\documentclass{article}

\usepackage[english]{babel}
\usepackage[utf8]{inputenc}

\usepackage{amsmath}
\usepackage{mathrsfs}
\usepackage{amsfonts}
\usepackage{graphicx}
\usepackage{bm}
\usepackage[colorinlistoftodos]{todonotes}
\usepackage{blindtext}
\usepackage{amssymb}
\usepackage{bbm}
\usepackage{authblk}

\usepackage{amsthm}
\numberwithin{equation}{section}
\newtheorem{theorem}{Theorem}[section]
\newtheorem{corollary}{Corollary}[section]
\newtheorem{lemma}[theorem]{Lemma}
\newtheorem{proposition}{Proposition}[section]

\theoremstyle{definition}

\newtheorem{definition}{Definition}[section]

\DeclareMathOperator\supp{supp}

\title{Large deviation principle for the Airy point process}

\author{Chenyang Zhong}
\affil{Department of Statistics, Columbia University}

\date{\today}

\begin{document}
\maketitle
\begin{abstract}
The Airy point process is a determinantal point process that arises from the spectral edge of the Gaussian Unitary Ensemble. In this paper, we establish a large deviation principle for the Airy point process, resolving a conjecture in \cite{CG}. Our result also extends to point processes arising from the spectrum of the stochastic Airy operator. 
\end{abstract}

\section{Introduction}\label{Sect.1}

The Airy point process, arising from rescaled eigenvalues near the spectral edge of the Gaussian Unitary Ensemble, is a determinantal point processes with correlation kernel
\begin{equation*}
    K^{\text{Ai}}(x,y)=\frac{\text{Ai}(x)\text{Ai}'(y)-\text{Ai}(y)\text{Ai}'(x)}{x-y},
\end{equation*}
where $\text{Ai}(\cdot)$ is the Airy function. We refer the reader to \cite[Section 4.2]{AGZ} for a detailed introduction to the Airy point process. Since its introduction by Tracy and Widom \cite{TW}, the Airy point process has been extensively investigated in the probability and mathematical physics literature. For example, \cite{Soshnikov} established a central limit theorem for the number of particles in a large interval. The rigidity property of the Airy point process was established in \cite{Bufetov}, and global rigidity upper bounds for the Airy point process were obtained in \cite{CC2}. In \cite{CD,CC}, the generating function for the Airy point process was investigated, which is further related to a system of coupled Painlev\'e II equations and leads to large gap asymptotics for the Airy point process. Further results on the Airy point process are in \cite{BC,LP,Ahn}. The Airy point process also appears in the limit of various interacting particle systems; we refer to \cite{Johansson} for a review.

In this paper, we focus on large deviations of the Airy point process. Namely, denoting by $a_1>a_2>\cdots$ the ordered points from the Airy point process, we aim to investigate large deviations of the scaled and space-reversed Airy point process empirical measure 
\begin{equation}\label{EM}
    \frac{1}{k}\sum_{i=1}^{\infty} \delta_{-k^{-2\slash 3} a_i}
\end{equation}
as the scaling parameter $k\in \mathbb{N}^{*}$ approaches infinity. Through a moment-matching formula by Borodin and Gorin \cite{BG}, such large deviations are also related to the lower tail of the Kardar-Parisi-Zhang equation (see \cite{CGKLT,CG,CG2,Kim,Tsai,CC3}). 

In \cite{CGKLT}, a physical and non-rigorous derivation of large deviations of (\ref{EM}) was proposed. A conjecture was made in \cite{CG} asking for a rigorous derivation. In this paper, we resolve this conjecture by establishing a full large deviation principle for (\ref{EM}). Our result extends an earlier paper \cite{Zho} on large deviation bounds for this empirical measure.

The Airy point process is also related to the stochastic Airy operator introduced in \cite{RRV}. The stochastic Airy operator depends on a parameter $\beta>0$, and is a random Schr\"odinger operator that describes the rescaled eigenvalue configuration near the spectral edge of a class of random matrix ensembles called $\beta$ ensembles \cite{DE}. When $\beta=2$, the eigenvalue configuration of the stochastic Airy operator is equal in distribution to the space reversal of the Airy point process ($-a_1,-a_2,\cdots$). The reader is referred to \cite{KRV,GS,LS,DL,GGL} for recent works on the stochastic Airy operator.

In the rest of this paper, we refer to the point process formed by the eigenvalues of the stochastic Airy operator with parameter $\beta$ as the ``$\beta$-Airy point process''. Our large deviation result also extends to the $\beta$-Airy point process for $\beta\in\mathbb{N}^{*}$.

The rest of this Introduction is organized as follows. In Section \ref{Sect.1.1}, we present our main result on large deviations of the $\beta$-Airy point process. Then we provide a brief review of stochastic Airy operator and Gaussian $\beta$ ensemble in Section \ref{Sect.1.3}. 

\subsection{Main result}\label{Sect.1.1}

In this subsection, we present the large deviation principle for the $\beta$-Airy point process. Throughout the paper, we fix an arbitrary $\beta\in\mathbb{N}^{*}$.

We start with the following definitions. We denote by $\mathcal{B}_{\mathbb{R}}$ the Borel $\sigma$-algebra on $\mathbb{R}$. 

\begin{definition}\label{Defn1.1}
For any $R>0$ and finite signed Borel measures $\mu,\nu$ on $[-R,R]$, the Kantorovich-Rubinshtein distance between $\mu$ and $\nu$ is defined as
\begin{equation*}
d_R(\mu,\nu):=\sup_{\substack{f:[-R,R]\rightarrow \mathbb{R},\\
    \|f\|_{BL}\leq 1}}\Big|\int_{[-R,R]} fd\mu-\int_{[-R,R]} fd\nu\Big|,
\end{equation*}
where $\|f\|_{BL}:=\max\{\|f\|_{\infty},\|f\|_{Lip}\}$ is the bounded Lipschitz norm.
\end{definition}

\begin{definition}\label{Defn1.2}
Let $\nu_0$ be the Borel measure on $\mathbb{R}$ such that
\begin{equation*}
 \nu_0(A)=\frac{1}{\pi}\int_A\sqrt{x}\mathbbm{1}_{[0,\infty)}(x)dx, \quad \forall A\in \mathcal{B}_{\mathbb{R}}.
\end{equation*}
For any $R>0$, we let $\nu_{0;R}$ be the restriction of $\nu_0$ to $[-R,R]$, and let $\mathcal{X}_R$ be the set of finite signed Borel measures $\mu$ on $[-R,R]$ such that $\mu+\nu_{0;R}$ is a positive measure. We also let $\mathscr{X}$ be the set of compactly supported finite signed Borel measures $\mu$ on $\mathbb{R}$ such that $\mu+\nu_0$ is a positive measure and $\mu(\mathbb{R})=0$.
\end{definition}

\begin{definition}\label{Defn1.3}
For any $\mu\in\mathscr{X}$, we define
\begin{equation*}
    \mathscr{I}(\mu):=-\int \log(|x-y|)d\mu(x)d\mu(y)+\frac{4}{3}\int |x|^{3\slash 2}\mathbbm{1}_{(-\infty,0]}(x)d\mu(x).
\end{equation*}
For any $R>0$ and $\mu\in\mathscr{X}$, we define $\mu_R$ to be the restriction of $\mu$ to $[-R,R]$. For any $R>0$, $\mu\in\mathcal{X}_R$, and $\delta>0$, we define
\begin{equation*}
    I_R(\mu,\delta):=\inf_{\substack{\mu'\in \mathscr{X}:\\d_R(\mu'_R,\mu)\leq \delta}}\mathscr{I}(\mu').
\end{equation*}
Note that for fixed $R>0$ and $\mu\in\mathcal{X}_R$, $I_R(\mu,\delta)$ viewed as a function of $\delta>0$ is monotone non-increasing. For any $R>0$ and $\mu\in\mathcal{X}_R$, we define
\begin{equation}\label{Eq1.2.1}
    I_R(\mu):=\lim_{\delta\rightarrow 0^{+}} I_R(\mu,\delta).
\end{equation}
\end{definition}

Throughout the rest of the paper, we fix an arbitrary $R\geq 10$. We denote by $\lambda_1<\lambda_2<\cdots$ the ordered eigenvalues of the stochastic Airy operator with parameter $\beta$ (see Section \ref{Sect.1.3} for details). For any $k\in \mathbb{N}^{*}$, we define the scaled $\beta$-Airy point process empirical measure as 
\begin{equation*}
    \bar{\nu}_k:=\frac{1}{k}\sum_{i=1}^{\infty} \delta_{k^{-2\slash 3} \lambda_i}.
\end{equation*}
We also let $\bar{\nu}_{k;R}$ be the restriction of $\bar{\nu}_k$ to $[-R,R]$, and define
\begin{equation}\label{Eq1.1}
    \nu_{k;R}:=\bar{\nu}_{k;R}-\nu_{0;R}. 
\end{equation}
Note that $\nu_{k;R}\in\mathcal{X}_R$ almost surely.

Now we state our main result. We take the topological space to be $\mathcal{X}_R$ equipped with the Kantorovich-Rubinshtein distance $d_R(\cdot,\cdot)$. The following theorem establishes a full large deviation principle for the $\beta$-Airy point process.

\begin{theorem}\label{Thm1}
Assuming the preceding setup, $\{\nu_{k;R}\}_{k=1}^{\infty}$ satisfies a large deviation principle with speed $k^2$ and good rate function $\frac{\beta}{2} I_R$. 
\end{theorem}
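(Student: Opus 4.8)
The plan is to prove the large deviation principle in the standard way, by combining exponential tightness at speed $k^2$ with a weak large deviation principle --- that is, with matching local upper and lower bounds at every $\mu\in\mathcal{X}_R$. For the exponential tightness I would use that, $[-R,R]$ being compact, every set $\{\mu\in\mathcal{X}_R:\|\mu\|_{TV}\le C\}$ is $d_R$-compact, together with $\|\nu_{k;R}\|_{TV}\le \bar\nu_{k;R}([-R,R])+\nu_0([-R,R])$ (valid since $\bar\nu_{k;R}\ge 0$); the tightness then reduces to an upper-tail bound $\mathbb{P}(\#\{i:\lambda_i\le k^{2/3}R\}\ge Ck)\le e^{-c(C)k^2}$ with $c(C)\to\infty$, which I would extract from the Riccati/Sturm-oscillation description of the spectrum of the stochastic Airy operator, or equivalently from known edge rigidity bounds. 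It then remains to show, for each fixed $\mu\in\mathcal{X}_R$, that both the $\limsup$ and the $\liminf$ of $k^{-2}\log\mathbb{P}(d_R(\nu_{k;R},\mu)<\delta)$ tend, as $\delta\to0^+$, to $-\tfrac{\beta}{2}I_R(\mu)$.

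The main device I would use is to transfer the question to a finite $\beta$-ensemble. By \cite{RRV,DE}, the bottom of the spectrum of the stochastic Airy operator is the edge scaling limit of the Gaussian $\beta$-ensemble in tridiagonal form, whose $n$ eigenvalues $x_1<\dots<x_n$ have joint density proportional to $\prod_{i<j}|x_i-x_j|^{\beta}e^{-\frac{\beta n}{4}\sum_i x_i^2}$; writing $x_{n-i+1}=2-n^{-2/3}\lambda_i^{(n)}$ and sending $n\to\infty$ recovers $(\lambda_i)_i$, hence realizes $\bar\nu_{k;R}$ as the weak limit (for fixed $k$) of the scaled edge empirical measure $\sigma_k^{(n)}$ of the ensemble restricted to $[-R,R]$. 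On the finite-$n$ side the probability of $\{d_R(\cdot,\mu)<\delta\}$ is governed by the weight $\prod_{i<j}|x_i-x_j|^{\beta}e^{-\frac{\beta n}{4}\sum_i x_i^2}$, which in edge coordinates behaves like $\exp(-\tfrac{\beta}{2}k^2\mathscr{I}(\sigma)+o(k^2))$ evaluated at the scaled empirical measure $\sigma$: the Vandermonde supplies the coefficient $\tfrac{\beta}{2}$ in front of $-\iint\log|x-y|$, while the external field $\tfrac{4}{3}|x|^{3/2}\mathbbm{1}_{(-\infty,0]}(x)$ in $\mathscr{I}$ is precisely what survives from the quadratic weight after subtracting the equilibrium (semicircle) contribution and passing to edge coordinates, and is supported on the side away from $\supp\nu_0$. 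For the upper bound I would combine a standard regularization of the logarithmic singularity on the diagonal with lower semicontinuity of the energy, and then account for the $\Theta(k)$ eigenvalues falling outside $[-R,R]$ by optimizing over where they sit; this optimization of the global energy subject only to matching $\mu$ on $[-R,R]$ is exactly the variational problem defining $I_R(\mu,\delta)$, and after $\delta\to0^+$ the quantity $I_R(\mu)$ of Definition \ref{Defn1.3} --- which is why the rate function is $\tfrac{\beta}{2}I_R$ and not $\tfrac{\beta}{2}\mathscr{I}$ of some restricted measure.

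For the matching lower bound I would argue dually: given $\mu\in\mathcal{X}_R$ with $I_R(\mu)<\infty$, choose $\mu'\in\mathscr{X}$ with $d_R(\mu'_R,\mu)$ small and $\mathscr{I}(\mu')$ within $\varepsilon$ of $I_R(\mu)$; discretize $\mu'+\nu_0$ into roughly $k$ prescribed target locations (with extra care near the hard edge at $0$ and throughout $\supp\nu_0$), confine the eigenvalue configuration to small disjoint neighborhoods of these locations, and bound the probability of this event from below by the integral of the density over the corresponding product of neighborhoods; the usual log-gas estimate gives a logarithm of size $k^2[\tfrac{\beta}{2}\iint\log|x-y|\,d\mu'(x)d\mu'(y)-\tfrac{\beta}{2}\cdot\tfrac{4}{3}\int|x|^{3/2}\mathbbm{1}_{(-\infty,0]}(x)d\mu'(x)-o(1)]=-\tfrac{\beta}{2}k^2\mathscr{I}(\mu')+o(k^2)$, and sending $n\to\infty$, then $\delta\to0$ and $\varepsilon\to0$, yields $-\tfrac{\beta}{2}I_R(\mu)$. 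In parallel I would check that $\tfrac{\beta}{2}I_R$ is a good rate function: $\mathscr{I}$ is convex and lower semicontinuous on $\mathscr{X}$ with compact sublevel sets (the logarithmic energy controls the measures, while the $|x|^{3/2}$ field controls the mass on the negative axis), the monotone limit in \eqref{Eq1.2.1} inherits lower semicontinuity, and $I_R\ge 0$ with equality only at $\mu=0$.

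I expect the principal obstacle to be the interchange of the two limiting procedures --- the operator approximation $n\to\infty$ and the large-deviation scaling $k\to\infty$: the finite-$n$ estimates above must be carried out with errors that are $o(k^2)$ uniformly enough in $n$ to survive the limit $n\to\infty$, and one must simultaneously keep quantitative, exponential-in-$k^2$, control of the infinitely many eigenvalues lying outside $[-R,R]$ --- the very source of the discrepancy between $\mathscr{I}$ and $I_R$ --- and of the configuration near the spectral edge, where both the density estimates and the discretization are most delicate. A secondary difficulty, purely in potential theory, is the fine analysis of $I_R$ and of the approximation $I_R(\mu,\delta)\to I_R(\mu)$ needed to make the upper and lower bounds coincide; the restriction $\beta\in\mathbb{N}^{*}$ is expected to enter the finite-$n$ analysis rather than this potential-theoretic part.
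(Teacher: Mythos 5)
Your overall architecture (exponential tightness plus matching local bounds, transfer to the Gaussian $\beta$-ensemble through the tridiagonal model, log-gas energy estimates producing $\tfrac{\beta}{2}I_R$) is the same as the paper's, but two steps in your plan have genuine gaps. First, the transfer: you propose to realize $\bar\nu_{k;R}$ as the weak limit, for fixed $k$, of the edge empirical measure of the $n$-point ensemble and then "send $n\to\infty$". Weak convergence at fixed $k$ gives no control at the exponential scale $e^{-ck^2}$, so the large deviation estimates cannot be passed through that limit. The paper never interchanges limits in this way: it ties $n$ to $k$ polynomially (ultimately $n=k^{40000}$, within the window $n^{10^{-7}}\le k\le n^{1/20000}$) and uses the quantitative coupling of \cite{Zho} (Theorem 1.5 there), under which the first $O(2^Tk)$ eigenvalues of the two models agree up to $Cn^{-1/24}$ outside an event of probability at most $C\exp(-ck^3)$; only with this coupling does the comparison of the events $\{d_R(\nu_{k;R},\mu)\le\delta\}$ and $\{d_R(\mu_{n,k;R},\mu)\le 2\delta\}$ (Proposition \ref{P5.5}) survive at speed $k^2$.

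Second, and more seriously, your upper bound treats the window $[-R,R]$ as if the energy there were a function of $\nu_{k;R}$ alone plus an optimization over the $\Theta(k)$ exterior points. In fact the empirical measure has $\sim n\gg k$ particles, the deviation probability is only $e^{-\Theta(k^2)}$, and the logarithmic cross-interaction between the window and the rest of the spectrum is a priori of larger order than $k^2$ unless one knows, on the same event, that the centered counting function $\Psi=\tilde N-\tilde N_0$ is small at all scales beyond $R$. Supplying exactly this is the bulk of the paper: the Riccati-diffusion tail bounds and multi-scale estimates of Sections \ref{Sect.3}--\ref{Sect.4} give the rigidity event (Propositions \ref{P4.3}--\ref{P4.4}) holding up to probability $\exp(-cMk^2)$, a pigeonhole argument produces a good random scale $R_0(\bm{\phi})$ at which $\Psi$ is exceptionally small (Proposition \ref{P5.2}), and Propositions \ref{P5.3}--\ref{P5.7} then show the inside/outside cross terms cancel, up to $O(M\sqrt{\epsilon}+M^{-1})$, in a ratio argument comparing the actual configuration with a reference configuration placed at quantiles of $\mu_0$ inside $[-R_0(\bm{\phi}),R_0(\bm{\phi})]$; it is from this comparison (Proposition \ref{P5.6}) that $I_R(\mu,3\delta)$ of Definition \ref{Defn1.3} emerges. "Standard regularization of the logarithmic singularity plus lower semicontinuity, then optimize over where the outside eigenvalues sit" does not close without these a priori estimates, and your only nod to them ("known edge rigidity bounds") is invoked for tightness, not for this decoupling, and would in any case need to hold with probability $1-e^{-cMk^2}$ with $M$ arbitrarily large, which is precisely what the paper has to prove from scratch. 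Your lower bound sketch and your identification of the external field $\tfrac43|x|^{3/2}\mathbbm{1}_{(-\infty,0]}$ are consistent with the paper (the lower bound is indeed an adaptation of \cite{Zho}).
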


\subsection{Stochastic Airy operator and Gaussian $\beta$ ensemble}\label{Sect.1.3}

In this subsection, we review background materials on stochastic Airy operator and Gaussian $\beta$ ensemble following \cite{RRV} and \cite[Section 4.5]{AGZ}. We also set up some notations that will be used in later parts of the paper.

Let $D$ be the space of Schwartz distributions, and let $H_{\text{loc}}^1$ be the space of real-valued functions $f$ on $[0,\infty)$ such that $f'\mathbbm{1}_I\in L^2$ for any compact set $I$. Let $(B_x,x\geq 0)$ be the standard Brownian motion. The stochastic Airy operator, denoted by $\mathcal{H}_{\beta}$, is a random linear map from $H_{\text{loc}}^1$ to $D$ that sends $f\in H_{\text{loc}}^1$ to
\begin{equation*}
    \mathcal{H}_{\beta} f=-f''+xf+\frac{2}{\sqrt{\beta}}f B',
\end{equation*}
where $f B'$ is understood as the distributional derivative of the continuous function $-\int_{0}^x B_t f'(t)dt+f(x)B_x$ for $x\in [0,\infty)$. We also define the Airy operator $\mathcal{A}$ as the nonrandom part of $\mathcal{H}_{\beta}$: for any $f\in H_{\text{loc}}^1$, $\mathcal{A} f =-f''+xf$.

Let $C_0^{\infty}$ be the space of compactly supported infinitely differentiable functions on $(0,\infty)$. Let 
\begin{equation*}
    L^{*}:=\Big\{f\in H^1[0,\infty): f(0)=0, \int_{0}^{\infty} ((f'(x))^2+(1+x)f(x)^2)dx<\infty\Big\}. 
\end{equation*}
The eigenvalues and eigenfunctions of $\mathcal{H}_{\beta}$ are defined as the pairs $(f,\lambda)\in L^{*}\times \mathbb{R}$ such that $\|f\|_2=1$ and $\mathcal{H}_{\beta}f=\lambda f$ in the sense of distributions. It was shown in \cite{RRV} that the collection of eigenvalues of $\mathcal{H}_{\beta}$ has a well-defined $i$th smallest element $\lambda_i$ for each $i\in\mathbb{N}^{*}$. We can similarly define the eigenvalues and eigenfunctions of $\mathcal{A}$, and denote the ordered eigenvalues of $\mathcal{A}$ by $\gamma_1<\gamma_2<\cdots$. 

The following result on $\{\gamma_i\}_{i=1}^{\infty}$ follows from \cite{MT,Titch}; see also \cite[Theorem 3.1]{Hua}).

\begin{proposition}\label{AiryOperator}
For any $i\in\mathbb{N}^{*}$, we have
\begin{equation*}
    \gamma_i=\Big(\frac{3\pi}{2}\Big(i-\frac{1}{4}+R(i)  \Big)\Big)^{2\slash 3},
\end{equation*}
where $|R(i)|\leq C\slash i$ for some absolute constant $C>0$.
\end{proposition}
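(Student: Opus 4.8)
The plan is to reduce the statement to the classical asymptotics of the zeros of the Airy function, which is the only substantive input.

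First I would characterize the eigenpairs of $\mathcal{A}$. If $\mathcal{A}f=\gamma f$, then by elliptic regularity $f$ is smooth on $(0,\infty)$ and solves $f''=(x-\gamma)f$ there; the substitution $g(u):=f(u+\gamma)$ converts this into the Airy equation $g''=ug$, whose solution space is spanned by $\text{Ai}$ and $\text{Bi}$. Since $f\in L^{*}$ forces $f$, hence $g$, to be square-integrable near $+\infty$, and $\text{Bi}(u)$ grows like $e^{\frac{2}{3}u^{3/2}}$ while $\text{Ai}(u)$ decays, we must have $f(x)=c\,\text{Ai}(x-\gamma)$ for some $c\neq 0$. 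The boundary condition $f(0)=0$ then says precisely that $-\gamma$ is a zero of $\text{Ai}$. Conversely, whenever $-\gamma$ is a zero of $\text{Ai}$, the function $x\mapsto\text{Ai}(x-\gamma)$ together with all of its derivatives decays super-exponentially at $+\infty$, so it lies in $L^{*}$ and, after normalization, is an eigenfunction of $\mathcal{A}$ with eigenvalue $\gamma$. Because $\mathcal{A}$ is a Sturm--Liouville operator with confining potential $x\to\infty$, its spectrum is purely discrete and simple and is exhausted by these eigenvalues (alternatively, this is recorded in \cite{RRV}, \cite[Section 4.5]{AGZ}). The zeros of $\text{Ai}$ are all real and negative; writing $a_1>a_2>\cdots$ for them in decreasing order, we obtain $\gamma_i=-a_i$ for every $i\in\mathbb{N}^{*}$.

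Next I would invoke the classical asymptotic expansion for the Airy zeros from \cite{MT,Titch} (see also \cite[Theorem 3.1]{Hua}). Setting $t_i:=\frac{3\pi}{8}(4i-1)=\frac{3\pi}{2}(i-\frac14)$, this expansion gives
\begin{equation*}
    -a_i=t_i^{2/3}\big(1+\varepsilon_i\big),\qquad |\varepsilon_i|\leq C_1 i^{-2},
\end{equation*}
with an absolute constant $C_1$. Raising the identity $\gamma_i=-a_i$ to the power $3/2$ yields
\begin{equation*}
    \gamma_i^{3/2}=t_i\big(1+\varepsilon_i\big)^{3/2}=\frac{3\pi}{2}\Big(i-\frac14+\big(i-\tfrac14\big)\big((1+\varepsilon_i)^{3/2}-1\big)\Big).
\end{equation*}
Defining $R(i):=(i-\frac14)((1+\varepsilon_i)^{3/2}-1)$ and using $|(1+\varepsilon_i)^{3/2}-1|\leq C_2|\varepsilon_i|$ for small $\varepsilon_i$, we get $|R(i)|\leq C i^{-1}$ for an absolute constant $C$; taking the $2/3$ power of the last display gives exactly the claimed formula.

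The only place where care is genuinely needed is the bookkeeping around conventions: one must check that the leading term $t_i^{2/3}$ of the cited expansion really equals $\big(\frac{3\pi}{2}(i-\frac14)\big)^{2/3}$ with the stated constant and index shift $\frac14$, and one must track how the relative error $O(i^{-2})$ of the Airy-zero expansion becomes the additive error $O(i^{-1})$ once it is absorbed into the correction $R(i)$ inside the $2/3$-power. No genuine analytical obstacle arises beyond quoting the asymptotics of the Airy zeros, which I would not reprove.
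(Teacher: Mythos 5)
Your proof is correct, and it follows the same route the paper intends: the paper gives no proof of Proposition \ref{AiryOperator}, merely citing the classical results of \cite{MT,Titch} (and \cite[Theorem 3.1]{Hua}), and your argument — identifying the eigenvalues of $\mathcal{A}$ with the negatives of the Airy zeros via the $f(0)=0$ boundary condition and the decay requirement in $L^{*}$, then transferring the standard zero asymptotics with relative error $O(i^{-2})$ into the additive correction $R(i)=O(i^{-1})$ — is exactly the intended derivation. The bookkeeping in your last display is sound, so no gap remains.
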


The following tail bound on $\lambda_1$ was shown in \cite{DV}.

\begin{proposition}\label{TailBound}
When $t\rightarrow\infty$, we have
\begin{equation*}
    \mathbb{P}(\lambda_1<-t)=t^{-3\beta\slash 4}\exp\Big( -\frac{2}{3}\beta t^{3\slash 2} +O(\sqrt{\log{t}})\Big).
\end{equation*}
In particular, there exists a positive constant $C$ that only depends on $\beta$, such that for any $t>0$,
\begin{equation*}
    \mathbb{P}(\lambda_1<-t)\leq C\exp\Big(-\frac{2}{3}\beta t^{3\slash 2}\Big).
\end{equation*}
\end{proposition}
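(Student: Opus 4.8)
The plan is to reduce the statement to an explosion probability for a one‑dimensional diffusion, via the Riccati correspondence of \cite{RRV}, and then to run a small‑noise (Freidlin–Wentzell) argument, refined to the level of the polynomial prefactor. By \cite{RRV}, for the stochastic Airy operator $\mathcal{H}_\beta$ one has $\lambda_1<-t$ if and only if the diffusion
\begin{equation*}
dp(x)=\bigl(x+t-p(x)^2\bigr)\,dx+\tfrac{2}{\sqrt{\beta}}\,dB_x,\qquad p(0^+)=+\infty,
\end{equation*}
reaches $-\infty$ at some finite $x$ (after an explosion one restarts from $+\infty$, but only the first explosion matters for $\lambda_1$), so $\mathbb{P}(\lambda_1<-t)$ is exactly the probability that this diffusion ever explodes. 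Substituting $p(x)=\sqrt{t}\,q(v)$ with $x=t^{-1/2}v$ and using Brownian scaling gives
\begin{equation*}
dq(v)=\bigl(1-q(v)^2+t^{-3/2}v\bigr)\,dv+\varepsilon_t\,d\widehat{B}_v,\qquad q(0^+)=+\infty,\qquad \varepsilon_t:=\tfrac{2}{\sqrt{\beta}}\,t^{-3/4},
\end{equation*}
with $\widehat{B}$ a standard Brownian motion; this is a small‑noise diffusion ($\varepsilon_t\to0$) with limiting drift $1-q^2$, and the event of interest is its explosion to $-\infty$.

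Next I identify the exponential rate. The flow $\dot q=1-q^2$ relaxes from $+\infty$ to the stable point $q=1$ and never crosses the unstable point $q=-1$, so explosion is a rare event that forces $q$ through the band $[-1,1]$. Writing $\Phi(q):=q-q^3/3$ and using the pointwise identity $(\dot q-(1-q^2))^2=(\dot q+(1-q^2))^2-4\tfrac{d}{dv}\Phi(q)$, every path descending from $q=1$ to $q=-1$ has Freidlin–Wentzell action
\begin{equation*}
\frac{1}{2\varepsilon_t^2}\int\bigl(\dot q-(1-q^2)\bigr)^2\,dv\;\ge\;\frac{2}{\varepsilon_t^2}\bigl(\Phi(1)-\Phi(-1)\bigr)=\frac{8}{3\varepsilon_t^2}=\frac{2}{3}\beta\,t^{3/2},
\end{equation*}
with equality exactly along the ``instanton'' $\dot q=q^2-1$ (the time‑reversed drift). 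A first‑/last‑passage decomposition of the path at the band boundaries confines the action‑carrying portion to $[-1,1]$ --- on which the $t^{-3/2}v$ term of the drift is negligible over the $O(1)$‑length window of $v$ in which explosion is not exponentially costlier --- and discards the free descent from $+\infty$ down to $1$. This pins the exponential rate at $\tfrac23\beta t^{3/2}$.

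For the \emph{upper bound}, apply Girsanov's theorem to tilt the drift of $q$ so that it becomes a suitably truncated copy of the instanton field $q^2-1$ (truncation is needed because the untruncated field ceases to be square‑integrable once $q$ runs off to $-\infty$); equivalently, stop the associated exponential supermartingale at the first explosion time. Under the tilted law the explosion happens with probability $O(1)$, while the Radon–Nikodym density contributes $\exp(-\tfrac23\beta t^{3/2}+\cdots)$; tracking the sub‑exponential corrections --- a Laplace‑type expansion of the Gaussian fluctuations transverse to the instanton, together with the attendant volume factors --- sharpens this to $\mathbb{P}(\lambda_1<-t)\le t^{-3\beta/4}\exp(-\tfrac23\beta t^{3/2}+O(\sqrt{\log t}))$. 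For the \emph{lower bound}, restrict to the event that $q$ stays inside a thin tube of appropriately decaying width around the instanton until it has safely passed below $-1$, after which the drift carries it to $-\infty$ at no further cost; Girsanov then yields a density whose logarithm equals $-\tfrac23\beta t^{3/2}$ plus a martingale of controlled quadratic variation, and the probability of the tube event under the tilted law is bounded below by comparing the fluctuation $q-q_{\mathrm{inst}}$ with a time‑inhomogeneous Ornstein–Uhlenbeck process and using its Gaussian density. Optimizing the tube width reproduces the same prefactor $t^{-3\beta/4}$ and error $O(\sqrt{\log t})$, and combining the two bounds gives the displayed asymptotics. The ``in particular'' inequality is then immediate: $-\tfrac{3\beta}{4}\log t+O(\sqrt{\log t})\to-\infty$, so $t^{-3\beta/4}e^{O(\sqrt{\log t})}$ is bounded for large $t$, while $\mathbb{P}(\lambda_1<-t)\le1$ handles bounded $t$.

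The main obstacle is the behaviour near the unstable point $q\approx-1$, where the drift linearizes to a strongly \emph{expanding} coefficient, so that the fluctuation $q-q_{\mathrm{inst}}$ gets amplified rather than damped as $q$ exits; controlling this amplification finely enough to make the \emph{polynomial} prefactors in the upper and lower bounds coincide --- rather than only the exponential orders, which is all the naive Freidlin–Wentzell estimate gives --- is the delicate part, and it is precisely what forces the error $O(\sqrt{\log t})$ instead of $o(1)$. A secondary technicality is making the first‑/last‑passage reductions quantitative: bounding the excursions of $p$ (equivalently $q$) outside the band, and handling the boundary layer at $x=0$ where $p$ descends like $1/x$ from $+\infty$.
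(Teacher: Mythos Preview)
The paper does not prove this proposition; it is simply quoted as a known result from \cite{DV} (Dumaz--Vir\'ag). So there is no ``paper's own proof'' to compare against.

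Your sketch is essentially the strategy of \cite{DV}: the Riccati correspondence reduces $\{\lambda_1<-t\}$ to an explosion event for a one-dimensional diffusion, the rescaling $p(x)=\sqrt{t}\,q(t^{-1/2}v)$ puts it in small-noise form with $\varepsilon_t\asymp t^{-3/4}$, and the Freidlin--Wentzell action along the instanton $\dot q=q^2-1$ gives the rate $\tfrac{2}{3}\beta t^{3/2}$. All of this is correct, and the identification of the action via the total-derivative trick $(\dot q-(1-q^2))^2=(\dot q+(1-q^2))^2-4\frac{d}{dv}\Phi(q)$ is exactly right.

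That said, what you have written is a strategy rather than a proof. You correctly flag the main difficulty yourself: the analysis near the unstable point $q\approx -1$, where the linearized flow is expanding, is what separates the exponential order (which the naive large-deviation estimate gives) from the sharp asymptotic with the polynomial prefactor $t^{-3\beta/4}$ and error $O(\sqrt{\log t})$. Your description of how to handle this --- Girsanov tilting to the instanton, tube estimates, comparison with time-inhomogeneous Ornstein--Uhlenbeck --- is accurate as an outline but omits the actual estimates, which in \cite{DV} occupy the bulk of the work. In particular, the claim that ``optimizing the tube width reproduces the same prefactor $t^{-3\beta/4}$'' hides a nontrivial computation: one has to track how the exponential growth rate near $q=-1$ interacts with the diffusion coefficient to produce exactly the power $-3\beta/4$, and the $O(\sqrt{\log t})$ error arises from a specific balancing of the tube width against the time the process spends in the unstable region. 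Without carrying these out, the prefactor and error term are asserted rather than derived.

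For the purposes of this paper, only the ``in particular'' inequality is ever used, and that follows from the exponential-order upper bound alone --- which is the easy direction of the Freidlin--Wentzell estimate and does not require the sharp prefactor analysis.
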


Throughout the rest of this paper, for any finite set $A$, we denote by $|A|$ the cardinality of $A$. For any $x\in\mathbb{R}$, we let 
\begin{equation}\label{Eq2.2}
   N(x):=|\{i\in\mathbb{N}^{*}: \lambda_i\leq x\}|,\quad N_0(x):=|\{i\in \mathbb{N}^{*}: \gamma_i\leq x\}|.
\end{equation}
For any $\lambda\in \mathbb{R}$, we define $p_{\lambda}(x), x\geq 0$ as the following diffusion: 
\begin{equation}\label{DefinitionP}
  dp_{\lambda}(x)=(x-\lambda-p_{\lambda}(x)^2)dx+\frac{2}{\sqrt{\beta}} dB_x, \quad p_{\lambda}(0)=\infty,
\end{equation}
and define $q_{\lambda}(x), x\geq 0$ as the solution to the following differential equation:
\begin{equation}\label{DefinitionQ}
   q_{\lambda}'(x)=x-\lambda-q_{\lambda}(x)^2, \quad q_{\lambda}(0)=\infty. 
\end{equation}
The diffusion $p_{\lambda}(x)$ or the function $q_{\lambda}(x)$ may blow up to $-\infty$ at a finite time; immediately after this blow-up time, we restart $p_{\lambda}(x)$ or $q_{\lambda}(x)$ at $+\infty$. For any $\lambda,\lambda_1\in\mathbb{R}$ and $\lambda_2\in\mathbb{R}\cup\{\infty\}$ such that $\lambda_1\leq \lambda_2$, we let $N(\lambda;\lambda_1,\lambda_2)$ be the number of blow-ups of $p_{\lambda}(x)$ in the interval $(\lambda_1,\lambda_2]$, and let $N_0(\lambda;\lambda_1,\lambda_2)$ be the number of blow-ups of $q_{\lambda}(x)$ in the interval $(\lambda_1,\lambda_2]$. 

We have the following proposition.

\begin{proposition}[\cite{RRV}]\label{diffu}
For any fixed nonrandom $\lambda\in\mathbb{R}$, $N(\lambda)$ is equal to the number of blow-ups of $p_{\lambda}(x)$ in $[0,\infty)$, and $N_0(\lambda)$ is equal to the number of blow-ups of $q_{\lambda}(x)$ in $[0,\infty)$.
\end{proposition}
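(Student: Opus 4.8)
The plan is to prove both identities by the Riccati (logarithmic-derivative) substitution combined with Sturm oscillation theory; the identity for $q_{\lambda},\mathcal{A},N_0$ is the noiseless specialization of the one for $p_{\lambda},\mathcal{H}_{\beta},N$ and is the classical oscillation theorem for the Airy equation $-f''+xf=\lambda f$ on $[0,\infty)$ with a Dirichlet condition at $0$, so I will concentrate on the stochastic case and indicate at the end where it degenerates to the deterministic one.

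\emph{Step 1: the Riccati correspondence.} Fix a nonrandom $\lambda\in\mathbb{R}$ and let $f=f_{\lambda}$ be the weak solution of $\mathcal{H}_{\beta}f=\lambda f$ on $[0,\infty)$ with $f(0)=0$ and $f'(0)=1$, the equation being read in the integrated form supplied by the definition of $\mathcal{H}_{\beta}$. Putting $p:=f'/f$ on each interval where $f\neq 0$, a direct computation (equivalently, Itô's formula) shows that $p$ satisfies $dp=(x-\lambda-p^2)\,dx+\frac{2}{\sqrt{\beta}}\,dB_x$, which is exactly (\ref{DefinitionP}), and $f(0)=0$, $f'(0)=1$ forces $p(0)=+\infty$. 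Every zero $x_0\in(0,\infty)$ of $f$ is simple (otherwise $f\equiv 0$ by uniqueness for the linear equation), and near such an $x_0$ one has $p(x)\to-\infty$ as $x\uparrow x_0$ and $p(x)\to+\infty$ as $x\downarrow x_0$, regardless of the sign of $f'(x_0)$; thus the ``restart at $+\infty$'' convention is precisely the analytic continuation of $p$ across a zero of $f$, and since $-p^2$ dominates the drift for large $|p|$, every blow-up is of this type. Conversely $f$ is recovered from $p_{\lambda}$ on the successive intervals between blow-ups via $f=\pm\exp\big(\int p_{\lambda}\big)$, with a sign change at each blow-up. Hence the number of blow-ups of $p_{\lambda}$ in $[0,\infty)$ equals the number of zeros of $f_{\lambda}$ in $(0,\infty)$, and it remains to identify the latter with $N(\lambda)$; note that for nonrandom $\lambda$ one has $\lambda\neq\lambda_i$ for every $i$ almost surely, so the distinction between $\lambda_i\le\lambda$ and $\lambda_i<\lambda$ is immaterial.

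\emph{Step 2: finite-interval oscillation and the limit $T\to\infty$.} For $T<\infty$, the Sturm oscillation theorem for $\mathcal{H}_{\beta}$ restricted to $[0,T]$ with Dirichlet conditions at both endpoints — most conveniently obtained from the tridiagonal-matrix approximation of $\mathcal{H}_{\beta}$ of \cite{RRV,DE}, for which it is the elementary statement that the eigenvalue counting function of a tridiagonal matrix equals the number of sign changes in its Sturm sequence of leading principal minors, equivalently the number of blow-ups of the associated discrete Riccati recursion, which converges to $p_{\lambda}$ — shows that the number of zeros of $f_{\lambda}$ in $(0,T)$ equals $|\{i:\lambda_i^{(T)}\le\lambda\}|$, where $\lambda_1^{(T)}<\lambda_2^{(T)}<\cdots$ are the eigenvalues of this restriction. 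As $T\to\infty$: the potential $x-\lambda\to+\infty$ forces $f_{\lambda}$ to be eventually of one sign, so the number of zeros in $(0,T)$ stabilizes at the (finite) number of zeros in $(0,\infty)$; and $\lambda_i^{(T)}\downarrow\lambda_i$ by domain monotonicity of Dirichlet eigenvalues together with the decay of the eigenfunctions of $\mathcal{H}_{\beta}$ from \cite{RRV}, so $|\{i:\lambda_i^{(T)}\le\lambda\}|\to|\{i:\lambda_i\le\lambda\}|=N(\lambda)$ almost surely for the fixed nonrandom $\lambda$. Combining with Step 1 yields the first claim; repeating Steps 1--2 with $\frac{2}{\sqrt{\beta}}$ replaced by $0$, i.e. for $q_{\lambda},\mathcal{A},\gamma_i$, is the purely deterministic Sturm theorem for the Airy equation and yields the second.

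The main obstacle lies entirely in Step 2: justifying the finite-interval Sturm oscillation theorem for the stochastic Airy operator despite the white-noise potential (which requires either working throughout with the weak/integrated equation or passing through the tridiagonal approximation and controlling the joint convergence of the Riccati recursion to $p_{\lambda}$ and of the spectra), and controlling $f_{\lambda}$ as $x\to\infty$ uniformly for $\lambda$ in a compact set, so that zeros neither escape to nor emerge from $+\infty$ except exactly when $\lambda$ crosses an eigenvalue — this being what underlies both the stabilization of the zero count and the convergence $\lambda_i^{(T)}\downarrow\lambda_i$. These are precisely the analytic facts established in the spectral analysis of $\mathcal{H}_{\beta}$ in \cite{RRV}, which the present proposition invokes.
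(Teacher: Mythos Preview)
The paper does not give its own proof of this proposition: it is stated with the attribution \cite{RRV} and used as a black box. Your sketch is essentially the argument of \cite{RRV} --- the Riccati substitution $p=f'/f$ identifying blow-ups of $p_{\lambda}$ with zeros of the Dirichlet solution $f_{\lambda}$, followed by Sturm oscillation on $[0,T]$ and the limit $T\to\infty$ --- and you correctly flag that the delicate points (oscillation theory in the presence of white noise, and control of $f_{\lambda}$ and $\lambda_i^{(T)}$ as $T\to\infty$) are exactly what \cite{RRV} supplies. So there is nothing to compare against; your proposal matches the cited source.
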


The following result follows from Propositions \ref{TailBound} and \ref{diffu}. 

\begin{corollary}\label{tail}
There exists a positive constant $C$ that only depends on $\beta$, such that for any $t>0$, we have
\begin{equation*}
    \mathbb{P}(p_{-t}(x) \text{ blows up in finite time})\leq C\exp\Big(-\frac{2}{3}\beta t^{3\slash 2}\Big).
\end{equation*}
\end{corollary}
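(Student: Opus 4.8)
The plan is to relate the blow-up of the diffusion $p_{-t}(x)$ directly to the first eigenvalue $\lambda_1$ of the stochastic Airy operator, and then invoke the tail bound of Proposition \ref{TailBound}. By Proposition \ref{diffu}, for the fixed nonrandom value $\lambda=-t$, the number $N(-t)$ of blow-ups of $p_{-t}(x)$ on $[0,\infty)$ equals $N(-t)=|\{i: \lambda_i\le -t\}|$. In particular, $p_{-t}(x)$ blows up in finite time on $[0,\infty)$ if and only if $N(-t)\ge 1$, which happens if and only if $\lambda_1\le -t$. Hence
\begin{equation*}
\mathbb{P}\big(p_{-t}(x)\text{ blows up in finite time}\big)=\mathbb{P}(N(-t)\ge 1)=\mathbb{P}(\lambda_1\le -t).
\end{equation*}

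Next I would apply the second (non-asymptotic) assertion of Proposition \ref{TailBound}. There is a constant $C$ depending only on $\beta$ so that $\mathbb{P}(\lambda_1<-t)\le C\exp(-\tfrac{2}{3}\beta t^{3/2})$ for all $t>0$. The only minor gap is that we need $\mathbb{P}(\lambda_1\le -t)$ rather than $\mathbb{P}(\lambda_1<-t)$; this is handled by noting that $\lambda_1$ has a continuous distribution (its law has a density, as follows from the diffusion description, so $\mathbb{P}(\lambda_1=-t)=0$), or alternatively by replacing $t$ with $t-\varepsilon$ and letting $\varepsilon\downarrow 0$, which only costs a harmless constant factor. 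Either way, $\mathbb{P}(p_{-t}(x)\text{ blows up in finite time})=\mathbb{P}(\lambda_1\le -t)\le C\exp(-\tfrac{2}{3}\beta t^{3/2})$, which is the claimed bound.

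There is essentially no hard step here: the content is entirely packaged in the two cited propositions, and the argument is a one-line identification of events plus a passage from strict to non-strict inequality. The only point requiring a moment's care is making sure the equality $\{p_{-t}\text{ blows up}\}=\{\lambda_1\le -t\}$ is applied for a \emph{deterministic} $\lambda=-t$, which is exactly the hypothesis under which Proposition \ref{diffu} is stated, so there is no subtlety about conditioning or measurability.
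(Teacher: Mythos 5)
Your proposal is correct and follows exactly the route the paper intends: the paper states the corollary as an immediate consequence of Propositions \ref{TailBound} and \ref{diffu}, identifying the blow-up event with $\{N(-t)\geq 1\}=\{\lambda_1\leq -t\}$ for the deterministic level $\lambda=-t$ and then applying the non-asymptotic tail bound. Your extra remark handling the strict versus non-strict inequality (via the $t-\varepsilon$ limit or continuity of the law of $\lambda_1$) is a fine way to close the only small gap the paper leaves implicit.
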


For any $\beta>0$, the \emph{Gaussian $\beta$ ensemble} of size $n$ is the probability distribution on ordered points $\lambda_1^{(n)}> \lambda_2^{(n)} > \cdots > \lambda_n^{(n)}$ with joint density
\begin{equation}\label{Eqn10}
 Z_{\beta,n}^{-1} \exp\Big(-\frac{\beta}{4}\sum\limits_{i=1}^n (\lambda_i^{(n)})^2\Big)\prod_{1\leq i<j\leq n} \big|\lambda_i^{(n)}-\lambda_j^{(n)}\big|^{\beta},
\end{equation}
where $Z_{\beta,n}$ is a normalizing constant. Dumitriu and Edelman \cite{DE} introduced a class of tridiagonal random matrix models (which we denote by $H_{\beta,n}$ in this paper) and showed that the joint density of the eigenvalues of such random matrices is given by (\ref{Eqn10}). Below we describe this random matrix $H_{\beta,n}$. We first recall the definition of $\chi$ random variables:

\begin{definition}
For any $t>0$, the $\chi$ distribution with $t$ degrees of freedom, denoted by $\chi_t$, is defined as the probability distribution on $(0,\infty)$ with density 
\begin{equation*}
    f_t(x)=\Gamma(t\slash 2)^{-1} 2^{1-t\slash 2}x^{t-1}e^{-x^2\slash 2}, \quad \forall x>0.
\end{equation*}
\end{definition} 

We denote $[0]:=\emptyset$ and $[m]:=\{1,2,\cdots,m\}$ for any $m\in\mathbb{N}^{*}$. Let $\xi_1,\cdots,\xi_n$ be i.i.d. $N(0,1)$ random variables. Let $Y_1,\cdots,Y_{n-1}$ be mutually independent random variables that are independent of $\{\xi_i\}_{i=1}^n$, such that $Y_i\sim \chi_{i\beta}$ for every $i\in [n-1]$. Then $H_{\beta,n}$ is an $n$ by $n$ random matrix whose entries are given as follows: for any $(i,j)\in [n]^2$ such that $|i-j|>1$, $H_{\beta,n}(i,j)=0$; for any $i\in [n]$, $H_{\beta,n}(i,i)=\sqrt{2\slash\beta} \xi_i$; for any $i\in [n-1]$, $H_{\beta,n}(i,i+1)=H_{\beta,n}(i+1,i)=Y_{n-i}\slash\sqrt{\beta}$.

In the following, we fix $k,n\in\mathbb{N}^{*}$ such that $k\leq n$. For any $i\in [n]$, we define $\tilde{\lambda}^{(n)}_i:=n^{1\slash 6} (\lambda^{(n)}_i-2\sqrt{n})$ and $b_i:=-k^{-2\slash 3}  \tilde{\lambda}^{(n)}_i$. Let $\mu_0$ be the Borel measure on $\mathbb{R}$ such that for any $A\in \mathcal{B}_{\mathbb{R}}$,
\begin{equation*}
  \mu_0(A)=\frac{1}{\pi}\int_A \sqrt{x}\Big(1-\frac{1}{4}(k\slash n)^{2\slash 3}x\Big)^{1\slash 2}\mathbbm{1}_{[0,4(n\slash k)^{2\slash 3}]}(x)dx.
\end{equation*} 
We define 
\begin{equation}
  \mu_{n,k}:=\frac{1}{k}\sum_{i=1}^n \delta_{b_i}-\mu_0,
\end{equation}
and let $\mu_{n,k;R}$ be the restriction of $\mu_{n,k}$ to $[-R,R]$. For any $x\in\mathbb{R}$, we let 
\begin{eqnarray}\label{Eq2.21}
  && \tilde{N}(x):=|\{i\in [n]: b_i\leq x\}|, \quad\tilde{N}_0(x):=\begin{cases}
   k \mu_0([0,x]),& \text{ if }x\geq 0\\
   0, & \text{ if }x<0
  \end{cases},
  \nonumber\\
  && \Psi(x):=\tilde{N}(x)-\tilde{N}_0(x).
\end{eqnarray}

\bigskip

Throughout the rest of this paper, we fix a probability space $(\Omega,\mathcal{F},\mathbb{P})$, on which a standard Brownian motion $(B_x,x\geq 0)$ is constructed with filtration $\mathcal{F}_t=\sigma(B_x,x\in [0,t])$, $t\geq 0$. For any $t\geq s\geq 0$, we let $\mathcal{F}_{s,t}:=\sigma(B_x-B_s,x\in [s,t])$. We denote by $Leb$ the Lebesgue measure on $\mathbb{R}$. Unless otherwise specified, we denote by $C,c$ positive constants that only depend on $\beta$; the values of these constants may change from line to line.

The rest of this paper is organized as follows. In Section \ref{Sect.3}, we establish some tail bounds on the number of blow-ups of the diffusion $p_{\lambda}(x)$. Then we introduce some basic setups in Section \ref{Sect.2} that will be used in the proof of Theorem \ref{Thm1}. Bounds on several quantities introduced in Section \ref{Sect.2} are provided in Section \ref{Sect.4}. In Section \ref{Sect.5}, we finish the proof of Theorem \ref{Thm1}. 

\subsection{Acknowledgements}

The author wishes to thank Amir Dembo for suggesting the problem and Ivan Corwin for proposing the problem. The author also thanks Ivan Corwin, Amir Dembo, and Li-Cheng Tsai for helpful conversations.

\section{Tail bounds on the number of blow-ups of the diffusion $p_{\lambda}(x)$}\label{Sect.3}

In this section, we establish tail bounds on the number of blow-ups of the diffusion $p_{\lambda}(x)$ as defined in (\ref{DefinitionP}) on various types of intervals. We recall the definitions of $N(x),N_0(x),N(\lambda;\lambda_1,\lambda_2),N_0(\lambda;\lambda_1,\lambda_2)$ from Section \ref{Sect.1.3} (see (\ref{Eq2.2}) and (\ref{Eq2.21})). We also introduce the following definition.

\begin{definition}\label{Def3.1}
For any $\lambda,\lambda_1\in \mathbb{R}$, we let $\overline{p}_{\lambda,\lambda_1}(x),x\geq \lambda_1$ be the diffusion 
\begin{equation}
  d\overline{p}_{\lambda,\lambda_1}(x)=(x-\lambda-\overline{p}_{\lambda,\lambda_1}(x)^2)dx+\frac{2}{\sqrt{\beta}}dB_x, \quad \overline{p}_{\lambda,\lambda_1}(\lambda_1)=\infty,
\end{equation}
and let $\overline{q}_{\lambda,\lambda_1}(x),x\geq \lambda_1$ be the solution to the differential equation
\begin{equation}
  \overline{q}_{\lambda,\lambda_1}'(x)=x-\lambda-\overline{q}_{\lambda,\lambda_1}(x)^2, \quad \overline{q}_{\lambda,\lambda_1}(\lambda_1)=\infty.
\end{equation}
For any $\lambda,\lambda_1\in\mathbb{R}$ and $\lambda_2\in\mathbb{R}\cup\{\infty\}$ such that $\lambda_1\leq \lambda_2$, we let $\overline{N}(\lambda;\lambda_1,\lambda_2)$ be the number of blow-ups of $\overline{p}_{\lambda,\lambda_1}(x)$ in the interval $(\lambda_1,\lambda_2]$, and let $\overline{N}_0(\lambda;\lambda_1,\lambda_2)$ be the number of blow-ups of $\overline{q}_{\lambda,\lambda_1}(x)$ in the interval $(\lambda_1,\lambda_2]$.
\end{definition}

The main results of this section are given in Propositions \ref{P3.1}-\ref{P3.5} below. 

\begin{proposition}\label{P3.1}
There exist positive constants $K\geq 10,C,c$ that only depend on $\beta$, such that the following holds. For any $\lambda,\lambda_1,\lambda_2,x\in\mathbb{R}$ that satisfy
\begin{equation*}
0\leq \lambda_1<\lambda_2<\lambda, \quad \min\{\lambda_2-\lambda_1,\lambda-\lambda_2\}\geq K,
\end{equation*}
\begin{equation*}
 x\geq K\Big(1+\log\Big(\frac{\lambda-\lambda_1}{\lambda-\lambda_2}\Big)\Big)\max\Big\{1,\log\log\Big(\frac{\lambda-\lambda_1}{\lambda-\lambda_2}\Big)\Big\},
\end{equation*}
there exists an event $\mathcal{U}\in\mathcal{F}_{\lambda_1,\lambda_2}$, such that 
\begin{equation}
\{|N(\lambda;\lambda_1,\lambda_2)-N_0(\lambda;\lambda_1,\lambda_2)|\geq x\}\subseteq \mathcal{U},
\end{equation}
\begin{eqnarray}
&& \mathbb{P}(|N(\lambda;\lambda_1,\lambda_2)-N_0(\lambda;\lambda_1,\lambda_2)|\geq x)\leq \mathbb{P}(\mathcal{U}) \nonumber \\
&\leq& \exp(C (\lambda-\lambda_1)^{1\slash 2}(\lambda_2-\lambda_1)\log\log(\lambda-\lambda_1))\nonumber\\
&& \times  \exp\Big(-c x^2\Big/\log\Big(\frac{\lambda-\lambda_1}{\lambda-\lambda_2}\Big)\Big).
\end{eqnarray}
\end{proposition}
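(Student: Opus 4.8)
The plan is to reduce the estimate to a sub‑Gaussian tail bound for a single continuous martingale produced by the Riccati–Prüfer (angular) change of variables. Since $N(\lambda;\lambda_1,\lambda_2)$ depends on the path of $B$ up to time $\lambda_1$ (through $p_\lambda(\lambda_1)$) it is not $\mathcal{F}_{\lambda_1,\lambda_2}$‑measurable, so I would first compare it with the number $\overline N(\lambda;\lambda_1,\lambda_2)$ of blow‑ups on $(\lambda_1,\lambda_2]$ of the diffusion $\overline p_{\lambda,\lambda_1}$ of Definition~\ref{Def3.1}, which is driven only by the increments of $B$ on $[\lambda_1,\lambda_2]$. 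Coupling $p_\lambda$ and $\overline p_{\lambda,\lambda_1}$ through the same Brownian motion, the difference solves the noise‑free equation $\tfrac{d}{dx}(\overline p_{\lambda,\lambda_1}-p_\lambda)=-(\overline p_{\lambda,\lambda_1}+p_\lambda)(\overline p_{\lambda,\lambda_1}-p_\lambda)$, so the two processes cannot cross; hence their blow‑ups interlace and $|N(\lambda;\lambda_1,\lambda_2)-\overline N(\lambda;\lambda_1,\lambda_2)|\le 1$, and $|N_0(\lambda;\lambda_1,\lambda_2)-\overline N_0(\lambda;\lambda_1,\lambda_2)|\le 1$ from the corresponding deterministic comparison. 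It therefore suffices to take $\mathcal{U}:=\{|\overline N(\lambda;\lambda_1,\lambda_2)-\overline N_0(\lambda;\lambda_1,\lambda_2)|\ge x-2\}\in\mathcal{F}_{\lambda_1,\lambda_2}$ and to bound $\mathbb{P}(\mathcal{U})$.

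\emph{The angular variable.} On $(\lambda_1,\lambda_2]$ one has $x<\lambda$, so set $\omega(x):=\sqrt{\lambda-x}>0$ and define a continuous $\mathbb{R}$‑valued process $\theta$ by $\theta(\lambda_1)=0$ and $\overline p_{\lambda,\lambda_1}(x)=\omega(x)\cot\theta(x)$ read with the principal branch on each interexcursion interval, pasted so that $\theta$ increases by exactly $\pi$ across each blow‑up. By construction $\theta$ never retreats across $\pi\mathbb{Z}$ and $\overline N(\lambda;\lambda_1,y)=\lfloor\theta(y)/\pi\rfloor$ for all $y\in(\lambda_1,\lambda_2]$. A direct Itô computation gives
\begin{equation*}
 d\theta=\sqrt{\lambda-x}\,dx-\frac{\sin\theta\cos\theta}{2(\lambda-x)}\,dx+\frac{4\sin^{3}\theta\cos\theta}{\beta(\lambda-x)}\,dx-\frac{2}{\sqrt\beta}\cdot\frac{\sin^{2}\theta}{\sqrt{\lambda-x}}\,dB_x ,
\end{equation*}
and an analogous computation for the deterministic flow $\overline q_{\lambda,\lambda_1}$ gives a deterministic angle $\theta_0$ (with $\theta_0(\lambda_1)=0$ and drift $\sqrt{\lambda-x}+O((\lambda-x)^{-1})$) for which $\overline N_0(\lambda;\lambda_1,\lambda_2)=\lfloor\theta_0(\lambda_2)/\pi\rfloor$. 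The well‑posedness of $\theta$ as a semimartingale through the blow‑up times is cleanest if one passes to the non‑exploding pair $(\psi,\psi')$ with $\psi'/\psi=\overline p_{\lambda,\lambda_1}$, of which $\theta=\arctan(\omega\psi/\psi')$ is a smooth function away from the origin (cf.\ \cite{RRV}).

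\emph{Subtraction and concentration.} In $d(\theta-\theta_0)$ the leading $\sqrt{\lambda-x}\,dx$ terms cancel, so
\begin{equation*}
 \theta(\lambda_2)-\theta_0(\lambda_2)=\int_{\lambda_1}^{\lambda_2}\big(b(\theta(x),x)-b_0(\theta_0(x),x)\big)\,dx-\frac{2}{\sqrt\beta}\,M ,
\end{equation*}
where $M:=\int_{\lambda_1}^{\lambda_2}\sin^{2}\theta(x)\,(\lambda-x)^{-1/2}\,dB_x$ and $b,b_0$ are the respective drift corrections, which satisfy $|b|,|b_0|\le C/(\lambda-x)$ pointwise with $C=C(\beta)$; crucially this uses only the pointwise size of $b,b_0$, not any closeness of $\theta$ and $\theta_0$. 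Hence the deterministic integral is at most $C\log\tfrac{\lambda-\lambda_1}{\lambda-\lambda_2}$ in absolute value, and $M$ is a continuous martingale whose quadratic variation is bounded \emph{by a deterministic quantity}, $\langle M\rangle_{\lambda_2}=\int_{\lambda_1}^{\lambda_2}\sin^{4}\theta(x)\,(\lambda-x)^{-1}\,dx\le\log\tfrac{\lambda-\lambda_1}{\lambda-\lambda_2}$. Using $|\lfloor a\rfloor-\lfloor b\rfloor|\le|a-b|+1$ together with the previous step,
\begin{equation*}
 |\overline N(\lambda;\lambda_1,\lambda_2)-\overline N_0(\lambda;\lambda_1,\lambda_2)|\le \frac{2}{\pi\sqrt\beta}|M|+C\log\tfrac{\lambda-\lambda_1}{\lambda-\lambda_2}+1 .
\end{equation*}
The lower bound on $x$ in the statement is exactly what makes $C\log\tfrac{\lambda-\lambda_1}{\lambda-\lambda_2}+3\le x/2$ once the absolute constant $K$ is chosen large depending on $\beta$; so $\mathcal{U}\subseteq\{|M|\ge c\,x\}$, and the standard exponential (Bernstein) inequality for a continuous martingale with deterministically bounded quadratic variation gives $\mathbb{P}(\mathcal{U})\le 2\exp\!\big(-c\,x^{2}/\log\tfrac{\lambda-\lambda_1}{\lambda-\lambda_2}\big)$ with $c=c(\beta)>0$. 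Since $(\lambda-\lambda_1)^{1/2}(\lambda_2-\lambda_1)\log\log(\lambda-\lambda_1)$ is bounded below under the hypotheses, this is in fact stronger than the claimed bound.

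\emph{Where the difficulty lies.} There is no single hard computation; the estimate is routine once the angular representation is set up, and setting it up is the real obstacle. Two points require care: (i) the well‑posedness and continuity of $\theta$ across the blow‑ups of $\overline p_{\lambda,\lambda_1}$ (handled by the $(\psi,\psi')$ detour, and using that the noise coefficient $\propto\sin^{2}\theta$ degenerates on $\pi\mathbb{Z}$, where the drift $\sqrt{\lambda-x}\ge\sqrt{\lambda-\lambda_2}\ge\sqrt K$ is large); and (ii) the bound $C\log\tfrac{\lambda-\lambda_1}{\lambda-\lambda_2}$ on the \emph{difference of deterministic drifts}, which is the logarithm that ends up in the denominator of the exponent and which forces the lower bound on $x$. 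The precise numerical shape of the statement — the prefactor $\exp(C(\lambda-\lambda_1)^{1/2}(\lambda_2-\lambda_1)\log\log(\lambda-\lambda_1))$ and the extra $\log\log$ in the hypothesis on $x$ — is slack for this argument; it is the shape one would naturally obtain by instead splitting $(\lambda_1,\lambda_2]$ into $O(\log\tfrac{\lambda-\lambda_1}{\lambda-\lambda_2})$ blocks on which $\lambda-x$ varies by a bounded factor, running a one‑block version of the above on each, and recombining via the independence of the Brownian increments on disjoint blocks and a Chernoff bound.
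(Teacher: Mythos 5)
Your proof is correct in outline and takes a genuinely different route from the paper's. The paper never passes to a phase function: it compares the individual inter-blow-up times of $\overline p_{\lambda,\lambda_1}$ and $\overline q_{\lambda,\lambda_1}$ using the tail bounds of \cite{Zho}, together with a union bound over a dyadic grid of deviation sizes, which settles the case $\lambda-\lambda_1\le 2(\lambda-\lambda_2)$ (Lemma \ref{Lem3.3s}); the general case is then handled exactly by the block scheme you sketch in your last paragraph --- $O\big(\log\tfrac{\lambda-\lambda_1}{\lambda-\lambda_2}\big)$ geometric windows on which $\lambda-x$ varies by a bounded factor, independence of the Brownian increments across windows, Cauchy--Schwarz to recombine the per-window deviations, and a union bound over per-window levels --- and it is precisely this recombination that produces the prefactor $\exp(C(\lambda-\lambda_1)^{1/2}(\lambda_2-\lambda_1)\log\log(\lambda-\lambda_1))$ and the extra $\log\log$ in the hypothesis on $x$. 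Your single-martingale argument avoids both: the It\^o form of $d\theta$ is correct, the common $\sqrt{\lambda-x}\,dx$ rotation cancels in $\theta-\theta_0$, the pointwise bounds $|b|,|b_0|\le C_\beta/(\lambda-x)$ integrate to $C_\beta\log\tfrac{\lambda-\lambda_1}{\lambda-\lambda_2}$, the quadratic variation bound $\langle M\rangle_{\lambda_2}\le\log\tfrac{\lambda-\lambda_1}{\lambda-\lambda_2}$ is deterministic, and the exponential martingale inequality plus the assumed lower bound on $x$ give a prefactor-free tail that indeed implies the stated estimate (the prefactor exponent is bounded below under the hypotheses, and the factor $2$ can be absorbed); moreover $\mathcal U\in\mathcal F_{\lambda_1,\lambda_2}$ because $\overline p_{\lambda,\lambda_1}$, hence $\theta$, is driven only by the increments of $B$ on $[\lambda_1,\lambda_2]$ from a deterministic initial condition. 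The one piece you must still write out in full is the point you flag yourself: constructing $\theta$ as a global semimartingale through the blow-ups and proving it never crosses $\pi\mathbb Z$ downward, so that $\overline N(\lambda;\lambda_1,\lambda_2)=\lfloor\theta(\lambda_2)/\pi\rfloor$; this is standard (either via the linear $(\psi,\psi')$ It\^o system, which never hits the origin, or by defining $\theta$ directly as the strong solution of its SDE, whose coefficients are Lipschitz on $[\lambda_1,\lambda_2]$ since $\lambda-x\ge K$, and using that the noise coefficient vanishes quadratically at $\pi\mathbb Z$ while the drift there is at least $\sqrt{\lambda-\lambda_2}$, so a scale-function argument excludes downcrossings), but it is the real content of the setup. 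In exchange for its extra length, the paper's blow-up-time technology yields per-gap estimates that are reused elsewhere in Section \ref{Sect.3}, whereas your approach delivers the sharper counting-function tail in one stroke.
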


\begin{proposition}\label{P3.2}
There exist positive constants $K,C,c$ that only depend on $\beta$, such that the following holds. For any $\lambda,\lambda_1,x\in\mathbb{R}$ and any $\lambda_2\in \mathbb{R}\cup \{\infty\}$ that satisfy $\lambda_1-\lambda\geq K$, $\lambda_2\geq \lambda_1$, and $x\geq 2$, there exists an event $\mathcal{U}\in \mathcal{F}_{\lambda_1,\lambda_2}$, such that 
\begin{equation}
\{N(\lambda;\lambda_1,\lambda_2)\geq x\}\subseteq \mathcal{U},
\end{equation}
\begin{equation}
  \mathbb{P}(N(\lambda;\lambda_1,\lambda_2)\geq x)\leq \mathbb{P}(\mathcal{U})\leq C\exp(-c(\lambda_1-\lambda)^{3\slash 2} x).
\end{equation}
\end{proposition}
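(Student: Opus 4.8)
The plan is to control the number of blow-ups of $p_\lambda$ on the interval $(\lambda_1,\lambda_2]$ in the deeply supercritical regime $\lambda_1 - \lambda \geq K$, where blow-ups are rare and each one is exponentially costly. First I would reduce to the case $\lambda_2 = \infty$: since adding more of the interval can only increase the count, it suffices to bound $\mathbb{P}(N(\lambda;\lambda_1,\infty) \geq x)$, and the relevant $\sigma$-algebra is $\mathcal{F}_{\lambda_1,\infty}$ in either case. By the strong Markov property, once we condition on $p_\lambda(\lambda_1)$ (or rather, since $p_\lambda$ may already have blown up, on the state at time $\lambda_1$), the future evolution is governed by the same SDE started from that state, and is measurable with respect to $\mathcal{F}_{\lambda_1,\infty}$; this lets me define the event $\mathcal{U}$ in terms of the post-$\lambda_1$ behavior.

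The key step is a single-blow-up estimate: started from any initial condition at time $\lambda_1$ with $\lambda_1 - \lambda \geq K$, the probability that $p_\lambda$ blows up at all on $[\lambda_1,\infty)$ is at most $C\exp(-c(\lambda_1-\lambda)^{3/2})$. This should follow from Corollary \ref{tail} after a shift. Indeed, $p_\lambda$ restarted at $+\infty$ from time $\lambda_1$ has, by the time-shift $x \mapsto x - \lambda_1$, the same law as $p_{\lambda - \lambda_1}$ run from time $0$ with a fresh Brownian motion (using stationarity of Brownian increments); since $\lambda - \lambda_1 = -(\lambda_1 - \lambda) \leq -K$, Corollary \ref{tail} with $t = \lambda_1 - \lambda$ gives exactly $\mathbb{P}(\text{blow-up}) \leq C\exp(-\frac{2}{3}\beta(\lambda_1-\lambda)^{3/2})$. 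The point is that each blow-up, once it occurs, is followed by a restart at $+\infty$ whose subsequent blow-up probability is again at most $C\exp(-c(\lambda_1-\lambda)^{3/2})$ --- the drift $x - \lambda - p^2$ only becomes more negative (more confining away from blow-up) as $x$ increases past $\lambda_1$, so the bound from Corollary \ref{tail} dominates uniformly over all restart times $\geq \lambda_1$. Then iterating: the event $\{N(\lambda;\lambda_1,\infty) \geq x\}$ means there are at least $\lceil x \rceil$ successive blow-ups, and by the strong Markov property applied at each restart time together with the uniform single-blow-up bound, $\mathbb{P}(N(\lambda;\lambda_1,\infty) \geq x) \leq (C\exp(-c(\lambda_1-\lambda)^{3/2}))^{\lceil x \rceil} \leq C'\exp(-c'(\lambda_1-\lambda)^{3/2}x)$ for $x \geq 2$, after absorbing constants. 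I take $\mathcal{U} = \{N(\lambda;\lambda_1,\lambda_2) \geq x\}$ itself, which lies in $\mathcal{F}_{\lambda_1,\lambda_2}$ since the post-$\lambda_1$ evolution and its blow-ups depend only on Brownian increments after time $\lambda_1$.

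The main obstacle I anticipate is making the iteration fully rigorous: one must verify that the strong Markov property applies at the (random) blow-up/restart times, that these times are stopping times with respect to $(\mathcal{F}_{\lambda_1,t})_{t \geq \lambda_1}$, and --- most delicately --- that the single-blow-up bound $C\exp(-c(\lambda_1-\lambda)^{3/2})$ holds \emph{uniformly over the starting state and the restart time}, not just for a restart exactly at time $\lambda_1$ from $+\infty$. For a restart at time $s > \lambda_1$, the diffusion $p_\lambda$ from time $s$ compares (by a coupling/monotonicity argument in the starting point and in the shift, since larger $x$ makes the drift more negative) to $p_{\lambda - \lambda_1}$ from time $0$; one needs monotonicity of the SDE solution in the initial condition (standard, from the comparison principle for one-dimensional SDEs with the same noise) plus the observation that starting from $+\infty$ is the "worst case" for producing a blow-up, as in \cite{RRV}. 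Packaging these comparisons cleanly, and handling the degenerate possibility that $\lambda_2 = \lambda_1$ (where the count is zero and the bound is trivial), is where the care goes; the probabilistic heart is just Corollary \ref{tail} raised to the power $\lceil x \rceil$.
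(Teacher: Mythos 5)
Your core mechanism (Corollary \ref{tail} plus the strong Markov property at successive restart times, iterated to get geometric decay in $x$) is the same as the paper's, but two of your supporting claims are wrong as stated, and they are exactly the points the hypothesis $x\geq 2$ and Definition \ref{Def3.1} exist to handle. First, the ``single-blow-up estimate started from any initial condition at time $\lambda_1$'' is false, and your monotonicity is reversed: by the comparison principle a \emph{lower} starting value blows up \emph{sooner}, so starting from $+\infty$ is the most favourable case, not the worst. If $p_\lambda(\lambda_1)$ happens to be large and negative (which depends on the pre-$\lambda_1$ noise and cannot be excluded), the probability of a blow-up shortly after $\lambda_1$ is close to $1$, not $C\exp(-c(\lambda_1-\lambda)^{3/2})$; Corollary \ref{tail} only controls diffusions started at $+\infty$. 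Second, and for the same reason, your choice $\mathcal{U}=\{N(\lambda;\lambda_1,\lambda_2)\geq x\}$ is \emph{not} in $\mathcal{F}_{\lambda_1,\lambda_2}$: the count $N(\lambda;\lambda_1,\lambda_2)$ depends on the entrance value $p_\lambda(\lambda_1)$, hence on $\sigma(B_s,s\leq\lambda_1)$. Your preliminary reduction to $\lambda_2=\infty$ has the same defect, since the statement demands measurability with respect to $\mathcal{F}_{\lambda_1,\lambda_2}$, not $\mathcal{F}_{\lambda_1,\infty}$.

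The repair is the paper's device: introduce the restarted diffusion $\overline{p}_{\lambda,\lambda_1}$ of Definition \ref{Def3.1}, which begins at $+\infty$ at time $\lambda_1$ and therefore has blow-up count $\overline{N}(\lambda;\lambda_1,\lambda_2)$ determined by the increments $(B_s-B_{\lambda_1})_{s\geq\lambda_1}$, hence $\mathcal{F}_{\lambda_1,\lambda_2}$-measurable. The pathwise interlacing of $p_\lambda$ and $\overline{p}_{\lambda,\lambda_1}$ gives $|N(\lambda;\lambda_1,\lambda_2)-\overline{N}(\lambda;\lambda_1,\lambda_2)|\leq 1$, so you may take $\mathcal{U}=\{\overline{N}(\lambda;\lambda_1,\lambda_2)\geq x-1\}$; this is where $x\geq 2$ is used, conceding the one uncontrollable (first) blow-up rather than trying to bound it uniformly over initial conditions. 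From there your iteration goes through verbatim: each blow-up time $\tau_j$ of $\overline{p}_{\lambda,\lambda_1}$ is a stopping time, the restart is from $+\infty$ at a time $\geq\lambda_1$, so Corollary \ref{tail} with $t=\tau_j-\lambda\geq\lambda_1-\lambda\geq K$ and the strong Markov property give $\mathbbm{1}_{\tau_j<\infty}\mathbb{P}(\tau_{j+1}<\infty\mid\mathcal{F}_{\tau_j})\leq C\exp(-c(\lambda_1-\lambda)^{3/2})$, and telescoping over $\lceil x/2\rceil$ blow-ups yields $C^{\lceil x/2\rceil}\exp(-c(\lambda_1-\lambda)^{3/2}\lceil x/2\rceil)\leq C\exp(-c(\lambda_1-\lambda)^{3/2}x)$ once $K$ is taken large enough to absorb the constant.
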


\begin{proposition}\label{P3.3}
For any $L\geq 1$, there exist positive constants $K\geq 10, C, c$ that only depend on $\beta$ and $L$, such that the following holds. For any $\lambda,\lambda_1,\lambda_2,x\in\mathbb{R}$ that satisfy
\begin{equation*}
 0\leq \lambda_1<\lambda<\lambda_2, \quad \lambda-\lambda_1\geq K, \quad \lambda_2-\lambda\leq L(\lambda-\lambda_1),
\end{equation*}
\begin{equation*}
x>0,\quad x\geq K(\log(2+(\lambda-\lambda_1)x^{-2\slash 3}))^2,
\end{equation*}
there exists an event $\mathcal{U}\in\mathcal{F}_{\lambda_1,\lambda_2}$, such that
\begin{equation}
\{|N(\lambda;\lambda_1,\lambda_2)-N_0(\lambda;\lambda_1,\lambda_2)|\geq x\}\subseteq \mathcal{U},
\end{equation}
\begin{eqnarray}
&& \mathbb{P}(|N(\lambda;\lambda_1,\lambda_2)-N_0(\lambda;\lambda_1,\lambda_2)|\geq x)\leq \mathbb{P}(\mathcal{U})\nonumber\\
&\leq& \exp(C(\lambda-\lambda_1)^{3\slash 2} \log\log(\lambda-\lambda_1))\nonumber\\
&&\times \exp(-cx^2\slash\log(2+(\lambda-\lambda_1)x^{-2\slash 3})).
\end{eqnarray}
\end{proposition}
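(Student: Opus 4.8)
The plan is to pass to an $\mathcal F_{\lambda_1,\lambda_2}$-measurable event, split $(\lambda_1,\lambda_2]$ at the turning point $\lambda$, control the relative blow-up count on the oscillatory part $(\lambda_1,\lambda]$ by a Pr\"ufer-type phase analysis, and treat the remaining part $(\lambda,\lambda_2]$ by estimates in the spirit of Proposition \ref{P3.2}. As a first step, since $p_\lambda$ depends on $B$ on all of $[0,\lambda_2]$, I would replace $p_\lambda$ and $q_\lambda$ by $\overline p_{\lambda,\lambda_1}$ and $\overline q_{\lambda,\lambda_1}$ from Definition \ref{Def3.1}, which restart at $+\infty$ at $x=\lambda_1$ and depend only on the increments of $B$ on $[\lambda_1,\lambda_2]$. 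Passing to the associated second-order differential equations, a standard Sturm-type interlacing of solutions with different boundary data at $\lambda_1$ shows that the number of blow-ups in $(\lambda_1,\lambda_2]$ changes by at most one; hence $|N(\lambda;\lambda_1,\lambda_2)-\overline N(\lambda;\lambda_1,\lambda_2)|\le 1$ and, likewise, $|N_0(\lambda;\lambda_1,\lambda_2)-\overline N_0(\lambda;\lambda_1,\lambda_2)|\le 1$. Thus the event in question is contained in $\mathcal U:=\{|\overline N(\lambda;\lambda_1,\lambda_2)-\overline N_0(\lambda;\lambda_1,\lambda_2)|\ge x-2\}\in\mathcal F_{\lambda_1,\lambda_2}$, and it remains to bound $\mathbb P(\mathcal U)$.

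Next, set $\varepsilon\asymp x^{2/3}$ and split $(\lambda_1,\lambda_2]=(\lambda_1,\lambda-\varepsilon]\cup(\lambda-\varepsilon,\lambda_2]$ (if $\varepsilon\ge\lambda-\lambda_1$ the first piece is empty, $\log(2+(\lambda-\lambda_1)x^{-2/3})\asymp 1$ and $x\gtrsim(\lambda-\lambda_1)^{3/2}$, and the estimate reduces to the analysis of the second piece below). On $(\lambda_1,\lambda-\varepsilon]$, where $x-\lambda<0$, I would introduce a continuous phase $\theta(x)$ through a substitution of the form $\overline p_{\lambda,\lambda_1}(x)=\sqrt{\lambda-x}\,\cot\theta(x)$, normalised so that $\theta$ increases by $\pi$ at each blow-up, so that $\overline N(\lambda;\lambda_1,\lambda-\varepsilon)=\pi^{-1}(\theta(\lambda-\varepsilon)-\theta(\lambda_1))+O(1)$, and let $\theta_0$ be the analogous noiseless phase attached to $\overline q_{\lambda,\lambda_1}$. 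An It\^o computation gives $d\theta=\sqrt{\lambda-x}\,dx+g(x,\theta)\,dx+\tfrac2{\sqrt\beta}\sigma(x,\theta)\,dB_x$, where the leading drift $\sqrt{\lambda-x}$ is exactly the phase velocity of $\theta_0$, and $g,\sigma$ are explicit trigonometric expressions with $|g(x,\theta)|\le C(\lambda-x)^{-1}$ and $|\sigma(x,\theta)|\le C(\lambda-x)^{-1/2}$. Subtracting the equation for $\theta_0$ and bounding $|g(x,\theta)-g(x,\theta_0)|$ by the triangle inequality, which avoids any Gronwall-type loss precisely because the cutoff keeps $\lambda-x\ge\varepsilon$, yields $|\theta(\lambda-\varepsilon)-\theta_0(\lambda-\varepsilon)|\le C\int_{\lambda_1}^{\lambda-\varepsilon}\frac{dx}{\lambda-x}+\sup|M|$, where $M$ is a continuous martingale with $\langle M\rangle\le\tfrac C\beta\int_{\lambda_1}^{\lambda-\varepsilon}\frac{dx}{\lambda-x}$. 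Since $\int_{\lambda_1}^{\lambda-\varepsilon}\frac{dx}{\lambda-x}=\log\frac{\lambda-\lambda_1}{\varepsilon}\asymp\log(2+(\lambda-\lambda_1)x^{-2/3})$ and the hypothesis forces $x$ to dominate this quantity, the standard exponential inequality for a martingale with a deterministic bound on its quadratic variation gives $\mathbb P(|\overline N(\lambda;\lambda_1,\lambda-\varepsilon)-\overline N_0(\lambda;\lambda_1,\lambda-\varepsilon)|\ge x/2)\le C\exp(-cx^2/\log(2+(\lambda-\lambda_1)x^{-2/3}))$, up to the prefactor $\exp(C(\lambda-\lambda_1)^{3/2}\log\log(\lambda-\lambda_1))$ that enters from a union bound over the $\asymp(\lambda-\lambda_1)^{3/2}$ oscillations needed to pass from control of $\theta-\theta_0$ to control of the blow-up count uniformly, and which is harmless once $x$ is at least of order $(\lambda-\lambda_1)^{3/2}$.

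For the remaining piece $(\lambda-\varepsilon,\lambda_2]$, a bounded-depth neighbourhood of $\lambda$ together with the classically forbidden region $x>\lambda$, of total length at most $L(\lambda-\lambda_1)+\varepsilon$, the deterministic count $\overline N_0$ there is only $O(\varepsilon^{3/2})=O(x)$, so it cannot simply be discarded; instead I would couple $\overline p_{\lambda,\lambda_1}$ and $\overline q_{\lambda,\lambda_1}$ through the same $B$ and argue that, away from the $O(\varepsilon^{3/2})$ blow-ups of $\overline q_{\lambda,\lambda_1}$, the accumulated noise is too small to create or destroy a crossing, while on $\{x>\lambda\}$ the drift pulls $\overline p_{\lambda,\lambda_1}$ onto the stable branch $\sqrt{x-\lambda}$ and any blow-up there requires a large noise excursion, controlled as in the proof of Proposition \ref{P3.2}; the hypothesis $\lambda_2-\lambda\le L(\lambda-\lambda_1)$ is what keeps the length of this region, and hence its contribution, under control. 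Combining this with the previous step and absorbing constants gives the proposition.

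I expect the turning point to be the main obstacle. The phase SDE is singular there, with $\sigma\sim(\lambda-x)^{-1/2}$ and $g\sim(\lambda-x)^{-1}$ failing to be square-integrable and integrable respectively at $x=\lambda$, so the cutoff $\varepsilon\asymp x^{2/3}$ must be tuned so that the logarithmic estimates on the oscillatory side and the crude bound $\overline N_0=O(\varepsilon^{3/2})=O(x)$ on the turning-point side are simultaneously strong enough; carrying the phase substitution consistently through the restarts of $\overline p_{\lambda,\lambda_1}$ at its blow-ups, and controlling the dyadic shells needed to approach $x=\lambda$, is the technical heart of the argument, and the hypothesis $x\ge K(\log(2+(\lambda-\lambda_1)x^{-2/3}))^2$ is calibrated precisely so that all of these estimates fit together.
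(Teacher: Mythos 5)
Your opening reduction (restarting at $\lambda_1$ via Definition \ref{Def3.1}, using monotonicity to get $|N-\overline N|\le 1$, $|N_0-\overline N_0|\le 1$, and taking $\mathcal U=\{|\overline N(\lambda;\lambda_1,\lambda_2)-\overline N_0(\lambda;\lambda_1,\lambda_2)|\ge x-2\}$) is exactly the paper's, and so is the idea of cutting out a window of half-width $\asymp x^{2/3}$ around the turning point. For the oscillatory region your route is genuinely different: the paper simply applies Proposition \ref{P3.1} on $(\lambda_1,\lambda_1')$ with $\lambda_1'=\lambda-\tfrac1{50}x^{2/3}L^{-4/3}$ (that proposition being proved by coupling successive blow-up times and discretizing over scales, which is where the $\exp(C(\lambda-\lambda_1)^{3/2}\log\log(\lambda-\lambda_1))$ prefactor originates), whereas your Pr\"ufer phase plus an exponential martingale inequality with the deterministic quadratic-variation bound $\tfrac C\beta\log\frac{\lambda-\lambda_1}{\varepsilon}$ would, if carried out, give the exponent $x^2/\log(2+(\lambda-\lambda_1)x^{-2/3})$ directly; in fact no union bound over the $\asymp(\lambda-\lambda_1)^{3/2}$ oscillations is needed, since both counts are within $O(1)$ of $\theta/\pi$ and $\theta_0/\pi$ at the single endpoint $\lambda-\varepsilon$. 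That half of your plan is a legitimate, arguably cleaner, alternative, though only sketched.

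The genuine gap is the window piece $(\lambda-\varepsilon,\lambda_2]$, which you yourself identify as the crux but do not actually resolve. The mechanism you propose there — couple $\overline p_{\lambda,\lambda_1}$ and $\overline q_{\lambda,\lambda_1}$ through the same $B$ and argue that away from the blow-ups of $\overline q$ ``the accumulated noise is too small to create or destroy a crossing'', and that any blow-up in $\{x>\lambda\}$ ``requires a large noise excursion'' — is not valid. Near the turning point the noise term in your own phase equation is of the same order as the phase velocity (at $\lambda-x\asymp 1$ one has $\sigma\asymp(\lambda-x)^{-1/2}$ against drift $\sqrt{\lambda-x}$), so crossings are created and destroyed with probability that is not small, and a blow-up at distance $t=O(1)$ above $\lambda$ costs only about $\exp(-\tfrac23\beta t^{3/2})$, which is likewise not small; pathwise tracking of individual crossings cannot work. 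What is rare is an \emph{excess of order $x$}, and ruling that out requires a counting argument that your sketch does not contain. The paper's mechanism is: choose the window half-width with a small constant, $\tfrac1{50}L^{-4/3}x^{2/3}$, so that $x\ge 300L^2(\lambda-\lambda_1')^{3/2}$ and hence $\overline N_0$ on the window is at most $\pi^{-1}(\lambda-\lambda_1')^{3/2}+1\ll x/2$; a deviation of $x/2$ there then forces at least order $x$ blow-ups, hence at least $x/3$ gaps of length at most $8(\lambda_2'-\lambda_1')/x$, and each such anomalously short gap costs $\exp(-cL^{-2}(\lambda-\lambda_1')^{-3/2}x^2)=\exp(-cx)$ by Lemma \ref{Lem3.3} and the strong Markov property, so a union bound gives $\exp(-cx^2)$. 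Beyond the window one applies Proposition \ref{P3.2} on $(\lambda_2',\lambda_2)$, which is legitimate precisely because $\lambda_2'-\lambda\asymp x^{2/3}\ge K$; it cannot be invoked from $\lambda-\varepsilon$ or from $\lambda$ as your two-piece decomposition would require. Relatedly, the hypothesis $\lambda_2-\lambda\le L(\lambda-\lambda_1)$ is not there to bound the length of the forbidden region (Proposition \ref{P3.2} even allows $\lambda_2=\infty$); it is used to compare $\lambda_2-\lambda_1$ with $\lambda-\lambda_1$ so that the short-gap threshold is $\lesssim(\lambda-\lambda_1)/x$ and Lemma \ref{Lem3.3} applies with $s\ge 1$. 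Until the window (and, with it, your boundary case $\varepsilon\ge\lambda-\lambda_1$, which you route entirely through this step) is handled by such a quantitative counting argument, the proof is incomplete.
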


\begin{proposition}\label{P3.4}
There exist positive constants $K\geq 10,C,c$ that only depend on $\beta$, such that for any $\lambda,x\in\mathbb{R}$ that satisfy $\lambda\geq 0$ and $x\geq K\max\{1,\lambda^{3\slash 2}\}$, we have
\begin{equation}
   \mathbb{P}(N(\lambda)\geq x)\leq C\exp(-c x^2).
\end{equation}
\end{proposition}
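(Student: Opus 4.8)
\textbf{Proof proposal for Proposition \ref{P3.4}.}

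The plan is to deduce the bound for $N(\lambda)$ on the full half-line $[0,\infty)$ by splitting the interval into a bounded initial piece near the origin and a ``far'' piece on which the drift of the diffusion $p_\lambda$ is strongly confining, and then invoking the already-established Propositions \ref{P3.2} and \ref{P3.3} together with Corollary \ref{tail}. Concretely, using Proposition \ref{diffu} I would write $N(\lambda)$ as the number of blow-ups of $p_\lambda(x)$ in $[0,\infty)$, and for a threshold $M=M(\lambda)$ to be chosen (of order $\max\{1,\lambda\}$ up to a large constant), decompose
\begin{equation*}
N(\lambda)=N(\lambda;0,M)+N(\lambda;M,\infty),
\end{equation*}
where the counts are taken in $(0,M]$ and $(M,\infty]$ respectively (and the $x=0$ boundary contributes at most one blow-up, absorbed into constants).

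For the far piece $N(\lambda;M,\infty)$: once $x\geq M$ with $M$ a large multiple of $\max\{1,\lambda\}$, the diffusion started afresh at $+\infty$ at time $M$ behaves like $p_{-t}$ with an effective shift $t$ of order $M-\lambda\gtrsim \max\{1,\lambda\}$, so Proposition \ref{P3.2} (applied with $\lambda_1=M$, $\lambda_2=\infty$) gives $\mathbb{P}(N(\lambda;M,\infty)\geq y)\leq C\exp(-c(M-\lambda)^{3/2}y)\leq C\exp(-c' M^{3/2} y)$ for $y\geq 2$. Since $x\geq K\max\{1,\lambda^{3/2}\}\gtrsim M^{3/2}$ in the regime of interest, taking $y$ to be a suitable fraction of $x/M^{3/2}\vee 2$ already yields a bound like $C\exp(-cx)$, which for $x$ large is dominated by $C\exp(-cx^2)$; alternatively one can just use Corollary \ref{tail} more crudely. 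For the near piece $N(\lambda;0,M)$: here I would apply Proposition \ref{P3.3} with $\lambda_1$ a small positive constant, $\lambda$ as given, and $\lambda_2=M$, after checking the hypotheses $\lambda-\lambda_1\geq K$ and $\lambda_2-\lambda\leq L(\lambda-\lambda_1)$ hold (the latter forces the choice $M\leq L\lambda + O(1)$, consistent with $M\asymp\max\{1,\lambda\}$ when $\lambda$ is large; for $\lambda$ bounded one instead uses Proposition \ref{P3.1} or a direct comparison on a fixed bounded interval). Proposition \ref{P3.3} controls $|N(\lambda;0,M)-N_0(\lambda;0,M)|$, and $N_0(\lambda;0,M)$ is deterministic and bounded by $O(M^{3/2})=O(\lambda^{3/2})$ via the Weyl-type asymptotics behind Proposition \ref{AiryOperator} (or directly from $q_\lambda$), hence $N_0(\lambda;0,M)\leq x/2$ when $x\geq K\max\{1,\lambda^{3/2}\}$ with $K$ large; consequently $\{N(\lambda;0,M)\geq x/2\}\subseteq\{|N(\lambda;0,M)-N_0(\lambda;0,M)|\geq x/4\}$ up to constants, and Proposition \ref{P3.3} bounds the probability of the latter by $\exp(C\lambda^{3/2}\log\log\lambda)\exp(-cx^2/\log(2+\lambda x^{-2/3}))$.

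Putting the two pieces together via a union bound, $\{N(\lambda)\geq x\}\subseteq\{N(\lambda;0,M)\geq x/2\}\cup\{N(\lambda;M,\infty)\geq x/2\}$ (again modulo the harmless boundary term), and
\begin{equation*}
\mathbb{P}(N(\lambda)\geq x)\leq \exp(C\lambda^{3/2}\log\log\lambda)\exp\!\Big(-\frac{cx^2}{\log(2+\lambda x^{-2/3})}\Big)+C\exp(-cx).
\end{equation*}
The point is that the prefactor $\exp(C\lambda^{3/2}\log\log\lambda)$ is absorbed because $x\geq K\lambda^{3/2}$ makes $cx^2/\log(2+\lambda x^{-2/3})$ beat it for $K$ large: indeed $\lambda x^{-2/3}\leq K^{-2/3}$ is bounded, so the logarithm in the denominator is $O(1)$, giving a clean $\exp(-c x^2)$ after shrinking $c$. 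The main obstacle I anticipate is the bookkeeping at the interface of the two regimes $\lambda$ bounded versus $\lambda$ large: for bounded $\lambda$ one cannot take $M\asymp\lambda$ (it degenerates), so one needs a separate fixed bounded interval argument — either a direct estimate on the number of blow-ups of $p_\lambda$ on a compact interval using Corollary \ref{tail}-type inputs, or Proposition \ref{P3.1} with fixed parameters — and one must verify that the constants can be chosen uniformly in $\lambda\in[0,\lambda_*]$ for a fixed threshold $\lambda_*$. Matching the denominator $\log(2+(\lambda-\lambda_1)x^{-2/3})$ from Proposition \ref{P3.3} against the requirement $x\geq K\max\{1,\lambda^{3/2}\}$ is the key quantitative check that makes the prefactor disappear.
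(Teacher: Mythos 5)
Your decomposition does not deliver the claimed $\exp(-cx^2)$, and the failure is in the far piece. With your cutoff $M\asymp\max\{1,\lambda\}$, Proposition \ref{P3.2} applied on $(M,\infty)$ gives only
$\mathbb{P}(N(\lambda;M,\infty)\geq x/2)\leq C\exp(-c(M-\lambda)^{3/2}x)$, and since $(M-\lambda)^{3/2}\asymp\max\{1,\lambda^{3/2}\}$ can be far smaller than $x$ in the regime of the proposition (take $\lambda$ bounded and $x\to\infty$), this is merely $C\exp(-cx)$. Your sentence that such a bound ``for $x$ large is dominated by $C\exp(-cx^2)$'' reverses the inequality: $\exp(-cx)\geq\exp(-cx^2)$, so an $\exp(-cx)$ upper bound does not imply the target. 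To extract $x^2$ from Proposition \ref{P3.2} one must have $(\lambda_1-\lambda)^{3/2}\gtrsim x$, i.e.\ a cutoff at height $\gtrsim\lambda+x^{2/3}$; this is exactly the paper's choice $\lambda_1=x^{2/3}/32$ (admissible because $x\geq K\lambda^{3/2}$ gives $x^{2/3}/32-\lambda\geq x^{2/3}/64$). But that cutoff is incompatible with your near-piece tool: Proposition \ref{P3.3} requires $\lambda_2-\lambda\leq L(\lambda-\lambda_1)$ for a fixed $L$, which fails when $\lambda_2\asymp x^{2/3}\gg\lambda$. So in the regime $x\gg\lambda^{3/2}$ there is no single cutoff for which both of your cited propositions apply, and the plan cannot be repaired just by re-tuning $M$.

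The bounded-$\lambda$ branch has the same defect: the fallbacks you name (Corollary \ref{tail}-type inputs, or Proposition \ref{P3.1}, whose hypothesis $\lambda-\lambda_2\geq K$ is unavailable here) price each blow-up at a constant, so iterating them over $x/2$ blow-ups again yields only $\exp(-cx)$. The missing ingredient is a clustering estimate on the near interval, which is how the paper actually argues: if $N(\lambda;0,x^{2/3}/32)\geq x/2$, a pigeonhole argument forces at least $x/4$ consecutive blow-up gaps of length $\leq 1/(4x^{1/3})$, and Lemma \ref{Lem3.3} (with $a=x^{2/3}$, $s=\pi$, using $\lambda\leq x^{2/3}$) bounds each such short gap by $C\exp(-cx)$ conditionally; the strong Markov property and a union bound over the $\leq 2^{\lceil x/2\rceil}$ possible gap positions then give $C\exp(-cx^2)$, and the far piece is handled by Proposition \ref{P3.2} at the $x$-dependent cutoff as above. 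Without such a short-gap argument your proposal proves at best $\mathbb{P}(N(\lambda)\geq x)\leq C\exp(-cx)$, which is strictly weaker than Proposition \ref{P3.4}. (Your use of Proposition \ref{P3.3} for the near piece when $\lambda$ is large, with the prefactor absorbed since $\lambda x^{-2/3}\leq K^{-2/3}$, is fine in itself, but it becomes moot once the cutoff must be taken of order $x^{2/3}$.)
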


\begin{proposition}\label{P3.5}
There exist positive constants $K,C,c$ that only depend on $\beta$, such that for any $\lambda,x\in\mathbb{R}$ that satisfy $\lambda\leq -K$ and $x>0$, we have
\begin{equation}
  \mathbb{P}(N(\lambda)\geq x)\leq C\exp(-c(-\lambda)^{3\slash 2}x).
\end{equation} 
\end{proposition}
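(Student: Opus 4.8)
The plan is to exploit the regenerative structure of the diffusion $p_\lambda$ at its successive blow-up times, using the single-blow-up estimate of Corollary \ref{tail} as the basic building block. By Proposition \ref{diffu}, $N(\lambda)$ equals the number of blow-ups of $p_\lambda(x)$ in $[0,\infty)$. Let $0<T_1<T_2<\cdots$ denote these blow-up times; each is a stopping time for the Brownian filtration (in the ``time'' variable $x$), the convention being that $p_\lambda$ restarts from $+\infty$ immediately after each $T_j$, so that $\{N(\lambda)\geq m\}=\{T_m<\infty\}$ for every $m\in\mathbb{N}^{*}$. I will also use the diffusions $\overline p_{\lambda,s}$ from Definition \ref{Def3.1}.

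The argument rests on two facts. First, for any $s\geq 0$ the time-shift $y\mapsto \overline p_{\lambda,s}(y+s)$, $y\geq 0$, satisfies the defining SDE of $p_{\lambda-s}$ driven by the standard Brownian motion $y\mapsto B_{y+s}-B_s$ (the drift becomes $y-(\lambda-s)-(\cdot)^2$). Hence $\overline p_{\lambda,s}$ blows up in finite time with the same probability that $p_{\lambda-s}$ does, and since $-(\lambda-s)=(-\lambda)+s\geq -\lambda>0$, Corollary \ref{tail} gives
\[
\mathbb{P}\big(\overline p_{\lambda,s}\text{ blows up in finite time}\big)=\mathbb{P}\big(p_{\lambda-s}\text{ blows up in finite time}\big)\leq C\exp\!\Big(-\tfrac{2}{3}\beta(-\lambda)^{3\slash 2}\Big).
\]
Second, by the strong Markov property of the Riccati diffusion (in the framework of \cite{RRV}), on the event $\{T_j<\infty\}$ the evolution of $p_\lambda$ after $T_j$ is, conditionally on $\mathcal{F}_{T_j}$, distributed as $\overline p_{\lambda,T_j}$ driven by the post-$T_j$ increments of $B$, and $\{T_{j+1}<\infty\}$ is precisely the event that this diffusion blows up in finite time. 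Combining the two facts (and $T_j\geq 0$),
\[
\mathbb{P}(T_{j+1}<\infty\mid\mathcal{F}_{T_j})\,\mathbbm{1}_{\{T_j<\infty\}}\leq C\exp\!\Big(-\tfrac{2}{3}\beta(-\lambda)^{3\slash 2}\Big)\mathbbm{1}_{\{T_j<\infty\}}.
\]
Taking expectations and iterating, starting from the base case $\mathbb{P}(T_1<\infty)=\mathbb{P}(p_\lambda\text{ blows up})\leq C\exp(-\tfrac{2}{3}\beta(-\lambda)^{3\slash 2})$ (Corollary \ref{tail} again), yields for every $m\in\mathbb{N}^{*}$
\[
\mathbb{P}(N(\lambda)\geq m)=\mathbb{P}(T_m<\infty)\leq \Big(C\exp\!\big(-\tfrac{2}{3}\beta(-\lambda)^{3\slash 2}\big)\Big)^{m}.
\]

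It remains to absorb the factor $C^m$ and pass from integers to real $x$. Choose $K>0$ (depending only on $\beta$) so large that $\log C\leq \tfrac13\beta K^{3\slash 2}$; then for $\lambda\leq -K$ the last bound is at most $\exp(-\tfrac13\beta(-\lambda)^{3\slash 2}m)$. For $x\geq 1$, apply this with $m=\lceil x\rceil\geq x$ to get $\mathbb{P}(N(\lambda)\geq x)\leq \exp(-\tfrac13\beta(-\lambda)^{3\slash2}x)$. For $0<x<1$ we have $\{N(\lambda)\geq x\}=\{N(\lambda)\geq 1\}$, so by Corollary \ref{tail} and $x<1$, $\mathbb{P}(N(\lambda)\geq x)\leq C\exp(-\tfrac23\beta(-\lambda)^{3\slash2})\leq C\exp(-\tfrac13\beta(-\lambda)^{3\slash2}x)$. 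Taking $c=\tfrac13\beta$ and replacing $C$ by $\max\{1,C\}$ gives the proposition.

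I expect the only point requiring genuine care to be the rigorous invocation of the strong Markov property at the blow-up times $T_j$ together with the time-shift identity $\overline p_{\lambda,s}(\cdot+s)\stackrel{d}{=}p_{\lambda-s}(\cdot)$: namely, that each $T_j$ is a stopping time, that the process truly restarts from $+\infty$ with future increments independent of $\mathcal{F}_{T_j}$, and that ``blows up in finite time after $T_j$'' is a measurable event. These are part of the construction of $\mathcal{H}_\beta$ and its Riccati diffusion in \cite{RRV}; everything else — the SDE computation under the shift, the geometric iteration, and the constant bookkeeping using $\lambda\leq -K$ — is routine.
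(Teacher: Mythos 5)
Your proposal is correct and follows essentially the same route as the paper: blow-up times of $p_\lambda$ as stopping times, Corollary \ref{tail} applied after each restart via the strong Markov property and the shift $\lambda\mapsto\lambda-\tau_j$ (using $-(\lambda-\tau_j)\geq-\lambda\geq K$), geometric iteration as in (\ref{Eq3.132}), and absorbing the constant $C^{\lceil x\rceil}$ by taking $K$ large. The only cosmetic difference is that you treat $0<x<1$ separately, whereas the paper's bound with $\lceil x\rceil\geq x$ already covers it.
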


The rest of this section is devoted to the proofs of Propositions \ref{P3.1}-\ref{P3.5}. We first give some preliminary lemmas in Section \ref{Sect.3.1}. The proofs of Propositions \ref{P3.1}-\ref{P3.5} are presented in Sections \ref{Sect.3.2}-\ref{Sect.3.6}, respectively.

\subsection{Preliminary lemmas}\label{Sect.3.1}

In this subsection, we state and prove several preliminary lemmas.

\begin{lemma}\label{Lem3.1}
For any $\lambda>0$, let $t_{\lambda}$ be the first blow-up time of the diffusion $p_{\lambda}(x)$. There exist positive constants $C,c$ that only depend on $\beta$, such that for any $s\geq 1$, we have
\begin{equation}\label{Eq3.4}
   \mathbb{P}\Big(t_{\lambda}< \frac{\pi}{2s\sqrt{\lambda}}\Big)\leq C\exp(-c \lambda^{3\slash 2} s^2).
\end{equation}
\end{lemma}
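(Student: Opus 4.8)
The plan is to reduce the estimate to its special case $s=1$ by a scaling/comparison argument, and then to handle the $s=1$ case by a Prüfer (phase) change of variables, which turns the problem into a Gaussian tail bound for a stochastic integral with bounded quadratic variation. The decisive observation is the identity $\frac{\pi}{2s\sqrt\lambda}=\frac{\pi}{2\sqrt{s^2\lambda}}$: raising the parameter from $\lambda$ to $s^2\lambda$ converts the ``$s$'' threshold into the ``$s=1$'' threshold, and the resulting power $(s^2\lambda)^{3/2}=s^3\lambda^{3/2}$ is already stronger than what is claimed.

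For the reduction, set $\mu:=s^2\lambda\ge\lambda$ and recall that $p_\lambda$ and $p_\mu$ are built from the same Brownian motion $B$. The drift of $p_\mu$ is that of $p_\lambda$ shifted down by the constant $\mu-\lambda\ge0$, so the noise cancels in the difference $D:=p_\lambda-p_\mu$: up to the first blow-up of either diffusion, $D$ solves the \emph{linear} ODE $D'=(\mu-\lambda)-(p_\lambda+p_\mu)D$ with $D(0^{+})=0$, and the integrating-factor formula gives $D\ge0$, i.e. $p_\mu\le p_\lambda$. Hence if $p_\lambda\to-\infty$ at time $t_\lambda$, then also $p_\mu\le p_\lambda\to-\infty$ there, so $t_\mu\le t_\lambda$, and therefore
\[
\Big\{t_\lambda<\tfrac{\pi}{2s\sqrt\lambda}\Big\}=\Big\{t_\lambda<\tfrac{\pi}{2\sqrt\mu}\Big\}\subseteq\Big\{t_\mu<\tfrac{\pi}{2\sqrt\mu}\Big\}.
\]
It thus suffices to prove that, for some $C,c>0$ depending only on $\beta$,
\[
\mathbb{P}\Big(t_\nu<\tfrac{\pi}{2\sqrt\nu}\Big)\le C\exp\!\big(-c\,\nu^{3/2}\big)\qquad\text{for every }\nu>0;
\]
applying this with $\nu=\mu=s^2\lambda$ and using $s^3\ge s^2$ then gives the lemma.

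To prove the displayed bound we may assume $\nu\ge\nu_0$ for a large constant $\nu_0=\nu_0(\beta)$, since for $\nu<\nu_0$ the right-hand side is bounded below by a positive constant and it suffices to take $C$ large. Write $T_\nu:=\tfrac{\pi}{2\sqrt\nu}$. On $[0,t_\nu)$ set $\theta(x):=\operatorname{arccot}\big(p_\nu(x)/\sqrt\nu\big)\in(0,\pi)$; then $\theta$ is a continuous semimartingale with $\theta(0^{+})=0$ and $\theta(x)\to\pi$ as $x\uparrow t_\nu$. An Itô computation gives, on $[0,T_\nu]$,
\[
d\theta=\big(\sqrt\nu+r(x,\theta)\big)\,dx-\frac{2}{\sqrt{\beta\nu}}\,\sin^2\theta\,dB_x,
\]
where $r$ gathers two error terms (the $x\sin^2\theta/\sqrt\nu$ correction and an $\nu^{-1}\sin2\theta$ correction), each $O(1/\nu)$ uniformly for $x\in[0,T_\nu]$, so $|r|\le C_0/\nu$ with $C_0=C_0(\beta)$; fix $\nu_0$ so large that $C_0/\nu\le\tfrac1{10}\sqrt\nu$ for $\nu\ge\nu_0$. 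Let $M_x:=\int_0^{x\wedge t_\nu}\frac{2}{\sqrt{\beta\nu}}\sin^2\theta\,dB$, a continuous martingale with $\langle M\rangle_{T_\nu}\le\frac{4}{\beta\nu}T_\nu=\frac{2\pi}{\beta\,\nu^{3/2}}$ deterministically. On the event $\{t_\nu<T_\nu\}$, integrating the phase equation up to $t_\nu$ and using $\theta(t_\nu)=\pi$ and $\sqrt\nu\,t_\nu<\sqrt\nu\,T_\nu=\tfrac{\pi}{2}$ yields $\pi\le\tfrac{11}{10}\sqrt\nu\,t_\nu+|M_{t_\nu}|<\tfrac{11\pi}{20}+|M_{t_\nu}|$, hence $\sup_{x\le T_\nu}|M_x|\ge|M_{t_\nu}|>\tfrac{9\pi}{20}$. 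By the standard exponential-supermartingale estimate (a continuous martingale $M$ with $M_0=0$ and $\langle M\rangle_T\le V$ satisfies $\mathbb{P}(\sup_{x\le T}|M_x|\ge a)\le 2e^{-a^2/(2V)}$),
\[
\mathbb{P}\Big(t_\nu<T_\nu\Big)\le 2\exp\!\Big(-\frac{(9\pi/20)^2}{2\cdot 2\pi/(\beta\nu^{3/2})}\Big)=2\exp\!\big(-c\,\nu^{3/2}\big),
\]
as required (with $C$ enlarged to absorb the range $\nu<\nu_0$).

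I expect the only genuinely delicate point to be the scaling reduction: one must verify that the pathwise comparison $p_\mu\le p_\lambda$, hence $t_\mu\le t_\lambda$, is compatible with the blow-up/restart convention. This is why it is convenient to run the comparison through the difference $D=p_\lambda-p_\mu$, whose evolution is a bona fide linear ODE with nonnegative source, so $D\ge0$ holds automatically up to the first blow-up of $p_\mu$ — which is all we need. The remaining ingredients (the Itô formula for $\theta$, the uniform $O(1/\nu)$ control of $r$ on the short window $[0,T_\nu]$, and the Gaussian tail for a stochastic integral with deterministically bounded quadratic variation) are routine. We note that running the phase estimate directly on $[0,\tfrac{\pi}{2s\sqrt\lambda}]$, without the scaling step, only produces the weaker exponent $c\lambda^{3/2}s$; it is precisely the scaling that upgrades this to $c\lambda^{3/2}s^3\ge c\lambda^{3/2}s^2$.
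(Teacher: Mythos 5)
Your proof is correct, but it follows a genuinely different route from the paper. The paper works directly at the parameter $\lambda$ and at scale $s$: it subtracts the noise, setting $r=p_\lambda-\tfrac{2}{\sqrt\beta}B$, restricts to the event $\sup_{[0,2\pi/\sqrt\lambda]}|B_x|\le s\sqrt{\beta\lambda/8}$ (whose complement already has probability $\le 4\exp(-\beta\lambda^{3/2}s^2/(32\pi))$ by Brownian scaling), and on that event bounds the drift below via AM--GM by $-2\lambda s^2-2r(x)^2$, so that the explicit tangent solution of $w'=-2\lambda s^2-2w^2$ shows no blow-up can occur before $\pi/(2s\sqrt\lambda)$; the entire probability is thus carried by the Gaussian sup event, with no stochastic calculus needed. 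You instead first absorb $s$ into the parameter via the monotone coupling $p_{s^2\lambda}\le p_\lambda$ (this is exactly the monotonicity the paper itself invokes in its Lemma~\ref{Lem3.3}, used in the reverse direction), reducing to the $s=1$ bound at parameter $\nu=s^2\lambda$, and then prove that case by the Pr\"ufer phase representation: the drift of $\theta$ can account for only about $\tfrac{11}{10}\cdot\tfrac{\pi}{2}$ of the required travel to $\pi$ on $[0,\pi/(2\sqrt\nu)]$, so the martingale part must exceed $\tfrac{9\pi}{20}$, and its quadratic variation is deterministically $\le 2\pi/(\beta\nu^{3/2})$, giving the Gaussian tail. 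Your route yields the slightly stronger exponent $c\,s^3\lambda^{3/2}$ and connects to the RRV-style phase analysis, while the paper's argument is more elementary (pathwise ODE comparison plus a Brownian sup bound) and is organized so that its Lemma~\ref{Lem3.3} follows immediately. Two small points you should make explicit if you write this up: the initial condition $D(0^+)=0$ for the difference of two diffusions both started at $+\infty$ (or note that the integrating factor $\exp(-\int_{0^+}^x(p_\lambda+p_\mu))$ kills any contribution from time $0^+$), and the application of It\^o's formula from the entrance point $p=\infty$, which is handled by starting at a small time $\epsilon$ and letting $\epsilon\to0^+$; both are routine and consistent with the conventions the paper already uses.
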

\begin{proof}

Let $r(x):=p_{\lambda}(x)-(2\slash \sqrt{\beta})B_x$. Note that $r(x)$ satisfies the following differential equation:
\begin{equation}
  r'(x) = x-\lambda-\Big(r(x)+\frac{2}{\sqrt{\beta}}B_x\Big)^2,\quad r(0)=\infty.
\end{equation}

Let $\mathcal{C}$ be the event that 
\begin{equation}
  \sup_{x\in [0, 2\pi\slash\sqrt{\lambda}]}|B_x|\leq s\sqrt{\beta\lambda\slash 8}.
\end{equation}
We recall the following standard estimate for Brownian motion: for any $t\geq 0$,
\begin{equation}\label{Eq3.1}
    \mathbb{P}(\sup_{x\in [0,1]}|B_x|>t)\leq  4 e^{-t^2\slash 2}.
\end{equation}
From (\ref{Eq3.1}), we can deduce that
\begin{equation}\label{Eq3.3}
  \mathbb{P}(\mathcal{C}^c)\leq 4\exp\Big(-\frac{\beta \lambda^{3\slash 2} s^2}{32\pi}\Big).
\end{equation}

Now we assume that the event $\mathcal{C}$ holds. By the AM-GM inequality, for any $x\in [0,\min\{2\pi\slash \sqrt{\lambda},t_{\lambda}\}]$, 
\begin{eqnarray*}
&& x-\lambda-\Big(r(x)+\frac{2}{\sqrt{\beta}}B_x\Big)^2\geq -\lambda-2r(x)^2-8\beta^{-1}B_x^2\nonumber\\
&\geq& -\lambda(1+s^2)-2r(x)^2\geq -2\lambda s^2-2r(x)^2.
\end{eqnarray*}
Let $w(x)$ be 
defined by
\begin{equation}\label{Eq3.2}
   w'(x)=-2\lambda s^2-2w(x)^2, \quad w(0)=\infty.
\end{equation}
For $x\in [0, \min\{2\pi\slash\sqrt{\lambda},t_{\lambda}\}]$, before $w(x)$ blows up, $r(x)$ is lower bounded by $w(x)$. Solving (\ref{Eq3.2}), we obtain that
\begin{equation*}
 w(x)=s\sqrt{\lambda}\tan\Big(\frac{\pi}{2}-2s\sqrt{\lambda}x\Big),
\end{equation*}
which blows up at $x=\pi\slash (2s\sqrt{\lambda})<2\pi\slash \sqrt{\lambda}$.
% ,
% which is defined by
% \begin{equation}\label{Eq3.2}
%    w'(x)=-2\lambda s^2-2w(x)^2, \quad w(0)=\infty.
% \end{equation}
Therefore, we have $t_{\lambda}\geq \pi\slash (2s\sqrt{\lambda})$. Combining this with (\ref{Eq3.3}), we obtain (\ref{Eq3.4}) with $C=4, c=\beta\slash (32\pi)$. 

\end{proof}

\begin{lemma}\label{Lem3.3}
Consider any $a,\lambda\in\mathbb{R}$ such that $a>0$ and $\lambda\leq a$. Let $t_{\lambda}$ be the first blow-up time of the diffusion $p_{\lambda}(x)$. Then there exist positive constants $C,c$ that only depend on $\beta$, such that for any $s\geq 1$, we have
\begin{equation}
   \mathbb{P}\Big(t_{\lambda}< \frac{\pi}{2s\sqrt{a}}\Big)\leq C\exp(-c a^{3\slash 2} s^2).
\end{equation}
\end{lemma}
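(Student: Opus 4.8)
The plan is to reduce to Lemma \ref{Lem3.1} by a monotonicity argument in the parameter $\lambda$. Since $\lambda\leq a$ and $a>0$, the drift coefficient in (\ref{DefinitionP}) satisfies $x-\lambda-p^2\geq x-a-p^2$ pointwise; intuitively, making $\lambda$ smaller only pushes the diffusion upward, which can only \emph{delay} the blow-up. So if we let $\tilde p(x)$ denote the diffusion driven by the \emph{same} Brownian motion $B_x$ but with parameter $a$ in place of $\lambda$ — i.e. $d\tilde p(x)=(x-a-\tilde p(x)^2)dx+(2/\sqrt\beta)dB_x$, $\tilde p(0)=\infty$ — then a pathwise comparison shows $p_\lambda(x)\geq \tilde p(x)$ for all $x$ before the first blow-up of $p_\lambda$, and hence the first blow-up time $t_\lambda$ of $p_\lambda$ is at least the first blow-up time $\tilde t_a$ of $\tilde p$. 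The comparison is the standard one for one-dimensional SDEs with a common noise: set $r(x)=p_\lambda(x)-\tilde p(x)$ on the interval before either process blows up; then $r'(x)=(a-\lambda)-(p_\lambda(x)^2-\tilde p(x)^2)=(a-\lambda)-(p_\lambda(x)+\tilde p(x))r(x)$, which is a linear ODE in $r$ with nonnegative forcing $a-\lambda\geq 0$ and initial condition $r\to 0^+$ as $x\to 0$ (both start at $+\infty$, and a short computation with the explicit blow-up-from-$+\infty$ asymptotics shows the difference starts nonnegative), so $r(x)\geq 0$ throughout. Care is needed at the initial singularity and at restarts after blow-ups, but since we only track the \emph{first} blow-up of $p_\lambda$ and compare it to the \emph{first} blow-up of $\tilde p$, we only need the comparison on $[0,t_\lambda\wedge\tilde t_a]$, where both processes are finite and the argument is clean.

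Given the comparison $t_\lambda\geq \tilde t_a$, we conclude
\begin{equation*}
\mathbb{P}\Big(t_\lambda<\frac{\pi}{2s\sqrt a}\Big)\leq \mathbb{P}\Big(\tilde t_a<\frac{\pi}{2s\sqrt a}\Big).
\end{equation*}
Now $\tilde p$ has exactly the law of the diffusion $p_a(x)$ appearing in Lemma \ref{Lem3.1} (same SDE, same initial condition, parameter $a>0$), so its first blow-up time $\tilde t_a$ is distributed as $t_a$ in that lemma. Applying Lemma \ref{Lem3.1} with $\lambda$ there replaced by $a>0$ and the same $s\geq 1$ gives
\begin{equation*}
\mathbb{P}\Big(\tilde t_a<\frac{\pi}{2s\sqrt a}\Big)\leq C\exp(-c\, a^{3/2}s^2),
\end{equation*}
with $C,c$ depending only on $\beta$. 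Chaining the two displays yields the claim.

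The only genuinely delicate point is justifying the pathwise comparison $p_\lambda\geq\tilde p$ near $x=0$, where both diffusions issue from $+\infty$. One clean way to handle this is to start both diffusions at a large finite level $M$ at time $\varepsilon_M$ (choosing $\varepsilon_M\to 0$ as $M\to\infty$ consistently with the $\tan$-type blow-in behavior), run the comparison on $[\varepsilon_M,t_\lambda\wedge\tilde t_a]$ where the linear-ODE argument for $r$ is valid, and then pass to the limit $M\to\infty$; the first blow-up times converge to $t_\lambda$ and $\tilde t_a$ respectively. Alternatively, one can invoke a standard SDE comparison theorem (e.g. as used in \cite{RRV} for exactly these Riccati diffusions) directly. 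I expect this limiting/initial-condition bookkeeping to be the main obstacle; the rest is an immediate invocation of Lemma \ref{Lem3.1}.
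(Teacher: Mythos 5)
Your proposal is correct and follows essentially the same route as the paper: the paper's proof is exactly the pathwise comparison $p_\lambda \geq p_a$ (same Brownian motion, $\lambda \leq a$), giving $t_\lambda \geq t_a$, followed by an application of Lemma \ref{Lem3.1} with parameter $a$. Your additional care about the initial condition at $+\infty$ is a fine (if more detailed) justification of the monotonicity step the paper invokes directly.
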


\begin{proof}
Let $t_{a}$ be the first blow-up time of the diffusion $p_{a}(x)$. As $\lambda\leq a$, $p_{\lambda}(x)$ is lower bounded by $p_a(x)$ (before $p_a(x)$ blows up), hence $t_{\lambda}\geq t_a$. By Lemma \ref{Lem3.1}, for any $s\geq 1$, we have
\begin{equation}
  \mathbb{P}\Big(t_a<\frac{\pi}{2s\sqrt{a}}\Big)\leq C\exp(-c a^{3\slash 2} s^2).
\end{equation}
Hence
\begin{equation}
  \mathbb{P}\Big(t_{\lambda}<\frac{\pi}{2s\sqrt{a}}\Big)\leq C\exp(-c a^{3\slash 2} s^2).
\end{equation}
\end{proof}

\begin{lemma}\label{Lem3.2}
Recall Definition \ref{Def3.1}. There exist positive absolute constants $K,C,c$, such that for any $\lambda,\lambda_1,\lambda_2\geq 0$ that satisfy $\lambda_1<\lambda_2<\lambda$, $\lambda-\lambda_1\leq 2(\lambda-\lambda_2)$, and $\lambda_2-\lambda_1\geq K$, we have
\begin{equation}\label{Eq3.8n}
   c (\lambda-\lambda_1)^{1\slash 2}(\lambda_2-\lambda_1)\leq \overline{N}_0(\lambda;\lambda_1,\lambda_2)\leq C(\lambda-\lambda_1)^{1\slash 2}(\lambda_2-\lambda_1).
\end{equation}
% For any $\lambda,\lambda_1\geq 0$, let $\overline{q}_{\lambda,\lambda_1}(x)$ be the solution to the following differential equation:
% \begin{equation}
%   \overline{q}_{\lambda,\lambda_1}'(x)=x-\lambda-\overline{q}_{\lambda,\lambda_1}(x)^2, \quad \overline{q}_{\lambda,\lambda_1}(\lambda_1)=\infty,
% \end{equation}
% There exist positive absolute constants $K,C,c$, such that the following holds. For any $\lambda,\lambda_1,\lambda_2\geq 0$ such that $\lambda_1<\lambda_2<\lambda$, $\lambda-\lambda_1\leq 2(\lambda-\lambda_2)$, and $\lambda_2-\lambda_1\geq K$, the number of blow-ups of $\overline{q}_{\lambda,\lambda_1}(x)$ in the interval $(\lambda_1,\lambda_2]$, denoted by $\overline{N}_0(\lambda;\lambda_1,\lambda_2)$, satisfies
% \begin{equation}\label{Eq3.8}
%    c (\lambda-\lambda_1)^{1\slash 2}(\lambda_2-\lambda_1)\leq \overline{N}_0(\lambda;\lambda_1,\lambda_2)\leq C(\lambda-\lambda_1)^{1\slash 2}(\lambda_2-\lambda_1).
% \end{equation}
\end{lemma}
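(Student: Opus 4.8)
The plan is to analyze the deterministic Riccati equation $\overline q_{\lambda,\lambda_1}'(x) = x - \lambda - \overline q_{\lambda,\lambda_1}(x)^2$ directly on the interval $[\lambda_1,\lambda_2]$, where throughout we have $x - \lambda < 0$ (since $x \le \lambda_2 < \lambda$), so the forcing term $x-\lambda$ is negative and bounded. Writing $a(x) := \lambda - x > 0$, the equation reads $\overline q' = -a(x) - \overline q^2$, so between consecutive blow-ups $\overline q$ decreases from $+\infty$ to $-\infty$, and the number of blow-ups is controlled by the time it takes $\overline q$ to traverse from $+\infty$ down to $-\infty$. The key comparison is with the autonomous equation $w' = -a - w^2$ with constant $a>0$: its solution from $w(x_0)=+\infty$ blows up after time exactly $\pi/\sqrt a$. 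So the heuristic is that the ``local period'' of $\overline q$ near $x$ is $\pi/\sqrt{\lambda - x}$, and hence
\[
\overline N_0(\lambda;\lambda_1,\lambda_2) \approx \frac{1}{\pi}\int_{\lambda_1}^{\lambda_2} \sqrt{\lambda - x}\,dx = \frac{2}{3\pi}\big((\lambda-\lambda_1)^{3/2} - (\lambda-\lambda_2)^{3/2}\big).
\]
First I would make this precise by the standard monotonicity-in-$a$ comparison: on a subinterval $[u,v] \subseteq [\lambda_1,\lambda_2]$, the coefficient $a(x) = \lambda - x$ satisfies $\lambda - v \le a(x) \le \lambda - u$, so $\overline q$ restricted to $[u,v]$ is sandwiched (in the sense of blow-up counts per unit length) between the autonomous solutions with coefficients $\lambda - v$ and $\lambda - u$, giving $\lfloor (v-u)\sqrt{\lambda-v}/\pi\rfloor - 1 \le \overline N_0(\lambda;u,v) \le (v-u)\sqrt{\lambda-u}/\pi + 1$ or similar crude two-sided bounds.

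Next, to get the claimed bounds $c(\lambda-\lambda_1)^{1/2}(\lambda_2-\lambda_1) \le \overline N_0 \le C(\lambda-\lambda_1)^{1/2}(\lambda_2-\lambda_1)$ — note these are NOT the $3/2$-power integral formula but the cruder product form — I would use the hypotheses. The condition $\lambda - \lambda_1 \le 2(\lambda - \lambda_2)$ means $\lambda - x$ stays within a factor of $2$ of $\lambda - \lambda_1$ for all $x \in [\lambda_1,\lambda_2]$, i.e.\ $(\lambda-\lambda_1)/2 \le \lambda - x \le \lambda - \lambda_1$ throughout. Hence $\sqrt{\lambda - x} \asymp (\lambda-\lambda_1)^{1/2}$ uniformly on the interval, and the heuristic integral $\frac{1}{\pi}\int_{\lambda_1}^{\lambda_2}\sqrt{\lambda-x}\,dx$ is comparable to $(\lambda-\lambda_1)^{1/2}(\lambda_2-\lambda_1)$ up to constants. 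For the rigorous upper bound I would chop $[\lambda_1,\lambda_2]$ into $\lceil (\lambda_2-\lambda_1)\sqrt{\lambda-\lambda_1}/\pi\rceil$-many subintervals each of length $\le \pi/\sqrt{\lambda-\lambda_1} \le \pi/\sqrt{\lambda - x}$; on each such subinterval the autonomous comparison with coefficient $\ge (\lambda-\lambda_1)/2$ shows $\overline q$ can have at most a bounded number (say $\le 2$) of blow-ups, because the autonomous period with that coefficient is $\pi/\sqrt{(\lambda-\lambda_1)/2} = \pi\sqrt2/\sqrt{\lambda-\lambda_1}$, which exceeds the subinterval length. Summing gives the upper bound with $C$ absolute. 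For the lower bound, I would similarly partition into subintervals of length slightly larger than the autonomous period with coefficient $\lambda - \lambda_1$ (the smallest value of $a$), forcing at least one blow-up in each; the requirement $\lambda_2 - \lambda_1 \ge K$ ensures there are at least, say, two such subintervals so the count is genuinely of the stated order rather than being killed by an additive $O(1)$ loss.

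The main obstacle — though it is more bookkeeping than conceptual — is handling the blow-up/restart mechanism carefully: $\overline q$ restarts at $+\infty$ immediately after each blow-up, and the comparison principle for ODEs must be applied on each inter-blow-up excursion separately, keeping track of the fact that a blow-up of the autonomous comparison function forces (or is forced by) a blow-up of $\overline q$ in an appropriately nested way. Concretely, the clean statement I would use is: if $a_- \le a(x) \le a_+$ on $[u,v]$ with $\overline q(u) = +\infty$, then the number of blow-ups of $\overline q$ on $(u,v]$ lies between $\lfloor (v-u)\sqrt{a_-}/\pi \rfloor$ and $\lceil (v-u)\sqrt{a_+}/\pi \rceil$. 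This follows by comparing $\overline q$ on each excursion to the autonomous solutions: once $\overline q$ is below the autonomous-$a_+$ solution it stays below and blows up no later, and once it is above the autonomous-$a_-$ solution it blows up no earlier. I would state and prove this as a sub-lemma, then the rest is arithmetic using $(\lambda-\lambda_1)/2 \le a(x) \le \lambda-\lambda_1$ and $\lambda_2-\lambda_1 \ge K$. No randomness is involved here since $\overline q$ is the deterministic equation, so there is no probabilistic estimate to worry about — this lemma is purely a sine-like-oscillation counting argument for a Riccati ODE with slowly varying coefficient.
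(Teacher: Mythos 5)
Your proposal is correct and essentially the paper's own argument: the paper likewise controls each inter-blow-up gap of $\overline{q}_{\lambda,\lambda_1}$ via $\pi/\sqrt{\lambda-\tau_j}\leq \tau_{j+1}-\tau_j\leq \sqrt{2}\pi/\sqrt{\lambda-\tau_j}$ (citing \cite[Lemma 2.3]{Zho}, which is exactly your autonomous-Riccati comparison sub-lemma in sharper form), uses $\lambda-\lambda_1\leq 2(\lambda-\lambda_2)$ to replace $\lambda-\tau_j$ by $\lambda-\lambda_1$ up to a factor of $2$, sums the gaps, and absorbs the additive $O(1)$ using $(\lambda-\lambda_1)^{1/2}(\lambda_2-\lambda_1)\geq K^{3/2}$. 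One small caveat: in your chopping argument you invoke the coefficient bound $a(x)\geq(\lambda-\lambda_1)/2$ to lower-bound the time between blow-ups, which is the wrong direction (a larger coefficient shortens the period); the correct input for the upper bound on the count is $a(x)\leq\lambda-\lambda_1$, giving gaps $\geq\pi/\sqrt{\lambda-\lambda_1}$ --- but your final sub-lemma states the two-sided comparison with the correct directions, so the argument goes through.
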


\begin{proof}
Let $\tau_0:=\lambda_1$. For any $j\in\mathbb{N}^{*}$, if there exist at least $j$ blow-ups of $\overline{q}_{\lambda,\lambda_1}(x)$ in the interval $(\lambda_1,\infty)$, we let $\tau_j$ be the $j$th blow-up time of $\overline{q}_{\lambda,\lambda_1}(x)$ in $(\lambda_1,\infty)$; otherwise we let $\tau_j:=\infty$. 

Consider any $j\in \{0,1,\cdots,\overline{N}_0(\lambda;\lambda_1,\lambda_2)\}$. We define $r_j(x):=\overline{q}_{\lambda,\lambda_1}(x+\tau_j)$ for any $x\geq 0$. Then $r_j(x)$ is the solution to the following differential equation:
\begin{equation*}
  r_j'(x)=x-(\lambda-\tau_j)-r_j(x)^2, \quad r_j(0)=\infty.
\end{equation*}
As $\lambda_1\leq \tau_j\leq \lambda_2$ and $\lambda-\lambda_1\leq 2(\lambda-\lambda_2)$, we have
\begin{equation}\label{Eq3.7}
  (\lambda-\lambda_1)\slash 2\leq \lambda-\tau_j\leq \lambda-\lambda_1.
\end{equation}
We take $K=2(4\pi)^{2\slash 3}$ and assume that $\lambda_2-\lambda_1\geq K$ in the following. Note that $\lambda-\tau_j\geq (\lambda-\lambda_1)\slash 2\geq (\lambda_2-\lambda_1)\slash 2\geq (4\pi)^{2\slash 3}$. By \cite[Lemma 2.3]{Zho}, we have
\begin{equation}\label{Eq3.6}
    \frac{\pi}{\sqrt{\lambda-\tau_j}} \leq \tau_{j+1}-\tau_j \leq \frac{\pi}{\sqrt{\lambda-\tau_j-2\pi\slash\sqrt{\lambda-\tau_j}}}\leq \frac{\sqrt{2}\pi}{\sqrt{\lambda-\tau_j}}.
\end{equation}
Combining (\ref{Eq3.7}) and (\ref{Eq3.6}), we obtain that
\begin{equation}
   \frac{\pi}{\sqrt{\lambda-\lambda_1}} \leq \tau_{j+1}-\tau_j\leq \frac{2\pi}{\sqrt{\lambda-\lambda_1}}.
\end{equation}

Therefore, noting that $\tau_{\overline{N}_0(\lambda;\lambda_1,\lambda_2)}\leq \lambda_2<\tau_{\overline{N}_0(\lambda;\lambda_1,\lambda_2)+1}$, we have
\begin{equation*}
   \lambda_2-\lambda_1 \geq \tau_{\overline{N}_0(\lambda;\lambda_1,\lambda_2)}-\lambda_1=\sum_{j=0}^{\overline{N}_0(\lambda;\lambda_1,\lambda_2)-1}(\tau_{j+1}-\tau_j)\geq \frac{\pi\overline{N}_0(\lambda;\lambda_1,\lambda_2)}{\sqrt{\lambda-\lambda_1}},
\end{equation*}
\begin{equation*}
 \lambda_2-\lambda_1\leq \tau_{\overline{N}_0(\lambda;\lambda_1,\lambda_2)+1}-\lambda_1\leq \sum_{j=0}^{\overline{N}_0(\lambda;\lambda_1,\lambda_2)}(\tau_{j+1}-\tau_j)\leq\frac{2\pi (\overline{N}_0(\lambda;\lambda_1,\lambda_2)+1)}{\sqrt{\lambda-\lambda_1}}.
\end{equation*}
Hence we have
\begin{equation*}
  (2\pi)^{-1}(\lambda-\lambda_1)^{1\slash 2}(\lambda_2-\lambda_1)-1\leq \overline{N}_0(\lambda;\lambda_1,\lambda_2)\leq  \pi^{-1}(\lambda-\lambda_1)^{1\slash 2}(\lambda_2-\lambda_1).
\end{equation*}
As $(\lambda-\lambda_1)^{1\slash 2}(\lambda_2-\lambda_1)\geq (\lambda_2-\lambda_1)^{3\slash 2}\geq 4\pi$, we have
\begin{equation}
 (4\pi)^{-1}(\lambda-\lambda_1)^{1\slash 2}(\lambda_2-\lambda_1)\leq \overline{N}_0(\lambda;\lambda_1,\lambda_2)\leq  \pi^{-1}(\lambda-\lambda_1)^{1\slash 2}(\lambda_2-\lambda_1).
\end{equation}
Thus we obtain (\ref{Eq3.8n}) by setting $C=\pi^{-1}$ and $c=(4\pi)^{-1}$.

\end{proof}

\subsection{Proof of Proposition \ref{P3.1}}\label{Sect.3.2}

In this subsection, we present the proof of Proposition \ref{P3.1}. We start with the case where $\lambda-\lambda_1\leq 2(\lambda-\lambda_2)$. Namely, we establish the following lemma.

\begin{lemma}\label{Lem3.3s}
There exist positive constants $K\geq 10,C,c$ that only depend on $\beta$, such that the following holds. For any $\lambda,\lambda_1,\lambda_2,x\in\mathbb{R}$ satisfying
\begin{equation*}
0\leq \lambda_1<\lambda_2<\lambda, \quad \min\{\lambda_2-\lambda_1,\lambda-\lambda_2,x\}\geq K, \quad\lambda-\lambda_1\leq 2(\lambda-\lambda_2),
\end{equation*}
there exists an event $\mathcal{U}\in\mathcal{F}_{\lambda_1,\lambda_2}$, such that 
\begin{equation}
 \{|N(\lambda;\lambda_1,\lambda_2)-N_0(\lambda;\lambda_1,\lambda_2)|\geq x\}\subseteq \mathcal{U},
\end{equation}
\begin{eqnarray}\label{Eq3.69}
&& \max\{\mathbb{P}(|N(\lambda;\lambda_1,\lambda_2)-N_0(\lambda;\lambda_1,\lambda_2)|\geq x),\nonumber\\
&& \quad\quad \mathbb{P}(|\overline{N}(\lambda;\lambda_1,\lambda_2)-\overline{N}_0(\lambda;\lambda_1,\lambda_2)|\geq x) \}\nonumber\\
&\leq& \mathbb{P}(\mathcal{U})\nonumber\\
&\leq&   \exp(C (\lambda-\lambda_1)^{1\slash 2}(\lambda_2-\lambda_1)\log\log(\lambda-\lambda_1))\nonumber\\
&& \times  \exp\Big(-c x^2\Big/\log\Big(\frac{\lambda-\lambda_1}{\lambda-\lambda_2}\Big)\Big).
\end{eqnarray}
\end{lemma}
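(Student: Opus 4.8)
The plan is to compare the random diffusion $p_\lambda$ with its deterministic counterpart $q_\lambda$ through a discretization of the interval $(\lambda_1,\lambda_2]$ into sub-blocks, track how the blow-up count can drift on each block, and then sum up the errors using a union bound. The key quantitative input is that on each block the number of blow-ups is comparable to $\sqrt{\lambda-\lambda_1}$ times the block length (Lemma \ref{Lem3.2} and the estimate (\ref{Eq3.6}) from \cite{Zho}), so the total number of blow-ups $N_0(\lambda;\lambda_1,\lambda_2)$ is of order $(\lambda-\lambda_1)^{1/2}(\lambda_2-\lambda_1)$, and the deviation we want to control, $x$, is measured against this.

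Concretely, I would first restart $p_\lambda$ and $q_\lambda$ at $\lambda_1$ (replacing them by $\overline p_{\lambda,\lambda_1}$ and $\overline q_{\lambda,\lambda_1}$ from Definition \ref{Def3.1}); this is harmless because conditioning on the position at time $\lambda_1$ and restarting at $+\infty$ only decreases the subsequent solution, hence changes the blow-up count by at most a bounded amount, and it makes the relevant event $\mathcal F_{\lambda_1,\lambda_2}$-measurable. Then I would chop $(\lambda_1,\lambda_2]$ into consecutive intervals $I_1,\dots,I_m$ each of a carefully chosen length $\ell$ (on the order of $K/\sqrt{\lambda-\lambda_1}$ times a logarithmic factor, so that each $I_j$ carries $O(\log(\cdot))$ blow-ups of $\overline q$), with $m \asymp (\lambda_2-\lambda_1)/\ell \asymp (\lambda-\lambda_1)^{1/2}(\lambda_2-\lambda_1)/(\text{log factor})$. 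On each block I couple $\overline p$ to $\overline q$ (both restarted at the left endpoint of the block at $+\infty$, so they are coupled through the same Brownian increments) and estimate the probability that the discrepancy in blow-up counts on that block exceeds a fixed threshold $y$; this is where a Gaussian-type bound $\exp(-cy^2/\log(\cdots))$ enters, coming from controlling the Brownian oscillation over an interval of length $\sim 1/\sqrt{\lambda-\lambda_1}$ relative to the drift $(\lambda-\tau)$, similar in spirit to Lemma \ref{Lem3.1}. The event $\mathcal U$ is the union over blocks of the ``large discrepancy on this block'' events together with the restart-error event; it is $\mathcal F_{\lambda_1,\lambda_2}$-measurable and contains $\{|N-N_0|\ge x\}$ because a total discrepancy of $x$ forces at least one block to contribute at least $x/m$, or forces the restart error to be large.

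The union bound then gives $\mathbb P(\mathcal U) \le m\exp(-c(x/m)^2/\log(\frac{\lambda-\lambda_1}{\lambda-\lambda_2})) + (\text{restart term})$; choosing the block count $m$ of the stated order $(\lambda-\lambda_1)^{1/2}(\lambda_2-\lambda_1)$ up to the $\log\log$ correction, and absorbing the prefactor $m$ and the number-of-blocks combinatorics into $\exp(C(\lambda-\lambda_1)^{1/2}(\lambda_2-\lambda_1)\log\log(\lambda-\lambda_1))$, yields exactly (\ref{Eq3.69}). Since the hypothesis $\lambda-\lambda_1 \le 2(\lambda-\lambda_2)$ makes $\lambda-\tau$ comparable to $\lambda-\lambda_1$ uniformly over $\tau\in[\lambda_1,\lambda_2]$, the per-block drift is essentially constant, which is what lets the single-block estimate be uniform and the log factor $\log(\frac{\lambda-\lambda_1}{\lambda-\lambda_2})$ appear cleanly — note this log factor is bounded by $\log 2$ in this regime, so it is really the harder, unbounded-log regime (handled elsewhere via Proposition \ref{P3.1} reducing to this lemma) where it matters, but I keep it to make the reduction transparent. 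I expect the main obstacle to be the single-block discrepancy estimate: establishing that, conditionally on the configuration at the left endpoint and with the right endpoint position possibly far from a blow-up, the difference between the number of blow-ups of the stochastic and deterministic diffusions on a short interval has Gaussian tails with the correct variance proxy. This requires carefully linearizing the blow-up count as a functional of the Brownian path over the block, controlling how a small perturbation of the path displaces each blow-up time (via the ODE comparison and the explicit $\tan$ solutions as in Lemma \ref{Lem3.1}), and ensuring the displacements do not accumulate catastrophically across the $O(\log)$ blow-ups within one block — the logarithmic loss in the exponent is precisely the price of this accumulation. I would isolate this as its own lemma before assembling the proof.
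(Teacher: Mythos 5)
There is a genuine gap in the assembly step of your argument, and it is not repairable as stated. You chop $(\lambda_1,\lambda_2]$ into $m\asymp (\lambda-\lambda_1)^{1/2}(\lambda_2-\lambda_1)/(\text{log factor})$ blocks, restart both processes at $+\infty$ at each block's left endpoint, and then argue that a total discrepancy $x$ forces some block to carry discrepancy $\geq x/m$, concluding with a union bound $m\exp(-c(x/m)^2/\log(\frac{\lambda-\lambda_1}{\lambda-\lambda_2}))$. Two things break. First, restarting on every block costs an $O(1)$ counting error per block (monotonicity gives $|N(\lambda;a_j,a_{j+1})-\overline N(\lambda;a_j,a_{j+1})|\le 1$), hence $O(m)$ in total; since the lemma must hold for $x$ as small as the constant $K$ while $m$ can be arbitrarily large, the restart error swamps the discrepancy you are trying to detect, and the inclusion $\{|N-N_0|\ge x\}\subseteq\mathcal U$ fails (this per-block restart trick is exactly what the paper uses in the proof of Proposition \ref{P3.1}, but there the number of blocks is only $T_0\asymp\log(\frac{\lambda-\lambda_1}{\lambda-\lambda_2})$ and the hypothesis $x\gtrsim K T_0$ makes the error affordable). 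Second, even ignoring restarts, pigeonholing to a single block degrades the exponent from $x^2$ to $(x/m)^2$, which is far weaker than \eqref{Eq3.69}; note also that in this regime $\log(\frac{\lambda-\lambda_1}{\lambda-\lambda_2})$ can be arbitrarily close to $0$ (when $\lambda_2-\lambda_1\ll\lambda-\lambda_2$), so via \eqref{Eq3.21} the target is effectively $\exp(-cx^2(\lambda-\lambda_1)/(\lambda_2-\lambda_1))$ — a strong bound, not the ``easy bounded-log case'' you suggest.

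The mechanism the paper uses, and which your sketch is missing, is aggregation over \emph{all} inter-blow-up gaps of a single restarted diffusion rather than localization to one bad block. One tracks the successive blow-up times $\tau_j$ of $\overline p_{\lambda,\lambda_1}$ and records, for each gap, a dyadically discretized deviation $s_j$ of $\tau_{j+1}-\tau_j$ from $\pi/\sqrt{\lambda-\tau_j}$; the per-gap tails $\exp(-c\,\psi_{\lambda,\lambda_1}(s_j))$ from \cite[Lemmas 2.1--2.2]{Zho} multiply across gaps by the strong Markov property (as in \eqref{Eq3.37}). A Gronwall-type propagation estimate (\eqref{Eq3.28}--\eqref{Eq3.31}) converts a count discrepancy $x$ into the requirement $\sum_j s_j\gtrsim x/\sqrt{\lambda-\lambda_1}$, and Cauchy--Schwarz over the $\asymp(\lambda-\lambda_1)^{1/2}(\lambda_2-\lambda_1)$ gaps then yields $\sum_j\psi_{\lambda,\lambda_1}(s_j)\gtrsim x^2(\lambda-\lambda_1)/(\lambda_2-\lambda_1)\gtrsim x^2/\log(\frac{\lambda-\lambda_1}{\lambda-\lambda_2})$. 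The prefactor $\exp(C(\lambda-\lambda_1)^{1/2}(\lambda_2-\lambda_1)\log\log(\lambda-\lambda_1))$ is the entropy of deviation profiles (number of gaps times the number of dyadic scales), not a union bound over spatial blocks; separate treatments are needed for the extreme regime $x\ge 10(\lambda-\lambda_1)^{1/2}(\lambda_2-\lambda_1)$ and for the undercounting event $\overline N\le \overline N_0-x$. Your instinct that the single-block discrepancy estimate is the main obstacle is therefore misplaced: the real work is in the profile decomposition, the propagation of time displacements across gaps, and the entropy-versus-Cauchy--Schwarz bookkeeping that produces the stated exponent.
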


\begin{proof}[Proof of Lemma \ref{Lem3.3s}]

We take $K\geq 1000$ sufficiently large (depending only on $\beta$), so that all the following estimates in this proof hold. We fix $\lambda,\lambda_1,\lambda_2\in\mathbb{R}$ such that $0\leq\lambda_1<\lambda_2<\lambda$, $\min\{\lambda_2-\lambda_1,\lambda-\lambda_2\}\geq K$, and $\lambda-\lambda_1\leq 2(\lambda-\lambda_2)$. By monotonicity, we have 
\begin{equation}\label{Eq3.5}
 |N(\lambda;\lambda_1,\lambda_2)-\overline{N}(\lambda;\lambda_1,\lambda_2)|\leq 1,\quad |N_0(\lambda;\lambda_1,\lambda_2)-\overline{N}_0(\lambda;\lambda_1,\lambda_2)|\leq 1.
\end{equation}

For any $x\geq 0$, we let $\mathcal{C}_1(x)$ be the event that $\overline{N}(\lambda;\lambda_1,\lambda_2)\geq \overline{N}_0(\lambda;\lambda_1,\lambda_2)+x$ and $\mathcal{C}_2(x)$ the event that $\overline{N}(\lambda;\lambda_1,\lambda_2)\leq \overline{N}_0(\lambda;\lambda_1,\lambda_2)-x$. By (\ref{Eq3.5}), for any $x\geq 2$, letting $\mathcal{U}=\{|\overline{N}(\lambda;\lambda_1,\lambda_2)-\overline{N}_0(\lambda;\lambda_1,\lambda_2)|\geq x-2\}\in\mathcal{F}_{\lambda_1,\lambda_2}$, we have
\begin{equation}
  \{|N(\lambda;\lambda_1,\lambda_2)-N_0(\lambda;\lambda_1,\lambda_2)|\geq x\}\subseteq \mathcal{U},
\end{equation}
\begin{eqnarray}\label{Eq3.68}
 && \max\{\mathbb{P}(|N(\lambda;\lambda_1,\lambda_2)-N_0(\lambda;\lambda_1,\lambda_2)|\geq x),\nonumber\\
&&  \quad\quad\mathbb{P}(|\overline{N}(\lambda;\lambda_1,\lambda_2)-\overline{N}_0(\lambda;\lambda_1,\lambda_2)|\geq x) \}\nonumber\\
 &\leq&  \mathbb{P}(\mathcal{U}) \leq \mathbb{P}(\mathcal{C}_1(x-2))+\mathbb{P}(\mathcal{C}_2(x-2)).
\end{eqnarray}

% We let $\overline{p}_{\lambda,\lambda_1}(x)$ be the diffusion given by
% \begin{equation}
%   d\overline{p}_{\lambda,\lambda_1}(x)=(x-\lambda-\overline{p}_{\lambda,\lambda_1}(x)^2)dx-\frac{2}{\sqrt{\beta}}dB_x, \quad \overline{p}_{\lambda,\lambda_1}(\lambda_1)=\infty,
% \end{equation}
% and let $\overline{q}_{\lambda,\lambda_1}(x)$ be the solution to the following differential equation:
% \begin{equation}
%   \overline{q}_{\lambda,\lambda_1}'(x)=x-\lambda-\overline{q}_{\lambda,\lambda_1}(x)^2, \quad \overline{q}_{\lambda,\lambda_1}(\lambda_1)=\infty.
% \end{equation}
% Let $\overline{N}(\lambda;\lambda_1,\lambda_2)$ be the number of blow-ups of $\overline{p}_{\lambda,\lambda_1}(x)$ in the interval $(\lambda_1,\lambda_2]$, and let $\overline{N}_0(\lambda;\lambda_1,\lambda_2)$ be the number of blow-ups of $\overline{q}_{\lambda,\lambda_1}(x)$ in the interval $(\lambda_1,\lambda_2]$. 

In the following, we bound $\mathbb{P}(\mathcal{C}_1(x))$ and $\mathbb{P}(\mathcal{C}_2(x))$ for any $x\geq K-2$. Let $\tau_0:=\lambda_1$ and $\tau_0':=\lambda_1$. For any $j\in\mathbb{N}^{*}$, if there exist at least $j$ blow-ups of $\overline{p}_{\lambda,\lambda_1}(x)$ in the interval $(\lambda_1,\infty)$, we let $\tau_j$ be the $j$th blow-up time of $\overline{p}_{\lambda,\lambda_1}(x)$ in $(\lambda_1,\infty)$; otherwise we set $\tau_j:=\infty$. For any $j\in \mathbb{N}^{*}$, if there exist at least $j$ blow-ups of $\overline{q}_{\lambda,\lambda_1}(x)$ in the interval $(\lambda_1,\infty)$, we let $\tau_j'$ be the $j$th blow-up time of $\overline{q}_{\lambda,\lambda_1}(x)$ in $(\lambda_1,\infty)$; otherwise we set $\tau_j':=\infty$. 
% As $\lceil x\rceil-1\leq \overline{N}(\lambda;\lambda_1,\lambda_2)-1$, we have $\tau_j<\infty$ for any $j\in\{0,1,\cdots,\lceil x\rceil-1\}$.

\bigskip

\paragraph{Step 1: Bounding $\mathbb{P}(\mathcal{C}_1(x))$ for $x\geq K-2$}

We consider the following two cases: $x\geq 10(\lambda-\lambda_1)^{1\slash 2} (\lambda_2-\lambda_1)$ or $K-2\leq x<10 (\lambda-\lambda_1)^{1\slash 2}(\lambda_2-\lambda_1)$.

\medskip

\subparagraph{Case 1: $x\geq 10(\lambda-\lambda_1)^{1\slash 2}(\lambda_2-\lambda_1)$}

We assume that the event $\mathcal{C}_1(x)$ holds in the following. Then $\overline{N}(\lambda;\lambda_1,\lambda_2)\geq \lceil x\rceil\geq 10(\lambda-\lambda_1)^{1\slash 2}(\lambda_2-\lambda_1)$ and $\tau_{\lceil x\rceil}\leq \lambda_2$.

Now we show that there exist at least $\lceil x\slash 3\rceil$ elements (denoted by $j$) from $\{0,1,\cdots,\lceil x\rceil-1\}$, such that $\tau_{j+1}-\tau_j\leq 2(\lambda_2-\lambda_1)\slash x$. Otherwise, there are at least $2x\slash 3$ of $j$'s from $\{0,1,\cdots,\lceil x\rceil-1\}$, such that $\tau_{j+1}-\tau_j\geq 2(\lambda_2-\lambda_1)\slash x$. Hence 
\begin{equation*}
 \lambda_2-\lambda_1\geq \sum_{j=0}^{\lceil x\rceil -1}(\tau_{j+1}-\tau_j)\geq \frac{2x}{3}\cdot\frac{2(\lambda_2-\lambda_1)}{x}= \frac{4}{3}(\lambda_2-\lambda_1),
\end{equation*} 
which leads to a contradiction.

For any $j\in \{0,1,\cdots,\lceil x\rceil-1\}$, we let
\begin{equation}
\mathcal{E}_j:=\{\tau_j\leq \lambda_2, \tau_{j+1}-\tau_j\leq 2(\lambda_2-\lambda_1)\slash x\}\in\mathcal{F}_{\tau_{j+1}}.
\end{equation}
For any $i_1<\cdots<i_{\lceil x\slash 3\rceil}$ such that $i_1,\cdots,i_{\lceil x\slash 3\rceil}\in \{0,1,\cdots,\lceil x\rceil-1\}$, we let
\begin{equation}
    \mathcal{D}_{i_1,\cdots,i_{\lceil x\slash 3\rceil}}:=\bigcap_{j\in\{i_1,\cdots,i_{\lceil x\slash 3\rceil}\}}\mathcal{E}_j.
\end{equation}
By the preceding discussion, we have
\begin{equation}\label{Eq3.8}
\mathcal{C}_1(x)\subseteq \bigcup_{\substack{i_1<\cdots<i_{\lceil x\slash 3\rceil}:\\i_1,\cdots, i_{\lceil x\slash 3\rceil}\in \{0,1,\cdots,\lceil x\rceil-1\}}}\mathcal{D}_{i_1,\cdots,i_{\lceil x\slash 3\rceil}}.
\end{equation}

Consider any $i_1<\cdots<i_{\lceil x\slash 3\rceil}$ such that $i_1,\cdots,i_{\lceil x\slash 3\rceil}\in \{0,1,\cdots,\lceil x\rceil-1\}$. We bound $\mathbb{P}(\mathcal{D}_{i_1,\cdots,i_{\lceil x\slash 3\rceil}})$ as follows. For any $j\in \{i_1,\cdots,i_{\lceil x\slash 3\rceil}\}$, by Lemma \ref{Lem3.1}, the strong Markov property, and the fact that $\lambda-\lambda_1\leq 2(\lambda-\lambda_2)$, we have
\begin{eqnarray*}
&& \mathbb{P}(\mathcal{E}_j|\mathcal{F}_{\tau_j})=
  \mathbbm{1}_{\tau_j\leq \lambda_2} \mathbb{P}(\tau_{j+1}-\tau_j\leq 2(\lambda_2-\lambda_1)\slash x|\mathcal{F}_{\tau_j} )\nonumber\\
  &\leq& \mathbbm{1}_{\tau_j\leq \lambda_2} C\exp(-c x^2(\lambda-\tau_j)^{1\slash 2} (\lambda_2-\lambda_1)^{-2})\nonumber\\
  &\leq& C\exp(-cx^2(\lambda-\lambda_2)^{1\slash 2} (\lambda_2-\lambda_1)^{-2}) 
  \leq C\exp(-cx^2(\lambda-\lambda_1)^{1\slash 2} (\lambda_2-\lambda_1)^{-2}),
\end{eqnarray*}
where in Lemma \ref{Lem3.1}, we replace $\lambda$ by $\lambda-\tau_j$ and take
\begin{equation*}
  s=\frac{\pi x}{5(\lambda_2-\lambda_1)\sqrt{\lambda-\tau_j}}\geq  \frac{\pi \cdot 10\sqrt{\lambda-\lambda_1}(\lambda_2-\lambda_1)}{5(\lambda_2-\lambda_1)\sqrt{\lambda-\lambda_1}}=  2\pi.
\end{equation*}
Therefore, we have
\begin{eqnarray}\label{Eq3.9}
&& \mathbb{P}(\mathcal{D}_{i_1,\cdots,i_{\lceil x\slash 3\rceil}}) =\mathbb{E}[\prod_{l=1}^{\lceil x\slash 3\rceil} \mathbbm{1}_{\mathcal{E}_{i_l}}]=\mathbb{E}[\prod_{l=1}^{\lceil x\slash 3\rceil-1}\mathbbm{1}_{\mathcal{E}_{i_l}}\mathbb{P}(\mathcal{E}_{i_{\lceil x\slash 3\rceil}}|\mathcal{F}_{\tau_{i_{\lceil x\slash 3\rceil}}})] \nonumber\\
&\leq& C\exp(-cx^2(\lambda-\lambda_1)^{1\slash 2} (\lambda_2-\lambda_1)^{-2})\mathbb{E}[\prod_{l=1}^{\lceil x\slash 3\rceil-1}\mathbbm{1}_{\mathcal{E}_{i_l}}]\leq\cdots\nonumber\\
&\leq& C^{\lceil x\slash 3\rceil} \exp(-c\lceil x\slash 3\rceil x^2 (\lambda-\lambda_1)^{1\slash 2} (\lambda_2-\lambda_1)^{-2})\nonumber\\
&\leq& C^{x+1} \exp(-c x^3(\lambda-\lambda_1)^{1\slash 2}(\lambda_2-\lambda_1)^{-2}).
\end{eqnarray}

By (\ref{Eq3.8}), (\ref{Eq3.9}), and the union bound, we have
\begin{eqnarray}\label{Eq3.10}
   \mathbb{P}(\mathcal{C}_1(x)) &\leq&   2^{\lceil x\rceil} C^{x+1} \exp(-c x^3(\lambda-\lambda_1)^{1\slash 2}(\lambda_2-\lambda_1)^{-2})\nonumber\\
   &\leq& C^{x+1} \exp(-c x^3(\lambda-\lambda_1)^{1\slash 2}(\lambda_2-\lambda_1)^{-2}).
\end{eqnarray}
As $x\geq 10(\lambda-\lambda_1)^{1\slash 2}(\lambda_2-\lambda_1)$, we have 
\begin{equation}\label{Eq3.11}
    x^2(\lambda-\lambda_1)^{1\slash 2}(\lambda_2-\lambda_1)^{-2}\geq  100 (\lambda-\lambda_1)^{3\slash 2}\geq 100 K^{3\slash 2}. 
\end{equation}
As we have taken $K$ to be sufficiently large, by (\ref{Eq3.10}) and (\ref{Eq3.11}), we have
\begin{equation}\label{Eq3.12}
   \mathbb{P}(\mathcal{C}_1(x))\leq C\exp(-c x^3(\lambda-\lambda_1)^{1\slash 2}(\lambda_2-\lambda_1)^{-2}).
\end{equation}
As $\lambda-\lambda_1\leq 2(\lambda-\lambda_2)$, we have $(\lambda_2-\lambda_1)\slash (\lambda-\lambda_2)\in [0,1]$. As $2\log(1+t)\geq  t  $ for any $t\in [0,1]$, we have
\begin{equation}\label{Eq3.21}
2\log\Big(\frac{\lambda-\lambda_1}{\lambda-\lambda_2}\Big)=2\log\Big(1+\frac{\lambda_2-\lambda_1}{\lambda-\lambda_2}\Big)\geq \frac{\lambda_2-\lambda_1}{\lambda-\lambda_2}\geq \frac{\lambda_2-\lambda_1}{\lambda-\lambda_1}.
\end{equation}
Now as $x\geq 10(\lambda-\lambda_1)^{1\slash 2}(\lambda_2-\lambda_1)$, we have 
\begin{equation}\label{Eq3.13}
  x^3(\lambda-\lambda_1)^{1\slash 2} (\lambda_2-\lambda_1)^{-2}\geq  10x^2\frac{\lambda-\lambda_1}{\lambda_2-\lambda_1}\geq 5x^2\Big/\log\Big(\frac{\lambda-\lambda_1}{\lambda-\lambda_2}\Big).
\end{equation}
By (\ref{Eq3.12}) and (\ref{Eq3.13}), we conclude that for any $x\geq 10(\lambda-\lambda_1)^{1\slash 2}(\lambda_2-\lambda_1)$,
\begin{equation}\label{Eq3.65}
 \mathbb{P}(\mathcal{C}_1(x))\leq C\exp\Big(-c x^2\Big/\log\Big(\frac{\lambda-\lambda_1}{\lambda-\lambda_2}\Big)\Big).
\end{equation}

\medskip

\subparagraph{Case 2: $K-2\leq x<10(\lambda-\lambda_1)^{1\slash 2}(\lambda_2-\lambda_1)$} We bound $\mathbb{P}(\mathcal{C}_1(x))$ through \textbf{Parts 1-5} as detailed below.

\textbf{Part 1}

Consider any $j\in\mathbb{N}$. By \cite[Lemma 2.1]{Zho} and the strong Markov property (taking $\delta=\epsilon$ and replacing $a$ by $\lambda-\tau_j$ in \cite[Lemma 2.1]{Zho}; note that we have taken $K$ to be sufficiently large), using the fact that $\lambda-\lambda_1\leq 2(\lambda-\lambda_2)$, we obtain that for any $\epsilon\in (0,1\slash 3)$,
\begin{eqnarray}\label{Eq3.14}
  && \mathbbm{1}_{\tau_j\leq \lambda_2}\mathbb{P}\Big(\Big\{\frac{\pi}{(1+\epsilon)\sqrt{\lambda-\tau_j}}\leq \tau_{j+1}-\tau_j\leq \frac{\pi}{\sqrt{(1-\epsilon)^2(\lambda-\tau_j)-2\pi\slash\sqrt{\lambda-\tau_j}}}\Big\}^c\Big|\mathcal{F}_{\tau_j}\Big)\nonumber\\
  &&\leq C_1\exp(-c_1' \epsilon^2 (\lambda-\tau_j)^{3\slash 2}) \mathbbm{1}_{\tau_j\leq \lambda_2}\leq C_1\exp(-c_1\epsilon^2 (\lambda-\lambda_1)^{3\slash 2}) ,
\end{eqnarray}
where $C_1,c_1,c_1'$ are positive constants that only depend on $\beta$. When $\tau_j\leq \lambda_2$, as $\lambda-\tau_j\geq \lambda-\lambda_2\geq K$ is sufficiently large, for any $\epsilon\in (0,1\slash 3)$,
\begin{eqnarray}\label{Eq3.15}
   && ((1-\epsilon)^2(\lambda-\tau_j)-2\pi\slash \sqrt{\lambda-\tau_j})^{-1\slash 2}\nonumber\\
   &=& (1-\epsilon)^{-1}(\lambda-\tau_j)^{-1\slash 2}(1-2\pi(1-\epsilon)^{-2}(\lambda-\tau_j)^{-3\slash 2})^{-1\slash 2}\nonumber\\
   &\leq& (1-\epsilon)^{-1}(\lambda-\tau_j)^{-1\slash 2} (1+C(\lambda-\tau_j)^{-3\slash 2})\nonumber\\
   &\leq& (1-\epsilon)^{-1}(\lambda-\tau_j)^{-1\slash 2} +C(\lambda-\lambda_1)^{-2},
\end{eqnarray}
where we use $(1-x)^{-1\slash 2}\leq 1+2x, \forall x\in [0,1\slash 2]$ for the first inequality and $\lambda-\lambda_1\leq 2(\lambda-\lambda_2)$ for the second inequality. By (\ref{Eq3.14}) and (\ref{Eq3.15}), for any $\epsilon\in (0,1\slash 3)$, we have
\begin{eqnarray}
&&  \mathbbm{1}_{\tau_j\leq \lambda_2} \mathbb{P}\Big(\Big\{\frac{\pi}{(1+\epsilon)\sqrt{\lambda-\tau_j}}\leq \tau_{j+1}-\tau_j\leq \frac{\pi}{(1-\epsilon)\sqrt{\lambda-\tau_j}}+\frac{C}{(\lambda-\lambda_1)^2}\Big\}^c\Big|\mathcal{F}_{\tau_j} \Big) \nonumber\\
&& \leq C_1\exp(-c_1 \epsilon^2 (\lambda-\lambda_1)^{3\slash 2}). 
\end{eqnarray}
Now note that for any $\epsilon\in (0,1\slash 3)$, when $\tau_j\leq\lambda_2$,
\begin{eqnarray*}
  && \max\Big\{\Big|\frac{\pi}{\sqrt{\lambda-\tau_j}}-\frac{\pi}{(1+\epsilon)\sqrt{\lambda-\tau_j}}\Big|,\Big|\frac{\pi}{\sqrt{\lambda-\tau_j}}-\frac{\pi}{(1-\epsilon)\sqrt{\lambda-\tau_j}}\Big|\Big\}\nonumber\\
  &&\leq \frac{2\epsilon\pi}{\sqrt{\lambda-\tau_j}}\leq C\epsilon(\lambda-\lambda_1)^{-1\slash 2}.
\end{eqnarray*}
Hence for any $\epsilon\in (0,1\slash 3)$, we have
\begin{eqnarray}\label{Eq3.16}
 && \mathbbm{1}_{\tau_j\leq \lambda_2}\mathbb{P}\Big(\Big|\tau_{j+1}-\tau_j-\frac{\pi}{\sqrt{\lambda-\tau_j}} \Big|> \frac{C_0\epsilon}{\sqrt{\lambda-\lambda_1}}+\frac{C_0}{(\lambda-\lambda_1)^2}\Big|\mathcal{F}_{\tau_j}\Big)\nonumber\\
 && \leq C_1\exp(-c_1  \epsilon^2 (\lambda-\lambda_1)^{3\slash 2}), 
\end{eqnarray}
where $C_0\geq 100$ is a positive constant that only depends on $\beta$. 

Again, consider an arbitrary $j\in\mathbb{N}$. By \cite[Lemma 2.2]{Zho} and the strong Markov property (replacing $a$ by $\lambda-\tau_j$ and taking $M=s(\lambda-\tau_j)\slash 8$), using the fact that $\lambda-\lambda_1\leq 2(\lambda-\lambda_2)$, we obtain that for any $s>0$ such that $50(\lambda-\lambda_1)^{-1\slash 2}\leq s \leq (\lambda-\lambda_1)\slash 50$, 
\begin{eqnarray}
  \mathbbm{1}_{\tau_j\leq \lambda_2}\mathbb{P}(\tau_{j+1}-\tau_j>s|\mathcal{F}_{\tau_j})&\leq& C_2\exp(-c_2' s(\lambda-\tau_j)^2) 1_{\tau_j\leq \lambda_2}\nonumber\\
  & \leq& C_2\exp(-c_2 s(\lambda-\lambda_1)^2),
\end{eqnarray}
where $C_2,c_2,c_2'$ are positive constants that only depend on $\beta$. When $\tau_j\leq\lambda_2$, we have $\pi\slash \sqrt{\lambda-\tau_j}\leq \pi\slash \sqrt{\lambda-\lambda_2}\leq \sqrt{2}\pi\slash \sqrt{\lambda-\lambda_1}<50 \slash \sqrt{\lambda-\lambda_1}$. Hence for any $s>0$ such that $50(\lambda-\lambda_1)^{-1\slash 2}\leq s \leq (\lambda-\lambda_1)\slash 50$, we have
\begin{equation}\label{Eq3.17}
  \mathbbm{1}_{\tau_j\leq \lambda_2}\mathbb{P}\Big(\Big|\tau_{j+1}-\tau_j-\frac{\pi}{\sqrt{\lambda-\tau_j}}\Big|\geq s\Big|\mathcal{F}_{\tau_j}\Big)\leq C_2\exp(-c_2 s(\lambda-\lambda_1)^2).
\end{equation}

As $\lambda-\lambda_1\geq K$ is sufficiently large, we assume that $C_0\leq (\lambda-\lambda_1)^3\slash 1000$ in the following. Let 
\begin{equation*}
  \mathcal{S}:=\{0\}\cup \{2^l C_0(\lambda-\lambda_1)^{-2}: l\in\mathbb{N}^{*}, 2^l C_0(\lambda-\lambda_1)^{-2}\leq (\lambda-\lambda_1)\slash 50\}. 
\end{equation*} 
For any $j\in\mathbb{N}$ and $s\in\mathcal{S}\backslash \{0\}$, consider the following two cases:
\begin{itemize}
  \item[(a)] If $2C_0(\lambda-\lambda_1)^{-2}\leq s\leq C_0(\lambda-\lambda_1)^{-1\slash 2}\slash 2$, we take $\epsilon=s\sqrt{\lambda-\lambda_1}\slash (4C_0)$ in (\ref{Eq3.16}) and obtain that
  \begin{equation}
     \mathbbm{1}_{\tau_j\leq \lambda_2}\mathbb{P}\Big(\Big|\tau_{j+1}-\tau_j-\frac{\pi}{\sqrt{\lambda-\tau_j}}\Big|\geq s\Big|\mathcal{F}_{\tau_j}\Big)\leq C_1\exp(-c_1 s^2 (\lambda-\lambda_1)^{5\slash 2}\slash (16C_0^2)).
  \end{equation}
  \item[(b)] If $C_0(\lambda-\lambda_1)^{-1\slash 2}\slash 2\leq s \leq (\lambda-\lambda_1)\slash 50$, by (\ref{Eq3.17}), we have
  \begin{equation}
     \mathbbm{1}_{\tau_j\leq \lambda_2}\mathbb{P}\Big(\Big|\tau_{j+1}-\tau_j-\frac{\pi}{\sqrt{\lambda-\tau_j}}\Big|\geq s\Big|\mathcal{F}_{\tau_j}\Big)\leq C_2\exp(-c_2 s(\lambda-\lambda_1)^2).
  \end{equation}
\end{itemize}
Therefore, for any $j\in\mathbb{N}$ and $s\in\mathcal{S}$,
\begin{equation}\label{Eq3.18}
  \mathbbm{1}_{\tau_j\leq \lambda_2} \mathbb{P}\Big(\Big|\tau_{j+1}-\tau_j-\frac{\pi}{\sqrt{\lambda-\tau_j}}\Big|\geq s\Big|\mathcal{F}_{\tau_j}\Big)\leq C\exp(-c \psi_{\lambda,\lambda_1}(s)),
\end{equation} 
where for any $s\geq 0$,
\begin{equation}\label{Eq3.40}
 \psi_{\lambda,\lambda_1}(s):=  \begin{cases}
                            s^2 (\lambda-\lambda_1)^{5\slash 2} & s\leq C_0(\lambda-\lambda_1)^{-1\slash 2}\slash 2 \\
                            s (\lambda-\lambda_1)^2 & s>C_0(\lambda-\lambda_1)^{-1\slash 2}\slash 2  \\
                            \end{cases}.
\end{equation}
For any $j\in \mathbb{N}$ and $s\in\mathcal{S}$, we let
\begin{equation}\label{Eq3.47}
 \overline{\mathcal{E}}_{j,s} := \begin{cases}
   \Big\{\tau_j\leq \lambda_2,\Big|\tau_{j+1}-\tau_j-\frac{\pi}{\sqrt{\lambda-\tau_j}}\Big|\leq  2C_0 (\lambda-\lambda_1)^{-2}\Big\} & s=0\\
       \Big \{  \tau_j\leq \lambda_2,s\leq \Big|\tau_{j+1}-\tau_j-\frac{\pi}{\sqrt{\lambda-\tau_j}}\Big|\leq 2s  \Big\} & s  \neq        0\\
 \end{cases}.
\end{equation}
Note that $\overline{\mathcal{E}}_{j,s}\in\mathcal{F}_{\tau_{j+1}}$. By (\ref{Eq3.18}), we have
\begin{equation}
  \mathbb{P}(\overline{\mathcal{E}}_{j,s}|\mathcal{F}_{\tau_j})\leq C\exp(-c \psi_{\lambda,\lambda_1}(s)).
\end{equation}
For any $m\in\mathbb{N}^{*}$, any $i_1,\cdots,i_m\in\mathbb{N}$ such that $i_1<\cdots<i_m$, and any $s_1,\cdots,s_m\in \mathcal{S}$, we let
\begin{equation}\label{Eq3.46}
   \overline{\mathcal{D}}_{i_1,\cdots, i_m;  s_1,\cdots, s_m} :=  \bigcap_{j=1}^m  \overline{\mathcal{E}}_{i_j,s_j}.
\end{equation}
Then we have
\begin{eqnarray}\label{Eq3.37}
\mathbb{P}(\overline{\mathcal{D}}_{i_1,\cdots,i_m;s_1\cdots,s_m})&=&\mathbb{E}\Big[\prod_{j=1}^m \mathbbm{1}_{\overline{\mathcal{E}}_{i_j,s_j}}\Big]=\mathbb{E}\Big[\prod_{j=1}^{m-1}\mathbbm{1}_{\overline{\mathcal{E}}_{i_j,s_j}} \mathbb{P}(\overline{\mathcal{E}}_{i_m,s_m}|\mathcal{F}_{\tau_{i_m}})\Big]\nonumber\\
&\leq &  C\exp(-c \psi_{\lambda,\lambda_1}(s_m)) \mathbb{E}\Big[\prod_{j=1}^{m-1}\mathbbm{1}_{\overline{\mathcal{E}}_{i_j,s_j}}\Big]\leq \cdots\nonumber\\
&\leq& C^m \exp\Big(-c\sum_{j=1}^m\psi_{\lambda,\lambda_1}(s_j)\Big).
\end{eqnarray}

\textbf{Part 2}

Now for any $j\in\mathbb{N}$, we let
\begin{equation}\label{Eq3.48}
 \mathcal{W}_j:=\Big\{\tau_j\leq \lambda_2, \Big|\tau_{j+1}-\tau_j-\frac{\pi}{\sqrt{\lambda-\tau_j}}\Big|\geq (\lambda-\lambda_1)\slash 50\Big\}.
\end{equation}
Note that by (\ref{Eq3.17}), we have 
\begin{equation}\label{Eq3.19}
  \mathbb{P}(\mathcal{W}_j)=\mathbb{E}[\mathbb{P}(\mathcal{W}_j|\mathcal{F}_{\tau_j})]\leq C\exp(-c (\lambda-\lambda_1)^3). 
\end{equation}
Let
\begin{equation}\label{Eq3.49}
  \mathcal{W}:= \bigcup_{j=0}^{\overline{N}_0(\lambda;\lambda_1,\lambda_2)}\mathcal{W}_j.
\end{equation}
By (\ref{Eq3.19}), the union bound, and Lemma \ref{Lem3.2}, 
\begin{eqnarray}
  && \mathbb{P}(\mathcal{W})\leq C(\overline{N}_0(\lambda;\lambda_1,\lambda_2)+1)\exp(-c (\lambda-\lambda_1)^3)\nonumber\\
   &\leq& C(\lambda-\lambda_1)^{3\slash 2}\exp(-c (\lambda-\lambda_1)^3)\leq C\exp(-c (\lambda-\lambda_1)^3).
\end{eqnarray}
As $x< 10(\lambda-\lambda_1)^{1\slash 2} (\lambda_2-\lambda_1)$, by (\ref{Eq3.21}), we have
\begin{equation*}
   (\lambda-\lambda_1)^3\geq \frac{x^2(\lambda-\lambda_1)^2}{100(\lambda_2-\lambda_1)^2}\geq \frac{x^2(\lambda-\lambda_1)}{100(\lambda_2-\lambda_1)}\geq \frac{1}{200}x^2\Big/ \log\Big(\frac{\lambda-\lambda_1}{\lambda-\lambda_2}\Big).
\end{equation*}
Hence we have
\begin{equation}\label{Eq3.38}
  \mathbb{P}(\mathcal{W}) \leq C\exp\Big(-cx^2\Big/\log\Big(\frac{\lambda-\lambda_1}{\lambda-\lambda_2}\Big)\Big).
\end{equation}

\textbf{Part 3}

For any $j\in\mathbb{N}$, we let
\begin{equation}
   \mathcal{W}'_j:=\Big\{\tau_j\leq \lambda_2, \tau_{j+1}-\tau_j<\frac{\pi}{4\sqrt{\lambda-\lambda_1}}\Big\}\in\mathcal{F}_{\tau_{j+1}}.
\end{equation}
By Lemma \ref{Lem3.1} and the strong Markov property (replacing $\lambda$ by $\lambda-\tau_j$ and taking $s=2$ in Lemma \ref{Lem3.1}), we have
\begin{eqnarray}
&& \mathbb{P}(\mathcal{W}'_j|\mathcal{F}_{\tau_j}) \leq \mathbbm{1}_{\tau_j\leq \lambda_2} \mathbb{P}\Big(\tau_{j+1}-\tau_j< \frac{\pi}{4\sqrt{\lambda-\tau_j}}\Big|\mathcal{F}_{\tau_j}\Big)\nonumber\\
&\leq & C\exp(-c(\lambda-\tau_j)^{3\slash 2})\mathbbm{1}_{\tau_j\leq \lambda_2}\leq C\exp(-c (\lambda-\lambda_1)^{3\slash 2}).
\end{eqnarray}
Hence for any $i_1,\cdots,i_{\lceil x\slash 2\rceil}\in \{\overline{N}_0(\lambda;\lambda_1,\lambda_2)+1,\cdots,\overline{N}_0(\lambda;\lambda_1,\lambda_2)+\lceil x\rceil -1\}$ such that $i_1<\cdots<i_{\lceil x\slash 2\rceil}$, we have
\begin{eqnarray}\label{Eq3.20}
  &&\mathbb{P}\Big(\bigcap_{j=1}^{\lceil x\slash 2\rceil} \mathcal{W}'_{i_j}   \Big)=\mathbb{E}\Big[\prod_{j=1}^{\lceil x\slash 2\rceil} \mathbbm{1}_{\mathcal{W}'_{i_j}}\Big]=\mathbb{E}\Big[\prod_{j=1}^{\lceil x\slash 2\rceil-1} \mathbbm{1}_{\mathcal{W}'_{i_j}}\mathbb{P}(\mathcal{W}'_{i_{\lceil x\slash 2\rceil}}|\mathcal{F}_{\tau_{i_{\lceil x\slash 2\rceil}}})\Big]\nonumber\\
 &\leq& C\exp(-c(\lambda-\lambda_1)^{3\slash 2}) \mathbb{E}\Big[\prod_{j=1}^{\lceil x\slash 2\rceil-1} \mathbbm{1}_{W'_{i_j}}\Big] \leq \cdots\nonumber\\
  &\leq& C^{\lceil x\slash 2\rceil} \exp(-c\lceil x\slash 2\rceil (\lambda-\lambda_1)^{3\slash 2})\leq C^x\exp(-c x(\lambda-\lambda_1)^{3\slash 2}).
\end{eqnarray}
Let
\begin{equation}
   \mathcal{W}':=\bigcup_{\substack{i_1<\cdots<i_{\lceil x\slash 2\rceil}: \\i_1,\cdots,i_{\lceil x\slash 2\rceil}\in \{\overline{N}_0(\lambda;\lambda_1,\lambda_2)+1,\cdots,\overline{N}_0(\lambda;\lambda_1,\lambda_2)+\lceil x\rceil -1\}}} \bigcap_{j=1}^{\lceil x\slash 2\rceil}\mathcal{W}'_{i_j}.
\end{equation}
By (\ref{Eq3.20}) and the union bound, as $\lambda-\lambda_1\geq K$ is sufficiently large,
\begin{equation}
 \mathbb{P}(\mathcal{W}')\leq 2^{\lceil x\rceil-1}  C^x\exp(-cx(\lambda-\lambda_1)^{3\slash 2})\leq C\exp(-cx(\lambda-\lambda_1)^{3\slash 2}).
\end{equation}
As $x< 10(\lambda-\lambda_1)^{1\slash 2} (\lambda_2-\lambda_1)$, by (\ref{Eq3.21}), we have
\begin{equation*}
   x(\lambda-\lambda_1)^{3\slash 2}\geq  \frac{x^2(\lambda-\lambda_1)}{10(\lambda_2-\lambda_1)}\geq \frac{1}{20}  x^2\Big/ \log\Big(\frac{\lambda-\lambda_1}{\lambda-\lambda_2}\Big).
\end{equation*}
Hence we have
\begin{equation}\label{Eq3.39}
\mathbb{P}(\mathcal{W}')\leq C\exp\Big(-cx^2\Big/\log\Big(\frac{\lambda-\lambda_1}{\lambda-\lambda_2}\Big)\Big).
\end{equation}

\textbf{Part 4}

Now consider an arbitrary choice of $\{s_j\}_{j\in\{0,1,\cdots,\overline{N}_0(\lambda;\lambda_1,\lambda_2)\}}$ such that $s_j\in \mathcal{S}$ for any $j\in\{0,1,\cdots,\overline{N}_0(\lambda;\lambda_1,\lambda_2)\}$. In the following, we assume that the event
\begin{equation*}
 \mathcal{W}^c \cap(\mathcal{W}')^c \cap \overline{\mathcal{D}}_{0,1,\cdots,\overline{N}_0(\lambda;\lambda_1,\lambda_2);s_0,s_1,\cdots,s_{\overline{N}_0(\lambda;\lambda_1,\lambda_2)}}\cap \mathcal{C}_1(x)
\end{equation*}
holds. As the event $\mathcal{C}_1(x)$ holds, for any $j\in \{0,1,\cdots,\overline{N}_0(\lambda;\lambda_1,\lambda_2)+\lceil x\rceil\}$, 
\begin{equation}\label{Eq3.22}
  \tau_j\leq \lambda_2.
\end{equation}

For any $j\in\{0,1,\cdots,\overline{N}_0(\lambda;\lambda_1,\lambda_2)+\lceil x\rceil-1\}$, we let $d_{j,1}:=\tau_{j+1}-\tau_j$ (note that this is well-defined due to (\ref{Eq3.22})). For any $j\in\{0,1,\cdots,\overline{N}_0(\lambda;\lambda_1,\lambda_2)\}$, we let $d_{j,2}:=\tau'_{j+1}-\tau_j'$. For any $j\in\{0,1,\cdots,\overline{N}_0(\lambda;\lambda_1,\lambda_2)\}$, we let $\tau_j''$ be the first blow-up time of $\tilde{q}_j(x),x\geq \tau_j$, which is defined by
\begin{equation}
  \tilde{q}_j'(x)=x-\lambda-\tilde{q}_j(x)^2, \quad \tilde{q}_j(\tau_j)=\infty,
\end{equation}
and let $d_{j,3}:=\tau_j''-\tau_j$.

In the following, we bound $\tau_{\overline{N}_0(\lambda;\lambda_1,\lambda_2)}-\tau'_{\overline{N}_0(\lambda;\lambda_1,\lambda_2)}$. Consider an arbitrary $j\in \{0,1,\cdots,\overline{N}_0(\lambda;\lambda_1,\lambda_2)\}$. By \cite[Lemma 2.3]{Zho} (replacing $a$ by $\lambda-\tau_j'$ in \cite[Lemma 2.3]{Zho}; note that $\lambda-\tau_j'\geq \lambda-\lambda_2\geq K$ is sufficiently large),
\begin{equation}
   \frac{\pi}{\sqrt{\lambda-\tau_j'}}\leq d_{j,2}\leq \frac{\pi}{\sqrt{\lambda-\tau_j'-2\pi\slash \sqrt{\lambda-\tau_j'}}}.
\end{equation}
By the facts that $(1-x)^{-1\slash 2}\leq 1+2x,\forall x\in [0,1\slash 2]$ and $\lambda-\lambda_1\leq 2(\lambda-\lambda_2)$, 
\begin{eqnarray*}
   && (\lambda-\tau_j'-2\pi\slash \sqrt{\lambda-\tau_j'})^{-1\slash 2}=(\lambda-\tau_j')^{-1\slash 2} (1-2\pi(\lambda-\tau_j')^{-3\slash 2})^{-1 \slash 2}     \nonumber  \\
   &\leq&    (\lambda-\tau_j')^{-1\slash 2} (1+4\pi (\lambda-\tau_j')^{-3\slash 2})\leq (\lambda-\tau_j')^{-1\slash 2}+C(\lambda-\lambda_1)^{-2}.
\end{eqnarray*}
Hence we have
\begin{equation}\label{Eq3.23}
   \frac{\pi}{\sqrt{\lambda-\tau_j'}}\leq d_{j,2}\leq \frac{\pi}{\sqrt{\lambda-\tau_j'}}+\frac{C}{(\lambda-\lambda_1)^2}.
\end{equation}
Similarly, by \cite[Lemma 2.3]{Zho}, we have
\begin{equation}\label{Eq3.25}
  \frac{\pi}{\sqrt{\lambda-\tau_j}}\leq d_{j,3}\leq \frac{\pi}{\sqrt{\lambda-\tau_j}}+\frac{C}{(\lambda-\lambda_1)^2}. 
\end{equation}
As the event $\overline{\mathcal{E}}_{j,s_j}$ holds, we have
\begin{equation}\label{Eq3.24}
\Big|d_{j,1}-\frac{\pi}{\sqrt{\lambda-\tau_j}}\Big|\leq 2s_j+\frac{2C_0}{(\lambda-\lambda_1)^2}. 
\end{equation}
By (\ref{Eq3.25}) and (\ref{Eq3.24}), we have
\begin{equation}\label{Eq3.26}
   |d_{j,1}-d_{j,3}|\leq 2s_j+C(\lambda-\lambda_1)^{-2}.
\end{equation}
As $\min\{\lambda-\tau_j,\lambda-\tau_j'\}\geq \lambda-\lambda_2\geq (\lambda-\lambda_1)\slash 2$, we have
\begin{eqnarray}
 \Big|\frac{1}{\sqrt{\lambda-\tau_j}}-\frac{1}{\sqrt{\lambda-\tau_j'}}\Big|&=&\frac{|\tau_j-\tau_j'|}{\sqrt{(\lambda-\tau_j)(\lambda-\tau_j')}(\sqrt{\lambda-\tau_j}+\sqrt{\lambda-\tau_j'})}\nonumber\\
 &\leq & \sqrt{2}(\lambda-\lambda_1)^{-3\slash 2}|\tau_j-\tau_j'|.
\end{eqnarray}
Hence by (\ref{Eq3.23}) and (\ref{Eq3.25}),
\begin{eqnarray}\label{Eq3.27}
   |d_{j,2}-d_{j,3}|&\leq& \Big|d_{j,2}-\frac{\pi}{\sqrt{\lambda-\tau_j'}}\Big|+\Big|\frac{\pi}{\sqrt{\lambda-\tau_j}}-\frac{\pi}{\sqrt{\lambda-\tau_j'}}\Big|+\Big|d_{j,3}-\frac{\pi}{\sqrt{\lambda-\tau_j}}\Big|\nonumber\\
   &\leq&   C(\lambda-\lambda_1)^{-3\slash 2}|\tau_j-\tau'_j|+C(\lambda-\lambda_1)^{-2}.
\end{eqnarray}

Consider any $j\in \{0,1,\cdots,\overline{N}_0(\lambda;\lambda_1,\lambda_2)+1\}$. Note that $\tau_j=\lambda_1+\sum_{l=0}^{j-1} d_{l,1}$ and $\tau_j'=\lambda_1+\sum_{l=0}^{j-1} d_{l,2}$. Hence by (\ref{Eq3.26}), (\ref{Eq3.27}), and Lemma \ref{Lem3.2}, we have
\begin{eqnarray}\label{Eq3.28}
 &&  |\tau_j-\tau_j'|   \leq     \sum_{l=0}^{j-1}|d_{l,1}-d_{l,2}|\leq \sum_{l=0}^{j-1}|d_{l,1}-d_{l,3}|+\sum_{l=0}^{j-1}|d_{l,2}-d_{l,3}|\nonumber\\
 &\leq& 2\sum_{l=0}^{j-1}s_l+C(\overline{N}_0(\lambda;\lambda_1,\lambda_2)+1)(\lambda-\lambda_1)^{-2}+\sum_{l=0}^{j-1}|d_{l,2}-d_{l,3}|\nonumber\\
 &\leq& C\Big(\sum_{l=0}^{j-1}s_l+\sum_{l=0}^{j-1}|d_{l,2}-d_{l,3}|+(\lambda-\lambda_1)^{-1\slash 2}\Big).
\end{eqnarray}
By (\ref{Eq3.27}) and (\ref{Eq3.28}), for any $j\in \{0,1,\cdots,\overline{N}_0(\lambda;\lambda_1,\lambda_2)\}$, we have
\begin{equation}\label{Eq3.29}
  |d_{j,2}-d_{j,3}|\leq C(\lambda-\lambda_1)^{-3\slash 2}\Big(\sum_{l=0}^{j-1}s_l+\sum_{l=0}^{j-1}|d_{l,2}-d_{l,3}|+(\lambda-\lambda_1)^{-1\slash 2}\Big).
\end{equation}

For any $j\in \{0,1,\cdots,\overline{N}_0(\lambda;\lambda_1,\lambda_2)\}$, let $D_j:=\sum_{l=0}^j|d_{l,2}-d_{l,3}|$. Let $D_{-1}:=0$. By (\ref{Eq3.29}), for any $j\in \{0,1,\cdots,\overline{N}_0(\lambda;\lambda_1,\lambda_2)\}$, we have
\begin{eqnarray}
  D_j-D_{j-1}\leq C(\lambda-\lambda_1)^{-3\slash 2}\Big(D_{j-1}+\sum_{l=0}^{\overline{N}_0(\lambda;\lambda_1,\lambda_2)-1}s_l+(\lambda-\lambda_1)^{-1\slash 2}\Big),
\end{eqnarray}
which leads to 
\begin{eqnarray}
&& D_j+\sum_{l=0}^{\overline{N}_0(\lambda;\lambda_1,\lambda_2)-1}s_l+(\lambda-\lambda_1)^{-1\slash 2}\nonumber\\
&\leq& (1+C(\lambda-\lambda_1)^{-3\slash 2})\Big(D_{j-1}+\sum_{l=0}^{\overline{N}_0(\lambda;\lambda_1,\lambda_2)-1}s_l+(\lambda-\lambda_1)^{-1\slash 2}\Big).
\end{eqnarray}
Hence
\begin{eqnarray}\label{Eq3.30}
 &&  D_{\overline{N}_0(\lambda;\lambda_1,\lambda_2)}\leq D_{\overline{N}_0(\lambda;\lambda_1,\lambda_2)}+\sum_{l=0}^{\overline{N}_0(\lambda;\lambda_1,\lambda_2)-1}s_l+(\lambda-\lambda_1)^{-1\slash 2}\nonumber\\
  &\leq& (1+C(\lambda-\lambda_1)^{-3\slash 2})^{\overline{N}_0(\lambda;\lambda_1,\lambda_2)+1}\Big(\sum_{l=0}^{\overline{N}_0(\lambda;\lambda_1,\lambda_2)-1}s_l+(\lambda-\lambda_1)^{-1\slash 2}\Big)\nonumber\\
  &\leq& C \Big(\sum_{l=0}^{\overline{N}_0(\lambda;\lambda_1,\lambda_2)-1}s_l+(\lambda-\lambda_1)^{-1\slash 2}\Big),
\end{eqnarray}
where for the third inequality, we note that by Lemma \ref{Lem3.2},
\begin{equation*}
 (1+C(\lambda-\lambda_1)^{-3\slash 2})^{\overline{N}_0(\lambda;\lambda_1,\lambda_2)+1}\leq  (1+C(\lambda-\lambda_1)^{-3\slash 2})^{C(\lambda-\lambda_1)^{3\slash 2}}\leq C.
\end{equation*}
By (\ref{Eq3.28}) and (\ref{Eq3.30}), we have
\begin{equation}
 |\tau_{\overline{N}_0(\lambda;\lambda_1,\lambda_2)+1}-\tau'_{\overline{N}_0(\lambda;\lambda_1,\lambda_2)+1}|\leq C \Big(\sum_{l=0}^{\overline{N}_0(\lambda;\lambda_1,\lambda_2)}s_l+(\lambda-\lambda_1)^{-1\slash 2}\Big).
\end{equation}
As $\tau'_{\overline{N}_0(\lambda;\lambda_1,\lambda_2)+1}\geq \lambda_2$, we have
\begin{equation}\label{Eq3.31}
   \tau_{\overline{N}_0(\lambda;\lambda_1,\lambda_2)+1}\geq \lambda_2-C \Big(\sum_{l=0}^{\overline{N}_0(\lambda;\lambda_1,\lambda_2)}s_l+(\lambda-\lambda_1)^{-1\slash 2}\Big).
\end{equation}

As the event $(\mathcal{W}')^c\cap\mathcal{C}_1(x)$ holds, at most $\lceil x\slash 2\rceil-1$ elements (denoted by $j$) from $\{\overline{N}_0(\lambda;\lambda_1,\lambda_2)+1,\cdots,\overline{N}_0(\lambda;\lambda_1,\lambda_2)+\lceil x\rceil-1\}$ satisfy $d_{j,1}<\pi\slash (4\sqrt{\lambda-\lambda_1})$. As $x\geq K-2$ is sufficiently large, at least $(\lceil x\rceil-1)-(\lceil x\slash 2\rceil-1)\geq x\slash 4$ elements (denoted by $j$) from $\{\overline{N}_0(\lambda;\lambda_1,\lambda_2)+1,\cdots,\overline{N}_0(\lambda;\lambda_1,\lambda_2)+\lceil x\rceil-1\}$ satisfy $d_{j,1}\geq \pi\slash (4\sqrt{\lambda-\lambda_1})$. Hence
\begin{eqnarray}\label{Eq3.32}
 && \tau_{\overline{N}_0(\lambda;\lambda_1,\lambda_2)+\lceil x\rceil}-\tau_{\overline{N}_0(\lambda;\lambda_1,\lambda_2)+1}=\sum_{j=\overline{N}_0(\lambda;\lambda_1,\lambda_2)+1}^{\overline{N}_0(\lambda;\lambda_1,\lambda_2)+\lceil x\rceil-1}d_{j,1}\nonumber\\
 &\geq& \frac{x}{4}\cdot \frac{\pi}{4\sqrt{\lambda-\lambda_1}}=\frac{\pi x}{16\sqrt{\lambda-\lambda_1}}.
\end{eqnarray}
By (\ref{Eq3.22}), (\ref{Eq3.31}), and (\ref{Eq3.32}), we have
\begin{equation}
  \lambda_2\geq \tau_{\overline{N}_0(\lambda;\lambda_1,\lambda_2)+\lceil x\rceil}\geq \lambda_2-C \Big(\sum_{l=0}^{\overline{N}_0(\lambda;\lambda_1,\lambda_2)}s_l+(\lambda-\lambda_1)^{-1\slash 2}\Big)+cx(\lambda-\lambda_1)^{-1\slash 2},
\end{equation}
which leads to 
\begin{equation}\label{Eq3.34}
\sum_{l=0}^{\overline{N}_0(\lambda;\lambda_1,\lambda_2)}s_l\geq (c_0 x-1)(\lambda-\lambda_1)^{-1\slash 2},
\end{equation}
where $c_0$ is a positive constant that only depends on $\beta$.
 
Therefore, we conclude that for any $\{s_j\}_{j\in\{0,1,\cdots,\overline{N}_0(\lambda;\lambda_1,\lambda_2)\}}$ such that $s_j\in \mathcal{S}$ for any $j\in\{0,1,\cdots,\overline{N}_0(\lambda;\lambda_1,\lambda_2)\}$ and 
\begin{equation}\label{Eq3.33}
\sum_{l=0}^{\overline{N}_0(\lambda;\lambda_1,\lambda_2)}s_l< (c_0 x-1)(\lambda-\lambda_1)^{-1\slash 2},
\end{equation}
we have
\begin{equation}\label{Eq3.36}
 \mathcal{W}^c \cap(\mathcal{W}')^c \cap \overline{\mathcal{D}}_{0,1,\cdots,\overline{N}_0(\lambda;\lambda_1,\lambda_2);s_0,s_1,\cdots,s_{\overline{N}_0(\lambda;\lambda_1,\lambda_2)}}\cap \mathcal{C}_1(x)=\emptyset.
\end{equation}

\textbf{Part 5}

Now consider any $j\in \{0,1,\cdots,\overline{N}_0(\lambda;\lambda_1,\lambda_2)\}$. Note that 
\begin{equation}\label{Eq3.35}
   \{\tau_j\leq \lambda_2\}\subseteq \mathcal{W}_j\bigcup \Big(\bigcup_{s_j\in\mathcal{S}}\overline{\mathcal{E}}_{j,s_j}  \Big)\subseteq \mathcal{W}\bigcup \Big(\bigcup_{s_j\in\mathcal{S}}\overline{\mathcal{E}}_{j,s_j}  \Big). 
\end{equation}
In the following, we denote $\mathbf{s}=(s_0,s_1,\cdots,s_{\overline{N}_0(\lambda;\lambda_1,\lambda_2)})$. Let $\mathcal{S}'$ be the set of $\mathbf{s}$ such that $s_j\in \mathcal{S}$ for any $j\in \{0,1,\cdots,\overline{N}_0(\lambda;\lambda_1,\lambda_2)\}$ and $\mathcal{S}''$ the set of $\mathbf{s}\in \mathcal{S}'$ such that (\ref{Eq3.34}) is satisfied. By (\ref{Eq3.35}), we have
\begin{eqnarray}
\mathcal{C}_1(x) &\subseteq  &\bigcap_{j=0}^{\overline{N}_0(\lambda;\lambda_1,\lambda_2)}\{\tau_j\leq \lambda_2\}\subseteq \Big(\bigcap_{j=0}^{\overline{N}_0(\lambda;\lambda_1,\lambda_2)}\bigcup_{s_j\in\mathcal{S}}\overline{\mathcal{E}}_{j,s_j}\Big)\bigcup \mathcal{W} \nonumber\\
   &\subseteq & \Big(\bigcup_{\mathbf{s}\in\mathcal{S}'}\overline{\mathcal{D}}_{0,1,\cdots,\overline{N}_0(\lambda;\lambda_1,\lambda_2);s_0,s_1,\cdots,s_{\overline{N}_0(\lambda;\lambda_1,\lambda_2)}}\Big)\bigcup\mathcal{W}.
\end{eqnarray}
Hence by (\ref{Eq3.36}),
\begin{eqnarray}
\mathcal{C}_1(x)&\subseteq& \Big(\bigcup_{\mathbf{s}\in\mathcal{S}'}\big(\mathcal{W}^c\cap(\mathcal{W}')^c\cap\overline{\mathcal{D}}_{0,1,\cdots,\overline{N}_0(\lambda;\lambda_1,\lambda_2);s_0,s_1,\cdots,s_{\overline{N}_0(\lambda;\lambda_1,\lambda_2)}}\cap\mathcal{C}_1(x)\big)\Big)\nonumber\\
&&\bigcup\mathcal{W} \bigcup \mathcal{W}'\nonumber\\
&\subseteq & \Big(\bigcup_{\mathbf{s}\in\mathcal{S}''}\big(\mathcal{W}^c\cap(\mathcal{W}')^c\cap\overline{\mathcal{D}}_{0,1,\cdots,\overline{N}_0(\lambda;\lambda_1,\lambda_2);s_0,s_1,\cdots,s_{\overline{N}_0(\lambda;\lambda_1,\lambda_2)}}\cap\mathcal{C}_1(x)\big)\Big)\nonumber\\
&&\bigcup\mathcal{W} \bigcup \mathcal{W}'\nonumber\\
&\subseteq& \Big(\bigcup_{\mathbf{s}\in\mathcal{S}''}\overline{\mathcal{D}}_{0,1,\cdots,\overline{N}_0(\lambda;\lambda_1,\lambda_2);s_0,s_1,\cdots,s_{\overline{N}_0(\lambda;\lambda_1,\lambda_2)}}\Big)\bigcup\mathcal{W} \bigcup \mathcal{W}'.
\end{eqnarray}
By (\ref{Eq3.37}), (\ref{Eq3.38}), (\ref{Eq3.39}), and the union bound, we have
\begin{eqnarray}\label{Eq3.43}
\mathbb{P}(\mathcal{C}_1(x)) &\leq& \sum_{\mathbf{s}\in \mathcal{S}''}\mathbb{P}(\overline{\mathcal{D}}_{0,1,\cdots,\overline{N}_0(\lambda;\lambda_1,\lambda_2);s_0,s_1,\cdots,s_{\overline{N}_0(\lambda;\lambda_1,\lambda_2)}})+\mathbb{P}(\mathcal{W})+\mathbb{P}(\mathcal{W}')\nonumber\\
&\leq&C^{\overline{N}_0(\lambda;\lambda_1,\lambda_2)+1}\sum_{\mathbf{s}\in\mathcal{S}''} \exp\Big(-c\sum_{j=0}^{\overline{N}_0(\lambda;\lambda_1,\lambda_2)}\psi_{\lambda,\lambda_1}(s_j)\Big)\nonumber\\
&&+C\exp\Big(-cx^2\Big/\log\Big(\frac{\lambda-\lambda_1}{\lambda-\lambda_2}\Big)\Big).
\end{eqnarray}

Note that 
\begin{equation}
 |\mathcal{S}|\leq \frac{\log((\lambda-\lambda_1)^3\slash (50C_0))}{\log(2)}+1\leq C\log(\lambda-\lambda_1).
\end{equation}
Hence by Lemma \ref{Lem3.2}, we have
\begin{eqnarray}\label{Eq3.59}
   |\mathcal{S}''|&\leq& |\mathcal{S}'|\leq |\mathcal{S}|^{\overline{N}_0(\lambda;\lambda_1,\lambda_2)+1}\leq (C\log(\lambda-\lambda_1))^{C(\lambda-\lambda_1)^{1\slash 2}(\lambda_2-\lambda_1)}\nonumber\\
 &\leq& \exp(C (\lambda-\lambda_1)^{1\slash 2}(\lambda_2-\lambda_1) \log\log(\lambda-\lambda_1)).
\end{eqnarray}
Now consider any $\mathbf{s}=(s_0,s_1,\cdots,s_{\overline{N}_0(\lambda;\lambda_1,\lambda_2)})\in\mathcal{S}''$. Recalling the definition of $\psi_{\lambda,\lambda_1}(s)$ from (\ref{Eq3.40}), we have
\begin{eqnarray}\label{Eq3.41}
\sum_{j=0}^{\overline{N}_0(\lambda;\lambda_1,\lambda_2)}\psi_{\lambda,\lambda_1}(s_j)&=&(\lambda-\lambda_1)^{5\slash 2}\sum_{\substack{j\in\{0,1,\cdots,\overline{N}_0(\lambda;\lambda_1,\lambda_2)\}:\\ s_j\leq C_0(\lambda-\lambda_1)^{-1\slash 2}\slash 2 }} s_j^2\nonumber\\
&&+(\lambda-\lambda_1)^2\sum_{\substack{j\in\{0,1,\cdots,\overline{N}_0(\lambda;\lambda_1,\lambda_2)\}:\\ s_j> C_0(\lambda-\lambda_1)^{-1\slash 2}\slash 2 }}s_j.
\end{eqnarray}
Consider the following two cases (noting (\ref{Eq3.34})):
\begin{itemize}
  \item[(a)] If $\sum\limits_{\substack{j\in\{0,1,\cdots,\overline{N}_0(\lambda;\lambda_1,\lambda_2)\}:\\ s_j\leq C_0(\lambda-\lambda_1)^{-1\slash 2}\slash 2 }} s_j\geq (c_0x-1)(\lambda-\lambda_1)^{-1\slash 2}\slash 2$, by the Cauchy-Schwarz inequality and Lemma \ref{Lem3.2}, noting that $x\geq K-2$ is sufficiently large, we obtain that
  \begin{eqnarray}\label{Eq3.42}
     \sum_{\substack{j\in\{0,1,\cdots,\overline{N}_0(\lambda;\lambda_1,\lambda_2)\}:\\ s_j\leq C_0(\lambda-\lambda_1)^{-1\slash 2}\slash 2 }} s_j^2&\geq& \Big(\sum\limits_{\substack{j\in\{0,1,\cdots,\overline{N}_0(\lambda;\lambda_1,\lambda_2)\}:\\ s_j\leq C_0(\lambda-\lambda_1)^{-1\slash 2}\slash 2 }} s_j  \Big)^2\Big\slash(\overline{N}_0(\lambda;\lambda_1,\lambda_2)+1)\nonumber\\
     &\geq&  \frac{cx^2}{(\lambda-\lambda_1)^{3\slash 2}(\lambda_2-\lambda_1)}.
  \end{eqnarray}
  By (\ref{Eq3.21}), (\ref{Eq3.41}) and (\ref{Eq3.42}), we have
  \begin{equation}\label{Eq3.44}
     \sum_{j=0}^{\overline{N}_0(\lambda;\lambda_1,\lambda_2)}\psi_{\lambda,\lambda_1}(s_j)\geq \frac{cx^2(\lambda-\lambda_1)}{\lambda_2-\lambda_1}\geq cx^2\Big/\log\Big(\frac{\lambda-\lambda_1}{\lambda-\lambda_2}\Big).
  \end{equation}
  \item[(b)] If $\sum\limits_{\substack{j\in\{0,1,\cdots,\overline{N}_0(\lambda;\lambda_1,\lambda_2)\}:\\ s_j> C_0(\lambda-\lambda_1)^{-1\slash 2}\slash 2 }} s_j\geq (c_0x-1)(\lambda-\lambda_1)^{-1\slash 2}\slash 2$, by (\ref{Eq3.21}), (\ref{Eq3.41}), and our assumption that $x<10(\lambda-\lambda_1)^{1\slash 2}(\lambda_2-\lambda_1)$, we have
  \begin{equation}\label{Eq3.45}
     \sum_{j=0}^{\overline{N}_0(\lambda;\lambda_1,\lambda_2)}\psi_{\lambda,\lambda_1}(s_j)\geq cx(\lambda-\lambda_1)^{3\slash 2}\geq \frac{cx^2(\lambda-\lambda_1)}{\lambda_2-\lambda_1}\geq cx^2\Big/\log\Big(\frac{\lambda-\lambda_1}{\lambda-\lambda_2}\Big).
  \end{equation}
\end{itemize}

By (\ref{Eq3.43}), (\ref{Eq3.44}), (\ref{Eq3.45}), and Lemma \ref{Lem3.2}, we conclude that for any $x\geq 0$ such that $K-2\leq x<10(\lambda-\lambda_1)^{1\slash 2}(\lambda_2-\lambda_1)$,
\begin{eqnarray}\label{Eq3.66}
 \mathbb{P}(\mathcal{C}_1(x))&\leq& \exp(C(\lambda-\lambda_1)^{1\slash 2}(\lambda_2-\lambda_1)\log\log(\lambda-\lambda_1))\nonumber\\
 &&\times \exp\Big(-cx^2\Big/\log\Big(\frac{\lambda-\lambda_1}{\lambda-\lambda_2}\Big)\Big).
\end{eqnarray}

\bigskip

\paragraph{Step 2: Bounding $\mathbb{P}(\mathcal{C}_2(x))$ for $x\geq K-2$}

If $x>\overline{N}_0(\lambda;\lambda_1,\lambda_2)$, then $\mathbb{P}(\mathcal{C}_2(x))=0$. Below we assume that $x\leq \overline{N}_0(\lambda;\lambda_1,\lambda_2)$. Note that by Lemma \ref{Lem3.2}, we have
\begin{equation}\label{Eq3.51}
  x\leq \overline{N}_0(\lambda;\lambda_1,\lambda_2)\leq C_0'(\lambda-\lambda_1)^{1\slash 2}(\lambda_2-\lambda_1),
\end{equation}
where $C_0'$ is a positive absolute constant. 

For any $j\in \{\lceil x\rceil, \cdots, \overline{N}_0(\lambda;\lambda_1,\lambda_2)\}$, we let 
\begin{equation}
  \mathcal{V}_j:=\{\overline{N}(\lambda;\lambda_1,\lambda_2)=\overline{N}_0(\lambda;\lambda_1,\lambda_2)-j\}.
\end{equation}
By the union bound, we have
\begin{equation}\label{Eq3.63}
   \mathbb{P}(\mathcal{C}_2(x))\leq \sum_{j=\lceil x\rceil}^{\overline{N}_0(\lambda;\lambda_1,\lambda_2)}\mathbb{P}(\mathcal{V}_j). 
\end{equation}

In the following, we fix an arbitrary $j\in \{\lceil x\rceil, \cdots, \overline{N}_0(\lambda;\lambda_1,\lambda_2)\}$. Recall the definitions of $\overline{\mathcal{E}}_{j,s}$, $\overline{\mathcal{D}}_{i_1,\cdots,i_m;s_1,\cdots,s_m}$, $\mathcal{W}_j$, and $\mathcal{W}$ from (\ref{Eq3.47}), (\ref{Eq3.46}), (\ref{Eq3.48}), and (\ref{Eq3.49}). Note that for any $l\in \{0,1,\cdots,\overline{N}_0(\lambda;\lambda_1,\lambda_2)-j\}$, 
\begin{equation}\label{Eq3.50}
   \{\tau_l\leq\lambda_2\}\subseteq \mathcal{W}_l\bigcup \Big(\bigcup_{s_l\in\mathcal{S}}\overline{\mathcal{E}}_{l,s_l}\Big)\subseteq \mathcal{W}\bigcup \Big(\bigcup_{s_l\in\mathcal{S}}\overline{\mathcal{E}}_{l,s_l}\Big). 
\end{equation}
In the following, we denote $\tilde{\mathbf{s}}=(s_0,s_1,\cdots,s_{\overline{N}_0(\lambda;\lambda_1,\lambda_2)-j})$. We let $\tilde{\mathcal{S}}'$ be the set of $\tilde{\mathbf{s}}$ such that $s_l\in\mathcal{S}$ for any $l\in\{0,1,\cdots,\overline{N}_0(\lambda;\lambda_1,\lambda_2)-j\}$. By (\ref{Eq3.50}), we have
\begin{eqnarray}\label{Eq3.56}
&& \mathcal{V}_j\subseteq \{\tau_{\overline{N}_0(\lambda;\lambda_1,\lambda_2)-j+1}>\lambda_2\}\bigcap\Big(\bigcap_{l=0}^{\overline{N}_0(\lambda;\lambda_1,\lambda_2)-j}\{\tau_l\leq \lambda_2\}\Big)\nonumber\\
&\subseteq& \{\tau_{\overline{N}_0(\lambda;\lambda_1,\lambda_2)-j+1}>\lambda_2\}\bigcap\Big(\mathcal{W}\bigcup\Big(\bigcap_{l=0}^{\overline{N}_0(\lambda;\lambda_1,\lambda_2)-j}\bigcup_{s_l\in\mathcal{S}}\overline{\mathcal{E}}_{l,s_l}\Big)\Big)\nonumber\\
&\subseteq&  \Big(\bigcup_{\tilde{\mathbf{s}}\in\tilde{\mathcal{S}}'}\big(\overline{\mathcal{D}}_{0,1,\cdots,\overline{N}_0(\lambda;\lambda_1,\lambda_2)-j;s_0,s_1,\cdots,s_{\overline{N}_0(\lambda;\lambda_1,\lambda_2)-j}}\cap \{\tau_{\overline{N}_0(\lambda;\lambda_1,\lambda_2)-j+1}>\lambda_2\}\big)  \Big)\nonumber\\
&& \bigcup  \mathcal{W}.
\end{eqnarray}

In the following, we consider any $\tilde{\mathbf{s}}=(s_0,s_1,\cdots,s_{\overline{N}_0(\lambda;\lambda_1,\lambda_2)-j})\in\tilde{\mathcal{S}}'$, and assume that the event 
\begin{equation*}
\overline{\mathcal{D}}_{0,1,\cdots,\overline{N}_0(\lambda;\lambda_1,\lambda_2)-j;s_0,s_1,\cdots,s_{\overline{N}_0(\lambda;\lambda_1,\lambda_2)-j}}\cap \{\tau_{\overline{N}_0(\lambda;\lambda_1,\lambda_2)-j+1}>\lambda_2\}
\end{equation*}
holds. Arguing as in \textbf{Part 4} of \textbf{Step 1, Case 2} (see below (\ref{Eq3.22})), we can deduce that for any $l\in \{0,1,\cdots,\overline{N}_0(\lambda;\lambda_1,\lambda_2)-j+1\}$,
\begin{equation}\label{Eq3.52}
  |\tau_l-\tau_l'|\leq C\Big(\sum_{i=0}^{\overline{N}_0(\lambda;\lambda_1,\lambda_2)-j}s_i+(\lambda-\lambda_1)^{-1\slash 2}\Big).
\end{equation}
For any $l\in \{0,1,\cdots,\overline{N}_0(\lambda;\lambda_1,\lambda_2)\}$, define $d_{l,2}:=\tau_{l+1}'-\tau_l'$. Recall from (\ref{Eq3.23}) that
\begin{equation}
   \frac{\pi}{\sqrt{\lambda-\tau_l'}}\leq d_{l,2}\leq \frac{\pi}{\sqrt{\lambda-\tau_l'}}+\frac{C}{(\lambda-\lambda_1)^2}.
\end{equation}
As $\tau_l'\geq \lambda_1$, we have
\begin{equation}
    d_{l,2}\geq \frac{\pi}{\sqrt{\lambda-\lambda_1}}.
\end{equation}
As $\tau'_{\overline{N}_0(\lambda;\lambda_1,\lambda_2)}\leq \lambda_2$, we have
\begin{eqnarray}
  && \tau'_{\overline{N}_0(\lambda;\lambda_1,\lambda_2)-j+1}= \tau'_{\overline{N}_0(\lambda;\lambda_1,\lambda_2)}-\sum_{l=\overline{N}_0(\lambda;\lambda_1,\lambda_2)-j+1}^{\overline{N}_0(\lambda;\lambda_1,\lambda_2)-1}d_{l,2}\nonumber\\
  &\leq& \lambda_2-(j-1) \frac{\pi}{\sqrt{\lambda-\lambda_1}}\leq \lambda_2-cx(\lambda-\lambda_1)^{-1\slash 2},
\end{eqnarray}
where we use the fact that $j\geq \lceil x\rceil\geq K-2$ is sufficiently large. Now note that $\tau_{\overline{N}_0(\lambda;\lambda_1,\lambda_2)-j+1}> \lambda_2$. Hence
\begin{eqnarray}\label{Eq3.53}
   |\tau_{\overline{N}_0(\lambda;\lambda_1,\lambda_2)-j+1}-\tau'_{\overline{N}_0(\lambda;\lambda_1,\lambda_2)-j+1}|&\geq& \tau_{\overline{N}_0(\lambda;\lambda_1,\lambda_2)-j+1}-\tau'_{\overline{N}_0(\lambda;\lambda_1,\lambda_2)-j+1}\nonumber\\
 &\geq& cx(\lambda-\lambda_1)^{-1\slash 2}.
\end{eqnarray}
Combining (\ref{Eq3.52}) and (\ref{Eq3.53}) and using the fact that $x\geq K-2$ is sufficiently large, we obtain that
\begin{equation}\label{Eq3.54}
   \sum_{l=0}^{\overline{N}_0(\lambda;\lambda_1,\lambda_2)-j}s_l\geq (cx-1)(\lambda-\lambda_1)^{-1\slash 2}\geq c_0' x(\lambda-\lambda_1)^{-1\slash 2},
\end{equation}
where $c_0'$ is a positive constant that only depends on $\beta$.

Let $\tilde{\mathcal{S}}''$ be the set of $\tilde{\mathbf{s}}=(s_0,s_1,\cdots,s_{\overline{N}_0(\lambda;\lambda_1,\lambda_2)-j})\in \tilde{\mathcal{S}}'$ such that (\ref{Eq3.54}) holds. By the preceding argument, for any $\tilde{\mathbf{s}}\in\tilde{\mathcal{S}}'\backslash \tilde{\mathcal{S}}''$, we have
\begin{equation}\label{Eq3.55}
 \overline{\mathcal{D}}_{0,1,\cdots,\overline{N}_0(\lambda;\lambda_1,\lambda_2)-j;s_0,s_1,\cdots,s_{\overline{N}_0(\lambda;\lambda_1,\lambda_2)-j}}\cap \{\tau_{\overline{N}_0(\lambda;\lambda_1,\lambda_2)-j+1}>\lambda_2\}=\emptyset.
\end{equation}
By (\ref{Eq3.56}) and (\ref{Eq3.55}), we have
\begin{eqnarray}\label{Eq3.56n}
 \mathcal{V}_j&\subseteq&  \Big(\bigcup_{\tilde{\mathbf{s}}\in\tilde{\mathcal{S}}''}\big(\overline{\mathcal{D}}_{0,1,\cdots,\overline{N}_0(\lambda;\lambda_1,\lambda_2)-j;s_0,s_1,\cdots,s_{\overline{N}_0(\lambda;\lambda_1,\lambda_2)-j}}\cap \{\tau_{\overline{N}_0(\lambda;\lambda_1,\lambda_2)-j+1}>\lambda_2\}\big)  \Big)\nonumber\\
&& \bigcup  \mathcal{W}\nonumber\\
&\subseteq& \Big(\bigcup_{\tilde{\mathbf{s}}\in\tilde{\mathcal{S}}''}\overline{\mathcal{D}}_{0,1,\cdots,\overline{N}_0(\lambda;\lambda_1,\lambda_2)-j;s_0,s_1,\cdots,s_{\overline{N}_0(\lambda;\lambda_1,\lambda_2)-j}}\Big)\bigcup  \mathcal{W}.
\end{eqnarray}
Hence by the union bound, we have
\begin{equation}\label{Eq3.57}
   \mathbb{P}(\mathcal{V}_j)\leq \mathbb{P}(\mathcal{W})+\sum_{\tilde{\mathbf{s}}\in\tilde{\mathcal{S}}''}\mathbb{P}(\overline{\mathcal{D}}_{0,1,\cdots,\overline{N}_0(\lambda;\lambda_1,\lambda_2)-j;s_0,s_1,\cdots,s_{\overline{N}_0(\lambda;\lambda_1,\lambda_2)-j}}).
\end{equation}
By (\ref{Eq3.51}), using an argument that is similar to that used in the proof of (\ref{Eq3.38}), we obtain that 
\begin{equation}\label{Eq3.58}
   \mathbb{P}(\mathcal{W})\leq  C\exp\Big(-cx^2\Big/\log\Big(\frac{\lambda-\lambda_1}{\lambda-\lambda_2}\Big)\Big).
\end{equation} 
By (\ref{Eq3.37}), (\ref{Eq3.57}), and (\ref{Eq3.58}), we obtain that 
\begin{eqnarray}\label{Eq3.60}
   \mathbb{P}(\mathcal{V}_j)&\leq& C^{\overline{N}_0(\lambda;\lambda_1,\lambda_2)}\sum_{\tilde{\mathbf{s}}\in\tilde{\mathcal{S}}''}\exp\Big(-c\sum_{l=0}^{\overline{N}_0(\lambda;\lambda_1,\lambda_2)-j}\psi_{\lambda,\lambda_1}(s_l)\Big)\nonumber\\
   && + C\exp\Big(-cx^2\Big/\log\Big(\frac{\lambda-\lambda_1}{\lambda-\lambda_2}\Big)\Big).
\end{eqnarray}
Using an argument that is similar to that used in the proof of (\ref{Eq3.59}), we can deduce that
\begin{equation}\label{Eq3.61}
 |\tilde{\mathcal{S}}''|\leq  \exp(C(\lambda-\lambda_1)^{1\slash 2} (\lambda_2-\lambda_1)\log\log(\lambda-\lambda_1)).
\end{equation}
Now consider any $\tilde{\mathbf{s}}=(s_0,s_1,\cdots,s_{\overline{N}_0(\lambda;\lambda_1,\lambda_2)-j})\in\tilde{\mathcal{S}}''$. Recalling the definition of $\psi_{\lambda,\lambda_1}(s)$ from (\ref{Eq3.40}), following the arguments used in the proofs of (\ref{Eq3.44}) and (\ref{Eq3.45}) (noting (\ref{Eq3.51}) and (\ref{Eq3.54})), we obtain that
\begin{equation}\label{Eq3.62}
   \sum_{l=0}^{\overline{N}_0(\lambda;\lambda_1,\lambda_2)-j}\psi_{\lambda,\lambda_1}(s_l)\geq  cx^2\Big/\log\Big(\frac{\lambda-\lambda_1}{\lambda-\lambda_2}\Big).
\end{equation}
By (\ref{Eq3.60}), (\ref{Eq3.61}), (\ref{Eq3.62}), and Lemma \ref{Lem3.2}, we have
\begin{eqnarray}\label{Eq3.64}
 \mathbb{P}(\mathcal{V}_j)&\leq& \exp(C(\lambda-\lambda_1)^{1\slash 2}(\lambda_2-\lambda_1)\log\log(\lambda-\lambda_1))\nonumber\\
 &&\times \exp\Big(-cx^2\Big/\log\Big(\frac{\lambda-\lambda_1}{\lambda-\lambda_2}\Big)\Big).
\end{eqnarray}
By (\ref{Eq3.63}), (\ref{Eq3.64}), and Lemma \ref{Lem3.2}, we conclude that for any $x\geq K-2$,
\begin{eqnarray}\label{Eq3.67}
 \mathbb{P}(\mathcal{C}_2(x))&\leq& \exp(C(\lambda-\lambda_1)^{1\slash 2}(\lambda_2-\lambda_1)\log\log(\lambda-\lambda_1))\nonumber\\
 &&\times \exp\Big(-cx^2\Big/\log\Big(\frac{\lambda-\lambda_1}{\lambda-\lambda_2}\Big)\Big).
\end{eqnarray}

\bigskip

By (\ref{Eq3.68}), (\ref{Eq3.65}), (\ref{Eq3.66}), and (\ref{Eq3.67}), we conclude that for any $x\geq K$, (\ref{Eq3.69}) holds. This finishes the proof of the proposition.

% \begin{eqnarray}
% && \mathbb{P}(|N(\lambda;\lambda_1,\lambda_2)-N_0(\lambda;\lambda_1,\lambda_2)|\geq x)\nonumber\\
% &\leq&   \exp(C (\lambda-\lambda_1)^{1\slash 2}(\lambda_2-\lambda_1)\log\log(\lambda-\lambda_1))\nonumber\\
% && \times  \exp\Big(-c x^2\Big/\log\Big(\frac{\lambda-\lambda_1}{\lambda-\lambda_2}\Big)\Big).
% \end{eqnarray}

% Consider any $l\in \{0,1,\cdots,\overline{N}_0(\lambda;\lambda_1,\lambda_2)-j\}$. Note that $\tau_l\leq \lambda_2$. We define $d_{l,1}, d_{l,2}, d_{l,3}$ as in \textbf{Part 4} of \textbf{Step 1, Case 2} (see below (\ref{Eq3.22})). We also let $D_j$

%  we deduce that for any $l\in \{0,1,\cdots,\overline{N}_0(\lambda;\lambda_1,\lambda_2)-j\}$,

\end{proof}

Based on Lemma \ref{Lem3.3s}, we give the proof of Proposition \ref{P3.1} as follows.

\begin{proof}[Proof of Proposition \ref{P3.1}]
We take $K\geq 1000$ sufficiently large (depending on $\beta$), so that all the following estimates in this proof hold. We fix $\lambda,\lambda_1,\lambda_2\in\mathbb{R}$ such that $0\leq \lambda_1<\lambda_2<\lambda$, $\min\{\lambda_2-\lambda_1,\lambda-\lambda_2\}\geq K$. If $\lambda-\lambda_1\leq 2(\lambda-\lambda_2)$, the conclusion of the proposition follows from Lemma \ref{Lem3.3s}. In the rest of the proof, we assume that $\lambda-\lambda_1>2(\lambda-\lambda_2)$. 

Let 
\begin{equation}
 T_0:=\Big\lfloor 2\log\Big(\frac{\lambda-\lambda_1}{\lambda-\lambda_2}\Big)\Big/\log(2)\Big\rfloor\geq 2.
\end{equation}
For every $j\in \{0\}\cup [T_0-1]$, let $Z_j:=\lambda-2^{j\slash 2}(\lambda-\lambda_2)$. We also let $Z_{T_0}:=\lambda_1$. Note that $\lambda>Z_0>Z_1>\cdots>Z_{T_0}\geq 0$. For any $j\in \{0,1,\cdots,T_0-2\}$, we have $(\lambda-Z_{j+1})\slash (\lambda-Z_j)=\sqrt{2}$. Moreover, we have
\begin{equation*}
   \lambda-Z_{T_0-1}=2^{(T_0-1)\slash 2}(\lambda-\lambda_2)\geq (\lambda-\lambda_1)\slash 2=(\lambda-Z_{T_0})\slash 2.
\end{equation*}
Hence for any $j\in \{0\}\cup [T_0-1]$, we have
\begin{equation}\label{Eq3.71}
  \lambda-Z_{j+1}\leq 2(\lambda-Z_j), \quad 0\leq Z_{j+1}<Z_j<\lambda.
\end{equation}
We also note that for any $j\in \{0\}\cup [T_0-1]$, 
\begin{equation*}
  \lambda-Z_j\geq \lambda-\lambda_2;
\end{equation*}
for any $j\in \{0,1,\cdots,T_0-2\}$,
\begin{equation*}
    Z_j-Z_{j+1}=(\sqrt{2}-1)2^{j\slash 2}(\lambda-\lambda_2)\geq (\lambda-\lambda_2)\slash 4;
\end{equation*}
\begin{eqnarray*}
   Z_{T_0-1}-Z_{T_0}&=&\lambda-\lambda_1-2^{(T_0-1)\slash 2}(\lambda-\lambda_2)\geq (1-2^{-1\slash 2})(\lambda-\lambda_1)\nonumber\\
   &\geq& (2-\sqrt{2})(\lambda-\lambda_2).
\end{eqnarray*}
Hence for any $j\in \{0\}\cup [T_0-1]$,
\begin{equation}\label{Eq3.72}
   \min\{\lambda-Z_j,Z_j-Z_{j+1}\}\geq (\lambda-\lambda_2)\slash 4 \geq K\slash 4. 
\end{equation}
We also note that
\begin{equation}\label{Eq3.77}
  N(\lambda;\lambda_1,\lambda_2)=\sum_{j=0}^{T_0-1}N(\lambda;Z_{j+1},Z_j),\quad N_0(\lambda;\lambda_1,\lambda_2)=\sum_{j=0}^{T_0-1}N_0(\lambda;Z_{j+1},Z_j).
\end{equation}
By monotonicity, for any $j\in \{0\}\cup [T_0-1]$,
\begin{equation}\label{Eq3.78}
  |N(\lambda;Z_{j+1},Z_j)-\overline{N}(\lambda;Z_{j+1},Z_j)|\leq 1, \quad |N_0(\lambda;Z_{j+1},Z_j)-\overline{N}_0(\lambda;Z_{j+1},Z_j)|\leq 1.
\end{equation}

Let $\mathcal{S}:=\{0\}\cup \{2^t K\slash 40: t\in \mathbb{N}\}$. For any $j\in \{0\}\cup [T_0-1]$ and $s\in\mathcal{S}$, we let 
\begin{equation}\label{Eq3.47n}
 \mathcal{E}_{j,s} := \begin{cases}
   \{|\overline{N}(\lambda;Z_{j+1},Z_j)-\overline{N}_0(\lambda;Z_{j+1},Z_j)|\leq K\slash 40\} & s=0\\
    \{s\leq |\overline{N}(\lambda;Z_{j+1},Z_j)-\overline{N}_0(\lambda;Z_{j+1},Z_j)|\leq 2s\}    & s  \neq        0\\
 \end{cases}.
\end{equation} 
Note that
\begin{equation}\label{Eq3.70}
  \mathcal{E}_{j,s}\in \mathcal{F}_{Z_{j+1},Z_j}\subseteq \mathcal{F}_{\lambda_1,\lambda_2}. 
\end{equation}
By Lemma \ref{Lem3.3s} (noting (\ref{Eq3.71}), (\ref{Eq3.72}), and the fact that $K$ is sufficiently large), for any $j\in \{0\}\cup [T_0-1]$ and $s\in \mathcal{S}\backslash \{0\}$, 
\begin{eqnarray}\label{Eq3.74}
 &&  \mathbb{P}(\mathcal{E}_{j,s})\leq \mathbb{P}(|\overline{N}(\lambda;Z_{j+1},Z_j)-\overline{N}_0(\lambda;Z_{j+1},Z_j)|\geq s)\nonumber\\
 &\leq& \exp(C(\lambda-Z_{j+1})^{1\slash 2}(Z_j-Z_{j+1})\log\log(\lambda-Z_{j+1}))\nonumber\\
 && \times \exp\Big(-cs^2\Big/\log\Big(\frac{\lambda-Z_{j+1}}{\lambda-Z_j}\Big)\Big)\nonumber\\
 &\leq& \exp(C(\lambda-\lambda_1)^{1\slash 2}\log\log(\lambda-\lambda_1)(Z_j-Z_{j+1}))\nonumber\\
 && \times \exp\Big(-cs^2\Big/\log\Big(\frac{\lambda-Z_{j+1}}{\lambda-Z_j}\Big)\Big).
\end{eqnarray}

For any $s_0,s_1,\cdots, s_{T_0-1}\in \mathcal{S}$, let
\begin{equation}
    \mathcal{D}_{s_0,s_1,\cdots,s_{T_0-1}}:=\bigcap_{j=0}^{T_0-1} \mathcal{E}_{j,s_j} \in\mathcal{F}_{\lambda_1,\lambda_2}. 
\end{equation}
Let $\mathcal{S}'$ be the set of $(s_0,s_1,\cdots,s_{T_0-1})$ such that $s_j\in \mathcal{S}$ for every $j\in\{0\}\cup [T_0-1]$. Note that for any $j\in \{0\}\cup [T_0-1]$, $\bigcup\limits_{s\in\mathcal{S}}\mathcal{E}_{j,s}=\Omega$. Hence
\begin{equation}\label{Eq3.80}
   \bigcup_{(s_0,s_1,\cdots,s_{T_0-1})\in\mathcal{S}'} \mathcal{D}_{s_0,s_1,\cdots,s_{T_0-1}}=\Omega.
\end{equation}
By (\ref{Eq3.70}), the events $\{\mathcal{E}_{j,s_j}\}_{j=0}^{T_0-1}$ are mutually independent. Hence we have
\begin{equation}\label{Eq3.73}
  \mathbb{P}(\mathcal{D}_{s_0,s_1,\cdots,s_{T_0-1}})=\prod_{j=0}^{T_0-1} \mathbb{P}(\mathcal{E}_{j,s_j}). 
\end{equation}
By (\ref{Eq3.74}) and (\ref{Eq3.73}), we have
\begin{eqnarray}\label{Eq3.75}
  \mathbb{P}(\mathcal{D}_{s_0,s_1,\cdots,s_{T_0-1}})&\leq &\exp(C(\lambda-\lambda_1)^{1\slash 2}(\lambda_2-\lambda_1)\log\log(\lambda-\lambda_1))\nonumber\\
  && \times \exp\Big(-c\sum_{j=0}^{T_0-1} \Big(s_j^2\Big/\log\Big(\frac{\lambda-Z_{j+1}}{\lambda-Z_j}\Big)\Big)\Big).
\end{eqnarray}
By the Cauchy-Schwarz inequality, we have
\begin{equation*}
  \Big(\sum_{j=0}^{T_0-1} \Big(s_j^2\Big/\log\Big(\frac{\lambda-Z_{j+1}}{\lambda-Z_j}\Big)\Big)\Big)\Big(\sum_{j=0}^{T_0-1}\log\Big(\frac{\lambda-Z_{j+1}}{\lambda-Z_j}\Big)\Big)\geq \Big(\sum_{j=0}^{T_0-1}s_j\Big)^2.
\end{equation*}
As
\begin{equation*}
    \sum_{j=0}^{T_0-1}\log\Big(\frac{\lambda-Z_{j+1}}{\lambda-Z_j}\Big)=\log\Big(\frac{\lambda-Z_{T_0}}{\lambda-Z_0}\Big)=\log\Big(\frac{\lambda-\lambda_1}{\lambda-\lambda_2}\Big),
\end{equation*}
we have
\begin{equation}\label{Eq3.76}
   \sum_{j=0}^{T_0-1} \Big(s_j^2\Big/\log\Big(\frac{\lambda-Z_{j+1}}{\lambda-Z_j}\Big)\Big)\geq \Big(\sum_{j=0}^{T_0-1}s_j\Big)^2\Big/ \log\Big(\frac{\lambda-\lambda_1}{\lambda-\lambda_2}\Big).
\end{equation}
By (\ref{Eq3.75}) and (\ref{Eq3.76}), 
\begin{eqnarray}\label{Eq3.82}
  \mathbb{P}(\mathcal{D}_{s_0,s_1,\cdots,s_{T_0-1}})&\leq &\exp(C(\lambda-\lambda_1)^{1\slash 2}(\lambda_2-\lambda_1)\log\log(\lambda-\lambda_1))\nonumber\\
  && \times \exp\Big(-c\Big(\sum_{j=0}^{T_0-1}s_j\Big)^2\Big/ \log\Big(\frac{\lambda-\lambda_1}{\lambda-\lambda_2}\Big)\Big).
\end{eqnarray}

Consider any $s_0,s_1,\cdots, s_{T_0-1}\in \mathcal{S}$. In the following, we assume that the event $\{|N(\lambda;\lambda_1,\lambda_2)-N_0(\lambda;\lambda_1,\lambda_2)|\geq x\}\cap\mathcal{D}_{s_0,s_1,\cdots,s_{T_0-1}}$ holds. By (\ref{Eq3.77}) and (\ref{Eq3.78}), we have
\begin{eqnarray}
x&\leq& |N(\lambda;\lambda_1,\lambda_2)-N_0(\lambda;\lambda_1,\lambda_2)|\nonumber\\
&\leq& \sum_{j=0}^{T_0-1} |\overline{N}(\lambda;Z_{j+1},Z_j)-\overline{N}_0(\lambda;Z_{j+1},Z_j)|+2 T_0\nonumber\\
&\leq& \sum_{j=0}^{T_0-1}(2s_j+K\slash 40)+2T_0\leq 2\sum_{j=0}^{T_0-1}s_j+KT_0\slash 20. 
\end{eqnarray}
As
\begin{equation*}
   x\geq K\Big(1+\log\Big(\frac{\lambda-\lambda_1}{\lambda-\lambda_2}\Big)\Big)\geq KT_0\slash 4,
\end{equation*}
we have 
\begin{equation}\label{Eq3.79}
  \sum_{j=0}^{T_0-1}s_j\geq x\slash 4.
\end{equation}
Let $\mathcal{S}''$ be the set of $(s_0,s_1,\cdots,s_{T_0-1})\in\mathcal{S}'$ such that (\ref{Eq3.79}) is satisfied. By the preceding argument, for any $(s_0,s_1,\cdots,s_{T_0-1})\in\mathcal{S}'\backslash \mathcal{S}''$, we have
\begin{equation}\label{Eq3.81}
  \{|N(\lambda;\lambda_1,\lambda_2)-N_0(\lambda;\lambda_1,\lambda_2)|\geq x\}\cap\mathcal{D}_{s_0,s_1,\cdots,s_{T_0-1}}=\emptyset.
\end{equation}
By (\ref{Eq3.80}) and (\ref{Eq3.81}), we have
\begin{equation}\label{Eq3.83}
  \{|N(\lambda;\lambda_1,\lambda_2)-N_0(\lambda;\lambda_1,\lambda_2)|\geq x\}\subseteq \bigcup_{(s_0,s_1,\cdots,s_{T_0-1})\in\mathcal{S}''} \mathcal{D}_{s_0,s_1,\cdots,s_{T_0-1}}.
\end{equation}
For any $l\in \mathbb{N}$, let $\mathcal{S}''_l$ be the set of $(s_0,s_1,\cdots,s_{T_0-1})\in\mathcal{S}''$ such that $\sum_{j=0}^{T_0-1}s_j=Kl\slash 40$. Note that $\mathcal{S}''=\bigcup\limits_{l=0}^{\infty} \mathcal{S}_l''$. For any $l\in \mathbb{N}$ such that $l<10 x\slash K$, we have $\sum_{j=0}^{T_0-1}s_j<x\slash 4$, hence $\mathcal{S}''_l=\emptyset$. Hence letting
\begin{equation}
 \mathcal{U}=\bigcup\limits_{l=\lceil 10x\slash K  \rceil}^{\infty}\bigcup\limits_{(s_0,s_1,\cdots,s_{T_0-1})\in\mathcal{S}''_l} \mathcal{D}_{s_0,s_1,\cdots,s_{T_0-1}}\in\mathcal{F}_{\lambda_1,\lambda_2},
\end{equation}
by (\ref{Eq3.83}), we have
\begin{equation}\label{Eq3.84}
  \{|N(\lambda;\lambda_1,\lambda_2)-N_0(\lambda;\lambda_1,\lambda_2)|\geq x\}\subseteq \mathcal{U}.
\end{equation}

Now consider any $l\in\mathbb{N}$ such that $l\geq \lceil   10 x\slash K\rceil$ (as $x\geq K$, this implies $l\geq 10$). For any $(s_0,s_1,\cdots,s_{T_0-1})\in\mathcal{S}''_l$ and any $j\in \{0\}\cup [T_0-1]$, we have $s_j\leq Kl\slash 40$, hence $s_j\in\{0\} \cup \{2^t K\slash 40: t\in\mathbb{N}, t\leq \log_2(l)\}$. Hence 
\begin{equation}\label{Eq3.85}
  |\mathcal{S}''_l|  \leq (2+\log_2(l))^{T_0}\leq \exp\Big(C\log\log(l)\log\Big(\frac{\lambda-\lambda_1}{\lambda-\lambda_2}\Big)\Big).
\end{equation}
For any $(s_0,s_1,\cdots,s_{T_0-1})\in\mathcal{S}''_l$, by (\ref{Eq3.82}), we have
\begin{eqnarray}\label{Eq3.86}
  \mathbb{P}(\mathcal{D}_{s_0,s_1,\cdots,s_{T_0-1}})&\leq &\exp(C(\lambda-\lambda_1)^{1\slash 2}(\lambda_2-\lambda_1)\log\log(\lambda-\lambda_1))\nonumber\\
  && \times \exp\Big(-cK^2l^2\Big/ \log\Big(\frac{\lambda-\lambda_1}{\lambda-\lambda_2}\Big)\Big).
\end{eqnarray}
By (\ref{Eq3.84}), (\ref{Eq3.85}), (\ref{Eq3.86}), and the union bound, we have
\begin{eqnarray}\label{Eq3.87}
  &&\mathbb{P}(|N(\lambda;\lambda_1,\lambda_2)-N_0(\lambda;\lambda_1,\lambda_2)|\geq x)\leq \mathbb{P}(\mathcal{U})\nonumber\\
  &\leq& \sum_{l=\lceil 10x\slash K\rceil}^{\infty} \sum_{(s_0,s_1,\cdots,s_{T_0-1})\in\mathcal{S}''_l}  \mathbb{P}(\mathcal{D}_{s_0,s_1,\cdots,s_{T_0-1}})\nonumber\\
  &\leq& \exp(C(\lambda-\lambda_1)^{1\slash 2}(\lambda_2-\lambda_1)\log\log(\lambda-\lambda_1))\nonumber\\
  &\times& \sum_{l=\lceil 10x\slash K\rceil}^{\infty}   \exp\Big(-cK^2l^2\Big/ \log\Big(\frac{\lambda-\lambda_1}{\lambda-\lambda_2}\Big)+C\log\log(l)\log\Big(\frac{\lambda-\lambda_1}{\lambda-\lambda_2}\Big)\Big).\nonumber\\
  &&
\end{eqnarray}

We define $h(x):=x^2\slash \log\log(x)$ for any $x\geq 10$. For any $x\geq 10$,
\begin{equation*}
   h'(x)=\frac{2x\log\log(x)-x\slash \log(x)}{(\log\log(x))^2}\geq 0.
\end{equation*}
Hence $h(x)$ is non-decreasing on $[10,\infty)$. Below we consider any $l\in\mathbb{N}$ such that $l\geq \lceil 10x\slash K\rceil$. As
\begin{equation}\label{Eq3.92}
 x\geq K\Big(1+\log\Big(\frac{\lambda-\lambda_1}{\lambda-\lambda_2}\Big)\Big)\max\Big\{1,\log\log\Big(\frac{\lambda-\lambda_1}{\lambda-\lambda_2}\Big)\Big\},
\end{equation}
we have
\begin{equation}
  l\geq 10\Big(1+\log\Big(\frac{\lambda-\lambda_1}{\lambda-\lambda_2}\Big)\Big)\max\Big\{1,\log\log\Big(\frac{\lambda-\lambda_1}{\lambda-\lambda_2}\Big)\Big\}=:L_0.
\end{equation}
As $L_0\geq 10$, we have $h(l)\geq h(L_0)$. Note that
\begin{equation}\label{Eq3.88}
 L_0^2\geq 100 \Big(\log\Big(\frac{\lambda-\lambda_1}{\lambda-\lambda_2}\Big)\Big)^2\Big(\max\Big\{1,\log\log\Big(\frac{\lambda-\lambda_1}{\lambda-\lambda_2}\Big)\Big\}\Big)^2.
\end{equation}
Moreover, 
\begin{equation*}
  L_0\leq 20\Big(\max\Big\{1,\log\Big(\frac{\lambda-\lambda_1}{\lambda-\lambda_2}\Big)\Big\}\Big)^2,
\end{equation*}
\begin{equation}\label{Eq3.89}
   \log(L_0)\leq \log(20)+2\log\Big(\max\Big\{1,\log\Big(\frac{\lambda-\lambda_1}{\lambda-\lambda_2}\Big)\Big\}\Big).
\end{equation}
If $(\lambda-\lambda_1)\slash (\lambda-\lambda_2)\leq e^e$, by (\ref{Eq3.88}) and (\ref{Eq3.89}), we have 
\begin{equation*}
   L_0^2\geq 100 \Big(\log\Big(\frac{\lambda-\lambda_1}{\lambda-\lambda_2}\Big)\Big)^2, \quad \log\log(L_0)\leq 2,
\end{equation*}
hence 
\begin{equation}\label{Eq3.90}
  h(l)   \geq h(L_0)\geq 50\Big(\log\Big(\frac{\lambda-\lambda_1}{\lambda-\lambda_2}\Big)\Big)^2.
\end{equation}
If $(\lambda-\lambda_1)\slash (\lambda-\lambda_2)> e^e$, by (\ref{Eq3.88}) and (\ref{Eq3.89}), we have 
\begin{equation*}
   L_0^2\geq 100 \Big(\log\Big(\frac{\lambda-\lambda_1}{\lambda-\lambda_2}\Big)\Big)^2\Big(\log\log\Big(\frac{\lambda-\lambda_1}{\lambda-\lambda_2}\Big)\Big)^2,
\end{equation*}
\begin{equation*}
 \log(L_0)\leq \log(20)+2\log\log\Big(\frac{\lambda-\lambda_1}{\lambda-\lambda_2}\Big)\leq 6\log\log\Big(\frac{\lambda-\lambda_1}{\lambda-\lambda_2}\Big),
\end{equation*}
\begin{equation*}
 \log\log(L_0)\leq \log(6)+\log\log\log\Big(\frac{\lambda-\lambda_1}{\lambda-\lambda_2}\Big)\leq 3\log\log\Big(\frac{\lambda-\lambda_1}{\lambda-\lambda_2}\Big),
\end{equation*}
hence
\begin{equation}\label{Eq3.91}
  h(l)\geq h(L_0)\geq 20  \Big(\log\Big(\frac{\lambda-\lambda_1}{\lambda-\lambda_2}\Big)\Big)^2.
\end{equation}
By (\ref{Eq3.90}) and (\ref{Eq3.91}), as $K$ is sufficiently large, for any $l\in\mathbb{N}$ such that $l\geq \lceil 10x\slash K\rceil$, we have 
\begin{equation}
  c K^2l^2 \Big/\Big(2\log\Big(\frac{\lambda-\lambda_1}{\lambda-\lambda_2}\Big)\Big)\geq C\log\log(l)\log\Big(\frac{\lambda-\lambda_1}{\lambda-\lambda_2}\Big),
\end{equation}
where $C,c$ are the constants appearing in (\ref{Eq3.87}). Hence by (\ref{Eq3.87}),
\begin{eqnarray}\label{Eq3.94}
  &&\mathbb{P}(|N(\lambda;\lambda_1,\lambda_2)-N_0(\lambda;\lambda_1,\lambda_2)|\geq x)\leq\mathbb{P}(\mathcal{U})\nonumber\\
  &\leq&  \exp(C(\lambda-\lambda_1)^{1\slash 2}(\lambda_2-\lambda_1)\log\log(\lambda-\lambda_1))\nonumber\\
  &\times& \sum_{l=\lceil 10x\slash K\rceil}^{\infty}   \exp\Big(-cK^2l^2\Big/\Big(2\log\Big(\frac{\lambda-\lambda_1}{\lambda-\lambda_2}\Big)\Big)\Big).
\end{eqnarray}
Now note that
\begin{eqnarray}\label{Eq3.93}
&& \sum_{l=\lceil 10x\slash K\rceil}^{\infty}   \exp\Big(-cK^2l^2\Big/\Big(2\log\Big(\frac{\lambda-\lambda_1}{\lambda-\lambda_2}\Big)\Big)\Big)\nonumber\\
&\leq& \sum_{l=\lceil 10x\slash K \rceil  }^{\infty}\exp\Big(-cx K l\Big/\log\Big(\frac{\lambda-\lambda_1}{\lambda-\lambda_2}\Big)\Big)\nonumber\\
&\leq & \exp\Big(-cx^2\Big/\log\Big(\frac{\lambda-\lambda_1}{\lambda-\lambda_2}\Big)\Big)\Big/\Big(1-\exp\Big(-cxK\Big/\log\Big(\frac{\lambda-\lambda_1}{\lambda-\lambda_2}\Big)\Big)\Big).\nonumber\\
&&
\end{eqnarray}
By (\ref{Eq3.92}), as $K$ is sufficiently large, we have
\begin{equation}\label{Eq3.95}
 \exp\Big(-cxK\Big/\log\Big(\frac{\lambda-\lambda_1}{\lambda-\lambda_2}\Big)\Big)\leq \exp(-cK^2)\leq \frac{1}{2}.
\end{equation}
By (\ref{Eq3.94}), (\ref{Eq3.93}), and (\ref{Eq3.95}), we conclude that
\begin{eqnarray}
&& \mathbb{P}(|N(\lambda;\lambda_1,\lambda_2)-N_0(\lambda;\lambda_1,\lambda_2)|\geq x)\leq\mathbb{P}(\mathcal{U})\nonumber\\
&\leq& \exp(C (\lambda-\lambda_1)^{1\slash 2}(\lambda_2-\lambda_1)\log\log(\lambda-\lambda_1))\nonumber\\
&& \times  \exp\Big(-c x^2\Big/\log\Big(\frac{\lambda-\lambda_1}{\lambda-\lambda_2}\Big)\Big).
\end{eqnarray}

\end{proof}

\subsection{Proof of Proposition \ref{P3.2}}\label{Sect.3.3}

In this subsection, we give the proof of Proposition \ref{P3.2}.

\begin{proof}[Proof of Proposition \ref{P3.2}]
We take $K$ sufficiently large (depending only on $\beta$), so that all the following estimates in this proof hold. By monotonicity, we have
\begin{equation}
  |N(\lambda;\lambda_1,\lambda_2)-\overline{N}(\lambda;\lambda_1,\lambda_2)|\leq 1. 
\end{equation}
Let $\mathcal{U}=\{\overline{N}(\lambda;\lambda_1,\lambda_2)\geq x-1\}\in\mathcal{F}_{\lambda_1,\lambda_2}$. Then $\{N(\lambda;\lambda_1,\lambda_2)\geq x\}\subseteq \mathcal{U}$. As $x\geq 2$, we have
\begin{equation}\label{Eq3.119}
 \mathbb{P}(N(\lambda;\lambda_1,\lambda_2)\geq x)\leq \mathbb{P}(\mathcal{U})\leq \mathbb{P}(\overline{N}(\lambda;\lambda_1,\lambda_2)\geq x\slash 2).
\end{equation}

For any $j\in\mathbb{N}^{*}$, if there are at least $j$ blow-ups of $\overline{p}_{\lambda,\lambda_1}(x)$ in the interval $(\lambda_1,\infty)$, we let $\tau_j$ be the $j$th blow-up time of $\overline{p}_{\lambda,\lambda_1}(x)$ in $(\lambda_1,\infty)$; otherwise we let $\tau_j:=\infty$. We also let $\tau_0:=\lambda_1$.

For any $j\in\mathbb{N}$, by Corollary \ref{tail} and the strong Markov property (replacing $t$ by $-(\lambda-\tau_j)$ in Corollary \ref{tail}; note that $-(\lambda-\tau_j)\geq \lambda_1-\lambda\geq K$), we have
\begin{equation}
  \mathbbm{1}_{\tau_j<\infty} \mathbb{P}(\tau_{j+1}<\infty|\mathcal{F}_{\tau_j})\leq C\exp(-c (\lambda_1-\lambda)^{3\slash 2}). 
\end{equation}
Therefore, as $K$ is sufficiently large, noting (\ref{Eq3.119}), we have
\begin{eqnarray}\label{Eq3.132}
&& \mathbb{P}(N(\lambda;\lambda_1,\lambda_2)\geq x)\leq \mathbb{P}(\mathcal{U})\nonumber\\
&\leq& \mathbb{P}(\overline{N}(\lambda;\lambda_1,\lambda_2)\geq x\slash 2)
\leq\mathbb{P}(\tau_{\lceil x\slash 2\rceil }<\infty)\nonumber\\
&=& \mathbb{E}\Big[\prod_{j=1}^{\lceil x\slash 2 \rceil} \mathbbm{1}_{\tau_j<\infty}\Big]=\mathbb{E}\Big[\prod_{j=1}^{\lceil x\slash 2\rceil -1}\mathbbm{1}_{\tau_j<\infty}\mathbb{P}(\tau_{\lceil x\slash 2\rceil }<\infty|\mathcal{F}_{\tau_{\lceil x\slash 2\rceil -1}})\Big]\nonumber\\
&\leq& C\exp(-c(\lambda_1-\lambda)^{3\slash 2}) \mathbb{E}\Big[\prod_{j=1}^{\lceil x\slash 2 \rceil -1}\mathbbm{1}_{\tau_j<\infty}\Big]\leq \cdots\nonumber\\
&\leq& C^{\lceil x\slash 2\rceil}\exp(-c(\lambda_1-\lambda)^{3\slash 2}\lceil x\slash 2\rceil)\leq C \exp(-c(\lambda_1-\lambda)^{3\slash 2} x).\nonumber\\
&& 
\end{eqnarray}

\end{proof}

\subsection{Proof of Proposition \ref{P3.3}}\label{Sect.3.4}

In this subsection, we give the proof of Proposition \ref{P3.3}.

\begin{proof}[Proof of Proposition \ref{P3.3}]

Throughout this proof, we use $C,c$ to denote positive constants that only depend on $\beta$ and $L$. The values of these constants may change from line to line.

We take $K\geq 10$ sufficiently large (depending only on $\beta$ and $L$), so that all the following estimates in this proof hold. We also fix any $\lambda,\lambda_1,\lambda_2,x\in\mathbb{R}$ such that the conditions stated in the theorem hold. By monotonicity, we have
\begin{equation*}
 |N(\lambda;\lambda_1,\lambda_2)-\overline{N}(\lambda;\lambda_1,\lambda_2)|\leq 1,\quad |N_0(\lambda;\lambda_1,\lambda_2)-\overline{N}_0(\lambda;\lambda_1,\lambda_2)|\leq 1.
\end{equation*}
Hence we have
\begin{equation}\label{Eq3.120}
\{|N(\lambda;\lambda_1,\lambda_2)-N_0(\lambda;\lambda_1,\lambda_2)|\geq x\} \subseteq \{|\overline{N}(\lambda;\lambda_1,\lambda_2)-\overline{N}_0(\lambda;\lambda_1,\lambda_2)|\geq x-2\},
\end{equation}
\begin{eqnarray}\label{Eq3.98}
 && \mathbb{P}(|N(\lambda;\lambda_1,\lambda_2)-N_0(\lambda;\lambda_1,\lambda_2)|\geq x) \nonumber\\
 &\leq & \mathbb{P}(|\overline{N}(\lambda;\lambda_1,\lambda_2)-\overline{N}_0(\lambda;\lambda_1,\lambda_2)|\geq x-2).
\end{eqnarray}

Let $t_0$ be the number of blow-ups of $\overline{q}_{\lambda,\lambda_1}(x)$ in the interval $(\lambda_1,\lambda]$. Below we show that
\begin{equation}\label{Eq3.96}
   t_0\leq \overline{N}_0(\lambda;\lambda_1,\lambda_2) \leq t_0+1. 
\end{equation}
The first inequality in (\ref{Eq3.96}) is obvious, and we derive the second inequality in (\ref{Eq3.96}) as follows. Suppose that $\overline{N}_0(\lambda;\lambda_1,\lambda_2)\geq t_0+2$. Then there exists some $x_0> \lambda$, such that the function $r(x), x\geq x_0$ defined by
\begin{equation*}
    r'(x)=x-\lambda-r(x)^2, \quad r(x_0)=\infty
\end{equation*}
blows up in finite time. Note that for $x\geq x_0$, $r(x)$ is lower bounded by $w(x)$, defined by
\begin{equation*}
 w'(x)=x_0-\lambda-w(x)^2, \quad w(x_0)=\infty. 
\end{equation*}
Solving the above equation, we obtain that for any $x\geq x_0$,
\begin{equation*}
  w(x)=\frac{\sqrt{x_0-\lambda}(1+e^{-2\sqrt{x_0-\lambda}(x-x_0)})}{1-e^{-2\sqrt{x_0-\lambda}(x-x_0)}},
\end{equation*}
which does not blow up in finite time. This leads to a contradiction, and we conclude that $\overline{N}_0(\lambda;\lambda_1,\lambda_2) \leq t_0+1$.

We bound $t_0$ as follows. Let $\tau_0':=\lambda_1$. For any $j\in\{1,\cdots,t_0\}$, let $\tau_j'$ be the $j$th blow-up time of $\overline{q}_{\lambda,\lambda_1}(x)$ in the interval $(\lambda_1,\lambda]$. For any $j\in\{0,1,\cdots,t_0-1\}$, we let $r_j(x)=\overline{q}_{\lambda,\lambda_1}(x+\tau_j')$ for any $x\geq 0$. Note that   
\begin{equation}
  r_j'(x)=x-(\lambda-\tau_j')-r_j(x)^2, \quad  r_j(0)=\infty.
\end{equation}
As $\tau_j'\in [\lambda_1,\lambda)$, we have $\lambda-\tau_j'\in (0,\lambda-\lambda_1]$. By \cite[Lemma 2.3]{Zho}, 
\begin{equation*}
  \tau_{j+1}'-\tau_j'\geq   \frac{\pi}{\sqrt{\lambda-\tau_j'}} \geq \frac{\pi}{\sqrt{\lambda-\lambda_1}}.
\end{equation*}
Hence 
\begin{equation*}
  \lambda-\lambda_1\geq \tau_{t_0}'-\tau_0'=\sum_{j=0}^{t_0-1}(\tau_{j+1}'-\tau_j')\geq \frac{\pi t_0}{\sqrt{\lambda-\lambda_1}},
\end{equation*}
which leads to
\begin{equation}\label{Eq3.97}
   t_0\leq \pi^{-1} (\lambda-\lambda_1)^{3\slash 2}. 
\end{equation}

Let $\tau_0:=\lambda_1$. For any $j\in\mathbb{N}^{*}$, if there exist at least $j$ blow-ups of $\overline{p}_{\lambda,\lambda_1}(x)$ in the interval $(\lambda_1,\infty)$, we let $\tau_j$ be the $j$th blow-up time of $\overline{p}_{\lambda,\lambda_1}(x)$ in $(\lambda_1,\infty)$; otherwise we set $\tau_j:=\infty$. 

We consider the two cases $x\geq 100 L^2 (\lambda-\lambda_1)^{3\slash 2}$ and $x< 100 L^2 (\lambda-\lambda_1)^{3\slash 2}$ separately in the following. 

\paragraph{Case 1: $x\geq 100 L^2 (\lambda-\lambda_1)^{3\slash 2}$} 

For this case, we let
\begin{equation}
\mathcal{U}=\{|\overline{N}(\lambda;\lambda_1,\lambda_2)-\overline{N}_0(\lambda;\lambda_1,\lambda_2)|\geq x-2\}\in\mathcal{F}_{\lambda_1,\lambda_2}.
\end{equation}
Note that by (\ref{Eq3.120}), we have
\begin{equation}\label{Eq3.125}
\{|N(\lambda;\lambda_1,\lambda_2)-N_0(\lambda;\lambda_1,\lambda_2)|\geq x\} \subseteq \mathcal{U}.
\end{equation}
By (\ref{Eq3.96}) and (\ref{Eq3.97}), as $\lambda-\lambda_1\geq K$ is sufficiently large, we have
\begin{eqnarray*}
  && \overline{N}_0(\lambda;\lambda_1,\lambda_2)-(x-2)\leq t_0+1-(x-2)\nonumber\\
  &\leq& \pi^{-1}(\lambda-\lambda_1)^{3\slash 2}+3-100 (\lambda-\lambda_1)^{3\slash 2} <0.
\end{eqnarray*}
Hence $\overline{N}(\lambda;\lambda_1,\lambda_2)\geq 0>\overline{N}_0(\lambda;\lambda_1,\lambda_2)-(x-2)$. Combining this with (\ref{Eq3.98}), as $x\geq K\log(2)^2$ is sufficiently large, we obtain that
\begin{eqnarray}\label{Eq3.106}
 && \mathbb{P}(|N(\lambda;\lambda_1,\lambda_2)-N_0(\lambda;\lambda_1,\lambda_2)|\geq x)\leq \mathbb{P}(\mathcal{U})\nonumber\\
 &\leq& \mathbb{P}(\overline{N}(\lambda;\lambda_1,\lambda_2)\geq \overline{N}_0(\lambda;\lambda_1,\lambda_2)+x-2)\nonumber\\
&\leq& \mathbb{P}(\overline{N}(\lambda;\lambda_1,\lambda_2)\geq x\slash 2).
\end{eqnarray}

In the following, we assume that $\overline{N}(\lambda;\lambda_1,\lambda_2)\geq x\slash 2$. This implies that $\tau_{\lceil x\slash 2\rceil}\leq \lambda_2$. Suppose that there exist at least $\lceil x\slash 6 \rceil$ elements (denoted by $j$) from $\{0,1,\cdots,\lceil x\slash 2\rceil-1\}$, such that $\tau_{j+1}-\tau_j\geq 8(\lambda_2-\lambda_1)\slash x$. Then we have
\begin{equation*}
   \lambda_2-\lambda_1\geq \tau_{\lceil x\slash 2\rceil}-\tau_0=\sum_{j=0}^{\lceil x\slash 2\rceil-1} (\tau_{j+1}-\tau_j)\geq \Big\lceil\frac{x}{6}\Big\rceil \frac{8(\lambda_2-\lambda_1)}{x} \geq \frac{4}{3}(\lambda_2-\lambda_1),
\end{equation*}
which leads to a contradiction. Hence there are at most $\lceil x\slash 6\rceil -1$ elements (denoted by $j$) from $\{0,1,\cdots,\lceil x\slash 2\rceil-1\}$, such that $\tau_{j+1}-\tau_j\geq 8(\lambda_2-\lambda_1)\slash x$. From this, we obtain that there are at least $\lceil x\slash 2\rceil-(\lceil x\slash 6\rceil-1)\geq x\slash 3$ elements (denoted by $j$) from $\{0,1,\cdots,\lceil x\slash 2\rceil-1\}$, such that 
\begin{equation*}
 \tau_{j+1}-\tau_j< \frac{8(\lambda_2-\lambda_1)}{x} \leq \frac{8(L+1)(\lambda-\lambda_1)}{x},
\end{equation*}
where we use the assumption that $\lambda_2-\lambda\leq L(\lambda-\lambda_1)$.

For any $j\in\{0,1,\cdots,\lceil x\slash 2\rceil-1\}$, we let
\begin{equation}
   \mathcal{E}_j:=\{\tau_j\leq \lambda_2, \tau_{j+1}-\tau_j\leq 8(L+1)(\lambda-\lambda_1)\slash x\}\in\mathcal{F}_{\tau_{j+1}}.
\end{equation}
For any $i_1<\cdots<i_{\lceil x\slash 3\rceil}$ such that $i_1,\cdots,i_{\lceil x\slash 3\rceil}\in\{0,1,\cdots,\lceil x\slash 2\rceil-1\}$, let
\begin{equation}
  \mathcal{D}_{i_1,\cdots,i_{\lceil x\slash 3\rceil}} :=\bigcap_{j\in \{i_1,\cdots,i_{\lceil x\slash 3\rceil}\}}\mathcal{E}_j.
\end{equation}
By the preceding discussion, we have
\begin{equation}\label{Eq3.99}
 \{\overline{N}(\lambda;\lambda_1,\lambda_2)\geq x\slash 2\}  \subseteq \bigcup_{\substack{i_1<\cdots<i_{\lceil x\slash 3\rceil}:\\ i_1,\cdots,i_{\lceil x\slash 3\rceil}\in \{0,1,\cdots,\lceil x\slash 2\rceil-1\} }}\mathcal{D}_{i_1,\cdots,i_{\lceil x\slash 3\rceil}}.
\end{equation}

Consider any $i_1<\cdots<i_{\lceil x\slash 3\rceil}$ such that $i_1,\cdots,i_{\lceil x\slash 3\rceil}\in\{0,1,\cdots,\lceil x\slash 2\rceil -1\}$. For any $j\in\{i_1,\cdots,i_{\lceil x\slash 3\rceil}\}$, if $\tau_j\leq \lambda_2$, then $\lambda-\tau_j\leq \lambda-\lambda_1$. By Lemma \ref{Lem3.3} and the strong Markov property (replacing $\lambda$ by $\lambda-\tau_j$ and taking $a=\lambda-\lambda_1$ and $s=(\pi\slash 20) (L+1)^{-1}(\lambda-\lambda_1)^{-3\slash 2}x$ in Lemma \ref{Lem3.3}; note that we have $s\geq (\pi\slash 40)L^{-1}(\lambda-\lambda_1)^{-3\slash 2}x\geq 1$), we have
\begin{eqnarray}
  \mathbb{P}(\mathcal{E}_j|\mathcal{F}_{\tau_j})&=&\mathbbm{1}_{\tau_j\leq \lambda_2} \mathbb{P}(\tau_{j+1}-\tau_j\leq 8(L+1)(\lambda-\lambda_1)\slash x|\mathcal{F}_{\tau_j}) \nonumber\\
 &\leq& C\exp(-c L^{-2}(\lambda-\lambda_1)^{-3\slash 2} x^2).
\end{eqnarray}
Therefore, we have
\begin{eqnarray}\label{Eq3.100}
 &&  \mathbb{P}(\mathcal{D}_{i_1,\cdots,i_{\lceil x\slash 3\rceil}})=\mathbb{E}\Big[\prod_{l=1}^{\lceil x\slash 3\rceil} \mathbbm{1}_{\mathcal{E}_{i_l}}\Big]=\mathbb{E}\Big[\prod_{l=1}^{\lceil x\slash 3\rceil-1}\mathbbm{1}_{\mathcal{E}_{i_l}}\mathbb{P}(\mathcal{E}_{i_{\lceil x\slash 3\rceil}}|\mathcal{F}_{\tau_{i_{\lceil x\slash 3\rceil}}})\Big]\nonumber\\
 &\leq& C\exp(-c L^{-2}(\lambda-\lambda_1)^{-3\slash 2} x^2) \mathbb{E}\Big[\prod_{l=1}^{\lceil x\slash 3\rceil-1}\mathbbm{1}_{\mathcal{E}_{i_l}}\Big] \leq \cdots\nonumber\\
 &\leq& C^{\lceil x\slash 3\rceil}\exp(-c\lceil x\slash 3\rceil L^{-2}(\lambda-\lambda_1)^{-3\slash 2} x^2)\nonumber\\
 &\leq& C^{x+1}\exp(-c L^{-2}(\lambda-\lambda_1)^{-3\slash 2} x^3).
\end{eqnarray}
By (\ref{Eq3.99}), (\ref{Eq3.100}), and the union bound, we have
\begin{eqnarray}\label{Eq3.101}
  \mathbb{P}(\overline{N}(\lambda;\lambda_1,\lambda_2)\geq x\slash 2)&\leq& 2^{\lceil x\slash 2\rceil}C^{x+1}\exp(-c L^{-2}(\lambda-\lambda_1)^{-3\slash 2} x^3) \nonumber\\
  &\leq& C^{x+1} \exp(-c L^{-2}(\lambda-\lambda_1)^{-3\slash 2} x^3).
\end{eqnarray}
As $x\geq K\log(2)^2\geq K\slash 4$ and $x\geq 100 L^2(\lambda-\lambda_1)^{3\slash 2}$, we have
\begin{equation}\label{Eq3.102}
 L^{-2}(\lambda-\lambda_1)^{-3\slash 2} x^3\geq 25 K x.
\end{equation}
As we have taken $K$ to be sufficiently large (depending only on $\beta$ and $L$), by (\ref{Eq3.101}), (\ref{Eq3.102}), and the fact that $x\geq 100 L^2  (\lambda-\lambda_1)^{3\slash 2}$ we have
\begin{equation}\label{Eq3.107}
  \mathbb{P}(\overline{N}(\lambda;\lambda_1,\lambda_2)\geq x\slash 2) \leq C\exp(-c L^{-2}(\lambda-\lambda_1)^{-3\slash 2} x^3) \leq C\exp(-c x^2).                            
\end{equation}
By (\ref{Eq3.106}) and (\ref{Eq3.107}), we conclude that when $x\geq 100 L^2 (\lambda-\lambda_1)^{3\slash 2}$, we have
\begin{equation}\label{Eq3.117}
\mathbb{P}(|N(\lambda;\lambda_1,\lambda_2)-N_0(\lambda;\lambda_1,\lambda_2)|\geq x)\leq \mathbb{P}(\mathcal{U})\leq C\exp(-c x^2).
\end{equation}

\paragraph{Case 2: $x< 100L^2 (\lambda-\lambda_1)^{3\slash 2}$}

We take 
\begin{equation}
    \lambda_1'=\lambda-\frac{1}{50}x^{2  \slash  3} L^{-4\slash 3}, \quad \lambda_2'=\min\Big\{\lambda+\frac{1}{50}x^{2\slash 3} L^{-4\slash 3},   \lambda_2\Big\}.
\end{equation}
Note that 
\begin{equation}\label{Eq3.103}
  0\leq  \lambda_1<\lambda_1'<\lambda <\lambda_2'\leq \lambda_2, \quad \lambda_2'-\lambda\leq \lambda-\lambda_1',
\end{equation}
\begin{equation}\label{Eq3.104}
  \lambda-\lambda_1'=\frac{1}{50}x^{2\slash 3} L^{-4\slash 3}\geq \frac{1}{200}K^{2\slash 3}L^{-4\slash 3}
\end{equation}
\begin{equation}\label{Eq3.105}
   x= 50^{3\slash 2} L^2 (\lambda-\lambda_1')^{3\slash 2}\geq 300 L^2(\lambda-\lambda_1')^{3\slash 2},
\end{equation}
where we use the fact that $x\geq K\slash 4$ in (\ref{Eq3.104}). As we have taken $K$ to be sufficiently large (depending only on $\beta$ and $L$), by the result of \textbf{Case 1} (replacing $\lambda_1,\lambda_2$ by $\lambda_1',\lambda_2'$, respectively) and (\ref{Eq3.103})-(\ref{Eq3.105}), there exists an event
\begin{equation*}
\mathcal{U}_1\in\mathcal{F}_{\lambda_1',\lambda_2'}\subseteq \mathcal{F}_{\lambda_1,\lambda_2},
\end{equation*}
such that
\begin{equation}\label{Eq3.121}
   \{|N(\lambda;\lambda_1',\lambda_2')-N_0(\lambda;\lambda_1',\lambda_2')|\geq x\slash 2\} \subseteq \mathcal{U}_1,
\end{equation}
\begin{equation}\label{Eq3.114}
   \mathbb{P}(|N(\lambda;\lambda_1',\lambda_2')-N_0(\lambda;\lambda_1',\lambda_2')|\geq x\slash 2)\leq \mathbb{P}(\mathcal{U}_1)\leq C\exp(-c x^2).
\end{equation}

As $x<100 L^2(\lambda-\lambda_1)^{3\slash 2}$ and $\lambda-\lambda_1\geq K$, we have
\begin{eqnarray}\label{Eq3.112}
&& \lambda_1'-\lambda_1=\lambda-\lambda_1-\frac{1}{50}x^{2\slash 3} L^{-4\slash 3}\nonumber\\
&>& (1-100^{2\slash 3}\slash 50)(\lambda-\lambda_1) >  (\lambda-\lambda_1)\slash 2 \geq K\slash 2.
\end{eqnarray}
Now note that
\begin{equation*}
  \frac{\lambda-\lambda_1}{\lambda-\lambda_1'}=50 L^{4\slash 3} (\lambda-\lambda_1) x^{-2\slash 3}, 
\end{equation*}
which leads to
\begin{eqnarray}\label{Eq3.108}
  && 1+\log\Big(\frac{\lambda-\lambda_1}{\lambda-\lambda_1'}\Big)  = 1+\log(50 L^{4\slash 3})+\log((\lambda-\lambda_1)x^{-2\slash 3})\nonumber\\
  &\leq& C \log(2+(\lambda-\lambda_1)x^{-2\slash 3}).
\end{eqnarray}
Using the fact that $\log(1+x)\leq x$ for any $x>-1$, we obtain that
\begin{eqnarray}\label{Eq3.109}
  \log\log\Big(\frac{\lambda-\lambda_1}{\lambda-\lambda_1'}\Big)&\leq& \log\Big(1+\log\Big(\frac{\lambda-\lambda_1}{\lambda-\lambda_1'}\Big)\Big)\leq \log\Big(\frac{\lambda-\lambda_1}{\lambda-\lambda_1'}\Big).
\end{eqnarray}
By (\ref{Eq3.108}) and (\ref{Eq3.109}),
\begin{equation}\label{Eq3.110}
  \max\Big\{1,\log\log\Big(\frac{\lambda-\lambda_1}{\lambda-\lambda_1'}\Big)\Big\}\leq C\log(2+(\lambda-\lambda_1)x^{-2\slash 3}).
\end{equation}
By (\ref{Eq3.108}), (\ref{Eq3.110}), and the fact that $x\geq K(\log(2+(\lambda-\lambda_1)x^{-2\slash 3}))^2$, we have
\begin{eqnarray}\label{Eq3.111}
 && \Big(1+\log\Big(\frac{\lambda-\lambda_1}{\lambda-\lambda_1'}\Big)\Big) \max\Big\{1,\log\log\Big(\frac{\lambda-\lambda_1}{\lambda-\lambda_1'}\Big)\Big\}\nonumber\\
 &\leq& C (\log(2+(\lambda-\lambda_1)x^{-2\slash 3}))^2\leq C K^{-1} x.
\end{eqnarray}

By Proposition \ref{P3.1} (replacing $\lambda_2$ by $\lambda_1'$), noting (\ref{Eq3.104}), (\ref{Eq3.112}), (\ref{Eq3.111}), and the fact that $K$ is taken to be sufficiently large (depending only on $\beta$ and $L$), we conclude that there exists an event $\mathcal{U}_2\in \mathcal{F}_{\lambda_1,\lambda_1'}\subseteq \mathcal{F}_{\lambda_1,\lambda_2}$, such that
\begin{equation}\label{Eq3.122}
\{|N(\lambda;\lambda_1,\lambda_1')-N_0(\lambda;\lambda_1,\lambda_1')|\geq x\slash 4\}\subseteq \mathcal{U}_2,
\end{equation}
\begin{eqnarray}\label{Eq3.115}
  &&  \mathbb{P}(|N(\lambda;\lambda_1,\lambda_1')-N_0(\lambda;\lambda_1,\lambda_1')|\geq x\slash 4)\leq \mathbb{P}(\mathcal{U}_2)\nonumber\\
  &\leq& \exp(C(\lambda-\lambda_1)^{3\slash 2}\log\log(\lambda-\lambda_1)) \exp\Big(-c x^2\Big/\log\Big(\frac{\lambda-\lambda_1}{\lambda-\lambda_1'}\Big)\Big)\nonumber\\
  &\leq&  \exp(C(\lambda-\lambda_1)^{3\slash 2}\log\log(\lambda-\lambda_1))\exp(-c x^2\slash \log(2+(\lambda-\lambda_1)x^{-2\slash 3})), \nonumber\\
  &&   
\end{eqnarray}
where we use (\ref{Eq3.108}) for the last inequality. 

If $\lambda_2'=\lambda_2$, then $N(\lambda;\lambda_2',\lambda_2)=N_0(\lambda;\lambda_2',\lambda_2)=0$. Below we assume that $\lambda_2'<\lambda_2$. Following an argument that is similar to that below (\ref{Eq3.96}), we obtain that $N_0(\lambda;\lambda_2',\lambda_2)\leq 1$. As $\lambda_2'<\lambda_2$ and $x\geq K\slash 4$, we have
\begin{equation}\label{Eq3.113}
 \lambda_2'-\lambda= \frac{1}{50}x^{2\slash 3}L^{-4\slash 3}\geq \frac{1}{200} K^{2\slash 3} L^{-4\slash 3}.
\end{equation}
By Proposition \ref{P3.2} and (\ref{Eq3.113}), as $x\geq \log(2)^2 K$ and $K$ is sufficiently large (depending only on $\beta$ and $L$), we have $N_0(\lambda;\lambda_2',\lambda_2)\leq 1\leq x\slash 8$, and there exists an event $\mathcal{U}_3\in \mathcal{F}_{\lambda_2',\lambda_2}\subseteq \mathcal{F}_{\lambda_1,\lambda_2}$, such that
\begin{equation}\label{Eq3.123}
 \{|N(\lambda;\lambda_2',\lambda_2)-N_0(\lambda;\lambda_2',\lambda_2)|\geq x\slash 4\}\subseteq \{N(\lambda;\lambda_2',\lambda_2)\geq x\slash 8\}\subseteq \mathcal{U}_3,
\end{equation}
\begin{eqnarray}\label{Eq3.116}
 && \mathbb{P}(|N(\lambda;\lambda_2',\lambda_2)-N_0(\lambda;\lambda_2',\lambda_2)|\geq x\slash 4)\leq \mathbb{P}(N(\lambda;\lambda_2',\lambda_2)\geq x\slash 8)\nonumber\\
 && \leq \mathbb{P}(\mathcal{U}_3)\leq C\exp(-c (\lambda_2'-\lambda)^{3\slash 2}x)\leq C\exp(-c x^2).\nonumber\\
 &&
\end{eqnarray}

By (\ref{Eq3.103}), we have
\begin{equation*}
 N(\lambda;\lambda_1,\lambda_2)=N(\lambda;\lambda_1,\lambda_1')+N(\lambda;\lambda_1',\lambda_2')+N(\lambda;\lambda_2',\lambda_2),
\end{equation*}
\begin{equation*}
 N_0(\lambda;\lambda_1,\lambda_2)=N_0(\lambda;\lambda_1,\lambda_1')+N_0(\lambda;\lambda_1',\lambda_2')+N_0(\lambda;\lambda_2',\lambda_2).
\end{equation*}
Hence when $x<100 L^2(\lambda-\lambda_1)^{3\slash 2}$, letting $\mathcal{U}=\mathcal{U}_1\cup\mathcal{U}_2\cup\mathcal{U}_3\in\mathcal{F}_{\lambda_1,\lambda_2}$, by (\ref{Eq3.121}), (\ref{Eq3.122}), and (\ref{Eq3.123}), we have
\begin{eqnarray}\label{Eq3.124}
&& \{|N(\lambda;\lambda_1,\lambda_2)-N_0(\lambda;\lambda_1,\lambda_2)|\geq x\}\nonumber\\
&\subseteq&  \{|N(\lambda;\lambda_1',\lambda_2')-N_0(\lambda;\lambda_1',\lambda_2')|\geq x\slash 2\}\nonumber\\
&&\cup \{|N(\lambda;\lambda_1,\lambda_1')-N_0(\lambda;\lambda_1,\lambda_1')|\geq x\slash 4\}\nonumber\\
&& \cup \{|N(\lambda;\lambda_2',\lambda_2)-N_0(\lambda;\lambda_2',\lambda_2)|\geq x\slash 4\}\nonumber\\
&\subseteq& \mathcal{U}_1\cup\mathcal{U}_2\cup\mathcal{U}_3=\mathcal{U};
\end{eqnarray}
moreover, by (\ref{Eq3.114}), (\ref{Eq3.115}), (\ref{Eq3.116}), (\ref{Eq3.124}), and the union bound, we have
\begin{eqnarray}\label{Eq3.118}
&& \mathbb{P}(|N(\lambda;\lambda_1,\lambda_2)-N_0(\lambda;\lambda_1,\lambda_2)|\geq x)\nonumber\\
&\leq& \mathbb{P}(\mathcal{U})\leq \mathbb{P}(\mathcal{U}_1)+\mathbb{P}(\mathcal{U}_2)+\mathbb{P}(\mathcal{U}_3)\nonumber\\
&\leq& \exp(C(\lambda-\lambda_1)^{3\slash 2}\log\log(\lambda-\lambda_1))\nonumber\\
&&\times\exp(-c x^2\slash \log(2+(\lambda-\lambda_1)x^{-2\slash 3})).
\end{eqnarray}

\bigskip

By (\ref{Eq3.125}), (\ref{Eq3.117}), (\ref{Eq3.124}), and (\ref{Eq3.118}), we conclude that there exists an event $\mathcal{U}\in \mathcal{F}_{\lambda_1,\lambda_2}$, such that
\begin{equation}
  \{|N(\lambda;\lambda_1,\lambda_2)-N_0(\lambda;\lambda_1,\lambda_2)|\geq x\}\subseteq \mathcal{U},
\end{equation}
\begin{eqnarray}
&& \mathbb{P}(|N(\lambda;\lambda_1,\lambda_2)-N_0(\lambda;\lambda_1,\lambda_2)|\geq x)\leq\mathbb{P}(\mathcal{U})\nonumber\\
&\leq& \exp(C(\lambda-\lambda_1)^{3\slash 2} \log\log(\lambda-\lambda_1))\nonumber\\
&&\times \exp(-cx^2\slash\log(2+(\lambda-\lambda_1)x^{-2\slash 3})).
\end{eqnarray}

\end{proof}

\subsection{Proof of Proposition \ref{P3.4}}\label{Sect.3.5}

In this subsection, we give the proof of Proposition \ref{P3.4}.

\begin{proof}[Proof of Proposition \ref{P3.4}]

We take $K\geq 10$ sufficiently large (depending only on $\beta$), so that all the following estimates in this proof hold. For any $j\in\mathbb{N}^{*}$, if there are at least $j$ blow-ups of $p_{\lambda}(x)$ in $[0,\infty)$, we let $\tau_j$ be the $j$th blow-up time of $p_{\lambda}(x)$; otherwise we let $\tau_j:=\infty$. We also let $\tau_0:=0$.

Note that by Proposition \ref{diffu}, $N(\lambda)=N(\lambda;0,x^{2\slash 3}\slash 32)+N(\lambda;x^{2\slash 3}\slash 32,\infty)$. Hence
\begin{equation}\label{Eq3.129}
  \mathbb{P}(N(\lambda)\geq x)\leq \mathbb{P}(N(\lambda;0,x^{2\slash 3}\slash 32)\geq x\slash 2)+\mathbb{P}(N(\lambda;x^{2\slash 3}\slash 32,\infty)\geq x\slash 2).
\end{equation}

We first bound $\mathbb{P}(N(\lambda;0,x^{2\slash 3}\slash 32)\geq x\slash 2)$. In the following, we assume that $N(\lambda;0,x^{2\slash 3}\slash 32)\geq x\slash 2$. Then we have $\tau_{\lceil x\slash 2\rceil }\leq x^{2\slash 3}\slash 32$. Suppose that there exist at least $\lceil x\slash 4\rceil$ elements (denoted by $j$) of $\{0,1,\cdots,\lceil x\slash 2\rceil-1\}$, such that $\tau_{j+1}-\tau_j\geq 1\slash (4 x^{1\slash 3})$. Then we have
\begin{equation*}
  \frac{x^{2\slash 3}}{32}\geq \tau_{\lceil x\slash 2\rceil}=\sum_{j=0}^{\lceil x\slash 2\rceil-1} (\tau_{j+1}-\tau_j)\geq \frac{1}{4x^{1\slash 3}}\cdot \frac{x}{4}= \frac{x^{2\slash 3}}{16},
\end{equation*}
which leads to a contradiction. Hence there exist at least
\begin{equation*}
\lceil x\slash 2\rceil-(\lceil x\slash 4\rceil-1)\geq x\slash 4
\end{equation*}
elements (denoted by $j$) of $\{0,1,\cdots,\lceil x\slash 2\rceil-1\}$, such that $\tau_{j+1}-\tau_j\leq 1\slash (4x^{1\slash 3})$. For any $j\in\{0,1,\cdots,\lceil x\slash 2\rceil-1\}$, we let 
\begin{equation}
 \mathcal{E}_j:=\Big\{\tau_j\leq \frac{x^{2\slash 3}}{32}, \tau_{j+1}-\tau_j\leq \frac{1}{4x^{1\slash 3}}\Big\}\in\mathcal{F}_{\tau_{j+1}}.
\end{equation}
For any $i_1<\cdots<i_{\lceil x\slash 4\rceil}$ such that $i_1,\cdots,i_{\lceil x\slash  4 \rceil}\in\{0,1,\cdots,\lceil x\slash 2\rceil-1\}$, let
\begin{equation}
\mathcal{D}_{i_1,\cdots,i_{\lceil x\slash 4\rceil}}:=\bigcap_{j\in\{i_1,\cdots,i_{\lceil x\slash 4\rceil}\}} \mathcal{E}_j.
\end{equation}
By the preceding argument, we have 
\begin{equation}\label{Eq3.126}
\{N(\lambda;0,x^{2\slash 3}\slash 32 )\geq x\slash 2 \}\subseteq \bigcup_{\substack{i_1<\cdots<i_{\lceil x\slash 4\rceil}:\\i_1,\cdots,i_{\lceil x\slash 4\rceil\in \{0,1,\cdots,\lceil x\slash 2\rceil-1\}}}}\mathcal{D}_{i_1,\cdots,i_{\lceil x\slash 4\rceil}}.
\end{equation}

Consider any $i_1<\cdots<i_{\lceil x\slash 4\rceil}$ such that $i_1,\cdots,i_{\lceil x\slash 4\rceil}\in\{0,1,\cdots,\lceil x\slash 2\rceil-1\}$. For any $j\in\{i_1,\cdots,i_{\lceil x\slash 4\rceil}\}$, by Lemma \ref{Lem3.3} and the strong Markov property (replacing $\lambda$ by $\lambda-\tau_j$ and taking $a=x^{2\slash 3}, s=\pi\geq 1$; as $x\geq K\lambda^{3\slash 2}\geq \lambda^{3\slash 2}$, we have $\lambda-\tau_j\leq\lambda\leq x^{2\slash 3}=a$ and $\pi\slash (2s\sqrt{a})>\pi\slash (4s\sqrt{a})=1\slash (4 x^{1\slash 3})$), we have
\begin{equation}
  \mathbb{P}(\mathcal{E}_j|\mathcal{F}_{\tau_j})= \mathbbm{1}_{\tau_j\leq x^{2\slash 3}\slash 32} \mathbb{P}(\tau_{j+1}-\tau_j\leq 1\slash (4x^{1\slash 3})|\mathcal{F}_{\tau_j})
  \leq C\exp(-c x).
\end{equation}
Hence as $x\geq K$ is sufficiently large, we have
\begin{eqnarray}\label{Eq3.127}
&& \mathbb{P}(\mathcal{D}_{i_1,\cdots,i_{\lceil x\slash 4\rceil}})=\mathbb{E}\Big[\prod_{l=1}^{\lceil x\slash 4\rceil} \mathbbm{1}_{\mathcal{E}_{i_l}}\Big]=\mathbb{E}\Big[\prod_{l=1}^{\lceil x\slash 4\rceil-1}\mathbbm{1}_{\mathcal{E}_{i_l}}\mathbb{P}(\mathcal{E}_{i_{\lceil x\slash 4\rceil}}|\mathcal{F}_{\tau_{i_{\lceil x\slash 4\rceil}}})\Big] \nonumber\\
&\leq& C\exp(-c x)\mathbb{E}\Big[\prod_{l=1}^{\lceil x\slash 4\rceil  -1} \mathbbm{1}_{\mathcal{E}_{i_l}}\Big]\leq \cdots\leq C^{\lceil x\slash 4\rceil}\exp(-cx\lceil x\slash 4\rceil)\nonumber\\
&\leq& C^{x+1}\exp(-c x^2)\leq C\exp(-c x^2).
\end{eqnarray}

By (\ref{Eq3.126}), (\ref{Eq3.127}), and the union bound, as $x\geq K$ is sufficiently large, we have
\begin{eqnarray}\label{Eq3.130}
\mathbb{P}(N(\lambda;0,x^{2\slash 3}\slash 32)\geq x\slash 2)\leq  2^{\lceil x\slash 2\rceil}C\exp(-c x^2)\leq C\exp(-c x^2).
\end{eqnarray}

Now we bound $\mathbb{P}(N(\lambda;x^{2\slash 3}\slash 32,\infty)\geq x\slash 2)$. As we have taken $K$ to be sufficiently large and $x\geq K\max\{1,\lambda^{3\slash 2}\}$, we have 
\begin{equation}\label{Eq3.128}
 \frac{x^{2\slash 3}}{32}-\lambda\geq \frac{x^{2\slash 3}}{32}-\frac{x^{2\slash 3}}{K^{2\slash 3}}\geq \frac{x^{2\slash 3}}{64}\geq \frac{K^{2\slash 3}}{64}.
\end{equation}
By Proposition \ref{P3.2} (replacing $x$ by $x\slash 2$ and taking $\lambda_1=x^{2\slash 3}\slash 32,\lambda_2=\infty$) and (\ref{Eq3.128}), as $x\geq K$ and we have taken $K$ to be sufficiently large, we obtain that
\begin{equation}\label{Eq3.131}
  \mathbb{P}(N(\lambda;x^{2\slash 3}\slash 32,\infty)\geq x\slash 2)\leq C\exp(-c x^2).
\end{equation}

By (\ref{Eq3.129}), (\ref{Eq3.130}), and (\ref{Eq3.131}), we conclude that
\begin{equation}
 \mathbb{P}(N(\lambda)\geq x) \leq C\exp(-c x^2). 
\end{equation}

\end{proof}

\subsection{Proof of Proposition \ref{P3.5}}\label{Sect.3.6}

In this subsection, we give the proof of Proposition \ref{P3.5}.

\begin{proof}[Proof of Proposition \ref{P3.5}]

We take $K$ sufficiently large (depending only on $\beta$), so that all the following estimates in this proof hold. For any $j\in\mathbb{N}^{*}$, if there are at least $j$ blow-ups of $p_{\lambda}(x)$ in $[0,\infty)$, we let $\tau_j$ be the $j$th blow-up time of $p_{\lambda}(x)$ in $[0,\infty)$; otherwise we let $\tau_j:=\infty$. We also let $\tau_0:=0$. 

For any $j\in \mathbb{N}$, by Corollary \ref{tail} and the strong Markov property (replacing $t$ by $-(\lambda-\tau_j)$ in Corollary \ref{tail}; note that $-(\lambda-\tau_j)\geq -\lambda\geq K$), we have
\begin{equation}
  \mathbbm{1}_{\tau_j<\infty}\mathbb{P}(\tau_{j+1}<\infty|\mathcal{F}_{\tau_j})\leq C\exp(-c(-\lambda)^{3\slash 2}).
\end{equation}
Arguing as in (\ref{Eq3.132}), we obtain that
\begin{equation}
  \mathbb{P}(N(\lambda)\geq x)\leq\mathbb{E}\Big[\prod_{j=1}^{\lceil x\rceil}\mathbbm{1}_{\tau_j<\infty}\Big]\leq C\exp(-c(-\lambda)^{3\slash 2}x).
\end{equation}

\end{proof}

\section{Basic setups}\label{Sect.2}

In this section, we set up some basic notations and definitions that will be used in later sections.

Throughout the rest of this paper, we fix $M>0$, $\epsilon,\delta\in (0,1)$, and $n,k\in\mathbb{N}^{*}$. We take $M_0\geq 10$ sufficiently large (depending on $\beta$), $\epsilon_0\in (0,1\slash 10)$ sufficiently small (depending on $\beta$), $\delta_{0}(M,\epsilon)\in (0,1\slash 10)$ sufficiently small (depending on $\beta,M,\epsilon$), and $K_{0}(M,\epsilon,\delta)\geq 10$ sufficiently large (depending on $\beta,M,\epsilon,\delta$), such that when
\begin{equation}\label{Eq2.1}
   M\geq M_0, \quad \epsilon\in (0,\epsilon_0), \quad \delta\in (0,\delta_0(M,\epsilon)), \quad k\geq K_0(M,\epsilon,\delta),
\end{equation}
all the estimates in the rest of this paper hold. We assume (\ref{Eq2.1}) throughout the rest of this paper.

Let $\mathcal{I}_0:=2\lceil M^2\slash \epsilon\rceil \in\mathbb{N}^{*}$. We define $\{W_i\}_{i\in [\mathcal{I}_0+1] }$ as follows. Let $W_1:=10$. For any $i\in [\mathcal{I}_0]$, having defined $W_i$, we let $\overline{W}_{i+1}:=\max\{\lceil e^{M\slash \epsilon} (W_i+1)\rceil, 10 W_i\}$ and $W_{i+1}:=40 \overline{W}_{i+1}$. Note that by construction, for any $i\in [\mathcal{I}_0+1]$, $W_i\in\mathbb{N}^{*}$ and $W_i\geq 10^i$; for any $i\in [\mathcal{I}_0]$, $W_{i+1}>\overline{W}_{i+1}>W_i$. We also let $T:=2^{W_{\mathcal{I}_0+1}}+1$. Note that $\mathcal{I}_0$, $\{W_i\}_{i\in [\mathcal{I}_0+1]}$, and $T$ only depend on $M$ and $\epsilon$.

We define $I_{0}:=[0,2]$, $I_{0,1}:=[0,1]$, and $I_{0,2}:=(1,2]$. For any $l\in \mathbb{N}^{*}$, we let $I_l:=(2^l, 2^{l+1}]$. For any $l\in\mathbb{N}^{*}$ and $j\in [2l]$, we let 
\begin{equation*}
    I_{l,j}:=(2^l (1+(j-1)\slash (2l)), 2^l  (1+j\slash (2l))].
\end{equation*}
For any $t\in \mathbb{N}$, we let
\begin{equation*}
 \mathcal{R}:=\{(0,1),(0,2)\}\cup\{(l,j):l\in\mathbb{N}^{*}, j\in [2l]\}, \quad \mathcal{R}_t:=\{(l,j)\in\mathcal{R}: l\leq t\},
\end{equation*}
\begin{equation*}
\mathcal{R}^{EVEN}:=\{(l,j)\in\mathcal{R}: j\text{ is even}\}, \quad \mathcal{R}^{EVEN}_t:=\{(l,j)\in\mathcal{R}^{EVEN}: l\leq t\},
\end{equation*}
\begin{equation*}
\mathcal{R}^{ODD}:=\{(l,j)\in\mathcal{R}: j\text{ is odd}\}, \quad \mathcal{R}^{ODD}_t:=\{(l,j)\in\mathcal{R}^{ODD}: l\leq t\}.
\end{equation*}
For any $(l,j)\in\mathcal{R}$, let $a_{l,j}$ and $b_{l,j}$ be the left and right endpoints of $I_{l,j}$, respectively. We introduce the following total order on $\mathcal{R}$: For any $(l_1,j_1),(l_2,j_2)\in \mathcal{R}$, we let $(l_1,j_1)\preceq (l_2,j_2)$, if for any $x\in I_{l_1,j_1}, y\in I_{l_2,j_2}$, we have $x\leq y$. We let
\begin{equation*}
\overline{a}_{0,1}:=0, \overline{b}_{0,1}:=3\slash 2; \quad \overline{a}_{0,2}:=0,\overline{b}_{0,2}:=5\slash 2.
\end{equation*}
For any $(l,j)\in\mathcal{R}$ such that $l\geq 1$, we let $P_{l,j}$ be the largest element in $\mathcal{R}$ such that $P_{l,j}\prec (l,j)$ and $S_{l,j}$ be the smallest element in $\mathcal{R}$ such that $(l,j)\prec S_{l,j}$. We also take
\begin{equation*}
\overline{a}_{l,j}:=(a_{P_{l,j}}+b_{P_{l,j}})\slash 2,\quad \overline{b}_{l,j}:=(a_{S_{l,j}}+b_{S_{l,j}})\slash 2.
\end{equation*}

Now we let $\mathcal{Q}:=\{l\cdot 2^{-10(T+1)}:l\in\mathbb{N}\}$. In the following, we consider an arbitrary $(l,j)\in \mathcal{R}_T$. Let $\mathcal{Q}_{l,j}:=\mathcal{Q}\cap I_{l,j}$. We let
\begin{equation}
  \Delta_{l,j}:=\max_{x\in \mathcal{Q}_{l,j}}\{|N(x k^{2\slash 3})-N_0(x k^{2\slash 3})|\},
\end{equation}
and define $J_{l,j}$ as the smallest element in $\mathcal{Q}_{l,j}$ such that
\begin{equation*}
   |N(J_{l,j} k^{2\slash 3})-N_0(J_{l,j} k^{2\slash 3})|=\Delta_{l,j}.
\end{equation*}
We let 
\begin{equation}
\Delta_{l,j;1}:=|N(J_{l,j}k^{2\slash 3};0,\overline{b}_{l,j}k^{2\slash 3})-N_0(J_{l,j}k^{2\slash 3};0,\overline{b}_{l,j}k^{2\slash 3})|,
\end{equation}
\begin{equation}\label{Eq2.15}
\Delta_{l,j;2}:=|N(J_{l,j}k^{2\slash 3};\overline{b}_{l,j}k^{2\slash 3},\infty)-N_0(J_{l,j}k^{2\slash 3};\overline{b}_{l,j}k^{2\slash 3},\infty)|.
\end{equation}
Note that
\begin{equation}\label{Eq2.19}
 \Delta_{l,j}\leq \Delta_{l,j,1}+\Delta_{l,j,2}.
\end{equation}
We also let (recall (\ref{Eq2.21}))
\begin{equation}\label{Eq2.22}
\overline{\Delta}_{l,j}:=\sup_{x\in I_{l,j}}|N(x k^{2\slash 3})-N_0(x k^{2\slash 3})|,
\end{equation}
\begin{equation}
\tilde{\Delta}_{l,j}:=\sup_{x\in I_{l,j}}|\tilde{N}(x)-\tilde{N}_0(x)|=\sup_{x\in I_{l,j}}|\Psi(x)|.
\end{equation}

For every $(l,j)\in \mathcal{R}_T$, if $(l,j)\in\mathcal{R}^{EVEN}_T$, we let
\begin{equation}\label{Eq2.3}
    \mathcal{P}_{l,j}:=\{(l',j')\in\mathcal{R}^{EVEN}_T: (l',j')\preceq (l,j)\},
\end{equation}
\begin{equation}\label{Eq2.5}
 \mathcal{P}_{l,j}^{\circ}:=\{(l',j')\in\mathcal{R}^{EVEN}_T: (l',j')\prec (l,j)\};
\end{equation}
if $(l,j)\in\mathcal{R}^{ODD}_T$, we let
\begin{equation}\label{Eq2.4}
    \mathcal{P}_{l,j}:=\{(l',j')\in\mathcal{R}^{ODD}_T: (l',j')\preceq (l,j)\},
\end{equation}
\begin{equation}\label{Eq2.6}
    \mathcal{P}_{l,j}^{\circ}:=\{(l',j')\in\mathcal{R}^{ODD}_T: (l',j')\prec (l,j)\}.
\end{equation}
For any $(l,j)\in\mathcal{R}_T$ and $(l',j')\in \mathcal{P}_{l,j}$, we let  
\begin{equation}\label{Eq2.7}
    \Delta_{l,j;l',j';1}:=|N(J_{l,j}k^{2\slash 3};\overline{a}_{l',j'}k^{2\slash 3}, \overline{b}_{l',j'}k^{2\slash 3})-N_0(J_{l,j}k^{2\slash 3};\overline{a}_{l',j'}k^{2\slash 3}, \overline{b}_{l',j'}k^{2\slash 3})|.
\end{equation}
Note that for any $(l,j)\in\mathcal{R}_T$, we have
\begin{equation}\label{Eq2.20}
    \Delta_{l,j;1}\leq \sum_{(l',j')\in\mathcal{P}_{l,j}} \Delta_{l,j;l',j';1}.
\end{equation}

For every $l\in \{0\}\cup [T]$, we let
\begin{equation}\label{Eq2.11}
  \mathcal{K}_l:=\{0\}\cup \{(l+1)^{-10}\cdot 2^t: t\in\mathbb{N}, (l+1)^{-10}\cdot 2^t\leq M(l+1)^{10}\}.
\end{equation}
Now we define
\begin{equation}\label{Eq2.17}
   \mathcal{J}_T:=\{(z_{l,j})_{(l,j)\in\mathcal{R}_T}: z_{l,j}\in \mathcal{Q}_{l,j}\text{ for every }(l,j)\in\mathcal{R}_T\},
\end{equation}
\begin{eqnarray}\label{Eq2.18}
   \Theta_T &:=&\{(\theta_{l,j;l',j'})_{(l,j)\in\mathcal{R}_T, (l',j')\in \mathcal{P}_{l,j}}: \theta_{l,j;l',j'}\in\mathcal{K}_l\text{ for any }\nonumber\\
   &&\quad\quad (l,j)\in\mathcal{R}_T, (l',j')\in \mathcal{P}_{l,j}\}.
\end{eqnarray}
\begin{eqnarray}\label{Eq2.16}
   \Theta_T' &:=&\{(\theta_{l,j})_{(l,j)\in\mathcal{R}_T}:\theta_{l,j}\in\mathcal{K}_l\text{ for every }(l,j)\in\mathcal{R}_T\}.
\end{eqnarray}
For any $(l,j)\in\mathcal{R}_T$ and $z_{l,j}\in\mathcal{Q}_{l,j}$, we let $\mathcal{E}_0((l,j),z_{l,j})$ be the event that $J_{l,j}=z_{l,j}$. For any $(l,j)\in\mathcal{R}_T$ and $\theta'_{l,j}\in \mathcal{K}_l$, we let $\mathcal{E}'((l,j),\theta'_{l,j})$ be the event that the following holds:
\begin{itemize}
   \item[(a)] If $\theta'_{l,j}=0$, then $\Delta_{l,j;2}\leq (l+1)^{-10}k$;
   \item[(b)] If $\theta'_{l,j}\neq 0$, then $\theta'_{l,j}k \leq \Delta_{l,j;2}\leq 2 \theta'_{l,j}k$.
\end{itemize}
For any $(l,j)\in\mathcal{R}_T$, $(l',j')\in\mathcal{P}_{l,j}$, and any $\theta_{l,j;l',j'}\in\mathcal{K}_l$, we let $\mathcal{E}((l,j),(l',j'),\theta_{l,j;l',j'})$ be the event that the following holds:
\begin{itemize}
   \item[(a)] If $\theta_{l,j;l',j'}=0$, then $\Delta_{l,j;l',j';1}\leq (l+1)^{-10}k$;
   \item[(b)] If $\theta_{l,j;l',j'}\neq 0$, then $\theta_{l,j;l',j'}k \leq \Delta_{l,j;l',j';1}\leq 2 \theta_{l,j;l',j'}k$.
\end{itemize}
Now for any $\mathbf{z}=(z_{l,j})_{(l,j)\in\mathcal{R}_T}\in\mathcal{J}_T$, $\bm{\theta}=(\theta_{l,j;l',j'})_{(l,j)\in\mathcal{R}_T,(l',j')\in\mathcal{P}_{l,j}}\in \Theta_T$, and $\bm{\theta}'=(\theta_{l,j}')_{(l,j)\in\mathcal{R}_T} \in  \Theta'_T$, we define  
\begin{eqnarray}\label{DefinitionH}
\mathcal{H}(\mathbf{z},\bm{\theta},\bm{\theta}')&:=&\big(\bigcap_{(l,j)\in\mathcal{R}_T}\mathcal{E}_0((l,j),z_{l,j})\big)\bigcap\big(\bigcap_{(l,j)\in\mathcal{R}_T} \mathcal{E}'((l,j),\theta'_{l,j})\big)\nonumber\\
&& \bigcap\big(\bigcap_{(l,j)\in\mathcal{R}_T}\bigcap_{(l',j')\in\mathcal{P}_{l,j}}\mathcal{E}((l,j),(l',j'),\theta_{l,j;l',j'})\big).
\end{eqnarray}
We let $\mathcal{H}'_1$ be the event that there exist $(l,j)\in\mathcal{R}_T$, $(l',j')\in\mathcal{P}_{l,j}$, and $x\in \mathcal{Q}_{l,j}$, such that 
\begin{equation}\label{Eq2.8}
    |N(x k^{2\slash 3}; \overline{a}_{l',j'}k^{2\slash 3}, \overline{b}_{l',j'}k^{2\slash 3})-N_0(x k^{2\slash 3}; \overline{a}_{l',j'}k^{2\slash 3}, \overline{b}_{l',j'}k^{2\slash 3})|\geq M(l+1)^{10} k.
\end{equation}
We also let $\mathcal{H}'_2$ be the event that there exist $(l,j)\in\mathcal{R}_T$ and $x \in \mathcal{Q}_{l,j}$, such that
\begin{equation}\label{Eq2.9}
    |N( x k^{2\slash 3};\overline{b}_{l,j}k^{2\slash 3},\infty)-N_0(xk^{2\slash 3};\overline{b}_{l,j}k^{2\slash 3},\infty)|\geq M (l+1)^{10} k.
\end{equation}
Finally, we let 
\begin{equation}\label{Eq4.123}
    \mathcal{H}':=\mathcal{H}'_1\cup\mathcal{H}'_2.
\end{equation}

For any $\Lambda\geq 1$, we define $\mathcal{A}_0(\Lambda)$ as the event that
\begin{equation}\label{Eq2.10}
   \sum_{(l,j)\in \mathcal{R}_T} \frac{(\overline{\Delta}_{l,j})^2}{l+1}\leq \Lambda M^2 k^2,
\end{equation}
\begin{equation}\label{Eq2.12}
  N(-j k^{2\slash 3})\leq \frac{\Lambda M k}{(j+2)^{3\slash 2}} \text{ for any }j\in \{-1,0\}\cup [k^{10}],
\end{equation}
and define $\mathcal{B}_0(\Lambda)$ as the event that
\begin{equation}\label{Eq2.13}
  \sum_{(l,j)\in\mathcal{R}_{T-1}}\frac{(\tilde{\Delta}_{l,j})^2}{l+1}\leq\Lambda M^2 k^2,
\end{equation}
\begin{equation}\label{Eq2.14}
  \tilde{N}(-j)\leq \frac{\Lambda M k}{(j+1)^{3\slash 2}} \text{ for any } j\in \{0\}\cup [k^{10}].
\end{equation}
Let $\Phi$ be the set of $\bm{\phi}=(i_0,l_0, j_0,l_0', j_0')\in\mathbb{N}^5$ such that
\begin{itemize}
 \item[(a)] $\mathcal{I}_0\slash 2\leq i_0\leq \mathcal{I}_0$;
 \item[(b)] $2^{W_{i_0}}+1\leq l_0\leq 2^{\overline{W}_{i_0+1}}$, $l_0\slash 2  \leq j_0\leq 3 l_0 \slash 2$;
 \item[(c)] $10(l_0+1)\leq l_0'\leq 11(l_0+1)$, $2^{W_{i_0}}+1\leq l_0'\leq 2^{W_{i_0+1}}$, $1\leq j_0'\leq 2l_0'$.
\end{itemize}
For any $\bm{\phi}=(i_0,l_0,j_0,l_0',j_0')\in\Phi$ and any $\Lambda\geq 0$, we define $\mathcal{B}(\bm{\phi};\Lambda)$ as the event that the following conditions hold:
\begin{equation}\label{Eq2.1.1}
 \sum_{(l,j)\in \mathcal{R}_{T-1}}\frac{(\tilde{\Delta}_{l,j})^2}{l+1}\leq \Lambda M^2 k^2;
\end{equation}
\begin{equation}\label{Eq2.1.2}
   \tilde{N}(-j)\leq \frac{\Lambda M k}{(j+1)^{3\slash 2}}\text{ for any }j\in \{0\}\cup [k^{10}];
\end{equation}
\begin{equation}\label{Eq2.1.3}
\sum_{l=2^{W_{i_0}}+1}^{2^{W_{i_0+1}}}\sum_{j=1}^{2l}\frac{(\tilde{\Delta}_{l,j})^2}{l+1} \leq \Lambda \epsilon k^2;
\end{equation}
\begin{equation}\label{Eq2.1.4}
  \sum_{j=1}^{2l_0}\frac{(\tilde{\Delta}_{l_0,j})^2}{l_0+1}\leq \frac{\epsilon k^2}{(l_0+1)\log(l_0+1)};
\end{equation}
\begin{equation}\label{Eq2.1.5}
  \tilde{\Delta}_{l_0,j_0}\leq 2\sqrt{\frac{\epsilon}{(l_0+1)\log(l_0+1)}} k;
\end{equation}
\begin{equation}\label{Eq2.1.6}
   \tilde{\Delta}_{l_0',j_0'}\leq \sqrt{\frac{12\Lambda \epsilon}{l_0'+1}} k.
\end{equation}
For any $\bm{\phi}=(i_0,l_0, j_0,l_0', j_0')\in \Phi$, we further define
\begin{equation}\label{Eq2.1.11}
  R_0(\bm{\phi}):=2^{l_0}\Big(1+\frac{2j_0-1}{4l_0}\Big)\in I_{l_0,j_0}, \quad R_0'(\bm{\phi}):=2^{l_0'}\Big(1+\frac{2j_0'-1}{4l_0'}\Big)\in I_{l_0',j_0'}.
\end{equation}
Note that
\begin{equation}\label{Eq2.1.12}
  2^{l_0}\leq R_0(\bm{\phi})\leq 2^{l_0+1}. 
\end{equation}
As $i_0\geq \mathcal{I}_0\slash 2\geq M^2\slash \epsilon$, we have $W_{i_0}\geq 10^{i_0}\geq 10^{M^2\slash \epsilon}$. As $2^i\geq i$ for any $i\in\mathbb{N}$, we have
\begin{equation}\label{Eq2.1.10}
 l_0\geq 2^{W_{i_0}}+1\geq W_{i_0}\geq 10^{M^2\slash \epsilon}\geq 2000,\quad R_0(\bm{\phi})\geq 2^{l_0}\geq l_0\geq 10^{M^2\slash \epsilon}\geq 2000.
\end{equation}
where we use the fact that $M\geq M_0$ is sufficiently large and $\epsilon\in (0,  \epsilon_0)$ is sufficiently small. From $10(l_0+1)\leq l_0'\leq 11(l_0+1)$, as $M_0$ is sufficiently large and $\epsilon_0$ is sufficiently small, we have $10 (l_0+1)\leq l_0'\leq 20l_0-1$. As $2^{l_0'}\leq R_0'(\bm{\phi})\leq 2^{l_0'+1}$, we have
\begin{equation}\label{Eq2.2.1}
   R_0(\bm{\phi})^{10}\leq R_0'(\bm{\phi})\leq R_0(\bm{\phi})^{20}.
\end{equation}
We also note that
\begin{equation}\label{Eq2.1.15}
 l_0'\leq 2^{W_{i_0+1}}\leq 2^{W_{\mathcal{I}_0+1}}=T-1, \quad R_0'(\bm{\phi})\leq 2^{l_0'+1}\leq 2^T.
\end{equation}

\section{Bounding various quantities}\label{Sect.4}

In this section, we bound various quantities defined in Section \ref{Sect.2}.

\subsection{Bounding $\mathbb{P}(\mathcal{H}(\mathbf{z}, \bm{\theta}, \bm{\theta}'))$}

Recall the definition of $\mathcal{H}(\mathbf{z}, \bm{\theta}, \bm{\theta}')$ in (\ref{DefinitionH}) and the related quantities above (\ref{DefinitionH}).  We also introduce the following definition.

\begin{definition}\label{Def4.1}
For any $\bm{\theta}=(\theta_{l,j;l',j'})_{(l,j)\in\mathcal{R}_T,(l',j')\in\mathcal{P}_{l,j}}\in \Theta_T$ and any $(l,j)\in\mathcal{R}_T$, we let 
\begin{equation}
 \mathcal{S}_{l,j}(\bm{\theta}):=\sum_{(l',j')\in\mathcal{P}_{l,j}} \theta_{l,j;l',j'}.
\end{equation}
\end{definition}

In this subsection, we give an upper bound on $\mathbb{P}(\mathcal{H}(\mathbf{z}, \bm{\theta}, \bm{\theta}'))$ for any choice of $\mathbf{z}=(z_{l,j})_{(l,j)\in\mathcal{R}_T}\in\mathcal{J}_T$, $\bm{\theta}=(\theta_{l,j;l',j'})_{(l,j)\in\mathcal{R}_T,(l',j')\in\mathcal{P}_{l,j}}\in \Theta_T$, and $\bm{\theta}'=(\theta_{l,j}')_{(l,j)\in\mathcal{R}_T} \in  \Theta'_T$. The main result is given in the following proposition.

\begin{proposition}\label{P4.1}
There exist positive constants $C,c$ that only depend on $\beta$, such that the following holds. For any
\begin{equation*}
\mathbf{z}=(z_{l,j})_{(l,j)\in\mathcal{R}_T}\in\mathcal{J}_T,
\end{equation*}
\begin{equation*}
\bm{\theta}=(\theta_{l,j;l',j'})_{(l,j)\in\mathcal{R}_T,(l',j')\in\mathcal{P}_{l,j}}\in \Theta_T, \quad \bm{\theta}'=(\theta_{l,j}')_{(l,j)\in\mathcal{R}_T} \in  \Theta'_T,
\end{equation*}
we have
\begin{eqnarray}
\mathbb{P}(\mathcal{H}(\mathbf{z}, \bm{\theta}, \bm{\theta}'))&\leq& C  \min\Big\{\exp\Big(-c k^2 \Big(\sum_{(l,j)\in\mathcal{R}_T}(l+1)^{-1}\mathcal{S}_{l,j}(\bm{\theta})^2 \Big)\Big),\nonumber\\
&& \quad\quad\exp\Big(-c k^2 \max_{(l,j)\in\mathcal{R}_T}\Big\{2^{3l\slash 2}(l+1)^{-3\slash 2}\theta'_{l,j}\Big\} \Big)\Big\}.
\end{eqnarray}
\end{proposition}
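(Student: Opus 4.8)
The plan is to pin down all the random locations first and then feed everything into the tail bounds of Section~\ref{Sect.3}, handling the two factors of the minimum by entirely different mechanisms.

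First, on the event $\mathcal{H}(\mathbf{z},\bm{\theta},\bm{\theta}')$ we have $J_{l,j}=z_{l,j}$ for every $(l,j)$, so each of $\Delta_{l,j;1},\Delta_{l,j;2},\Delta_{l,j;l',j';1}$ becomes an explicit function of the driving Brownian motion, namely an absolute discrepancy $|N(z_{l,j}k^{2/3};\cdot,\cdot)-N_0(z_{l,j}k^{2/3};\cdot,\cdot)|$. Two structural facts will be used repeatedly: (i) within one parity class the blocks $[\overline{a}_{l',j'}k^{2/3},\overline{b}_{l',j'}k^{2/3}]$ are pairwise disjoint (consecutive same-parity blocks only meet at an endpoint, since the midpoint of the interposed opposite-parity block is the right endpoint of one and the left endpoint of the next), so the $\sigma$-algebras $\mathcal{F}_{\overline{a}_{l',j'}k^{2/3},\overline{b}_{l',j'}k^{2/3}}$ over a fixed parity class are independent; (ii) $N(\lambda;\lambda_1,\lambda_2)$ differs from its $\mathcal{F}_{\lambda_1,\lambda_2}$-measurable surrogate $\overline{N}(\lambda;\lambda_1,\lambda_2)$ by at most $1$ by monotonicity, and likewise for $N_0$.

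For the second factor I would fix $(l^\ast,j^\ast)$ attaining $\max_{(l,j)}2^{3l/2}(l+1)^{-3/2}\theta'_{l,j}$ (if that maximum is $0$ there is nothing to prove) and bound $\mathbb{P}(\mathcal{H})\leq\mathbb{P}(\mathcal{E}_0((l^\ast,j^\ast),z_{l^\ast,j^\ast})\cap\mathcal{E}'((l^\ast,j^\ast),\theta'_{l^\ast,j^\ast}))$. Since $\overline{b}_{l^\ast,j^\ast}>b_{l^\ast,j^\ast}\geq z_{l^\ast,j^\ast}$, the function $q_{z_{l^\ast,j^\ast}k^{2/3}}$ cannot blow up more than once to the right of $z_{l^\ast,j^\ast}k^{2/3}$ (exactly the argument in the proof of Proposition~\ref{P3.3}), so $N_0(z_{l^\ast,j^\ast}k^{2/3};\overline{b}_{l^\ast,j^\ast}k^{2/3},\infty)\leq 1\leq\theta'_{l^\ast,j^\ast}k/2$ and the event is contained in $\{N(z_{l^\ast,j^\ast}k^{2/3};\overline{b}_{l^\ast,j^\ast}k^{2/3},\infty)\geq\theta'_{l^\ast,j^\ast}k/2\}$. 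Applying Proposition~\ref{P3.2} with $\lambda=z_{l^\ast,j^\ast}k^{2/3}$, $\lambda_1=\overline{b}_{l^\ast,j^\ast}k^{2/3}$, $\lambda_2=\infty$, and using that $\overline{b}_{l^\ast,j^\ast}-z_{l^\ast,j^\ast}$ is at least half the width of the neighbouring block, hence $\gtrsim 2^{l^\ast}(l^\ast+1)^{-1}$, yields $\mathbb{P}(\mathcal{H})\leq C\exp(-c(\lambda_1-\lambda)^{3/2}\theta'_{l^\ast,j^\ast}k/2)\leq C\exp(-ck^2 2^{3l^\ast/2}(l^\ast+1)^{-3/2}\theta'_{l^\ast,j^\ast})$, which is the second factor.

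For the first factor I would discard all constraints except the $\mathcal{E}((l,j),(l',j'),\cdot)$'s and work in one parity class of columns, in two layers. For a single row $(l,j)$, the columns $(l',j')\in\mathcal{P}_{l,j}$ are distinct and same-parity, so $\mathbb{P}\!\big(\bigcap_{(l',j')\in\mathcal{P}_{l,j}}\mathcal{E}((l,j),(l',j'),\theta_{l,j;l',j'})\big)=\prod_{(l',j')\in\mathcal{P}_{l,j}}\mathbb{P}(\mathcal{E}(\cdots))$; for $(l',j')\prec(l,j)$ the block lies strictly below $\lambda=z_{l,j}k^{2/3}$ and Proposition~\ref{P3.1} applies with $x=\theta_{l,j;l',j'}k$ (its hypotheses holding since $\theta\neq0$ forces $x\geq(l+1)^{-10}k$ while all ratios $(\lambda-\overline{a}_{l',j'}k^{2/3})/(\lambda-\overline{b}_{l',j'}k^{2/3})$ are $O(1)$ and $k\geq K_0$), and for $(l',j')=(l,j)$ the block straddles $\lambda$ with $\overline{b}_{l,j}-z_{l,j}\leq L(z_{l,j}-\overline{a}_{l,j})$ for an absolute $L$, so Proposition~\ref{P3.3} applies. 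Then the telescoping identity $\sum_{(l',j')\in\mathcal{P}_{l,j},\,(l',j')\prec(l,j)}\log\!\big((\lambda-\overline{a}_{l',j'}k^{2/3})/(\lambda-\overline{b}_{l',j'}k^{2/3})\big)=\log\!\big(z_{l,j}/(z_{l,j}-\overline{a}_{l,j})\big)\lesssim\log(l+1)$ (same-parity blocks chain via $\overline{b}$ of one $=\overline{a}$ of the next), combined with Cauchy--Schwarz $\sum u_i^2/w_i\geq(\sum u_i)^2/\sum w_i$ and $\log\!\big(2+(\lambda-\overline{a}_{l,j}k^{2/3})(\theta_{l,j;l,j}k)^{-2/3}\big)\lesssim l+1$ for the diagonal term, gives a per-row bound $\prod_{(l',j')\in\mathcal{P}_{l,j}}\mathbb{P}(\mathcal{E}(\cdots))\leq\exp(\mathrm{Ent}_{l,j})\exp(-ck^2(l+1)^{-1}\mathcal{S}_{l,j}(\bm{\theta})^2)$; the entropy $\mathrm{Ent}_{l,j}\lesssim 2^{3l/2}k\log\log(\cdots)$ (summed from Propositions~\ref{P3.1}--\ref{P3.3}) is absorbed because $k\geq K_0(\beta,M,\epsilon,\delta)$ may be taken enormous relative to $T$.

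The crux is upgrading this per-row estimate to the stated \emph{sum} over $(l,j)\in\mathcal{R}_T$; here the per-row bounds alone only give a maximum. I would split $\mathcal{S}_{l,j}(\bm{\theta})^2\leq2\theta_{l,j;l,j}^2+2\big(\sum_{(l',j')\prec(l,j)}\theta_{l,j;l',j'}\big)^2$: the diagonal part $\sum_{(l,j)}(l+1)^{-1}\theta_{l,j;l,j}^2$ is handled by multiplying the single-block bounds $\mathbb{P}(\mathcal{E}((l,j),(l,j),\theta_{l,j;l,j}))\leq C\exp(-ck^2(l+1)^{-1}\theta_{l,j;l,j}^2)$ over the disjoint diagonal blocks of one parity. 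For the off-diagonal part one reorganizes the product over columns, $\mathbb{P}(\mathcal{H})\leq\prod_{(l',j')}\mathbb{P}\!\big(\bigcap_{(l,j)\succ(l',j')}\mathcal{E}((l,j),(l',j'),\theta_{l,j;l',j'})\big)$ (product over one parity class, using disjointness of the blocks), and one must bound each factor by $\exp\!\big(-ck^2\sum_{(l,j)\succ(l',j')}(l+1)\theta_{l,j;l',j'}^2\big)$; since $|\{(l',j')\in\mathcal{P}_{l,j}\}|\lesssim(l+1)^2$ and $\big(\sum_i\theta_i\big)^2\leq|\mathcal{P}_{l,j}|\sum_i\theta_i^2$, multiplying these column factors recovers exactly the claimed bound. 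Proving the column factor requires a \emph{joint, multi-scale} estimate: the row-events on one fixed block, at the widely separated scales $z_{l,j}\asymp2^l$, must satisfy a product bound whose individual factors are stronger than the one-shot bound of Proposition~\ref{P3.1} (which is already far stronger than $\exp(-ck^2(l+1)\theta^2)$ whenever the block lies far below $\lambda$, because then $\log((\lambda-\overline{a})/(\lambda-\overline{b}))$ is exponentially small in $l-l'$; the work is concentrated on rows within $O(\log l)$ levels of the column). I expect this multi-scale decoupling to be the main obstacle, provable by a successive-conditioning argument along the blow-up times of the diffusion on the fixed block, in the spirit of Sections~\ref{Sect.3.2} and \ref{Sect.3.4} but tracking several energy levels at once; once it is available, an Abel-summation rearrangement of $\sum_{(l,j)}(l+1)^{-1}\mathcal{S}_{l,j}(\bm{\theta})^2$ over columns finishes the proof.
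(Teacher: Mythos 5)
Your reduction for the second factor (fixing the maximizing $(l^\ast,j^\ast)$, using $N_0(\cdot;\overline{b}_{l^\ast,j^\ast}k^{2\slash 3},\infty)\leq 1$ and Proposition~\ref{P3.2}) matches the paper's Step~6, and your per-row estimate via telescoping of the logarithms plus Cauchy--Schwarz is essentially the paper's per-row computation. The gap is exactly where you place it, in upgrading the per-row bound to the sum over rows, and the fix you propose does not work. Reorganizing by columns and using the crude Cauchy--Schwarz $\mathcal{S}_{l,j}(\bm{\theta})^2\leq |\mathcal{P}_{l,j}|\sum_{(l',j')}\theta_{l,j;l',j'}^2$ forces you to prove, for each fixed column block, a factor of the form $\exp\big(-ck^2(l+1)\,\theta_{l,j;l',j'}^2\big)$ per row. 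For rows at scales comparable to the column (say the next same-parity block), the ratio $(\lambda-\overline{a}_{l',j'}k^{2\slash 3})\slash(\lambda-\overline{b}_{l',j'}k^{2\slash 3})$ is bounded above and below by absolute constants, so Proposition~\ref{P3.1} (and any refinement of it along the lines of Sections~\ref{Sect.3.2} and \ref{Sect.3.4}) can only give $\exp(-ck^2\theta^2)$, and this order is also what the log-gas heuristic predicts is sharp: shifting $\theta k$ particles within a window adjacent to the energy costs $\asymp\theta^2k^2$, with no factor growing in $l$. So even granting the multi-scale decoupling you flag as the ``main obstacle,'' the individual factors you need are off by a factor $l+1$ in the exponent for near-diagonal (row, column) pairs, and the route collapses; the problematic pairs are precisely the nearby ones, not the widely separated ones where your observation about exponentially small $\log$-ratios is correct.

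The paper avoids this loss by never invoking more than one row-event per column. It runs a recursive bookkeeping (the ``$Index$/$Level$'' construction with type I and type II indices, Steps 1--3 of the proof of Proposition~\ref{P4.1}) that, for each column $(l,j)$, selects a single row $Index(l,j)$ and a quantity $Level(l,j)=\theta_{l'',j'';l,j}^2\big/\log\big((\overline{b}_{l'',j''}-\overline{a}_{l,j})\slash\max\{\overline{b}_{l'',j''}-\overline{b}_{l,j},1\}\big)$, and proves by induction the key inequality $\sum_{(l,j)}\frac{\theta_{l,j}^2}{\log(\overline{b}_{l,j})}\leq C\sum_{(l,j)}Level(l,j)$. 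The point is that the Cauchy--Schwarz is performed with the logarithmic weights, whose total over a row telescopes to $\log(\overline{b}_{l,j})\asymp l+1$ rather than $|\mathcal{P}_{l,j}|\asymp (l+1)^2$, so no per-pair bound stronger than Propositions~\ref{P3.1}/\ref{P3.3} is ever needed; and since only one event per column is retained, independence of the disjoint same-parity blocks (your structural fact (i), which is correct) already yields the product over columns, with no joint multi-energy estimate on a single block required. If you want to salvage your outline, you would have to replace the crude Cauchy--Schwarz by some such weighted, telescoping version and a rule for distributing each $\theta_{l,j;l',j'}$ to a single column event --- which is, in effect, the paper's construction.
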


\begin{proof}

We fix any $\mathbf{z}=(z_{l,j})_{(l,j)\in\mathcal{R}_T}\in\mathcal{J}_T$, $\bm{\theta}=(\theta_{l,j;l',j'})_{(l,j)\in\mathcal{R}_T,(l',j')\in\mathcal{P}_{l,j}}\in \Theta_T$, and $\bm{\theta}'=(\theta_{l,j}')_{(l,j)\in\mathcal{R}_T} \in  \Theta'_T$. For each $(l,j)\in \mathcal{R}^{EVEN}_T$ such that $l\geq 1$, we let $P^{EVEN}_{l,j}$ be the largest element in $\mathcal{R}^{EVEN}_T$ such that $P^{EVEN}_{l,j} \prec (l,j)$. 

\paragraph{Step 1}

Recall the definition of $\mathcal{P}_{l,j}$ and $\mathcal{P}_{l,j}^{\circ}$ from (\ref{Eq2.3})-(\ref{Eq2.6}). Sequentially for each $(l,j)\in \mathcal{R}^{EVEN}_T$ (from the smallest element to the largest element in terms of the order $\preceq$ defined in Section \ref{Sect.2}), we define $Index(l,j;l',j')$ and $Level(l,j;l',j')$ for every $(l',j')\in \mathcal{P}_{l,j}$ as follows. We take
\begin{equation}\label{Eq4.7}
  Index(l,j;l,j):=(l,j), \quad Level(l,j;l,j):=\frac{(\theta_{l,j;l,j})^2}{\log(\overline{b}_{l,j}-\overline{a}_{l,j})}.
\end{equation}

If $l=0$, we call $(l,j)$ a ``type I index''.

If $l\geq 1$, we denote $(\tilde{l},\tilde{j})=P^{EVEN}_{l,j}$, and assume that $Index(\tilde{l},\tilde{j};l',j')$ and $Level(\tilde{l},\tilde{j};l',j')$ have been determined for every $(l',j')\in \mathcal{P}_{\tilde{l},\tilde{j}}$ (due to our sequential construction). We consider the following two cases.
\begin{itemize}
  \item[(a)] If 
  \begin{equation}\label{Eq4.16}
     \sum_{(l',j')\in\mathcal{P}_{\tilde{l},\tilde{j}}}\sqrt{Level(\tilde{l},\tilde{j};l',j')}\sqrt{\log\Big(\frac{\overline{b}_{l,j}-\overline{a}_{l',j'}}{\overline{b}_{l,j}-\overline{b}_{l',j'}}\Big)} \leq \frac{1}{100}\sum_{(l',j')\in\mathcal{P}_{l,j}}\theta_{l,j;l',j'},
  \end{equation}
  we call $(l,j)$ a ``type I index''. For every $(l',j')\in\mathcal{P}_{\tilde{l},\tilde{j}}$, if 
  \begin{equation}\label{Eq4.2}
      (\theta_{l,j;l',j'})^2\geq Level(\tilde{l},\tilde{j};l',j') \log\Big(\frac{\overline{b}_{l,j}-\overline{a}_{l',j'}}{\overline{b}_{l,j}-\overline{b}_{l',j'}}\Big),
  \end{equation}
  we let
  \begin{equation}\label{Eq4.3}
      Index(l,j;l',j'):=(l,j), \quad Level(l,j;l',j'):=(\theta_{l,j;l',j'})^2 \Big/\log\Big(\frac{\overline{b}_{l,j}-\overline{a}_{l',j'}}{\overline{b}_{l,j}-\overline{b}_{l',j'}}\Big);
  \end{equation}
  otherwise, we let
  \begin{equation}\label{Eq4.4}
      Index(l,j;l',j'):=Index(\tilde{l},\tilde{j};l',j'), \quad Level(l,j;l',j'):=Level(\tilde{l},\tilde{j};l',j').
  \end{equation}
  \item[(b)] If 
  \begin{equation}
     \sum_{(l',j')\in\mathcal{P}_{\tilde{l},\tilde{j}}}\sqrt{Level(\tilde{l},\tilde{j};l',j')}\sqrt{\log\Big(\frac{\overline{b}_{l,j}-\overline{a}_{l',j'}}{\overline{b}_{l,j}-\overline{b}_{l',j'}}\Big)} > \frac{1}{100}\sum_{(l',j')\in\mathcal{P}_{l,j}}\theta_{l,j;l',j'},
  \end{equation}
  we call $(l,j)$ a ``type II index''. For every $(l',j')\in\mathcal{P}_{\tilde{l},\tilde{j}}$, we let
  \begin{equation}\label{Eq4.5}
      Index(l,j;l',j'):=Index(\tilde{l},\tilde{j};l',j'), \quad Level(l,j;l',j'):=Level(\tilde{l},\tilde{j};l',j').
  \end{equation}
\end{itemize}

Note that $\overline{b}_{0,1}-\overline{a}_{0,1}=3\slash 2>1$ and $\overline{b}_{0,2}-\overline{a}_{0,2}=5\slash 2>1$. For any $(l,j)\in\mathcal{R}_T$ such that $l\geq 1$, $\overline{b}_{l,j}-\overline{a}_{l,j}>b_{l,j}-a_{l,j}=2^l\slash (2l)\geq 1$. Hence for any $(l,j)\in\mathcal{R}_T$, we have $\log(\overline{b}_{l,j}-\overline{a}_{l,j})>0$. Moreover, for any $(l,j)\in\mathcal{R}_T$ and $(l',j')\in\mathcal{P}_{l,j}^{\circ}$, as $\overline{a}_{l',j'}<\overline{b}_{l',j'}$, we have 
\begin{equation*}
    \log\Big(\frac{\overline{b}_{l,j}-\overline{a}_{l',j'}}{\overline{b}_{l,j}-\overline{b}_{l',j'}}\Big)>0.
\end{equation*}
Therefore, by the above construction, for any $(l,j)\in \mathcal{R}^{EVEN}_T$ and $(l',j')\in\mathcal{P}_{l,j}$, we have $Level(l,j;l',j')\geq 0$.

Note that for any $(l,j)\in\mathcal{R}^{EVEN}_T$ and $(l',j')\in\mathcal{P}_{l,j}$ such that $(l',j')\prec (l,j)$ (which implies $l\geq 1$), we have 
\begin{equation}\label{Eq4.56}
  \overline{b}_{l,j}-\overline{b}_{l',j'}\geq b_{l,j}-a_{l,j}=\frac{2^l}{2l}\geq 1.
\end{equation}
By (\ref{Eq4.56}) and the above construction, for any $(l,j)\in \mathcal{R}^{EVEN}_T$ and $(l',j')\in\mathcal{P}_{l,j}$, we have
\begin{equation}\label{Eq4.57}
  (l',j')\preceq Index(l,j;l',j')\preceq (l,j);
\end{equation}
moreover, if we denote $(l'',j'')=Index(l,j;l',j')$, then
\begin{equation}\label{Eq4.101}
Level(l,j;l',j')=(\theta_{l'',j'';l',j'})^2\Big/\log\Big(\frac{\overline{b}_{l'',j''}-\overline{a}_{l',j'}}{\max\{\overline{b}_{l'',j''}-\overline{b}_{l',j'},1\}}\Big).
\end{equation}

By (\ref{Eq4.2})-(\ref{Eq4.4}) and (\ref{Eq4.5}), for any $(l,j)\in\mathcal{R}^{EVEN}_T$ such that $l\geq 1$ and any $(l',j')\in \mathcal{P}_{\tilde{l},\tilde{j}}$, we have
\begin{equation}\label{Eq4.6}
    Level(l,j;l',j')\geq Level(\tilde{l},\tilde{j};l',j').
\end{equation}
Therefore, for any $(l_1,j_1),(l_2,j_2)\in\mathcal{R}^{EVEN}_T$ such that $(l_1,j_1)\succeq (l_2,j_2)$ and any $(l',j')\in\mathcal{P}_{l_2,j_2}$, we have
\begin{equation}\label{Eq4.21}
  Level(l_1,j_1;l',j')\geq Level(l_2,j_2;l',j').
\end{equation}

\paragraph{Step 2}

For any $(l,j)\in\mathcal{R}_T$, we denote
\begin{equation}\label{Eq4.14}
    \theta_{l,j}:=\sum_{(l',j')\in\mathcal{P}_{l,j}} \theta_{l,j;l',j'}=\mathcal{S}_{l,j}(\bm{\theta}).
\end{equation}
In the following, we show by induction that for any $(l,j)\in\mathcal{R}^{EVEN}_T$,
\begin{equation}\label{Eq4.1}
    \sum_{(l',j')\in\mathcal{P}_{l,j}} Level(l,j;l',j')\geq \frac{1}{20}\sum_{\substack{(l',j')\in\mathcal{P}_{l,j}:\\ (l',j')\text{ is a type I index}}} \frac{(\theta_{l',j'})^2}{\log(\overline{b}_{l',j'})}.
\end{equation}

For $(l,j)=(0,2)$, the left-hand side of (\ref{Eq4.1}) is $(\theta_{0,2;0,2})^2\slash \log(\overline{b}_{0,2})$, and the right-hand side of (\ref{Eq4.1}) is at most $(\theta_{0,2;0,2})^2\slash( 20 \log(\overline{b}_{0,2}))$, hence (\ref{Eq4.1}) holds.

Below we assume that $l\geq 1$ and that (\ref{Eq4.1}) holds for any $(l',j')\in\mathcal{P}_{\tilde{l},\tilde{j}}$. If $(l,j)$ is a type II index, then we have
\begin{eqnarray}\label{Eq4.19}
  &&  \sum_{(l',j')\in\mathcal{P}_{l,j}} Level(l,j;l',j') \geq \sum_{(l',j')\in\mathcal{P}_{\tilde{l},\tilde{j}}} Level(l,j;l',j')\nonumber\\
  &\geq& \sum_{(l',j')\in\mathcal{P}_{\tilde{l},\tilde{j}}}Level(\tilde{l},\tilde{j};l',j') \geq \frac{1}{20}\sum_{\substack{(l',j')\in\mathcal{P}_{\tilde{l},\tilde{j}}:\\ (l',j')\text{ is a type I index}}} \frac{(\theta_{l',j'})^2}{\log(\overline{b}_{l',j'})}\nonumber\\
  &=& \frac{1}{20}\sum_{\substack{(l',j')\in\mathcal{P}_{l,j}:\\ (l',j')\text{ is a type I index}}} \frac{(\theta_{l',j'})^2}{\log(\overline{b}_{l',j'})},
\end{eqnarray}
where we use (\ref{Eq4.6}) for the second inequality and the induction hypothesis for the third inequality.

Below we consider the case where $(l,j)$ is a type I index. We have
\begin{eqnarray}\label{Eq4.8}
&& \sum_{(l',j')\in\mathcal{P}_{l,j}} Level(l,j;l',j')-\sum_{(l',j')\in\mathcal{P}_{\tilde{l},\tilde{j}}} Level(\tilde{l},\tilde{j};l',j') \nonumber\\
&=& \sum_{(l',j')\in\mathcal{P}_{\tilde{l},\tilde{j}}}(Level(l,j;l',j')-Level(\tilde{l},\tilde{j};l',j'))+Level(l,j;l,j)\nonumber\\
&\geq& \sum_{\substack{(l',j')\in\mathcal{P}_{\tilde{l},\tilde{j}}:\\ Level(l,j;l',j')\geq 2 Level(\tilde{l},\tilde{j};l',j')}}(Level(l,j;l',j')-Level(\tilde{l},\tilde{j};l',j'))\nonumber\\
&& +\frac{(\theta_{l,j;l,j})^2}{\log(\overline{b}_{l,j}-\overline{a}_{l,j})}\nonumber\\
&\geq& \frac{1}{2}\sum_{\substack{(l',j')\in\mathcal{P}_{\tilde{l},\tilde{j}}:\\ Level(l,j;l',j')\geq 2 Level(\tilde{l},\tilde{j};l',j')}} Level(l,j;l',j')+\frac{(\theta_{l,j;l,j})^2}{\log(\overline{b}_{l,j}-\overline{a}_{l,j})},
\end{eqnarray}
where we use (\ref{Eq4.6}) in the third line and (\ref{Eq4.7}) in the fourth line. 

For any choice of $(l',j')\in\mathcal{P}_{\tilde{l},\tilde{j}}$ such that $Level(l,j;l',j')\geq 2 Level(\tilde{l},\tilde{j};l',j')$, we have the following two cases:
\begin{itemize}
    \item[(a)] If $Level(\tilde{l},\tilde{j};l',j')=0$, then (\ref{Eq4.2}) holds. Hence
    \begin{equation}\label{Eq4.9}
        Level(l,j;l',j')=(\theta_{l,j;l',j'})^2\Big/\log\Big(\frac{\overline{b}_{l,j}-\overline{a}_{l',j'}}{\overline{b}_{l,j}-\overline{b}_{l',j'}}\Big).
    \end{equation}
    \item[(b)] If $Level(\tilde{l},\tilde{j};l',j')>0$, then
    \begin{equation*}
        Level(l,j;l',j')\geq 2 Level(\tilde{l},\tilde{j};l',j')>Level(\tilde{l},\tilde{j};l',j').
    \end{equation*}
    By the construction in \textbf{Step 1}, we have
    \begin{equation}\label{Eq4.10}
        Level(l,j;l',j')=(\theta_{l,j;l',j'})^2\Big/\log\Big(\frac{\overline{b}_{l,j}-\overline{a}_{l',j'}}{\overline{b}_{l,j}-\overline{b}_{l',j'}}\Big).
    \end{equation}
\end{itemize}
By the Cauchy-Schwarz inequality, we have
\begin{eqnarray}\label{Eq4.11}
&& \Big(\sum_{\substack{(l',j')\in\mathcal{P}_{\tilde{l},\tilde{j}}:\\ Level(l,j;l',j')\geq 2 Level(\tilde{l},\tilde{j};l',j')}}(\theta_{l,j;l',j'})^2\Big/\log\Big(\frac{\overline{b}_{l,j}-\overline{a}_{l',j'}}{\overline{b}_{l,j}-\overline{b}_{l',j'}}\Big)+\frac{(\theta_{l,j;l,j})^2}{\log(\overline{b}_{l,j}-\overline{a}_{l,j})}\Big) \nonumber\\
&\times& \Big(\sum_{\substack{(l',j')\in\mathcal{P}_{\tilde{l},\tilde{j}}:\\ Level(l,j;l',j')\geq 2 Level(\tilde{l},\tilde{j};l',j')}}\log\Big(\frac{\overline{b}_{l,j}-\overline{a}_{l',j'}}{\overline{b}_{l,j}-\overline{b}_{l',j'}}\Big)+\log(\overline{b}_{l,j}-\overline{a}_{l,j})\Big)\nonumber\\
&\geq& \Big(\sum_{\substack{(l',j')\in\mathcal{P}_{\tilde{l},\tilde{j}}:\\ Level(l,j;l',j')\geq 2 Level(\tilde{l},\tilde{j};l',j')}}\theta_{l,j;l',j'}+\theta_{l,j;l,j}\Big)^2.
\end{eqnarray}
Moreover, 
\begin{eqnarray}\label{Eq4.12}
&& \sum_{\substack{(l',j')\in\mathcal{P}_{\tilde{l},\tilde{j}}:\\ Level(l,j;l',j')\geq 2 Level(\tilde{l},\tilde{j};l',j')}}\log\Big(\frac{\overline{b}_{l,j}-\overline{a}_{l',j'}}{\overline{b}_{l,j}-\overline{b}_{l',j'}}\Big)+\log(\overline{b}_{l,j}-\overline{a}_{l,j}) \nonumber\\
&&\leq \sum_{(l',j')\in\mathcal{P}_{\tilde{l},\tilde{j}}} \log\Big(\frac{\overline{b}_{l,j}-\overline{a}_{l',j'}}{\overline{b}_{l,j}-\overline{b}_{l',j'}}\Big) + \log(\overline{b}_{l,j}-\overline{a}_{l,j})=\log(\overline{b}_{l,j}).
\end{eqnarray}
By (\ref{Eq4.11}) and (\ref{Eq4.12}), we have
\begin{eqnarray}\label{Eq4.13}
&& \sum_{\substack{(l',j')\in\mathcal{P}_{\tilde{l},\tilde{j}}:\\ Level(l,j;l',j')\geq 2 Level(\tilde{l},\tilde{j};l',j')}}(\theta_{l,j;l',j'})^2\Big/\log\Big(\frac{\overline{b}_{l,j}-\overline{a}_{l',j'}}{\overline{b}_{l,j}-\overline{b}_{l',j'}}\Big)+\frac{(\theta_{l,j;l,j})^2}{\log(\overline{b}_{l,j}-\overline{a}_{l,j})} \nonumber\\
&&\geq\Big(\sum_{\substack{(l',j')\in\mathcal{P}_{\tilde{l},\tilde{j}}:\\ Level(l,j;l',j')\geq 2 Level(\tilde{l},\tilde{j};l',j')}}\theta_{l,j;l',j'}+\theta_{l,j;l,j}\Big)^2 \Big/ \log(\overline{b}_{l,j}).
\end{eqnarray}
By (\ref{Eq4.8})-(\ref{Eq4.10}) and (\ref{Eq4.13}), we have
\begin{eqnarray}\label{Eq4.18}
&& \sum_{(l',j')\in\mathcal{P}_{l,j}} Level(l,j;l',j')-\sum_{(l',j')\in\mathcal{P}_{\tilde{l},\tilde{j}}} Level(\tilde{l},\tilde{j};l',j') \nonumber\\
&\geq& \frac{1}{2}\sum_{\substack{(l',j')\in\mathcal{P}_{\tilde{l},\tilde{j}}:\\ Level(l,j;l',j')\geq 2 Level(\tilde{l},\tilde{j};l',j')}}(\theta_{l,j;l',j'})^2\Big/\log\Big(\frac{\overline{b}_{l,j}-\overline{a}_{l',j'}}{\overline{b}_{l,j}-\overline{b}_{l',j'}}\Big)\nonumber\\
&& +\frac{(\theta_{l,j;l,j})^2}{\log(\overline{b}_{l,j}-\overline{a}_{l,j})}\nonumber\\
&\geq& \Big(\sum_{\substack{(l',j')\in\mathcal{P}_{\tilde{l},\tilde{j}}:\\ Level(l,j;l',j')\geq 2 Level(\tilde{l},\tilde{j};l',j')}}\theta_{l,j;l',j'}+\theta_{l,j;l,j}\Big)^2 \Big/ (2\log(\overline{b}_{l,j})).
\end{eqnarray}

Now consider any $(l',j')\in\mathcal{P}_{\tilde{l},\tilde{j}}$ such that $Level(l,j;l',j')<2 Level(\tilde{l},\tilde{j};l',j')$. Then either
\begin{equation*}
    \theta_{l,j;l',j'}<\sqrt{Level(\tilde{l},\tilde{j};l',j')}\sqrt{\log\Big(\frac{\overline{b}_{l,j}-\overline{a}_{l',j'}}{\overline{b}_{l,j}-\overline{b}_{l',j'}}\Big)},
\end{equation*}
or
\begin{equation*}
    \theta_{l,j;l',j'}\geq \sqrt{Level(\tilde{l},\tilde{j};l',j')}\sqrt{\log\Big(\frac{\overline{b}_{l,j}-\overline{a}_{l',j'}}{\overline{b}_{l,j}-\overline{b}_{l',j'}}\Big)}.
\end{equation*}
For the latter case, (\ref{Eq4.2}) holds, hence by (\ref{Eq4.3}), we obtain that
\begin{eqnarray*}
    \theta_{l,j;l',j'}&=&\sqrt{Level(l,j;l',j')}\sqrt{\log\Big(\frac{\overline{b}_{l,j}-\overline{a}_{l',j'}}{\overline{b}_{l,j}-\overline{b}_{l',j'}}\Big)}  \nonumber\\
    &<& \sqrt{2} \sqrt{Level(\tilde{l},\tilde{j};l',j')}\sqrt{\log\Big(\frac{\overline{b}_{l,j}-\overline{a}_{l',j'}}{\overline{b}_{l,j}-\overline{b}_{l',j'}}\Big)}.
\end{eqnarray*}
Hence in both cases, we have
\begin{equation}\label{Eq4.15}
    \theta_{l,j;l',j'}<\sqrt{2} \sqrt{Level(\tilde{l},\tilde{j};l',j')}\sqrt{\log\Big(\frac{\overline{b}_{l,j}-\overline{a}_{l',j'}}{\overline{b}_{l,j}-\overline{b}_{l',j'}}\Big)}.
\end{equation}

By (\ref{Eq4.14}) and (\ref{Eq4.15}), we have
\begin{eqnarray}\label{Eq4.17}
&& \sum_{\substack{(l',j')\in\mathcal{P}_{\tilde{l},\tilde{j}}:\\ Level(l,j;l',j')\geq 2 Level(\tilde{l},\tilde{j};l',j')}}\theta_{l,j;l',j'}+\theta_{l,j;l,j}\nonumber\\
&=&\theta_{l,j}-\sum_{\substack{(l',j')\in\mathcal{P}_{\tilde{l},\tilde{j}}:\\ Level(l,j;l',j')< 2 Level(\tilde{l},\tilde{j};l',j')}}\theta_{l,j;l',j'}\nonumber\\
&\geq&  \theta_{l,j}-\sqrt{2}\sum_{\substack{(l',j')\in\mathcal{P}_{\tilde{l},\tilde{j}}:\\ Level(l,j;l',j')< 2 Level(\tilde{l},\tilde{j};l',j')}}\sqrt{Level(\tilde{l},\tilde{j};l',j')}\sqrt{\log\Big(\frac{\overline{b}_{l,j}-\overline{a}_{l',j'}}{\overline{b}_{l,j}-\overline{b}_{l',j'}}\Big)}\nonumber\\
&\geq& \theta_{l,j}-\sqrt{2}\sum_{(l',j')\in\mathcal{P}_{\tilde{l},\tilde{j}}}\sqrt{Level(\tilde{l},\tilde{j};l',j')}\sqrt{\log\Big(\frac{\overline{b}_{l,j}-\overline{a}_{l',j'}}{\overline{b}_{l,j}-\overline{b}_{l',j'}}\Big)}\nonumber\\
&\geq&\theta_{l,j}-\frac{1}{50}\sum_{(l',j')\in\mathcal{P}_{l,j}} \theta_{l,j;l',j'}=\frac{49}{50}\theta_{l,j},
\end{eqnarray}
where we use (\ref{Eq4.16}) and the fact that $(l,j)$ is a type I index in the fifth line. By (\ref{Eq4.18}) and (\ref{Eq4.17}), we have
\begin{eqnarray}
&& \sum_{(l',j')\in\mathcal{P}_{l,j}} Level(l,j;l',j')-\sum_{(l',j')\in\mathcal{P}_{\tilde{l},\tilde{j}}} Level(\tilde{l},\tilde{j};l',j') \nonumber\\
&&\geq \frac{1}{2}\Big(\frac{49}{50}\Big)^2 \frac{(\theta_{l,j})^2}{\log(\overline{b}_{l,j})}\geq \frac{1}{10} \frac{(\theta_{l,j})^2}{\log(\overline{b}_{l,j})}.
\end{eqnarray}
Hence by the induction hypothesis, we have
\begin{eqnarray}\label{Eq4.20}
 && \sum_{(l',j')\in\mathcal{P}_{l,j}} Level(l,j;l',j')\geq \sum_{(l',j')\in\mathcal{P}_{\tilde{l},\tilde{j}}} Level(\tilde{l},\tilde{j};l',j')+\frac{1}{10} \frac{(\theta_{l,j})^2}{\log(\overline{b}_{l,j})} \nonumber\\
 &\geq& \frac{1}{20}\sum_{\substack{(l',j')\in\mathcal{P}_{\tilde{l},\tilde{j}}:\\ (l',j')\text{ is a type I index}}} \frac{(\theta_{l',j'})^2}{\log(\overline{b}_{l',j'})}+\frac{1}{10} \frac{(\theta_{l,j})^2}{\log(\overline{b}_{l,j})} \nonumber\\
 &\geq & \frac{1}{20}\sum_{\substack{(l',j')\in\mathcal{P}_{l,j}:\\(l',j')\text{ is a type I index}}}\frac{(\theta_{l',j'})^2}{\log(\overline{b}_{l',j'})}.
\end{eqnarray}

By (\ref{Eq4.19}) and (\ref{Eq4.20}), we conclude that (\ref{Eq4.1}) holds.

\paragraph{Step 3}

Consider any $(l,j)\in\mathcal{R}^{EVEN}_T$ and $(l',j')\in\mathcal{P}_{l,j}$ such that $(l',j')$ is a type II index (note that this implies $l'\geq 1 $). We denote $(\overline{l},\overline{j})=P^{EVEN}_{l',j'}$ (note that $(\overline{l},\overline{j})$ depends on $(l',j')$). As $(l',j')$ is a type II index, we have
\begin{eqnarray}
&& \theta_{l',j'}=\sum_{(l'',j'')\in\mathcal{P}_{l',j'}}\theta_{l',j';l'',j''}\nonumber\\
&<& 100 \sum_{(l'',j'')\in\mathcal{P}_{\overline{l},\overline{j}}}\sqrt{Level(\overline{l},\overline{j};l'',j'')}\sqrt{\log\Big(\frac{\overline{b}_{l',j'}-\overline{a}_{l'',j''}}{\overline{b}_{l',j'}-\overline{b}_{l'',j''}}\Big)}\nonumber\\
&\leq&  100 \sum_{(l'',j'')\in\mathcal{P}_{\overline{l},\overline{j}}}\sqrt{Level(l,j;l'',j'')}\sqrt{\log\Big(\frac{\overline{b}_{l',j'}-\overline{a}_{l'',j''}}{\overline{b}_{l',j'}-\overline{b}_{l'',j''}}\Big)},
\end{eqnarray}
where we use (\ref{Eq4.21}) for the second inequality. Hence we have
\begin{eqnarray}\label{Eq4.52n}
&&\sum_{\substack{(l',j')\in\mathcal{P}_{l,j}:\\(l',j')\text{ is a type II index}}} \frac{(\theta_{l',j'})^2}{\log(\overline{b}_{l',j'})}\nonumber\\
&\leq& 10^4 \sum_{(l',j')\in \mathcal{P}_{l,j} }\frac{1}{\log(\overline{b}_{l',j'})}\bigg(\sum_{(l'',j'')\in\mathcal{P}_{\overline{l},\overline{j}}}\sqrt{Level(l,j;l'',j'')}\sqrt{\log\Big(\frac{\overline{b}_{l',j'}-\overline{a}_{l'',j''}}{\overline{b}_{l',j'}-\overline{b}_{l'',j''}}\Big)}\bigg)^2\nonumber\\
&=& 10^4  \sum_{(l',j')\in \mathcal{P}_{l,j} }\frac{1}{\log(\overline{b}_{l',j'})}\bigg(\sum_{\substack{(l_1,j_1)\in\mathcal{P}_{\overline{l},\overline{j}},\\(l_2,j_2)\in\mathcal{P}_{\overline{l},\overline{j}}}}\sqrt{Level(l,j;l_1,j_1)}\sqrt{Level(l,j;l_2,j_2)}\nonumber\\
&& \quad\quad\quad\quad\quad\quad\quad\quad\quad\quad\quad \times \sqrt{\log\Big(\frac{\overline{b}_{l',j'}-\overline{a}_{l_1,j_1}}{\overline{b}_{l',j'}-\overline{b}_{l_1,j_1}}\Big)}\sqrt{\log\Big(\frac{\overline{b}_{l',j'}-\overline{a}_{l_2,j_2}}{\overline{b}_{l',j'}-\overline{b}_{l_2,j_2}}\Big)}\bigg)\nonumber\\
&\leq& \frac{10^4}{2} \sum_{\substack{(l',j')\in\mathcal{P}_{l,j},\\(l_1,j_1)\in\mathcal{P}_{\overline{l},\overline{j}},\\(l_2,j_2)\in\mathcal{P}_{\overline{l},\overline{j}}}} \frac{1}{\log(\overline{b}_{l',j'})}(Level(l,j;l_1,j_1)+Level(l,j;l_2,j_2))\nonumber\\
&& \quad\quad\quad\quad\quad\quad\quad\quad\times\sqrt{\log\Big(\frac{\overline{b}_{l',j'}-\overline{a}_{l_1,j_1}}{\overline{b}_{l',j'}-\overline{b}_{l_1,j_1}}\Big)}\sqrt{\log\Big(\frac{\overline{b}_{l',j'}-\overline{a}_{l_2,j_2}}{\overline{b}_{l',j'}-\overline{b}_{l_2,j_2}}\Big)}\nonumber\\
&=& 10^4 \sum_{\substack{(l',j')\in\mathcal{P}_{l,j},\\(l_1,j_1)\in\mathcal{P}_{\overline{l},\overline{j}},\\(l_2,j_2)\in\mathcal{P}_{\overline{l},\overline{j}}}} \frac{Level(l,j;l_1,j_1) }{\log(\overline{b}_{l',j'})}\sqrt{\log\Big(\frac{\overline{b}_{l',j'}-\overline{a}_{l_1,j_1}}{\overline{b}_{l',j'}-\overline{b}_{l_1,j_1}}\Big)}\sqrt{\log\Big(\frac{\overline{b}_{l',j'}-\overline{a}_{l_2,j_2}}{\overline{b}_{l',j'}-\overline{b}_{l_2,j_2}}\Big)}\nonumber\\
&=& 10^4 \sum_{(l_1,j_1)\in\mathcal{P}_{l,j}}Level(l,j;l_1,j_1)\bigg(\sum_{\substack{(l',j')\in \mathcal{P}_{l,j}:\\ (l',j')\succ (l_1,j_1)}}\sum_{(l_2,j_2)\in\mathcal{P}_{l',j'}^{\circ}}\frac{1}{\log(\overline{b}_{l',j'})}\nonumber\\
&&\quad\quad\quad\quad\quad\quad\quad\quad\quad\quad\quad\times\sqrt{\log\Big(\frac{\overline{b}_{l',j'}-\overline{a}_{l_1,j_1}}{\overline{b}_{l',j'}-\overline{b}_{l_1,j_1}}\Big)}\sqrt{\log\Big(\frac{\overline{b}_{l',j'}-\overline{a}_{l_2,j_2}}{\overline{b}_{l',j'}-\overline{b}_{l_2,j_2}}\Big)}\bigg),
\end{eqnarray}
where we use the AM-GM inequality in the fifth line.

Consider any $(l',j')\in\mathcal{R}^{EVEN}_T$ such that $l'\geq 1$. Let
\begin{equation}\label{Eq4.53}
\mathcal{T}_{l',j'}:=\sum_{\substack{(l_2,j_2)\in\mathcal{P}_{l',j'}^{\circ}}}\sqrt{\log\Big(\frac{\overline{b}_{l',j'}-\overline{a}_{l_2,j_2}}{\overline{b}_{l',j'}-\overline{b}_{l_2,j_2}}\Big)}.
\end{equation}
In the following, we bound $\mathcal{T}_{l',j'}$. We consider the following two cases separately: $1\leq j'\leq 2l'-1$ or $j'=2l'$. For any $t\in \mathbb{N}^{*}$ and $s\in \{0\}\cup [4t]$, we let
\begin{equation}
  \alpha_{t,s}:=2^t\Big(1+\frac{s}{4t}\Big). 
\end{equation}

\subparagraph{Case 1: $1\leq j'\leq 2l'-1$}

In this case, we have 
\begin{equation*}
 \overline{b}_{l',j'}=2^{l'}\Big(1+\frac{2j'+1}{4l'}\Big)=\alpha_{l',2j'+1}.
\end{equation*}
As $\sqrt{a+b}\leq \sqrt{a}+\sqrt{b}$ for any $a,b\geq 0$, we have
\begin{eqnarray}\label{Eq4.22}
 \mathcal{T}_{l',j'}
&\leq& \sqrt{\log\Big(\frac{\alpha_{l',2j'+1}}{\alpha_{l',2j'+1}-2}\Big)}+\sum_{t=1}^{l'-1}\sum_{s=1}^{4t}\sqrt{\log\Big(\frac{\alpha_{l',2j'+1}-\alpha_{t,s-1}}{\alpha_{l',2j'+1}-\alpha_{t,s}}\Big)}\nonumber\\
&& + \sum_{s=1}^{2j'} \sqrt{\log\Big(\frac{\alpha_{l',2j'+1}-\alpha_{l',s-1}}{\alpha_{l',2j'+1}-\alpha_{l',s}}\Big)}.
\end{eqnarray}

Note that $\alpha_{l',2j'+1}\geq 3$. Hence
\begin{equation}\label{Eq4.23}
   \sqrt{\log\Big(\frac{\alpha_{l',2j'+1}}{\alpha_{l',2j'+1}-2}\Big)}=\sqrt{\log\Big(1+\frac{2}{\alpha_{l',2j'+1}-2}\Big)}\leq \sqrt{\log(3)}\leq 2.
\end{equation}
Moreover, we have
\begin{eqnarray}\label{Eq4.24}
&& \sum_{s=1}^{2j'}\sqrt{\log\Big(\frac{\alpha_{l',2j'+1}-\alpha_{l',s-1}}{\alpha_{l',2j'+1}-\alpha_{l',s}}\Big)}= \sum_{s=1}^{2j'}\sqrt{\log\Big(1+\frac{1}{2j'+1-s}\Big)}\nonumber\\
&\leq& \sum_{s=1}^{2j'} \sqrt{\frac{1}{2j'+1-s}}=\sum_{s=1}^{2j'}s^{-1\slash 2}\leq \sum_{s=1}^{4l'} s^{-1\slash 2}\nonumber\\
&\leq& \int_{0}^{4l'} x^{-1\slash 2}dx=4\sqrt{l'},
\end{eqnarray}
where we use $\log(1+x)\leq x,\forall x>-1$ in the second line.

If $l'\geq 2$, we consider any $t\in [1,l'-1]\cap\mathbb{N}$ below. Note that
\begin{equation}
   \alpha_{l',2j'+1}\geq 2^{l'}\Big(1+\frac{1}{2l'}\Big).
\end{equation}
If $t=l'-1$, for any $s\in [4t]$, as $l'\geq 2$, we have 
\begin{eqnarray}
   \alpha_{l',2j'+1}-\alpha_{t,s}&\geq& 2^{l'}\Big(1+\frac{1}{2l'}\Big)-2^{l'-1}\Big(1+\frac{s}{4(l'-1)}\Big) \nonumber\\
   &\geq& 2^{l'-1} \Big(1-\frac{s-1}{4(l'-1)}\Big).
\end{eqnarray}
Hence
\begin{eqnarray}\label{Eq4.25}
&& \sum_{s=1}^{4t}\sqrt{\log\Big(\frac{\alpha_{l',2j'+1}-\alpha_{t,s-1}}{\alpha_{l',2j'+1}-\alpha_{t,s}}\Big)}=\sum_{s=1}^{4t}  \sqrt{\log\Big(1+\frac{\alpha_{t,s}-\alpha_{t,s-1}}{\alpha_{l',2j'+1}-\alpha_{t,s}}\Big)}\nonumber\\
&\leq& \sum_{s=1}^{4t}\sqrt{\log\Big(1+\frac{2^{l'-1}\slash (4(l'-1))}{2^{l'-1}(1-(s-1)\slash(4(l'-1)))}\Big)}\nonumber\\
&\leq& \sum_{s=1}^{4(l'-1)} \sqrt{\frac{1}{4(l'-1)-(s-1)}}=\sum_{s=1}^{4(l'-1)} s^{-1\slash 2}\nonumber\\
&\leq& \int_{0}^{4(l'-1)}x^{-1\slash 2}dx\leq 4\sqrt{l'-1},
\end{eqnarray}
where we use $\log(1+x)\leq x,\forall x>-1$ in the third line. If $l'\geq 3$ and $t\leq l'-2$, for any $s\in [4t]$, we have
\begin{equation}
 \alpha_{l',2j'+1}-\alpha_{t,s}\geq 2^{l'}\Big(1+\frac{1}{2l'}\Big)-2^{l'-1}\geq 2^{l'-1}. 
\end{equation}
Hence
\begin{eqnarray}\label{Eq4.26}
&& \sum_{s=1}^{4t}\sqrt{\log\Big(\frac{\alpha_{l',2j'+1}-\alpha_{t,s-1}}{\alpha_{l',2j'+1}-\alpha_{t,s}}\Big)}=\sum_{s=1}^{4t}  \sqrt{\log\Big(1+\frac{\alpha_{t,s}-\alpha_{t,s-1}}{\alpha_{l',2j'+1}-\alpha_{t,s}}\Big)}\nonumber\\
&\leq& \sum_{s=1}^{4t} \sqrt{\log\Big(1+\frac{2^t\slash (4t)}{2^{l'-1}}\Big)}\leq \sum_{s=1}^{4t} \sqrt{\frac{2^{t-l'}}{2t}}\leq 4 \cdot 2^{(t-l')\slash 2}\sqrt{t},
\end{eqnarray}
where we use $\log(1+x)\leq x,\forall x>-1$ in the second line.

By (\ref{Eq4.22})-(\ref{Eq4.24}), (\ref{Eq4.25}), and (\ref{Eq4.26}), as $l'\geq 1$, we have
\begin{eqnarray}\label{Eq4.31}
 && \mathcal{T}_{l',j'}\leq 2+4\sqrt{l'}+4\sqrt{l'-1}+4\sum_{t=1}^{l'-2} 2^{(t-l')\slash 2}\sqrt{t} \nonumber\\
&\leq& 10\sqrt{l'}+4\sqrt{l'}\sum_{t=1}^{l'-2}2^{(t-l')\slash 2} \leq 10\sqrt{l'}+4\Big(\sum_{s=0}^{\infty} 2^{-s\slash 2}\Big)\sqrt{l'}\leq 30\sqrt{l'}.
\end{eqnarray}

\subparagraph{Case 2: $j'=2l'$}

In this case, we have
\begin{equation}
 \overline{b}_{l',j'}=2^{l'+1}\Big(1+\frac{1}{4(l'+1)}\Big)=\alpha_{l'+1,1}.
\end{equation}
As $\sqrt{a+b}\leq \sqrt{a}+\sqrt{b}$ for any $a,b\geq 0$, we have
\begin{equation}\label{Eq4.27}
\mathcal{T}_{l',j'}
\leq\sqrt{\log\Big(\frac{\alpha_{l'+1,1}}{\alpha_{l'+1,1}-2}\Big)}+\sum_{t=1}^{l'}\sum_{s=1}^{4t}\sqrt{\log\Big(\frac{\alpha_{l'+1,1}-\alpha_{t,s-1}}{\alpha_{l'+1,1}-\alpha_{t,s}}\Big)}.
\end{equation}

Note that $\alpha_{l'+1,1}\geq 4$. Hence
\begin{equation}\label{Eq4.28}
   \sqrt{\log\Big(\frac{\alpha_{l'+1,1}}{\alpha_{l'+1,1}-2}\Big)}=\sqrt{\log\Big(1+\frac{2}{\alpha_{l'+1,1}-2}\Big)}\leq \sqrt{\log(2)}\leq 1.
\end{equation}

Consider the case where $t=l'$. For any $s\in [4l']$, as $l'\geq 1$, we have
\begin{eqnarray}
 && \alpha_{l'+1,1}-\alpha_{t,s} = 2^{l'+1}\Big(1+\frac{1}{4(l'+1)}\Big)-2^{l'}\Big(1+\frac{s}{4l'}\Big) \nonumber\\
 &=& 2^{l'}\Big(\frac{2}{4(l'+1)}+\frac{4l'-s}{4l'}\Big) \geq \frac{2^{l'}(4l'-s+1)}{4l'}.
\end{eqnarray}
Hence
\begin{eqnarray}\label{Eq4.29}
&& \sum_{s=1}^{4t}\sqrt{\log\Big(\frac{\alpha_{l'+1,1}-\alpha_{t,s-1}}{\alpha_{l'+1,1}-\alpha_{t,s}}\Big)}=\sum_{s=1}^{4t}  \sqrt{\log\Big(1+\frac{\alpha_{t,s}-\alpha_{t,s-1}}{\alpha_{l'+1,1}-\alpha_{t,s}}\Big)}\nonumber\\
&\leq& \sum_{s=1}^{4t}\sqrt{\log\Big(1+\frac{2^{l'}\slash (4l')}{2^{l'}(4l'-s+1)\slash (4l')}\Big)}\leq \sum_{s=1}^{4l'} \sqrt{\frac{1}{4l'-s+1}}\nonumber\\
&=&\sum_{s=1}^{4l'} s^{-1\slash 2}\leq \int_{0}^{4l'}x^{-1\slash 2}dx\leq 4\sqrt{l'},
\end{eqnarray}
where we use $\log(1+x)\leq x, \forall  x>-1$ in the second line.

Now if $l'\geq 2$, we consider any $t\in [1,l'-1]\cap\mathbb{N}$. For any $s\in [4t]$, we have
\begin{equation}
  \alpha_{l'+1,1}-\alpha_{t,s}\geq 2^{l'+1}-2^{l'}=2^{l'}.
\end{equation}
Hence we have
\begin{eqnarray}\label{Eq4.30}
&& \sum_{s=1}^{4t}\sqrt{\log\Big(\frac{\alpha_{l'+1,1}-\alpha_{t,s-1}}{\alpha_{l'+1,1}-\alpha_{t,s}}\Big)}=\sum_{s=1}^{4t}  \sqrt{\log\Big(1+\frac{\alpha_{t,s}-\alpha_{t,s-1}}{\alpha_{l'+1,1}-\alpha_{t,s}}\Big)}\nonumber\\
&\leq& \sum_{s=1}^{4t}\sqrt{\log\Big(1+\frac{2^t\slash (4t)}{2^{l'}}\Big)}\leq \sum_{s=1}^{4t} \sqrt{\frac{2^{t-l'}}{4t}}= 2\cdot 2^{(t-l')\slash 2}\sqrt{t},
\end{eqnarray}
where we use the inequality $\log(1+x)\leq x$ for any $x>-1$ in the second line.

By (\ref{Eq4.27}), (\ref{Eq4.28}), (\ref{Eq4.29}), and (\ref{Eq4.30}), as $l'\geq 1$, we have
\begin{eqnarray}\label{Eq4.32}
    \mathcal{T}_{l',j'}&\leq& 1+4\sqrt{l'}+2\sum_{t=1}^{l'-1} 2^{(t-l')\slash 2}\sqrt{t} \leq 5\sqrt{l'}+2\sqrt{l'} \sum_{t=1}^{l'-1}2^{(t-l')\slash 2} \nonumber\\
    &\leq& 5\sqrt{l'}+2\Big(\sum_{s=0}^{\infty}2^{-s\slash 2}\Big)\sqrt{l'}\leq 20\sqrt{l'}.
\end{eqnarray}

\bigskip

Consider any $(l,j)\in\mathcal{R}^{EVEN}_T$. For any $(l_1,j_1)\in\mathcal{P}_{l,j}$, let
\begin{equation}\label{Eq4.54}
 \overline{\mathcal{T}}_{l_1,j_1}:=\sum_{\substack{(l',j')\in \mathcal{P}_{l,j}:\\ (l',j')\succ (l_1,j_1)}}\frac{\mathcal{T}_{l',j'}}{\log(\overline{b}_{l',j'})}\sqrt{\log\Big(\frac{\overline{b}_{l',j'}-\overline{a}_{l_1,j_1}}{\overline{b}_{l',j'}-\overline{b}_{l_1,j_1}}\Big)}.
\end{equation}
Note that for any $(l',j')\in\mathcal{R}^{EVEN}_T$ such that $l'\geq 1$, we have
\begin{equation}\label{Eq4.33}
\log(\overline{b}_{l',j'})\geq \log(2^{l'})\geq \frac{3}{5}l'.
\end{equation}

By (\ref{Eq4.31}), (\ref{Eq4.32}), and (\ref{Eq4.33}), we have
\begin{eqnarray}\label{Eq4.38}
 \overline{\mathcal{T}}_{l_1,j_1}\leq  50\sum_{\substack{(l',j')\in \mathcal{P}_{l,j}:\\ (l',j')\succ (l_1,j_1)}} (l')^{-1\slash 2} \sqrt{\log\Big(\frac{\overline{b}_{l',j'}-\overline{a}_{l_1,j_1}}{\overline{b}_{l',j'}-\overline{b}_{l_1,j_1}}\Big)}.
\end{eqnarray}

For any $(l_1,j_1)\in \mathcal{R}^{EVEN}_T$, if $l_1\geq 1$, we have $\overline{b}_{l_1,j_1}-\overline{a}_{l_1,j_1}\leq 2^{l_1+1}\slash (l_1+1)$; if $l_1=0$, we have $\overline{b}_{l_1,j_1}-\overline{a}_{l_1,j_1}=5\slash 2$. Hence for any $(l_1,j_1)\in\mathcal{R}^{EVEN}_T$, we have 
\begin{equation}
 \overline{b}_{l_1,j_1}-\overline{a}_{l_1,j_1}\leq \frac{4\cdot 2^{l_1}}{l_1+1}.
\end{equation}
Hence for any $(l_1,j_1),(l',j')\in \mathcal{P}_{l,j}$ such that $(l',j')\succ (l_1,j_1)$, we have
\begin{eqnarray}\label{Eq4.44}
 && \sqrt{\log\Big(\frac{\overline{b}_{l',j'}-\overline{a}_{l_1,j_1}}{\overline{b}_{l',j'}-\overline{b}_{l_1,j_1}}\Big)}=\sqrt{\log\Big(1+\frac{\overline{b}_{l_1,j_1}-\overline{a}_{l_1,j_1}}{\overline{b}_{l',j'}-\overline{b}_{l_1,j_1}}\Big)} \nonumber\\
 &\leq& \sqrt{\frac{\overline{b}_{l_1,j_1}-\overline{a}_{l_1,j_1}}{\overline{b}_{l',j'}-\overline{b}_{l_1,j_1}}}\leq 2\sqrt{\frac{2^{l_1}}{l_1+1}}\sqrt{\frac{1}{\overline{b}_{l',j'}-\overline{b}_{l_1,j_1}}}.
\end{eqnarray}

Below we consider any $(l_1,j_1)\in\mathcal{P}_{l,j}$. We deal with the following three cases separately: $l_1\geq 1$ and $1\leq j_1\leq 2l_1-1$, or $l_1\geq 1$ and $j_1=2l_1$, or $(l_1,j_1)=(0,2)$.

\subparagraph{Case 1: $l_1\geq 1$ and $1\leq j_1\leq 2l_1-1$}

For this case, we have 
\begin{equation*}
\overline{b}_{l_1,j_1}=2^{l_1}\Big(1+\frac{2j_1+1}{4l_1}\Big)=\alpha_{l_1,2j_1+1}. 
\end{equation*}
Hence 
\begin{eqnarray}\label{Eq4.34}
  && \sum_{\substack{(l',j')\in \mathcal{P}_{l,j}:\\ (l',j')\succ (l_1,j_1)}} (l')^{-1\slash 2}\sqrt{\frac{1}{\overline{b}_{l',j'}-\overline{b}_{l_1,j_1}}}
   \leq  l_1^{-1\slash 2}\sum_{s=2j_1+2}^{4l_1}\sqrt{\frac{1}{\alpha_{l_1,s}-\alpha_{l_1,2j_1+1}}}\nonumber\\
   && \quad\quad\quad \quad\quad\quad\quad\quad +\sum_{t=l_1+1}^{\infty}(t-1)^{-1\slash 2} \bigg(\sum_{s=1}^{4t} \sqrt{\frac{1}{\alpha_{t,s}-\alpha_{l_1,2j_1+1}}}\bigg).
\end{eqnarray}

For any $s\in [2j_1+2,4l_1]\cap\mathbb{N}$, we have
\begin{equation}
  \alpha_{l_1,s}-\alpha_{l_1,2j_1+1}=\frac{2^{l_1}(s-(2j_1+1))}{4l_1}. 
\end{equation}
Hence
\begin{eqnarray}\label{Eq4.35}
&& \sum_{s=2j_1+2}^{4l_1}\sqrt{\frac{1}{\alpha_{l_1,s}-\alpha_{l_1,2j_1+1}}}=2^{1-l_1\slash 2}\sqrt{l_1}\sum_{s=2j_1+2}^{4 l_1}\frac{1}{\sqrt{s-(2j_1+1)}}\nonumber\\
&\leq& 2^{1-l_1\slash 2}\sqrt{l_1}\sum_{s=1}^{4l_1}s^{-1\slash 2}\leq 2^{1-l_1\slash 2}\sqrt{l_1}\int_{0}^{4l_1} x^{-1\slash 2}dx\leq 8\cdot 2^{-l_1\slash 2}l_1.
\end{eqnarray}

For any $s\in [4(l_1+1)]$, we have
\begin{equation}
  \alpha_{l_1+1,s}-\alpha_{l_1,2j_1+1}\geq 2^{l_1+1}\Big(1+\frac{s}{4(l_1+1)}\Big)-2^{l_1+1}=\frac{2^{l_1-1}s}{l_1+1}.
\end{equation}
Hence
\begin{eqnarray}\label{Eq4.36}
 && \sum_{s=1}^{4(l_1+1)}\sqrt{\frac{1}{\alpha_{l_1+1,s}-\alpha_{l_1,2j_1+1}}}\leq 2^{(1-l_1)\slash 2}\sqrt{l_1+1}\sum_{s=1}^{4(l_1+1)} s^{-1\slash 2} \nonumber\\
 &\leq& 2^{(1-l_1)\slash 2}\sqrt{l_1+1}\int_0^{4(l_1+1)}x^{-1\slash 2} dx \leq 8\cdot 2^{-(l_1+1)\slash 2} (l_1+1).
\end{eqnarray}

For any $t\in\mathbb{N}$ with $t\geq l_1+2$ and any $s\in [4t]$, we have
\begin{equation}
  \alpha_{t,s}-\alpha_{l_1,2j_1+1}\geq 2^t-2^{l_1+1}\geq 2^{t-1}. 
\end{equation}
Hence for any $t\in\mathbb{N}$ with $t\geq l_1+2$, we have
\begin{eqnarray}\label{Eq4.37}
 && \sum_{s=1}^{4t} \sqrt{\frac{1}{\alpha_{t,s}-\alpha_{l_1,2j_1+1}}}\leq \sum_{s=1}^{4t} 2^{-(t-1)\slash 2}\leq 8\cdot 2^{-t\slash 2} t.
\end{eqnarray}

By (\ref{Eq4.34}), (\ref{Eq4.35}), (\ref{Eq4.36}), and (\ref{Eq4.37}), we have
\begin{eqnarray}\label{Eq4.39}
&&\sum_{\substack{(l',j')\in \mathcal{P}_{l,j}:\\ (l',j')\succ (l_1,j_1)}} (l')^{-1\slash 2}\sqrt{\frac{1}{\overline{b}_{l',j'}-\overline{b}_{l_1,j_1}}}\nonumber\\
&\leq& 8\cdot 2^{-l_1\slash 2}\sqrt{l_1}+8\cdot 2^{-(l_1+1)\slash 2}(l_1+1) l_1^{-1\slash 2}+8\sum_{t=l_1+2}^{\infty}2^{-t\slash 2}t (t-1)^{-1\slash 2}\nonumber\\
&\leq& 8\cdot 2^{-l_1\slash 2}\sqrt{l_1}+16\sum_{t=l_1}^{\infty} 2^{-t\slash 2}\sqrt{t}\leq 24\sum_{t=l_1}^{\infty} 2^{-t\slash 2}\sqrt{t}.
\end{eqnarray}

By (\ref{Eq4.38}), (\ref{Eq4.44}), and (\ref{Eq4.39}), we have
\begin{eqnarray}\label{Eq4.45}
  &&\overline{\mathcal{T}}_{l_1,j_1}\leq 2400 \cdot2^{l_1\slash 2}(l_1+1)^{-1\slash 2} \sum_{t=l_1}^{\infty} 2^{-t\slash 2}\sqrt{t} \nonumber\\
&\leq& 2400\cdot 2^{l_1\slash 2} l_1^{-1}\sum_{t=l_1}^{\infty} 2^{-t\slash 2} t\nonumber\\
&=&  2400\cdot 2^{l_1\slash 2} l_1^{-1}\frac{2^{-1\slash 2}(l_1 2^{-(l_1-1)\slash 2}-(l_1-1)2^{-l_1\slash 2})}{(1-2^{-1\slash 2})^2}\leq 30000.
\end{eqnarray}

\subparagraph{Case 2: $l_1\geq 1$ and $j_1=2l_1$} 

For this case, we have
\begin{equation}
\overline{b}_{l_1,j_1}=2^{l_1+1}\Big(1+\frac{1}{4(l_1+1)}\Big)=\alpha_{l_1+1,1}.
\end{equation}
Hence 
\begin{eqnarray}\label{Eq4.40}
  && \sum_{\substack{(l',j')\in \mathcal{P}_{l,j}:\\ (l',j')\succ (l_1,j_1)}} (l')^{-1\slash 2}\sqrt{\frac{1}{\overline{b}_{l',j'}-\overline{b}_{l_1,j_1}}}
   \leq  l_1^{-1\slash 2}\sum_{s=2}^{4(l_1+1)}\sqrt{\frac{1}{\alpha_{l_1+1,s}-\alpha_{l_1+1,1}}}\nonumber\\
   && \quad\quad\quad \quad\quad\quad\quad\quad +\sum_{t=l_1+2}^{\infty}(t-1)^{-1\slash 2} \bigg(\sum_{s=1}^{4t} \sqrt{\frac{1}{\alpha_{t,s}-\alpha_{l_1+1,1}}}\bigg).
\end{eqnarray}

For any $s\in [2, 4(l_1+1)]\cap\mathbb{N}$, we have
\begin{equation}
   \alpha_{l_1+1,s}-\alpha_{l_1+1,1}=\frac{2^{l_1}(s-1)}{2(l_1+1)}.
\end{equation}
Hence
\begin{eqnarray}\label{Eq4.41}
&&\sum_{s=2}^{4(l_1+1)}\sqrt{\frac{1}{\alpha_{l_1+1,s}-\alpha_{l_1+1,1}}}=2^{-(l_1-1)\slash 2}\sqrt{l_1+1}\sum_{s=2}^{4(l_1+1)}\frac{1}{\sqrt{s-1}}\nonumber\\
&\leq& 2^{1-l_1\slash 2}\sqrt{l_1}\sum_{s=1}^{4(l_1+1)}s^{-1\slash 2}\leq 2^{1-l_1\slash 2}\sqrt{l_1}\int_0^{4(l_1+1)}x^{-1\slash 2} dx\nonumber\\
&=& 8\cdot 2^{-l_1\slash 2}\sqrt{l_1(l_1+1)}\leq 16\cdot 2^{-l_1\slash 2} l_1.
\end{eqnarray}

For any $t\in\mathbb{N}$ with $t\geq l_1+2$ and any $s\in [4t]$, we have
\begin{equation}
  \alpha_{t,s}-\alpha_{l_1+1,1}\geq 2^{t}-2^{l_1+1}\Big(1+\frac{1}{4(l_1+1)}\Big)\geq 2^{t-2}.
\end{equation}
Hence for any $t\in\mathbb{N}$ with $t\geq l_1+2$, we have
\begin{eqnarray}\label{Eq4.42}
  \sum_{s=1}^{4t} \sqrt{\frac{1}{\alpha_{t,s}-\alpha_{l_1+1,1}}}\leq \sum_{s=1}^{4t} 2^{1-t\slash 2}=8\cdot 2^{-t\slash 2} t.
\end{eqnarray}

By (\ref{Eq4.40}), (\ref{Eq4.41}), and (\ref{Eq4.42}), we have
\begin{eqnarray}\label{Eq4.43}
  && \sum_{\substack{(l',j')\in \mathcal{P}_{l,j}:\\ (l',j')\succ (l_1,j_1)}} (l')^{-1\slash 2}\sqrt{\frac{1}{\overline{b}_{l',j'}-\overline{b}_{l_1,j_1}}}\nonumber\\
  &\leq &  16\cdot 2^{-l_1\slash 2}\sqrt{l_1}+8\sum_{t=l_1+2}^{\infty} (t-1)^{-1\slash 2} 2^{-t\slash 2}t\nonumber\\
  &\leq& 16\cdot 2^{-l_1\slash 2}\sqrt{l_1}+16\sum_{t=l_1+1}^{\infty} 2^{-t\slash 2}\sqrt{t}= 16\sum_{t=l_1}^{\infty} 2^{-t\slash 2}\sqrt{t}.
\end{eqnarray}

By (\ref{Eq4.38}), (\ref{Eq4.44}), and (\ref{Eq4.43}), following the calculation in (\ref{Eq4.45}), we have
\begin{eqnarray}\label{Eq4.51}
\overline{\mathcal{T}}_{l_1,j_1}\leq 1600\cdot 2^{l_1\slash 2}(l_1+1)^{-1\slash 2}\sum_{t=l_1}^{\infty} 2^{-t\slash 2}\sqrt{t}\leq 30000.
\end{eqnarray}

\subparagraph{Case 3: $(l_1,j_1)=(0,2)$}

For this case, we have $\overline{b}_{l_1,j_1}=5\slash 2$. Hence
\begin{eqnarray}\label{Eq4.46}
  && \sum_{\substack{(l',j')\in \mathcal{P}_{l,j}:\\ (l',j')\succ (l_1,j_1)}} (l')^{-1\slash 2}\sqrt{\frac{1}{\overline{b}_{l',j'}-\overline{b}_{l_1,j_1}}}
   \leq  \sum_{s=2}^{4}\sqrt{\frac{1}{\alpha_{1,s}-5\slash 2}}\nonumber\\
   && \quad\quad\quad \quad\quad\quad\quad\quad +\sum_{t=2}^{\infty}(t-1)^{-1\slash 2} \bigg(\sum_{s=1}^{4t} \sqrt{\frac{1}{\alpha_{t,s}-5\slash 2}}\bigg).
\end{eqnarray}

For any $s\in\{2,3,4\}$, $\alpha_{1,s}-5\slash 2\geq 1\slash 2$, hence
\begin{equation}\label{Eq4.47}
 \sum_{s=2}^{4}\sqrt{\frac{1}{\alpha_{1,s}-5\slash 2}}\leq 4\sqrt{2}\leq 6.
\end{equation}

For any $t\in\mathbb{N}$ with $t\geq 2$ and any $s\in [4t]$, we have
\begin{equation}
  \alpha_{t,s}-5\slash 2\geq 2^t-5\slash 2\geq 2^{t-2}.
\end{equation}
Hence for any $t\in\mathbb{N}$ with $t\geq 2$, we have
\begin{eqnarray}\label{Eq4.48}
\sum_{s=1}^{4t} \sqrt{\frac{1}{\alpha_{t,s}-5\slash 2}}\leq \sum_{s=1}^{4t} 2^{-t\slash 2+1}=8\cdot 2^{-t\slash 2} t.
\end{eqnarray}

By (\ref{Eq4.46}), (\ref{Eq4.47}), and (\ref{Eq4.48}), we have
\begin{eqnarray}\label{Eq4.49}
&& \sum_{\substack{(l',j')\in \mathcal{P}_{l,j}:\\ (l',j')\succ (l_1,j_1)}} (l')^{-1\slash 2}\sqrt{\frac{1}{\overline{b}_{l',j'}-\overline{b}_{l_1,j_1}}}\leq 6+8\sum_{t=2}^{\infty}(t-1)^{-1\slash 2}2^{-t\slash 2}t \nonumber\\
&\leq& 16\sum_{t=1}^{\infty} 2^{-t\slash 2}\sqrt{t}\leq 16\sum_{t=1}^{\infty} 2^{-t\slash 2}t=\frac{16\cdot 2^{-1\slash 2}}{(1-2^{-1\slash 2})^2}\leq 200. 
\end{eqnarray}

By (\ref{Eq4.38}), (\ref{Eq4.44}), and (\ref{Eq4.49}), we have
\begin{equation}\label{Eq4.50}
\overline{\mathcal{T}}_{l_1,j_1}\leq 20000.
\end{equation}

\bigskip

By (\ref{Eq4.45}), (\ref{Eq4.51}), and (\ref{Eq4.50}), we conclude that for any $(l_1,j_1)\in\mathcal{P}_{l,j}$,
\begin{equation}\label{Eq4.55}
 \overline{\mathcal{T}}_{l_1,j_1}\leq 30000.
\end{equation}

Therefore, for any $(l,j)\in\mathcal{R}^{EVEN}_T$, by (\ref{Eq4.52n}), (\ref{Eq4.53}), (\ref{Eq4.54}), and (\ref{Eq4.55}), 
\begin{eqnarray}\label{Eq4.52}
\sum_{\substack{(l',j')\in\mathcal{P}_{l,j}:\\(l',j')\text{ is a type II index}}} \frac{(\theta_{l',j'})^2}{\log(\overline{b}_{l',j'})}&\leq& 10^4\sum_{(l_1,j_1)\in \mathcal{P}_{l,j}} Level(l,j;l_1,j_1) \overline{\mathcal{T}}_{l_1,j_1}\nonumber\\
&\leq& 3\cdot 10^8 \sum_{(l_1,j_1)\in\mathcal{P}_{l,j}} Level(l,j;l_1,j_1) \nonumber\\
&=& 3\cdot 10^8 \sum_{(l',j')\in\mathcal{P}_{l,j}} Level(l,j;l',j').
\end{eqnarray}

By (\ref{Eq4.1}) and (\ref{Eq4.52}), for any $(l,j)\in\mathcal{R}^{EVEN}_T$, we have
\begin{equation}\label{Eq4.110}
 \sum_{\substack{(l',j')\in\mathcal{P}_{l,j}}}\frac{(\theta_{l',j'})^2}{\log(\overline{b}_{l',j'})} \leq 4\cdot 10^8 \sum_{(l',j')\in\mathcal{P}_{l,j}} Level(l,j;l',j').
\end{equation}

\paragraph{Step 4}

Recall that we have constructed $Level(l,j;l',j')$ and $Index(l,j;l',j')$ for any $(l,j)\in\mathcal{R}^T_{EVEN}$ and $(l',j')\in \mathcal{P}_{l,j}$ in \textbf{Step 1}. For any $(l,j)\in\mathcal{R}^{EVEN}_T$, we let
\begin{equation*}
 Index(l,j):=Index(T,2T;l,j), \quad Level(l,j):=Level(T,2T;l,j).
\end{equation*}

In the following, we assume that the event $\mathcal{H}(\mathbf{z}, \bm{\theta}, \bm{\theta}')$ holds. By (\ref{DefinitionH}), for any $(l,j)\in\mathcal{R}^{EVEN}_T$ and $(l',j')\in \mathcal{P}_{l,j}$, we have 
\begin{equation}
 J_{l,j}=z_{l,j}, \quad \theta_{l,j;l',j'} k \leq\Delta_{l,j;l',j';1}\leq 2\theta_{l,j;l',j'} k+ (l+1)^{-10} k.
\end{equation}
Noting (\ref{Eq2.7}), we obtain that
\begin{eqnarray}\label{Eq4.100}
&& \theta_{l,j;l',j'} k \nonumber\\
&\leq& |N(z_{l,j}k^{2\slash 3};\overline{a}_{l',j'}k^{2\slash 3}, \overline{b}_{l',j'}k^{2\slash 3})-N_0(z_{l,j}k^{2\slash 3};\overline{a}_{l',j'}k^{2\slash 3}, \overline{b}_{l',j'}k^{2\slash 3})|\nonumber\\
&\leq& 2\theta_{l,j;l',j'} k+ (l+1)^{-10} k.
\end{eqnarray}

For every $(l,j)\in\mathcal{R}^{EVEN}_{T}$, we denote $(x_{l,j},y_{l,j})=Index(l,j)$. Note that by (\ref{Eq4.57}), $(x_{l,j},y_{l,j})\in\mathcal{R}^{EVEN}_T$ and $(l,j)\in\mathcal{P}_{x_{l,j},y_{l,j}}$. Replacing $(l,j)$ and $(l',j')$ by $(x_{l,j},y_{l,j})$ and $(l,j)$ respectively in (\ref{Eq4.100}), we obtain that
\begin{eqnarray}\label{Eq4.102}
 && \mathcal{H}(\mathbf{z}, \bm{\theta}, \bm{\theta}')\nonumber\\
 &\subseteq& \{|N(z_{x_{l,j},y_{l,j}}k^{2\slash 3};\overline{a}_{l,j}k^{2\slash 3}, \overline{b}_{l,j}k^{2\slash 3})-N_0(z_{x_{l,j},y_{l,j}}k^{2\slash 3};\overline{a}_{l,j}k^{2\slash 3}, \overline{b}_{l,j}k^{2\slash 3})|\nonumber\\
 && \quad \geq \theta_{x_{l,j},y_{l,j};l,j} k\}. 
\end{eqnarray}

For any $(l,j)\in\mathcal{R}^{EVEN}_T$, we let $\mathcal{U}_{l,j}$ be the event that
\begin{eqnarray}
&& |N(z_{x_{l,j},y_{l,j}}k^{2\slash 3};\overline{a}_{l,j}k^{2\slash 3}, \overline{b}_{l,j}k^{2\slash 3})-N_0(z_{x_{l,j},y_{l,j}}k^{2\slash 3};\overline{a}_{l,j}k^{2\slash 3}, \overline{b}_{l,j}k^{2\slash 3})|\nonumber\\
&& \geq \theta_{x_{l,j},y_{l,j};l,j} k. 
\end{eqnarray}
Note that by (\ref{Eq4.102}), we have
\begin{equation}\label{Eq4.103}
 \mathcal{H}(\mathbf{z}, \bm{\theta}, \bm{\theta}')\subseteq \bigcap_{(l,j)\in\mathcal{R}^{EVEN}_T} \mathcal{U}_{l,j}.
\end{equation}

In the following, we consider an arbitrary $(l,j)\in\mathcal{R}^{EVEN}_T$. First consider the case where $(x_{l,j},y_{l,j})\succ (l,j)$. In Proposition \ref{P3.1}, take 
\begin{equation*}
\lambda_1=\overline{a}_{l,j}k^{2\slash 3}, \quad \lambda_2=\overline{b}_{l,j} k^{2\slash 3}, \quad \lambda=z_{x_{l,j},y_{l,j}} k^{2\slash 3}.
\end{equation*}
Note that $\lambda_2-\lambda_1\geq (b_{l,j}-a_{l,j})k^{2\slash 3}\geq k^{2\slash 3}$. As $z_{x_{l,j},y_{l,j}}\in \mathcal{Q}_{x_{l,j},y_{l,j}}\subseteq I_{x_{l,j},y_{l,j}}$, we have 
\begin{equation}\label{Eq4.64}
a_{x_{l,j},y_{l,j}}\leq z_{x_{l,j},y_{l,j}}\leq b_{x_{l,j},y_{l,j}}.
\end{equation}
As $(l,j)\prec (x_{l,j},y_{l,j})$ and $(l,j),(x_{l,j},y_{l,j})\in\mathcal{R}^{EVEN}_T$, we have 
\begin{equation*}
\lambda-\lambda_2\geq(a_{x_{l,j},y_{l,j}}-\overline{b}_{l,j})k^{2\slash 3}\geq k^{2\slash 3}\slash  2.
\end{equation*}
Hence
\begin{equation}\label{Eq4.61}
 0\leq \lambda_1<\lambda_2<\lambda, \quad \min\{\lambda_2-\lambda_1,\lambda-\lambda_2\}\geq k^{2\slash 3}\slash 2.
\end{equation}
Moreover, note that
\begin{equation}\label{Eq4.58}
 \frac{\lambda-\lambda_1}{\lambda-\lambda_2}=1+\frac{\overline{b}_{l,j}-\overline{a}_{l,j}}{z_{x_{l,j},y_{l,j}}-\overline{b}_{l,j}}\leq 1+\frac{\overline{b}_{l,j}-\overline{a}_{l,j}}{a_{x_{l,j},y_{l,j}}-\overline{b}_{l,j}}.
\end{equation}
We can check that 
\begin{equation*}
 a_{x_{l,j},y_{l,j}}-\overline{b}_{l,j}\geq \begin{cases}
   \frac{1}{2}, & x_{l,j}=1\\
   \frac{2^{x_{l,j}-1}}{4(x_{l,j}-1)}, & x_{l,j}\geq 2
 \end{cases},
\end{equation*}
hence 
\begin{equation}\label{Eq4.59}
a_{x_{l,j},y_{l,j}}-\overline{b}_{l,j}\geq \frac{2^{x_{l,j}}}{8x_{l,j}}\geq \frac{2^{x_{l,j}+1}}{16 (x_{l,j}+1)}.
\end{equation}
If $(l,j)=(0,2)$, we have $\overline{b}_{l,j}-\overline{a}_{l,j}=5\slash 2$; if $l\geq 1$, we have
\begin{equation*}
 \overline{b}_{l,j}-\overline{a}_{l,j}\leq \frac{2^l}{2l}+\frac{2^{l+1}}{2(l+1)}\leq \frac{2^{l+1}}{l+1}.
\end{equation*}
Hence
\begin{equation}\label{Eq4.60}
  \overline{b}_{l,j}-\overline{a}_{l,j}\leq \frac{2\cdot 2^{l+1}}{l+1}.
\end{equation}
As $l\leq x_{l,j}$, by (\ref{Eq4.58})-(\ref{Eq4.60}), we have
\begin{equation}\label{Eq4.62}
  \frac{\lambda-\lambda_1}{\lambda-\lambda_2}\leq 1+\frac{32\cdot 2^{l+1}\slash (l+1)}{2^{x_{l,j}+1}\slash (x_{l,j}+1)}\leq 33.
\end{equation}
If $\theta_{x_{l,j},y_{l,j};l,j}\neq 0$, as $\theta_{x_{l,j},y_{l,j};l,j}\in \mathcal{K}_{x_{l,j}}$, we have 
\begin{equation}\label{Eq4.63}
\theta_{x_{l,j},y_{l,j};l,j}k\geq (x_{l,j}+1)^{-10}k\geq (T+1)^{-10}k.
\end{equation}
As we have taken $K_0(M,\epsilon,\delta)$ to be sufficiently large (depending on $\beta, M,\epsilon,\delta$; recall (\ref{Eq2.1}) and the fact that $T$ only depends on $M,\epsilon$), by Proposition \ref{P3.1}, noting (\ref{Eq4.61}), (\ref{Eq4.62}), and (\ref{Eq4.63}), there exists an event $\overline{\mathcal{U}}_{l,j}\in \mathcal{F}_{\overline{a}_{l,j}k^{2\slash 3}, \overline{b}_{l,j}k^{2\slash 3}}$, such that
\begin{equation}\label{Eq4.104}
 \mathcal{U}_{l,j}\subseteq \overline{\mathcal{U}}_{l,j},
\end{equation}
\begin{eqnarray}\label{Eq4.70}
\mathbb{P}(\overline{\mathcal{U}}_{l,j})&\leq& \exp(C(\lambda-\lambda_1)^{3\slash 2}\log\log(\lambda-\lambda_1))\nonumber\\
&&\times \exp\Big(-c(\theta_{x_{l,j},y_{l,j};l,j})^2 k^2\Big/\log\Big(\frac{\lambda-\lambda_1}{\lambda-\lambda_2}\Big)\Big).
\end{eqnarray}
Note that by (\ref{Eq4.64}), we have
\begin{equation*}
  \lambda-\lambda_1\leq b_{x_{l,j},y_{l,j}} k^{2\slash 3}\leq 2^{T+1} k^{2\slash 3},
\end{equation*}
which leads to
\begin{equation}\label{Eq4.68}
  (\lambda-\lambda_1)^{3\slash 2}\log\log(\lambda-\lambda_1)\leq 2^{3(T+1)\slash 2} k \log\log(2^{T+1} k^{2\slash 3}).
\end{equation}
By (\ref{Eq4.62}) and (\ref{Eq4.63}), if $\theta_{x_{l,j},y_{l,j};l,j}\neq 0$, we have
\begin{equation}\label{Eq4.69}
(\theta_{x_{l,j},y_{l,j};l,j})^2 k^2\Big/\log\Big(\frac{\lambda-\lambda_1}{\lambda-\lambda_2}\Big)\geq c(T+1)^{-20} k^2.
\end{equation}
By (\ref{Eq4.70})-(\ref{Eq4.69}), as we have taken $K_0(M,\epsilon,\delta)$ to be sufficiently large (depending on $\beta, M,\epsilon,\delta$; recall (\ref{Eq2.1}) and the fact that $T$ only depends on $M,\epsilon$), we have
\begin{equation}\label{Eq4.71}
\mathbb{P}(\overline{\mathcal{U}}_{l,j})\leq \exp\Big(-c(\theta_{x_{l,j},y_{l,j};l,j})^2 k^2\Big/\log\Big(\frac{\lambda-\lambda_1}{\lambda-\lambda_2}\Big)\Big). 
\end{equation}
Now note that if $x_{l,j}=1$, we have $(l,j)=(0,2)$ and $(x_{l,j},y_{l,j})=(1,2)$, hence $a_{x_{l,j},y_{l,j}}-\overline{b}_{l,j}=1\slash 2$ and
\begin{equation}\label{Eq4.65}
  \overline{b}_{x_{l,j},y_{l,j}}-\overline{b}_{l,j}=2= 4(a_{x_{l,j},y_{l,j}}-\overline{b}_{l,j}).
\end{equation}
If $x_{l,j} \geq  2$, by (\ref{Eq4.59}), we have
\begin{equation*}
   \overline{b}_{x_{l,j},y_{l,j}}-a_{x_{l,j},y_{l,j}}\leq \frac{2^{x_{l,j}}}{2x_{l,j}}+\frac{2^{x_{l,j}+1}}{4(x_{l,j}+1)} \leq \frac{2^{x_{l,j}}}{x_{l,j}} \leq 8(a_{x_{l,j},y_{l,j}}-\overline{b}_{l,j}),
\end{equation*}
hence
\begin{equation}\label{Eq4.66}
  \overline{b}_{x_{l,j},y_{l,j}}-\overline{b}_{l,j}=(\overline{b}_{x_{l,j},y_{l,j}}-a_{x_{l,j},y_{l,j}})+(a_{x_{l,j},y_{l,j}}-\overline{b}_{l,j})\leq 9(a_{x_{l,j},y_{l,j}}-\overline{b}_{l,j}).
\end{equation}
By (\ref{Eq4.58}), (\ref{Eq4.65}), and (\ref{Eq4.66}), we have
\begin{equation}\label{Eq4.67}
 \frac{\lambda-\lambda_1}{\lambda-\lambda_2}\leq 1+\frac{9(\overline{b}_{l,j}-\overline{a}_{l,j})}{\overline{b}_{x_{l,j},y_{l,j}}-\overline{b}_{l,j}}.
\end{equation}
By (\ref{Eq4.67}) and the fact that $\log(1+9x)\leq 9\log(1+x),\forall x\geq 0$, we have 
\begin{equation}
  \log\Big(\frac{\lambda-\lambda_1}{\lambda-\lambda_2}\Big)\leq 9\log\Big(1+\frac{\overline{b}_{l,j}-\overline{a}_{l,j}}{\overline{b}_{x_{l,j},y_{l,j}}-\overline{b}_{l,j}}\Big)=9\log\Big(\frac{\overline{b}_{x_{l,j},y_{l,j}}-\overline{a}_{l,j}}{\overline{b}_{x_{l,j},y_{l,j}}-\overline{b}_{l,j}}\Big).
\end{equation}
Hence 
\begin{equation}\label{Eq4.72}
 (\theta_{x_{l,j},y_{l,j};l,j})^2 k^2\Big/\log\Big(\frac{\lambda-\lambda_1}{\lambda-\lambda_2}\Big)\geq \frac{1}{9} (\theta_{x_{l,j},y_{l,j};l,j})^2 k^2 \Big/ \log\Big(\frac{\overline{b}_{x_{l,j},y_{l,j}}-\overline{a}_{l,j}}{\overline{b}_{x_{l,j},y_{l,j}}-\overline{b}_{l,j}}\Big).
\end{equation}
By (\ref{Eq4.71}) and (\ref{Eq4.72}), we have
\begin{eqnarray}\label{Eq4.108}
 \mathbb{P}(\overline{\mathcal{U}}_{l,j})&\leq& \exp\Big(-c(\theta_{x_{l,j},y_{l,j};l,j})^2 k^2\Big/\log\Big(\frac{\overline{b}_{x_{l,j},y_{l,j}}-\overline{a}_{l,j}}{\overline{b}_{x_{l,j},y_{l,j}}-\overline{b}_{l,j}}\Big)\Big)\nonumber\\
 &=& \exp(-ck^2 Level(l,j)).
\end{eqnarray}
where we use (\ref{Eq4.56}) and (\ref{Eq4.101}) in the last equality.

Now consider the case where $(x_{l,j},y_{l,j})=(l,j)$ and $l\geq 1$. We take
\begin{equation*}
  \lambda_1=\overline{a}_{l,j} k^{2\slash 3}, \quad \lambda_2=\overline{b}_{l,j} k^{2\slash 3}, \quad \lambda=z_{l,j} k^{2\slash 3}
\end{equation*}
in Proposition \ref{P3.3}. Note that
\begin{equation*}
  a_{l,j}-\overline{a}_{l,j}\geq \begin{cases}
   \frac{1}{2}, & l=1\\
   \frac{2^{l-1}}{4(l-1)}\geq \frac{2^l}{8l}, & l\geq 2
  \end{cases},
\end{equation*}
which leads to $a_{l,j}-\overline{a}_{l,j}\geq 2^l\slash(8l)\geq 1\slash 4$. As $z_{l,j}\in\mathcal{Q}_{l,j}\subseteq I_{l,j}$, we have
\begin{equation}\label{Eq4.75}
 a_{l,j}\leq z_{l,j}\leq b_{l,j}.
\end{equation}
Hence
\begin{equation}\label{Eq4.73}
  0\leq \lambda_1<\lambda<\lambda_2, \quad \lambda-\lambda_1\geq (a_{l,j}-\overline{a}_{l,j}) k^{2\slash 3}\geq \frac{2^l k^{2\slash 3}}{8l}\geq \frac{1}{4}k^{2\slash 3},
\end{equation}
\begin{equation}\label{Eq4.74}
  \lambda_2-\lambda \leq (\overline{b}_{l,j}-a_{l,j}) k^{2\slash 3}\leq \Big(\frac{2^l}{2l}+\frac{2^{l+1}}{4(l+1)}\Big) k^{2\slash 3}\leq \frac{2^l k^{2\slash 3}}{l}\leq 8(\lambda-\lambda_1),
\end{equation}
where we use (\ref{Eq4.73}) in the last inequality of (\ref{Eq4.74}). Now if $\theta_{l,j;l,j}\neq 0$, as $\theta_{l,j;l,j}\in\mathcal{K}_l$, we have
\begin{equation}\label{Eq4.75.1}
  \theta_{l,j;l,j} k\geq (l+1)^{-10} k\geq (T+1)^{-10} k.
\end{equation}
Moreover, by (\ref{Eq4.75}), we have
\begin{equation}\label{Eq4.76}
  \lambda\leq b_{l,j}k^{2\slash 3} \leq 2^{T+1} k^{2\slash 3}.
\end{equation}
By (\ref{Eq4.75.1}) and (\ref{Eq4.76}), if $\theta_{l,j;l,j}\neq 0$, we have
\begin{equation}\label{Eq4.77}
 \log(2+(\lambda-\lambda_1)(\theta_{l,j;l,j} k)^{-2\slash 3})\leq \log(2^{T+2}(T+1)^{10}).
\end{equation}
By (\ref{Eq4.73})-(\ref{Eq4.75.1}), (\ref{Eq4.77}), and Proposition \ref{P3.3} (taking $L=8$), as we have taken $K_0(M,\epsilon,\delta)$ to be sufficiently large (depending on $\beta, M,\epsilon,\delta$; recall (\ref{Eq2.1}) and the fact that $T$ only depends on $M,\epsilon$), we conclude that if $\theta_{l,j;l,j} \neq 0$, then there exists an event $\overline{\mathcal{U}}_{l,j}\in\mathcal{F}_{\overline{a}_{l,j}k^{2\slash 3}, \overline{b}_{l,j} k^{2\slash 3}}$, such that
\begin{equation}\label{Eq4.84}
 \mathcal{U}_{l,j}\subseteq \overline{\mathcal{U}}_{l,j},
\end{equation}
\begin{eqnarray}\label{Eq4.80}
   \mathbb{P}(\overline{\mathcal{U}}_{l,j})&\leq& \exp(C(\lambda-\lambda_1)^{3\slash 2}\log\log(\lambda-\lambda_1))\nonumber\\
   &&\times\exp(-c(\theta_{l,j;l,j})^2 k^2 \slash \log(2+(\lambda-\lambda_1)k^{-2\slash 3} (\theta_{l,j;l,j})^{-2\slash 3})).\nonumber\\
   &&
\end{eqnarray}
By (\ref{Eq4.76}), we have
\begin{equation}\label{Eq4.78}
  (\lambda-\lambda_1)^{3\slash 2}\log\log(\lambda-\lambda_1)\leq \lambda^{3\slash 2}\log\log(\lambda)\leq 2^{3(T+1)\slash 2} k\log\log(2^{T+1}k^{2\slash 3}). 
\end{equation}
By (\ref{Eq4.75.1}) and (\ref{Eq4.77}), if $\theta_{l,j;l,j}\neq 0$, we have
\begin{equation}\label{Eq4.79}
(\theta_{l,j;l,j})^2 k^2 \slash \log(2+(\lambda-\lambda_1)k^{-2\slash 3} (\theta_{l,j;l,j})^{-2\slash 3})\geq (T+1)^{-20} k^2 \slash \log(2^{T+2}(T+1)^{10}).
\end{equation}
By (\ref{Eq4.80})-(\ref{Eq4.79}), as we have taken $K_0(M,\epsilon,\delta)$ to be sufficiently large (depending on $\beta, M,\epsilon,\delta$; recall (\ref{Eq2.1}) and the fact that $T$ only depends on $M,\epsilon$), if $\theta_{l,j;l,j}\neq 0$, we have
\begin{equation}\label{Eq4.81}
 \mathbb{P}(\overline{\mathcal{U}}_{l,j})\leq \exp(-c(\theta_{l,j;l,j})^2 k^2 \slash \log(2+(\lambda-\lambda_1)k^{-2\slash 3} (\theta_{l,j;l,j})^{-2\slash 3})).
\end{equation}
Now note that when $\theta_{l,j;l,j}\neq 0$, by (\ref{Eq4.75}) and (\ref{Eq4.75.1}), we have
\begin{equation*}
 (\lambda-\lambda_1) k^{-2\slash 3} (\theta_{l,j;l,j})^{-2\slash 3}\leq (b_{l,j}-\overline{a}_{l,j})(l+1)^{10} \leq 2^{l+1} (l+1)^{10}\leq 2^{12l},
\end{equation*}
where we use the inequality $l+1\leq 2^l$ for any $l\in\mathbb{N}^{*}$. Hence
\begin{equation}\label{Eq4.82}
 \log(2+(\lambda-\lambda_1) k^{-2\slash 3} (\theta_{l,j;l,j})^{-2\slash 3})\leq \log(2+2^{12l})\leq \log(2^{14 l})\leq 10 l.
\end{equation}
Note that $\overline{b}_{l,j}-\overline{a}_{l,j}\geq 2$ when $l\in [2]$. Moreover, 
\begin{equation}
  \overline{b}_{l,j}-\overline{a}_{l,j}\geq b_{l,j}-a_{l,j}=\frac{2^l}{2l}\geq 2^{l\slash 2-2},
\end{equation}
where we use the inequality $l\leq 2^{l\slash 2+1}$ for any $l\in\mathbb{N}^{*}$ (this inequality can be proved by induction: when $l\in [3]$, the inequality holds; when $l\geq 4$, assuming that the inequality holds with $l$ replaced by $l-1$, we have $2^{l\slash 2}=2^{1\slash 2} \cdot 2^{(l-1)\slash 2}  \geq 2^{-1\slash 2}(l-1)\geq l\slash 2 $). Hence for any $l\in [4]$, $\overline{b}_{l,j}-\overline{a}_{l,j}\geq 4\slash 3\geq 2^{l\slash 10}$; for any $l\geq 5$, $\overline{b}_{l,j}-\overline{a}_{l,j}\geq 2^{l\slash 2-2}\geq 2^{l\slash 10}$. Hence
\begin{equation}\label{Eq4.83}
\log(\overline{b}_{l,j}-\overline{a}_{l,j})\geq \log(2^{l\slash 10})\geq l\slash 20.
\end{equation}
By (\ref{Eq4.81}), (\ref{Eq4.82}), and (\ref{Eq4.83}), if $\theta_{l,j;l,j}\neq 0$, we have
\begin{equation}\label{Eq4.85}
  \mathbb{P}(\overline{\mathcal{U}}_{l,j})\leq \exp(-c (\theta_{l,j;l,j})^2 k^2\slash \log(\overline{b}_{l,j}-\overline{a}_{l,j}))=\exp(-ck^2 Level(l,j)), 
\end{equation}
where we use (\ref{Eq4.101}) in the last equality. If $\theta_{l,j;l,j}=0$, we take $\overline{\mathcal{U}}_{l,j}=\Omega$. Note that (\ref{Eq4.84}) and (\ref{Eq4.85}) still hold for this case.

Now consider the case where $(x_{l,j},y_{l,j})=(l,j)=(0,2)$. We take 
\begin{equation*}
  \lambda_1=0, \quad \lambda_2=\frac{5}{2}k^{2\slash 3}, \quad \lambda=z_{0,2} k^{2\slash 3}.
\end{equation*}
By Proposition \ref{AiryOperator}, there exists a positive absolute constant $C_0$, such that $\gamma_i\geq (\pi i)^{2\slash 3}$ for any $i\geq C_0$. As $z_{0,2}\leq 2$, $k\geq K_0(M,\epsilon,\delta)$, and $K_0(M,\epsilon,\delta)$ has been taken to be sufficiently large (depending on $\beta,M,\epsilon,\delta$), we have
$\gamma_{k}\geq  (\pi k)^{2\slash 3}>2k^{2\slash 3}\geq z_{0,2}k^{2\slash 3}=\lambda$. Hence $N_0(\lambda)<k$. As $\theta_{0,2;0,2}\in\mathcal{K}_0$, if $\theta_{0,2;0,2}\neq 0$, we have $\theta_{0,2;0,2}\geq 1$. In the following, we let $K\geq 10$ be the constant appearing in Proposition \ref{P3.4}.

First consider the sub-case where $z_{0,2}\leq K^{-2\slash 3}$ and $\theta_{0,2;0,2}\neq 0$. When the event $\mathcal{U}_{0,2}$ holds, $|N(\lambda;\lambda_1,\lambda_2)-N_0(\lambda;\lambda_1,\lambda_2)|\geq \theta_{0,2;0,2}k$. As $\theta_{0,2;0,2}\geq 1$ and $N_0(\lambda;\lambda_1,\lambda_2)\leq N_0(\lambda)$, we have 
\begin{equation*}
  N(\lambda;\lambda_1,\lambda_2)-N_0(\lambda;\lambda_1,\lambda_2)>-k\geq -\theta_{0,2;0,2} k.
\end{equation*}
Hence $N(\lambda)\geq N(\lambda;\lambda_1,\lambda_2)\geq N_0(\lambda;\lambda_1,\lambda_2)+\theta_{0,2;0,2}k\geq \theta_{0,2;0,2}k$. Therefore, by Proposition \ref{P3.4} (taking $x=\theta_{0,2;0,2}k$; note that $x\geq k\geq K_0(M,\epsilon,\delta)$ is sufficiently large and $K\lambda^{3\slash 2}=K z_{0,2}^{3\slash 2}k\leq k\leq \theta_{0,2;0,2} k=x$), we have
\begin{eqnarray}\label{Eq4.105}
  \mathbb{P}(\mathcal{U}_{0,2})&\leq& \mathbb{P}(N(\lambda)\geq \theta_{0,2;0,2}k ) \leq C\exp(-c (\theta_{0,2;0,2})^2 k^2)\nonumber\\
  &\leq&  \exp(-ck^2 Level(0,2)),
\end{eqnarray}
where we use (\ref{Eq4.101}) in the last inequality.

Now consider the sub-case where $K^{-2\slash 3}<z_{0,2}\leq 2$ and $\theta_{0,2;0,2}\neq 0$. Note that $0=\lambda_1<\lambda\leq 2k^{2\slash 3}< \lambda_2$, and 
\begin{equation*}
\lambda-\lambda_1=z_{0,2}k^{2\slash 3}\geq K^{-2\slash 3}k^{2\slash 3},\quad \lambda_2-\lambda\leq \frac{5}{2} k^{2\slash 3}\leq 3 K^{2\slash 3}(\lambda-\lambda_1).
\end{equation*}
Moreover, as $\theta_{0,2;0,2}\geq 1$, $\lambda-\lambda_1=\lambda\leq 2k^{2\slash 3}$, we have
\begin{equation*}
(\log(2+(\lambda-\lambda_1)(\theta_{0,2;0,2} k)^{-2\slash 3}))^2\leq \log(4)^2\leq 2.
\end{equation*}
Hence by Proposition \ref{P3.3} (taking $L=3K^{2\slash 3}$; note that $k\geq K_0(M,\epsilon,\delta)$ is sufficiently large), we have  
\begin{eqnarray}\label{Eq4.106}
 \mathbb{P}(\mathcal{U}_{0,2})&=&\mathbb{P}(|N(\lambda;\lambda_1,\lambda_2)-N_0(\lambda;\lambda_1,\lambda_2)|\geq \theta_{0,2;0,2}k) \nonumber\\
 &\leq& \exp(C(\lambda-\lambda_1)^{3\slash 2}\log\log(\lambda-\lambda_1))\nonumber\\
 && \times \exp(-c (\theta_{0,2;0,2})^2 k^2\Big/ \log(2+(\lambda-\lambda_1)(\theta_{0,2;0,2}k)^{-2\slash 3}))\nonumber\\
 &\leq& \exp(C k\log\log(2k^{2\slash 3})-c(\theta_{0,2;0,2})^2 k^2)\leq \exp(-c(\theta_{0,2;0,2})^2 k^2)\nonumber\\
 &\leq& \exp(-ck^2 Level(0,2)),
\end{eqnarray}
where we use (\ref{Eq4.101}) in the last inequality.

By (\ref{Eq4.105}) and (\ref{Eq4.106}), we conclude that when $(x_{l,j},y_{l,j})=(l,j)=(0,2)$,
\begin{equation}\label{Eq4.107}
  \mathbb{P}(\mathcal{U}_{0,2})\leq \exp(-ck^2 Level(0,2)). 
\end{equation}

\paragraph{Step 5}

% By (\ref{Eq4.103}), (\ref{Eq4.104}), and (\ref{Eq4.84}), if $(x_{0,2},y_{0,2})\neq (0,2)$, we have
% \begin{equation}
%   \mathcal{H}_T(\mathbf{z}, \bm{\theta}, \bm{\theta}')\subseteq \bigcap_{(l,j)\in\mathcal{R}^{EVEN}_T} \overline{\mathcal{U}}_{l,j};
% \end{equation}
% if $(x_{0,2},y_{0,2})= (0,2)$, we have $\mathcal{H}_T(\mathbf{z}, \bm{\theta}, \bm{\theta}')\subseteq \mathcal{U}_{0,2}$, and
% \begin{equation}
%   \mathcal{H}_T(\mathbf{z}, \bm{\theta}, \bm{\theta}')\subseteq \bigcap_{\substack{(l,j)\in\mathcal{R}^{EVEN}_T:\\(l,j)\neq (0,2)} } \overline{\mathcal{U}}_{l,j}.
% \end{equation}

By (\ref{Eq4.103}), (\ref{Eq4.104}), and (\ref{Eq4.84}), we have $\mathcal{H}(\mathbf{z}, \bm{\theta}, \bm{\theta}')\subseteq \mathcal{U}_{0,2}$ and
\begin{equation}
   \mathcal{H}(\mathbf{z}, \bm{\theta}, \bm{\theta}')\subseteq \bigcap_{\substack{(l,j)\in\mathcal{R}^{EVEN}_T:\\(l,j)\neq (0,2)} } \overline{\mathcal{U}}_{l,j}.
\end{equation}
Hence by (\ref{Eq4.104}), (\ref{Eq4.108}), and (\ref{Eq4.107}), we have
\begin{equation}\label{Eq4.113}
  \mathbb{P}(\mathcal{H}(\mathbf{z}, \bm{\theta}, \bm{\theta}'))\leq \mathbb{P}(\mathcal{U}_{0,2})\leq \exp(-c k^2 Level(0,2)).
\end{equation}
For any $(l,j)\in\mathcal{R}^{EVEN}_T$ such that $(l,j)\neq (0,2)$, we have $\overline{\mathcal{U}}_{l,j}\in\mathcal{F}_{\overline{a}_{l,j}k^{2\slash 3},\overline{b}_{l,j}k^{2\slash 3}}$. Hence we have
\begin{equation}\label{Eq4.109}
  \mathbb{P}(\mathcal{H}(\mathbf{z}, \bm{\theta}, \bm{\theta}'))\leq \mathbb{P}\Big(\bigcap_{\substack{(l,j)\in\mathcal{R}^{EVEN}_T:\\(l,j)\neq (0,2)} } \overline{\mathcal{U}}_{l,j}\Big)=\prod_{\substack{(l,j)\in\mathcal{R}^{EVEN}_T:\\(l,j)\neq (0,2)}} \mathbb{P}(\overline{\mathcal{U}}_{l,j}). 
\end{equation}
By (\ref{Eq4.108}), (\ref{Eq4.85}), and (\ref{Eq4.109}), we obtain that
\begin{eqnarray}\label{Eq4.111}
 \mathbb{P}(\mathcal{H}(\mathbf{z}, \bm{\theta}, \bm{\theta}'))&\leq& \prod_{\substack{(l,j)\in\mathcal{R}^{EVEN}_T:\\(l,j)\neq (0,2)}} \exp(-ck^2 Level(l,j)) \nonumber\\
 &=& \exp\Big(-c k^2 \Big(\sum_{\substack{(l,j)\in\mathcal{R}^{EVEN}_T:\\(l,j)\neq (0,2)}}Level(l,j)\Big)\Big).
\end{eqnarray}

Replacing $(l,j)$ and $(l',j')$ in (\ref{Eq4.110}) by $(T,2T)$ and $(l,j)$ respectively, we obtain that
\begin{equation}
 \sum_{(l,j)\in\mathcal{R}^{EVEN}_T}\frac{(\theta_{l,j})^2}{\log(\overline{b}_{l,j})} \leq 4\cdot 10^8 \sum_{(l,j)\in\mathcal{R}^{EVEN}_T} Level(l,j),
\end{equation}
which leads to
\begin{equation}\label{Eq4.112}
  \max\Big\{Level(0,2), \sum_{\substack{(l,j)\in\mathcal{R}^{EVEN}_T:\\(l,j)\neq (0,2)}} Level(l,j)\Big\}
  \geq 10^{-9} \sum_{(l,j)\in\mathcal{R}^{EVEN}_T}\frac{(\theta_{l,j})^2}{\log(\overline{b}_{l,j})}.
\end{equation}
By (\ref{Eq4.113}), (\ref{Eq4.111}), and (\ref{Eq4.112}), we have
\begin{equation}
  \mathbb{P}(\mathcal{H}(\mathbf{z}, \bm{\theta}, \bm{\theta}'))\leq \exp\Big(-c k^2 \Big(\sum_{(l,j)\in\mathcal{R}^{EVEN}_T}(\theta_{l,j})^2 \slash \log(\overline{b}_{l,j}) \Big) \Big).
\end{equation}
For any $(l,j)\in\mathcal{R}^{EVEN}_T$, we have $\overline{b}_{l,j}\leq 2^{l+2}\leq 2^{2(l+1)}$. Hence
\begin{equation}\label{Eq4.114}
  \mathbb{P}(\mathcal{H}(\mathbf{z}, \bm{\theta}, \bm{\theta}'))\leq \exp\Big(-c k^2 \Big(\sum_{(l,j)\in\mathcal{R}^{EVEN}_T}(\theta_{l,j})^2 \slash (l+1) \Big) \Big).
\end{equation}

Similarly, we can deduce that 
\begin{equation}\label{Eq4.115}
  \mathbb{P}(\mathcal{H}(\mathbf{z}, \bm{\theta}, \bm{\theta}'))\leq \exp\Big(-c k^2 \Big(\sum_{(l,j)\in\mathcal{R}^{ODD}_T}(\theta_{l,j})^2 \slash (l+1) \Big) \Big).
\end{equation}
Combining (\ref{Eq4.114}) and (\ref{Eq4.115}), we conclude that
\begin{eqnarray}\label{Eq4.121}
  &&\mathbb{P}(\mathcal{H}(\mathbf{z}, \bm{\theta}, \bm{\theta}'))\nonumber\\
  &\leq& \exp\Big(-c k^2 \max\Big\{\sum_{(l,j)\in\mathcal{R}^{EVEN}_T}(\theta_{l,j})^2 \slash (l+1), \sum_{(l,j)\in\mathcal{R}^{ODD}_T}(\theta_{l,j})^2 \slash (l+1)\Big\} \Big)\nonumber\\
  &\leq& \exp\Big(-c k^2 \Big(\sum_{(l,j)\in\mathcal{R}_T}(\theta_{l,j})^2 \slash (l+1) \Big)\Big).
\end{eqnarray}

\paragraph{Step 6}

In the following, we consider an arbitrary $(l,j)\in\mathcal{R}_T$. Let $\mathcal{V}_{l,j}$ be the event that 
\begin{equation}\label{Eq4.116}
  |N(z_{l,j}k^{2\slash 3};\overline{b}_{l,j} k^{2\slash 3},\infty)-N_0(z_{l,j}k^{2\slash 3};\overline{b}_{l,j} k^{2\slash 3},\infty)|\geq \theta'_{l,j} k.
\end{equation}
When the event $\mathcal{H}(\mathbf{z}, \bm{\theta}, \bm{\theta}')$ holds, by (\ref{DefinitionH}), we have
\begin{equation}
  J_{l,j}=z_{l,j}, \quad |N(J_{l,j}k^{2\slash 3};\overline{b}_{l,j} k^{2\slash 3},\infty)-N_0(J_{l,j}k^{2\slash 3};\overline{b}_{l,j} k^{2\slash 3},\infty)|\geq \theta'_{l,j} k,
\end{equation} 
hence (\ref{Eq4.116}) holds. Therefore, we have
\begin{equation}\label{Eq4.119}
\mathcal{H}(\mathbf{z}, \bm{\theta}, \bm{\theta}')\subseteq \mathcal{V}_{l,j}.
\end{equation} 

As $z_{l,j}\in \mathcal{Q}_{l,j}\subseteq I_{l,j}$, we have $z_{l,j}\leq b_{l,j}<\overline{b}_{l,j}$. Following the argument below (\ref{Eq3.96}), we can deduce that $N_0(z_{l,j}k^{2\slash 3};\overline{b}_{l,j} k^{2\slash 3},\infty)\leq 1$. Below we assume that $\theta'_{l,j}\neq 0$. As $\theta'_{l,j}\in\mathcal{K}_l$, we have $\theta'_{l,j}\geq (l+1)^{-10}\geq (T+1)^{-10}$; as $k\geq K_0(M,\epsilon,\delta)$ is sufficiently large (depending on $\beta,M,\epsilon,\delta$) and $T$ only depends on $M,\epsilon$, we have $\theta'_{l,j} k\geq (T+1)^{-10} k\geq 10$. Hence we have
\begin{equation*}
  N(z_{l,j}k^{2\slash 3};\overline{b}_{l,j} k^{2\slash 3},\infty)-N_0(z_{l,j}k^{2\slash 3};\overline{b}_{l,j} k^{2\slash 3},\infty)\geq -1>-\theta'_{l,j}k.
\end{equation*}
Hence if $\theta'_{l,j}\neq 0$ and the event $\mathcal{V}_{l,j}$ holds, we have
\begin{equation}\label{Eq4.117}
  N(z_{l,j}k^{2\slash 3};\overline{b}_{l,j} k^{2\slash 3},\infty)\geq N_0(z_{l,j}k^{2\slash 3};\overline{b}_{l,j} k^{2\slash 3},\infty)+\theta'_{l,j} k \geq \theta'_{l,j} k.
\end{equation}

In Proposition \ref{P3.2}, take
\begin{equation*}
   \lambda=z_{l,j} k^{2\slash 3}, \quad \lambda_1=\overline{b}_{l,j} k^{2\slash 3}, \quad \lambda_2=\infty.
\end{equation*}
Note that $\theta_{l,j}'k\geq 10$ when $\theta'_{l,j} \neq 0$, and
\begin{equation*}
  \lambda_1-\lambda\geq (\overline{b}_{l,j}-b_{l,j}) k^{2\slash 3}\geq \frac{2^l  k^{2\slash 3}}{4(l+1)} \geq \frac{1}{4} k^{2\slash 3},
\end{equation*}
where we use the fact that $\overline{b}_{0,2}-b_{0,2}=1\slash 2$ and $\overline{b}_{l,j}-b_{l,j}\geq 2^l\slash (4l)$ if $l\geq 1$. As $k\geq K_0(M,\epsilon,\delta)$ is sufficiently large (depending on $\beta, M, \epsilon, \delta$), by Proposition \ref{P3.2} and (\ref{Eq4.117}), we obtain that when $\theta'_{l,j}\neq 0$,
\begin{eqnarray}\label{Eq4.118}
 &&\mathbb{P}(\mathcal{V}_{l,j})\leq \mathbb{P}(N(z_{l,j}k^{2\slash 3};\overline{b}_{l,j} k^{2\slash 3},\infty)\geq \theta'_{l,j} k)\nonumber\\
 &\leq& C\exp(-c (\lambda_1-\lambda)^{3\slash 2} \theta'_{l,j} k)\leq C\exp(-c\cdot 2^{3l\slash 2}(l+1)^{-3\slash 2}\theta'_{l,j} k^2).\nonumber\\
 && 
\end{eqnarray}
Note that (\ref{Eq4.118}) also holds when $\theta'_{l,j}=0$. Hence by (\ref{Eq4.119}), we have
\begin{equation}\label{Eq4.120}
  \mathbb{P}(\mathcal{H}(\mathbf{z}, \bm{\theta}, \bm{\theta}'))\leq C\exp(-c\cdot 2^{3l\slash 2}(l+1)^{-3\slash 2}\theta'_{l,j} k^2).
\end{equation}

As (\ref{Eq4.120}) holds for any $(l,j)\in \mathcal{R}_T$, we have
\begin{equation}\label{Eq4.122}
  \mathbb{P}(\mathcal{H}(\mathbf{z}, \bm{\theta}, \bm{\theta}'))\leq C\exp\Big(-c k^2\max_{(l,j)\in\mathcal{R}_T}\Big\{2^{3l\slash 2}(l+1)^{-3\slash 2}\theta'_{l,j}\Big\} \Big).
\end{equation}
Combining (\ref{Eq4.121}) and (\ref{Eq4.122}), we conclude that
\begin{eqnarray}
\mathbb{P}(\mathcal{H}(\mathbf{z}, \bm{\theta}, \bm{\theta}'))&\leq& C  \min\Big\{\exp\Big(-c k^2 \Big(\sum_{(l,j)\in\mathcal{R}_T}(l+1)^{-1}(\theta_{l,j})^2 \Big)\Big),\nonumber\\
&& \exp\Big(-c k^2 \max_{(l,j)\in\mathcal{R}_T}\Big\{2^{3l\slash 2}(l+1)^{-3\slash 2}\theta'_{l,j}\Big\} \Big)\Big\}.
\end{eqnarray}

% If $x_{l,j}=1$, we have $(l,j)=(0,2)$ and $(x_{l,j},y_{l,j})=(1,2)$, hence 
% \begin{equation}
%   a_{x_{l,j},y_{l,j}}-\overline{b}_{l,j}=\frac{1}{2}, \quad \overline{b}_{l,j}-\overline{a}_{l,j}=\frac{5}{2}.
% \end{equation}
% If $x_{l,j}\geq 2$, we have 
% \begin{equation} 
%    a_{x_{l,j},y_{l,j}}-\overline{b}_{l,j}\geq \frac{2^{x_{l,j}-1}}{4(x_{l,j}-1)}\geq \frac{2^{x_{l,j}}}{8 x_{l,j}},
% \end{equation}
% \begin{equation}
%    \overline{b}_{l,j}-\overline{a}_{l,j}\leq \frac{2}{} 
% \end{equation}

% We can check that
% \begin{equation}
%  a_{x_{l,j},y_{l,j}}-\overline{b}_{l,j}\geq \begin{cases}
%    \frac{1}{2}, & x_{l,j}=1\\
%    \frac{2^{x_{l,j}-1}}{4(x_{l,j}-1)}, & x_{l,j}\geq 2
%  \end{cases}.
% \end{equation}
% Moreover, if $x_{l,j}=1$, we have $(l,j)=(0,2)$ and $(x_{l,j},y_{l,j})=(1,2)$, hence 

\end{proof}

\subsection{Bounding $\mathbb{P}(\mathcal{H}')$}

In this subsection, we bound $\mathbb{P}(\mathcal{H}')$, where $\mathcal{H}'$ is defined in (\ref{Eq4.123}). The main result is given in the following proposition.

\begin{proposition}\label{P4.2}
There exist positive constants $C,c$ that only depend on $\beta$, such that
\begin{equation}
   \mathbb{P}(\mathcal{H}')\leq C\exp(-c M k^2).
\end{equation}
\end{proposition}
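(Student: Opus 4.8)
The event $\mathcal{H}'=\mathcal{H}'_1\cup\mathcal{H}'_2$ is a union over indices $(l,j)\in\mathcal{R}_T$, points $x\in\mathcal{Q}_{l,j}$, and (for $\mathcal{H}'_1$) sub-indices $(l',j')\in\mathcal{P}_{l,j}$ of the events that a ``windowed counting discrepancy'' $|N(xk^{2/3};\cdot,\cdot)-N_0(xk^{2/3};\cdot,\cdot)|$ is at least $M(l+1)^{10}k$. The strategy is a union bound: I will fix one such elementary event, bound its probability using Propositions \ref{P3.1}--\ref{P3.5}, and then sum over all $(l,j)$, $(l',j')$, and $x$, exploiting that the threshold $M(l+1)^{10}k$ grows polynomially in $l+1$ while the number of relevant points $x$ and indices is only sub-exponentially large in $l$ (indeed $|\mathcal{Q}_{l,j}|\le 2^{l+1}\cdot 2^{10(T+1)}$ and $|\mathcal{R}_T|,|\mathcal{P}_{l,j}|$ are polynomial in $T$ and $2^l$). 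Since the threshold is $\ge M(T+1)^{-10}\cdot(l+1)^{10}k$ and $k$ is taken large depending on $M,\epsilon,\delta$ (hence on $T$), the per-event Gaussian-type bound $\exp(-c(\text{threshold})^2/(\text{log factor}))$ will dominate all the entropy factors with room to spare, producing a net bound $C\exp(-cMk^2)$.

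\textbf{Key steps, in order.} First, for $\mathcal{H}'_2$: fix $(l,j)\in\mathcal{R}_T$ and $x\in\mathcal{Q}_{l,j}$, so $\lambda=xk^{2/3}$, $\lambda_1=\overline{b}_{l,j}k^{2/3}$, $\lambda_2=\infty$, and as computed in Step 6 of the proof of Proposition \ref{P4.1} one has $\lambda_1-\lambda\ge \tfrac14 k^{2/3}\cdot 2^l/(l+1)\cdot(\text{const})$, i.e.\ $\lambda_1-\lambda\ge c\,2^l(l+1)^{-1}k^{2/3}$. Apply Proposition \ref{P3.2} with threshold $x_{\mathrm{thr}}=M(l+1)^{10}k$ (noting $x_{\mathrm{thr}}\ge 2$ since $k$ is large), giving $\mathbb{P}\le C\exp(-c(\lambda_1-\lambda)^{3/2}M(l+1)^{10}k)\le C\exp(-c\,2^{3l/2}(l+1)^{-3/2}(l+1)^{10}Mk^2)\le C\exp(-cMk^2)$; the factor $2^{3l/2}(l+1)^{17/2}$ absorbs the $\exp(O(l))$ entropy from summing over $x\in\mathcal{Q}_{l,j}$ and over $j\in[2l]$. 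Summing over all $l\le T$ (a geometric-type sum) keeps the bound at $C\exp(-cMk^2)$. Second, for $\mathcal{H}'_1$: fix $(l,j)$, $(l',j')\in\mathcal{P}_{l,j}$, $x\in\mathcal{Q}_{l,j}$, so $\lambda=xk^{2/3}$, $\lambda_1=\overline{a}_{l',j'}k^{2/3}$, $\lambda_2=\overline{b}_{l',j'}k^{2/3}$. Here I must distinguish the cases $(l',j')\prec(l,j)$ (so $\lambda>\lambda_2$, use Proposition \ref{P3.1}), $(l',j')=(l,j)$ with $l\ge1$ (so $\lambda_1<\lambda<\lambda_2$, use Proposition \ref{P3.3} with $L=O(1)$), and the base case $(l,j)=(l',j')=(0,2)$ (handled via Propositions \ref{P3.3}/\ref{P3.4} exactly as in Step 4 of the proof of Proposition \ref{P4.1}). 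In each case the ratio $(\lambda-\lambda_1)/(\lambda-\lambda_2)$ is bounded (by a constant when $(l',j')\prec(l,j)$, as in (\ref{Eq4.62}); by $2^{O(l)}$ via $\log(2+(\lambda-\lambda_1)x_{\mathrm{thr}}^{-2/3})=O(l)$ in the Proposition \ref{P3.3} case, cf.\ (\ref{Eq4.82})), and the prefactor $\exp(C(\lambda-\lambda_1)^{3/2}\log\log(\lambda-\lambda_1))\le\exp(C2^{3(T+1)/2}k\log\log(\cdots))$ is beaten by the main term since $k\gg T$. This yields $\mathbb{P}\le C\exp(-c(M(l+1)^{10}k)^2/\log(\cdots))\le C\exp(-cM^2(l+1)^{10}k^2/l)\le C\exp(-cMk^2)$ per elementary event, and the $\exp(O(l))$ entropy from summing over $x$, $(l',j')\in\mathcal{P}_{l,j}$ (of which there are $O(2^l)$ many), $j$, and $l\le T$ is again absorbed by the $(l+1)^{10}$ (actually $(l+1)^{9}$ suffices) surplus in the exponent. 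Finally, add the $\mathcal{H}'_1$ and $\mathcal{H}'_2$ bounds via $\mathbb{P}(\mathcal{H}')\le\mathbb{P}(\mathcal{H}'_1)+\mathbb{P}(\mathcal{H}'_2)\le C\exp(-cMk^2)$.

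\textbf{Main obstacle.} The bookkeeping for $\mathcal{H}'_1$ is the delicate part: I must verify that the hypotheses of Proposition \ref{P3.1} (resp.\ \ref{P3.3}) are met for \emph{every} admissible pair $(l,j),(l',j')$ — in particular that $\min\{\lambda_2-\lambda_1,\lambda-\lambda_2\}\ge K$ and that $x_{\mathrm{thr}}=M(l+1)^{10}k$ exceeds the $\log$-type lower threshold required by those propositions — uniformly in $l\le T$, using only that $k\ge K_0(M,\epsilon,\delta)$ and that $T$ depends only on $M,\epsilon$. This requires the same constant-ratio estimates on $\overline{a}_{l,j},\overline{b}_{l,j}$ already established in Step 4 of the proof of Proposition \ref{P4.1} (equations (\ref{Eq4.56})--(\ref{Eq4.83})), so I will reuse those verbatim rather than re-derive them. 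The second potential pitfall is ensuring the sum over $l$ of the per-level bounds converges with the claimed final constant; since each per-level contribution is at most $C\exp(-cM(l+1)k^2)$ after absorbing entropy (keeping one power of $(l+1)$ in reserve), the sum over $l\ge0$ is bounded by $C\exp(-cMk^2)\sum_{l\ge0}\exp(-cMlk^2)\le C\exp(-cMk^2)$, which is exactly the stated form.
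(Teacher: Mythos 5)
Your proposal is correct and follows essentially the same route as the paper: a union bound over $(l,j)$, $(l',j')$, and $x\in\mathcal{Q}_{l,j}$, with per-event bounds from Propositions \ref{P3.1}--\ref{P3.4} (Propositions \ref{P3.2}/\ref{P3.4} for $\mathcal{H}'_2$, Propositions \ref{P3.1}/\ref{P3.3}/\ref{P3.4} for $\mathcal{H}'_1$), reusing the geometric estimates on $\overline{a}_{l,j},\overline{b}_{l,j}$ and the $N_0\leq 1$ (resp.\ $N_0(xk^{2\slash 3})<k$) reductions already worked out in Steps 4 and 6 of the proof of Proposition \ref{P4.1}, and absorbing the entropy ($|\mathcal{R}_T|=O(T^2)$, $|\mathcal{Q}_{l,j}|\leq 2^{O(T)}$) because $k\geq K_0(M,\epsilon,\delta)$ while $T$ depends only on $M,\epsilon$. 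The only cosmetic differences are bookkeeping: the paper uses a uniform per-event bound $\exp(-cM^2k^2)$ (or $\exp(-cMk^2)$) and absorbs the total count globally rather than level-by-level, and it treats the $l=0$ terms of $\mathcal{H}'_2$ via Proposition \ref{P3.4} instead of Proposition \ref{P3.2}; neither affects the validity of your argument.
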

 
\begin{proof}

Recall the definitions of $\mathcal{H}_1'$ and $\mathcal{H}_2'$ from (\ref{Eq2.8}) and (\ref{Eq2.9}). By (\ref{Eq4.123}) and the union bound, we have
\begin{equation}\label{Eq4.137}
    \mathbb{P}(\mathcal{H}')\leq \mathbb{P}(\mathcal{H}_1')+\mathbb{P}(\mathcal{H}_2').
\end{equation}
In the following, we bound $\mathbb{P}(\mathcal{H}_1')$ and $\mathbb{P}(\mathcal{H}_2')$ separately.

\paragraph{Step 1: Bounding $\mathbb{P}(\mathcal{H}_1')$}

For any $(l,j)\in\mathcal{R}_T$, $(l',j')\in\mathcal{P}_{l,j}$, and $x\in\mathcal{Q}_{l,j}$, we let $\mathcal{U}_{l,j;l',j';x}$ be the event that 
\begin{equation}\label{Eq4.124}
|N(xk^{2\slash 3};\overline{a}_{l',j'}k^{2\slash 3},\overline{b}_{l',j'}k^{2\slash 3})-N_0(xk^{2\slash 3};\overline{a}_{l',j'}k^{2\slash 3},\overline{b}_{l',j'}k^{2\slash 3})|\geq M(l+1)^{10} k.
\end{equation}
By (\ref{Eq2.8}), we have
\begin{equation}\label{Eq4.130}
\mathcal{H}_1'=\bigcup_{(l,j)\in\mathcal{R}_T}\bigcup_{\substack{(l',j')\in\mathcal{P}_{l,j},\\x\in\mathcal{Q}_{l,j} }} \mathcal{U}_{l,j;l',j';x}.
\end{equation}

Consider any $(l,j)\in \mathcal{R}_T$, $(l',j')\in\mathcal{P}_{l,j}$, and $x\in\mathcal{Q}_{l,j}$, such that $l\geq 1$ and $(l',j')\neq (l,j)$. In Proposition \ref{P3.1}, we take
\begin{equation*}
  \lambda=x k^{2\slash 3}, \quad \lambda_1=\overline{a}_{l',j'} k^{2\slash 3}, \quad \lambda_2=\overline{b}_{l',j'} k^{2\slash 3}. 
\end{equation*}
As $x\in\mathcal{Q}_{l,j}\subseteq I_{l,j}$, we have $a_{l,j}\leq x\leq b_{l,j}$, hence $0\leq \lambda_1<\lambda_2<\lambda$. Moreover, 
\begin{equation*}
   \lambda_2-\lambda_1\geq (b_{l,j}-a_{l,j})k^{2\slash 3}\geq k^{2\slash 3}, \quad \lambda-\lambda_2\geq (a_{l,j}-\overline{b}_{l',j'})k^{2\slash 3}\geq \frac{1}{2}k^{2\slash 3}. 
\end{equation*}
Note that 
\begin{equation*}
  \frac{\lambda-\lambda_1}{\lambda-\lambda_2}= 1+\frac{\overline{b}_{l',j'}-\overline{a}_{l',j'}}{x-\overline{b}_{l',j'}}\leq 1+\frac{\overline{b}_{l',j'}-\overline{a}_{l',j'}}{a_{l,j}-\overline{b}_{l',j'}}.
\end{equation*}
If $l'=0$, we have $\overline{b}_{l',j'}-\overline{a}_{l',j'}\leq 5\slash 2$; if $l'\geq 1$, we have
\begin{equation*}
  \overline{b}_{l',j'}-\overline{a}_{l',j'}\leq \frac{2^{l'}}{2l'}+\frac{2^{l'+1}}{2(l'+1)}\leq \frac{2^{l'+1}}{l'+1}.
\end{equation*}
Hence $\overline{b}_{l',j'}-\overline{a}_{l',j'}\leq 2\cdot 2^{l'+1}\slash (l'+1)$. Moreover, if $l'=0$, $a_{l,j}-\overline{b}_{l',j'}\geq 1\slash 2$; if $l'\geq 1$, $a_{l,j}-\overline{b}_{l',j'}\geq 2^{l'}\slash (4l')$. Hence $a_{l,j}-\overline{b}_{l',j'}\geq 2^{l'}\slash (4(l'+1))$. Therefore,
\begin{equation*}
\frac{\lambda-\lambda_1}{\lambda-\lambda_2}\leq 17.
\end{equation*}
As $k\geq K_0(M,\epsilon,\delta)$ is sufficiently large (depending on $\beta,M,\epsilon,\delta$) and $T$ only depends on $M,\epsilon$, by (\ref{Eq4.124}) and Proposition \ref{P3.1}, we have
\begin{eqnarray}\label{Eq4.127}
 \mathbb{P}(\mathcal{U}_{l,j;l',j';x})&\leq& \exp(C(\lambda-\lambda_1)^{3\slash 2}\log\log(\lambda-\lambda_1))\nonumber\\
 && \times \exp\Big(-cM^2(l+1)^{20}k^2\big/\log\Big(\frac{\lambda-\lambda_1}{\lambda-\lambda_2}\Big)\Big) \nonumber\\
 &\leq& \exp(C\cdot 2^{3(T+1)\slash 2}k\log\log(2^{T+1}k^{2\slash 3})-cM^2 k^2)\nonumber\\
 &\leq& \exp(-c M^2 k^2),
\end{eqnarray}
where we use the fact that $\lambda\leq b_{l,j}k^{2\slash 3}\leq 2^{T+1} k^{2\slash 3}$ in the second inequality.

Now consider any $(l,j)\in\mathcal{R}_T$ such that $l\geq 1$ and any $x\in\mathcal{Q}_{l,j}$. In Proposition \ref{P3.3}, we take
\begin{equation*}
  \lambda=xk^{2\slash 3}, \quad \lambda_1=\overline{a}_{l,j} k^{2\slash 3}, \quad \lambda_2=\overline{b}_{l,j} k^{2\slash 3}. 
\end{equation*}
As $x\in\mathcal{Q}_{l,j}\subseteq I_{l,j}$, we have $a_{l,j}\leq x\leq b_{l,j}$. Hence $0\leq \lambda_1<\lambda<\lambda_2$. Moreover,   
\begin{equation*}
  \lambda-\lambda_1\geq (a_{l,j}-\overline{a}_{l,j})k^{2\slash 3}\geq \begin{cases}
  \frac{2^{l-1}}{4(l-1)} k^{2\slash 3} & l\geq 2\\
    \frac{1}{2} k^{2\slash 3} & l=1
  \end{cases},
\end{equation*}
which leads to $\lambda-\lambda_1\geq 2^l k^{2\slash 3}\slash (8l)\geq k^{2\slash 3}\slash 4$. We also have
\begin{equation*}
  \lambda_2-\lambda\leq (\overline{b}_{l,j}-a_{l,j}) k^{2\slash 3} \leq \Big(\frac{2^l}{2l}+\frac{2^{l+1}}{4(l+1)}\Big)k^{2\slash 3} \leq \frac{2^l}{l} k^{2\slash 3} \leq 8(\lambda-\lambda_1).
\end{equation*}
As $M\geq M_0\geq 10$, we have
\begin{eqnarray}\label{Eq4.125}
  && \log(2+(\lambda-\lambda_1)(M(l+1)^{10}k)^{-2\slash 3})\leq \log(2+b_{l,j})\leq \log(2+2^{l+1})\nonumber\\
  &\leq& \log(2^{l+2})\leq \log(2^{2(l+1)})=2\log(2) (l+1)\leq 2(l+1)\leq 2(T+1).
\end{eqnarray}
As $k\geq K_0(M,\epsilon,\delta)$ is sufficiently large (depending on $\beta,M,\epsilon,\delta$) and $T$ only depends on $M,\epsilon$, by Proposition \ref{P3.3} (with $L=8$), we have
\begin{eqnarray}\label{Eq4.128}
 \mathbb{P}(\mathcal{U}_{l,j;l,j;x})&\leq& \exp(C(\lambda-\lambda_1)^{3\slash 2}\log\log(\lambda-\lambda_1))\nonumber\\
 && \times \exp(-c(M(l+1)^{10}k)^2\slash \log(2+(\lambda-\lambda_1)(M(l+1)^{10} k)^{-2\slash 3}))\nonumber\\
 &\leq& \exp(C\cdot 2^{3(T+1)\slash 2}k \log\log(2^{T+1} k^{2\slash 3})-cM^2 k^2)\nonumber\\
 &\leq& \exp(-c M^2 k^2),
\end{eqnarray}
where we use (\ref{Eq4.125}) and the fact that $\lambda\leq b_{l,j}k^{2\slash 3}\leq 2^{T+1} k^{2\slash 3}$ in the second inequality. 

Now consider any $(l,j)\in \mathcal{R}_T$ such that $l=0$ and any $x\in\mathcal{Q}_{l,j}$. Note that $x\in [0,2]$. By Proposition \ref{AiryOperator}, there exists a positive absolute constant $C_0$, such that for any $i\in\mathbb{N}^{*}$ with $i\geq C_0$, we have $\gamma_i\geq (\pi i)^{2\slash 3}$. As $k\geq K_0(M,\epsilon,\delta)$ is sufficiently large (depending on $\beta,M,\epsilon,\delta$), we have 
\begin{equation*}
 \gamma_k\geq (\pi k)^{2\slash 3}> 2k^{2\slash 3}\geq x k^{2\slash 3},
\end{equation*}
which leads to $N_0(x k^{2\slash 3})<k$. Hence 
\begin{eqnarray*}
&& N(xk^{2\slash 3};\overline{a}_{l,j}k^{2\slash 3},\overline{b}_{l,j}k^{2\slash 3})-N_0(xk^{2\slash 3};\overline{a}_{l,j}k^{2\slash 3},\overline{b}_{l,j}k^{2\slash 3})\nonumber\\
&& \geq -N_0(x k^{2\slash 3})> -k\geq -M(l+1)^{10}k.
\end{eqnarray*}
Hence when the event $\mathcal{U}_{l,j;l,j;x}$ holds, we have 
\begin{eqnarray}\label{Eq4.126}
N(xk^{2\slash 3};\overline{a}_{l,j}k^{2\slash 3},\overline{b}_{l,j}k^{2\slash 3})&\geq& N_0(xk^{2\slash 3};\overline{a}_{l,j}k^{2\slash 3},\overline{b}_{l,j}k^{2\slash 3})+M(l+1)^{10} k\nonumber\\
&\geq& M(l+1)^{10} k\geq Mk. 
\end{eqnarray}
In Proposition \ref{P3.4}, take $\lambda=x k^{2\slash 3}$, then $\max\{1,\lambda^{3\slash 2}\}\leq 2^{3\slash 2} k\leq 3k$. As $M\geq M_0$ is sufficiently large (depending on $\beta$), by Proposition \ref{P3.4} and (\ref{Eq4.126}), 
\begin{eqnarray}\label{Eq4.129}
\mathbb{P}(\mathcal{U}_{l,j;l,j;x})&\leq& \mathbb{P}(N(xk^{2\slash 3};\overline{a}_{l,j}k^{2\slash 3},\overline{b}_{l,j}k^{2\slash 3})\geq Mk) \nonumber\\
&\leq& \mathbb{P}(N(\lambda)\geq Mk)\leq C\exp(-c M^2 k^2).
\end{eqnarray}

By (\ref{Eq4.127}), (\ref{Eq4.128}), and (\ref{Eq4.129}), for any $(l,j)\in\mathcal{R}_T$, $(l',j')\in\mathcal{P}_{l,j}$, and $x\in\mathcal{Q}_{l,j}$, we have
\begin{equation}\label{Bdd}
   \mathbb{P}(\mathcal{U}_{l,j;l',j';x})\leq C\exp(-c M^2 k^2). 
\end{equation}

From the discussion at the beginning of Section \ref{Sect.2}, $T\geq 20$. Note that
\begin{equation}\label{Eq4.131}
   |\mathcal{R}_T|=2+2\sum_{l=1}^Tl =2+(T+1)T\leq 4T^2. 
\end{equation}
For any $(l,j)\in \mathcal{R}_T$, we have
\begin{equation}\label{Eq4.132}
  |\mathcal{P}_{l,j}|\leq |\mathcal{R}_T|\leq 4T^2; 
\end{equation}
moreover, as $\mathcal{Q}_{l,j}=\mathcal{Q}\cap I_{l,j}\subseteq \mathcal{Q}\cap [0,2^{T+1}]$, we have
\begin{equation}\label{Eq4.133}
    |\mathcal{Q}_{l,j}|\leq 2^{11(T+1)}+1\leq 2^{11(T+1)+1}\leq 2^{12T}. 
\end{equation}
By (\ref{Eq4.130}), (\ref{Bdd})-(\ref{Eq4.133}), and the union bound, we have
\begin{equation}
  \mathbb{P}(\mathcal{H}'_1)\leq C(4T^2)^2\cdot 2^{12T}\exp(-c M^2 k^2)\leq C T^4 2^{12T} \exp(-c M^2 k^2).
\end{equation}
As $M\geq M_0\geq 10$, $k\geq K_0(M,\epsilon,\delta)$ is sufficiently large (depending on $\beta, M,\epsilon,\delta$), and $T$ only depends on $M,\epsilon$, we have
\begin{equation}\label{Eq4.138}
  \mathbb{P}(\mathcal{H}'_1)\leq \exp(-c M^2 k^2).
\end{equation}

\paragraph{Step 2: Bounding $\mathbb{P}(\mathcal{H}_2')$}

For any $(l,j)\in\mathcal{R}_T$ and $x\in\mathcal{Q}_{l,j}$, we let $\mathcal{V}_{l,j;x}$ be the event that
\begin{equation}
 |N( x k^{2\slash 3};\overline{b}_{l,j}k^{2\slash 3},\infty)-N_0(xk^{2\slash 3};\overline{b}_{l,j}k^{2\slash 3},\infty)|\geq M (l+1)^{10} k.
\end{equation}
By (\ref{Eq2.9}), we have
\begin{equation}\label{Eq4.136}
\mathcal{H}_2'=\bigcup_{(l,j)\in\mathcal{R}_T}\bigcup_{x\in\mathcal{Q}_{l,j}} \mathcal{V}_{l,j;x}.
\end{equation}

Consider any $(l,j)\in\mathcal{R}_T$ and $x\in\mathcal{Q}_{l,j}$. As $x\in\mathcal{Q}_{l,j}\subseteq I_{l,j}$, we have that $a_{l,j}\leq x\leq b_{l,j}<\overline{b}_{l,j}$. Following the argument below (\ref{Eq3.96}), we can deduce that $N_0(x k^{2\slash 3};\overline{b}_{l,j} k^{2\slash 3},\infty)\leq 1$. As $M\geq M_0\geq 10$ and $k\geq K_0(M,\epsilon,\delta)\geq 10$, we have 
\begin{equation*}
  N( x k^{2\slash 3};\overline{b}_{l,j}k^{2\slash 3},\infty)-N_0(xk^{2\slash 3};\overline{b}_{l,j}k^{2\slash 3},\infty)\geq -1>-M(l+1)^{10} k.
\end{equation*}
Hence when the event $\mathcal{V}_{l,j;x}$ holds, we have
\begin{eqnarray}\label{Eq4.132n}
 N( x k^{2\slash 3};\overline{b}_{l,j}k^{2\slash 3},\infty)&\geq& N_0(xk^{2\slash 3};\overline{b}_{l,j}k^{2\slash 3},\infty)+M(l+1)^{10} k\nonumber\\
 &\geq& M(l+1)^{10} k.
\end{eqnarray}

If $l\geq 1$, in Proposition \ref{P3.2}, we take
\begin{equation*}
  \lambda=x k^{2\slash 3}, \quad \lambda_1=\overline{b}_{l,j} k^{2\slash 3}, \quad \lambda_2=\infty.
\end{equation*}
Note that $\lambda_1-\lambda\geq (\overline{b}_{l,j}-b_{l,j}) k^{2\slash 3}\geq 2^l k^{2\slash 3}\slash (4l)\geq k^{2\slash 3}\slash 2$ and $M(l+1)^{10} k\geq 100$. As $k\geq K_0(M,\epsilon,\delta)$ is sufficiently large (depending on $\beta,M,\epsilon,\delta$), by (\ref{Eq4.132n}) and Proposition \ref{P3.2}, we have
\begin{eqnarray}\label{Eq4.134}
 && \mathbb{P}(\mathcal{V}_{l,j;x})\leq \mathbb{P}(N(x k^{2\slash 3};\overline{b}_{l,j} k^{2\slash 3}, \infty)\geq M(l+1)^{10} k )\nonumber\\
 &\leq& C\exp(-c (\lambda_1-\lambda)^{3\slash 2} M(l+1)^{10} k)\leq C\exp(-c Mk^2). 
\end{eqnarray}

If $l=0$, we have $x\in [0,2]$. In Proposition \ref{P3.4}, we take $\lambda=x k^{2\slash 3}$. Note that $\max\{1,\lambda^{3\slash 2}\}\leq \max\{1, 2^{3\slash 2}k\}\leq 3k$. As $M\geq M_0$ is sufficiently large (depending on $\beta$), by (\ref{Eq4.132n}) and Proposition \ref{P3.4}, we have
\begin{eqnarray}\label{Eq4.135}
 && \mathbb{P}(\mathcal{V}_{l,j;x})\leq \mathbb{P}(N(x k^{2\slash 3};\overline{b}_{l,j} k^{2\slash 3}, \infty)\geq M(l+1)^{10} k )\nonumber\\
 &\leq& \mathbb{P}(N(\lambda)\geq M(l+1)^{10} k)\leq C\exp(-c M^2 k^2). 
\end{eqnarray}

By (\ref{Eq4.134}) and (\ref{Eq4.135}), for any $(l,j)\in\mathcal{R}_T$ and $x\in\mathcal{Q}_{l,j}$, we have
\begin{equation}
  \mathbb{P}(\mathcal{V}_{l,j;x})\leq C\exp(- c Mk^2). 
\end{equation}
Hence by (\ref{Eq4.131}), (\ref{Eq4.133}), (\ref{Eq4.136}), and the union bound, we have
\begin{equation}
  \mathbb{P}(\mathcal{H}_2')\leq C (4T^2)\cdot 2^{12 T} \exp(-c Mk^2)\leq C T^2 2^{12 T} \exp(-c Mk^2). 
\end{equation}
As $M\geq M_0\geq 10$, $k\geq K_0(M,\epsilon,\delta)$ is sufficiently large (depending on $\beta, M,\epsilon,\delta$), and $T$ only depends on $M,\epsilon$, we have
\begin{equation}\label{Eq4.139}
  \mathbb{P}(\mathcal{H}'_2)\leq \exp(-c M k^2).
\end{equation}

\bigskip

By (\ref{Eq4.137}), (\ref{Eq4.138}), and (\ref{Eq4.139}), as $M\geq M_0\geq 10$, we conclude that
\begin{equation}
 \mathbb{P}(\mathcal{H}')\leq C\exp(-c M k^2). 
\end{equation}

\end{proof}

\subsection{Bounding $\mathbb{P}(\mathcal{A}_0(\Lambda))$ and $\mathbb{P}(\mathcal{B}_0(\Lambda))$}

Recall the definitions of $\mathcal{A}_0(\Lambda)$ and $\mathcal{B}_0(\Lambda)$ from (\ref{Eq2.10})-(\ref{Eq2.14}). In this subsection, we bound $\mathbb{P}(\mathcal{A}_0(\Lambda))$ and $\mathbb{P}(\mathcal{B}_0(\Lambda))$. The main results are given in the following two propositions.

\begin{proposition}\label{P4.3}
There exist positive constants $\Lambda_0\geq 1,C,c$ that only depend on $\beta$, such that 
\begin{equation}
\mathbb{P}(\mathcal{A}_0(\Lambda_0)^c)\leq C\exp(-c M k^2). 
\end{equation}
\end{proposition}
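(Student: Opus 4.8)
I would bound $\mathcal{A}_0(\Lambda_0)^c$ by treating its two defining conditions, (\ref{Eq2.10}) and (\ref{Eq2.12}), separately, and throughout I take $\Lambda_0=1$, enlarging $M_0$ if necessary. The condition (\ref{Eq2.12}) is the easy one. For $j\in\{-1,0\}$ we have $-jk^{2/3}\geq 0$ and $(-jk^{2/3})^{3/2}\leq k$, so (once $M_0\geq K$ and $k$ is large) the threshold $\Lambda_0 Mk/(j+2)^{3/2}$ dominates $K\max\{1,(-jk^{2/3})^{3/2}\}$ and Proposition \ref{P3.4} gives $\mathbb{P}(N(-jk^{2/3})>\Lambda_0 Mk/(j+2)^{3/2})\leq C\exp(-cM^2k^2)$. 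For $j\geq 1$ we have $-jk^{2/3}\leq -K$ for $k$ large, and Proposition \ref{P3.5} gives the bound $C\exp(-c(jk^{2/3})^{3/2}\Lambda_0 Mk/(j+2)^{3/2})\leq C\exp(-cMk^2)$, using $j^{3/2}/(j+2)^{3/2}\geq 3^{-3/2}$. A union bound over the $k^{10}+2$ values of $j$ costs only a polynomial factor, which is absorbed since $k\geq K_0(M,\epsilon,\delta)$ is large.

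For (\ref{Eq2.10}) I would first carry out a routine discretization. Since $\lambda\mapsto N(\lambda)$ and $\lambda\mapsto N_0(\lambda)$ are nondecreasing, and since by Proposition \ref{AiryOperator} the increments of $N_0(\,\cdot\,k^{2/3})$ over one cell of the grid $\mathcal{Q}$ are $O(2^{-9T}k)$ on the range $[0,2^{T+1}k^{2/3}]$, one sees that $\overline{\Delta}_{l,j}$ is bounded by the maximum of $\Delta_{l',j'}$ over the $O(1)$ indices $(l',j')$ whose interval $I_{l',j'}$ meets a $2^{-10(T+1)}$-neighbourhood of $I_{l,j}$, plus $O(2^{-9T}k)$; hence $\sum_{(l,j)\in\mathcal{R}_T}(l+1)^{-1}(\overline{\Delta}_{l,j})^2\leq C_1\sum_{(l,j)\in\mathcal{R}_T}(l+1)^{-1}(\Delta_{l,j})^2+C_1 k^2$ for an absolute $C_1$, so it suffices to bound $\mathbb{P}(\sum_{(l,j)\in\mathcal{R}_T}(l+1)^{-1}(\Delta_{l,j})^2>c_2 M^2k^2)$ for a suitable $c_2=c_2(\beta)>0$. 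Next I would restrict to $(\mathcal{H}')^c$, whose complement has probability at most $C\exp(-cMk^2)$ by Proposition \ref{P4.2}. On $(\mathcal{H}')^c$ every outcome lies in some $\mathcal{H}(\mathbf{z},\bm{\theta},\bm{\theta}')$: taking $z_{l,j}=J_{l,j}$ and choosing $\theta_{l,j;l',j'}$ (resp.\ $\theta'_{l,j}$) to be the dyadic level of $\Delta_{l,j;l',j';1}$ (resp.\ $\Delta_{l,j;2}$) — which lies in $\mathcal{K}_l$ precisely because these quantities are below $M(l+1)^{10}k$ on $(\mathcal{H}'_1)^c$ (resp.\ $(\mathcal{H}'_2)^c$) — makes all the events $\mathcal{E}_0,\mathcal{E},\mathcal{E}'$ hold. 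On $\mathcal{H}(\mathbf{z},\bm{\theta},\bm{\theta}')$, summing (\ref{Eq2.19})--(\ref{Eq2.20}) over $\mathcal{P}_{l,j}$ and using $|\mathcal{P}_{l,j}|\leq 2(l+1)^2$ yields $\Delta_{l,j}\leq 2\mathcal{S}_{l,j}(\bm{\theta})k+2\theta'_{l,j}k+3(l+1)^{-8}k$; since $\sum_{(l,j)\in\mathcal{R}_T}(l+1)^{-17}$ is a finite absolute constant, the event $\sum_{(l,j)}(l+1)^{-1}(\Delta_{l,j})^2>c_2 M^2k^2$ forces
\[ \sum_{(l,j)\in\mathcal{R}_T}(l+1)^{-1}\mathcal{S}_{l,j}(\bm{\theta})^2\;+\;\sum_{(l,j)\in\mathcal{R}_T}(l+1)^{-1}(\theta'_{l,j})^2\;\geq\; c_3 M^2, \]
for some $c_3=c_3(\beta)>0$, provided $M_0$ (which we are free to enlarge, depending only on $\beta$) is large enough to absorb the remaining additive constant.

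Proposition \ref{P4.1} then handles each of the two alternatives in this last inequality. If $\sum_{(l,j)}(l+1)^{-1}\mathcal{S}_{l,j}(\bm{\theta})^2\geq\tfrac12 c_3 M^2$, it gives directly $\mathbb{P}(\mathcal{H}(\mathbf{z},\bm{\theta},\bm{\theta}'))\leq C\exp(-cM^2k^2)$. If instead $\sum_{(l,j)}(l+1)^{-1}(\theta'_{l,j})^2\geq\tfrac12 c_3 M^2$, I would use that $\sum_{(l,j)\in\mathcal{R}_T}2^{-3l}(l+1)^2$ is a finite absolute constant: writing $m=\max_{(l,j)}2^{3l/2}(l+1)^{-3/2}\theta'_{l,j}$ and substituting $\theta'_{l,j}\leq m\,2^{-3l/2}(l+1)^{3/2}$ gives $\sum_{(l,j)}(l+1)^{-1}(\theta'_{l,j})^2\leq m^2\sum_{(l,j)}2^{-3l}(l+1)^2$, so $m\geq c_4 M$ and hence $\mathbb{P}(\mathcal{H}(\mathbf{z},\bm{\theta},\bm{\theta}'))\leq C\exp(-c c_4 Mk^2)$ by the second bound of Proposition \ref{P4.1}. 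Either way $\mathbb{P}(\mathcal{H}(\mathbf{z},\bm{\theta},\bm{\theta}'))\leq C\exp(-cMk^2)$, and a union bound over all $(\mathbf{z},\bm{\theta},\bm{\theta}')\in\mathcal{J}_T\times\Theta_T\times\Theta'_T$ — of which there are at most $\exp(CT^4\log T)$ — completes the estimate, the combinatorial factor being negligible because $k\geq K_0(M,\epsilon,\delta)$ while $T$ depends only on $M$ and $\epsilon$. Combining with the bound on $\mathbb{P}(\mathcal{H}')$ and the bound on the failure of (\ref{Eq2.12}) finishes the proof.

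The step I expect to be the main obstacle is precisely the reduction to Proposition \ref{P4.1} above. Since the sum in (\ref{Eq2.10}) ranges over $\asymp T^2$ terms, any attempt to bound it by spreading the budget $M^2k^2$ over the individual $\overline{\Delta}_{l,j}$ and union-bounding would yield an exponent of order $M^2k^2/T^{O(1)}$, far weaker than the required $Mk^2$ (recall $T$ is astronomically larger than $M$). One must instead route the estimate through Proposition \ref{P4.1}, whose two bounds are designed to carry the \emph{full} quantity $\sum(l+1)^{-1}\mathcal{S}_{l,j}(\bm{\theta})^2$ (resp.\ the full weighted maximum of the $\theta'_{l,j}$) in the exponent. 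The two non-routine ingredients are then (i) the bookkeeping identifying $(\mathcal{H}')^c$ with a union of the events $\mathcal{H}(\mathbf{z},\bm{\theta},\bm{\theta}')$ on which $\Delta_{l,j}$ is controlled by $\mathcal{S}_{l,j}(\bm{\theta})$ and $\theta'_{l,j}$, and (ii) the elementary but essential observation that summability of the weights $2^{-3l}(l+1)^2$ makes $\sum(l+1)^{-1}(\theta'_{l,j})^2$ comparable to the square of the weighted maximum occurring in Proposition \ref{P4.1}.
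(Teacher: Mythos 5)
Your proposal is correct and follows essentially the same route as the paper's proof: handle (\ref{Eq2.12}) via Propositions \ref{P3.4}--\ref{P3.5} with a union bound over $j$, bound $\mathbb{P}(\mathcal{H}')$ by Proposition \ref{P4.2}, cover $(\mathcal{H}')^c$ by the events $\mathcal{H}(\mathbf{z},\bm{\theta},\bm{\theta}')$, control $\overline{\Delta}_{l,j}$ via $\Delta_{l,j}\leq 2\mathcal{S}_{l,j}(\bm{\theta})k+2\theta'_{l,j}k+O((l+1)^{-8}k)$ together with the grid/semicircle-density discretization, and conclude with Proposition \ref{P4.1} and a union bound over the triple space whose size depends only on $M,\epsilon$. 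The only (cosmetic) difference is organizational: the paper introduces the classes $\mathcal{Z}_t$ with the weighted-max constraint on $\bm{\theta}'$ built in (and takes $\Lambda_0=\max\{\Lambda_1,1\}$), whereas you constrain $\sum_{(l,j)}(l+1)^{-1}(\theta'_{l,j})^2$ and pass to the weighted maximum via summability of $2^{-3l}(l+1)^2$, fixing $\Lambda_0=1$ at the cost of enlarging $M_0$ --- both variants yield the same bound.
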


\begin{proposition}\label{P4.4}
Assume that $\beta,n\in\mathbb{N}^{*}$ and $n^{10^{-7}}\leq k\leq n^{1\slash 20000}$. Then there exist positive constants $\Lambda_0\geq 1,C,c$ that only depend on $\beta$, such that 
\begin{equation}
\mathbb{P}(\mathcal{B}_0(\Lambda_0)^c)\leq C\exp(-c M k^2). 
\end{equation}
\end{proposition}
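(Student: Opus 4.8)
The plan is to deduce Proposition \ref{P4.4} from Proposition \ref{P4.3} via a quantitative coupling of the Dumitriu--Edelman tridiagonal model $H_{\beta,n}$ from (\ref{Eqn10}) with the stochastic Airy operator $\mathcal{H}_{\beta}$, in the spirit of the Riccati--diffusion framework of \cite{RRV}. Recall that for a Jacobi matrix the number of eigenvalues above a level $\mu$ equals the number of sign changes in the associated Sturm sequence, equivalently the number of passages through $+\infty$ of a discrete Riccati recursion driven by the matrix entries and started at $+\infty$; applied to $H_{\beta,n}$ at level $\mu=2\sqrt n-xk^{2\slash 3}n^{-1\slash 6}$ this realizes $\tilde N(x)$ — and, with $x$ replaced by $-j$, the quantities $\tilde N(-j)$ — as such a blow-up count. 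The first step is to record this identity, and then to show that, after rescaling the matrix index by $n^{1\slash 3}$ near the edge and Skorokhod-embedding the relevant martingale increments into the fixed Brownian motion $(B_x)$, this recursion tracks the diffusion $p_{xk^{2\slash 3}}$ of (\ref{DefinitionP}) up to a deterministic error: the Gaussian diagonal entries together with the fluctuations of the $\chi_{i\beta}$ subdiagonal entries produce the noise $\tfrac{2}{\sqrt\beta}B'$, and the means of the $\chi_{i\beta}$'s produce the drift $x$ and its $O(n^{-1\slash 3})$ correction.

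The coupling must be controlled on the range of levels relevant to $\mathcal{B}_0$, namely $x\in I_{l,j}$ for $(l,j)\in\mathcal{R}_{T-1}$ (so $x\le 2^T$) and $x=-j$ for $j\in\{0\}\cup[k^{10}]$. By the argument behind the bound $t_0\le\pi^{-1}(\lambda-\lambda_1)^{3\slash 2}$ in the proof of Proposition \ref{P3.3}, counting all blow-ups of $p_\lambda$ for $\lambda=xk^{2\slash 3}\le 2^T k^{2\slash 3}$ only requires a spatial window of length $O(2^T k^{2\slash 3})$, hence running the discrete recursion over $O(2^T k^{2\slash 3}n^{1\slash 3})=o(n)$ matrix steps, well inside the edge window where the \cite{RRV} scaling is valid. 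On this window the increments (squared $\chi$'s and Gaussians) concentrate with Gaussian tails, so there is an event $G$ with $\mathbb{P}(G)\ge 1-C\exp(-ck^2)$ — here $k\ge n^{10^{-7}}\gg\sqrt{\log n}$ absorbs the at-most-$n$-fold union bound over steps, and $k\le n^{1\slash 20000}$ ensures the polynomial-in-$n$ coupling error eventually beats every constant depending only on $M,\epsilon$ — on which, for all relevant $x$ simultaneously, $|\tilde N(x)-N(xk^{2\slash 3})|\le D_{l,j}$ with $D_{l,j}$ so small that $\sum_{(l,j)\in\mathcal{R}_{T-1}}D_{l,j}^2\slash(l+1)\le M^2k^2$. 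The comparison of $\tilde N_0$ with $N_0$ is deterministic: $\mu_0$ is the rescaled semicircle edge density, which on $[0,2^T]$ differs from $\tfrac1\pi\sqrt x$ by a factor $1+O((k\slash n)^{2\slash 3}x)$, so by Proposition \ref{AiryOperator} one has $|\tilde N_0(x)-N_0(xk^{2\slash 3})|=o(k)$ uniformly there (and both vanish for $x<0$), a negligible contribution after summing.

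Combining these on $G$ gives $\tilde\Delta_{l,j}\le\overline\Delta_{l,j}+D'_{l,j}$ with $\sum_{(l,j)\in\mathcal{R}_{T-1}}(D'_{l,j})^2\slash(l+1)\le M^2k^2$, so (\ref{Eq2.13}) at level $\Lambda_0$ follows from (\ref{Eq2.10}) at level $\Lambda_0\slash 4$ (using $|\mathcal{R}_{T-1}|\le 4T^2$, that $T$ depends only on $M,\epsilon$, and that $k\ge K_0(M,\epsilon,\delta)$ to absorb cross terms). For (\ref{Eq2.14}) I would split at $j_*\sim(Mk)^{2\slash 3}$: for $j\le j_*$ the coupling and (\ref{Eq2.12}) at level $\Lambda_0\slash 4$ yield $\tilde N(-j)\le\Lambda_0 Mk\slash(j+1)^{3\slash 2}$, since the additive error is $\lesssim Mk\slash(j+1)^{3\slash 2}$ in that range, whereas for $j>j_*$ the required bound forces $\tilde N(-j)=0$, i.e.\ $\lambda_1^{(n)}\le 2\sqrt n+jk^{2\slash 3}n^{-1\slash 6}$, which fails with probability $\le C\exp(-cj^{3\slash 2}k)\le C\exp(-cMk^2)$ by the standard right-tail large deviation estimate for the largest Gaussian $\beta$ ensemble eigenvalue (the finite-$n$ analogue of Proposition \ref{TailBound}); a union bound over $j\le k^{10}$ costs only a polynomial factor. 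Hence $\mathcal{B}_0(\Lambda_0)^c\subseteq G^c\cup\mathcal{A}_0(\Lambda_0\slash 4)^c\cup\{\lambda_1^{(n)}>2\sqrt n+j_* k^{2\slash 3}n^{-1\slash 6}\}$ up to the routine bookkeeping above, and the three terms are bounded by $C\exp(-cMk^2)$ via, respectively, the coupling estimate, Proposition \ref{P4.3}, and the edge tail bound.

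I expect the crux to be making the coupling quantitative: the discrete-Riccati-to-SDE approximation has to hold not merely in probability but on an event of probability $1-C\exp(-ck^2)$, uniformly over the $o(n)$ matrix steps, which in practice amounts to re-establishing the single-blow-up spacing estimates of \cite[Lemmas 2.1--2.3]{Zho} (the local inputs underlying all of Section \ref{Sect.3}) for the discrete recursion, with matching tail exponents, in the regime $n^{10^{-7}}\le k\le n^{1\slash 20000}$; once that is in place, the combinatorial machinery leading to Proposition \ref{P4.3} transfers verbatim.
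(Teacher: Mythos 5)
Your overall strategy (transfer Proposition \ref{P4.3} to the tridiagonal model through a coupling between the Gaussian $\beta$ ensemble and the stochastic Airy operator, plus a deterministic comparison of $\tilde N_0$ with $N_0$ via Proposition \ref{AiryOperator} and the explicit form of $\mu_0$) is the same as the paper's, and your treatment of the deterministic part and of the quadratic-sum bookkeeping is essentially what the paper does. But there is a genuine gap at exactly the point you flag as ``the crux'': you never establish the quantitative coupling. The paper does not build any discrete-Riccati/Skorokhod-embedding coupling at the level of counting functions; it simply invokes the already-proven coupling of \cite[Theorem 1.5]{Zho}, which (after replacing $k$ by $(\lceil 2^{3(T+2)/2}M\rceil+1)k$) gives eigenvalue-by-eigenvalue closeness $|\tilde\lambda_i^{(n)}+\lambda_i|\le C_0 n^{-1/24}$ for all $i$ up to roughly $2^{3(T+2)/2}Mk$, with failure probability $\le C\exp(-ck^3)$. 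Combined with the event $\{N(2^{T+2}k^{2/3})\le 2^{3(T+2)/2}Mk\}$ (controlled by Proposition \ref{P3.4}), this yields the two-sided sandwich $N(xk^{2/3}-C_0n^{-1/24})\le\tilde N(x)\le N(xk^{2/3}+C_0n^{-1/24})$ for all $x\le 2^{T+1}$, which is all that is needed. Your plan instead proposes to re-derive a coupling from scratch, with uniform $\exp(-ck^2)$ failure probability over $o(n)$ matrix steps, by ``re-establishing'' discrete analogues of \cite[Lemmas 2.1--2.3]{Zho}; that is the hardest part of the whole argument (it is the content of a separate theorem in the companion paper), and asserting that the machinery then ``transfers verbatim'' does not close it. As written, the proposal reduces Proposition \ref{P4.4} to an unproved statement of comparable difficulty.

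Two smaller points. First, for condition (\ref{Eq2.14}) no splitting at $j_*\sim(Mk)^{2/3}$ and no finite-$n$ right-tail estimate for $\lambda_1^{(n)}$ are needed (and no such estimate is available inside the paper): on the coupling event one has directly $\tilde N(-j)\le N((-j+C_0k^{-2/3}n^{-1/24})k^{2/3})\le N(-(j-1)k^{2/3})\le \Lambda_0' Mk/(j+1)^{3/2}$ for all $j\in\{0\}\cup[k^{10}]$, which is precisely why (\ref{Eq2.12}) is formulated with $j\in\{-1,0\}\cup[k^{10}]$ and the normalization $(j+2)^{3/2}$ --- the shift by one unit is absorbed for free. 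Your split would work only if you import an external finite-$n$ edge large-deviation bound, which is an extra ingredient your write-up does not justify. Second, your statement that the coupling should force $\sum_{(l,j)}D_{l,j}^2/(l+1)\le M^2k^2$ is stronger than necessary; with the eigenvalue-level coupling the error in each $\tilde\Delta_{l,j}$ is only $O(1)$ plus the values of $\overline\Delta$ on the two neighboring dyadic blocks (because the spatial shift $C_0k^{-2/3}n^{-1/24}$ is tiny), and the sum is then controlled by $\mathcal{A}_0(\Lambda_0')$ with a constant $\Lambda_0$ depending only on $\beta$, exactly as in the paper.
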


We give the proofs of Propositions \ref{P4.3} and \ref{P4.4} as follows. 

\begin{proof}[Proof of Proposition \ref{P4.3}]

The proof consists of \textbf{Steps 1-4} as given below.

\paragraph{Step 1} 

Recall the definition of $\mathcal{H}'$ from (\ref{Eq4.123}). Below we assume that the event $(\mathcal{H}')^c$ holds. Then for any $(l,j)\in\mathcal{R}_T$ and $x\in\mathcal{Q}_{l,j}$, we have 
\begin{equation}\label{Eq4.140}
  |N( x k^{2\slash 3};\overline{b}_{l,j}k^{2\slash 3},\infty)-N_0(xk^{2\slash 3};\overline{b}_{l,j}k^{2\slash 3},\infty)|< M (l+1)^{10} k.
\end{equation}
Moreover, for any $(l,j)\in\mathcal{R}_T$, $(l',j')\in\mathcal{P}_{l,j}$, and $x\in\mathcal{Q}_{l,j}$, we have 
\begin{equation}\label{Eq4.141}
    |N(x k^{2\slash 3}; \overline{a}_{l',j'}k^{2\slash 3}, \overline{b}_{l',j'}k^{2\slash 3})-N_0(x k^{2\slash 3}; \overline{a}_{l',j'}k^{2\slash 3}, \overline{b}_{l',j'}k^{2\slash 3})|< M(l+1)^{10} k.
\end{equation}
Recall the definition of $\mathcal{K}_l$ from (\ref{Eq2.11}). We also recall the definitions of 
\begin{equation*}
\mathcal{E}_0((l,j),z_{l,j}), \quad \mathcal{E}'((l,j),\theta'_{l,j}), \quad \mathcal{E}((l,j),(l',j'),\theta_{l,j;l',j'}) 
\end{equation*}
below (\ref{Eq2.16}). For any $(l,j)\in\mathcal{R}_T$, as $J_{l,j}\in \mathcal{Q}_{l,j}$, there exists some $z_{l,j}\in\mathcal{Q}_{l,j}$, such that $J_{l,j}=z_{l,j}$. Hence recalling the definition of $\mathcal{J}_T$ from (\ref{Eq2.17}), we have
\begin{equation}\label{Eq4.142}
  (\mathcal{H}')^c\subseteq \bigcap_{(l,j)\in\mathcal{R}_T}\bigcup_{z_{l,j}\in\mathcal{Q}_{l,j}} \mathcal{E}_0((l,j),z_{l,j})=\bigcup_{\mathbf{z}=(z_{l,j})_{(l,j)\in\mathcal{R}_T}\in\mathcal{J}_T}\bigcap_{(l,j)\in\mathcal{R}_T} \mathcal{E}_0((l,j),z_{l,j}).
\end{equation}
For any $(l,j)\in\mathcal{R}_T$, taking $x=J_{l,j}$ in (\ref{Eq4.140}), noting (\ref{Eq2.15}), we obtain that $\Delta_{l,j;2}<M(l+1)^{10}k$. Hence there exists $\theta'_{l,j}\in\mathcal{K}_l$, such that $\mathcal{E}'((l,j),\theta'_{l,j})$ holds. Hence recalling the definition of $\Theta'_T$ from (\ref{Eq2.16}), we have
\begin{equation}\label{Eq4.143}
   (\mathcal{H}')^c\subseteq \bigcap_{(l,j)\in\mathcal{R}_T} \bigcup_{\theta'_{l,j}\in\mathcal{K}_l}\mathcal{E}'((l,j),\theta'_{l,j})=\bigcup_{\bm{\theta}'=(\theta_{l,j}')_{(l,j)\in\mathcal{R}_T} \in  \Theta'_T}\bigcap_{(l,j)\in\mathcal{R}_T} \mathcal{E}'((l,j),\theta'_{l,j}) . 
\end{equation}
For any $(l,j)\in\mathcal{R}_T$ and $(l',j')\in\mathcal{P}_{l,j}$, taking $x=J_{l,j}$ in (\ref{Eq4.141}), noting (\ref{Eq2.7}), we have $\Delta_{l,j;l',j';1}<M(l+1)^{10}k$. Hence there exists $\theta_{l,j;l',j'}\in\mathcal{K}_l$, such that $\mathcal{E}((l,j),(l',j'),\theta_{l,j;l',j'})$ holds. Hence recalling the definition of $\Theta_T$ from (\ref{Eq2.18}), we have 
\begin{eqnarray}\label{Eq4.144}
  && (\mathcal{H}')^c\subseteq \bigcap_{(l,j)\in\mathcal{R}_T}\bigcap_{(l',j')\in\mathcal{P}_{l,j}} \bigcup_{\theta_{l,j;l',j'\in\mathcal{K}_l}}\mathcal{E}((l,j),(l',j'),\theta_{l,j;l',j'})\nonumber\\
&=& \bigcup_{\bm{\theta}=(\theta_{l,j;l',j'})_{(l,j)\in\mathcal{R}_T,(l',j')\in\mathcal{P}_{l,j}}\in \Theta_T}\bigcap_{(l,j)\in\mathcal{R}_T}\bigcap_{(l',j')\in\mathcal{P}_{l,j}} \mathcal{E}((l,j),(l',j'),\theta_{l,j;l',j'}).\nonumber\\
&& 
\end{eqnarray}
By (\ref{Eq4.142})-(\ref{Eq4.144}), recalling (\ref{DefinitionH}), we conclude that
\begin{equation}\label{Eq4.148}
 (\mathcal{H}')^c\subseteq  \bigcup_{\substack{\mathbf{z}=(z_{l,j})_{(l,j)\in\mathcal{R}_T}\in\mathcal{J}_T,\\ \bm{\theta}=(\theta_{l,j;l',j'})_{(l,j)\in\mathcal{R}_T,(l',j')\in\mathcal{P}_{l,j}}\in \Theta_T,\\ \bm{\theta}'=(\theta_{l,j}')_{(l,j)\in\mathcal{R}_T} \in  \Theta'_T}} \mathcal{H}(\mathbf{z},\bm{\theta},\bm{\theta}').
\end{equation}

\paragraph{Step 2}

Let $\mathcal{Z}$ be the set of $(\mathbf{z},\bm{\theta},\bm{\theta}')$ such that $\mathbf{z}=(z_{l,j})_{(l,j)\in\mathcal{R}_T}\in\mathcal{J}_T$, $\bm{\theta}=(\theta_{l,j;l',j'})_{(l,j)\in\mathcal{R}_T,(l',j')\in\mathcal{P}_{l,j}}\in \Theta_T$, and $\bm{\theta}'=(\theta_{l,j}')_{(l,j)\in\mathcal{R}_T} \in  \Theta'_T$. Recall Definition \ref{Def4.1}. For any $t\geq 1$, we let $\mathcal{Z}_t$ be the set of $(\mathbf{z},\bm{\theta},\bm{\theta}')\in\mathcal{Z}$ such that 
\begin{equation}\label{Eq4.155}
  \sum_{(l,j)\in\mathcal{R}_T}\frac{\mathcal{S}_{l,j}(\bm{\theta})^2}{l+1}\leq t M, \quad \max_{(l,j)\in\mathcal{R}_T} \Big\{\Big(\frac{2^{l+1}}{l+1}\Big)^{3\slash 2}  \theta'_{l,j} \Big\}\leq t M.
\end{equation}
For any $t\geq 1$, we let
\begin{equation}\label{Eq4.145}
 \mathcal{H}_t:=\bigcup_{(\mathbf{z},\bm{\theta},\bm{\theta}')\in\mathcal{Z}_t} \mathcal{H}(\textbf{z},\bm{\theta},\bm{\theta}'), \quad  \tilde{\mathcal{H}}_t:=\bigcup_{(\mathbf{z},\bm{\theta},\bm{\theta}')\in\mathcal{Z}\backslash \mathcal{Z}_t} \mathcal{H}(\textbf{z},\bm{\theta},\bm{\theta}').
\end{equation}
By (\ref{Eq4.148}), for any $t\geq 1$, we have $\mathcal{H}'\cup\mathcal{H}_t\cup \tilde{\mathcal{H}}_t=\Omega$, hence
\begin{equation}\label{Eq4.149}
  (\mathcal{H}_t)^c \subseteq \mathcal{H}'\cup \tilde{\mathcal{H}}_t. 
\end{equation}

By Proposition \ref{P4.1}, for any $t\geq 1$ and any $(\mathbf{z},\bm{\theta},\bm{\theta}')\in \mathcal{Z}\backslash \mathcal{Z}_t$, we have
\begin{equation}\label{Eq4.146}
  \mathbb{P}(\mathcal{H}(\textbf{z},\bm{\theta},\bm{\theta}'))\leq C\exp(-c t M k^2). 
\end{equation}
By (\ref{Eq2.11}), for any $l\in\{0\}\cup [T]$, as $M\geq M_0\geq 10$, we have 
\begin{equation}
|\mathcal{K}_l|\leq 2+\log(M(l+1)^{20})\slash \log(2)\leq 3\log(M(T+1)^{20}).
\end{equation}
Hence by (\ref{Eq4.131})-(\ref{Eq4.133}), we have
\begin{equation*}
 |\mathcal{J}_T|\leq (2^{12 T})^{4T^2}\leq \exp(C T^3), \quad |\Theta'_T|\leq (3\log(M(T+1)^{20}))^{4T^2},
\end{equation*}
\begin{equation*}
 |\Theta_T|\leq (3\log(M(T+1)^{20}))^{16 T^4}.
\end{equation*}
Hence
\begin{equation}\label{Eq4.147}
   |\mathcal{Z}|=|\mathcal{J}_T||\Theta_T||\Theta'_T|\leq \exp(CT^3)(3\log(M(T+1)^{20}))^{20T^4}. 
\end{equation}
As $k\geq K_0(M,\epsilon,\delta)$ is sufficiently large (depending on $\beta,M,\epsilon,\delta$) and $T$ only depends on $M,\epsilon$, by (\ref{Eq4.145}), (\ref{Eq4.146}), and (\ref{Eq4.147}), for any $t\geq 1$,
\begin{equation}\label{Eq4.150}
  \mathbb{P}(\tilde{\mathcal{H}}_t)\leq C\exp(CT^3)(3\log(M(T+1)^{20}))^{20T^4}\exp(-c t M k^2)\leq \exp(-ct M k^2). 
\end{equation}
By Proposition \ref{P4.2}, (\ref{Eq4.149}), (\ref{Eq4.150}), and the union bound, for any $t\geq 1$,
\begin{equation}\label{Eq4.159}
  \mathbb{P}((\mathcal{H}_t)^c)\leq C\exp(-c M k^2). 
\end{equation}

\paragraph{Step 3}

Below we assume that the event $\mathcal{H}_t$ holds (where $t\geq 1$). Then there exists $(\mathbf{z},\bm{\theta},\bm{\theta}')\in\mathcal{Z}_t$ with $\mathbf{z}=(z_{l,j})_{(l,j)\in\mathcal{R}_T}$, $\bm{\theta}=(\theta_{l,j;l',j'})_{(l,j)\in\mathcal{R}_T,(l',j')\in\mathcal{P}_{l,j}}$, and $\bm{\theta}'=(\theta_{l,j}')_{(l,j)\in\mathcal{R}_T}$, such that the event $\mathcal{H}(\textbf{z},\bm{\theta},\bm{\theta}')$ holds. Below we fix such a choice of $(\mathbf{z},\bm{\theta},\bm{\theta}')$, and assume that the event $\mathcal{H}(\textbf{z},\bm{\theta},\bm{\theta}')$ holds.

For any $(l,j)\in\mathcal{R}_T$ and $(l',j')\in\mathcal{P}_{l,j}$, we have 
\begin{equation}
\Delta_{l,j;l',j';1}\leq 2\theta_{l,j;l',j'} k+(l+1)^{-10} k.
\end{equation}
For any $(l,j)\in\mathcal{R}_T$, we have 
\begin{equation}
\Delta_{l,j;2}\leq  2\theta'_{l,j} k+(l+1)^{-10} k.
\end{equation}
We also note that for any $(l,j)\in\mathcal{R}_T$,
\begin{equation}
  |\mathcal{P}_{l,j}|\leq 2+2\sum_{i=1}^l i=2+l(l+1)\leq 4(l+1)^2.
\end{equation}
Hence by (\ref{Eq2.19}) and (\ref{Eq2.20}), for any $(l,j)\in\mathcal{R}_T$, we have
\begin{eqnarray}\label{Eq4.154}
\Delta_{l,j}&\leq&  \sum_{(l',j')\in\mathcal{P}_{l,j}} (2\theta_{l,j;l',j'}k +(l+1)^{-10}k )+ 2\theta'_{l,j} k+(l+1)^{-10} k \nonumber\\
 &\leq& 2\mathcal{S}_{l,j}(\bm{\theta})k+2\theta'_{l,j} k +(l+1)^{-10} k (1+|\mathcal{P}_{l,j}|)\nonumber\\
 &\leq& 2\mathcal{S}_{l,j}(\bm{\theta})k+2\theta'_{l,j} k+5(l+1)^{-8} k.
\end{eqnarray}

In the following, we consider any $(l,j)\in\mathcal{R}_T$. If $(l,j)\neq (0,1)$, we let $(l_1,j_1)$ be the largest element in $\mathcal{R}_T$ such that $(l_1,j_1)\prec (l,j)$ and take $I'_{l,j}:=I_{l_1,j_1}$; if $(l,j)=(0,1)$, we let $(l_1,j_1)=(0,0)$ and take $I'_{l,j}:=\emptyset$. If $(l,j)\neq (T,2T)$, we let $(l_2,j_2)$ be the smallest element in $\mathcal{R}_T$ such that $(l_2,j_2)\succ (l,j)$ and $I''_{l,j}:=I_{l_2,j_2}$; if $(l,j)=(T,2T)$, we let $(l_2,j_2)=(T,2T+1)$ and take $I''_{l,j}:=\emptyset$. We also define $\Delta_{0,0}=\mathcal{S}_{0,0}(\bm{\theta})=\theta'_{0,0}=0$ and $\Delta_{T,2T+1}=\mathcal{S}_{T,2T+1}(\bm{\theta})=\theta'_{T,2T+1}=0$. Note that when $l=0$, we have $l_1=0$; when $l\geq 1$, we have $l_1\geq l-1$. Hence
\begin{equation}\label{Eq4.157}
   \max\{l-1,0\}\leq l_1\leq l, \quad l\leq l_2\leq l+1,
\end{equation}
which leads to
\begin{equation}\label{Eq4.158}
  l_1+1\geq \frac{1}{2}(l+1), \quad l_2+1\leq l+2\leq 2(l+1).
\end{equation}

Recall the definitions of $N(x)$ and $\tilde{N}(x)$ from (\ref{Eq2.2}) and (\ref{Eq2.21}). For any $x\in I_{l,j}$, there exist $x_1,x_2\in\mathcal{Q}\cap(I_{l,j}\cup I'_{l,j}\cup I''_{l,j})$, such that $x_1\leq x\leq x_2$ and $x_2-x_1=2^{-10(T+1)}$. Note that
\begin{equation*}
 N(x_1 k^{2\slash 3})\leq N(x k^{2\slash 3})\leq N(x_2 k^{2\slash 3}), \quad N_0(x_1 k^{2\slash 3})\leq N_0(x k^{2\slash 3})\leq N_0(x_2 k^{2\slash 3}), 
\end{equation*}
which leads to
\begin{eqnarray*}
&&  N(x k^{2\slash 3})-N_0(x k^{2\slash 3})\leq N(x_2 k^{2\slash 3})-N_0(x_1 k^{2\slash 3})\nonumber\\
&\leq&   N(x_2 k^{2\slash 3})-N_0(x_2 k^{2\slash 3})+|N_0(x_1 k^{2\slash 3})-N_0(x_2 k^{2\slash 3})|,
\end{eqnarray*}
\begin{eqnarray*}
&&  N(x k^{2\slash 3})-N_0(x k^{2\slash 3})\geq N(x_1 k^{2\slash 3})-N_0(x_2 k^{2\slash 3})\nonumber\\
&\geq& N(x_1 k^{2\slash 3})-N_0(x_1 k^{2\slash 3})-|N_0(x_1 k^{2\slash 3})-N_0(x_2 k^{2\slash 3})|.
\end{eqnarray*}
Hence for any $x\in I_{l,j}$,
\begin{eqnarray}\label{Eq4.152}
&& |N(x k^{2\slash 3})-N_0(x k^{2\slash 3})|\leq |N_0(x_1 k^{2\slash 3})-N_0(x_2 k^{2\slash 3})|\nonumber\\
&+& \max\{|N(x_1 k^{2\slash 3})-N_0(x_1 k^{2\slash 3})|,|N(x_2 k^{2\slash 3})-N_0(x_2 k^{2\slash 3})|\}.
\end{eqnarray}
By Proposition \ref{AiryOperator}, there exists a positive absolute constant $C_0$, such that for any $i\in\mathbb{N}^{*}$ with $i\geq C_0$, we have $(3\pi(i-1)\slash 2)^{2\slash 3}\leq \gamma_i \leq (3\pi i\slash 2)^{2\slash 3}$, which leads to $2\gamma_i^{3\slash 2}\slash (3\pi)\leq i\leq 2\gamma_i^{3\slash 2}\slash (3\pi)+1$. For any $\lambda\geq (3\pi (C_0+1)\slash 2)^{2\slash 3}$, letting $i_1=\lfloor 2\lambda^{3\slash 2}\slash (3\pi)\rfloor$ and $i_2=\lceil 2\lambda^{3\slash 2}\slash (3\pi) \rceil+2$, we have $i_1,i_2\geq C_0$, hence $\gamma_{i_1}\leq \lambda$ and $\gamma_{i_2}> \lambda$. Therefore, for any $\lambda\geq (3\pi (C_0+1)\slash 2)^{2\slash 3}$, we have
\begin{equation}\label{Eq4.151}
 N_0(\lambda)\geq i_1\geq \frac{2}{3\pi}\lambda^{3\slash 2}-1,\quad N_0(\lambda)\leq i_2-1\leq \frac{2}{3\pi}\lambda^{3\slash 2}+2. 
\end{equation}
Now if $x_1=0$, then $x_2=2^{-10(T+1)}$. As $k\geq K_0(M,\epsilon,\delta)$ is sufficiently large (depending on $\beta,M,\epsilon,\delta$) and $T$ only depends on $M,\epsilon$, by (\ref{Eq4.151}), we have
\begin{equation}
 N_0(x_2 k^{2\slash 3})-N_0(x_1 k^{2\slash 3})\leq N_0(x_2 k^{2\slash 3})\leq \frac{2}{3\pi} x_2^{3\slash 2}k+2\leq \frac{C k}{2^{15(T+1)}}.
\end{equation}
If $x_1\neq 0$, then $x_1,x_2\geq 2^{-10(T+1)}$, and we have
\begin{equation*}
 N_0(x_1 k^{2\slash 3})\geq \frac{2}{3\pi} x_1^{3\slash 2} k -1,\quad N_0(x_2 k^{2\slash 3})\leq \frac{2}{3\pi} x_2^{3\slash 2} k +2.
\end{equation*}
Hence using the fact that $x_2\leq 2^{T+1}$, we have 
\begin{eqnarray}
 N_0(x_2 k^{2\slash 3})-N_0(x_1 k^{2\slash 3})&\leq& 3+\frac{2}{3\pi}(x_2^{3\slash 2}-x_1^{3\slash 2}) k \leq 3+\frac{1}{\pi} (x_2-x_1) \sqrt{x_2}k \nonumber\\
 &\leq& 3+\frac{\sqrt{2^{T+1}}k }{\pi\cdot 2^{10(T+1)}}\leq \frac{C k}{2^{9(T+1)}}.
\end{eqnarray}
Hence for both cases, we have
\begin{equation}\label{Eq4.153}
  N_0(x_2 k^{2\slash 3})-N_0(x_1 k^{2\slash 3})\leq \frac{C k}{2^{9(T+1)}}.
\end{equation}

By (\ref{Eq4.154}), (\ref{Eq4.152}), and (\ref{Eq4.153}), for any $x\in I_{l,j}$, we have
\begin{eqnarray}\label{Eq4.156}
 && |N(x k^{2\slash 3})-N_0(x k^{2\slash 3})|\nonumber\\
&\leq& \max\{|N(x_1 k^{2\slash 3})-N_0(x_1 k^{2\slash 3})|,|N(x_2 k^{2\slash 3})-N_0(x_2 k^{2\slash 3})|\}\nonumber\\
&& +C\cdot 2^{-9(T+1)} k \nonumber\\
&\leq&\max\{\Delta_{l,j},\Delta_{l_1,j_1},\Delta_{l_2,j_2}\} + C\cdot 2^{-9(T+1)} k\nonumber\\
&\leq& 2\max\{\mathcal{S}_{l,j}(\bm{\theta}),\mathcal{S}_{l_1,j_1}(\bm{\theta}),\mathcal{S}_{l_2,j_2}(\bm{\theta})\}k+2\max\{\theta'_{l,j},\theta'_{l_1,j_1},\theta'_{l_2,j_2}\} k  \nonumber\\
&& +C(l+1)^{-8}k \nonumber\\
&\leq& 2\max\{\mathcal{S}_{l,j}(\bm{\theta}),\mathcal{S}_{l_1,j_1}(\bm{\theta}),\mathcal{S}_{l_2,j_2}(\bm{\theta})\}k+C\cdot 2^{-3l \slash 2}(l+1)^{3\slash 2}tM k\nonumber\\
&& + C(l+1)^{-8}k,
\end{eqnarray}
where we use (\ref{Eq4.157}) and (\ref{Eq4.158}) in the third and last inequalities, and use (\ref{Eq4.155}) and the fact that $(\mathbf{z},\bm{\theta},\bm{\theta}')\in\mathcal{Z}_t$ in the last inequality.

By (\ref{Eq2.22}) and (\ref{Eq4.156}), for any $(l,j)\in\mathcal{R}_T$, we have
\begin{eqnarray}
 \overline{\Delta}_{l,j}&\leq& 2\max\{\mathcal{S}_{l,j}(\bm{\theta}),\mathcal{S}_{l_1,j_1}(\bm{\theta}),\mathcal{S}_{l_2,j_2}(\bm{\theta})\}k\nonumber\\
 &&+C\cdot 2^{-3l \slash 2}(l+1)^{3\slash 2}tM k+C(l+1)^{-8}k.
\end{eqnarray}
Hence we have
\begin{eqnarray}
 && \sum_{(l,j)\in\mathcal{R}_T} \frac{(\overline{\Delta}_{l,j})^2}{l+1}\nonumber\\
 &\leq& Ck^2 \Big(\sum_{(l,j)\in\mathcal{R}_T}\frac{\mathcal{S}_{l,j}(\bm{\theta})^2}{l+1}+ \sum_{(l,j)\in\mathcal{R}_T}\frac{\mathcal{S}_{l_1,j_1}(\bm{\theta})^2}{l+1}+\sum_{(l,j)\in\mathcal{R}_T}\frac{\mathcal{S}_{l_2,j_2}(\bm{\theta})^2}{l+1}\Big)\nonumber\\
 &&+Ct^2M^2k^2\Big(2+\sum_{l=1}^T 2l\cdot 2^{-3l}(l+1)^2\Big)+
 Ck^2\Big(2+\sum_{l=1}^T 2l(l+1)^{-17}\Big)\nonumber\\
 &\leq& Ck^2 \Big(\sum_{(l,j)\in\mathcal{R}_T}\frac{\mathcal{S}_{l,j}(\bm{\theta})^2}{l+1}+ \sum_{(l,j)\in\mathcal{R}_T}\frac{\mathcal{S}_{l_1,j_1}(\bm{\theta})^2}{l_1+1}+\sum_{(l,j)\in\mathcal{R}_T}\frac{\mathcal{S}_{l_2,j_2}(\bm{\theta})^2}{l_2+1}\Big)\nonumber\\
 && + Ct^2M^2k^2\Big(\sum_{l=1}^T 2^{-3 l}(l+1)^3\Big)+Ck^2\Big(\sum_{l=1}^T (l+1)^{-16}\Big)\nonumber\\
 &\leq& Ck^2\Big(\sum_{(l,j)\in\mathcal{R}_T}\frac{\mathcal{S}_{l,j}(\bm{\theta})^2}{l+1}\Big)+Ct^2M^2k^2\leq Ct^2M^2k^2,
\end{eqnarray}
where we use (\ref{Eq4.157}) and (\ref{Eq4.158}) in the second inequality, and use (\ref{Eq4.155}) and the fact that $(\mathbf{z},\bm{\theta},\bm{\theta}')\in\mathcal{Z}_t$ in the last inequality.

Taking $t=1$, we conclude that there exists a positive constant $\Lambda_1$ that only depends on $\beta$, such that 
\begin{equation}
  \mathcal{H}_1\subseteq \Big\{\sum_{(l,j)\in\mathcal{R}_T} \frac{(\overline{\Delta}_{l,j})^2}{l+1}\leq \Lambda_1 M^2 k^2\Big\}.
\end{equation}
Hence by (\ref{Eq4.159}), we have
\begin{equation}\label{Eq4.163}
 \mathbb{P}\Big(\sum_{(l,j)\in\mathcal{R}_T} \frac{(\overline{\Delta}_{l,j})^2}{l+1}>\Lambda_1 M^2 k^2\Big)\leq C\exp(-cMk^2).
\end{equation}

\paragraph{Step 4}

In Proposition \ref{P3.4}, we take $\lambda=k^{2\slash 3}$ and $x=Mk$. Noting that $k\geq K_0(M,\epsilon,\delta)$ is sufficiently large (depending on $\beta,M,\epsilon,\delta$) and $M\geq M_0$ is sufficiently large (depending on $\beta$), we obtain that 
\begin{equation}\label{Eq4.160}
 \mathbb{P}(N(k^{2\slash 3})\geq Mk)\leq C\exp(-cM^2 k^2)\leq C\exp(-c M k^2).
\end{equation}
Similarly, in Proposition \ref{P3.4}, we take $\lambda=0$ and $x=2^{-3\slash 2}Mk$, and obtain that
\begin{equation}\label{Eq4.161}
 \mathbb{P}(N(0)\geq 2^{-3\slash 2}Mk)\leq C\exp(-c M^2 k^2) \leq C\exp(-c M k^2).  
\end{equation}
Now consider any $j\in [k^{10}]$. In Proposition \ref{P3.5}, we take $\lambda=-j k^{2\slash 3}$ and $x=Mk(j+2)^{-3\slash 2}$. As $k\geq K_0(M,\epsilon,\delta)$ is sufficiently large (depending on $\beta,M,\epsilon,\delta$), we have
\begin{eqnarray}\label{Eq4.162}
    \mathbb{P}(N(-j k^{2\slash 3})\geq M k(j+2)^{-3\slash 2})&\leq& C\exp(-c M j^{3\slash 2} (j+2)^{-3\slash 2} k^2)\nonumber\\
    &\leq& C\exp(-c M k^2).
\end{eqnarray}

By (\ref{Eq4.160})-(\ref{Eq4.162}) and the union bound, as $k\geq K_0(M,\epsilon,\delta)$ is sufficiently large (depending on $\beta,M,\epsilon,\delta$), we have
\begin{eqnarray}\label{Eq4.164}
 && \mathbb{P}\Big(N(-j k^{2\slash 3})\geq \frac{M k}{(j+2)^{3\slash 2}}\text{ for some }j\in\{-1,0\}\cup [k^{10}]\Big)\nonumber\\
 &\leq& C(2+k^{10}) \exp(-c M k^2)\leq C\exp(-c M k^2).
\end{eqnarray}

By (\ref{Eq4.163}), (\ref{Eq4.164}), and the union bound, taking $\Lambda_0=\max\{\Lambda_1,1\}$, we conclude that
\begin{equation}
  \mathbb{P}(\mathcal{A}_0(\Lambda_0)^c)\leq C\exp(-c M k^2). 
\end{equation}

\end{proof}

\begin{proof}[Proof of Proposition \ref{P4.4}]

Let $\mathcal{C}_0$ be the event that $N(2^{T+2} k^{2\slash 3})\leq 2^{3(T+2)\slash 2} M k$. In Proposition \ref{P3.4}, we take $\lambda=2^{T+2} k^{2\slash 3}$ and $x=2^{3(T+2)\slash 2} M k$. As $M\geq M_0$ is sufficiently large (depending on $\beta$), by Proposition \ref{P3.4}, we have
\begin{eqnarray}\label{Eq4.1.28}
 &&  \mathbb{P}((\mathcal{C}_0)^c)\leq \mathbb{P}(N(2^{T+2} k^{2\slash 3})\geq 2^{3(T+2)\slash 2} M k)\nonumber\\
 &\leq& C\exp(-c\cdot 2^{3(T+2)}M^2 k^2)\leq C\exp(-c M k^2).
\end{eqnarray}

By Proposition \ref{P4.3}, there exists a positive constant $\Lambda_0'\geq 1$ that only depends on $\beta$, such that
\begin{equation}\label{Eq4.1.29}
  \mathbb{P}(\mathcal{A}_0(\Lambda_0')^c)\leq C\exp(-c M k^2).
\end{equation}

In the following, we couple the sorted eigenvalues $\lambda_1< \lambda_2< \cdots$ of the stochastic Airy operator $H_{\beta}$ and the sorted eigenvalues $\lambda_1^{(n)}> \lambda_2^{(n)}> \cdots> \lambda_n^{(n)}$ of the Gaussian $\beta$ ensemble $H_{\beta,n}$ (recall Section \ref{Sect.1.3}) using the coupling given in \cite[Theorem 1.5]{Zho} (replacing $k$ by $(\lceil 2^{3(T+2)\slash 2}M\rceil+1)k$ in \cite[Theorem 1.5]{Zho}; note that $k\geq K_0(M,\epsilon,\delta)$ is sufficiently large, depending on $\beta,M,\epsilon,\delta$). Recall that $\tilde{\lambda}_i^{(n)}=n^{1\slash 6}(\lambda_i^{(n)}-2\sqrt{n})$ for every $i\in [n]$. By \cite[Theorem 1.5]{Zho}, there exist positive constants $C_0,C,c$ that only depend on $\beta$, such that the following holds. Let $\mathcal{C}_0'$ be the event that for any $i\in [(\lceil 2^{3(T+2)\slash 2}M\rceil+1)k]$, we have
\begin{equation}\label{Eq4.1.1}
  |\tilde{\lambda}_i^{(n)}+\lambda_i|\leq C_0 n^{-1\slash 24}.
\end{equation}
Then
\begin{equation}\label{Eq4.1.30}
  \mathbb{P}((\mathcal{C}_0')^c)\leq C\exp(-c k^3)\leq C\exp(-cMk^2). 
\end{equation}

In the following, we assume that the event $\mathcal{C}_0\cap\mathcal{C}_0'\cap\mathcal{A}_0(\Lambda_0')$ holds. By (\ref{Eq4.1.1}), as $b_i=-k^{-2\slash 3}\tilde{\lambda}_i^{(n)}$ for every $i\in [n]$ (recall Section \ref{Sect.1.3}), for every $i\in [(\lceil 2^{3(T+2)\slash 2}M\rceil+1)k]$, we have
\begin{equation}\label{Eq4.1.2}
  |b_i-\lambda_i k^{-2\slash 3}|\leq C_0 k^{-2\slash 3} n^{-1\slash 24}. 
\end{equation}

Consider any $x\leq 2^{T+2}$. Let $i_x=N(x k^{2\slash 3})$. As the event $\mathcal{C}_0$ holds, we have $i_x\leq N(2^{T+2}k^{2\slash 3})\leq 2^{3(T+2)\slash 2} Mk$. We also note that $\lambda_1,\cdots,\lambda_{i_x}\leq xk^{2\slash 3}$ and $\lambda_{i_x+1}> xk^{2\slash 3}$. Hence if $i_x\geq 1$, then by (\ref{Eq4.1.2}), we have
\begin{equation}
 b_{i_x} \leq \lambda_{i_x}k^{-2\slash 3}+C_0 k^{-2\slash 3} n^{-1\slash 24}\leq x+ C_0 k^{-2\slash 3} n^{-1\slash 24}.
\end{equation}
Hence 
\begin{equation}\label{Eq4.1.3}
  \tilde{N}(x+C_0 k^{-2\slash 3} n^{-1\slash 24})\geq i_x=N(x k^{2\slash 3}).
\end{equation}
As $i_x+1\leq 2^{3(T+2)\slash 2}Mk+1\leq (\lceil 2^{3(T+2)\slash 2}M\rceil +1)k$, by (\ref{Eq4.1.2}), we have
\begin{equation}
 b_{i_x+1}\geq \lambda_{i_x+1}k^{-2\slash 3}-C_0 k^{-2\slash 3} n^{-1\slash 24}>x-C_0 k^{-2\slash 3} n^{-1\slash 24}.
\end{equation}
Hence
\begin{equation}\label{Eq4.1.4}
 \tilde{N}(x-C_0 k^{-2\slash 3} n^{-1\slash 24})\leq i_x=N(x k^{2\slash 3}). 
\end{equation}
Combining (\ref{Eq4.1.3}) and (\ref{Eq4.1.4}), we conclude that for any $x\leq 2^{T+2}$, 
\begin{equation}\label{Eq4.1.5}
\tilde{N}(x-C_0 k^{-2\slash 3} n^{-1\slash 24})\leq N(x k^{2\slash 3}) \leq \tilde{N}(x+C_0 k^{-2\slash 3} n^{-1\slash 24}).
\end{equation}

Now consider any $x\leq 2^{T+1}$. In (\ref{Eq4.1.5}), replacing $x$ by $x-C_0 k^{-2\slash 3} n^{-1\slash 24}$, we obtain that
\begin{equation}\label{Eq4.1.6}
  \tilde{N}(x)\geq N(xk^{2\slash 3}-C_0 n^{-1\slash 24}).
\end{equation}
As $k\geq K_0(M,\epsilon,\delta)$ is sufficiently large (depending on $\beta,M,\epsilon,\delta$), we have that $x+C_0 k^{-2\slash 3} n^{-1\slash 24}\leq 2^{T+1}+1\leq 2^{T+2}$. In (\ref{Eq4.1.5}), replacing $x$ by $x+C_0 k^{-2\slash 3} n^{-1\slash 24}$, we obtain that
\begin{equation}\label{Eq4.1.7}
   \tilde{N}(x)\leq N(x k^{2\slash 3}+C_0 n^{-1\slash 24}). 
\end{equation}
Combining (\ref{Eq4.1.6}) and (\ref{Eq4.1.7}), we conclude that for any $x\leq 2^{T+1}$,
\begin{equation}\label{Eq4.1.18}
   N(xk^{2\slash 3}-C_0 n^{-1\slash 24}) \leq \tilde{N}(x) \leq N(x k^{2\slash 3}+C_0 n^{-1\slash 24}).
\end{equation}

By Proposition \ref{AiryOperator}, there exists a positive absolute constant $K_0\geq 10$, such that for any $i\in\mathbb{N}^{*}$ with $i\geq K_0$, we have
\begin{equation*}
(3\pi(i-1)\slash 2)^{2\slash 3}\leq \gamma_i\leq (3\pi i\slash 2)^{2\slash 3}.
\end{equation*}
For any $x\geq (3\pi(K_0+1)\slash 2)^{2\slash 3}$, taking $i_x=\lfloor 2x^{3\slash 2}\slash (3\pi)\rfloor\geq K_0$, we obtain 
\begin{equation}\label{Eq4.1.8}
  \gamma_{i_x}\leq \Big(\frac{3\pi i_x}{2}\Big)^{2\slash 3}\leq x, \quad \text{hence } N_0(x)\geq i_x\geq \frac{2}{3\pi} x^{3\slash 2}-1.
\end{equation}
For any $x\geq (3\pi(K_0+1)\slash 2)^{2\slash 3}$, taking $i_x'=\lceil 2x^{3\slash 2}\slash (3\pi)\rceil+2\geq K_0$, we obtain
\begin{equation}\label{Eq4.1.9}
   \gamma_{i_x'}\geq \Big(\frac{3\pi(i_x'-1)}{2}\Big)^{2\slash 3}>x, \quad \text{hence }  N_0(x)\leq i_x'-1\leq \frac{2}{3\pi} x^{3\slash 2}+2.
\end{equation}
Let $K_1=(3\pi(K_0+1)\slash 2)^{2\slash 3}$. By (\ref{Eq4.1.8}) and (\ref{Eq4.1.9}), for any $x\geq K_1$, we have
\begin{equation}\label{Eq4.1.12}
     \Big|N_0(x)-\frac{2}{3\pi}x^{3\slash 2}\Big|\leq 2.
\end{equation}

For any $\lambda<0$, $q_{\lambda}(x)$ as defined in (\ref{DefinitionQ}) is lower bounded by $w(x)$, defined by
\begin{equation}\label{Eq4.1.10}
   w'(x)=-\lambda-w(x)^2, \quad w(0)=\infty.
\end{equation}
Solving (\ref{Eq4.1.10}), we obtain that for any $x\geq 0$, 
\begin{equation}
   w(x)=\frac{\sqrt{-\lambda}(e^{2\sqrt{-\lambda}x}+1)}{e^{2\sqrt{-\lambda}x}-1},
\end{equation}
which does not blow up. Hence for any $\lambda<0$, $q_{\lambda}(x)$ does not blow up in $[0,\infty)$, and $N_0(\lambda)=0$.

As $k\leq n^{1\slash 20000}$, $k\geq K_0(M,\epsilon,\delta)$ is sufficiently large (depending on $\beta,M,\epsilon,\delta$), and $T$ only depends on $M,\epsilon$, for any $x\in [0, 2^{T+2}]$, we have
\begin{equation}\label{Eq4.1.11}
  \frac{1}{4}(k\slash n)^{2\slash 3}x\leq \frac{1}{4}k^{-1}n^{-1\slash 3} 2^{T+2}\leq \frac{1}{4}n^{-1\slash 3}. 
\end{equation}
By (\ref{Eq2.21}) and (\ref{Eq4.1.11}), for any $x\in [0, 2^{T+2}]$, we have
\begin{equation}
   \tilde{N}_0(x)\leq k\pi^{-1}\int_0^x \sqrt{y}dy=\frac{2}{3\pi}x^{3\slash 2}k,
\end{equation}
\begin{equation}
   \tilde{N}_0(x)\geq k\pi^{-1}\sqrt{1-\frac{1}{4}n^{-1\slash 3}}\int_0^x \sqrt{y}dy\geq \Big(1-\frac{1}{2}n^{-1\slash 3}\Big) \frac{2}{3\pi}x^{3\slash 2}k,
\end{equation}
where we use the inequality $\sqrt{1-y}\geq 1-2y$ for any $y\in [0,1\slash 2]$. 
Hence for any $x\in [0,2^{T+2}]$, we have
\begin{equation}\label{Eq4.1.13}
  \Big|\tilde{N}_0(x)-\frac{2}{3\pi}x^{3\slash 2}k\Big|\leq k n^{-1\slash 3} 2^{3(T+2)\slash 2}\leq n^{-1\slash 6}. 
\end{equation}
Moreover, for any $x<0$, we have $\tilde{N}_0(x)=0$.

As $k\geq K_0(M,\epsilon,\delta)$ is sufficiently large (depending on $\beta,M,\epsilon,\delta$), for any $x\in [0,2^{T+1}]$, we have $x+C_0 k^{-2\slash 3} n^{-1\slash 24}\leq 2^{T+2}$. Note that for any $x\in [0, 2^{T+1}]$, 
\begin{eqnarray}\label{Eq4.1.14}
  0&\leq& \tilde{N}_0(x+C_0 k^{-2\slash 3} n^{-1\slash 24})-\tilde{N}_0(x)\leq k\pi^{-1}\int_x^{x+C_0 k^{-2\slash 3} n^{-1\slash 24}}\sqrt{y}dy \nonumber\\
  &\leq& k\pi^{-1}\cdot 2^{(T+2)\slash 2} \cdot C_0 k^{-2\slash 3} n^{-1\slash 24}\leq C\cdot 2^{(T+2)\slash 2} k^{1\slash 3} n^{-1\slash 24}\leq C, \nonumber\\
  &&   
\end{eqnarray}
\begin{eqnarray}\label{Eq4.1.15}
  0&\leq& \tilde{N}_0(x)-\tilde{N}_0(x-C_0 k^{-2\slash 3} n^{-1\slash 24})\leq k\pi^{-1}\int_{x-C_0 k^{-2\slash 3} n^{-1\slash 24}}^x\sqrt{y}\mathbbm{1}_{[0,\infty)}(y)dy \nonumber\\
  &\leq& k\pi^{-1}\cdot 2^{(T+2)\slash 2} \cdot C_0 k^{-2\slash 3} n^{-1\slash 24}\leq C\cdot 2^{(T+2)\slash 2} k^{1\slash 3} n^{-1\slash 24}\leq C. \nonumber\\
  &&   
\end{eqnarray}

Consider any $x\in [2K_1 k^{-2\slash 3},2^{T+1}]$. As $k\leq n^{1\slash 20000}$ and $k\geq K_0(M,\epsilon,\delta)$ is sufficiently large (depending on $\beta,M,\epsilon,\delta$), we have 
\begin{equation*}
 xk^{2\slash 3}-C_0 n^{-1\slash 24}\geq 2K_1-C_0 n^{-1\slash 24}\geq K_1, \quad x+C_0 k^{-2\slash 3} n^{-1\slash 24}\leq 2^{T+2}.
\end{equation*}
By (\ref{Eq4.1.12}), (\ref{Eq4.1.13}), (\ref{Eq4.1.14}), and (\ref{Eq4.1.15}), we have
\begin{eqnarray}
  \tilde{N}_0(x)&\leq& \tilde{N}_0(x-C_0 k^{-2\slash 3} n^{-1\slash 24})+C\leq \frac{2}{3\pi} (x-C_0 k^{-2\slash 3} n^{-1\slash 24})^{3\slash 2} k +C \nonumber\\
  &\leq& N_0(x k^{2\slash 3}-C_0 n^{-1\slash 24})+C,
\end{eqnarray}
\begin{eqnarray}
  \tilde{N}_0(x)&\geq& \tilde{N}_0(x+C_0 k^{-2\slash 3} n^{-1\slash 24})-C\geq \frac{2}{3\pi}(x+C_0 k^{-2\slash 3} n^{-1\slash 24})^{3\slash 2} k -C\nonumber\\
  &\geq& N_0(x k^{2\slash 3}+C_0 n^{-1\slash 24})-C.
\end{eqnarray}
Hence for any $x\in [2K_1 k^{-2\slash 3},2^{T+1}]$, we have
\begin{equation}\label{Eq4.1.16}
 N_0(x k^{2\slash 3}+C_0 n^{-1\slash 24})-C\leq \tilde{N}_0(x) \leq N_0(x k^{2\slash 3}-C_0 n^{-1\slash 24})+C.
\end{equation}

Now consider any $x<2K_1 k^{-2\slash 3}$. By (\ref{Eq2.21}) and (\ref{Eq4.1.12}), we have
\begin{equation}
   \tilde{N}_0(x)\leq \tilde{N}_0(2K_1 k^{-2\slash 3})\leq k\pi^{-1}\int_0^{2K_1 k^{-2\slash 3}}\sqrt{y}dy\leq \frac{2(2K_1)^{3\slash 2}}{3\pi}\leq C,
\end{equation}
\begin{eqnarray}
 N_0(x k^{2\slash 3}+C_0 n^{-1\slash 24})\leq N_0(4K_1)
 \leq \frac{2}{3\pi}(4K_1)^{3\slash 2}+2\leq C.
\end{eqnarray}
Hence for any $x<2K_1 k^{-2\slash 3}$, we have
\begin{equation}\label{Eq4.1.17}
N_0(x k^{2\slash 3}+C_0 n^{-1\slash 24})-C\leq \tilde{N}_0(x) \leq N_0(x k^{2\slash 3}-C_0 n^{-1\slash 24})+C.
\end{equation}

Combining (\ref{Eq4.1.16}) and (\ref{Eq4.1.17}), we obtain that for any $x\leq 2^{T+1}$,
\begin{equation}\label{Eq4.1.19}
N_0(x k^{2\slash 3}+C_0 n^{-1\slash 24})-C\leq \tilde{N}_0(x) \leq N_0(x k^{2\slash 3}-C_0 n^{-1\slash 24})+C.
\end{equation}
By (\ref{Eq4.1.18}) and (\ref{Eq4.1.19}), we conclude that for any $x\leq 2^{T+1}$,
\begin{eqnarray}
 && N(xk^{2\slash 3}-C_0 n^{-1\slash 24})-N_0(xk^{2\slash 3}-C_0 n^{-1\slash 24})-C\leq \tilde{N}(x)-\tilde{N}_0(x)\nonumber\\
 &\leq& N(xk^{2\slash 3}+C_0 n^{-1\slash 24})-N_0(xk^{2\slash 3}+C_0 n^{-1\slash 24})+C,
\end{eqnarray}
which leads to
\begin{eqnarray}\label{Eq4.1.20}
|\tilde{N}(x)-\tilde{N}_0(x)|&\leq& C+ |N(xk^{2\slash 3}+C_0 n^{-1\slash 24})-N_0(xk^{2\slash 3}+C_0 n^{-1\slash 24})|\nonumber\\
&+&|N(xk^{2\slash 3}-C_0 n^{-1\slash 24})-N_0(xk^{2\slash 3}-C_0 n^{-1\slash 24})|.
\end{eqnarray}

Now we consider any $(l,j)\in\mathcal{R}_{T-1}$. If $(l,j)\neq (0,1)$, we let $(l_1,j_1)$ be the largest element in $\mathcal{R}_T$ such that $(l_1,j_1)\prec (l,j)$; if $(l,j)=(0,1)$, we let $(l_1,j_1)=(0,0)$. We also let $(l_2,j_2)$ be the smallest element in $\mathcal{R}_T$ such that $(l_2,j_2)\succ (l,j)$. We also define
\begin{equation*}
  I_{0,0}=[-1,0), \quad \overline{\Delta}_{0,0}=\sup_{x\in [-1,0)} |N(xk^{2\slash 3})-N_0(xk^{2\slash 3})|.
\end{equation*}
Note that $l=0$ if $l_2=0$ and $l+1\geq l_2\geq (l_2+1)\slash 2$ if $l_2\geq 1$. Hence
\begin{equation}\label{Eq4.1.23}
  l_1\leq l, \quad l+1\geq (l_2+1)\slash 2.
\end{equation}
As $k\geq K_0(M,\epsilon,\delta)$ is sufficiently large (depending on $\beta,M,\epsilon,\delta$), we have $C_0 k^{-2\slash 3}n^{-1\slash 24}\leq 1\slash 10$. Hence for any $x\in I_{l,j}$, we have 
\begin{equation*}
 x\pm C_0k^{-2\slash 3} n^{-1\slash 24}\in I_{l_1,j_1}\cup I_{l,j}\cup I_{l_2,j_2}.
\end{equation*}
By (\ref{Eq4.1.20}), for any $(l,j)\in\mathcal{R}_{T-1}$, we have
\begin{eqnarray}
  \tilde{\Delta}_{l,j}=\sup_{x\in I_{l,j}}|\tilde{N}(x)-\tilde{N}_0(x)|\leq C+2\max\{\overline{\Delta}_{l_1,j_1},\overline{\Delta}_{l,j},\overline{\Delta}_{l_2,j_2}\},
\end{eqnarray}
which leads to
\begin{eqnarray}\label{Eq4.1.22}
 (\tilde{\Delta}_{l,j})^2&\leq& 2C^2+8\max\{(\overline{\Delta}_{l_1,j_1})^2,(\overline{\Delta}_{l,j})^2, (\overline{\Delta}_{l_2,j_2}\})^2\} \nonumber\\
 &\leq& C(1+(\overline{\Delta}_{l_1,j_1})^2+(\overline{\Delta}_{l,j})^2+(\overline{\Delta}_{l_2,j_2})^2).
\end{eqnarray}
Note that the event $\mathcal{A}_0(\Lambda_0')$ holds. By (\ref{Eq2.12}) with $j=0$, for any $x<0$, we have $N(x)\leq N(0)\leq \Lambda_0' Mk$. As $N_0(x)=0$ for any $x<0$, we have
\begin{equation}\label{Eq4.1.21}
 \overline{\Delta}_{0,0}=\sup_{x\in [-1,0)} N(x k^{2\slash 3})\leq \Lambda_0' M k.
\end{equation}
By (\ref{Eq2.10}) and (\ref{Eq4.1.21}), we have
\begin{equation}\label{Eq4.1.24}
 \sum_{(l,j)\in \{(0,0)\}\cup\mathcal{R}_{T}}\frac{(\overline{\Delta}_{l,j})^2}{l+1}\leq ((\Lambda_0')^2+\Lambda_0') M^2 k^2. 
\end{equation}
By (\ref{Eq4.1.23}), (\ref{Eq4.1.22}), and (\ref{Eq4.1.24}), as $k\geq K_0(M,\epsilon,\delta)$ is sufficiently large (depending on $\beta,M,\epsilon,\delta$) and $T$ only depends on $M,\epsilon$, we have
\begin{eqnarray}\label{Eq4.1.25}
 \sum_{(l,j)\in\mathcal{R}_{T-1}}\frac{(\tilde{\Delta}_{l,j})^2}{l+1}&\leq& C\Big(\sum_{(l,j)\in\mathcal{R}_{T-1}}\frac{1}{l+1}+\sum_{(l,j)\in\mathcal{R}_{T-1}}\frac{(\overline{\Delta}_{l,j})^2}{l+1}\nonumber\\
 && \quad+\sum_{(l,j)\in\mathcal{R}_{T-1}}\frac{(\overline{\Delta}_{l_1,j_1})^2}{l+1}+\sum_{(l,j)\in\mathcal{R}_{T-1}}\frac{(\overline{\Delta}_{l_2,j_2})^2}{l+1}\Big)\nonumber\\
 &\leq& C\Big(2+\sum_{l=1}^{T-1}\frac{2l}{l+1}+\sum_{(l,j)\in\mathcal{R}_{T-1}}\frac{(\overline{\Delta}_{l,j})^2}{l+1}\nonumber\\
 && \quad+\sum_{(l,j)\in\mathcal{R}_{T-1}}\frac{(\overline{\Delta}_{l_1,j_1})^2}{l_1+1}+\sum_{(l,j)\in\mathcal{R}_{T-1}}\frac{2(\overline{\Delta}_{l_2,j_2})^2}{l_2+1}\Big)\nonumber\\
 &\leq& C\Big(T+\sum_{(l,j)\in\{(0,0)\}\cup \mathcal{R}_{T}}\frac{(\overline{\Delta}_{l,j})^2}{l+1}\Big)\nonumber\\
 &\leq& CT+C((\Lambda_0')^2+\Lambda_0') M^2 k^2\leq CM^2k^2.
\end{eqnarray}

Now consider any $j\in\{0\}\cup [k^{10}]$. As $C_0 k^{-2\slash 3}n^{-1\slash 24}\leq 1\slash 10$, by (\ref{Eq2.12}) and (\ref{Eq4.1.7}), we have 
\begin{equation}\label{Eq4.1.26}
\tilde{N}(-j)\leq N((-j+C_0 k^{-2\slash 3} n^{-1\slash 24})k^{2\slash 3})\leq N((-j+1)k^{2\slash 3}) \leq\frac{\Lambda_0' M k}{(j+1)^{3\slash 2}}.
\end{equation}

By (\ref{Eq4.1.25}) and (\ref{Eq4.1.26}), there exists a positive constant $\Lambda_0$ that only depends on $\beta$, such that
\begin{equation}\label{Eq4.1.27}
 \mathcal{C}_0\cap\mathcal{C}_0'\cap\mathcal{A}_0(\Lambda_0') \subseteq \mathcal{B}_0(\Lambda_0).
\end{equation}
By (\ref{Eq4.1.28}), (\ref{Eq4.1.29}), (\ref{Eq4.1.30}), (\ref{Eq4.1.27}), and the union bound, we conclude that
\begin{equation}
 \mathbb{P}(\mathcal{B}_0(\Lambda_0)^c)\leq \mathbb{P}((\mathcal{C}_0)^c)+\mathbb{P}((\mathcal{C}_0')^c)+\mathbb{P}(\mathcal{A}_0(\Lambda_0')^c)\leq C\exp(-c M k^2).
\end{equation}

\end{proof}

\subsection{Some results on $\mathcal{B}(\bm{\phi};\Lambda)$}

Recall that $\Psi(x)=\tilde{N}(x)-\tilde{N}_0(x)$ for any $x\in\mathbb{R}$. Recall the definitions of $\mathcal{B}(\bm{\phi};\Lambda)$ and related concepts below (\ref{Eq2.14}) in Section \ref{Sect.2}. In this subsection, we establish some results on $\mathcal{B}(\bm{\phi};\Lambda)$, as given in Propositions \ref{P5.1} and \ref{P5.2} below.

\begin{proposition}\label{P5.1}
Assume that $\beta,n\in\mathbb{N}^{*}$ and $n^{10^{-7}}\leq k\leq n^{1\slash 20000}$. Let $\Lambda_0$ be the constant that appears in Proposition \ref{P4.4}. Then we have
\begin{equation}
 \mathcal{B}_0(\Lambda_0)\subseteq \bigcup_{\bm{\phi}\in\Phi} \mathcal{B}(\bm{\phi};\Lambda_0).
\end{equation}
\end{proposition}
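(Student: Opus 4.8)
The plan is to show that, on the event $\mathcal{B}_0(\Lambda_0)$, one can choose an index $\bm{\phi}=(i_0,l_0,j_0,l_0',j_0')\in\Phi$ for which all six defining conditions (\ref{Eq2.1.1})--(\ref{Eq2.1.6}) of $\mathcal{B}(\bm{\phi};\Lambda_0)$ hold. Conditions (\ref{Eq2.1.1}) and (\ref{Eq2.1.2}) are literally conditions (\ref{Eq2.13}) and (\ref{Eq2.14}) in the definition of $\mathcal{B}_0(\Lambda_0)$, so they require nothing further; the whole content is to produce $i_0,l_0,j_0,l_0',j_0'$ satisfying (\ref{Eq2.1.3})--(\ref{Eq2.1.6}). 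The selection will be a nested pigeonhole argument: first pick $i_0$, then $l_0$ (and $j_0$), then $l_0'$ (and $j_0'$), at each stage using the preceding bound together with an averaging argument over a dyadic block of scales.

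The first step is to choose $i_0$. Apply (\ref{Eq2.13}) and split the sum $\sum_{(l,j)\in\mathcal{R}_{T-1}}(\tilde\Delta_{l,j})^2/(l+1)\le\Lambda_0 M^2k^2$ into the $\mathcal{I}_0/2$ disjoint blocks indexed by $i\in[\mathcal{I}_0/2,\mathcal{I}_0]$, where block $i$ consists of scales $l\in(2^{W_i}+1,\dots,2^{W_{i+1}}]$ (the blocks are disjoint because $W_i<\overline W_{i+1}<W_{i+1}$ and $W_{i+1}\le W_{i+1}$; one must check the ranges tile a subset of $\mathcal{R}_{T-1}$, using $2^{W_{\mathcal{I}_0+1}}=T-1$). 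Since there are $\ge\mathcal{I}_0/2\ge M^2/\epsilon$ blocks and the total is $\le\Lambda_0 M^2k^2$, by pigeonhole at least one block $i_0$ has $\sum_{l=2^{W_{i_0}}+1}^{2^{W_{i_0+1}}}\sum_{j=1}^{2l}(\tilde\Delta_{l,j})^2/(l+1)\le\Lambda_0\epsilon k^2$; in fact one can arrange $i_0\in[\mathcal{I}_0/2,\mathcal{I}_0]$ as required by condition (a) of $\Phi$. This gives (\ref{Eq2.1.3}). Next, within block $i_0$, restrict to the sub-block of scales $l\in(2^{W_{i_0}}+1,\dots,2^{\overline W_{i_0+1}}]$; by construction $\overline W_{i_0+1}\ge\log_2(e^{M/\epsilon}(W_{i_0}+1))$, so this sub-block contains $\gtrsim e^{M/\epsilon}$ consecutive scales, and a further pigeonhole on $l$ (weighting by $1/((l+1)\log(l+1))$ against the bound $\Lambda_0\epsilon k^2$ and using that $\sum 1/((l+1)\log(l+1))$ over such a long range is $\gtrsim\log(\overline W_{i_0+1}/W_{i_0})\gtrsim M/\epsilon$) produces a scale $l_0$ in the range $2^{W_{i_0}}+1\le l_0\le 2^{\overline W_{i_0+1}}$ with $\sum_{j=1}^{2l_0}(\tilde\Delta_{l_0,j})^2/(l_0+1)\le\epsilon k^2/((l_0+1)\log(l_0+1))$, which is (\ref{Eq2.1.4}). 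Then pigeonhole over $j\in[2l_0]$ (there are $2l_0$ values of $j$ and the average of $(\tilde\Delta_{l_0,j})^2$ is $\le\epsilon k^2/(2\log(l_0+1))$) to find $j_0$ with $\tilde\Delta_{l_0,j_0}\le 2\sqrt{\epsilon/((l_0+1)\log(l_0+1))}\,k$ — and one can force $l_0/2\le j_0\le 3l_0/2$, since a majority of the $j$-indices lie in that middle range and the average bound still holds after restricting to it; this yields (\ref{Eq2.1.5}) and condition (b) of $\Phi$.

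The third step handles $l_0',j_0'$. Using (\ref{Eq2.1.3}) again but now summing only over scales $l\in(10(l_0+1),11(l_0+1)]$ with $l\le 2^{W_{i_0+1}}$ (one must verify this window lies inside block $i_0$: since $l_0\le 2^{\overline W_{i_0+1}}$ and $W_{i_0+1}=40\overline W_{i_0+1}$, we have $11(l_0+1)\le 2^{W_{i_0+1}}$, and $10(l_0+1)>2^{W_{i_0}}+1$), the sum of $(\tilde\Delta_{l,j})^2/(l+1)$ over this window is $\le\Lambda_0\epsilon k^2$, and the window contains $\sim l_0$ scales each of size $\sim l_0$, so $\sum_{l,j}(\tilde\Delta_{l,j})^2\le\Lambda_0\epsilon k^2\cdot O(l_0^2)$ while the number of $(l,j)$ pairs is $\sim l_0^2$; hence some pair $(l_0',j_0')$ in this window satisfies $(\tilde\Delta_{l_0',j_0'})^2\le 12\Lambda_0\epsilon k^2/(l_0'+1)$, i.e.\ (\ref{Eq2.1.6}). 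Finally one checks the $\Phi$-membership conditions: $10(l_0+1)\le l_0'\le 11(l_0+1)$ and $2^{W_{i_0}}+1\le l_0'\le 2^{W_{i_0+1}}$ by the choice of window, and $1\le j_0'\le 2l_0'$ automatically. The main obstacle will be bookkeeping: ensuring all the dyadic ranges $(W_i,W_{i+1})$, $(W_{i_0},\overline W_{i_0+1})$, $(10(l_0+1),11(l_0+1))$ actually nest correctly inside $\mathcal{R}_{T-1}$ and inside each other given the recursive definition $\overline W_{i+1}=\max\{\lceil e^{M/\epsilon}(W_i+1)\rceil,10W_i\}$, $W_{i+1}=40\overline W_{i+1}$, and that at each pigeonhole stage the counting slack (number of available indices versus the total budget) is genuinely large enough — this is where the specific growth rates $e^{M/\epsilon}$, the factor $40$, and the bound $\mathcal{I}_0=2\lceil M^2/\epsilon\rceil$ are used, and getting the constants to line up (e.g.\ the factor $2$ in (\ref{Eq2.1.5}), the $12$ in (\ref{Eq2.1.6})) requires care but no deep idea.
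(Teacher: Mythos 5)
Your selection of $(i_0,l_0,j_0,l_0',j_0')$ is the same nested pigeonhole the paper uses (block averaging over $i\in[\mathcal{I}_0/2,\mathcal{I}_0]$ for $i_0$, the $\log\log$-weighted contradiction over $l\in(2^{W_{i_0}},2^{\overline{W}_{i_0+1}}]$ for $l_0$, the middle-range average for $j_0$, and the window $[10(l_0+1),11(l_0+1)]$ for $l_0',j_0'$), so the approach is correct and essentially identical to the paper's proof. One bookkeeping point in your last step: from $\sum_{l,j}(\tilde{\Delta}_{l,j})^2/(l+1)\le\Lambda_0\epsilon k^2$ over the window, the unweighted sum is $O(l_0)\,\Lambda_0\epsilon k^2$, not $O(l_0^2)\,\Lambda_0\epsilon k^2$ (every weight satisfies $l+1\le 12(l_0+1)$); with the $O(l_0^2)$ bound as written, the pigeonhole over the $\sim l_0^2$ pairs only yields $(\tilde{\Delta}_{l_0',j_0'})^2\lesssim\Lambda_0\epsilon k^2$ and misses the factor $1/(l_0'+1)$ in (\ref{Eq2.1.6}), whereas with the corrected $O(l_0)$ bound (or with the paper's two-stage choice, first $l_0'$ by averaging over the $\ge l_0+1$ scales, then $j_0'\in[2l_0']$) the constant $12$ does come out. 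The other slips are harmless ($\overline{W}_{i_0+1}\ge e^{M/\epsilon}(W_{i_0}+1)$ itself, not its base-$2$ logarithm; the per-$j$ average at scale $l_0$ is $\epsilon k^2/(2l_0\log(l_0+1))$), since the conclusions you draw from them are the correct ones.
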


\begin{proposition}\label{P5.2}
Assume that $\beta,n\in\mathbb{N}^{*}$ and $n^{10^{-7}}\leq k\leq n^{1\slash 20000}$. Recall (\ref{Eq2.1.10}). Let $\Lambda_0$ be the constant that appears in Proposition \ref{P4.4}. Then there exists a positive constant $C$ that only depends on $\beta$, such that for any choice of $\bm{\phi}=(i_0,l_0,j_0,l_0',j_0')\in \Phi$, when the event $\mathcal{B}(\bm{\phi};\Lambda_0)$ holds, the following properties hold.
\begin{itemize}
  \item[(a)] For any 
  \begin{equation*}
   x\in \Big[R_0(\bm{\phi})-\frac{R_0(\bm{\phi})}{16\log(R_0(\bm{\phi}))},R_0(\bm{\phi})+\frac{R_0(\bm{\phi})}{16\log(R_0(\bm{\phi}))}\Big],
  \end{equation*}
  we have
  \begin{equation*}
    |\Psi(x)|\leq C\sqrt{\frac{\epsilon}{\log(R_0(\bm{\phi}))\log\log(R_0(\bm{\phi}))}}k.
  \end{equation*}
  \item[(b)] 
  \begin{equation*}
    \int_0^{2^T} \frac{\Psi(x)^2}{x+1}dx\leq C M^2 k^2,
  \end{equation*}
  which implies
  \begin{equation*}
    \int_0^{R_0'(\bm{\phi})}\frac{\Psi(x)^2}{x+1}dx\leq C M^2 k^2.
  \end{equation*}
  \item[(c)] For any $x\in [-k^2-1,0]$,
  \begin{equation*}
     |\Psi(x)|=\tilde{N}(x)\leq \frac{CM k}{(|x|+1)^{3\slash 2}}.
  \end{equation*}
  This implies that for any $x\leq -k^2$, $\tilde{N}(x)=0$.
  \item[(d)] 
  \begin{equation*}
    \int_{15 R_0(\bm{\phi})\slash 16}^{17 R_0(\bm{\phi})\slash 16} \frac{\Psi(x)^2}{x+1}dx\leq \frac{C\epsilon k^2}{\log(R_0(\bm{\phi}))\log\log(R_0(\bm{\phi}))}.
  \end{equation*}
  \item[(e)]
  \begin{equation*}
     |\Psi(R_0'(\bm{\phi}))|\leq C \sqrt{\frac{\epsilon}{\log(R_0(\bm{\phi}))}}k.
  \end{equation*}
  \item[(f)]
  \begin{equation*}
      \int_{R_0(\bm{\phi})}^{R_0'(\bm{\phi})}\frac{\Psi(x)^2}{x+1}dx\leq C \epsilon k^2.
  \end{equation*}
\end{itemize}
\end{proposition}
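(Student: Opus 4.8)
\textbf{Proof strategy for Proposition \ref{P5.2}.}
The plan is to translate each of the six defining conditions of the event $\mathcal{B}(\bm{\phi};\Lambda_0)$ — which are phrased in terms of the discrete quantities $\tilde{\Delta}_{l,j}$ and the sums $\sum (\tilde{\Delta}_{l,j})^2/(l+1)$ — into the continuous statements (a)--(f), using the dyadic block structure $\{I_{l,j}\}$ set up in Section \ref{Sect.2} together with the elementary comparisons $\tilde{N}(x)\le\tilde{N}(y)$, $\tilde{N}_0(x)\le\tilde{N}_0(y)$ for $x\le y$. The key observation, used repeatedly, is that for $x\in I_{l,j}$ one has $|\Psi(x)|=|\tilde{N}(x)-\tilde{N}_0(x)|\le\tilde{\Delta}_{l,j}$, and that $x+1\asymp 2^l\asymp b_{l,j}$ uniformly over $I_{l,j}$, so that $\int_{I_{l,j}}\Psi(x)^2/(x+1)\,dx \le |I_{l,j}|\,(\tilde{\Delta}_{l,j})^2/(2^l) \asymp (\tilde{\Delta}_{l,j})^2/(l+1)\cdot c_{l,j}$ for suitable constants, because $|I_{l,j}|=2^l/(2l)$. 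I would organize the proof as six short paragraphs, one per item.

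For item (c), I would note that $\Psi(x)=\tilde{N}(x)$ for $x\le 0$ (since $\tilde{N}_0(x)=0$ there, which follows from the definition (\ref{Eq2.21})), and then bound $\tilde{N}(x)$ for $x\in[-j,-j+1)$ by $\tilde{N}(-j+1)$ and invoke (\ref{Eq2.1.2}); the claim $\tilde{N}(x)=0$ for $x\le-k^2$ follows by taking $j=\lceil k^2\rceil$ and using that $\Lambda_0 Mk/(j+1)^{3/2}<1$ for $k$ large. For item (b), I would split $\int_0^{2^T}=\sum_{(l,j)\in\mathcal{R}_{T-1}}\int_{I_{l,j}}+\int_{\text{last block}}$, bound each block integral by $C(\tilde{\Delta}_{l,j})^2/(l+1)$, and sum using (\ref{Eq2.1.1}); the restriction to $[0,R_0'(\bm{\phi})]$ is immediate from $R_0'(\bm{\phi})\le 2^T$ (inequality (\ref{Eq2.1.15})). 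Items (a), (d), (e), (f) are the ones that actually use the block-specific bounds: (a) and (d) use (\ref{Eq2.1.5}) resp.\ (\ref{Eq2.1.4}) on the block $I_{l_0,j_0}$ (and a couple of neighboring blocks, since the interval $[R_0-R_0/(16\log R_0),R_0+R_0/(16\log R_0)]$ may spill into adjacent $I_{l,j}$, but the spill is controlled because $2^{l_0}/(16\log R_0)$ is comparable to a bounded number of blocks of length $2^{l_0}/(2l_0)$ — this is where $l_0\ge 2000$ from (\ref{Eq2.1.10}) and $R_0(\bm{\phi})\in[2^{l_0},2^{l_0+1}]$ from (\ref{Eq2.1.12}) matter); (e) uses (\ref{Eq2.1.6}) on $I_{l_0',j_0'}$ together with $\log(R_0'(\bm{\phi}))\asymp l_0'\asymp l_0\asymp\log(R_0(\bm{\phi}))$ from (\ref{Eq2.2.1}); (f) uses (\ref{Eq2.1.3}) after noting that the range $[R_0(\bm{\phi}),R_0'(\bm{\phi})]$ is covered by the blocks $I_{l,j}$ with $2^{W_{i_0}}+1\le l\le 2^{W_{i_0+1}}$ (because $R_0(\bm{\phi})\in I_{l_0,j_0}$ with $l_0\ge 2^{W_{i_0}}+1$, $R_0'(\bm{\phi})\in I_{l_0',j_0'}$ with $l_0'\le 2^{W_{i_0+1}}$), so $\int_{R_0}^{R_0'}\Psi^2/(x+1)\le\sum_{l=2^{W_{i_0}}+1}^{2^{W_{i_0+1}}}\sum_j (\tilde\Delta_{l,j})^2/(l+1)\le\Lambda_0\epsilon k^2$.

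The main obstacle, such as it is, is purely bookkeeping in items (a) and (d): one must check that the continuous interval around $R_0(\bm{\phi})$ of radius $\asymp 2^{l_0}/\log(R_0(\bm{\phi}))$ meets only $O(l_0/\log(R_0(\bm{\phi})))$ of the sub-blocks $I_{l_0,j}$ (each of width $2^{l_0}/(2l_0)$), so that summing $(\tilde{\Delta}_{l_0,j})^2/(l_0+1)$ over those $j$ and using the Cauchy--Schwarz inequality against (\ref{Eq2.1.4}) yields the $\epsilon/(\log(R_0)\log\log(R_0))$ rate in (d); for (a) one instead uses the pointwise bound (\ref{Eq2.1.5}) on the central block $I_{l_0,j_0}$ and the monotonicity of $\tilde N,\tilde N_0$ to propagate it to the whole interval, picking up at most an additive $O(1)$ from crossing block boundaries (which is absorbed since $k$ is large). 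There is also a minor subtlety that $R_0(\bm{\phi})$ might sit near the boundary $b_{l_0,2l_0}=2^{l_0+1}$, in which case the neighborhood spills into $I_{l_0+1,1}$; but by the definition (\ref{Eq2.1.11}) one has $R_0(\bm{\phi})=2^{l_0}(1+(2j_0-1)/(4l_0))$ with $l_0/2\le j_0\le 3l_0/2$, so $R_0(\bm{\phi})$ is bounded away from both endpoints of $I_{l_0}$ by a constant fraction, and the radius $R_0(\bm{\phi})/(16\log R_0(\bm{\phi}))$ is much smaller than that gap for $l_0\ge 2000$, so in fact the neighborhood stays inside $I_{l_0}=(2^{l_0},2^{l_0+1}]$ entirely — I would record this at the start of the (a)/(d) paragraph to avoid the spill discussion altogether.
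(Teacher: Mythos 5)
Your overall strategy---translating each defining condition of $\mathcal{B}(\bm{\phi};\Lambda_0)$ into the corresponding continuous statement via the block decomposition, using $|\Psi(x)|\le\tilde{\Delta}_{l,j}$ for $x\in I_{l,j}$ together with $x+1\asymp 2^{l}$ on $I_{l,j}$---is exactly the paper's, and your treatments of (b), (c), (e), (f) and (essentially) (d) coincide with the paper's proof. The one genuine gap is in item (a). The rate $C\sqrt{\epsilon/(\log R_0(\bm{\phi})\log\log R_0(\bm{\phi}))}\,k$ is only available from (\ref{Eq2.1.5}), which controls the single sub-block $I_{l_0,j_0}$; so the argument needs the interval $\big[R_0(\bm{\phi})-\tfrac{R_0(\bm{\phi})}{16\log R_0(\bm{\phi})},\,R_0(\bm{\phi})+\tfrac{R_0(\bm{\phi})}{16\log R_0(\bm{\phi})}\big]$ to lie inside $I_{l_0,j_0}$ itself, not merely inside the level block $I_{l_0}=(2^{l_0},2^{l_0+1}]$, which is all your closing remark verifies. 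Your two fallback mechanisms for spill into an adjacent sub-block both fail quantitatively: propagating the bound across a spill of length $d$ by monotonicity of $\tilde{N},\tilde{N}_0$ costs the increment of $\tilde{N}_0$, which is of order $k\sqrt{R_0(\bm{\phi})}\,d$ rather than $O(1)$; and bounding a neighboring $\tilde{\Delta}_{l_0,j}$ through (\ref{Eq2.1.4}) only yields $\sqrt{\epsilon/\log(l_0+1)}\,k$, which misses the target by a factor of order $\sqrt{\log R_0(\bm{\phi})}$.

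Fortunately the stronger containment holds and is a one-line check you almost made: by (\ref{Eq2.1.11}), $R_0(\bm{\phi})$ is the midpoint of $I_{l_0,j_0}$, whose half-width is $2^{l_0}/(4l_0)$, while the radius satisfies $R_0(\bm{\phi})/(16\log R_0(\bm{\phi}))\le 2^{l_0+1}/(16\,l_0\log 2)<2^{l_0}/(4l_0)$; this is precisely (\ref{Eq5.4})--(\ref{Eq5.5}) in the paper. With that, (\ref{Eq2.1.5}) applies pointwise on the whole interval, and (\ref{Eq5.10}) converts $(l_0+1)\log(l_0+1)$ into $\log R_0(\bm{\phi})\log\log R_0(\bm{\phi})$, giving (a). Two cosmetic points: in (b) there is no extra ``last block''---the blocks $I_0,\dots,I_{T-1}$ cover $[0,2^T]$ exactly; and in (d) no Cauchy--Schwarz is needed, since the full sum in (\ref{Eq2.1.4}) together with the weight $1/(x+1)\le 2^{-l_0}$ already produces the stated rate, as in the paper (after checking, as you indicate, that $[15R_0(\bm{\phi})/16,\,17R_0(\bm{\phi})/16]\subseteq(2^{l_0},2^{l_0+1}]$).
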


\begin{proof}[Proof of Proposition \ref{P5.1}]

In the following, we assume that the event $\mathcal{B}_0(\Lambda_0)$ holds. Note that this implies (\ref{Eq2.1.1}) and (\ref{Eq2.1.2}) with $\Lambda$ replaced by $\Lambda_0$.

Note that $2^{W_{\mathcal{I}_0+1}}=T-1$. By (\ref{Eq2.1.1}) (with $\Lambda$ replaced by $\Lambda_0$), we have
\begin{equation}
 \sum_{i=\mathcal{I}_0\slash 2}^{\mathcal{I}_0} \sum_{l=2^{W_i}+1}^{2^{W_{i+1}}} \sum_{j=1}^{2l} \frac{(\tilde{\Delta}_{l,j})^2}{l+1}\leq \sum_{(l,j)\in\mathcal{R}_{T-1}}\frac{(\tilde{\Delta}_{l,j})^2}{l+1}\leq \Lambda_0 M^2 k^2. 
\end{equation}
Hence there exists $i_0\in\mathbb{N}$ with $\mathcal{I}_0\slash 2\leq i_0\leq \mathcal{I}_0$ (note that $\mathcal{I}_0\slash 2\in\mathbb{N}^{*}$), such that
\begin{equation}\label{Eq5.1}
\sum_{l=2^{W_{i_0}}+1}^{2^{W_{i_0+1}}} \sum_{j=1}^{2l} \frac{(\tilde{\Delta}_{l,j})^2}{l+1}\leq \frac{\Lambda_0 M^2 k^2}{\mathcal{I}_0\slash 2}\leq \Lambda_0 \epsilon  k^2,
\end{equation}
where we use the fact that $\mathcal{I}_0=2\lceil M^2\slash \epsilon\rceil\geq 2M^2\slash \epsilon$. Below we fix a choice of such $i_0$. Note that (\ref{Eq2.1.3}) holds with $\Lambda$ replaced by $\Lambda_0$. 

Now suppose that for any $l\in\mathbb{N}$ such that $2^{W_{i_0}}+1\leq l\leq 2^{\overline{W}_{i_0+1}}$, we have
\begin{equation}
 \sum_{j=1}^{2l}\frac{(\tilde{\Delta}_{l,j})^2}{l+1}\geq \frac{\epsilon k^2}{(l+1)\log(l+1)}. 
\end{equation}
As $2^{W_{i_0}}+2\leq 2^{W_{i_0}+1}$ (note that $W_{i_0}\geq W_1\geq 10$) and $\overline{W}_{i_0+1}\geq e^{M\slash \epsilon}(W_{i_0}+1)$, noting that $M\geq M_0$ is sufficiently large (depending on $\beta$), we have
\begin{eqnarray}
&& \sum_{l=2^{W_{i_0}}+1}^{2^{W_{i_0+1}}} \sum_{j=1}^{2l} \frac{(\tilde{\Delta}_{l,j})^2}{l+1}\geq \sum_{l=2^{W_{i_0}}+1}^{2^{\overline{W}_{i_0+1}}} \sum_{j=1}^{2l} \frac{(\tilde{\Delta}_{l,j})^2}{l+1}\geq \sum_{l=2^{W_{i_0}}+1}^{2^{\overline{W}_{i_0+1}}}\frac{\epsilon k^2}{(l+1)\log(l+1)}\nonumber\\
&\geq& \epsilon k^2 \int_{2^{W_{i_0}}+1}^{2^{\overline{W}_{i_0+1}}+1}\frac{1}{(x+1)\log(x+1)}dx\nonumber\\
&=& \epsilon k^2 (\log\log(2^{\overline{W}_{i_0+1}}+2)-\log\log(2^{W_{i_0}}+2))\nonumber\\
&\geq& \epsilon k^2(\log(\overline{W}_{i_0+1}\log(2))-\log((W_{i_0}+1)\log(2)))\nonumber\\
&=&\epsilon k^2 (\log(\overline{W}_{i_0+1})-\log(W_{i_0}+1))\geq Mk^2 > \Lambda_0 \epsilon k^2,
\end{eqnarray}
which contradicts (\ref{Eq5.1}). Hence there exists $l_0\in\mathbb{N}$ with $2^{W_{i_0}}+1\leq l_0\leq 2^{\overline{W}_{i_0+1}}$, such that  
\begin{equation}\label{Eq5.2}
 \sum_{j=1}^{2l_0}\frac{(\tilde{\Delta}_{l_0,j})^2}{l_0+1}\leq \frac{\epsilon k^2}{(l_0+1)\log(l_0+1)}. 
\end{equation}
Below we fix a choice of such $l_0$. Note that (\ref{Eq2.1.4}) holds. 

By (\ref{Eq5.2}), there exists $j_0\in\mathbb{N}$ with $l_0\slash 2\leq j_0\leq 3l_0\slash 2$, such that
\begin{equation}
  \frac{(\tilde{\Delta}_{l_0,j_0})^2}{l_0+1}\leq \frac{1}{(l_0+1)\slash 2}\cdot\frac{\epsilon k^2}{(l_0+1)\log(l_0+1)}, 
\end{equation}
where we use the fact that $\lfloor 3 l_0\slash 2\rfloor-\lceil l_0\slash 2\rceil\geq l_0-2\geq  (l_0+1)\slash 2$ (noting (\ref{Eq2.1.10})). Hence
\begin{equation}
  \tilde{\Delta}_{l_0,j_0}\leq 2\sqrt{\frac{\epsilon}{(l_0+1)\log(l_0+1)}}k,
\end{equation}
and (\ref{Eq2.1.5}) holds.

As $l_0\geq 2000$ (see (\ref{Eq2.1.10})), we have
\begin{eqnarray}
2^{W_{i_0}}+1\leq l_0\leq 10(l_0+1)\leq 11(l_0+1)\leq l_0^3\leq 2^{3\overline{W}_{i_0+1}}\leq 2^{W_{i_0+1}}.
\end{eqnarray}
Hence
\begin{equation}
\sum_{l=10(l_0+1)}^{11(l_0+1)}\sum_{j=1}^{2l} \frac{(\tilde{\Delta}_{l,j})^2}{l+1}\leq \sum_{l=2^{W_{i_0}}+1}^{2^{W_{i_0+1}}} \sum_{j=1}^{2l} \frac{(\tilde{\Delta}_{l,j})^2}{l+1}\leq \Lambda_0 \epsilon  k^2.
\end{equation}
Hence there exists $l_0'\in\mathbb{N}$ with $10(l_0+1)\leq l_0'\leq 11(l_0+1)$ (note that this implies $2^{W_{i_0}}+1\leq l_0'\leq 2^{W_{i_0+1}}$), such that
\begin{equation}\label{Eq5.3}
 \sum_{j=1}^{2l_0'}\frac{(\tilde{\Delta}_{l_0',j})^2}{l_0'+1}\leq \frac{\Lambda_0\epsilon k^2}{l_0+1}.
\end{equation}
Below we fix a choice of such $l_0'$. By (\ref{Eq5.3}), there exists $j_0'\in[2l_0']$, such that
\begin{equation}
   \frac{(\tilde{\Delta}_{l_0',j_0'})^2}{l_0'+1}\leq \frac{\Lambda_0 \epsilon k^2}{2l_0'(l_0+1)},\quad \text{hence } \tilde{\Delta}_{l_0',j_0'}\leq \sqrt{\frac{\Lambda_0 \epsilon}{l_0+1}}k\leq \sqrt{\frac{12\Lambda_0\epsilon}{l_0'+1}}k.
\end{equation}
Note that (\ref{Eq2.1.6}) holds with $\Lambda$ replaced by $\Lambda_0$. 

Therefore, letting $\bm{\phi}=(i_0,l_0,j_0,l_0',j_0')\in \Phi$, the event $\mathcal{B}(\bm{\phi};\Lambda_0)$ holds. We conclude that
\begin{equation}
    \mathcal{B}_0(\Lambda_0)\subseteq \bigcup_{\bm{\phi}\in\Phi} \mathcal{B}(\bm{\phi};\Lambda_0).
\end{equation}

\end{proof}

\begin{proof}[Proof of Proposition \ref{P5.2}]

In the following, we consider an arbitrary choice of $\bm{\phi}=(i_0,l_0,j_0,l_0',j_0')\in \Phi$, and assume that the event $\mathcal{B}(\bm{\phi};\Lambda_0)$ holds. 

We start with the proof of (a). By (\ref{Eq2.1.12}), we have
\begin{equation}\label{Eq5.4}
 R_0(\bm{\phi})-2^{l_0}\Big(1+\frac{j_0-1}{2l_0}\Big)=\frac{2^{l_0+1}}{8l_0}\geq \frac{R_0(\bm{\phi})}{8\log(R_0(\bm{\phi}))\slash \log(2)}> \frac{R_0(\bm{\phi})}{16\log(R_0(\bm{\phi}))},
\end{equation}
\begin{equation}\label{Eq5.5}
 2^{l_0}\Big(1+\frac{j_0}{2l_0}\Big)-R_0(\bm{\phi})=\frac{2^{l_0+1}}{8l_0}> \frac{R_0(\bm{\phi})}{16\log(R_0(\bm{\phi}))}.
\end{equation}
By (\ref{Eq2.1.12}) and (\ref{Eq2.1.10}), we have
\begin{eqnarray}\label{Eq5.10}
 (l_0+1)\log(l_0+1) &\geq& \log(2)^{-1}\log(R_0(\bm{\phi}))(\log\log(R_0(\bm{\phi}))-\log\log(2)) \nonumber\\
 &\geq& \log(2)^{-1}\log(R_0(\bm{\phi}))\log\log(R_0(\bm{\phi})).
\end{eqnarray}
Below we consider any 
\begin{equation*}
   x\in \Big[R_0(\bm{\phi})-\frac{R_0(\bm{\phi})}{16\log(R_0(\bm{\phi}))},R_0(\bm{\phi})+\frac{R_0(\bm{\phi})}{16\log(R_0(\bm{\phi}))}\Big].
\end{equation*}
By (\ref{Eq5.4}) and (\ref{Eq5.5}), we have $x\in I_{l_0,j_0}$. Hence by (\ref{Eq2.1.5}) and (\ref{Eq5.10}), we have
\begin{eqnarray}
    |\Psi(x)|&=&|\tilde{N}(x)-\tilde{N}_0(x)|\leq \tilde{\Delta}_{l_0,j_0}\leq 2\sqrt{\frac{\epsilon}{(l_0+1)\log(l_0+1)}}k\nonumber\\
   &\leq& C\sqrt{\frac{\epsilon}{\log(R_0(\bm{\phi}))\log\log(R_0(\bm{\phi}))}}k.
\end{eqnarray}

We show (b) as follows. For any $l\in [T-1]$, we have
\begin{eqnarray}
&& \int_{I_l}\frac{\Psi(x)^2}{x+1}dx=\sum_{j=1}^{2l} \int_{I_{l,j}}\frac{\Psi(x)^2}{x+1}dx\leq \sum_{j=1}^{2l} (\tilde{\Delta}_{l,j})^2 \int_{I_{l,j}}\frac{1}{x+1}dx \nonumber\\
&=& \sum_{j=1}^{2l} (\tilde{\Delta}_{l,j})^2 \log\Big(\frac{2^l(1+j\slash (2l))+1}{2^l(1+(j-1)\slash (2l))+1}\Big)\leq \sum_{j=1}^{2l} \frac{(\tilde{\Delta}_{l,j})^2 }{2l+j-1}\nonumber\\
&\leq& \sum_{j=1}^{2l} \frac{(\tilde{\Delta}_{l,j})^2 }{2l}\leq \sum_{j=1}^{2l}\frac{(\tilde{\Delta}_{l,j})^2 }{l+1},
\end{eqnarray}
where we use
\begin{eqnarray*}
 && \log\Big(\frac{2^l(1+j\slash (2l))+1}{2^l(1+(j-1)\slash (2l))+1}\Big)=\log\Big(1+\frac{2^l\slash (2l)}{2^l(1+(j-1)\slash (2l))+1}\Big)\nonumber\\
 &\leq& \log\Big(1+\frac{1}{2l+j-1}\Big)\leq\frac{1}{2l+j-1} 
\end{eqnarray*}
for any $l\in [T-1]$ and $j\in [2l]$ in the second line. Moreover, 
\begin{equation}
 \int_{I_0} \frac{\Psi(x)^2}{x+1}dx=\int_{[0,1]}\frac{\Psi(x)^2}{x+1}dx+\int_{(1,2]}\frac{\Psi(x)^2}{x+1} dx\leq (\tilde{\Delta}_{0,1})^2+(\tilde{\Delta}_{0,2})^2.
\end{equation}
Therefore, noting (\ref{Eq2.1.1}) (with $\Lambda$ replaced by $\Lambda_0$), we have
\begin{equation}\label{Eq5.6}
\int_{0}^{2^T}\frac{\Psi(x)^2}{x+1}dx=\sum_{l=0}^{T-1}\int_{I_l}\frac{\Psi(x)^2}{x+1}dx\leq \sum_{(l,j)\in\mathcal{R}_{T-1}}\frac{(\tilde{\Delta}_{l,j})^2}{l+1}\leq C M^2 k^2. 
\end{equation}
By (\ref{Eq2.1.15}) and (\ref{Eq5.6}), we have
\begin{equation}
 \int_0^{R_0'(\bm{\phi})}\frac{\Psi(x)^2}{x+1}dx\leq \int_{0}^{2^T}\frac{\Psi(x)^2}{x+1}dx\leq CM^2 k^2.
\end{equation}

We show (c) as follows. For any $x\in [-k^2-1,0]$, if $x\in [-j-1,-j]$ (where $j\in\{0\}\cup [k^2]$), we have $j+1\geq \max\{|x|,1\}\geq (|x|+1)\slash 2$. Hence by (\ref{Eq2.1.2}) (with $\Lambda$ replaced by $\Lambda_0$), we have
\begin{equation}
  \tilde{N}(x)\leq \tilde{N}(-j)\leq \frac{C M k}{(j+1)^{3\slash 2}}\leq \frac{C M k}{(|x|+1)^{3\slash 2}}. 
\end{equation}
As $k\geq K_0(M,\epsilon,\delta)$ is taken to be sufficiently large (depending on $\beta,M,\epsilon,\delta$), for any $x\leq -k^2$, we have
\begin{equation}
 \tilde{N}(x)\leq \tilde{N}(-k^2)\leq \frac{CMk}{(k^2+1)^{3\slash 2}}\leq CM k^{-2}<1, \quad \text{hence }\tilde{N}(x)=0.
\end{equation}

We show (d) as follows. As $l_0\geq 2000$ (see (\ref{Eq2.1.10})) and $l_0\slash 2\leq j_0\leq 3l_0\slash 2$, by (\ref{Eq2.1.12}), we have
\begin{equation}
  R_0(\bm{\phi})-2^{l_0}=\frac{2^{l_0}(2j_0-1)}{4l_0}\geq \frac{2^{l_0}(l_0-1)}{4l_0}\geq \frac{2^{l_0}}{8}\geq \frac{1}{16}R_0(\bm{\phi}),
\end{equation}
\begin{equation}
  2^{l_0+1}-R_0(\bm{\phi})=\frac{2^{l_0}(4l_0-2j_0+1)}{4l_0}\geq \frac{2^{l_0}}{4} \geq \frac{1}{8}R_0(\bm{\phi})\geq \frac{1}{16}R_0(\bm{\phi}).
\end{equation}
Hence
\begin{equation}\label{Eq5.7}
  2^{l_0}\leq \frac{15}{16}R_0(\bm{\phi})\leq \frac{17}{16}R_0(\bm{\phi})\leq 2^{l_0+1}.
\end{equation}
As $l_0\geq 2000$, by (\ref{Eq2.1.4}), (\ref{Eq5.10}), and (\ref{Eq5.7}), we have
\begin{eqnarray}
&& \int_{15R_0(\bm{\phi})\slash 16}^{17 R_0(\bm{\phi})\slash 16} \frac{\Psi(x)^2}{x+1}dx  \leq \int_{2^{l_0}}^{2^{l_0+1}}\frac{\Psi(x)^2}{x+1}dx=\sum_{j=1}^{2l_0}\int_{2^{l_0}(1+(j-1)\slash (2l_0))}^{2^{l_0}(1+j\slash (2l_0))}\frac{\Psi(x)^2}{x+1}dx\nonumber\\
&&  \leq \sum_{j=1}^{2l_0} (\tilde{\Delta}_{l_0,j})^2\int_{2^{l_0}(1+(j-1)\slash (2l_0))}^{2^{l_0}(1+j\slash (2l_0))}\frac{1}{x}dx=\sum_{j=1}^{2l_0} (\tilde{\Delta}_{l_0,j})^2 \log\Big(\frac{2l_0+j}{2l_0+j-1}\Big) \nonumber\\
&& = \sum_{j=1}^{2 l_0} (\tilde{\Delta}_{l_0,j})^2\log\Big(1+\frac{1}{2l_0+j-1}\Big)\leq \sum_{j=1}^{2l_0}\frac{(\tilde{\Delta}_{l_0,j})^2}{2l_0+j-1}\nonumber\\
&& \leq \sum_{j=1}^{2l_0}\frac{(\tilde{\Delta}_{l_0,j})^2}{l_0+1} \leq \frac{\epsilon k^2}{(l_0+1)\log(l_0+1)}\leq \frac{C\epsilon k^2}{\log(R_0(\bm{\phi}))\log\log(R_0(\bm{\phi}))}.
\end{eqnarray}

We show (e) as follows. As $R_0'(\bm{\phi})\in I_{l_0',j_0'}$, by (\ref{Eq2.1.6}) (with $\Lambda$ replaced by $\Lambda_0$), (\ref{Eq2.2.1}), and (\ref{Eq2.1.15}), we have  
\begin{equation}
 |\Psi(R_0'(\bm{\phi}))|\leq \tilde{\Delta}_{l_0',j_0'} \leq C\sqrt{\frac{\epsilon}{l_0'+1}}k \leq C\sqrt{\frac{\epsilon}{\log(R_0'(\bm{\phi}))}}k\leq C\sqrt{\frac{\epsilon}{\log(R_0(\bm{\phi}))}}k.
\end{equation}

Finally we show (f). By (\ref{Eq2.1.3}) (with $\Lambda$ replaced by $\Lambda_0$), (\ref{Eq2.1.12}), (\ref{Eq2.1.10}), and (\ref{Eq2.1.15}), we have
\begin{eqnarray}
&& \int_{R_0(\bm{\phi})}^{R_0'(\bm{\phi})}\frac{\Psi(x)^2}{x+1}dx\leq \int_{2^{l_0}}^{2^{l_0'+1}} \frac{\Psi(x)^2}{x+1} dx  =\sum_{l=l_0}^{l_0'}\sum_{j=1}^{2l} \int_{2^l(1+(j-1)\slash (2l))}^{2^l(1+j\slash (2l))}\frac{\Psi(x)^2}{x+1} dx \nonumber\\
&\leq& \sum_{l=l_0}^{l_0'}\sum_{j=1}^{2l} (\tilde{\Delta}_{l,j})^2 \int_{2^l(1+(j-1)\slash (2l))}^{2^l(1+j\slash (2l))}\frac{1}{x}dx= \sum_{l=l_0}^{l_0'}\sum_{j=1}^{2l} (\tilde{\Delta}_{l,j})^2\log\Big(\frac{2l+j}{2l+j-1}\Big)\nonumber\\
&=& \sum_{l=l_0}^{l_0'}\sum_{j=1}^{2l} (\tilde{\Delta}_{l,j})^2\log\Big(1+\frac{1}{2l+j-1}\Big) \leq \sum_{l=l_0}^{l_0'}\sum_{j=1}^{2l} \frac{(\tilde{\Delta}_{l,j})^2}{2l+j-1}\nonumber\\
& \leq  & \sum_{l=l_0}^{l_0'}\sum_{j=1}^{2l} \frac{(\tilde{\Delta}_{l,j})^2}{l+1}\leq \sum_{l=2^{W_{i_0}}+1}^{2^{W_{i_0+1}}}\sum_{j=1}^{2l}\frac{(\tilde{\Delta}_{l,j})^2}{l+1}\leq C\epsilon k^2.
\end{eqnarray}

\end{proof}

\section{Proof of the large deviation principle}\label{Sect.5}

In this section, we establish the large deviation principle for $\{\nu_{k;R}\}_{k=1}^{\infty}$, thereby proving Theorem \ref{Thm1}. We note that by \cite[Theorem 1.1(c)]{Zho}, $\{\nu_{k;R}\}_{k=1}^{\infty}$ is exponentially tight. Hence to establish the large deviation princile, it suffices to establish the local large deviation upper and lower bounds. We establish these two bounds in Sections \ref{local_upp} and \ref{local_low}. Some preliminary bounds are presented in Section \ref{Sect.5.1}. 

Throughout this section, we assume that
\begin{equation}
\beta, n\in\mathbb{N}^{*}, \quad n^{10^{-7}}\leq k\leq n^{1\slash 20000}.
\end{equation}
As $T\geq 1$ only depends on $M,\epsilon$ and $k\geq K_0(M,\epsilon,\delta)$ is sufficiently large (depending on $\beta,M,\epsilon,\delta$), we have $n^{10^{-7}}\leq 2^T k \leq k^2\leq n^{1\slash 10000}$. By \cite[Theorem 1.5]{Zho} (replacing $k$ by $2^T k$), there exists a coupling (which we fix throughout this section) between the stochastic Airy operator $H_{\beta}$ and the Gaussian $\beta$ ensemble $H_{\beta,n}$ and positive constants $C,C_0,c$ that only depend on $\beta$, such that the following holds. Denote by $\mathscr{E}$ the event that for any $i\in [2^T k]$, $|\tilde{\lambda}_i^{(n)}+\lambda_i|\leq C_0 n^{-1\slash 24}$. Then 
\begin{equation}\label{Eq12.40}
\mathbb{P}(\mathscr{E}^c)\leq C\exp(-c k^3).
\end{equation}
Note that when the event $\mathscr{E}$ holds, for any $i\in [2^Tk]$, we have 
\begin{equation}\label{Eq5.23.1}
  |k^{2\slash 3}b_i-\lambda_i|\leq C_0 n^{-1\slash 24}.
\end{equation}

\subsection{Some preliminary bounds}\label{Sect.5.1}

In this subsection, we establish some preliminary bounds that will be used in later parts of this section. Recall the definition of $\Phi$ below (\ref{Eq2.14}) in Section \ref{Sect.2} and the definitions of $\{b_i\}_{i=1}^n$ and $\mu_{n,k}$ from Section \ref{Sect.1.3}. Throughout this subsection, we fix an arbitrary $\bm{\phi}\in\Phi$. 

We start by the following definition.

\begin{definition}\label{Defn5.1}
For any $\bm{\phi}\in\Phi$ and any $n_0\in\{0\}\cup [n]$, we define $\mathcal{D}(\bm{\phi},n_0)$ to be the event that $|\{i\in [n]: b_i\in [-R_0(\bm{\phi}),R_0(\bm{\phi})]\}|=n_0$.
\end{definition}

By (\ref{Eq2.1.10})-(\ref{Eq2.1.15}), as $k\geq K_0(M,\epsilon,\delta)$ is sufficiently large (depending on $\beta,M,\epsilon,\delta$) and $T$ only depends on $M,\epsilon$, we have
\begin{equation}\label{Eq5.11}
 2000\leq R_0(\bm{\phi})\leq 2^{T\slash 10}\leq k^2.
\end{equation}
As $M\geq M_0$ is sufficiently large and $\epsilon\in (0,\epsilon_0)$ is sufficiently small, by (\ref{Eq2.1.10}) (which implies that $R_0(\bm{\phi})$ is sufficiently large), we have
\begin{equation}\label{Eq5.13}
 \frac{(1+R_0(\bm{\phi}))^{3\slash 2}}{\sqrt{\log(R_0(\bm{\phi}))\log\log(R_0(\bm{\phi}))}}\geq R_0(\bm{\phi})\geq 10^{M^2\slash \epsilon}\geq \frac{M^2}{\epsilon}\geq \frac{M}{\sqrt{\epsilon}}.
\end{equation}
Hence when the event $\mathcal{B}(\bm{\phi};\Lambda_0)$ holds (where $\Lambda_0$ is the constant that appears in Proposition \ref{P4.4}), by properties (a) and (c) in Proposition \ref{P5.2}, we have 
\begin{equation}\label{Eq5.32}
  |\Psi(R_0(\bm{\phi}))|\leq C\sqrt{\frac{\epsilon}{\log(R_0(\bm{\phi}))\log\log(R_0(\bm{\phi}))}}k,
\end{equation}
\begin{equation}\label{Eq5.12}
  \tilde{N}(-R_0(\bm{\phi}))\leq \frac{CMk}{(1+R_0(\bm{\phi}))^{3\slash 2}}\leq C\sqrt{\frac{\epsilon}{\log(R_0(\bm{\phi}))\log\log(R_0(\bm{\phi}))}}k,
\end{equation}
where we use (\ref{Eq5.13}) for the second inequality in (\ref{Eq5.12}). From (\ref{Eq2.21}), we have 
\begin{equation}
  |\{i\in [n]: b_i\in (-R_0(\bm{\phi}),R_0(\bm{\phi})]\}|=\tilde{N}(R_0(\bm{\phi}))-\tilde{N}(-R_0(\bm{\phi})),
\end{equation}
hence
\begin{eqnarray}
 && |\{i\in [n]: b_i\in [-R_0(\bm{\phi}),R_0(\bm{\phi})]\}|\geq \tilde{N}(R_0(\bm{\phi}))-\tilde{N}(-R_0(\bm{\phi})) \nonumber\\
 &&=\Psi(R_0(\bm{\phi}))-\tilde{N}(-R_0(\bm{\phi}))+\tilde{N}_0(R_0(\bm{\phi})),
\end{eqnarray}
\begin{eqnarray}
 && |\{i\in [n]: b_i\in [-R_0(\bm{\phi}),R_0(\bm{\phi})]\}|\leq \tilde{N}(R_0(\bm{\phi}))-\tilde{N}(-R_0(\bm{\phi}))+1 \nonumber\\
 && =\Psi(R_0(\bm{\phi}))-\tilde{N}(-R_0(\bm{\phi}))+\tilde{N}_0(R_0(\bm{\phi}))+1.
\end{eqnarray}
Therefore, when the event $\mathcal{B}(\bm{\phi};\Lambda_0)$ holds, as $\tilde{N}_0(R_0(\bm{\phi}))=k\mu_0([0,R_0(\bm{\phi})])$, 
\begin{eqnarray}\label{Eq5.14}
   &&||\{i\in [n]: b_i\in [-R_0(\bm{\phi}),R_0(\bm{\phi})]\}|-k\mu_0([0,R_0(\bm{\phi})])|\nonumber\\
   &\leq& |\Psi(R_0(\bm{\phi}))|+\tilde{N}(-R_0(\bm{\phi}))+1\leq C_0\sqrt{\frac{\epsilon}{\log(R_0(\bm{\phi}))\log\log(R_0(\bm{\phi}))}}k,\nonumber\\
   &&
\end{eqnarray}
where $C_0$ is a positive constant that only depends on $\beta$. 

Let $C_0$ be the constant in (\ref{Eq5.14}). Throughout the rest of this subsection, we fix an arbitrary $n_0\in \{0\}\cup[n]$, such that    
\begin{equation}\label{Eq5.15}
  |n_0-k\mu_0([0,R_0(\bm{\phi})])|\leq C_0\sqrt{\frac{\epsilon}{\log(R_0(\bm{\phi}))\log\log(R_0(\bm{\phi}))}}k. 
\end{equation}

In the following, we set up some notations. Let $\Delta:=\{(x,x):x\in\mathbb{R}\}$. We define
\begin{eqnarray}
 && \mathcal{L}_1:=(-\infty,-R_0(\bm{\phi})), \quad \mathcal{L}_2:=[-R_0(\bm{\phi}),0], \quad \mathcal{L}_3:=(0,R_0(\bm{\phi})], \nonumber\\
 && \mathcal{L}_4:=(R_0(\bm{\phi}),R_0'(\bm{\phi})], \quad \mathcal{L}_5:=(R_0'(\bm{\phi}),\infty).
\end{eqnarray}

% \begin{eqnarray}
%  && \mathcal{L}_1(\bm{\phi}):=[-k^2,-R_0(\bm{\phi})), \quad \mathcal{L}_2(\bm{\phi}):=[-R_0(\bm{\phi}),0], \quad \mathcal{L}_3(\bm{\phi}):=(0,R_0(\bm{\phi})], \nonumber\\
%  && \mathcal{L}_4(\bm{\phi}):=(R_0(\bm{\phi}),R_0'(\bm{\phi})], \quad \mathcal{L}_5(\bm{\phi}):=(R_0'(\bm{\phi}),\infty).
% \end{eqnarray}

For every $i\in [5]$, we define $\kappa_{i}$ to be the signed Borel measure on $\mathbb{R}$ such that
\begin{equation}
  \kappa_{i}(A)=\mu_{n,k}(A\cap \mathcal{L}_i)  \quad \text{ for any }A\in\mathcal{B}_{\mathbb{R}}.
\end{equation}
We define signed Borel measures $\Upsilon_{1}$ and $\Upsilon_{2}$ on $\mathbb{R}$ such that for any $A\in\mathcal{B}_{\mathbb{R}}$,
\begin{equation}\label{Eq5.21.2}
 \Upsilon_{1}(A)=\mu_{n,k}(A\cap [-R_0(\bm{\phi}),R_0(\bm{\phi})]), \quad \Upsilon_{2}(A)=\mu_{n,k}(A\cap [-R_0(\bm{\phi}),R_0(\bm{\phi})]^c).
\end{equation}
Note that 
\begin{equation}\label{Eq6.5.1}
\Upsilon_{1}=\kappa_{2}+\kappa_{3}, \quad \Upsilon_{2}=\kappa_{1}+\kappa_{4}+\kappa_{5}.
\end{equation}
For any $i,j\in [5]$, we define
\begin{equation}
   \Xi_{i,j}:=-\int_{\mathbb{R}^2\backslash \Delta} \log(|x-y|) d \kappa_{i}(x) d \kappa_{j}(y).
\end{equation}

% By (\ref{Eq2.1.10})-(\ref{Eq2.1.15}), $1000\leq R_0(\bm{\phi})\leq 2^{T\slash 10}$. 

As $k\leq n^{1\slash 20000}$, $k\geq K_0(M,\epsilon,\delta)$ is sufficiently large (depending on $\beta,M,\epsilon,\delta$), and $T$ only depends on $M,\epsilon$, noting (\ref{Eq5.11}), we have 
\begin{equation}\label{Eq5.8}
 (k\slash n)^{2\slash 3} R_0(\bm{\phi})\leq (k\slash n)^{2\slash 3} R_0(\bm{\phi})^{5\slash 2}\leq k^{-1} 2^T \leq 1\slash 10.
\end{equation}
Hence we have
\begin{equation*}
 k\mu_0([10,R_0(\bm{\phi})])\leq \pi^{-1} k\int_{10}^{R_0(\bm{\phi})}\sqrt{x}dx=\frac{2}{3\pi} (R_0(\bm{\phi})^{3\slash 2}-10^{3\slash 2}) k,
\end{equation*}
\begin{eqnarray*}
 k\mu_0([1,R_0(\bm{\phi})]) &\geq& \pi^{-1} k \Big(1-\frac{1}{4}(k\slash n)^{2\slash 3}R_0(\bm{\phi})\Big)^{1\slash 2}\int_1^{R_0(\bm{\phi})}\sqrt{x} dx\\
 &\geq& \frac{2}{3\pi}\Big(1-\frac{1}{4}(k\slash n)^{2\slash 3}R_0(\bm{\phi})\Big)(R_0(\bm{\phi})^{3\slash 2}-1)k,
\end{eqnarray*}
which by (\ref{Eq5.8}) lead to
\begin{eqnarray*}
 && k\mu_0([1,R_0(\bm{\phi})])-k\mu_0([10,R_0(\bm{\phi})])\nonumber\\
 &\geq& \frac{2}{3\pi}(10^{3\slash 2}-1)k-\frac{1}{6\pi}(k\slash n)^{2\slash 3} R_0(\bm{\phi})^{5\slash 2} k\geq k\geq 10.
\end{eqnarray*}
Hence there exists $r_0\in [1,10]$, such that $k\mu_0([r_0,R_0(\bm{\phi})])\in\mathbb{N}$. We fix such a choice of $r_0$, and let
\begin{equation}\label{Eq5.16}
 m_0:=k\mu_0([r_0,R_0(\bm{\phi})])\in\mathbb{N}, \quad m_0'=n_0-k\mu_0([r_0,R_0(\bm{\phi})])\in\mathbb{Z}.
\end{equation}
As $k\leq n^{1\slash 20000}$ and $k\geq K_0(M,\epsilon,\delta)$ is sufficiently large, by (\ref{Eq5.11}), we have
\begin{equation}\label{Eq5.17}
  \mu_0([0,1])\geq \frac{1}{2\pi}\int_{0}^1\sqrt{x}dx\geq \frac{1}{10}, \quad \mu_0([0,10])\leq \frac{1}{\pi}\int_0^{10}\sqrt{x}dx\leq 20,
\end{equation}
\begin{equation}\label{Eq5.18}
 \mu_0([r_0,R_0(\bm{\phi})])\geq \mu_0([10,2000])\geq \frac{1}{2\pi}\int_{10}^{2000}\sqrt{x}dx\geq 10
\end{equation}
By (\ref{Eq5.15}) and (\ref{Eq5.16})-(\ref{Eq5.18}), we have $m_0\geq 10$ and
\begin{eqnarray}\label{Eq9.3}
 m_0'&\geq& k\mu_0([0,R_0(\bm{\phi})])-C_0\sqrt{\frac{\epsilon}{\log(R_0(\bm{\phi}))\log\log(R_0(\bm{\phi}))}}k-k\mu_0([r_0,R_0(\bm{\phi})])\nonumber\\
 &\geq& k\mu_0([0,1])-C_0\sqrt{\frac{\epsilon}{\log(R_0(\bm{\phi}))\log\log(R_0(\bm{\phi}))}}k\geq ck\geq 10.
\end{eqnarray}
Hence by (\ref{Eq5.11}), (\ref{Eq5.15}), and (\ref{Eq5.16}), we have
\begin{eqnarray}\label{Eq5.18.5}
 m_0 &\leq& n_0\leq k\mu_0([0,R_0(\bm{\phi})])+C_0\sqrt{\frac{\epsilon}{\log(R_0(\bm{\phi}))\log\log(R_0(\bm{\phi}))}}k\nonumber\\
 &\leq& Ck\int_{0}^{R_0(\bm{\phi})}\sqrt{x}dx \leq C R_0(\bm{\phi})^{3\slash 2} k,
\end{eqnarray}
\begin{eqnarray}\label{Eq5.43}
  m_0'&\leq& k\mu_0([0,R_0(\bm{\phi})])+C_0\sqrt{\frac{\epsilon}{\log(R_0(\bm{\phi}))\log\log(R_0(\bm{\phi}))}}k-k\mu_0([r_0,R_0(\bm{\phi})])\nonumber\\
  &\leq& k\mu_0([0,r_0])+Ck\leq k\pi^{-1}\int_0^{r_0}\sqrt{x}dx+Ck\leq Ck.
\end{eqnarray}

For every $i\in [m_0']$, we let $\rho_i\geq 0$ be such that
\begin{equation}\label{rhh}
 \mu_0([0,\rho_i])=\frac{i}{m_0'+1}\mu_0([0,r_0]).
\end{equation}
For every $i\in [m_0]$, we let $\rho_{m_0'+i}\geq r_0$ be such that
\begin{equation}\label{Eq5.39}
  k\mu_0([r_0,\rho_{m_0'+i}])=i-1.
\end{equation}
We also define $\rho_0:=0$ and $\rho_{n_0+1}:=R_0(\bm{\phi})$. As $k\mu_0([r_0,\rho_{n_0}])=m_0-1$, by (\ref{Eq5.16}), we have $\rho_{n_0}\leq R_0(\bm{\phi})$. Hence we have
\begin{equation}\label{Eq5.21}
 0=\rho_0<\rho_1< \cdots< \rho_{m_0'}< r_0=\rho_{m_0'+1}< \cdots< \rho_{n_0}\leq R_0(\bm{\phi}).
\end{equation}

For any $i\in [m_0'+1,n_0]\cap\mathbb{Z}$, by (\ref{Eq5.16}) and (\ref{Eq5.39}), 
\begin{equation}\label{Eq5.19}
 1=k\mu_0([\rho_i,\rho_{i+1}])\leq k\pi^{-1}\int_{\rho_i}^{\rho_{i+1}}\sqrt{x}dx\leq k\pi^{-1}\sqrt{R_0(\bm{\phi})}(\rho_{i+1}-\rho_i).
\end{equation}
% and
% \begin{equation}\label{Eq5.19}
%  1=k\mu_0([\rho_{n_0}, R_0(\bm{\phi})])\leq k\pi^{-1}\int_{\rho_{n_0}}^{R_0(\bm{\phi})}\sqrt{x} dx\leq k\pi^{-1}\sqrt{R_0(\bm{\phi})}(R_0(\bm{\phi})-\rho_{n_0}).
% \end{equation}
As $k\leq n^{1\slash 20000}$ and $k\geq K_0(M,\epsilon,\delta)$ is sufficiently large (depending on $\beta,M,\epsilon,\delta$), by (\ref{Eq5.11}) and (\ref{Eq5.19}), for any $i\in [m_0'+1,n_0]\cap\mathbb{Z}$, we have 
\begin{equation}\label{Eq5.20}
 \rho_{i+1}-\rho_{i}\geq \pi k^{-1} R_0(\bm{\phi})^{-1\slash 2}\geq 10 n^{-1}.
\end{equation}
Moreover, for any $i\in [m_0'+1,n_0]\cap\mathbb{Z}$, as $\rho_i\geq r_0\geq 1$, we have  
\begin{equation}
 1=k\mu_0([\rho_i,\rho_{i+1}])\geq \frac{k}{2\pi}\int_{\rho_i}^{\rho_{i+1}}\sqrt{x}dx \geq \frac{k(\rho_{i+1}-\rho_i)}{2\pi},
\end{equation}
hence
\begin{equation}\label{Eq5.15.1}
 \rho_{i+1}-\rho_i\leq 2\pi k^{-1}.
\end{equation}

Consider any $i\in [m_0'+1]$. As $\rho_i\in [0,r_0]\subseteq [0,10]$ and $k\geq K_0(M,\epsilon,\delta)$ is sufficiently large, by (\ref{rhh}), we have
\begin{equation}\label{Eq9.1}
    \frac{i}{m_0'+1}\mu_0([0,r_0])=\mu_0([0,\rho_i])
    \begin{cases}
     \leq \frac{1}{\pi} \int_0^{\rho_i}\sqrt{x} dx=\frac{2}{3\pi}\rho_i^{3\slash 2} \\
     \geq \frac{1}{2\pi} \int_0^{\rho_i}\sqrt{x}dx=\frac{1}{3\pi}\rho_i^{3\slash 2} \\
    \end{cases}.
\end{equation}
As $r_0\in [1,10]$, by (\ref{Eq5.17}), (\ref{Eq9.3}), (\ref{Eq5.43}), and (\ref{Eq9.1}), we have
\begin{equation}\label{Eq9.4}
    c\Big(\frac{i}{k}\Big)^{2\slash 3}\leq \rho_i\leq C\Big(\frac{i}{k}\Big)^{2\slash 3}. 
\end{equation}
By (\ref{rhh}) and (\ref{Eq9.4}), we have
\begin{equation}\label{Eq9.5}
    \frac{\mu_0([0,r_0])}{m_0'+1}=\mu_0([\rho_{i-1},\rho_i])\begin{cases}
     \leq \frac{1}{\pi}\int_{\rho_{i-1}}^{\rho_i}\sqrt{x}dx\leq C\big(\frac{i}{k}\big)^{1\slash 3}(\rho_i-\rho_{i-1})\\
     \geq \frac{1}{2\pi}\int_{\rho_{i-1}}^{\rho_i}\sqrt{x}dx\geq c\big(\frac{i-1}{k}\big)^{1\slash 3}(\rho_i-\rho_{i-1})
    \end{cases}.
\end{equation}
By (\ref{Eq5.17}), (\ref{Eq9.3}), (\ref{Eq5.43}), and (\ref{Eq9.5}), for any $i\in [m_0'+1]$ such that $i\geq 2$,
\begin{equation}\label{Eq9.6}
  \frac{c}{k}\Big(\frac{k}{i}\Big)^{1\slash 3} \leq \rho_i-\rho_{i-1}\leq \frac{C}{k}\Big(\frac{k}{i}\Big)^{1\slash 3}.
\end{equation}
Note that by (\ref{Eq9.4}), (\ref{Eq9.6}) also holds for $i=1$.

We let
\begin{equation}
 \mathscr{T}_{n_0}:=\{\mathbf{t}=(t_1,t_2,\cdots,t_{n_0}): 0\leq t_1\leq t_2\leq \cdots \leq t_{n_0}\leq n^{-1}\}.
\end{equation}
Below we consider an arbitrary $\mathbf{t}=(t_1,t_2,\cdots,t_{n_0})\in \mathscr{T}_{n_0}$. By (\ref{Eq5.21}) and (\ref{Eq5.20}), 
\begin{equation}\label{Eq5.25}
  0<\rho_1+t_1< \rho_2+t_2< \cdots < \rho_{n_0}+t_{n_0} \leq  \rho_{n_0}+n^{-1}< R_0(\bm{\phi}).
\end{equation}
We define
\begin{equation}\label{Eq8.2}
 \tilde{\mu}_{n_0,\mathbf{t}}:=\frac{1}{k}\sum_{i=1}^{n_0} \delta_{\rho_i+t_i}+\frac{1}{k}\sum_{\substack{i\in [n]:\\ b_i\notin [-R_0(\bm{\phi}),R_0(\bm{\phi})]}} \delta_{b_i}-\mu_0.
\end{equation}
For every $i\in [5]$, we define $\tilde{\kappa}_{i;n_0,\mathbf{t}}$ to be the signed Borel measure on $\mathbb{R}$ such that \begin{equation}
  \tilde{\kappa}_{i;n_0,\mathbf{t}}(A)=\tilde{\mu}_{n_0,\mathbf{t}}(A\cap \mathcal{L}_i)\quad \text{ for any }A\in\mathcal{B}_{\mathbb{R}}.
\end{equation}
We define signed Borel measures $\tilde{\Upsilon}_{1;n_0,\mathbf{t}}$ and $\tilde{\Upsilon}_{2;n_0,\mathbf{t}}$ on $\mathbb{R}$ such that for any $A\in\mathcal{B}_{\mathbb{R}}$,
\begin{equation}\label{Eq8.1}
 \tilde{\Upsilon}_{1;n_0,\mathbf{t}}(A)=\tilde{\mu}_{n_0,\mathbf{t}}(A\cap [-R_0(\bm{\phi}),R_0(\bm{\phi})]),  
\end{equation}
\begin{equation}\label{Eq14.2}
\tilde{\Upsilon}_{2;n_0,\mathbf{t}}(A)=\tilde{\mu}_{n_0,\mathbf{t}}(A\cap [-R_0(\bm{\phi}),R_0(\bm{\phi})]^c).
\end{equation}
Note that 
\begin{equation}
\tilde{\Upsilon}_{1;n_0,\mathbf{t}}=\tilde{\kappa}_{2;n_0,\mathbf{t}}+\tilde{\kappa}_{3;n_0,\mathbf{t}}, \quad \tilde{\Upsilon}_{2;n_0,\mathbf{t}}=\tilde{\kappa}_{1;n_0,\mathbf{t}}+\tilde{\kappa}_{4;n_0,\mathbf{t}}+\tilde{\kappa}_{5;n_0,\mathbf{t}}.
\end{equation}
For any $i,j\in [5]$, we define
\begin{equation}
   \tilde{\Xi}_{i,j;n_0,\mathbf{t}}:=-\int_{\mathbb{R}^2\backslash \Delta} \log(|x-y|) d \tilde{\kappa}_{i;n_0,\mathbf{t}}(x) d \tilde{\kappa}_{j;n_0,\mathbf{t}}(y).
\end{equation}

We also introduce the following three definitions.

\begin{definition}\label{Defn5.3.n}
For any $R\geq 10$, we define $\mathscr{G}_R$ to be the event that 
\begin{eqnarray}\label{Eq5.24.2}
&& |\{i\in [n]: b_i\in [-R-n^{-1\slash 30},-R+n^{-1\slash 30}]\nonumber\\
&&\quad\quad\quad\quad\quad\cup [R-n^{-1\slash 30},R+n^{-1\slash 30}]\}|\leq \frac{k}{\log(n)^{1\slash 4}}.
\end{eqnarray}
\end{definition}

\begin{definition}\label{Defn5.2}
For any $\bm{\phi}\in\Phi$ and $\Lambda\geq 0$, we define $\mathcal{C}(\bm{\phi};\Lambda)$ to be the event that for any $x\in [-R_0(\bm{\phi}),R_0(\bm{\phi})]$, 
\begin{equation}\label{Eq5.19.2}
   \Big|\int\frac{1}{x-y}d\kappa_5(y)\Big|\leq \Lambda  M R_0'(\bm{\phi})^{-1\slash 2}.
\end{equation} 
\end{definition}

By (\ref{Eq2.1.10})-(\ref{Eq2.1.15}), we have $2000\leq 10^{M^2\slash \epsilon}\leq 2 R_0(\bm{\phi})\leq R_0'(\bm{\phi})\leq 2^T$. Note that $T$ only depends on $M,\epsilon$ and $k\geq K_0(M,\epsilon,\delta)$ is sufficiently large (depending on $\beta, M,\epsilon,\delta$). Following the proof of \cite[Lemma 6.4]{Zho}, assuming that $n^{5\cdot 10^{-7}}\leq k\leq  n^{10^{-6}}$, we obtain that there exists a positive absolute constant $\Lambda_0'$, such that
\begin{equation}\label{Eq5.19.1}
\mathbb{P}(\mathcal{C}(\bm{\phi};\Lambda_0')^c)\leq C\exp(-Mk^2).
\end{equation}

\begin{definition}\label{Defn5.3}
Let $\rho(x)=\frac{\sqrt{4-x^2}}{2\pi}\mathbbm{1}_{[-2,2]}(x)$ be the density of the Wigner semicircle law. For any $x\in\mathbb{R}$, we define 
\begin{equation}\label{Eq7.6.1}
    \xi(x):=-\int_{-2}^2  \log(|x-y|)\rho(y)dy+\frac{1}{4}x^2-\frac{1}{2}, \quad \tilde{\xi}(x):=\frac{n}{k} \xi(2-(k\slash n)^{2\slash 3}x).
\end{equation}
\end{definition}

We have the following five propositions. In all these propositions, we let $\Lambda_0$ be the constant that appears in Proposition \ref{P4.4} and $\Lambda_0'$ the constant in (\ref{Eq5.19.1}). We also recall that we have fixed an arbitrary $\bm{\phi}\in \Phi$ and an arbitrary number $n_0\in \{0\}\cup [n]$ such that (\ref{Eq5.15}) holds. 

\begin{proposition}\label{P5.3}
There exists a positive constant $C$ that only depends on $\beta$, such that when the event $\mathcal{B}(\bm{\phi};\Lambda_0)\cap \mathcal{D}(\bm{\phi},n_0)\cap\mathcal{C}(\bm{\phi};\Lambda_0')$ holds, for any\\ $\mathbf{t}=(t_1,t_2,\cdots,t_{n_0})\in \mathscr{F}_{n_0}$, we have  
\begin{equation}
-\int \log(|x-y|) d\Upsilon_1(x)d\Upsilon_2(y)+\int \log(|x-y|) d\tilde{\Upsilon}_{1;n_0,\mathbf{t}}(x)d\tilde{\Upsilon}_{2;n_0,\mathbf{t}}(y)\geq -CM\sqrt{\epsilon}.
\end{equation}
\end{proposition}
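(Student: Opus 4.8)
The plan is to compare the ``continuous'' energy $-\int\log|x-y|\,d\Upsilon_1(x)\,d\Upsilon_2(y)$ built from the actual points $\{b_i\}$ with the ``replaced'' energy $-\int\log|x-y|\,d\tilde\Upsilon_{1;n_0,\mathbf t}(x)\,d\tilde\Upsilon_{2;n_0,\mathbf t}(y)$, where inside $[-R_0(\bm\phi),R_0(\bm\phi)]$ we have swapped the $n_0$ actual points for the deterministic points $\rho_i+t_i$. Since $\Upsilon_2$ and $\tilde\Upsilon_{2;n_0,\mathbf t}$ are \emph{the same} signed measure supported outside $[-R_0(\bm\phi),R_0(\bm\phi)]$ (the part of $\mu_{n,k}$ outside the window is untouched by the replacement, and both subtract the same reference measure $\mu_0$), the difference of the two energies equals $-\int\log|x-y|\,d(\Upsilon_1-\tilde\Upsilon_{1;n_0,\mathbf t})(x)\,d\Upsilon_2(y)$. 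Writing $\Upsilon_1-\tilde\Upsilon_{1;n_0,\mathbf t}=\frac1k\sum_{i:b_i\in[-R_0,R_0]}\delta_{b_i}-\frac1k\sum_{i=1}^{n_0}\delta_{\rho_i+t_i}$ (a signed measure of total mass zero by Definition~\ref{Defn5.1}, supported in $[-R_0(\bm\phi),R_0(\bm\phi)]$), the task reduces to lower-bounding
\[
-\int\Big(\int\log|x-y|\,d(\Upsilon_1-\tilde\Upsilon_{1;n_0,\mathbf t})(x)\Big)\,d\Upsilon_2(y)
\]
by $-CM\sqrt\epsilon$.

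First I would fix $y\in\mathrm{supp}(\Upsilon_2)$, i.e.\ $|y|>R_0(\bm\phi)$, and bound the inner integral $g(y):=\int\log|x-y|\,d(\Upsilon_1-\tilde\Upsilon_{1;n_0,\mathbf t})(x)$. Because the signed measure $\Upsilon_1-\tilde\Upsilon_{1;n_0,\mathbf t}$ has mass zero, $g(y)=\int\log\frac{|x-y|}{|x_0-y|}\,d(\Upsilon_1-\tilde\Upsilon_{1;n_0,\mathbf t})(x)$ for any reference $x_0$; the point is that $x\mapsto\log|x-y|$ is Lipschitz on $[-R_0(\bm\phi),R_0(\bm\phi)]$ with constant $1/\mathrm{dist}(y,[-R_0,R_0])$, which for $y\in\mathcal L_4$ is controlled by $1/(R_0(\bm\phi)^{-1/2}\text{-scale gaps})$ and for $y\in\mathcal L_1\cup\mathcal L_5$ is much smaller. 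The second ingredient is a Kantorovich--Rubinshtein / Wasserstein-type control: the signed measure $\Upsilon_1-\tilde\Upsilon_{1;n_0,\mathbf t}$ is small in transport distance because (i) the replacement points $\rho_i+t_i$ are chosen so that $\frac1k\sum_{i=1}^{n_0}\delta_{\rho_i}$ $n^{-1}$-shadows the deterministic profile $\mu_0$ on $[r_0,R_0(\bm\phi)]$ with spacings $\rho_{i+1}-\rho_i\asymp k^{-1}R_0(\bm\phi)^{-1/2}$ (from \eqref{Eq5.19}, \eqref{Eq5.15.1}, \eqref{Eq9.6}), and the small shifts $t_i\le n^{-1}$ are negligible; and (ii) the actual point count in the window agrees with $k\mu_0([0,R_0(\bm\phi)])$ up to $C_0\sqrt{\epsilon/(\log R_0\log\log R_0)}\,k$ by \eqref{Eq5.14}--\eqref{Eq5.15}, and more refined local agreement of $\tilde N(x)-\tilde N_0(x)=\Psi(x)$ is provided by parts (a), (d) of Proposition~\ref{P5.2}. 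Combining, I expect a bound of the shape $|g(y)|\le C\big(\text{(local discrepancy of }\Psi)\cdot(\text{local Lipschitz const})\big)$, integrated/summed over dyadic scales of the window.

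Then I would integrate $g(y)$ against $\Upsilon_2$. Here $\Upsilon_2=\kappa_1+\kappa_4+\kappa_5$: the contribution of $\kappa_1$ (points below $-R_0(\bm\phi)$) is tiny because by part (c) of Proposition~\ref{P5.2} the total mass $\tilde N(-R_0(\bm\phi))/k$ there is $\le CMk/(1+R_0(\bm\phi))^{3/2}\cdot k^{-1}$, which by \eqref{Eq5.13} is $\le C\sqrt\epsilon\,M/R_0(\bm\phi)\cdot(\dots)$ — negligible after multiplying by the modest $\log$-kernel growth. The dominant term is $\kappa_4$ on $\mathcal L_4=(R_0(\bm\phi),R_0'(\bm\phi)]$, where $|\Upsilon_2|$ has mass $\asymp\frac1k\int_{R_0}^{R_0'}|\Psi(x)|\,\cdot$ plus the $\mu_0$ part; the $L^2$-type estimate $\int_{R_0}^{R_0'}\Psi(x)^2/(x+1)\,dx\le C\epsilon k^2$ from part (f) of Proposition~\ref{P5.2}, together with part (e) giving $|\Psi(R_0'(\bm\phi))|\le C\sqrt{\epsilon/\log R_0}\,k$, should yield a Cauchy--Schwarz bound of order $\sqrt\epsilon$ on this piece; the $\kappa_5$ piece is handled via Definition~\ref{Defn5.2} / \eqref{Eq5.19.1}, which is exactly the reason $\mathcal C(\bm\phi;\Lambda_0')$ appears in the hypothesis — it controls the Cauchy transform $\int (x-y)^{-1}d\kappa_5(y)$ uniformly on the window, so that $\int g(y)\,d\kappa_5(y)$ can be estimated after integrating by parts in $x$. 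The overall budget is to show each of the three pieces is $\ge -CM\sqrt\epsilon$.

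The main obstacle I anticipate is the $\kappa_4$ term near the inner edge $y\downarrow R_0(\bm\phi)$, where the kernel $\log|x-y|$ is genuinely singular as $x$ ranges over the replacement points just inside $R_0(\bm\phi)$: one must exploit that the \emph{net} discrepancy $\Upsilon_1-\tilde\Upsilon_{1;n_0,\mathbf t}$ has a cancellation structure (mass zero, and locally the replacement matches $\Psi$ up to the small quantities in Proposition~\ref{P5.2}(a),(d)), rather than crudely bounding $|\Upsilon_1|+|\tilde\Upsilon_1|$ against $|\log|x-y||$, which would diverge logarithmically. Concretely, the fix is to localize: split the window into a collar of width $\sim R_0(\bm\phi)/\log R_0(\bm\phi)$ around $R_0(\bm\phi)$ and its complement; on the collar use the refined discrepancy bound Proposition~\ref{P5.2}(a) together with the fact that the number of $\rho_i$ in any interval matches $k\mu_0$ exactly by construction, so the singular integral $\int_{\text{collar}}\log|x-R_0|\,d(\dots)(x)$ telescopes to a boundary term of size $\tilde\Delta_{l_0,j_0}/k\lesssim\sqrt{\epsilon/(\log R_0\log\log R_0)}$ times a harmless $\log$; off the collar the kernel is bounded by $\log R_0(\bm\phi)$ and the $L^1$-mass of the discrepancy there is $\lesssim\sqrt\epsilon\,k/(k\log R_0)$ — so the product is $\lesssim\sqrt\epsilon$. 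Assembling the dyadic/collar estimates and summing is the bulk of the routine work; I would relegate those computations to the body of the proof.
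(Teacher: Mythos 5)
Your reduction is sound as far as it goes: since the replacement only alters $\mu_{n,k}$ inside $[-R_0(\bm{\phi}),R_0(\bm{\phi})]$, indeed $\Upsilon_2=\tilde{\Upsilon}_{2;n_0,\mathbf{t}}$, the difference of energies is $-\int\log|x-y|\,d(\Upsilon_1-\tilde{\Upsilon}_{1;n_0,\mathbf{t}})(x)\,d\Upsilon_2(y)$, and on $\mathcal{D}(\bm{\phi},n_0)$ this discrepancy measure has total mass zero; your treatment of the $\kappa_5$ piece via the Cauchy-transform bound in $\mathcal{C}(\bm{\phi};\Lambda_0')$ is exactly the paper's Step 5, and the $\kappa_1$ piece can be closed along the lines you indicate using Proposition \ref{P5.2}(c). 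The genuine gap is in the dominant $\kappa_4$ piece, and the specific quantitative claims you rest it on are false. First, $\Upsilon_1-\tilde{\Upsilon}_{1;n_0,\mathbf{t}}$ is \emph{not} small in transport distance: its Kantorovich norm is governed by $\frac1k\int_{-R_0}^{R_0}|\Psi|$, and the only available control (Proposition \ref{P5.2}(b) plus Cauchy--Schwarz) gives order $MR_0(\bm{\phi})$, not $o(1)$. Second, your off-collar estimate asserts that the $L^1$-mass of the discrepancy there is $\lesssim\sqrt{\epsilon}/\log R_0(\bm{\phi})$; in fact its total variation in the bulk is of order $R_0(\bm{\phi})^{3/2}$ and even the integrated counting-function discrepancy is of order $MR_0(\bm{\phi})$, since $|\Psi|$ is only controlled pointwise by Proposition \ref{P5.2}(a) on the thin interval around $R_0(\bm{\phi})$, not in the bulk. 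So ``kernel bounded by $\log R_0$ times small $L^1$-mass'' does not close, and neither does ``Lipschitz constant times transport distance''.

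What actually makes the $\kappa_4$ term work in the paper is a different mechanism: after two-variable integration by parts converting everything to $\Psi$ (Eqs.\ (\ref{Eq5.66})--(\ref{Eq5.68}) and (\ref{Eq5.3.3})--(\ref{Eq5.3.5})), every resulting term is a \emph{product} in which at least one factor carries smallness — $\sqrt{\epsilon}$-type factors from (a), (d), (e), (f), or $R_0(\bm{\phi})^{-3/2}$-decay from (c) — while the other factor is allowed to be large (of order $M\sqrt{\log R_0}\,k$ or $MR_0 k$ from (b)); the double integral $\iint\Psi(x)\Psi(y)/(y-x)^2$ additionally needs the eight-piece splitting $\mathscr{U}_{11},\dots,\mathscr{U}_{32}$. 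Your collar argument conflates the discrepancy between $\frac1k\sum\delta_{\rho_i+t_i}$ and $\mu_0$ (which is indeed tiny by construction, cf.\ the machinery of Proposition \ref{P5.7}) with the discrepancy between $\frac1k\sum\delta_{b_i}$ and $\mu_0$, which is $\Psi/k$ and is only small near $R_0(\bm{\phi})$; the claimed ``telescoping to a boundary term of size $\tilde{\Delta}_{l_0,j_0}/k$ times a harmless log'' therefore does not account for the bulk-edge cross terms, nor for $y$ within distance $R_0(\bm{\phi})^{-4}$ of the edge, where the log-singularity must be integrated directly against the counting measures (the paper's separate terms $\mathscr{R}_1,\mathscr{R}_2,\mathscr{R}_4$, where one also uses the favourable sign of the atom-atom contributions). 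As written, the proposal would not yield the $-CM\sqrt{\epsilon}$ bound without essentially redoing this analysis.
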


\begin{proposition}\label{P5.4}
There exist positive constants $C,c$ that only depend on $\beta$, such that for any $R\in [10, R_0(\bm{\phi})-10]$, the following holds. When the event $\mathcal{B}(\bm{\phi};\Lambda_0)\cap \mathcal{D}(\bm{\phi},n_0)\cap\mathcal{C}(\bm{\phi};\Lambda_0')\cap (\mathscr{G}_R)^c$ holds, we have
\begin{equation}
     -\int_{\mathbb{R}^2\backslash \Delta} \log(|x-y|) d\Upsilon_1(x)d\Upsilon_1(y)+2 \int \tilde{\xi}(x) d\Upsilon_1(x)\geq c\sqrt{\log(n)}-C\cdot 2^T.
\end{equation}
\end{proposition}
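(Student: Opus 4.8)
\textbf{Proof sketch for Proposition \ref{P5.4}.}

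The plan is to exploit the fact that we are on $(\mathscr{G}_R)^c$, which by Definition \ref{Defn5.3.n} \emph{forces} a large number of $b_i$'s to lie in a small window around $\pm R$: negating (\ref{Eq5.24.2}) gives a set $S$ of indices with $|S| > k/\log(n)^{1/4}$ and $b_i \in [-R-n^{-1/30}, -R+n^{-1/30}]\cup[R-n^{-1/30}, R+n^{-1/30}]$ for $i\in S$. The key point is that the logarithmic energy term $-\int_{\mathbb{R}^2\setminus\Delta}\log(|x-y|)d\Upsilon_1(x)d\Upsilon_1(y)$ becomes large and \emph{positive} when many points cluster at the same location: if two bunches of $\sim|S|/2$ points each sit within $n^{-1/30}$ of a common point, the pairwise interaction within a single bunch contributes $\gtrsim (|S|/k)^2 \cdot k^2 \cdot (-\log(n^{-1/30})) \sim |S|^2 (\log n)/30$ to the energy. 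Since $|S|^2 \geq k^2/\log(n)^{1/2}$, this is at least $\sim k^2 \sqrt{\log n}/\log(n)^{1/2} = c\,k^2$... wait — I need to recheck the target: the proposition wants $c\sqrt{\log n} - C\cdot 2^T$ with a speed-$k^2$ implicit normalization, so actually the empirical measure $\Upsilon_1$ already carries the $1/k$ factor, and the clustered energy is $(|S|/k)^2 \cdot \log(n^{1/30}) \gtrsim (1/\log(n)^{1/2})\cdot \log(n) = \sqrt{\log n}$. That is the source of the $c\sqrt{\log n}$ gain.

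The steps I would carry out, in order. First, decompose $-\int_{\mathbb{R}^2\setminus\Delta}\log(|x-y|)d\Upsilon_1(x)d\Upsilon_1(y)$ into the self-interaction of the clustered mass $\Upsilon_1^{S} := \frac1k\sum_{i\in S}\delta_{b_i}$ and the remaining cross and self terms. Second, bound the clustered self-energy from below: split $S$ according to which window ($-R$ or $R$) the point lies in; at least one window, say the one near $R$, contains $\geq |S|/2$ points, all within an interval of length $2n^{-1/30}$, so $-\log|b_i-b_j| \geq -\log(2n^{-1/30}) = \frac1{30}\log n - \log 2$ for every ordered pair in that window, giving a contribution $\geq (|S|/(2k))^2\cdot(\frac1{30}\log n - \log 2) \geq c'\sqrt{\log n}$ for $k$ large. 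Third, control the ``bad'' lower-order terms: the cross terms between $\Upsilon_1^S$ and $\Upsilon_1-\Upsilon_1^S$, the self-energy of $\Upsilon_1-\Upsilon_1^S$, and the linear term $2\int\tilde\xi(x)d\Upsilon_1(x)$. For these I would invoke the bounds available under $\mathcal{B}(\bm{\phi};\Lambda_0)$: property (b) of Proposition \ref{P5.2} controls $\int_0^{R_0'(\bm{\phi})}\Psi(x)^2/(x+1)\,dx \leq CM^2k^2$, property (c) controls the negative part $\tilde N(x)$ for $x<0$, and these together with the rigidity of $\mu_0$ bound the total variation and support of $\Upsilon_1$; the interactions over $[-R_0(\bm{\phi}),R_0(\bm{\phi})]$ are then bounded by $O(2^T)$ after the $1/k^2$ normalization (since $\log$ of the diameter $2R_0(\bm{\phi}) \leq 2^{T+1}$ is $O(T) = O(2^T)$ at worst in the crude bound, and the number-counting fluctuations are controlled by (b)). The linear term $\int\tilde\xi d\Upsilon_1$ is handled by the explicit asymptotics of $\xi$ near the edge (Definition \ref{Defn5.3}), which is $O(2^T)$ on the relevant range. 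Assembling: the positive $c'\sqrt{\log n}$ from the cluster dominates, while everything else is $\geq -C\cdot 2^T$, yielding the claim.

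The main obstacle I anticipate is the bookkeeping in Step 3 — making sure the cross-energy $-2\int_{\mathbb{R}^2\setminus\Delta}\log|x-y|\,d\Upsilon_1^S(x)\,d(\Upsilon_1-\Upsilon_1^S)(y)$ does not have a large \emph{negative} part that could cancel the gain. Here I would use that $|x-y| \leq 2R_0(\bm{\phi})$ always, so $-\log|x-y| \geq -\log(2R_0(\bm{\phi})) \geq -C\cdot 2^T$, and $\|\Upsilon_1^S\|_{TV}\cdot\|\Upsilon_1-\Upsilon_1^S\|_{TV}$ is bounded by a constant (again via (b) and the $\mu_0$-normalization, which give $\|\Upsilon_1\|_{TV} = O(1)$ up to the $C_0\sqrt{\epsilon}$-type fluctuations after dividing by $k$), so the cross term is $\geq -C\cdot 2^T$. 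The same crude ``diameter bound plus bounded mass'' argument disposes of the self-energy of $\Upsilon_1-\Upsilon_1^S$ and of the $\Delta$-deleted pairs with $|x-y|$ large; the only subtlety is pairs with $x,y$ very close but \emph{not} both in $S$, for which one needs the local regularity of the configuration — but since $\Upsilon_1 + \nu_{0;R_0(\bm{\phi})}$ (or the analogous $\mu_0$-shifted object) is a \emph{positive} measure consisting of atoms of mass $1/k$ plus an absolutely continuous part, the standard lower bound $-\int_{\mathbb{R}^2\setminus\Delta}\log|x-y|\,d\mu(x)d\mu(y) \geq -C$ for such measures on a bounded interval applies. I would formalize this last point by citing the exponential-tightness input from \cite[Theorem 1.1(c)]{Zho} or reproving the elementary fact that a sum of $m$ equally-weighted atoms on an interval of length $L$ has logarithmic self-energy $\geq -\frac{m^2}{k^2}\log L - C\frac{m}{k^2}\log m$, the last term being negligible since $m = O(2^T k)$.
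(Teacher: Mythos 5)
Your proposal follows essentially the same route as the paper's proof: on $(\mathscr{G}_R)^c$ the at least $k/\log(n)^{1/4}$ points packed into the $n^{-1/30}$-windows around $\pm R$ force a pairwise logarithmic contribution of order $\sqrt{\log n}$ (the paper counts pairs from both windows via $(n_1+n_2)^2/8$, you take the fuller window — same order), while every remaining interaction is crudely bounded below by a polynomial in $R_0(\bm{\phi})$, hence by $C\cdot 2^T$, and the $\tilde{\xi}$-term is harmless because $\tilde{\xi}\geq 0$ and vanishes on $[0,R_0(\bm{\phi})]$. One caveat on your Step 3: $\|\Upsilon_1\|_{TV}$ is only $O(R_0(\bm{\phi})^{3/2})$ (since $n_0\leq C R_0(\bm{\phi})^{3/2}k$), not $O(1)$, and the cross terms against the $-\mu_0$ part cannot be handled by a pointwise "$-\log|x-y|\geq -\log(2R_0(\bm{\phi}))$ times total variation" bound (the logarithm is unbounded below near coincidence); one instead uses local integrability of $\log$ against $\mu_0$'s bounded density, giving a $-CR_0(\bm{\phi})^2$ term exactly as in the paper — both slips are absorbed by the $-C\cdot 2^T$ slack since $R_0(\bm{\phi})^3\log R_0(\bm{\phi})\leq 2^T$, so your argument stands.
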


\begin{proposition}\label{P5.5} 
For any $R\in [10,R_0(\bm{\phi})-10]$ and any $\mu\in \mathcal{X}_R$, we have
\begin{eqnarray}\label{Eq5.28.1}
&& \mathcal{B}(\bm{\phi};\Lambda_0)\cap \mathcal{D}(\bm{\phi},n_0)\cap\mathcal{C}(\bm{\phi};\Lambda_0')\cap \mathscr{G}_R \cap \mathscr{E}\cap \{d_R(\nu_{k;R},\mu)\leq \delta\} \nonumber\\
&\subseteq& \mathcal{B}(\bm{\phi};\Lambda_0)\cap \mathcal{D}(\bm{\phi},n_0)\cap\mathcal{C}(\bm{\phi};\Lambda_0')\cap \mathscr{G}_R \cap \{d_R(\mu_{n,k;R},\mu)\leq 2\delta\},
\end{eqnarray}
where we recall Definition \ref{Defn1.1}.
\end{proposition}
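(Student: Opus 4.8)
The plan is to show that on the event $\mathscr{E}$, the two empirical measures $\nu_{k;R}$ and $\mu_{n,k;R}$ are close in the Kantorovich–Rubinshtein distance $d_R$, so that $d_R(\nu_{k;R},\mu)\leq\delta$ forces $d_R(\mu_{n,k;R},\mu)\leq 2\delta$ by the triangle inequality. Concretely, it suffices to prove that on $\mathscr{E}$ we have $d_R(\nu_{k;R},\mu_{n,k;R})\leq\delta$; since the left-hand side of (\ref{Eq5.28.1}) adds the events $\mathcal{B}(\bm{\phi};\Lambda_0)$, $\mathcal{D}(\bm{\phi},n_0)$, $\mathcal{C}(\bm{\phi};\Lambda_0')$, $\mathscr{G}_R$ to $\mathscr{E}\cap\{d_R(\nu_{k;R},\mu)\leq\delta\}$ and the right-hand side keeps the first four and only weakens the last, the containment will follow immediately once the distance bound is in hand. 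So the real content is a comparison of $\nu_{k;R}=\bar\nu_{k;R}-\nu_{0;R}$ with $\mu_{n,k;R}=\frac1k\sum_{i=1}^n\delta_{b_i}\big|_{[-R,R]}-\mu_{0;R}$.

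First I would record the relevant definitions: $\bar\nu_k=\frac1k\sum_{i\geq1}\delta_{k^{-2/3}\lambda_i}$, so $\bar\nu_{k;R}$ restricts the atoms at $k^{-2/3}\lambda_i$ to $[-R,R]$, while $\mu_{n,k}$ uses atoms at $b_i=-k^{-2/3}\tilde\lambda_i^{(n)}$. On $\mathscr{E}$, (\ref{Eq5.23.1}) gives $|k^{2/3}b_i-\lambda_i|\leq C_0 n^{-1/24}$ for all $i\in[2^Tk]$, equivalently $|b_i-k^{-2/3}\lambda_i|\leq C_0 k^{-2/3}n^{-1/24}$; call this displacement $\eta:=C_0 k^{-2/3}n^{-1/24}$. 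The next step is to argue that only indices $i\in[2^Tk]$ matter for atoms landing in $[-R,R]$: since $R\leq R_0'(\bm\phi)\leq 2^T$ and $R\geq 10$, any atom of $\nu_{k;R}$ or $\mu_{n,k;R}$ in $[-R,R]$ comes from an eigenvalue $\lambda_i\leq R k^{2/3}\leq 2^T k^{2/3}$ (or $b_i\leq R\leq 2^T$), hence from $i\leq N(2^T k^{2/3})$ resp. $\tilde N(2^T)$, and these are $\leq C\cdot 2^{3T/2}k$ which is $\ll 2^T k$ — here one can either invoke $\mathscr{G}_R$-type bounds or simply note $\lambda_i\geq \gamma_1\geq$ (a negative constant) and the crude counting estimate; in any case $i\leq 2^Tk$ suffices for a sufficiently large, constant buffer and for $i$ beyond that the atoms are far outside $[-R,R]$ so contribute nothing. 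Then the two restricted measures differ by (a) a global displacement of a common collection of $m\leq C\cdot 2^{3T/2}k$ atoms by at most $\eta$, which changes $d_R$ by at most $\frac{m}{k}\cdot\min\{\eta,2R\}$ after testing against a $1$-Lipschitz function, plus a boundary correction for atoms pushed across $\pm R$, which is controlled since it involves at most the number of $b_i$ (equivalently $\lambda_i$) within $\eta$ of $\pm R$, and $\eta\to0$ polynomially in $n^{-1}$ while $k\leq n^{1/20000}$; and (b) the difference $\nu_{0;R}-\mu_{0;R}$ of the two reference measures, which is deterministic.

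The main obstacle is controlling term (b): $\nu_0$ has density $\frac1\pi\sqrt x\,\mathbbm 1_{[0,\infty)}$ while $\mu_0$ has density $\frac1\pi\sqrt x(1-\frac14(k/n)^{2/3}x)^{1/2}\mathbbm 1_{[0,4(n/k)^{2/3}]}$; on $[0,R]$ with $R\leq 2^T$ and $(k/n)^{2/3}R\leq k^{-1}2^T\leq 1/10$ (this is (\ref{Eq5.8})-type control, valid since $k\geq K_0(M,\epsilon,\delta)$ is large and $k\leq n^{1/20000}$), we have $|\sqrt{1-u}-1|\leq u$ for $u\in[0,1/2]$, so the density difference is at most $\frac1\pi\sqrt x\cdot\frac14(k/n)^{2/3}x\leq C(k/n)^{2/3}x^{3/2}$ pointwise on $[0,R]$, giving total variation $\|\nu_{0;R}-\mu_{0;R}\|_{TV}\leq C(k/n)^{2/3}R^{5/2}\leq Ck^{-1}2^T$. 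Testing against $\|f\|_{BL}\leq1$, the contribution to $d_R$ is at most $R$ times this, i.e. $\leq C k^{-1}2^{2T}$, which $\to0$ as $k\to\infty$ (since $T$ depends only on $M,\epsilon$). Combining: $d_R(\nu_{k;R},\mu_{n,k;R})\leq \frac{C\cdot2^{3T/2}k}{k}\eta + (\text{boundary}) + Ck^{-1}2^{2T} = o(1)$ as $k\to\infty$, hence $\leq\delta$ once $k\geq K_0(M,\epsilon,\delta)$ is taken large enough (which is already assumed in (\ref{Eq2.1})). The only subtlety to handle carefully is that $\delta\in(0,1)$ is fixed \emph{before} $k$, but since all the error terms go to zero in $k$ with all other parameters fixed, and $K_0$ is allowed to depend on $\delta$, this is consistent with the setup. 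I would then conclude by the triangle inequality as described in the first paragraph.
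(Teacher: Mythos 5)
Your overall strategy is the same as the paper's: show $d_R(\nu_{k;R},\mu_{n,k;R})\leq\delta$ on the good event via the coupling estimate from $\mathscr{E}$, the comparison of the reference densities $\nu_0$ and $\mu_0$ on $[-R,R]$, and a boundary correction, then conclude by the triangle inequality. The reference-measure comparison and the displacement bound for the common atoms are fine. However, two steps are justified by arguments that would not actually work, and both are exactly the places where the hypotheses $\mathcal{B}(\bm{\phi};\Lambda_0)\cap\mathcal{D}(\bm{\phi},n_0)$ and $\mathscr{G}_R$ enter in the paper's proof. First, to apply the coupling bound $|b_i-k^{-2/3}\lambda_i|\leq C_0k^{-2/3}n^{-1/24}$ you need every atom that can land in $[-R,R]$ to have index $i\leq 2^Tk$, i.e.\ a bound on $\tilde N(R)$ (and, for the $\lambda$-atoms, on the index of the last $\lambda_i\leq Rk^{2/3}$). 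These counting functions are \emph{random}; there is no ``crude counting estimate'' giving $\tilde N(2^T)\leq C2^{3T/2}k$ almost surely, and $\mathscr{G}_R$ says nothing about global counts (it only controls thin shells around $\pm R$). The paper gets $\tilde N(R_0(\bm{\phi}))\leq CR_0(\bm{\phi})^{3/2}k\leq 2^Tk$ from properties (a) and (c) of Proposition \ref{P5.2} (valid on $\mathcal{B}(\bm{\phi};\Lambda_0)$) together with the bound (\ref{Eq5.15}) on $n_0$ (valid on $\mathcal{D}(\bm{\phi},n_0)$), and then transfers this to the $\lambda$-side through the coupling; without these events your index restriction is unproven.

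Second, your boundary correction is not controlled by the smallness of $\eta$ alone: an order-$k$ number of atoms could cluster within $\eta$ of $\pm R$, in which case the mismatched atoms (your sets of indices where $b_i\in[-R,R]$ but $k^{-2/3}\lambda_i\notin[-R,R]$, or vice versa) would contribute $O(1)$ to the $d_R$ difference, not $o(1)$. What saves the argument is precisely $\mathscr{G}_R$: on $\mathscr{E}$ (and with the index bound above) each mismatched atom has $b_i$ within $C_0k^{-2/3}n^{-1/24}\leq n^{-1/30}$ of $\pm R$, and (\ref{Eq5.24.2}) then caps their number by $k/\log(n)^{1/4}$, giving a boundary contribution $\leq 2/\log(n)^{1/4}$, which (with the other error terms) is $\leq\delta$ for $k\geq K_0(M,\epsilon,\delta)$. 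You name $\mathscr{G}_R$ but invoke it for the wrong purpose and omit it where it is needed; as written, the boundary step fails. Fixing these two points essentially reproduces the paper's proof.
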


\begin{proposition}\label{P5.6}
Recall Definition \ref{Defn1.3}. There exist positive constants $C,C_1$ that only depend on $\beta$, such that for any $R\in [10, R_0(\bm{\phi})-10]$ and any $\mu\in \mathcal{X}_R$, the following holds. When the event
\begin{equation*}
    \mathcal{B}(\bm{\phi};\Lambda_0)\cap \mathcal{D}(\bm{\phi},n_0)\cap\mathcal{C}(\bm{\phi};\Lambda_0')\cap \mathscr{G}_R \cap \{d_R(\mu_{n,k;R},\mu)\leq 2\delta\}
\end{equation*}
holds, we have
\begin{eqnarray}
&& -\int_{\mathbb{R}^2\backslash \Delta} \log(|x-y|)d\Upsilon_1(x)d\Upsilon_1(y)+2\int\tilde{\xi}(x)d\Upsilon_1(x)\nonumber\\
&&\geq\Big(1+\frac{C_1}{\log(n)}\Big)^{-1}I_R(\mu,3\delta)-C(M\sqrt{\epsilon}+M^{-1}).
\end{eqnarray}
\end{proposition}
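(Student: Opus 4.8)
\textbf{Proof proposal for Proposition \ref{P5.6}.}

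The plan is to compare the ``microscopic'' energy functional associated with the Gaussian $\beta$ ensemble configuration $\{b_i\}$ restricted to $[-R_0(\bm{\phi}),R_0(\bm{\phi})]$, namely
\[
-\int_{\mathbb{R}^2\backslash\Delta}\log(|x-y|)d\Upsilon_1(x)d\Upsilon_1(y)+2\int\tilde\xi(x)d\Upsilon_1(x),
\]
with the rate function $I_R(\mu,3\delta)$ defined via $\mathscr{I}(\mu')$ in Definition \ref{Defn1.3}. The natural route is to produce, from $\Upsilon_1$ and the event $\{d_R(\mu_{n,k;R},\mu)\le 2\delta\}$, a competitor measure $\mu'\in\mathscr{X}$ with $d_R(\mu'_R,\mu)\le 3\delta$, and then show that $\mathscr{I}(\mu')$ is bounded above (up to the multiplicative slack $1+C_1/\log(n)$ and the additive $C(M\sqrt\epsilon+M^{-1})$) by the microscopic energy. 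The construction of $\mu'$ should be: keep $\mu_{n,k}$ on $[-R_0(\bm{\phi}),R_0(\bm{\phi})]$ (so $\mu'_R$ agrees with $\mu_{n,k;R}$, giving $d_R(\mu'_R,\mu)\le 2\delta<3\delta$), and outside this window replace the true $\{b_i\}$-configuration with a deterministic placement that makes $\mu'+\nu_0$ a genuine positive measure with total mass zero and compact support, using the blow-up/Airy asymptotics (Proposition \ref{AiryOperator}) and the rigidity bounds in Proposition \ref{P5.2}(b),(c),(f) to control how much the energy can change. The extra $1/\log(n)$ factor comes from rescaling: $\tilde\xi(x)=\frac{n}{k}\xi(2-(k/n)^{2/3}x)$ and one needs the Taylor expansion of $\xi$ near $2$, which matches $\frac{4}{3}|x|^{3/2}$-type behavior only to leading order, the error being absorbed into $(1+C_1/\log(n))^{-1}$.

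The key steps, in order, would be: (i) decompose the microscopic energy along the regions $\mathcal{L}_1,\dots,\mathcal{L}_5$ using $\Upsilon_1=\kappa_2+\kappa_3$ and $\Xi_{i,j}$, isolating the ``bulk'' double integral over $[-R_0(\bm{\phi}),R_0(\bm{\phi})]^2$ from cross-terms; (ii) pass from the $n$-level logarithmic potential with external field $\tilde\xi$ to the Airy-scale functional $\mathscr{I}$ by a change of variables $x\mapsto 2-(k/n)^{2/3}x$ and a careful expansion of $\xi$ around the edge $2$, tracking the approximation $\frac{n}{k}\xi(2-(k/n)^{2/3}x)=\frac{4}{3}x^{3/2}\mathbbm{1}_{[0,\infty)}(x)\cdot(1+O((k/n)^{2/3}x))$ plus a term linear in $x$ that integrates against the mass-zero measure to something negligible; (iii) invoke Proposition \ref{P5.4} (or rather the bound on the configuration outside $[-R,R]$ via Proposition \ref{P5.2}) to argue the energy contribution from $|x|>R$ inside the window is non-negative up to $O(2^T)$, which is then dwarfed by the $\sqrt{\log n}$ gain and absorbed; (iv) choose the competitor $\mu'$ by taking $\mu_{n,k}$ on $[-R_0(\bm{\phi}),R_0(\bm{\phi})]$ and an explicit ``filling'' of $-\nu_0$ outside, verify $\mu'\in\mathscr{X}$ and $d_R(\mu'_R,\mu)\le 3\delta$, and bound $\mathscr{I}(\mu')$ below the rescaled microscopic energy using the preceding decompositions; (v) collect the errors: the self-energy diagonal correction (hence $\mathbb{R}^2\backslash\Delta$ versus $\mathbb{R}^2$), the discretization error from replacing $\delta_{b_i}$ sums by absolutely continuous measures (controlled via the spacing estimates \eqref{Eq5.19}, \eqref{Eq5.15.1}, \eqref{Eq9.6}), and the tail/cross-term errors, all of which should be $O(M\sqrt\epsilon+M^{-1})$ on the event $\mathcal{B}(\bm{\phi};\Lambda_0)\cap\mathcal{C}(\bm{\phi};\Lambda_0')$ by Propositions \ref{P5.2} and \ref{P5.3}.

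The main obstacle I expect is step (ii) together with the bookkeeping in step (v): one must extract the factor $(1+C_1/\log(n))^{-1}$ with the \emph{correct} sign and scaling, which forces a quite precise expansion of $\xi$ near the edge and a matching precise control of the support of $\Upsilon_1$ (it lives in $[-R_0(\bm\phi),R_0(\bm\phi)]$ with $R_0(\bm\phi)\le 2^{T/10}\le k^2\ll n$, so $(k/n)^{2/3}x$ really is $O(1/\log n)$ uniformly). The cross-terms $\Xi_{2,3}$ and the interaction of the negative-axis mass (bounded by Proposition \ref{P5.2}(c)) with the positive bulk also need careful sign analysis: the logarithmic interaction between the tiny left-tail mass and the bulk could a priori be of either sign, but its magnitude is $O(M\sqrt\epsilon)$ by the rigidity estimates, so it lands in the additive error. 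A secondary subtlety is ensuring that the competitor $\mu'$ genuinely lies in $\mathscr{X}$ — in particular that $\mu'(\mathbb{R})=0$ and $\mu'+\nu_0\ge 0$ — which constrains exactly how the mass outside the window must be filled in; here Proposition \ref{P5.2}(b),(f) on $\int\Psi(x)^2/(x+1)\,dx$ and Proposition \ref{P5.2}(e) on $|\Psi(R_0'(\bm\phi))|$ are what make the construction admissible with only an $O(\epsilon k^2)$ energy penalty.
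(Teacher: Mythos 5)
Your overall shape is right (build a competitor $\mu'$ from the configuration in the window, balance the mass outside, and dominate $\mathscr{I}(\mu')$ by the microscopic energy), but there are two genuine gaps. First, you propose to ``keep $\mu_{n,k}$ on $[-R_0(\bm{\phi}),R_0(\bm{\phi})]$'' as the competitor. That measure is atomic, and $\mathscr{I}$ in Definition \ref{Defn1.3} integrates $-\log(|x-y|)$ over all of $\mathbb{R}^2$, diagonal included, so $\mathscr{I}(\mu')=+\infty$ for any competitor carrying atoms; the bound $I_R(\mu,3\delta)\leq \mathscr{I}(\mu')$ is then vacuous. The essential missing step is the regularization of each atom $k^{-1}\delta_{\alpha_i}$ into a uniform measure $\pi_i$ on an interval of length $2/n$ (this is what the paper does), together with a quantitative comparison of the smoothed pairwise interaction $\int\int\log(|x-y|)d\pi_i\,d\pi_j$ with $\log(\alpha_j-\alpha_i)$ (the paper's Lemma \ref{L5.1}). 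This is also exactly why the statement allows $3\delta$ rather than $2\delta$ and why the hypothesis includes $\mathscr{G}_R$: the smoothing moves mass across $\pm R$, and the count of particles within $n^{-1/30}$ of $\pm R$ must be controlled to keep $d_R(\mu'_R,\mu)\leq 3\delta$. Your proposal makes no use of $\mathscr{G}_R$ and claims $2\delta$ suffices, which is a symptom of this omission.

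Second, you attribute the factor $(1+C_1/\log(n))^{-1}$ to the Taylor expansion of $\xi$ near the edge. That is not where it comes from: $\tilde{\xi}$ vanishes identically on $[0,R_0(\bm{\phi})]$ (since $2-(k/n)^{2/3}x\in[-2,2]$ there), and for $\alpha_i<0$ the deviation of $\tilde{\xi}(\alpha_i)$ from $\tfrac{2}{3}|\alpha_i|^{3/2}$ is of size $R_0(\bm{\phi})^{4}(k/n)^{2/3}$, an additive error far smaller than $1/\log(n)$ and absorbed into $C(M\sqrt{\epsilon}+M^{-1})$. The multiplicative $1/\log(n)$ loss is forced by the regularization of \emph{close pairs}: the error terms $1/\max\{n(\alpha_j-\alpha_i),1\}$ from the smoothed-versus-atomic comparison are summed and dominated by $(C_1/\log(n))\cdot\bigl(-\sum_{i<j}\log(\alpha_j-\alpha_i)\bigr)$ plus lower-order terms, which is precisely how the multiplicative factor enters with the correct sign. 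Without identifying this mechanism, step (ii)/(v) of your plan cannot produce the stated inequality. (Minor further points: the mass balancing must be done so that $\mu'+\nu_0\geq 0$, e.g.\ a negative Lebesgue bump on $[R_0(\bm{\phi})+2,R_0(\bm{\phi})+3]$ whose interaction with the window is controlled via Proposition \ref{P5.2} and the rigidity bound on $\mu_{n,k}([-R_0(\bm{\phi}),R_0(\bm{\phi})])$; and Proposition \ref{P5.4} is irrelevant here since $\mathscr{G}_R$ is assumed to hold.)
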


\begin{proposition}\label{P5.7}
There exists a positive constant $C$ that only depends on $\beta$, such that for any $\mathbf{t}=(t_1,t_2,\cdots,t_{n_0})\in \mathscr{F}_{n_0}$, we have
\begin{equation}
   \Big|-\int_{\mathbb{R}^2\backslash\Delta}\log(|x-y|)d\tilde{\Upsilon}_{1;n_0,\mathbf{t}}(x)d\tilde{\Upsilon}_{1;n_0,\mathbf{t}}(y)+2\int \tilde{\xi}(x)d\tilde{\Upsilon}_{1;n_0,\mathbf{t}}(x)\Big|\leq C\sqrt{\epsilon}.
\end{equation}
\end{proposition}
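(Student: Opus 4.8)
\textbf{Proof proposal for Proposition \ref{P5.7}.}

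The plan is to analyze the quantity
\begin{equation*}
\mathcal{F}(\mathbf{t}):=-\int_{\mathbb{R}^2\backslash\Delta}\log(|x-y|)\,d\tilde{\Upsilon}_{1;n_0,\mathbf{t}}(x)\,d\tilde{\Upsilon}_{1;n_0,\mathbf{t}}(y)+2\int \tilde{\xi}(x)\,d\tilde{\Upsilon}_{1;n_0,\mathbf{t}}(x)
\end{equation*}
by recognizing $\tilde{\Upsilon}_{1;n_0,\mathbf{t}}$ as a small perturbation of (minus) the reference object in which the semicircle/Airy edge energy functional vanishes. The key structural fact, coming from Definition \ref{Defn5.3} and the standard variational characterization of the semicircle law, is that the effective potential $\xi(x)$ is constant (equal to some explicit value) on $[-2,2]$ and strictly larger outside; equivalently $\tilde{\xi}$ is constant on the relevant macroscopic window $[0,4(n\slash k)^{2\slash 3}]$ and the functional $\mu\mapsto -\iint\log|x-y|\,d\mu\,d\mu+2\int\tilde{\xi}\,d\mu$ evaluated at the rescaled semicircle-type measure $\mu_0$ gives exactly $0$ (up to $O(n\slash k)$ corrections that are absorbed in the rescaling of $\xi$ by $n\slash k$). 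So the first step is to write $\tilde{\Upsilon}_{1;n_0,\mathbf{t}}=-\nu_{0;R_0} + (\text{error terms})$, or more precisely to compare $\mathcal{F}(\mathbf{t})$ to the analogous quantity with $\tilde{\Upsilon}_{1;n_0,\mathbf{t}}$ replaced by the measure $\tfrac1k\sum_i\delta_{\rho_i+t_i}$ minus the restriction of $\mu_0$ to $[0,R_0(\bm{\phi})]$, plus the contribution of the points $b_i$ outside $[-R_0,R_0]$ intersected with $[-R_0,R_0]$ — which is empty, so $\tilde{\Upsilon}_{1;n_0,\mathbf{t}}$ is literally $\tfrac1k\sum_{i=1}^{n_0}\delta_{\rho_i+t_i}-\mu_{0}|_{[-R_0,R_0]}$.

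Next I would expand $\mathcal{F}(\mathbf{t})$ bilinearly. The measure $\sigma_{\mathbf{t}}:=\tilde{\Upsilon}_{1;n_0,\mathbf{t}}$ has total mass $\sigma_{\mathbf{t}}(\mathbb{R})=\tfrac{n_0}{k}-\mu_0([0,R_0(\bm{\phi})])$, which by (\ref{Eq5.15}) is $O(\sqrt{\epsilon}\,(\log R_0 \log\log R_0)^{-1/2})$, hence $O(\sqrt\epsilon)$; and it is supported in $[-R_0(\bm{\phi}),R_0(\bm{\phi})]\subseteq[-k^2,k^2]$. The strategy is: (i) replace $\tilde{\xi}$ by its constant value $c_0$ on the window, incurring an error bounded by $\|\tilde{\xi}-c_0\|_{L^\infty(\text{supp}\,\sigma_{\mathbf{t}})}\cdot\|\sigma_{\mathbf{t}}\|_{TV}$; the $L^\infty$ error is $O(n\slash k\cdot (k/n)^{2/3}R_0\cdot\text{something})=O((k/n)^{1/3}R_0)$-type and in any case $O(1)$ after using $R_0\le 2^{T/10}$ and $k\le n^{1/20000}$, while $\|\sigma_{\mathbf t}\|_{TV}\le 2\mu_0([0,R_0])\le CR_0^{3/2}$ — so this needs care, but it is really the near-equality $\tilde\xi\equiv c_0$ on $[0,4(n/k)^{2/3}]$ exactly (since the semicircle equilibrium potential is flat on $[-2,2]$, $\xi$ is exactly constant there, so $\tilde\xi$ is exactly constant on $[0,4(n/k)^{2/3}]\supseteq[0,R_0(\bm\phi)]$ because $(k/n)^{2/3}R_0\le 1/10<4$). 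Given that exactness, $2\int\tilde\xi\,d\sigma_{\mathbf t}=2c_0\,\sigma_{\mathbf t}(\mathbb R)=O(\sqrt\epsilon)$ directly, using (\ref{Eq5.15}) and $|c_0|=O(1)$; (ii) for the double-integral term, use that $\mu_0|_{[0,R_0]}$ and $\tfrac1k\sum\delta_{\rho_i+t_i}$ are both close in Kantorovich--Rubinshtein-type distance — indeed the construction (\ref{rhh}), (\ref{Eq5.39}) places $\rho_i$ as quantiles of $\mu_0$ so that $\tfrac1k\sum\delta_{\rho_i}$ approximates $\mu_0|_{[0,R_0]}$ with per-point spacing controlled by (\ref{Eq9.6}) and (\ref{Eq5.19})--(\ref{Eq5.15.1}), and the perturbations $t_i\le n^{-1}$ are negligible. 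The standard estimate $|{-\iint\log|x-y|\,d\eta\,d\eta}|\le C\|\eta\|$ for a signed measure $\eta$ of total variation $\|\eta\|$, zero total mass, and support in a fixed compact set (plus the near-diagonal logarithmic-singularity bound using that consecutive $\rho_i$ are separated by at least $\sim k^{-1}(k/i)^{1/3}$ and at most $\sim k^{-1}$, so the self-energy $\tfrac1{k^2}\sum_{i\ne j}-\log|\rho_i-\rho_j|$ matches $-\iint\log|x-y|\,d\mu_0\,d\mu_0$ up to $O(\log k / k)=o(1)$) then gives that the double integral equals its value at $\mu_0|_{[0,R_0]}\times\mu_0|_{[0,R_0]}$ up to $O(\sqrt\epsilon+o(1))$.

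The step I expect to be the main obstacle is controlling the \emph{near-diagonal} part of the double logarithmic integral uniformly in $\mathbf t$ and in $\bm\phi$: one must show that replacing the empirical measure $\tfrac1k\sum_{i=1}^{n_0}\delta_{\rho_i+t_i}$ by $\mu_0|_{[0,R_0]}$ in the energy $-\iint\log|x-y|$ costs only $O(\sqrt\epsilon)$, and this requires the precise quantile spacing bounds (\ref{Eq9.4}), (\ref{Eq9.6}), (\ref{Eq5.19})--(\ref{Eq5.15.1}) to handle the integrable logarithmic singularity near the points $0$ where $\mu_0$ degenerates and the spacing $\rho_i-\rho_{i-1}$ grows like $(k/i)^{1/3}/k$. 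Concretely I would split $\int_0^{R_0}$ into a boundary block $[0,r_0]$ handled via (\ref{Eq9.4})/(\ref{Eq9.6}) (where the $\rho_i$ are quantiles of $\mu_0$ so the Riemann-sum comparison for $x\mapsto\int\log|x-y|\,d\mu_0(y)$, a Lipschitz-on-compacts, continuous function away from $x$, is $O(1/m_0'\cdot\text{Lip})=O(1/k)$ per point, $O(\mu_0([0,r_0])/k)$ total $=O(1/k)$) and a bulk block $[r_0,R_0]$ where (\ref{Eq5.19}), (\ref{Eq5.15.1}) give uniform spacing $\sim k^{-1}R_0^{-1/2}$ to $\sim k^{-1}$, and then bound the remaining self-energy defect by the usual $\sum \tfrac1{k^2}\log(1/\text{spacing})$ which is $O(R_0^{3/2}k^{-1}\log k)=o(1)$ since $R_0\le 2^{T/10}$ is fixed (depends only on $M,\epsilon$) while $k\to\infty$. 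Once these comparison lemmas are in place, assembling the $O(\sqrt\epsilon)$ bound is bookkeeping, and the final constant $C$ depends only on $\beta$ because all the structural constants ($c_0$, the semicircle self-energy, the quantile-spacing constants) are absolute or $\beta$-dependent and the $\epsilon$-free errors are made negligible by $k\ge K_0(M,\epsilon,\delta)$.
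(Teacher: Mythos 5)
Your overall skeleton is the same as the paper's: identify $\tilde{\Upsilon}_{1;n_0,\mathbf{t}}=\frac1k\sum_{i=1}^{n_0}\delta_{\rho_i+t_i}-\mu_0|_{[0,R_0(\bm{\phi})]}$, note that $\tilde{\xi}$ vanishes (is constant) on $[0,R_0(\bm{\phi})]$ so the potential term is controlled by the total-mass defect from (\ref{Eq5.15}), and then compare the empirical measure with $\mu_0$ cell by cell, splitting into the boundary block $[0,r_0]$ (quantile cells from (\ref{rhh})) and the bulk block $[r_0,R_0(\bm{\phi})]$ (cells of mass exactly $1/k$ from (\ref{Eq5.39})), using the spacing bounds (\ref{Eq9.6}), (\ref{Eq5.20})--(\ref{Eq5.15.1}). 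That matches Steps 1--3 of the paper's proof, and your treatment of the bulk self-energy defect ($O(R_0^2k^{-1}\log k)=o(1)$) and of the $t_i\le n^{-1}$ perturbations is in line with the paper.

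However, there is a genuine gap in how you get the $O(\sqrt{\epsilon})$ bound for the double integral. First, the ``standard estimate'' $\big|-\iint\log|x-y|\,d\eta\,d\eta\big|\le C\|\eta\|$ you invoke is not a valid estimate: $\sigma_{\mathbf{t}}$ does not have zero total mass, its total variation is of order $R_0(\bm{\phi})^{3/2}$ (not small), and the logarithmic kernel is neither bounded nor Lipschitz, so neither a TV bound nor closeness in Kantorovich--Rubinshtein distance controls the logarithmic energy of the difference; the entire content of the proposition is precisely this comparison, so the estimate cannot be cited, it must be proved. Second, and more importantly, your explicit error accounting never produces the $\sqrt{\epsilon}$ for the energy term. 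In the boundary block the atoms carry weight $1/k$ while each quantile cell carries mass $\mu_0([0,r_0])/(m_0'+1)$, and by (\ref{Eq5.15}) these differ by $O\big(k^{-1}\sqrt{\epsilon/(\log R_0(\bm{\phi})\log\log R_0(\bm{\phi}))}\big)$ per cell (this is (\ref{Eq9.9})). This systematic mass mismatch, weighted by the pairwise logarithms whose total is $O(k^2)$ (cf.\ (\ref{Eq9.12})--(\ref{Eq9.17})), is exactly what generates the dominant $C\sqrt{\epsilon}$ contribution in the paper's Sub-steps 1.1--1.2; your boundary-block estimate (``$O(1/k)$ per point, $O(1/k)$ total'') only captures the smoothness/Riemann-sum error and misses this term entirely. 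As written, your argument would either (incorrectly) yield an $o(1)$ bound with no $\epsilon$-dependence via the invalid energy estimate, or leave the claimed ``$O(\sqrt{\epsilon}+o(1))$'' unsupported. To close the gap you need to separate, in each boundary-block pair interaction, the weight discrepancy $\big|\frac1k-\frac{\mu_0([0,r_0])}{m_0'+1}\big|$ from the spacing/log-ratio errors and sum it against $\sum_{i<j}|\log(\rho_j-\rho_i)|$, as the paper does.
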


The rest of this subsection is devoted to the proofs of Propositions \ref{P5.3}-\ref{P5.7}.

\subsubsection{Proof of Proposition \ref{P5.3}}

\begin{proof}[Proof of Proposition \ref{P5.3}]

Throughout the proof, we assume that the event\\ $\mathcal{B}(\bm{\phi};\Lambda_0)\cap \mathcal{D}(\bm{\phi},n_0)\cap\mathcal{C}(\bm{\phi};\Lambda_0')$ holds. We also fix any $\mathbf{t}=(t_1,t_2,\cdots,t_{n_0})\in \mathscr{F}_{n_0}$. Note that
\begin{eqnarray}\label{Eq5.19.8}
&&-\int \log(|x-y|) d\Upsilon_1(x)d\Upsilon_2(y)+\int \log(|x-y|) d\tilde{\Upsilon}_{1;n_0,\mathbf{t}}(x)d\tilde{\Upsilon}_{2;n_0,\mathbf{t}}(y)\nonumber\\
&=&\sum_{i\in \{2,3\}}\sum_{j\in\{1,4,5\}}(\Xi_{i,j}-\tilde{\Xi}_{i,j;n_0,\mathbf{t}})\nonumber\\
&\geq& \sum_{i\in \{2,3\}}\sum_{j\in\{1,4\}}(\Xi_{i,j}-\tilde{\Xi}_{i,j;n_0,\mathbf{t}})-|(\Xi_{2,5}+\Xi_{3,5})-(\tilde{\Xi}_{2,5;n_0,\mathbf{t}}+\tilde{\Xi}_{3,5;n_0,\mathbf{t}})|.\nonumber\\
&& 
\end{eqnarray}
We bound the quantities on the right-hand side of (\ref{Eq5.19.8}) in \textbf{Steps 1-5} below.

\paragraph{Step 1} 

In this step, we bound $\Xi_{2,1}$ and $\tilde{\Xi}_{2,1;n_0,\mathbf{t}}$. 

As the event $\mathcal{B}(\bm{\phi},\Lambda_0)$ holds, by property (c) of Proposition \ref{P5.2}, we have $\tilde{N}(-k^2)=0$. Hence $b_i\geq -k^2$ for any $i\in [n]$. Therefore, 
\begin{equation}\label{Eq5.26}
  \kappa_1=\tilde{\kappa}_{1;n_0,\mathbf{t}}=\frac{1}{k}\sum_{\substack{i\in [n]:\\ -k^2\leq b_i< -R_0(\bm{\phi})}}\delta_{b_i}, \quad \kappa_2=\frac{1}{k}\sum_{\substack{i\in [n]:\\ -R_0(\bm{\phi})\leq b_i\leq 0}}\delta_{b_i}.
\end{equation}
For any $x\in \mathcal{L}_1, y\in\mathcal{L}_2$, we have $|x-y|\leq |x|$. Hence
\begin{eqnarray}\label{Eq5.23}
 && \Xi_{2,1}\geq -\int_{\mathbb{R}^2\backslash \Delta} \log(|x|) d\kappa_1(x) d\kappa_2(y)\nonumber\\
 &=& -k^{-2}|\{i\in [n]: -R_0(\bm{\phi})\leq b_i\leq 0\}|\Big(\sum_{\substack{i\in [n]: \\ -k^2\leq b_i<-R_0(\bm{\phi})}}\log(|b_i|)\Big).
\end{eqnarray}
By property (c) of Proposition \ref{P5.2} and (\ref{Eq5.11}), we have
\begin{equation}\label{Eq5.24}
 |\{i\in [n]: -R_0(\bm{\phi})\leq b_i\leq 0\}|\leq \tilde{N}(0)\leq CM k,
\end{equation}
\begin{eqnarray}\label{Eq5.22}
&& \sum_{\substack{i\in [n]: \\ -k^2\leq b_i<-R_0(\bm{\phi})}}\log(|b_i|)\nonumber\\
&\leq& \sum_{\substack{i\in [n]:\\ -\lceil R_0(\bm{\phi})\rceil \leq b_i<-R_0(\bm{\phi})}}\log(|b_i|)+\sum_{j=\lceil R_0(\bm{\phi})\rceil}^{k^2-1}\sum_{\substack{i\in [n]:\\ -(j+1)\leq b_i<-j}} \log(|b_i|)\nonumber\\
&\leq& \log(\lceil R_0(\bm{\phi})\rceil )\tilde{N}(-R_0(\bm{\phi}))+\sum_{j=\lceil R_0(\bm{\phi})\rceil}^{k^2-1}\log(j+1) \tilde{N}(-j)\nonumber\\
&\leq& \log(\lceil R_0(\bm{\phi})\rceil )\cdot\frac{CMk}{(R_0(\bm{\phi})+1)^{3\slash 2}}+\sum_{j=\lceil R_0(\bm{\phi})\rceil}^{k^2-1}\log(j+1)\cdot\frac{CMk }{(j+1)^{3\slash 2}}\nonumber\\
&\leq& \sum_{j=\lceil R_0(\bm{\phi})\rceil }^{\infty} CMk \log(j) j^{-3\slash 2}\leq \sum_{j=\lceil R_0(\bm{\phi})\rceil }^{\infty} CMk j^{-4\slash 3}\nonumber\\
&\leq& CMk \int_{\lceil R_0(\bm{\phi})\rceil-1}^{\infty} x^{-4\slash 3} dx\leq CMR_0(\bm{\phi})^{-1\slash 3} k,
\end{eqnarray}
where we use the inequality $\log(x)\leq 6 x^{1\slash 6}$ for any $x\geq 1$ in the fifth inequality in (\ref{Eq5.22}). By (\ref{Eq5.23})-(\ref{Eq5.22}), we have
\begin{equation}\label{Eq5.19.9}
  \Xi_{2,1}\geq -C M^2 R_0(\bm{\phi})^{-1\slash 3}.
\end{equation}

By (\ref{Eq5.25}), we have $\tilde{\kappa}_{2;n_0,\mathbf{t}}=0$. Hence
\begin{equation}\label{Eq5.19.10}
 \tilde{\Xi}_{2,1;n_0,\mathbf{t}}=0.
\end{equation}

\paragraph{Step 2}

In this step, we bound $\Xi_{3,1}$ and $\tilde{\Xi}_{3,1;n_0,\mathbf{t}}$.

By (\ref{Eq5.26}), we have
\begin{eqnarray}\label{Eq5.30}
\Xi_{3,1}&=&-\int_{\mathbb{R}^2\backslash \Delta} \log(|x-y|) d\kappa_1(x)d\kappa_3(y)\nonumber\\
&=& -k^{-1} \sum_{\substack{i\in [n]: \\ -k^2\leq b_i<-R_0(\bm{\phi})}} \int \log(|b_i-x|) d\kappa_3(x).
\end{eqnarray}
For any $i\in [n]$ such that $-k^2\leq b_i<-R_0(\bm{\phi})$, we have
\begin{eqnarray}\label{Eq5.27}
 && \int \log(|b_i-x|)d\kappa_3(x) = \frac{1}{k}\sum_{\substack{j\in [n]:\\0<b_j\leq R_0(\bm{\phi})}}\log(|b_i-b_j|)-\int_0^{R_0(\bm{\phi})}\log(|b_i-x|)d\mu_0(x) \nonumber\\
 &&= \frac{1}{k}\Big(\int_0^{R_0(\bm{\phi})} \log(|b_i-x|)d\tilde{N}(x)-\int_0^{R_0(\bm{\phi})}\log(|b_i-x|)d\tilde{N}_0(x)\Big), 
\end{eqnarray}
where the integrals in the second line are Riemann-Stieltjes integrals. By integration by parts, 
\begin{eqnarray}\label{Eq5.28}
\int_0^{R_0(\bm{\phi})} \log(|b_i-x|)d\tilde{N}(x)&=&\tilde{N}(R_0(\bm{\phi}))\log(|b_i-R_0(\bm{\phi})|)-\tilde{N}(0)\log(|b_i|)\nonumber\\
&&-\int_{0}^{R_0(\bm{\phi})}\frac{\tilde{N}(x)}{x-b_i}dx,
\end{eqnarray}
\begin{eqnarray}\label{Eq5.29}
\int_0^{R_0(\bm{\phi})}\log(|b_i-x|)d\tilde{N}_0(x)&=&\tilde{N}_0(R_0(\bm{\phi}))\log(|b_i-R_0(\bm{\phi})|)-\tilde{N}_0(0)\log(|b_i|)\nonumber\\
&&-\int_0^{R_0(\bm{\phi})}\frac{\tilde{N}_0(x)}{x-b_i}dx.
\end{eqnarray}
By (\ref{Eq5.27})-(\ref{Eq5.29}), for any $i\in [n]$ such that $-k^2\leq b_i<-R_0(\bm{\phi})$, we have
\begin{eqnarray}\label{Eq5.33}
\int \log(|b_i-x|)d\kappa_3(x)&=&\frac{1}{k}\Big((\Psi(R_0(\bm{\phi}))\log(|R_0(\bm{\phi})-b_i|)-\Psi(0)\log(|b_i|)\nonumber\\
&& -\int_0^{R_0(\bm{\phi})}\frac{\Psi(x)}{x-b_i}dx\Big).
\end{eqnarray}
By (\ref{Eq5.32}) and property (c) of Proposition \ref{P5.2},
\begin{equation}\label{Eq5.34}
 |\Psi(R_0(\bm{\phi}))|\leq Ck, \quad |\Psi(0)|=\tilde{N}(0)\leq CMk.
\end{equation}

Below we consider any $i\in [n]$ such that $-k^2\leq b_i<-R_0(\bm{\phi})$. By (\ref{Eq5.11}), $|b_i|\geq R_0(\bm{\phi})\geq 2000$, hence
\begin{equation}\label{Eq5.35}
 |\log(|R_0(\bm{\phi})-b_i|)|=\log(R_0(\bm{\phi})-b_i)\leq \log(2|b_i|)\leq 2\log(|b_i|),
\end{equation}
\begin{equation}\label{Eq5.36}
  |\log(|b_i|)|=\log(|b_i|).
\end{equation}
By (\ref{Eq2.2.1}) and property (b) of Proposition \ref{P5.2}, 
\begin{equation}
 \int_0^{R_0(\bm{\phi})}\frac{\Psi(x)^2}{x-b_i}dx\leq \int_0^{R_0(\bm{\phi})}\frac{\Psi(x)^2}{x+1}dx\leq \int_0^{R_0'(\bm{\phi})}\frac{\Psi(x)^2}{x+1}dx\leq CM^2 k^2.
\end{equation}
Hence by the Cauchy-Schwarz inequality,
\begin{eqnarray}\label{Eq5.37}
   \Big|\int_0^{R_0(\bm{\phi})}\frac{\Psi(x)}{x-b_i}dx\Big|&\leq& \Big(\int_0^{R_0(\bm{\phi})}\frac{\Psi(x)^2}{x-b_i}dx\Big)^{1\slash 2}\Big(\int_0^{R_0(\bm{\phi})}\frac{1}{x-b_i}dx\Big)^{1\slash 2}   \nonumber\\
 &\leq& CMk \Big(\int_0^{R_0(\bm{\phi})}\frac{1}{x+R_0(\bm{\phi})}dx\Big)^{1\slash 2} \leq CMk.
\end{eqnarray}
By (\ref{Eq5.33})-(\ref{Eq5.36}) and (\ref{Eq5.37}), 
\begin{equation}\label{Eq5.38}
 \Big|\int \log(|b_i-x|)d\kappa_3(x)\Big|\leq CM\log(|b_i|).
\end{equation}

By (\ref{Eq5.22}), (\ref{Eq5.30}), and (\ref{Eq5.38}), we conclude that
\begin{equation}\label{Eq5.19.11}
 |\Xi_{3,1}|\leq CMk^{-1}\sum_{\substack{i\in [n]:\\-k^2\leq b_i<-R_0(\bm{\phi})}}\log(|b_i|)\leq CM^2 R_0(\bm{\phi})^{-1\slash 3}.
\end{equation}

By (\ref{Eq5.26}), we have
\begin{equation}\label{Eq5.48}
 \tilde{\Xi}_{3,1;n_0,\mathbf{t}}=-k^{-1}\sum_{\substack{i\in[n]:\\-k^2\leq b_i<-R_0(\bm{\phi})}} \int\log(|b_i-x|)d\tilde{\kappa}_{3;n_0,\mathbf{t}}(x).
\end{equation}

Below we consider any $i\in [n]$ such that $-k^2\leq b_i<-R_0(\bm{\phi})$. By (\ref{Eq5.25}), 
\begin{equation}\label{Eq5.47}
\int\log(|b_i-x|)d\tilde{\kappa}_{3;n_0,\mathbf{t}}(x)=\frac{1}{k}\sum_{j=1}^{n_0}\log(|\rho_j+t_j-b_i|)-\int_0^{R_0(\bm{\phi})}\log(|x-b_i|)d\mu_0(x).
\end{equation}

For any $j\in [m_0'+1,n_0]\cap\mathbb{Z}$, $\rho_j\geq r_0\geq 1$. By (\ref{Eq5.16}), (\ref{Eq5.39}), and (\ref{Eq5.25}), 
\begin{equation}
 \frac{1}{k}=\mu_0([\rho_j,\rho_{j+1}])\geq \frac{1}{2\pi}\int_{\rho_j}^{\rho_{j+1}}\sqrt{x}dx\geq \frac{\rho_{j+1}-\rho_j}{2\pi}, \text{ hence } \rho_{j+1}-\rho_j\leq \frac{2\pi}{k},
\end{equation}
% \begin{eqnarray}\label{Eq5.40}
%  && \frac{1}{k}=\mu_0([\rho_j,\rho_{j+1}])\leq \frac{1}{\pi}\int_{\rho_j}^{\rho_{j+1}}\sqrt{x}dx\leq \frac{R_0(\bm{\phi})^{1\slash 2}(\rho_{j+1}-\rho_j)}{\pi},\nonumber\\
%  && \quad\quad\text{ hence } \rho_{j+1}-\rho_j\geq \frac{\pi}{R_0(\bm{\phi})^{1\slash 2} k}\geq \frac{\pi}{2^{T\slash 20}  k}\geq \frac{2}{n}.
% \end{eqnarray}
Hence for any $j\in [m_0'+1,n_0]\cap\mathbb{Z}$ and $x\in [\rho_j,\rho_{j+1}]$, 
\begin{equation}\label{Eq5.41}
 |\rho_j+t_j-x|\leq |\rho_j-x|+t_j\leq \rho_{j+1}-\rho_j+n^{-1}\leq 2\pi k^{-1}+n^{-1}\leq 10 k^{-1}.
\end{equation}
By (\ref{Eq5.41}) and the inequality $\log(1+z)\leq z$ for any $z>-1$, we have
\begin{equation}
 \log\Big(\frac{\rho_j+t_j-b_i}{x-b_i}\Big)=\log\Big(1+\frac{\rho_j+t_j-x}{x-b_i}\Big)\leq \frac{\rho_j+t_j-x}{x-b_i}\leq \frac{10}{R_0(\bm{\phi}) k},
\end{equation}
\begin{equation}
\log\Big(\frac{x-b_i}{\rho_j+t_j-b_i}\Big)=\log\Big(1+\frac{x-(\rho_j+t_j)}{\rho_j+t_j-b_i}\Big)\leq \frac{x-(\rho_j+t_j)}{\rho_j+t_j-b_i}\leq \frac{10}{R_0(\bm{\phi}) k}.
\end{equation}
Hence we have
\begin{equation}\label{Eq5.42}
 |\log(|\rho_j+t_j-b_i|)-\log(|x-b_i|)|\leq \frac{10}{R_0(\bm{\phi}) k}. 
\end{equation}

By (\ref{Eq5.39}) and (\ref{Eq5.42}), we have
\begin{eqnarray}\label{Eq5.44}
&& \Big|\frac{1}{k}\sum_{j=m_0'+1}^{n_0}\log(|\rho_j+t_j-b_i|)-\int_{r_0}^{R_0(\bm{\phi})}\log(|x-b_i|)d\mu_0(x)\Big|\nonumber\\
&=& \Big|\sum_{j=m_0'+1}^{n_0}\int_{\rho_j}^{\rho_{j+1}}(\log(|\rho_j+t_j-b_i|)-\log(|x-b_i|))d\mu_0(x)\Big|\nonumber\\
&\leq& \frac{10}{R_0(\bm{\phi})k}\sum_{j=m_0'+1}^{n_0}\mu_0([\rho_j,\rho_{j+1}])=\frac{10\mu_0([r_0,R_0(\bm{\phi})])}{R_0(\bm{\phi})k}\nonumber\\
&\leq& \frac{10}{\pi R_0(\bm{\phi}) k}\int_{r_0}^{R_0(\bm{\phi})}\sqrt{x}dx\leq \frac{10}{\pi R_0(\bm{\phi}) k}\cdot \frac{2}{3}R_0(\bm{\phi})^{3\slash 2}\nonumber\\
&\leq& 5 R_0(\bm{\phi})^{1\slash 2} k^{-1}.
\end{eqnarray}
By (\ref{Eq5.11}), for any $x\in [0,r_0]$, we have
\begin{equation*}
    x-b_i\geq -b_i\geq R_0(\bm{\phi})\geq 2000, \quad x-b_i\leq 10-b_i\leq  R_0(\bm{\phi})-b_i\leq -2b_i=2|b_i|.
\end{equation*}
Hence we have
\begin{eqnarray}\label{Eq5.45}
&& \Big|\int_0^{r_0}   \log(|x-b_i|) d\mu_0(x) \Big|=\int_0^{r_0}   \log(|x-b_i|) d\mu_0(x) \nonumber\\
&\leq& \log(2|b_i|)\pi^{-1}\int_{0}^{r_0}\sqrt{x}dx\leq 2\pi^{-1}r_0^{3\slash 2} \log(|b_i|)\leq 30\log(|b_i|).
\end{eqnarray}
For any $j\in [m_0']$, by (\ref{Eq5.11}) and (\ref{Eq5.25}), $\rho_j+t_j-b_i\leq R_0(\bm{\phi})-b_i\leq -2b_i=2|b_i|$ and $\rho_j+t_j-b_i\geq -b_i\geq R_0(\bm{\phi})\geq 2000$. Hence by (\ref{Eq5.43}), we have
\begin{eqnarray}\label{Eq5.46}
    \Big|\frac{1}{k}\sum_{j=1}^{m_0'}\log(|\rho_j+t_j-b_i|)\Big|&=&\frac{1}{k}\sum_{j=1}^{m_0'}\log(\rho_j+t_j-b_i)\leq m_0' k^{-1}\log(2|b_i|)\nonumber \\
   &\leq& 2 m_0' k^{-1} \log(|b_i|)\leq C\log(|b_i|).
\end{eqnarray}
By (\ref{Eq5.47}) and (\ref{Eq5.44})-(\ref{Eq5.46}), as $\log(|b_i|)\geq \log(R_0(\bm{\phi}))\geq 1\geq 5R_0(\bm{\phi})^{1\slash 2} k^{-1}$, 
\begin{equation}\label{Eq5.49}
 \Big|\int\log(|b_i-x|)d\tilde{\kappa}_{3;n_0,\mathbf{t}}(x)\Big|\leq C\log(|b_i|).
\end{equation}

By (\ref{Eq5.22}), (\ref{Eq5.48}), and (\ref{Eq5.49}), we conclude that
\begin{equation}\label{Eq5.19.12}
|\tilde{\Xi}_{3,1;n_0,\mathbf{t}}|\leq Ck^{-1}\sum_{\substack{i\in [n]:\\ -k^2\leq b_i<-R_0(\bm{\phi})}} \log(|b_i|)\leq CM R_0(\bm{\phi})^{-1\slash 3}.
\end{equation}

\paragraph{Step 3}

In this step, we bound $\Xi_{2,4}+\Xi_{3,4}$.

We define signed Borel measures $\{\eta_i\}_{i=0}^3$ and $\{\eta'_i\}_{i=1}^2$ on $\mathbb{R}$ such that for any $A\in\mathcal{B}_{\mathbb{R}}$,
\begin{eqnarray}
 && \eta_0(A)=\mu_{n,k}(A\cap (-R_0(\bm{\phi}),R_0(\bm{\phi})]),\quad \eta_1(A)=\mu_{n,k}(A\cap \{-R_0(\bm{\phi})\}),\nonumber\\
 && \eta_2(A)=\mu_{n,k}(A\cap (-R_0(\bm{\phi}),R_0(\bm{\phi})-R_0(\bm{\phi})^{-4}]),\nonumber\\
 && \eta_3(A)=\mu_{n,k}(A\cap (R_0(\bm{\phi})-R_0(\bm{\phi})^{-4},R_0(\bm{\phi})]),\nonumber\\
 && \eta'_1(A)=\mu_{n,k}(A\cap (R_0(\bm{\phi}),R_0(\bm{\phi})+R_0(\bm{\phi})^{-4}]),\nonumber\\
 && \eta'_2(A)=\mu_{n,k}(A\cap (R_0(\bm{\phi})+R_0(\bm{\phi})^{-4},R_0'(\bm{\phi})]).
\end{eqnarray}

Let 
\begin{eqnarray}
&& \mathscr{R}_1:=-\int \log(|x-y|)d\eta_1(x)d\kappa_4(y),  \mathscr{R}_2:=-\int \log(|x-y|)d\eta_3(x)d\eta'_1(y),\nonumber\\
&& \mathscr{R}_3:=-\int \log(|x-y|)d\eta_0(x) d\eta'_2(y), \mathscr{R}_4:=-\int \log(|x-y|)d\eta_2(x) d\eta'_1(y). \nonumber\\
&&
\end{eqnarray}
Note that
\begin{equation}\label{Eq5.20.1}
\Xi_{2,4}+\Xi_{3,4}=\mathscr{R}_1+\mathscr{R}_2+\mathscr{R}_3+\mathscr{R}_4.
\end{equation}
In the following, we bound $\mathscr{R}_1, \mathscr{R}_2, \mathscr{R}_3, \mathscr{R}_4$ in \textbf{Sub-steps 3.1-3.4}, respectively. 

\subparagraph{Sub-step 3.1}

In this sub-step, we bound $\mathscr{R}_1$. If there does not exist any $i\in [n]$ such that $b_i=-R_0(\bm{\phi})$, then $\eta_1=0$ and $\mathscr{R}_1=0$. Otherwise, we have $\eta_1=k^{-1}\delta_{-R_0(\bm{\phi})}$ and $\mathscr{R}_1=-k^{-1}\int \log(y+R_0(\bm{\phi}))d\kappa_4(y)$. Hence
\begin{eqnarray}
 |\mathscr{R}_1|&\leq& 
(k^{-1}|\{i\in [n]:R_0(\bm{\phi})<b_i\leq R_0'(\bm{\phi})\}|+\mu_0((R_0(\bm{\phi}),R_0'(\bm{\phi})]))\nonumber\\
 && \times k^{-1}\log(R_0(\bm{\phi})+R_0'(\bm{\phi})).
\end{eqnarray}
By (\ref{Eq2.2.1}), $\log(R_0(\bm{\phi})+R_0'(\bm{\phi}))\leq \log(2R_0(\bm{\phi})^{20})\leq 40\log(R_0(\bm{\phi}))$. By properties (a) and (e) of Proposition \ref{P5.2},  
\begin{eqnarray}
 &&  |\{i\in [n]:R_0(\bm{\phi})<b_i\leq R_0'(\bm{\phi})\}|=\tilde{N}(R_0'(\bm{\phi}))-\tilde{N}(R_0(\bm{\phi})) \nonumber\\
 &=& \Psi(R_0'(\bm{\phi}))-\Psi(R_0(\bm{\phi}))+\tilde{N}_0(R   _0'(\bm{\phi}))-\tilde{N}_0(R_0(\bm{\phi}))\nonumber\\
 &\leq& C\sqrt{\frac{\epsilon}{\log(R_0(\bm{\phi}))}}k+\mu_0([R_0(\bm{\phi}),R_0'(\bm{\phi})])k.
\end{eqnarray}
Hence noting $\mu_0([R_0(\bm{\phi}),R_0'(\bm{\phi})])\leq \pi^{-1}\int_{R_0(\bm{\phi})}^{R_0'(\bm{\phi})}\sqrt{x}dx\leq R_0'(\bm{\phi})^{3\slash 2}$, we obtain
\begin{equation}\label{Eq5.20.2}
 |\mathscr{R}_1|\leq C R_0'(\bm{\phi})^{3\slash 2}\log(R_0(\bm{\phi}))k^{-1}.
\end{equation}

\subparagraph{Sub-step 3.2}

In this sub-step, we bound $\mathscr{R}_2$. By (\ref{Eq5.11}), for any 
\begin{equation*}
x\in (R_0(\bm{\phi})-R_0(\bm{\phi})^{-4},R_0(\bm{\phi})], \quad y\in (R_0(\bm{\phi}),R_0(\bm{\phi})+R_0(\bm{\phi})^{-4}],
\end{equation*}
we have $|x-y|\leq 2R_0(\bm{\phi})^{-4}<1$. Hence
\begin{eqnarray}\label{Eq5.51}
&& \mathscr{R}_2= -k^{-2}\int_{R_0(\bm{\phi})-R_0(\bm{\phi})^{-4}}^{R_0(\bm{\phi})}\int_{R_0(\bm{\phi})}^{R_0(\bm{\phi})+R_0(\bm{\phi})^{-4}} \log(|x-y|)d\Psi(x) d\Psi(y)\nonumber\\
&\geq& k^{-2}\int_{R_0(\bm{\phi})-R_0(\bm{\phi})^{-4}}^{R_0(\bm{\phi})}\int_{R_0(\bm{\phi})}^{R_0(\bm{\phi})+R_0(\bm{\phi})^{-4}} \log(|x-y|)d\tilde{N}(x) d\tilde{N}_0(y)\nonumber\\
&&+k^{-2}\int_{R_0(\bm{\phi})-R_0(\bm{\phi})^{-4}}^{R_0(\bm{\phi})}\int_{R_0(\bm{\phi})}^{R_0(\bm{\phi})+R_0(\bm{\phi})^{-4}} \log(|x-y|)d\tilde{N}_0(x) d\tilde{N}(y).
\end{eqnarray}

For any $x\in (R_0(\bm{\phi})-R_0(\bm{\phi})^{-4},R_0(\bm{\phi})]$, we have
\begin{eqnarray}
&& -\int_{R_0(\bm{\phi})}^{R_0(\bm{\phi})+R_0(\bm{\phi})^{-4}}\log(|x-y|)d\tilde{N}_0(y)\nonumber\\
&\leq&\frac{k}{\pi}\int_{R_0(\bm{\phi})}^{R_0(\bm{\phi})+R_0(\bm{\phi})^{-4}}(-\log(y-R_0(\bm{\phi})))\sqrt{y}dy\nonumber\\
&\leq& \frac{k\sqrt{2 R_0(\bm{\phi})}}{\pi}\int_0^{R_0(\bm{\phi})^{-4}}(-\log(y))dy=\frac{\sqrt{2}k(4\log(R_0(\bm{\phi}))+1)}{\pi R_0(\bm{\phi})^{7\slash 2}}\nonumber\\
&\leq& C R_0(\bm{\phi})^{-7\slash 2}\log(R_0(\bm{\phi})) k.
\end{eqnarray}
By (\ref{Eq2.1.10}), as $M\geq M_0$ is sufficiently large and $\epsilon\in (0,\epsilon_0)$ is sufficiently small, 
\begin{equation}\label{Eq5.50}
 R_0(\bm{\phi})^{-4}\leq \frac{R_0(\bm{\phi})}{16\log(R_0(\bm{\phi}))}.
\end{equation}
Hence by property (a) of Proposition \ref{P5.2}, we have
\begin{eqnarray}
&& \tilde{N}(R_0(\bm{\phi}))-\tilde{N}(R_0(\bm{\phi})-R_0(\bm{\phi})^{-4}) \nonumber\\
&=& \Psi(R_0(\bm{\phi}))-\Psi(R_0(\bm{\phi})-R_0(\bm{\phi})^{-4})+\tilde{N}_0(R_0(\bm{\phi}))-\tilde{N}_0(R_0(\bm{\phi})-R_0(\bm{\phi})^{-4})\nonumber\\
&\leq& C\sqrt{\frac{\epsilon}{\log(R_0(\bm{\phi}))\log\log(R_0(\bm{\phi}))}}k+k\mu_0([R_0(\bm{\phi})-R_0(\bm{\phi})^{-4},R_0(\bm{\phi})])\nonumber\\
&\leq& C\sqrt{\frac{\epsilon}{\log(R_0(\bm{\phi}))\log\log(R_0(\bm{\phi}))}}k+\frac{k}{\pi}\int^{R_0(\bm{\phi})}_{R_0(\bm{\phi})-R_0(\bm{\phi})^{-4}}\sqrt{x}dx\nonumber\\
&\leq& C\sqrt{\frac{\epsilon}{\log(R_0(\bm{\phi}))\log\log(R_0(\bm{\phi}))}}k+\frac{k}{\pi R_0(\bm{\phi})^{7\slash 2}}\leq Ck.
\end{eqnarray}
Hence
\begin{eqnarray}\label{Eq5.61}
&& -k^{-2}\int_{R_0(\bm{\phi})-R_0(\bm{\phi})^{-4}}^{R_0(\bm{\phi})}\int_{R_0(\bm{\phi})}^{R_0(\bm{\phi})+R_0(\bm{\phi})^{-4}} \log(|x-y|)d\tilde{N}(x) d\tilde{N}_0(y) \nonumber\\
&\leq& C R_0(\bm{\phi})^{-7\slash 2}\log(R_0(\bm{\phi})) k^{-1}(\tilde{N}(R_0(\bm{\phi}))-\tilde{N}(R_0(\bm{\phi})-R_0(\bm{\phi})^{-4}))\nonumber\\
&\leq &  C R_0(\bm{\phi})^{-7\slash 2}\log(R_0(\bm{\phi})).
\end{eqnarray}

For any $y\in (R_0(\bm{\phi}),R_0(\bm{\phi})+R_0(\bm{\phi})^{-4}]$, we have
\begin{eqnarray}
&& -\int_{R_0(\bm{\phi})-R_0(\bm{\phi})^{-4}}^{R_0(\bm{\phi})}\log(|x-y|)d\tilde{N}_0(x) \nonumber\\
&\leq& \frac{k}{\pi}\int_{R_0(\bm{\phi})-R_0(\bm{\phi})^{-4}}^{R_0(\bm{\phi})}(-\log(R_0(\bm{\phi})-x))\sqrt{x} dx\nonumber\\
&\leq& \frac{k\sqrt{R_0(\bm{\phi})}}{\pi}\int_0^{R_0(\bm{\phi})^{-4}}(-\log(y))dy=\frac{k(4\log(R_0(\bm{\phi}))+1)}{\pi R_0(\bm{\phi})^{7\slash 2}}\nonumber\\
&\leq& C R_0(\bm{\phi})^{-7\slash 2}\log(R_0(\bm{\phi})) k.
\end{eqnarray}
Noting (\ref{Eq5.50}), by property (a) of Proposition \ref{P5.2}, we have
\begin{eqnarray}
 && \tilde{N}(R_0(\bm{\phi})+R_0(\bm{\phi})^{-4})-\tilde{N}(R_0(\bm{\phi})) \nonumber\\
 &=& \Psi(R_0(\bm{\phi})+R_0(\bm{\phi})^{-4})-\Psi(R_0(\bm{\phi}))+\tilde{N}_0(R_0(\bm{\phi})+R_0(\bm{\phi})^{-4})-\tilde{N}_0(R_0(\bm{\phi}))\nonumber\\
 &\leq& C\sqrt{\frac{\epsilon}{\log(R_0(\bm{\phi}))\log\log(R_0(\bm{\phi}))}}k+k\mu_0([R_0(\bm{\phi}),R_0(\bm{\phi})+R_0(\bm{\phi})^{-4}])\nonumber\\
 &\leq& C\sqrt{\frac{\epsilon}{\log(R_0(\bm{\phi}))\log\log(R_0(\bm{\phi}))}}k+\frac{k}{\pi}\int_{R_0(\bm{\phi})}^{R_0(\bm{\phi})+R_0(\bm{\phi})^{-4}}\sqrt{x}dx\nonumber\\
 &\leq& C\sqrt{\frac{\epsilon}{\log(R_0(\bm{\phi}))\log\log(R_0(\bm{\phi}))}}k+\frac{\sqrt{2}k}{\pi R_0(\bm{\phi})^{7\slash 2}}\leq Ck.
\end{eqnarray}
Hence
\begin{eqnarray}\label{Eq5.62}
&& -k^{-2}\int_{R_0(\bm{\phi})-R_0(\bm{\phi})^{-4}}^{R_0(\bm{\phi})}\int_{R_0(\bm{\phi})}^{R_0(\bm{\phi})+R_0(\bm{\phi})^{-4}} \log(|x-y|)d\tilde{N}_0(x) d\tilde{N}(y) \nonumber\\
&\leq& C R_0(\bm{\phi})^{-7\slash 2}\log(R_0(\bm{\phi}))k^{-1}(\tilde{N}(R_0(\bm{\phi})+R_0(\bm{\phi})^{-4})-\tilde{N}(R_0(\bm{\phi})) )\nonumber\\
&\leq& C R_0(\bm{\phi})^{-7\slash 2}\log(R_0(\bm{\phi})).
\end{eqnarray}

By (\ref{Eq5.51}), (\ref{Eq5.61}), and (\ref{Eq5.62}), we conclude that
\begin{equation}\label{Eq5.20.3}
\mathscr{R}_2\geq -C R_0(\bm{\phi})^{-7\slash 2}\log(R_0(\bm{\phi})).
\end{equation}

\subparagraph{Sub-step 3.3}

In this sub-step, we bound $\mathscr{R}_3$. Let 
\begin{equation}\label{Eq5.63}
 \mathscr{S}_3=\int_{-R_0(\bm{\phi})}^{R_0(\bm{\phi})}\int_{R_0(\bm{\phi})+R_0(\bm{\phi})^{-4}}^{R_0'(\bm{\phi})}\log(|x-y|) d\Psi(x)d\Psi(y).
\end{equation}
Note that
\begin{equation}\label{Eq5.1.1}
\mathscr{R}_3=-k^{-2}\mathscr{S}_3.
\end{equation}

For any $y\in (R_0(\bm{\phi})+R_0(\bm{\phi})^{-4}, R_0'(\bm{\phi})]$, by integration by parts, we have
\begin{eqnarray}\label{Eq5.64}
 \int_{-R_0(\bm{\phi})}^{R_0(\bm{\phi})}\log(|x-y|)d\Psi(x)&=& \int_{-R_0(\bm{\phi})}^{R_0(\bm{\phi})}\frac{\Psi(x)}{y-x}dx+\log(y-R_0(\bm{\phi}))\Psi(R_0(\bm{\phi}))\nonumber\\
 && -\log(y+R_0(\bm{\phi}))\Psi(-R_0(\bm{\phi})).
\end{eqnarray}
By (\ref{Eq5.63}) and (\ref{Eq5.64}), 
\begin{eqnarray}\label{Eq5.66}
\mathscr{S}_3&=&\int_{-R_0(\bm{\phi})}^{R_0(\bm{\phi})}\Psi(x)dx \int_{R_0(\bm{\phi})+R_0(\bm{\phi})^{-4}}^{R_0'(\bm{\phi})}\frac{1}{y-x}d\Psi(y)\nonumber\\
&& +\Psi(R_0(\bm{\phi}))\int_{R_0(\bm{\phi})+R_0(\bm{\phi})^{-4}}^{R_0'(\bm{\phi})} \log(y-R_0(\bm{\phi}))d\Psi(y)\nonumber\\
&& - \Psi(-R_0(\bm{\phi}))\int_{R_0(\bm{\phi})+R_0(\bm{\phi})^{-4}}^{R_0'(\bm{\phi})}\log(y+R_0(\bm{\phi}))d\Psi(y).
\end{eqnarray}
Now note that by integration by parts,
\begin{eqnarray}
 && \int_{R_0(\bm{\phi})+R_0(\bm{\phi})^{-4}}^{R_0'(\bm{\phi})} \log(y-R_0(\bm{\phi}))d\Psi(y)= -\int_{R_0(\bm{\phi})+R_0(\bm{\phi})^{-4}}^{R_0'(\bm{\phi})}\frac{\Psi(y)}{y-R_0(\bm{\phi})}dy\nonumber\\
 &&+\log(R_0'(\bm{\phi})-R_0(\bm{\phi}))\Psi(R_0'(\bm{\phi}))-\log(R_0(\bm{\phi})^{-4})\Psi(R_0(\bm{\phi})+R_0(\bm{\phi})^{-4}),\nonumber\\
 &&
\end{eqnarray}
\begin{eqnarray}
&& \int_{R_0(\bm{\phi})+R_0(\bm{\phi})^{-4}}^{R_0'(\bm{\phi})}\log(y+R_0(\bm{\phi}))d\Psi(y)=-\int_{R_0(\bm{\phi})+R_0(\bm{\phi})^{-4}}^{R_0'(\bm{\phi})}\frac{\Psi(y)}{y+R_0(\bm{\phi})}dy  \nonumber\\
&&+\log(R_0'(\bm{\phi})+R_0(\bm{\phi}))\Psi(R_0'(\bm{\phi}))-\log(2R_0(\bm{\phi})+R_0(\bm{\phi})^{-4})\Psi(R_0(\bm{\phi})+R_0(\bm{\phi})^{-4}). \nonumber\\
&&
\end{eqnarray}
For any $x\in (-R_0(\bm{\phi}),R_0(\bm{\phi})]$, by integration by parts,
\begin{eqnarray}\label{Eq5.67}
 \int_{R_0(\bm{\phi})+R_0(\bm{\phi})^{-4}}^{R_0'(\bm{\phi})}\frac{1}{y-x}d\Psi(y)&=&\int_{R_0(\bm{\phi})+R_0(\bm{\phi})^{-4}}^{R_0'(\bm{\phi})}\frac{\Psi(y)}{(y-x)^2}dy+\frac{\Psi(R_0'(\bm{\phi}))}{R_0'(\bm{\phi})-x}\nonumber\\
 &&-\frac{\Psi(R_0(\bm{\phi})+R_0(\bm{\phi})^{-4})}{R_0(\bm{\phi})+R_0(\bm{\phi})^{-4}-x}.
\end{eqnarray}
By (\ref{Eq5.66})-(\ref{Eq5.67}), we obtain that
\begin{eqnarray}\label{Eq5.68}
 \mathscr{S}_3&=& \Psi(R_0(\bm{\phi}))\Psi(R_0'(\bm{\phi}))\log(R_0'(\bm{\phi})-R_0(\bm{\phi}))\nonumber\\
 &&-\Psi(R_0(\bm{\phi}))\Psi(R_0(\bm{\phi})+R_0(\bm{\phi})^{-4})\log(R_0(\bm{\phi})^{-4})\nonumber\\
 && -\Psi(-R_0(\bm{\phi}))\Psi(R_0'(\bm{\phi}))\log(R_0'(\bm{\phi})+R_0(\bm{\phi}))\nonumber\\
 && +\Psi(-R_0(\bm{\phi}))\Psi(R_0(\bm{\phi})+R_0(\bm{\phi})^{-4})\log(2R_0(\bm{\phi})+R_0(\bm{\phi})^{-4})\nonumber\\
 && -\Psi(R_0(\bm{\phi}))\int_{R_0(\bm{\phi})+R_0(\bm{\phi})^{-4}}^{R_0'(\bm{\phi})}\frac{\Psi(y)}{y-R_0(\bm{\phi})}dy\nonumber\\
 && +\Psi(-R_0(\bm{\phi}))\int_{R_0(\bm{\phi})+R_0(\bm{\phi})^{-4}}^{R_0'(\bm{\phi})}\frac{\Psi(y)}{y+R_0(\bm{\phi})}dy\nonumber\\
 && +\Psi(R_0'(\bm{\phi}))\int_{-R_0(\bm{\phi})}^{R_0(\bm{\phi})}\frac{\Psi(x)}{R_0'(\bm{\phi})-x}dx\nonumber\\
 && -\Psi(R_0(\bm{\phi})+R_0(\bm{\phi})^{-4})\int_{-R_0(\bm{\phi})}^{R_0(\bm{\phi})} \frac{\Psi(x)}{R_0(\bm{\phi})+R_0(\bm{\phi})^{-4}-x}dx\nonumber\\
 &&+\int_{-R_0(\bm{\phi})}^{R_0(\bm{\phi})} \int_{R_0(\bm{\phi})+R_0(\bm{\phi})^{-4}}^{R_0'(\bm{\phi})}\frac{\Psi(x)\Psi(y)}{(y-x)^2}dxdy.
\end{eqnarray}

In the following, we bound the terms on the right-hand side of (\ref{Eq5.68}) in \textbf{Parts 3.3.1-3.3.9}.

\textbf{Part 3.3.1} By properties (a) and (e) of Proposition \ref{P5.2} and (\ref{Eq2.2.1}), we have
\begin{eqnarray}\label{Eq5.1.2}
&&  |\Psi(R_0(\bm{\phi}))\Psi(R_0'(\bm{\phi}))\log(R_0'(\bm{\phi})-R_0(\bm{\phi}))|\nonumber\\
&\leq& \frac{C\epsilon k^2}{\log(R_0(\bm{\phi}))\sqrt{\log\log(R_0(\bm{\phi}))}}\cdot \log(R_0(\bm{\phi})^{20})\leq  \frac{C\epsilon k^2}{\sqrt{\log\log(R_0(\bm{\phi}))}}.\nonumber\\
&&
\end{eqnarray}

\textbf{Part 3.3.2} By (\ref{Eq5.50}) and property (a) of Proposition \ref{P5.2}, we have
\begin{eqnarray}\label{Eq5.1.3}
&&  |\Psi(R_0(\bm{\phi}))\Psi(R_0(\bm{\phi})+R_0(\bm{\phi})^{-4})\log(R_0(\bm{\phi})^{-4})| \nonumber\\
&\leq& \frac{C\epsilon k^2}{\log(R_0(\bm{\phi}))\log\log(R_0(\bm{\phi}))}\cdot \log(R_0(\bm{\phi}))\leq \frac{C\epsilon k^2}{\log\log(R_0(\bm{\phi}))}.
\end{eqnarray}

\textbf{Part 3.3.3} By (\ref{Eq2.2.1}), (\ref{Eq5.11}), and properties (c) and (e) of Proposition \ref{P5.2},
\begin{eqnarray}\label{Eq5.1.4}
&& |\Psi(-R_0(\bm{\phi}))\Psi(R_0'(\bm{\phi}))\log(R_0'(\bm{\phi})+R_0(\bm{\phi}))|\nonumber\\
&\leq& \frac{CM k}{R_0(\bm{\phi})^{3\slash 2}}\cdot \sqrt{\frac{\epsilon}{\log(R_0(\bm{\phi}))}}k\cdot \log(2R_0(\bm{\phi})^{20})\leq \frac{CM\log(R_0(\bm{\phi})) k^2}{R_0(\bm{\phi})^{3\slash 2}}.\nonumber\\
&&
\end{eqnarray}

\textbf{Part 3.3.4} By (\ref{Eq2.2.1}), (\ref{Eq5.11}), (\ref{Eq5.50}), and properties (a) and (c) of Proposition \ref{P5.2}, 
we have 
\begin{eqnarray}\label{Eq5.1.5}
&& |\Psi(-R_0(\bm{\phi}))\Psi(R_0(\bm{\phi})+R_0(\bm{\phi})^{-4})\log(2R_0(\bm{\phi})+R_0(\bm{\phi})^{-4})| \nonumber\\
&\leq& \frac{CMk}{R_0(\bm{\phi})^{3\slash 2}} \cdot \sqrt{\frac{\epsilon}{\log(R_0(\bm{\phi}))\log\log(R_0(\bm{\phi}))}}k\cdot \log(R_0(\bm{\phi}))\nonumber\\
&\leq& \frac{CM \log(R_0(\bm{\phi})) k^2}{R_0(\bm{\phi})^{3\slash 2}}. 
\end{eqnarray}

\textbf{Part 3.3.5} By (\ref{Eq5.50}) and property (a) of Proposition \ref{P5.2}, we have
\begin{eqnarray}\label{Eq5.70}
&& \Big|\int_{R_0(\bm{\phi})+R_0(\bm{\phi})^{-4}}^{R_0(\bm{\phi})+\frac{R_0(\bm{\phi})}{16\log(R_0(\bm{\phi}))}} \frac{\Psi(y)}{y-R_0(\bm{\phi})}dy\Big|\leq \int_{R_0(\bm{\phi})+R_0(\bm{\phi})^{-4}}^{R_0(\bm{\phi})+\frac{R_0(\bm{\phi})}{16\log(R_0(\bm{\phi}))}} \frac{|\Psi(y)|}{y-R_0(\bm{\phi})}dy\nonumber\\
&\leq& C\sqrt{\frac{\epsilon}{\log(R_0(\bm{\phi}))\log\log(R_0(\bm{\phi}))}}k \int_{R_0(\bm{\phi})+R_0(\bm{\phi})^{-4}}^{R_0(\bm{\phi})+\frac{R_0(\bm{\phi})}{16\log(R_0(\bm{\phi}))}}\frac{1}{y-R_0(\bm{\phi})} dy\nonumber\\
&\leq& C\sqrt{\frac{\epsilon}{\log(R_0(\bm{\phi}))\log\log(R_0(\bm{\phi}))}}k \log\Big(\frac{R_0(\bm{\phi})^5}{16\log(R_0(\bm{\phi}))}\Big)\nonumber\\
&\leq& C\sqrt{\frac{\epsilon \log(R_0(\bm{\phi}))}{\log\log(R_0(\bm{\phi}))}}k.
\end{eqnarray}

By property (d) of Proposition \ref{P5.2}, 
\begin{eqnarray}
\int_{R_0(\bm{\phi})+\frac{R_0(\bm{\phi})}{16\log(R_0(\bm{\phi}))}}^{\frac{17}{16}R_0(\bm{\phi})}\Psi(y)^2 dy&\leq& 2R_0(\bm{\phi})\int_{R_0(\bm{\phi})+\frac{R_0(\bm{\phi})}{16\log(R_0(\bm{\phi}))}}^{\frac{17}{16}R_0(\bm{\phi})}\frac{\Psi(y)^2}{y+1}dy \nonumber\\
&\leq& \frac{CR_0(\bm{\phi}) \epsilon k^2}{\log(R_0(\bm{\phi})) \log\log(R_0(\bm{\phi}))}.
\end{eqnarray}
Moreover,
\begin{equation}
\int_{R_0(\bm{\phi})+\frac{R_0(\bm{\phi})}{16\log(R_0(\bm{\phi}))}}^{\frac{17}{16}R_0(\bm{\phi})}\frac{1}{(y-R_0(\bm{\phi}))^2}dy=\frac{16(\log(R_0(\bm{\phi}))-1)}{R_0(\bm{\phi})}\leq \frac{16\log(R_0(\bm{\phi}))}{R_0(\bm{\phi})}. 
\end{equation}
Hence by the Cauchy-Schwarz inequality, we have
\begin{eqnarray}\label{Eq5.72}
&& \Big|\int_{R_0(\bm{\phi})+\frac{R_0(\bm{\phi})}{16\log(R_0(\bm{\phi}))}}^{\frac{17}{16}R_0(\bm{\phi})}\frac{\Psi(y)}{y-R_0(\bm{\phi})}dy\Big|\leq \int_{R_0(\bm{\phi})+\frac{R_0(\bm{\phi})}{16\log(R_0(\bm{\phi}))}}^{\frac{17}{16}R_0(\bm{\phi})}\frac{|\Psi(y)|}{y-R_0(\bm{\phi})}dy \nonumber\\
&\leq&  \Big(\int_{R_0(\bm{\phi})+\frac{R_0(\bm{\phi})}{16\log(R_0(\bm{\phi}))}}^{\frac{17}{16}R_0(\bm{\phi})}\Psi(y)^2 dy\Big)^{1\slash 2}\Big(\int_{R_0(\bm{\phi})+\frac{R_0(\bm{\phi})}{16\log(R_0(\bm{\phi}))}}^{\frac{17}{16}R_0(\bm{\phi})}\frac{1}{(y-R_0(\bm{\phi}))^2}dy\Big)^{1\slash 2}\nonumber\\
&\leq& C\sqrt{\frac{\epsilon}{\log\log(R_0(\bm{\phi}))}} k.
\end{eqnarray}

Combining (\ref{Eq5.70}) and (\ref{Eq5.72}), by property (a) of Proposition \ref{P5.2}, we have
\begin{eqnarray}\label{Eq5.73}
 && \Big|\Psi(R_0(\bm{\phi}))\int_{R_0(\bm{\phi})+R_0(\bm{\phi})^{-4}}^{\frac{17}{16}R_0(\bm{\phi})}\frac{\Psi(y)}{y-R_0(\bm{\phi})}dy\Big|\nonumber\\
 &\leq& C\sqrt{\frac{\epsilon}{\log(R_0(\bm{\phi}))\log\log(R_0(\bm{\phi}))}}k\cdot \sqrt{\frac{\epsilon \log(R_0(\bm{\phi}))}{\log\log(R_0(\bm{\phi}))}}k\leq \frac{C\epsilon k^2}{\log\log(R_0(\bm{\phi}))}.\nonumber\\
 &&
\end{eqnarray}

For any $y\geq 17R_0(\bm{\phi})\slash 16$, we have $y\geq 2000$ (see (\ref{Eq5.11})) and hence
\begin{equation*}
y-R_0(\bm{\phi})\geq y\slash 17\geq (y+1)\slash 20.
\end{equation*}
Hence by property (f) of Proposition \ref{P5.2}, 
\begin{equation}
 \int_{\frac{17}{16}R_0(\bm{\phi})}^{R_0'(\bm{\phi})}\frac{\Psi(y)^2}{y-R_0(\bm{\phi})}dy\leq 20\int_{\frac{17}{16}R_0(\bm{\phi})}^{R_0'(\bm{\phi})}\frac{\Psi(y)^2}{y+1}dy\leq C\epsilon k^2.
\end{equation}
Hence by (\ref{Eq2.2.1}), the AM-GM inequality, and property (a) of Proposition \ref{P5.2},
\begin{eqnarray}\label{Eq5.74}
&& \Big|\Psi(R_0(\bm{\phi}))\int_{\frac{17}{16}R_0(\bm{\phi})}^{R_0'(\bm{\phi})}\frac{\Psi(y)}{y-R_0(\bm{\phi})}dy\Big|\leq \int_{\frac{17}{16}R_0(\bm{\phi})}^{R_0'(\bm{\phi})}\frac{|\Psi(R_0(\bm{\phi}))||\Psi(y)|}{y-R_0(\bm{\phi})}dy \nonumber\\
&\leq& \int_{\frac{17}{16}R_0(\bm{\phi})}^{R_0'(\bm{\phi})}\frac{\Psi(R_0(\bm{\phi}))^2}{y-R_0(\bm{\phi})}dy+\int_{\frac{17}{16}R_0(\bm{\phi})}^{R_0'(\bm{\phi})}\frac{\Psi(y)^2}{y-R_0(\bm{\phi})}dy\nonumber\\
&\leq& \frac{C \epsilon k^2}{\log(R_0(\bm{\phi}))\log\log(R_0(\bm{\phi}))}\cdot\log\Big(\frac{R_0'(\bm{\phi})-R_0(\bm{\phi})}{R_0(\bm{\phi})\slash 16}\Big)+C\epsilon k^2\nonumber\\
&\leq& \frac{C \epsilon k^2}{\log(R_0(\bm{\phi}))\log\log(R_0(\bm{\phi}))}\cdot \log(R_0(\bm{\phi}))+C\epsilon k^2\leq C\epsilon k^2.
\end{eqnarray}

By (\ref{Eq5.73}) and (\ref{Eq5.74}), we conclude that
\begin{equation}\label{Eq5.1.6}
\Big|\Psi(R_0(\bm{\phi}))\int_{R_0(\bm{\phi})+R_0(\bm{\phi})^{-4}}^{R_0'(\bm{\phi})}\frac{\Psi(y)}{y-R_0(\bm{\phi})}dy\Big|\leq C\epsilon k^2.
\end{equation}

\textbf{Part 3.3.6} By (\ref{Eq2.2.1}), the Cauchy-Schwarz inequality, and property (f) of Proposition \ref{P5.2}, 
\begin{eqnarray}
&& \Big|\int_{R_0(\bm{\phi})+R_0(\bm{\phi})^{-4}}^{R_0'(\bm{\phi})}\frac{\Psi(y)}{y+R_0(\bm{\phi})}dy\Big|\leq \int_{R_0(\bm{\phi})+R_0(\bm{\phi})^{-4}}^{R_0'(\bm{\phi})}\frac{|\Psi(y)|}{y+R_0(\bm{\phi})}dy \nonumber\\
&\leq& \Big(\int_{R_0(\bm{\phi})+R_0(\bm{\phi})^{-4}}^{R_0'(\bm{\phi})}\frac{\Psi(y)^2}{y+R_0(\bm{\phi})}dy\Big)^{1\slash 2}\Big(\int_{R_0(\bm{\phi})+R_0(\bm{\phi})^{-4}}^{R_0'(\bm{\phi})}\frac{1}{y+R_0(\bm{\phi})}dy\Big)^{1\slash 2}\nonumber\\
&\leq& \Big(\int_{R_0(\bm{\phi})+R_0(\bm{\phi})^{-4}}^{R_0'(\bm{\phi})}\frac{\Psi(y)^2}{y+1}dy\Big)^{1\slash 2} \log\Big(\frac{R_0'(\bm{\phi})+R_0(\bm{\phi})}{2R_0(\bm{\phi})+R_0(\bm{\phi})^{-4}}\Big)^{1\slash 2}\nonumber\\
&\leq& C\sqrt{\epsilon \log(R_0(\bm{\phi}))} k\leq C\log(R_0(\bm{\phi})) k.
\end{eqnarray}
Hence by (\ref{Eq5.11}) and property (c) of Proposition \ref{P5.2},
\begin{eqnarray}\label{Eq5.2.1}
  \Big| \Psi(-R_0(\bm{\phi}))\int_{R_0(\bm{\phi})+R_0(\bm{\phi})^{-4}}^{R_0'(\bm{\phi})}\frac{\Psi(y)}{y+R_0(\bm{\phi})}dy\Big|\leq \frac{CM\log(R_0(\bm{\phi})) k^2}{R_0(\bm{\phi})^{3\slash 2}}.
\end{eqnarray}

\textbf{Part 3.3.7} By property (b) of Proposition \ref{P5.2} and (\ref{Eq2.2.1}), 
\begin{equation}\label{Eq5.75}
  \int_0^{R_0(\bm{\phi})}\Psi(x)^2 dx \leq (R_0(\bm{\phi})+1)\int_{0}^{R_0(\bm{\phi})}\frac{\Psi(x)^2}{x+1}dx \leq CM^2 R_0(\bm{\phi}) k^2,
\end{equation}
\begin{equation}
  \int_0^{R_0(\bm{\phi})}\frac{1}{(R_0'(\bm{\phi})-x)^2}dx=\frac{R_0(\bm{\phi})}{(R_0'(\bm{\phi})-R_0(\bm{\phi}))R_0'(\bm{\phi})}\leq \frac{2R_0(\bm{\phi})}{R_0'(\bm{\phi})^2}.
\end{equation}
Hence by the Cauchy-Schwarz inequality,
\begin{eqnarray}
&& \Big|\int_0^{R_0(\bm{\phi})}\frac{\Psi(x)}{R_0'(\bm{\phi})-x}dx\Big|\leq \int_0^{R_0(\bm{\phi})}\frac{|\Psi(x)|}{R_0'(\bm{\phi})-x}dx\nonumber\\
&\leq& \Big(\int_0^{R_0(\bm{\phi})}\Psi(x)^2 dx\Big)^{1\slash 2}\Big(\int_0^{R_0(\bm{\phi})}\frac{1}{(R_0'(\bm{\phi})-x)^2}dx\Big)^{1\slash 2}\leq \frac{CM R_0(\bm{\phi}) k}{R_0'(\bm{\phi})}.\nonumber\\
&& 
\end{eqnarray}
By (\ref{Eq5.11}) and property (c) of Proposition \ref{P5.2},
\begin{eqnarray}
&&  \Big|\int_{-R_0(\bm{\phi})}^0 \frac{\Psi(x)}{R_0'(\bm{\phi})-x}dx\Big|\leq R_0'(\bm{\phi})^{-1}\int_{-R_0(\bm{\phi})}^0 |\Psi(x)|dx \nonumber\\
&\leq& CMR_0'(\bm{\phi})^{-1} k\int_{-R_0(\bm{\phi})}^0(|x|+1)^{-3\slash 2}dx \leq CMR_0'(\bm{\phi})^{-1} k.
\end{eqnarray}
Hence by property (e) of Proposition \ref{P5.2} and (\ref{Eq2.2.1}),
\begin{eqnarray}\label{Eq5.1.7}
 \Big|\Psi(R_0'(\bm{\phi}))\int_{-R_0(\bm{\phi})}^{R_0(\bm{\phi})}\frac{\Psi(x)}{R_0'(\bm{\phi})-x}dx\Big|&\leq&  C \sqrt{\frac{\epsilon}{\log(R_0(\bm{\phi}))}}k\cdot \frac{CM R_0(\bm{\phi}) k}{R_0'(\bm{\phi})}\nonumber\\
 &\leq& CM R_0(\bm{\phi})^{-9} k^2.
\end{eqnarray}

\textbf{Part 3.3.8} By (\ref{Eq5.11}) and property (c) of Proposition \ref{P5.2}, 
\begin{eqnarray}\label{Eq5.76}
&& \int_{-R_0(\bm{\phi})}^{0} \frac{|\Psi(x)|}{R_0(\bm{\phi})+R_0(\bm{\phi})^{-4}-x}dx\leq R_0(\bm{\phi})^{-1}\int_{-R_0(\bm{\phi})}^{0} |\Psi(x)|dx \nonumber\\
&\leq& CMR_0(\bm{\phi})^{-1}k \int_{-R_0(\bm{\phi})}^{0}(|x|+1)^{-3\slash 2}dx\leq CMR_0(\bm{\phi})^{-1}k.
\end{eqnarray}

Note that
\begin{eqnarray}
&& \int_0^{R_0(\bm{\phi})-\frac{R_0(\bm{\phi})}{16\log(R_0(\bm{\phi}))}}\frac{1}{R_0(\bm{\phi})+R_0(\bm{\phi})^{-4}-x}dx \nonumber\\
&=&\log\Big(\frac{R_0(\bm{\phi})+R_0(\bm{\phi})^{-4}}{\frac{R_0(\bm{\phi})}{ 16\log(R_0(\bm{\phi}))}+R_0(\bm{\phi})^{-4}}\Big)\leq \log\Big(\frac{2R_0(\bm{\phi})}{R_0(\bm{\phi})\slash (16 \log(R_0(\bm{\phi})))}\Big)\nonumber\\
&=& \log(32)+\log\log(R_0(\bm{\phi}))\leq C\log\log(R_0(\bm{\phi})).
\end{eqnarray}
By (\ref{Eq5.75}), we have
\begin{eqnarray}
 && \int_0^{R_0(\bm{\phi})-\frac{R_0(\bm{\phi})}{16\log(R_0(\bm{\phi}))}}\frac{\Psi(x)^2}{R_0(\bm{\phi})+R_0(\bm{\phi})^{-4}-x}dx\nonumber\\
 &&\leq \frac{16\log(R_0(\bm{\phi}))}{R_0(\bm{\phi})}\int_0^{R_0(\bm{\phi})}\Psi(x)^2dx \leq CM^2\log(R_0(\bm{\phi})) k^2.
\end{eqnarray}
Hence by the Cauchy-Schwarz inequality, we have
\begin{eqnarray}\label{Eq5.77}
% && \Big|\int_0^{R_0(\bm{\phi})-\frac{R_0(\bm{\phi})}{16\log(R_0(\bm{\phi}))}}\frac{\Psi(x)}{R_0(\bm{\phi})+R_0(\bm{\phi})^{-4}-x}dx\Big| \nonumber\\
% &\leq& 
&&\int_0^{R_0(\bm{\phi})-\frac{R_0(\bm{\phi})}{16\log(R_0(\bm{\phi}))}}\frac{|\Psi(x)|}{R_0(\bm{\phi})+R_0(\bm{\phi})^{-4}-x}dx\nonumber\\
&\leq& \Big(\int_0^{R_0(\bm{\phi})-\frac{R_0(\bm{\phi})}{16\log(R_0(\bm{\phi}))}}\frac{1}{R_0(\bm{\phi})+R_0(\bm{\phi})^{-4}-x}dx\Big)^{1\slash 2}\nonumber\\
&&\times \Big(\int_0^{R_0(\bm{\phi})-\frac{R_0(\bm{\phi})}{16\log(R_0(\bm{\phi}))}}\frac{\Psi(x)^2}{R_0(\bm{\phi})+R_0(\bm{\phi})^{-4}-x}dx\Big)^{1\slash 2}\nonumber\\
&\leq& CM\sqrt{\log(R_0(\bm{\phi}))\log\log(R_0(\bm{\phi}))}k.
\end{eqnarray}

By property (a) of Proposition \ref{P5.2}, we have
\begin{eqnarray}\label{Eq5.78}
% && \Big|\int_{R_0(\bm{\phi})-\frac{R_0(\bm{\phi})}{16\log(R_0(\bm{\phi}))}}^{R_0(\bm{\phi})}\frac{\Psi(x)}{R_0(\bm{\phi})+R_0(\bm{\phi})^{-4}-x}dx\Big| \nonumber\\
% &\leq& 
&& \int_{R_0(\bm{\phi})-\frac{R_0(\bm{\phi})}{16\log(R_0(\bm{\phi}))}}^{R_0(\bm{\phi})}\frac{|\Psi(x)|}{R_0(\bm{\phi})+R_0(\bm{\phi})^{-4}-x}dx\nonumber\\
&\leq& C\sqrt{\frac{\epsilon}{\log(R_0(\bm{\phi}))\log\log(R_0(\bm{\phi}))}}k\int_{R_0(\bm{\phi})-\frac{R_0(\bm{\phi})}{16\log(R_0(\bm{\phi}))}}^{R_0(\bm{\phi})}\frac{1}{R_0(\bm{\phi})+R_0(\bm{\phi})^{-4}-x}dx\nonumber\\
&=& C\sqrt{\frac{\epsilon}{\log(R_0(\bm{\phi}))\log\log(R_0(\bm{\phi}))}}k \log\Big(\frac{\frac{R_0(\bm{\phi})}{16\log(R_0(\bm{\phi}))}+R_0(\bm{\phi})^{-4}}{R_0(\bm{\phi})^{-4}}\Big)\nonumber\\
&\leq& C\sqrt{\frac{\epsilon}{\log(R_0(\bm{\phi}))\log\log(R_0(\bm{\phi}))}}k \log(R_0(\bm{\phi})^5)\leq C\sqrt{\frac{\epsilon \log(R_0(\bm{\phi}))}{\log\log(R_0(\bm{\phi}))}}k.
\end{eqnarray}

By (\ref{Eq5.76}), (\ref{Eq5.77}), and (\ref{Eq5.78}), we have
\begin{eqnarray}\label{Eq5.79}
&& \Big|\int_{-R_0(\bm{\phi})}^{R_0(\bm{\phi})} \frac{\Psi(x)}{R_0(\bm{\phi})+R_0(\bm{\phi})^{-4}-x}dx\Big|\leq \int_{-R_0(\bm{\phi})}^{R_0(\bm{\phi})} \frac{|\Psi(x)|}{R_0(\bm{\phi})+R_0(\bm{\phi})^{-4}-x}dx\nonumber\\
&& \leq CM\sqrt{\log(R_0(\bm{\phi}))\log\log(R_0(\bm{\phi}))} k.
\end{eqnarray}
Hence by (\ref{Eq5.50}) and property (a) of Proposition \ref{P5.2}, we conclude that
\begin{eqnarray}\label{Eq5.1.8}
&& \Big|\Psi(R_0(\bm{\phi})+R_0(\bm{\phi})^{-4})\int_{-R_0(\bm{\phi})}^{R_0(\bm{\phi})} \frac{\Psi(x)}{R_0(\bm{\phi})+R_0(\bm{\phi})^{-4}-x}dx\Big|\nonumber\\
&\leq& C\sqrt{\frac{\epsilon}{\log(R_0(\bm{\phi}))\log\log(R_0(\bm{\phi}))}}k\cdot M\sqrt{\log(R_0(\bm{\phi}))\log\log(R_0(\bm{\phi}))}k\nonumber\\
&\leq& CM\sqrt{\epsilon} k^2.
\end{eqnarray}

\textbf{Part 3.3.9} We let
\begin{eqnarray}
&& \mathscr{U}_{11}:=\int_{-R_0(\bm{\phi})}^{R_0(\bm{\phi})-\frac{R_0(\bm{\phi})}{16\log(R_0(\bm{\phi}))}}\int_{R_0(\bm{\phi})+R_0(\bm{\phi})^{-4}}^{R_0(\bm{\phi})+\frac{R_0(\bm{\phi})}{16\log(R_0(\bm{\phi}))}}\frac{|\Psi(x)||\Psi(y)|}{(y-x)^2}dxdy, \nonumber\\
&& \mathscr{U}_{12}:=\int^{R_0(\bm{\phi})}_{R_0(\bm{\phi})-\frac{R_0(\bm{\phi})}{16\log(R_0(\bm{\phi}))}}\int_{R_0(\bm{\phi})+R_0(\bm{\phi})^{-4}}^{R_0(\bm{\phi})+\frac{R_0(\bm{\phi})}{16\log(R_0(\bm{\phi}))}}\frac{|\Psi(x)||\Psi(y)|}{(y-x)^2}dxdy,\nonumber\\
&& \mathscr{U}_{21}:=\int_{-R_0(\bm{\phi})}^0 \int_{R_0(\bm{\phi})+\frac{R_0(\bm{\phi})}{16\log(R_0(\bm{\phi}))}}^{\frac{17}{16}R_0(\bm{\phi})}\frac{|\Psi(x)||\Psi(y)|}{(y-x)^2}dxdy,\nonumber\\
&& \mathscr{U}_{22}:=\int_0^{\frac{15}{16}R_0(\bm{\phi})} \int_{R_0(\bm{\phi})+\frac{R_0(\bm{\phi})}{16\log(R_0(\bm{\phi}))}}^{\frac{17}{16}R_0(\bm{\phi})}\frac{|\Psi(x)||\Psi(y)|}{(y-x)^2}dxdy,\nonumber\\
&& \mathscr{U}_{23}:=\int_{\frac{15}{16}R_0(\bm{\phi})}^{R_0(\bm{\phi})-\frac{R_0(\bm{\phi})}{16\log(R_0(\bm{\phi}))}} \int_{R_0(\bm{\phi})+\frac{R_0(\bm{\phi})}{16\log(R_0(\bm{\phi}))}}^{\frac{17}{16}R_0(\bm{\phi})}\frac{|\Psi(x)||\Psi(y)|}{(y-x)^2}dxdy,\nonumber\\
&& \mathscr{U}_{24}:=\int_{R_0(\bm{\phi})-\frac{R_0(\bm{\phi})}{16\log(R_0(\bm{\phi}))}}^{R_0(\bm{\phi})}
\int_{R_0(\bm{\phi})+\frac{R_0(\bm{\phi})}{16\log(R_0(\bm{\phi}))}}^{\frac{17}{16}R_0(\bm{\phi})}\frac{|\Psi(x)||\Psi(y)|}{(y-x)^2}dxdy,\nonumber\\
&& \mathscr{U}_{31}:=\int_{-R_0(\bm{\phi})}^0
\int_{\frac{17}{16}R_0(\bm{\phi})}^{R_0'(\bm{\phi})}\frac{|\Psi(x)||\Psi(y)|}{(y-x)^2}dxdy, \nonumber\\
&& \mathscr{U}_{32}:=\int_0^{R_0(\bm{\phi})}
\int_{\frac{17}{16}R_0(\bm{\phi})}^{R_0'(\bm{\phi})}\frac{|\Psi(x)||\Psi(y)|}{(y-x)^2}dxdy.
\end{eqnarray}
Note that
\begin{eqnarray}\label{Eq5.95}
 && \Big|\int_{-R_0(\bm{\phi})}^{R_0(\bm{\phi})} \int_{R_0(\bm{\phi})+R_0(\bm{\phi})^{-4}}^{R_0'(\bm{\phi})}\frac{\Psi(x)\Psi(y)}{(y-x)^2}dxdy\Big|\nonumber\\
 &\leq& \mathscr{U}_{11}+\mathscr{U}_{12}+\mathscr{U}_{21}+\mathscr{U}_{22}+\mathscr{U}_{23}+\mathscr{U}_{24}+\mathscr{U}_{31}+\mathscr{U}_{32}.
\end{eqnarray}

We first bound $\mathscr{U}_{11}$. For any $x\in \big[-R_0(\bm{\phi}), R_0(\bm{\phi})-\frac{R_0(\bm{\phi})}{16\log(R_0(\bm{\phi}))}\big]$, by property (a) of Proposition \ref{P5.2}, we have
\begin{eqnarray}
&& \int_{R_0(\bm{\phi})+R_0(\bm{\phi})^{-4}}^{R_0(\bm{\phi})+\frac{R_0(\bm{\phi})}{16\log(R_0(\bm{\phi}))}}\frac{|\Psi(y)|}{(y-x)^2} dy\nonumber\\
&\leq& C\sqrt{\frac{\epsilon}{\log(R_0(\bm{\phi}))\log\log(R_0(\bm{\phi}))}}k \int_{R_0(\bm{\phi})+R_0(\bm{\phi})^{-4}}^{R_0(\bm{\phi})+\frac{R_0(\bm{\phi})}{16\log(R_0(\bm{\phi}))}}\frac{1}{(y-x)^2}dy\nonumber\\
&\leq& C\sqrt{\frac{\epsilon}{\log(R_0(\bm{\phi}))\log\log(R_0(\bm{\phi}))}}k \cdot\frac{1}{R_0(\bm{\phi})+R_0(\bm{\phi})^{-4}-x}.
\end{eqnarray}
Hence by (\ref{Eq5.79}), we have
\begin{eqnarray}
\mathscr{U}_{11}&\leq& C\sqrt{\frac{\epsilon}{\log(R_0(\bm{\phi}))\log\log(R_0(\bm{\phi}))}}k \int_{-R_0(\bm{\phi})}^{R_0(\bm{\phi})} \frac{|\Psi(x)|}{R_0(\bm{\phi})+R_0(\bm{\phi})^{-4}-x}dx\nonumber\\
&\leq& CM\sqrt{\epsilon} k^2.
\end{eqnarray}

We bound $\mathscr{U}_{12}$ as follows. By property (a) of Proposition (\ref{P5.2}),
\begin{eqnarray}\label{Eq5.80}
    \mathscr{U}_{12}&\leq& \frac{C\epsilon k^2}{\log(R_0(\bm{\phi}))\log\log(R_0(\bm{\phi}))}\nonumber\\
    &&\times \int^{R_0(\bm{\phi})}_{R_0(\bm{\phi})-\frac{R_0(\bm{\phi})}{16\log(R_0(\bm{\phi}))}}\int_{R_0(\bm{\phi})+R_0(\bm{\phi})^{-4}}^{R_0(\bm{\phi})+\frac{R_0(\bm{\phi})}{16\log(R_0(\bm{\phi}))}}\frac{1}{(y-x)^2}dxdy.
\end{eqnarray}
For any $x\in \big[R_0(\bm{\phi})-\frac{R_0(\bm{\phi})}{16\log(R_0(\bm{\phi}))},R_0(\bm{\phi})\big]$, we have
\begin{equation}
\int_{R_0(\bm{\phi})+R_0(\bm{\phi})^{-4}}^{R_0(\bm{\phi})+\frac{R_0(\bm{\phi})}{16\log(R_0(\bm{\phi}))}}\frac{1}{(y-x)^2}dy\leq \frac{1}{R_0(\bm{\phi})+R_0(\bm{\phi})^{-4}-x}.
\end{equation}
Hence
\begin{eqnarray}\label{Eq5.81}
&& \int^{R_0(\bm{\phi})}_{R_0(\bm{\phi})-\frac{R_0(\bm{\phi})}{16\log(R_0(\bm{\phi}))}}\int_{R_0(\bm{\phi})+R_0(\bm{\phi})^{-4}}^{R_0(\bm{\phi})+\frac{R_0(\bm{\phi})}{16\log(R_0(\bm{\phi}))}}\frac{1}{(y-x)^2}dxdy \nonumber\\
&\leq& \int^{R_0(\bm{\phi})}_{R_0(\bm{\phi})-\frac{R_0(\bm{\phi})}{16\log(R_0(\bm{\phi}))}}\frac{1}{R_0(\bm{\phi})+R_0(\bm{\phi})^{-4}-x}dx\nonumber\\
&=& \log\Big(\frac{\frac{R_0(\bm{\phi})}{16\log(R_0(\bm{\phi}))}+R_0(\bm{\phi})^{-4}}{R_0(\bm{\phi})^{-4}}\Big)\leq \log(R_0(\bm{\phi})^5).
\end{eqnarray}
By (\ref{Eq5.80}) and (\ref{Eq5.81}),
\begin{equation}
 \mathscr{U}_{12}\leq \frac{C\epsilon k^2}{\log\log(R_0(\bm{\phi}))}\leq C\epsilon k^2\leq CM\sqrt{\epsilon}k^2.
\end{equation}

We bound $\mathscr{U}_{21}$ as follows. For any $y\in \big[R_0(\bm{\phi})+\frac{R_0(\bm{\phi})}{16\log(R_0(\bm{\phi}))},\frac{17}{16}R_0(\bm{\phi})\big]$ and $x\in [-R_0(\bm{\phi}),0]$, we have $y-x\geq R_0(\bm{\phi})$, hence
\begin{equation}\label{Eq5.82}
\mathscr{U}_{21}\leq R_0(\bm{\phi})^{-2} \Big(\int_{-R_0(\bm{\phi})}^0 |\Psi(x)|dx\Big)\Big(\int_{R_0(\bm{\phi})+\frac{R_0(\bm{\phi})}{16\log(R_0(\bm{\phi}))}}^{\frac{17}{16}R_0(\bm{\phi})}|\Psi(y)|dy\Big)
\end{equation}
By (\ref{Eq5.11}) and property (c) of Proposition \ref{P5.2}, 
\begin{equation}\label{Eq5.83}
\int_{-R_0(\bm{\phi})}^0 |\Psi(x)|dx\leq CMk \int_{-R_0(\bm{\phi})}^0 (|x|+1)^{-3\slash 2} dx \leq CMk.
\end{equation}
By the Cauchy-Schwarz inequality and property (f) of Proposition \ref{P5.2},
\begin{eqnarray}\label{Eq5.84}
&& \int_{R_0(\bm{\phi})+\frac{R_0(\bm{\phi})}{16\log(R_0(\bm{\phi}))}}^{\frac{17}{16}R_0(\bm{\phi})}|\Psi(y)|dy\nonumber\\
&\leq& \Big(\frac{R_0(\bm{\phi})}{16}\Big)^{1\slash 2}\Big(\int_{R_0(\bm{\phi})+\frac{R_0(\bm{\phi})}{16\log(R_0(\bm{\phi}))}}^{\frac{17}{16}R_0(\bm{\phi})}\Psi(y)^2dy\Big)^{1\slash 2}\nonumber\\
&\leq& \frac{1}{4}\sqrt{R_0(\bm{\phi})}\Big((2R_0(\bm{\phi})+1)\int_{R_0(\bm{\phi})+\frac{R_0(\bm{\phi})}{16\log(R_0(\bm{\phi}))}}^{\frac{17}{16}R_0(\bm{\phi})}\frac{\Psi(y)^2}{y+1}dy\Big)^{1\slash 2}\nonumber\\
&\leq& C\sqrt{\epsilon} R_0(\bm{\phi}) k.
\end{eqnarray}
By (\ref{Eq5.82})-(\ref{Eq5.84}),
\begin{equation}
 \mathscr{U}_{21}\leq CM\sqrt{\epsilon} R_0(\bm{\phi})^{-1} k^2\leq CM\sqrt{\epsilon} k^2.
\end{equation}

We bound $\mathscr{U}_{22}$ as follows. For any $y\in \big[R_0(\bm{\phi})+\frac{R_0(\bm{\phi})}{16\log(R_0(\bm{\phi}))},\frac{17}{16}R_0(\bm{\phi})\big]$ and $x\in \big[0,\frac{15}{16}R_0(\bm{\phi})\big]$, we have $y-x\geq \frac{1}{16}R_0(\bm{\phi})$, hence
\begin{equation}\label{Eq5.85}
 \mathscr{U}_{22}\leq C  R_0(\bm{\phi})^{-2} \Big(\int_0^{\frac{15}{16}R_0(\bm{\phi})} |\Psi(x)|  dx\Big)\Big(\int_{R_0(\bm{\phi})+\frac{R_0(\bm{\phi})}{16\log(R_0(\bm{\phi}))}}^{\frac{17}{16}R_0(\bm{\phi})}|\Psi(y)| dy\Big).
\end{equation}
By the Cauchy-Schwarz inequality and property (b) of Proposition \ref{P5.2},
\begin{eqnarray}\label{Eq5.86}
&& \int_0^{\frac{15}{16}R_0(\bm{\phi})} |\Psi(x)|  dx\leq  \Big(\frac{15}{16}R_0(\bm{\phi})\Big)^{1\slash 2} \Big(\int_0^{\frac{15}{16}R_0(\bm{\phi})} \Psi(x)^2 dx\Big)^{1\slash 2} \nonumber\\
&\leq& \sqrt{R_0(\bm{\phi})}\Big((R_0(\bm{\phi})+1)\int_0^{\frac{15}{16}R_0(\bm{\phi})} \frac{\Psi(x)^2}{x+1} dx\Big)^{1\slash 2}\leq CMR_0(\bm{\phi})k.\nonumber\\
&&
\end{eqnarray}
By (\ref{Eq5.84}), (\ref{Eq5.85}), and (\ref{Eq5.86}), we have
\begin{equation}
 \mathscr{U}_{22}\leq CM\sqrt{\epsilon} k^2.
\end{equation}

We bound $\mathscr{U}_{23}$ as follows. By the AM-GM inequality, we have
\begin{eqnarray}
 \mathscr{U}_{23}&\leq& \int_{\frac{15}{16}R_0(\bm{\phi})}^{R_0(\bm{\phi})-\frac{R_0(\bm{\phi})}{16\log(R_0(\bm{\phi}))}} \int_{R_0(\bm{\phi})+\frac{R_0(\bm{\phi})}{16\log(R_0(\bm{\phi}))}}^{\frac{17}{16}R_0(\bm{\phi})}\frac{\Psi(x)^2}{(y-x)^2}dxdy \nonumber\\
 && + \int_{\frac{15}{16}R_0(\bm{\phi})}^{R_0(\bm{\phi})-\frac{R_0(\bm{\phi})}{16\log(R_0(\bm{\phi}))}} \int_{R_0(\bm{\phi})+\frac{R_0(\bm{\phi})}{16\log(R_0(\bm{\phi}))}}^{\frac{17}{16}R_0(\bm{\phi})}\frac{\Psi(y)^2}{(y-x)^2}dxdy. 
\end{eqnarray}
For any $x\in \big[\frac{15}{16}R_0(\bm{\phi}),R_0(\bm{\phi})-\frac{R_0(\bm{\phi})}{16\log(R_0(\bm{\phi}))}\big]$, 
\begin{equation}\label{Eq5.87}
\int_{R_0(\bm{\phi})+\frac{R_0(\bm{\phi})}{16\log(R_0(\bm{\phi}))}}^{\frac{17}{16}R_0(\bm{\phi})}\frac{1}{(y-x)^2}dy\leq \frac{1}{R_0(\bm{\phi})+\frac{R_0(\bm{\phi})}{16\log(R_0(\bm{\phi}))}-x}\leq \frac{16\log(R_0(\bm{\phi}))}{R_0(\bm{\phi})}.
\end{equation}
For any $y\in \big[R_0(\bm{\phi})+\frac{R_0(\bm{\phi})}{16\log(R_0(\bm{\phi}))}, \frac{17}{16}R_0(\bm{\phi})\big]$, 
\begin{equation}
\int_{\frac{15}{16}R_0(\bm{\phi})}^{R_0(\bm{\phi})-\frac{R_0(\bm{\phi})}{16\log(R_0(\bm{\phi}))}} \frac{1}{(y-x)^2}dx\leq \frac{1}{y-\Big(R_0(\bm{\phi})-\frac{R_0(\bm{\phi})}{16\log(R_0(\bm{\phi}))}\Big)}\leq \frac{16\log(R_0(\bm{\phi}))}{R_0(\bm{\phi})}.
\end{equation}
Hence by property (d) of Proposition \ref{P5.2},
\begin{eqnarray}
&&\mathscr{U}_{23}\nonumber\\
&\leq&
\frac{16\log(R_0(\bm{\phi}))}{R_0(\bm{\phi})}  \times\Big(\int_{\frac{15}{16}R_0(\bm{\phi})}^{R_0(\bm{\phi})-\frac{R_0(\bm{\phi})}{16\log(R_0(\bm{\phi}))}}\Psi(x)^2 dx+\int_{R_0(\bm{\phi})+\frac{R_0(\bm{\phi})}{16\log(R_0(\bm{\phi}))}}^{\frac{17}{16}R_0(\bm{\phi})}\Psi(y)^2dy\Big)\nonumber\\
&\leq& \frac{16\log(R_0(\bm{\phi}))}{R_0(\bm{\phi})}\int_{\frac{15}{16}R_0(\bm{\phi})}^{\frac{17}{16}R_0(\bm{\phi})}\Psi(x)^2dx\nonumber\\
&\leq& \frac{16\log(R_0(\bm{\phi}))}{R_0(\bm{\phi})}\cdot (2R_0(\bm{\phi})+1)\int_{\frac{15}{16}R_0(\bm{\phi})}^{\frac{17}{16}R_0(\bm{\phi})}\frac{\Psi(x)^2}{x+1}dx\nonumber\\
&\leq& \frac{C\epsilon k^2}{\log\log(R_0(\bm{\phi}))}\leq C\epsilon k^2\leq CM\sqrt{\epsilon}k^2.
\end{eqnarray}

We bound $\mathscr{U}_{24}$ as follows. For any $y\in \big[R_0(\bm{\phi})+\frac{R_0(\bm{\phi})}{16\log(R_0(\bm{\phi}))},\frac{17}{16}R_0(\bm{\phi})\big]$,
\begin{equation}
\int_{R_0(\bm{\phi})-\frac{R_0(\bm{\phi})}{16\log(R_0(\bm{\phi}))}}^{R_0(\bm{\phi})}\frac{1}{(x-y)^2}dx\leq \frac{1}{y-R_0(\bm{\phi})}. 
\end{equation}
Hence by property (a) of Proposition \ref{P5.2},
\begin{eqnarray}\label{Eq5.88}
 \mathscr{U}_{24}&\leq& C\sqrt{\frac{\epsilon}{\log(R_0(\bm{\phi}))\log\log(R_0(\bm{\phi}))}}k\nonumber\\
 &&\times\int_{R_0(\bm{\phi})+\frac{R_0(\bm{\phi})}{16\log(R_0(\bm{\phi}))}}^{\frac{17}{16}R_0(\bm{\phi})}|\Psi(y)|dy \int_{R_0(\bm{\phi})-\frac{R_0(\bm{\phi})}{16\log(R_0(\bm{\phi}))}}^{R_0(\bm{\phi})}\frac{1}{(x-y)^2}dx \nonumber\\
 &\leq& C\sqrt{\frac{\epsilon}{\log(R_0(\bm{\phi}))\log\log(R_0(\bm{\phi}))}}k \int_{R_0(\bm{\phi})+\frac{R_0(\bm{\phi})}{16\log(R_0(\bm{\phi}))}}^{\frac{17}{16}R_0(\bm{\phi})}\frac{|\Psi(y)|}{y-R_0(\bm{\phi})}dy.\nonumber\\
 &&
\end{eqnarray}
Note that
\begin{equation}
    \int_{R_0(\bm{\phi})+\frac{R_0(\bm{\phi})}{16\log(R_0(\bm{\phi}))}}^{\frac{17}{16}R_0(\bm{\phi})}\frac{1}{(y-R_0(\bm{\phi}))^2} dy=\frac{16(\log(R_0(\bm{\phi}))-1)}{R_0(\bm{\phi})}\leq \frac{16\log(R_0(\bm{\phi}))}{R_0(\bm{\phi})}.
\end{equation}
Hence by the Cauchy-Schwarz inequality and property (d) of Proposition \ref{P5.2},
\begin{eqnarray}\label{Eq5.89}
&& \int_{R_0(\bm{\phi})+\frac{R_0(\bm{\phi})}{16\log(R_0(\bm{\phi}))}}^{\frac{17}{16}R_0(\bm{\phi})}\frac{|\Psi(y)|}{y-R_0(\bm{\phi})}dy \nonumber\\
&\leq& \Big(\int_{R_0(\bm{\phi})+\frac{R_0(\bm{\phi})}{16\log(R_0(\bm{\phi}))}}^{\frac{17}{16}R_0(\bm{\phi})}\frac{1}{(y-R_0(\bm{\phi}))^2} dy\Big)^{1\slash 2} \Big(\int_{R_0(\bm{\phi})+\frac{R_0(\bm{\phi})}{16\log(R_0(\bm{\phi}))}}^{\frac{17}{16}R_0(\bm{\phi})}\Psi(y)^2 dy\Big)^{1\slash 2}\nonumber\\
&\leq& \sqrt{\frac{16\log(R_0(\bm{\phi}))}{R_0(\bm{\phi})}}\Big((2R_0(\bm{\phi})+1)\int_{R_0(\bm{\phi})+\frac{R_0(\bm{\phi})}{16\log(R_0(\bm{\phi}))}}^{\frac{17}{16}R_0(\bm{\phi})}\frac{\Psi(y)^2}{y+1} dy\Big)^{1\slash 2}\nonumber\\
&\leq& C\sqrt{\frac{\epsilon}{\log\log(R_0(\bm{\phi}))}}k.
\end{eqnarray}
By (\ref{Eq5.88}) and (\ref{Eq5.89}), 
\begin{equation}
    \mathscr{U}_{24}\leq  C\epsilon k^2\leq CM\sqrt{\epsilon} k^2.
\end{equation}

We bound $\mathscr{U}_{31}$ as follows. For any $x\in [-R_0(\bm{\phi}),0]$ and $y\in [\frac{17}{16}R_0(\bm{\phi}),R_0'(\bm{\phi})]$, we have $y-x\geq y>0$. Hence 
\begin{equation}\label{Eq5.90}
     \mathscr{U}_{31}\leq \Big(\int_{-R_0(\bm{\phi})}^0 |\Psi(x)|dx\Big)
\Big(\int_{\frac{17}{16}R_0(\bm{\phi})}^{R_0'(\bm{\phi})}\frac{|\Psi(y)|}{y^2}dy\Big).
\end{equation}
Note that
\begin{equation}
    \int_{\frac{17}{16}R_0(\bm{\phi})}^{R_0'(\bm{\phi})}\frac{1}{y^3}dy\leq \frac{1}{2}\Big(\frac{17}{16}R_0(\bm{\phi})\Big)^{-2}\leq R_0(\bm{\phi})^{-2}.
\end{equation}
Hence by the Cauchy-Schwarz inequality and property (f) of Proposition \ref{P5.2},
\begin{eqnarray}\label{Eq5.91}
&&\int_{\frac{17}{16}R_0(\bm{\phi})}^{R_0'(\bm{\phi})}\frac{|\Psi(y)|}{y^2}dy\leq \Big(\int_{\frac{17}{16}R_0(\bm{\phi})}^{R_0'(\bm{\phi})}\frac{\Psi(y)^2}{y}dy\Big)^{1\slash 2}\Big(\int_{\frac{17}{16}R_0(\bm{\phi})}^{R_0'(\bm{\phi})}\frac{1}{y^3}dy\Big)^{1\slash 2}\nonumber\\
&\leq& R_0(\bm{\phi})^{-1}\Big(2\int_{\frac{17}{16}R_0(\bm{\phi})}^{R_0'(\bm{\phi})}\frac{\Psi(y)^2}{y+1}dy\Big)^{1\slash 2}\leq C\sqrt{\epsilon}R_0(\bm{\phi})^{-1} k.
\end{eqnarray}
By (\ref{Eq5.83}), (\ref{Eq5.90}), and (\ref{Eq5.91}),
\begin{equation}
    \mathscr{U}_{31}\leq CM\sqrt{\epsilon}R_0(\bm{\phi})^{-1}k^2\leq CM\sqrt{\epsilon}k^2.
\end{equation}

We bound $\mathscr{U}_{32}$ as follows. For any $x\in [0,R_0(\bm{\phi})]$ and $y\in \big[\frac{17}{16}R_0(\bm{\phi}),R_0'(\bm{\phi})\big]$, we have $x\leq 16y\slash 17$ and $y-x\geq y\slash 17$. Hence by the Cauchy-Schwarz inequality,
\begin{eqnarray}\label{Eq5.92}
&& \mathscr{U}_{32}\leq C \int_0^{R_0(\bm{\phi})}
\int_{\frac{17}{16}R_0(\bm{\phi})}^{R_0'(\bm{\phi})}\frac{|\Psi(x)||\Psi(y)|}{y^2}dxdy\nonumber\\
&\leq& C\Big(\int_0^{R_0(\bm{\phi})}
\int_{\frac{17}{16}R_0(\bm{\phi})}^{R_0'(\bm{\phi})}\frac{1}{y^3}dxdy\Big)^{1\slash 2}\Big(\int_0^{R_0(\bm{\phi})}
\int_{\frac{17}{16}R_0(\bm{\phi})}^{R_0'(\bm{\phi})}\frac{\Psi(x)^2\Psi(y)^2}{y}dxdy\Big)^{1\slash 2}.\nonumber\\
&& 
\end{eqnarray}
Note that
\begin{equation}\label{Eq5.93}
    \int_0^{R_0(\bm{\phi})}
\int_{\frac{17}{16}R_0(\bm{\phi})}^{R_0'(\bm{\phi})}\frac{1}{y^3}dxdy\leq R_0(\bm{\phi})\cdot \frac{1}{2}\Big(\frac{17}{16}R_0(\bm{\phi})\Big)^{-2}\leq R_0(\bm{\phi})^{-1}.
\end{equation}
By properties (b) and (f) of Proposition \ref{P5.2},
\begin{equation}
    \int_0^{R_0(\bm{\phi})}\Psi(x)^2dx\leq (R_0(\bm{\phi})+1)\int_0^{R_0(\bm{\phi})}\frac{\Psi(x)^2}{x+1}dx\leq CM^2 R_0(\bm{\phi}) k^2,
\end{equation}
\begin{equation}
    \int_{\frac{17}{16}R_0(\bm{\phi})}^{R_0'(\bm{\phi})}\frac{\Psi(y)^2}{y}dy\leq 2  \int_{\frac{17}{16}R_0(\bm{\phi})}^{R_0'(\bm{\phi})}\frac{\Psi(y)^2}{y+1}dy\leq C\epsilon k^2,
\end{equation}
hence
\begin{equation}\label{Eq5.94}
    \int_0^{R_0(\bm{\phi})}
\int_{\frac{17}{16}R_0(\bm{\phi})}^{R_0'(\bm{\phi})}\frac{\Psi(x)^2\Psi(y)^2}{y}dxdy\leq CM^2\epsilon R_0(\bm{\phi})k^4.
\end{equation}
By (\ref{Eq5.92}), (\ref{Eq5.93}), and (\ref{Eq5.94}),
\begin{equation}
    \mathscr{U}_{32}\leq CM\sqrt{\epsilon} k^2.
\end{equation}

By (\ref{Eq5.95}) and the above estimates, we conclude that
\begin{equation}\label{Eq5.1.9}
    \Big|\int_{-R_0(\bm{\phi})}^{R_0(\bm{\phi})} \int_{R_0(\bm{\phi})+R_0(\bm{\phi})^{-4}}^{R_0'(\bm{\phi})}\frac{\Psi(x)\Psi(y)}{(y-x)^2}dxdy\Big|\leq CM\sqrt{\epsilon} k^2.
\end{equation}

\medskip

By (\ref{Eq5.1.1}), (\ref{Eq5.68})-(\ref{Eq5.1.5}), (\ref{Eq5.1.6}), (\ref{Eq5.2.1}), (\ref{Eq5.1.7}), (\ref{Eq5.1.8}), and (\ref{Eq5.1.9}), we conclude that \begin{equation}\label{Eq5.20.4}
    |\mathscr{R}_3|\leq CM\sqrt{\epsilon}+\frac{CM\log(R_0(\bm{\phi}))}{R_0(\bm{\phi})^{3\slash 2}}.
\end{equation}

\subparagraph{Sub-step 3.4}

In this sub-step, we bound $\mathscr{R}_4$. Let
\begin{equation}\label{Eq5.3.1}
    \mathscr{S}_4=\int_{-R_0(\bm{\phi})}^{R_0(\bm{\phi})-R_0(\bm{\phi})^{-4}}\int_{R_0(\bm{\phi})}^{R_0(\bm{\phi})+R_0(\bm{\phi})^{-4}}\log(|x-y|)d\Psi(x)d\Psi(y).
\end{equation}
Note that
\begin{equation}\label{Eq5.9.1}
    \mathscr{R}_4=-k^{-2}\mathscr{S}_4.
\end{equation}

For any $y\in [R_0(\bm{\phi}), R_0(\bm{\phi})+R_0(\bm{\phi})^{-4}]$, by integration by parts, we have
\begin{eqnarray}\label{Eq5.3.2}
   && \int_{-R_0(\bm{\phi})}^{R_0(\bm{\phi})-R_0(\bm{\phi})^{-4}}\log(|x-y|)d\Psi(x)\nonumber\\
   &=&
   -\int_{-R_0(\bm{\phi})}^{R_0(\bm{\phi})-R_0(\bm{\phi})^{-4}}\frac{\Psi(x)}{x-y}dx-\log(y+R_0(\bm{\phi}))\Psi(-R_0(\bm{\phi}))\nonumber\\
   &&+\log(y-(R_0(\bm{\phi})-R_0(\bm{\phi})^{-4}))\Psi(R_0(\bm{\phi})-R_0(\bm{\phi})^{-4}).
\end{eqnarray}
By (\ref{Eq5.3.1}) and (\ref{Eq5.3.2}), we have
\begin{eqnarray}\label{Eq5.3.3}
 \mathscr{S}_4&=& \int_{-R_0(\bm{\phi})}^{R_0(\bm{\phi})-R_0(\bm{\phi})^{-4}}\Psi(x) dx \int_{R_0(\bm{\phi})}^{R_0(\bm{\phi})+R_0(\bm{\phi})^{-4}}\frac{1}{y-x}d\Psi(y)\nonumber\\
 &+&\Psi(R_0(\bm{\phi})-R_0(\bm{\phi})^{-4})\int_{R_0(\bm{\phi})}^{R_0(\bm{\phi})+R_0(\bm{\phi})^{-4}}\log(y-(R_0(\bm{\phi})-R_0(\bm{\phi})^{-4}))d\Psi(y)\nonumber\\
 &-&\Psi(-R_0(\bm{\phi}))\int_{R_0(\bm{\phi})}^{R_0(\bm{\phi})+R_0(\bm{\phi})^{-4}}\log(y+R_0(\bm{\phi}))d\Psi(y).
\end{eqnarray}

Now note that by integration by parts,
\begin{eqnarray}
&& \int_{R_0(\bm{\phi})}^{R_0(\bm{\phi})+R_0(\bm{\phi})^{-4}}\log(y-(R_0(\bm{\phi})-R_0(\bm{\phi})^{-4}))d\Psi(y)\nonumber\\
&=&\log(2R_0(\bm{\phi})^{-4})  \Psi(R_0(\bm{\phi})+R_0(\bm{\phi})^{-4})-\log(R_0(\bm{\phi})^{-4})\Psi(R_0(\bm{\phi}))\nonumber\\
&&-\int_{R_0(\bm{\phi})}^{R_0(\bm{\phi})+R_0(\bm{\phi})^{-4}}\frac{\Psi(y)}{y-(R_0(\bm{\phi})-R_0(\bm{\phi})^{-4})}dy,
\end{eqnarray}
\begin{eqnarray}
 &&   \int_{R_0(\bm{\phi})}^{R_0(\bm{\phi})+R_0(\bm{\phi})^{-4}}\log(y+R_0(\bm{\phi}))d\Psi(y)\nonumber\\
 &=& \log(2R_0(\bm{\phi})+R_0(\bm{\phi})^{-4})\Psi(R_0(\bm{\phi})+R_0(\bm{\phi})^{-4})-\log(2R_0(\bm{\phi}))\Psi(R_0(\bm{\phi}))\nonumber\\
 && -\int_{R_0(\bm{\phi})}^{R_0(\bm{\phi})+R_0(\bm{\phi})^{-4}}\frac{\Psi(y)}{y+R_0(\bm{\phi})}dy.
\end{eqnarray}
For any $x\in [-R_0(\bm{\phi}),R_0(\bm{\phi})-R_0(\bm{\phi})^{-4}]$, by integration by parts,
\begin{eqnarray}\label{Eq5.3.4}
\int_{R_0(\bm{\phi})}^{R_0(\bm{\phi})+R_0(\bm{\phi})^{-4}}\frac{1}{y-x}d\Psi(y)&=&\frac{\Psi(R_0(\bm{\phi})+R_0(\bm{\phi})^{-4})}{R_0(\bm{\phi})+R_0(\bm{\phi})^{-4}-x}-\frac{\Psi(R_0(\bm{\phi}))}{R_0(\bm{\phi})-x}\nonumber\\
&&+\int_{R_0(\bm{\phi})}^{R_0(\bm{\phi})+R_0(\bm{\phi})^{-4}}\frac{\Psi(y)}{(y-x)^2}dy.
\end{eqnarray}
By (\ref{Eq5.3.3})-(\ref{Eq5.3.4}), we obtain that
\begin{eqnarray}\label{Eq5.3.5}
\mathscr{S}_4&=& \Psi(R_0(\bm{\phi})-R_0(\bm{\phi})^{-4})\Psi(R_0(\bm{\phi})+R_0(\bm{\phi})^{-4})\log(2R_0(\bm{\phi})^{-4})  \nonumber\\
&& -\Psi(R_0(\bm{\phi})-R_0(\bm{\phi})^{-4})\Psi(R_0(\bm{\phi}))\log(R_0(\bm{\phi})^{-4})  \nonumber\\
&&-\Psi(-R_0(\bm{\phi}))\Psi(R_0(\bm{\phi})+R_0(\bm{\phi})^{-4})\log(2R_0(\bm{\phi})+R_0(\bm{\phi})^{-4})  \nonumber\\
&&+\Psi(-R_0(\bm{\phi}))\Psi(R_0(\bm{\phi}))\log(2R_0(\bm{\phi})) \nonumber\\
&& -\Psi(R_0(\bm{\phi})-R_0(\bm{\phi})^{-4})\int_{R_0(\bm{\phi})}^{R_0(\bm{\phi})+R_0(\bm{\phi})^{-4}}\frac{\Psi(y)}{y-(R_0(\bm{\phi})-R_0(\bm{\phi})^{-4})}dy\nonumber\\
&&+\Psi(-R_0(\bm{\phi}))\int_{R_0(\bm{\phi})}^{R_0(\bm{\phi})+R_0(\bm{\phi})^{-4}}\frac{\Psi(y)}{y+R_0(\bm{\phi})}dy\nonumber\\
&&+\Psi(R_0(\bm{\phi})+R_0(\bm{\phi})^{-4})\int_{-R_0(\bm{\phi})}^{R_0(\bm{\phi})-R_0(\bm{\phi})^{-4}}\frac{\Psi(x)}{R_0(\bm{\phi})+R_0(\bm{\phi})^{-4}-x}dx\nonumber\\
&&-\Psi(R_0(\bm{\phi}))\int_{-R_0(\bm{\phi})}^{R_0(\bm{\phi})-R_0(\bm{\phi})^{-4}}\frac{\Psi(x)}{R_0(\bm{\phi})-x}dx\nonumber\\
&&+\int_{-R_0(\bm{\phi})}^{R_0(\bm{\phi})-R_0(\bm{\phi})^{-4}} \int_{R_0(\bm{\phi})}^{R_0(\bm{\phi})+R_0(\bm{\phi})^{-4}}\frac{\Psi(x)\Psi(y)}{(y-x)^2}dxdy.
\end{eqnarray}

In the following, we bound the terms on the right-hand side of (\ref{Eq5.3.5}) in \textbf{Parts 3.4.1-3.4.9}.

\textbf{Part 3.4.1} By (\ref{Eq5.50}) and property (a) of Proposition \ref{P5.2}, 
\begin{eqnarray}\label{Eq5.10.1}
&& |\Psi(R_0(\bm{\phi})-R_0(\bm{\phi})^{-4})\Psi(R_0(\bm{\phi})+R_0(\bm{\phi})^{-4})\log(2R_0(\bm{\phi})^{-4})  | \nonumber\\
&\leq& \frac{C\epsilon k^2}{\log(R_0(\bm{\phi}))\log\log(R_0(\bm{\phi}))}\cdot \log(R_0(\bm{\phi}))\leq \frac{C\epsilon k^2}{\log\log(R_0(\bm{\phi}))}.
\end{eqnarray}

\textbf{Part 3.4.2} By (\ref{Eq5.50}) and property (a) of Proposition \ref{P5.2}, 
\begin{eqnarray}\label{Eq5.10.2}
&& |\Psi(R_0(\bm{\phi})-R_0(\bm{\phi})^{-4})\Psi(R_0(\bm{\phi}))\log(R_0(\bm{\phi})^{-4})|\nonumber\\
&\leq& \frac{C\epsilon k^2}{\log(R_0(\bm{\phi}))\log\log(R_0(\bm{\phi}))}\cdot \log(R_0(\bm{\phi}))\leq \frac{C\epsilon k^2}{\log\log(R_0(\bm{\phi}))}.
\end{eqnarray}

\textbf{Part 3.4.3} By (\ref{Eq5.11}), (\ref{Eq5.50}), and properties (a) and (c) of Proposition \ref{P5.2}, 
\begin{eqnarray}\label{Eq5.10.3}
&& |\Psi(-R_0(\bm{\phi}))\Psi(R_0(\bm{\phi})+R_0(\bm{\phi})^{-4})\log(2R_0(\bm{\phi})+R_0(\bm{\phi})^{-4})| \nonumber\\
&\leq& \frac{CMk}{R_0(\bm{\phi})^{3\slash 2}}\cdot \sqrt{\frac{\epsilon}{\log(R_0(\bm{\phi}))\log\log(R_0(\bm{\phi}))}}k \cdot \log(R_0(\bm{\phi}))\nonumber\\
&\leq& \frac{CM\log(R_0(\bm{\phi})) k^2}{R_0(\bm{\phi})^{3\slash 2}}.
\end{eqnarray}

\textbf{Part 3.4.4} By (\ref{Eq5.11}) and properties (a) and (c) of Proposition \ref{P5.2}
\begin{eqnarray}\label{Eq5.10.4}
 && |\Psi(-R_0(\bm{\phi}))\Psi(R_0(\bm{\phi}))\log(2R_0(\bm{\phi}))|\nonumber\\
 &\leq& \frac{CMk}{R_0(\bm{\phi})^{3\slash 2}}\cdot \sqrt{\frac{\epsilon}{\log(R_0(\bm{\phi}))\log\log(R_0(\bm{\phi}))}}k \cdot \log(R_0(\bm{\phi}))\nonumber\\
&\leq& \frac{CM\log(R_0(\bm{\phi})) k^2}{R_0(\bm{\phi})^{3\slash 2}}.
\end{eqnarray}

\textbf{Part 3.4.5} By (\ref{Eq5.50}) and property (a) of Proposition \ref{P5.2},
\begin{eqnarray}\label{Eq5.10.5}
&&\Big|\Psi(R_0(\bm{\phi})-R_0(\bm{\phi})^{-4})\int_{R_0(\bm{\phi})}^{R_0(\bm{\phi})+R_0(\bm{\phi})^{-4}}\frac{\Psi(y)}{y-(R_0(\bm{\phi})-R_0(\bm{\phi})^{-4})}dy\Big|\nonumber\\
&\leq& |\Psi(R_0(\bm{\phi})-R_0(\bm{\phi})^{-4})|\int_{R_0(\bm{\phi})}^{R_0(\bm{\phi})+R_0(\bm{\phi})^{-4}}\frac{|\Psi(y)|}{y-(R_0(\bm{\phi})-R_0(\bm{\phi})^{-4})}dy\nonumber\\
&\leq& \frac{C\epsilon k^2}{\log(R_0(\bm{\phi}))\log\log(R_0(\bm{\phi}))}\int_{R_0(\bm{\phi})}^{R_0(\bm{\phi})+R_0(\bm{\phi})^{-4}}\frac{1}{y-(R_0(\bm{\phi})-R_0(\bm{\phi})^{-4})}dy\nonumber\\
&\leq& \frac{C\epsilon k^2}{\log(R_0(\bm{\phi}))\log\log(R_0(\bm{\phi}))}.
\end{eqnarray}

\textbf{Part 3.4.6} By (\ref{Eq5.11}), (\ref{Eq5.50}), and properties (a) and (c) of Proposition \ref{P5.2},
\begin{eqnarray}\label{Eq5.10.6}
&& \Big|\Psi(-R_0(\bm{\phi}))\int_{R_0(\bm{\phi})}^{R_0(\bm{\phi})+R_0(\bm{\phi})^{-4}}\frac{\Psi(y)}{y+R_0(\bm{\phi})}dy\Big|\nonumber\\
&\leq& |\Psi(-R_0(\bm{\phi}))|\int_{R_0(\bm{\phi})}^{R_0(\bm{\phi})+R_0(\bm{\phi})^{-4}}\frac{|\Psi(y)|}{y+R_0(\bm{\phi})}dy\nonumber\\
&\leq& \frac{CMk}{R_0(\bm{\phi})^{3\slash 2}}\cdot \sqrt{\frac{\epsilon}{\log(R_0(\bm{\phi}))\log\log(R_0(\bm{\phi}))}}k\cdot R_0(\bm{\phi})^{-1}\cdot R_0(\bm{\phi})^{-4}\nonumber\\
&\leq& \frac{CMk^2}{R_0(\bm{\phi})^{3\slash 2}}.
\end{eqnarray}

\textbf{Part 3.4.7} By (\ref{Eq5.50}), (\ref{Eq5.79}), and property (a) of Proposition \ref{P5.2},
\begin{eqnarray}\label{Eq5.10.7}
&& \Big|\Psi(R_0(\bm{\phi})+R_0(\bm{\phi})^{-4})\int_{-R_0(\bm{\phi})}^{R_0(\bm{\phi})-R_0(\bm{\phi})^{-4}}\frac{\Psi(x)}{R_0(\bm{\phi})+R_0(\bm{\phi})^{-4}-x}dx\Big|\nonumber\\
&\leq& C\sqrt{\frac{\epsilon}{\log(R_0(\bm{\phi}))\log\log(R_0(\bm{\phi}))}}k \int_{-R_0(\bm{\phi})}^{R_0(\bm{\phi})-R_0(\bm{\phi})^{-4}}\frac{|\Psi(x)|}{R_0(\bm{\phi})+R_0(\bm{\phi})^{-4}-x}dx\nonumber\\
&\leq& C\sqrt{\frac{\epsilon}{\log(R_0(\bm{\phi}))\log\log(R_0(\bm{\phi}))}}k\cdot M\sqrt{\log(R_0(\bm{\phi}))\log\log(R_0(\bm{\phi}))} k\nonumber\\
&\leq& CM\sqrt{\epsilon} k^2.
\end{eqnarray}

\textbf{Part 3.4.8} By (\ref{Eq5.11}) and property (c) of Proposition \ref{P5.2}, 
\begin{eqnarray}\label{Eq5.5.1}
&& \int_{-R_0(\bm{\phi})}^{0}\frac{|\Psi(x)|}{R_0(\bm{\phi})-x}dx\leq R_0(\bm{\phi})^{-1}\int_{-R_0(\bm{\phi})}^{0}|\Psi(x)|dx \nonumber\\
&\leq& CMkR_0(\bm{\phi})^{-1}\int_{-R_0(\bm{\phi})}^{0}(|x|+1)^{-3\slash 2}dx\leq CMkR_0(\bm{\phi})^{-1}.
\end{eqnarray}

Note that
\begin{equation}
    \int_0^{R_0(\bm{\phi})-\frac{R_0(\bm{\phi})}{16\log(R_0(\bm{\phi}))}}\frac{1}{(R_0(\bm{\phi})-x)^2}dx=\frac{16\log(R_0(\bm{\phi}))-1}{R_0(\bm{\phi})}\leq \frac{16\log(R_0(\bm{\phi}))}{R_0(\bm{\phi})}.
\end{equation}
% \begin{equation}
%     \int_0^{R_0(\bm{\phi})-\frac{R_0(\bm{\phi})}{16\log(R_0(\bm{\phi}))}}\frac{1}{R_0(\bm{\phi})-x}dx=\log(16 \log(R_0(\bm{\phi})))\leq C\log\log(R_0(\bm{\phi})).
% \end{equation}
By property (b) of Proposition \ref{P5.2},
\begin{eqnarray}
    \int_0^{R_0(\bm{\phi})-\frac{R_0(\bm{\phi})}{16\log(R_0(\bm{\phi}))}}\Psi(x)^2 dx&\leq& (R_0(\bm{\phi})+1)\int_0^{R_0(\bm{\phi})-\frac{R_0(\bm{\phi})}{16\log(R_0(\bm{\phi}))}}\frac{\Psi(x)^2}{x+1} dx\nonumber\\
 &\leq&CM^2 R_0(\bm{\phi}) k^2.
\end{eqnarray}
% \begin{eqnarray}
%  && \int_0^{R_0(\bm{\phi})-\frac{R_0(\bm{\phi})}{16\log(R_0(\bm{\phi}))}}\frac{\Psi(x)^2}{R_0(\bm{\phi})-x}dx\leq \frac{16\log(R_0(\bm{\phi}))}{R_0(\bm{\phi})}\int_0^{R_0(\bm{\phi})-\frac{R_0(\bm{\phi})}{16\log(R_0(\bm{\phi}))}}\Psi(x)^2 dx\nonumber\\
%  &&\leq \frac{16\log(R_0(\bm{\phi}))}{R_0(\bm{\phi})}\cdot (R_0(\bm{\phi})+1)\int_0^{R_0(\bm{\phi})-\frac{R_0(\bm{\phi})}{16\log(R_0(\bm{\phi}))}}\frac{\Psi(x)^2}{x+1} dx\nonumber\\
%  && \leq  C\log(R_0(\bm{\phi}))M^2 k^2.
% \end{eqnarray}
Hence by the Cauchy-Schwarz inequality,
\begin{eqnarray}\label{Eq5.5.2}
&& \int_0^{R_0(\bm{\phi})-\frac{R_0(\bm{\phi})}{16\log(R_0(\bm{\phi}))}}\frac{|\Psi(x)|}{R_0(\bm{\phi})-x}dx \nonumber\\
&\leq& \Big(\int_0^{R_0(\bm{\phi})-\frac{R_0(\bm{\phi})}{16\log(R_0(\bm{\phi}))}}\frac{1}{(R_0(\bm{\phi})-x)^2}dx\Big)^{1\slash 2}\Big(\int_0^{R_0(\bm{\phi})-\frac{R_0(\bm{\phi})}{16\log(R_0(\bm{\phi}))}}\Psi(x)^2dx\Big)^{1\slash 2}\nonumber\\
&\leq& CM\sqrt{\log(R_0(\bm{\phi}))} k.
\end{eqnarray}

By property (a) of Proposition \ref{P5.2},
\begin{eqnarray}\label{Eq5.5.3}
 &&\int_{R_0(\bm{\phi})-\frac{R_0(\bm{\phi})}{16\log(R_0(\bm{\phi}))}}^{R_0(\bm{\phi})-R_0(\bm{\phi})^{-4}}\frac{|\Psi(x)|}{R_0(\bm{\phi})-x}dx\nonumber\\
 &\leq& C\sqrt{\frac{\epsilon}{\log(R_0(\bm{\phi}))\log\log(R_0(\bm{\phi}))}}k \int_{R_0(\bm{\phi})-\frac{R_0(\bm{\phi})}{16\log(R_0(\bm{\phi}))}}^{R_0(\bm{\phi})-R_0(\bm{\phi})^{-4}}\frac{1}{R_0(\bm{\phi})-x}dx\nonumber\\
 &=& C\sqrt{\frac{\epsilon}{\log(R_0(\bm{\phi}))\log\log(R_0(\bm{\phi}))}}k \log\Big(\frac{R_0(\bm{\phi})\slash (16\log(R_0(\bm{\phi})))}{R_0(\bm{\phi})^{-4}}\Big)\nonumber\\
 &\leq& C\sqrt{\frac{\epsilon \log(R_0(\bm{\phi}))}{\log\log(R_0(\bm{\phi}))}}k.
\end{eqnarray}

By (\ref{Eq5.5.1}), (\ref{Eq5.5.2}), and (\ref{Eq5.5.3}), we have
\begin{equation}\label{Eq5.6.1}
 \int_{-R_0(\bm{\phi})}^{R_0(\bm{\phi})-R_0(\bm{\phi})^{-4}}\frac{|\Psi(x)|}{R_0(\bm{\phi})-x}dx\leq CM\sqrt{\log(R_0(\bm{\phi}))} k.
\end{equation}
Hence by property (a) of Proposition \ref{P5.2},
\begin{eqnarray}\label{Eq5.10.8}
 && \Big|\Psi(R_0(\bm{\phi}))\int_{-R_0(\bm{\phi})}^{R_0(\bm{\phi})-R_0(\bm{\phi})^{-4}}\frac{\Psi(x)}{R_0(\bm{\phi})-x}dx\Big|\nonumber\\
 &\leq& C\sqrt{\frac{\epsilon}{\log(R_0(\bm{\phi}))\log\log(R_0(\bm{\phi}))}}k\cdot M\sqrt{\log(R_0(\bm{\phi}))} k \leq  CM\sqrt{\epsilon} k^2.\nonumber\\
 &&
\end{eqnarray}

\textbf{Part 3.4.9} By (\ref{Eq5.50}) and property (a) of Proposition \ref{P5.2}, 
\begin{eqnarray}\label{Eq5.7.1}
 && \Big|\int_{-R_0(\bm{\phi})}^{R_0(\bm{\phi})-R_0(\bm{\phi})^{-4}} \int_{R_0(\bm{\phi})}^{R_0(\bm{\phi})+R_0(\bm{\phi})^{-4}}\frac{\Psi(x)\Psi(y)}{(y-x)^2}dxdy\Big|\nonumber\\
 &\leq& \int_{-R_0(\bm{\phi})}^{R_0(\bm{\phi})-R_0(\bm{\phi})^{-4}} \int_{R_0(\bm{\phi})}^{R_0(\bm{\phi})+R_0(\bm{\phi})^{-4}}\frac{|\Psi(x)||\Psi(y)|}{(y-x)^2}dxdy\nonumber\\
 &\leq& C\sqrt{\frac{\epsilon}{\log(R_0(\bm{\phi}))\log\log(R_0(\bm{\phi}))}}k\nonumber\\
 &&\times \int_{-R_0(\bm{\phi})}^{R_0(\bm{\phi})-R_0(\bm{\phi})^{-4}} |\Psi(x)|dx \int_{R_0(\bm{\phi})}^{R_0(\bm{\phi})+R_0(\bm{\phi})^{-4}}\frac{1}{(y-x)^2}dy.
\end{eqnarray}

For any $x\in [-R_0(\bm{\phi}),R_0(\bm{\phi})-R_0(\bm{\phi})^{-4}]$, 
\begin{equation}\label{Eq5.6.2}
\int_{R_0(\bm{\phi})}^{R_0(\bm{\phi})+R_0(\bm{\phi})^{-4}}\frac{1}{(y-x)^2}dy\leq \frac{1}{R_0(\bm{\phi})-x}.
\end{equation}
Hence by (\ref{Eq5.50}) and (\ref{Eq5.6.1}),
\begin{eqnarray}\label{Eq5.7.2}
&& \int_{-R_0(\bm{\phi})}^{R_0(\bm{\phi})-\frac{R_0(\bm{\phi})}{16\log(R_0(\bm{\phi}))}} |\Psi(x)|dx \int_{R_0(\bm{\phi})}^{R_0(\bm{\phi})+R_0(\bm{\phi})^{-4}}\frac{1}{(y-x)^2}dy \nonumber\\
&\leq& \int_{-R_0(\bm{\phi})}^{R_0(\bm{\phi})-\frac{R_0(\bm{\phi})}{16\log(R_0(\bm{\phi}))}} \frac{|\Psi(x)|}{R_0(\bm{\phi})-x}dx \leq CM\sqrt{\log(R_0(\bm{\phi}))} k.
\end{eqnarray}

By (\ref{Eq5.6.2}), we have
\begin{eqnarray}\label{Eq5.6.7}
 && \int_{R_0(\bm{\phi})-\frac{R_0(\bm{\phi})}{16\log(R_0(\bm{\phi}))}}^{R_0(\bm{\phi})-R_0(\bm{\phi})^{-4}} dx \int_{R_0(\bm{\phi})}^{R_0(\bm{\phi})+R_0(\bm{\phi})^{-4}}\frac{1}{(y-x)^2}dy \nonumber\\
 &\leq& \int_{R_0(\bm{\phi})-\frac{R_0(\bm{\phi})}{16\log(R_0(\bm{\phi}))}}^{R_0(\bm{\phi})-R_0(\bm{\phi})^{-4}} \frac{1}{R_0(\bm{\phi})-x}dx=\log\Big(\frac{R_0(\bm{\phi})\slash (16\log(R_0(\bm{\phi})))}{R_0(\bm{\phi})^{-4}}\Big)\nonumber\\
 &\leq& C\log(R_0(\bm{\phi})).
\end{eqnarray}
Hence by property (a) of Proposition \ref{P5.2}, 
\begin{eqnarray}\label{Eq5.6.6}
&& \int_{R_0(\bm{\phi})-\frac{R_0(\bm{\phi})}{16\log(R_0(\bm{\phi}))}}^{R_0(\bm{\phi})-R_0(\bm{\phi})^{-4}} |\Psi(x)|dx \int_{R_0(\bm{\phi})}^{R_0(\bm{\phi})+R_0(\bm{\phi})^{-4}}\frac{1}{(y-x)^2}dy \nonumber\\
&\leq& C\sqrt{\frac{\epsilon}{\log(R_0(\bm{\phi}))\log\log(R_0(\bm{\phi}))}}k\nonumber\\
&&\times\int_{R_0(\bm{\phi})-\frac{R_0(\bm{\phi})}{16\log(R_0(\bm{\phi}))}}^{R_0(\bm{\phi})-R_0(\bm{\phi})^{-4}} dx \int_{R_0(\bm{\phi})}^{R_0(\bm{\phi})+R_0(\bm{\phi})^{-4}}\frac{1}{(y-x)^2}dy\nonumber\\
&\leq& C\sqrt{\frac{\epsilon \log(R_0(\bm{\phi}))}{\log\log(R_0(\bm{\phi}))}}k.
\end{eqnarray}

By (\ref{Eq5.7.1}), (\ref{Eq5.7.2}), and (\ref{Eq5.6.6}), we conclude that
\begin{equation}\label{Eq5.10.9}
\Big|\int_{-R_0(\bm{\phi})}^{R_0(\bm{\phi})-R_0(\bm{\phi})^{-4}} \int_{R_0(\bm{\phi})}^{R_0(\bm{\phi})+R_0(\bm{\phi})^{-4}}\frac{\Psi(x)\Psi(y)}{(y-x)^2}dxdy\Big|\leq CM\sqrt{\epsilon} k^2.
\end{equation}

\medskip

By (\ref{Eq5.9.1}), (\ref{Eq5.3.5}), (\ref{Eq5.10.1})-(\ref{Eq5.10.7}), (\ref{Eq5.10.8}), and (\ref{Eq5.10.9}), we conclude that
\begin{equation}\label{Eq5.20.5}
  |\mathscr{R}_4|\leq CM\sqrt{\epsilon}+\frac{CM\log(R_0(\bm{\phi}))}{R_0(\bm{\phi})^{3\slash 2}}.
\end{equation}

\bigskip

By (\ref{Eq5.20.1}), (\ref{Eq5.20.2}), (\ref{Eq5.20.3}), (\ref{Eq5.20.4}), and (\ref{Eq5.20.5}), we conclude that
\begin{equation}\label{Eq5.19.13}
\Xi_{2,4}+\Xi_{3,4}\geq -CM\sqrt{\epsilon}-\frac{CM\log(R_0(\bm{\phi}))}{R_0(\bm{\phi})^{3\slash 2}},
\end{equation}
where we note (\ref{Eq2.1.15}) and that $T$ only depends on $M,\epsilon$ and $k\geq K_0(M,\epsilon,\delta)$ is sufficiently large (depending on $\beta,M,\epsilon,\delta$).

\paragraph{Step 4} In this step, we bound $\tilde{\Xi}_{2,4;n_0,\mathbf{t}}$ and $\tilde{\Xi}_{3,4;n_0,\mathbf{t}}$. By (\ref{Eq5.25}), we have $\tilde{\kappa}_{2;n_0,\mathbf{t}}=0$. Hence
\begin{equation}\label{Eq5.19.14}
    \tilde{\Xi}_{2,4;n_0,\mathbf{t}}=0. 
\end{equation}

For any $x\in [0,R_0(\bm{\phi})]$, let 
\begin{equation}\label{Eq5.14.1}
    h(x):=\int \log(|x-y|)d\tilde{\kappa}_{4;n_0,\mathbf{t}}(y).
\end{equation}
By (\ref{Eq5.25}) and integration by parts, for any $x\in [0,R_0(\bm{\phi}))$, we have
\begin{eqnarray}\label{Eq5.12.3}
    h(x)&=&\frac{1}{k}\int_{R_0(\bm{\phi})}^{R_0'(\bm{\phi})}\log(|x-y|)d\Psi(y),\nonumber\\
    &=& \frac{1}{k}\Big(\log(R_0'(\bm{\phi})-x)\Psi(R_0'(\bm{\phi}))-\log(R_0(\bm{\phi})-x)\Psi(R_0(\bm{\phi}))\nonumber\\
    &&\quad\quad
    -\int_{R_0(\bm{\phi})}^{R_0'(\bm{\phi})}\frac{\Psi(y)}{y-x}dy\Big);
\end{eqnarray}
moreover,
\begin{eqnarray}\label{Eq5.18.8}
 \tilde{\Xi}_{3,4;n_0,\mathbf{t}}&=&-\int d\tilde{\kappa}_{3;n_0,\mathbf{t}}(x)\int \log(|x-y|)d\tilde{\kappa}_{4;n_0,\mathbf{t}}(y)\nonumber\\
 &=& -\frac{1}{k}\sum_{i=1}^{n_0} h(\rho_i+t_i)+\int_{0}^{R_0(\bm{\phi})}h(x)d\mu_0(x).
\end{eqnarray}

Below we consider an arbitrary $x\in [0,r_0+1]$. As $r_0\in [1,10]$, by (\ref{Eq5.11}), for any $y\in [R_0(\bm{\phi}), R_0'(\bm{\phi})]$, we have $x\leq 11\leq R_0(\bm{\phi})\slash 2\leq  y\slash 2$ and $y-x\geq y\slash 2$. Hence 
\begin{equation}\label{Eq5.11.1}
\Big|\int_{R_0(\bm{\phi})}^{R_0'(\bm{\phi})}\Psi(y)\Big(\frac{1}{y-x}-\frac{1}{y}\Big)dy\Big|\leq \int_{R_0(\bm{\phi})}^{R_0'(\bm{\phi})} \frac{|\Psi(y)| x}{(y-x)y}dy\leq 22  \int_{R_0(\bm{\phi})}^{R_0'(\bm{\phi})} \frac{|\Psi(y)|}{y^2}dy.
\end{equation}
By property (f) of Proposition \ref{P5.2}, 
\begin{equation}\label{Eq5.11.2}
\int_{R_0(\bm{\phi})}^{R_0'(\bm{\phi})} \frac{\Psi(y)^2}{y}dy\leq 2\int_{R_0(\bm{\phi})}^{R_0'(\bm{\phi})} \frac{\Psi(y)^2}{y+1}dy\leq C\epsilon k^2.
\end{equation}
By (\ref{Eq5.11.1}), (\ref{Eq5.11.2}), and the Cauchy-Schwarz inequality,
\begin{eqnarray}\label{Eq5.12.1}
\Big|\int_{R_0(\bm{\phi})}^{R_0'(\bm{\phi})}\Psi(y)\Big(\frac{1}{y-x}-\frac{1}{y}\Big)dy\Big|&\leq& C\Big(\int_{R_0(\bm{\phi})}^{R_0'(\bm{\phi})} \frac{\Psi(y)^2}{y}dy\Big)^{1\slash 2}\Big(\int_{R_0(\bm{\phi})}^{R_0'(\bm{\phi})} \frac{1}{y^3}dy\Big)^{1\slash 2}\nonumber\\
&\leq& C\sqrt{\epsilon} R_0(\bm{\phi})^{-1} k. 
\end{eqnarray}
Note that $R_0'(\bm{\phi})-x\geq R_0(\bm{\phi})-x\geq R_0(\bm{\phi})\slash 2$. Hence
\begin{eqnarray}\label{Eq5.12.2}
&& 0\leq \log(R_0'(\bm{\phi}))-\log(R_0'(\bm{\phi})-x)=\log\Big(1+\frac{x}{R_0'(\bm{\phi})-x}\Big)\leq \frac{x}{R_0'(\bm{\phi})-x}\leq \frac{22}{R_0(\bm{\phi})},\nonumber\\
&& 0\leq \log(R_0(\bm{\phi}))-\log(R_0(\bm{\phi})-x)=\log\Big(1+\frac{x}{R_0(\bm{\phi})-x}\Big)\leq \frac{x}{R_0(\bm{\phi})-x}\leq \frac{22}{R_0(\bm{\phi})}.\nonumber\\
&& 
\end{eqnarray}
By (\ref{Eq5.12.3}), (\ref{Eq5.12.1}), (\ref{Eq5.12.2}), and properties (a) and (e) of Proposition \ref{P5.2}, 
\begin{eqnarray}\label{Eq5.12.4}
&& |h(x)-h(0)| \leq \frac{1}{k}\Big|\int_{R_0(\bm{\phi})}^{R_0'(\bm{\phi})}\Psi(y)\Big(\frac{1}{y-x}-\frac{1}{y}\Big)dy\Big|\nonumber\\
&&\quad\quad\quad\quad\quad\quad\quad+\frac{1}{k}|\log(R_0'(\bm{\phi})-x)-\log(R_0'(\bm{\phi}))||\Psi(R_0'(\bm{\phi}))|\nonumber\\
&&\quad\quad\quad\quad\quad\quad\quad + \frac{1}{k}|\log(R_0(\bm{\phi})-x)-\log(R_0(\bm{\phi}))||\Psi(R_0(\bm{\phi}))|\nonumber\\
&\leq& C\sqrt{\epsilon}R_0(\bm{\phi})^{-1}+CR_0(\bm{\phi})^{-1}k^{-1}\cdot\sqrt{\frac{\epsilon}{\log(R_0(\bm{\phi}))}}k\nonumber\\
&& +CR_0(\bm{\phi})^{-1}k^{-1}\cdot \sqrt{\frac{\epsilon}{\log(R_0(\bm{\phi}))\log\log(R_0(\bm{\phi}))}}k\nonumber\\
&\leq& C\sqrt{\epsilon}R_0(\bm{\phi})^{-1}.
\end{eqnarray}
By (\ref{Eq5.21}), for any $i\in [m_0']$, $0<\rho_i+t_i\leq r_0+n^{-1}\leq r_0+1$. Hence by (\ref{Eq5.43}) and (\ref{Eq5.12.4}), 
\begin{eqnarray}\label{Eq5.12.6}
 && \Big|\frac{1}{k}\sum_{i=1}^{m_0'}h(\rho_i+t_i)-\int_0^{r_0}h(x)d\mu_0(x)-\Big(\frac{m_0'}{k}-\mu_0([0,r_0])\Big) h(0)\Big| \nonumber\\
 &\leq& \frac{1}{k}\sum_{i=1}^{m_0'}|h(\rho_i+t_i)-h(0)|+\int_{0}^{r_0}|h(x)-h(0)|d\mu_0(x)\nonumber\\
 &\leq& C\sqrt{\epsilon}R_0(\bm{\phi})^{-1}\Big(\frac{m_0'}{k}+\mu_0([0,r_0])\Big)\leq C\sqrt{\epsilon}R_0(\bm{\phi})^{-1}. 
\end{eqnarray}

By the Cauchy-Schwarz inequality, (\ref{Eq2.2.1}), and (\ref{Eq5.11.2}),
\begin{eqnarray}\label{Eq5.12.5}
 && \int_{R_0(\bm{\phi})}^{R_0'(\bm{\phi})}\frac{|\Psi(y)|}{y}dy \leq \Big(\int_{R_0(\bm{\phi})}^{R_0'(\bm{\phi})}\frac{\Psi(y)^2}{y}dy\Big)^{1\slash 2} \Big(\int_{R_0(\bm{\phi})}^{R_0'(\bm{\phi})}\frac{1}{y}dy\Big)^{1\slash 2} \nonumber\\
 &\leq& C\sqrt{\epsilon} k \cdot \sqrt{\log\Big(\frac{R_0'(\bm{\phi})}{R_0(\bm{\phi})}\Big)}\leq C\sqrt{\epsilon \log(R_0(\bm{\phi}))} k.
\end{eqnarray}
By (\ref{Eq2.2.1}), (\ref{Eq5.12.3}), (\ref{Eq5.12.5}), and properties (a) and (e) of Proposition \ref{P5.2},
\begin{eqnarray}\label{Eq5.12.8}
|h(0)|&\leq& \frac{1}{k}\Big(\log(R_0'(\bm{\phi}))|\Psi(R_0'(\bm{\phi}))|+\log(R_0(\bm{\phi}))|\Psi(R_0(\bm{\phi}))|\nonumber\\
&& \quad\quad+\int_{R_0(\bm{\phi})}^{R_0'(\bm{\phi})}\frac{|\Psi(y)|}{y}dy\Big)\nonumber\\
&\leq& \frac{C}{k}\Big(\log(R_0(\bm{\phi}))\cdot \sqrt{\frac{\epsilon}{\log(R_0(\bm{\phi}))}}k+\sqrt{\epsilon \log(R_0(\bm{\phi}))} k\Big)\nonumber\\
&\leq& C\sqrt{\epsilon \log(R_0(\bm{\phi}))}.
\end{eqnarray}
By (\ref{Eq5.15}) and (\ref{Eq5.16}),
\begin{equation}\label{Eq5.12.9}
 \Big|\frac{m_0'}{k}-\mu_0([0,r_0])\Big|=\Big|\frac{n_0}{k}-\mu_0([0,R_0(\bm{\phi})])\Big|\leq C_0\sqrt{\frac{\epsilon}{\log(R_0(\bm{\phi}))\log\log(R_0(\bm{\phi}))}}.
\end{equation}

By (\ref{Eq2.1.10}), as $M\geq M_0$ is sufficiently large and $\epsilon\in (0,\epsilon_0)$ is sufficiently small, we have $R_0(\bm{\phi})\geq \sqrt{\log\log(R_0(\bm{\phi}))}$. Hence by (\ref{Eq5.12.6}), (\ref{Eq5.12.8}), and (\ref{Eq5.12.9}),
\begin{equation}\label{Eq5.18.7}
\Big|\frac{1}{k}\sum_{i=1}^{m_0'}h(\rho_i+t_i)-\int_0^{r_0}h(x)d\mu_0(x)\Big|\leq C\sqrt{\frac{\epsilon}{\log\log(R_0(\bm{\phi}))}}. 
\end{equation}

By (\ref{Eq5.16}) and (\ref{Eq5.39}), for any $i\in [m_0'+1,n_0]\cap\mathbb{Z}$, $\mu_0([\rho_i,\rho_{i+1}])=k^{-1}$. Hence noting (\ref{Eq5.14.1}), we have
\begin{eqnarray}\label{Eq5.18.1}
&&  \frac{1}{k}\sum_{i=m_0'+1}^{n_0}h(\rho_i+t_i)-\int_{r_0}^{R_0(\bm{\phi})}h(x)d\mu_0(x) \nonumber\\
&=& \sum_{i=m_0'+1}^{n_0}\int_{\rho_i}^{\rho_{i+1}} (h(\rho_i+t_i)-h(x))d\mu_0(x)\nonumber\\
&=& \sum_{i=m_0'+1}^{n_0}\int \Big(\int_{\rho_i}^{\rho_{i+1}}(\log(|\rho_i+t_i-y|)-\log(|x-y|))d\mu_0(x) \Big) d\tilde{\kappa}_{4;n_0,\mathbf{t}}(y).   \nonumber\\
&&
\end{eqnarray}

Below we consider any $i\in [m_0'+1,n_0-1]\cap\mathbb{Z}$ and any $x\in [\rho_i,\rho_{i+1}]$, $y\in [R_0(\bm{\phi}),R_0'(\bm{\phi})]$. By (\ref{Eq5.20}), we have
\begin{equation}
 y-x\geq R_0(\bm{\phi})-\rho_{i+1}=\sum_{l=i+1}^{n_0}(\rho_{l+1}-\rho_l)\geq \frac{c(n_0-i)}{k\sqrt{R_0(\bm{\phi})}},
\end{equation}
\begin{equation}
 y-(\rho_i+t_i)\geq (R_0(\bm{\phi})-\rho_{i+1})+(\rho_{i+1}-\rho_i-n^{-1})\geq R_0(\bm{\phi})-\rho_{i+1} \geq \frac{c(n_0-i)}{k\sqrt{R_0(\bm{\phi})}}.
\end{equation}
By (\ref{Eq5.15.1}), 
\begin{equation}
 x-(\rho_i+t_i)\leq \rho_{i+1}-\rho_i\leq C k^{-1}, \quad \rho_i+t_i-x\leq n^{-1}\leq k^{-1}. 
\end{equation}
Hence
\begin{eqnarray}
&& \log(|\rho_i+t_i-y|)-\log(|x-y|)=\log\Big(\frac{y-(\rho_i+t_i)}{y-x}\Big)=\log\Big(1+\frac{x-(\rho_i+t_i)}{y-x}\Big) \nonumber\\
&&\leq \frac{x-(\rho_i+t_i)}{y-x}\leq \frac{C\slash k}{c(n_0-i)\slash (k\sqrt{R_0(\bm{\phi})})}\leq \frac{C\sqrt{R_0(\bm{\phi})}}{n_0-i},
\end{eqnarray}
\begin{eqnarray}
&& \log(|x-y|)-\log(|\rho_i+t_i-y|)=\log\Big(\frac{y-x}{y-(\rho_i+t_i)}\Big)=\log\Big(1+\frac{\rho_i+t_i-x}{y-(\rho_i+t_i)}\Big) \nonumber\\
&&\leq \frac{\rho_i+t_i-x}{y-(\rho_i+t_i)}\leq \frac{k^{-1}}{c(n_0-i)\slash (k\sqrt{R_0(\bm{\phi})})}\leq \frac{C\sqrt{R_0(\bm{\phi})}}{n_0-i},
\end{eqnarray}
which lead to
\begin{equation}\label{Eq5.16.1}
 |\log(|\rho_i+t_i-y|)-\log(|x-y|)|\leq \frac{C\sqrt{R_0(\bm{\phi})}}{n_0-i}.
\end{equation}

For any $i\in [m_0'+1,n_0-1]\cap\mathbb{Z}$ and $y\in [R_0(\bm{\phi}),R_0'(\bm{\phi})]$, by (\ref{Eq5.16}) and (\ref{Eq5.39}), 
\begin{eqnarray}
&& \Big|\int_{\rho_i}^{\rho_{i+1}}(\log(|\rho_i+t_i-y|)-\log(|x-y|))d\mu_0(x)\Big|\nonumber\\
&\leq& \frac{C\sqrt{R_0(\bm{\phi})}}{n_0-i}\cdot \mu_0([\rho_i,\rho_{i+1}]) =\frac{C\sqrt{R_0(\bm{\phi})}}{(n_0-i)k}.
\end{eqnarray}
Hence for any $i\in [m_0'+1,n_0-1]\cap\mathbb{Z}$, 
\begin{eqnarray}
&& \Big|\int \Big(\int_{\rho_i}^{\rho_{i+1}}(\log(|\rho_i+t_i-y|)-\log(|x-y|))d\mu_0(x) \Big) d\tilde{\kappa}_{4;n_0,\mathbf{t}}(y)\Big| \nonumber\\
&\leq& \frac{C\sqrt{R_0(\bm{\phi})}}{(n_0-i)k}\cdot (k^{-1}|\{j\in [n]: b_j\in (R_0(\bm{\phi}),R_0'(\bm{\phi})]\}|+\mu_0([R_0(\bm{\phi}),R_0'(\bm{\phi})])).\nonumber\\
&& 
\end{eqnarray}
Note that by (\ref{Eq2.2.1}) and property (e) of Proposition \ref{P5.2},
\begin{equation}\label{Eq5.17.3}
 \mu_0([R_0(\bm{\phi}),R_0'(\bm{\phi})])\leq \mu_0([0,R_0'(\bm{\phi})]) \leq\frac{1}{\pi}\int_{0}^{R_0'(\bm{\phi})}\sqrt{x}dx\leq R_0'(\bm{\phi})^{3\slash 2}\leq R_0(\bm{\phi})^{30},
\end{equation}
\begin{eqnarray}\label{Eq5.17.4}
&& |\{j\in [n]: b_j\in (R_0(\bm{\phi}),R_0'(\bm{\phi})]\}|\leq |\{j\in [n]: b_j\leq R_0'(\bm{\phi})\}| \nonumber\\
&=& \tilde{N}(R_0'(\bm{\phi}))\leq |\Psi(R_0'(\bm{\phi}))|+\tilde{N}_0(R_0'(\bm{\phi}))\nonumber\\
&\leq& C \sqrt{\frac{\epsilon}{\log(R_0(\bm{\phi}))}}k+R_0(\bm{\phi})^{30}k\leq CR_0(\bm{\phi})^{30}k.
\end{eqnarray}
Hence for any $i\in [m_0'+1,n_0-1]\cap\mathbb{Z}$, we have
\begin{equation}\label{Eq5.18.2}
\Big|\int \Big(\int_{\rho_i}^{\rho_{i+1}}(\log(|\rho_i+t_i-y|)-\log(|x-y|))d\mu_0(x) \Big) d\tilde{\kappa}_{4;n_0,\mathbf{t}}(y)\Big|\leq \frac{CR_0(\bm{\phi})^{32}}{(n_0-i)k}.
\end{equation}

For any $y\in [R_0(\bm{\phi}),R_0'(\bm{\phi})]$, by (\ref{Eq2.2.1}) and (\ref{Eq5.20}), we have
\begin{equation}
 y-(\rho_{n_0}+t_{n_0})\geq R_0(\bm{\phi})-\rho_{n_0}-n^{-1}\geq \frac{1}{2}(R_0(\bm{\phi})-\rho_{n_0})\geq \frac{c}{k\sqrt{R_0(\bm{\phi})}},
\end{equation}
\begin{equation}
y-(\rho_{n_0}+t_{n_0})\leq y\leq R_0'(\bm{\phi})\leq R_0(\bm{\phi})^{20}.
\end{equation}
Hence by (\ref{Eq5.11}), we have $c k^{-2}\leq y-(\rho_{n_0}+t_{n_0})\leq k^{40}$, and
\begin{equation}
  |\log(|\rho_{n_0}+t_{n_0}-y|)|\leq C\log(k).
\end{equation}
Hence by (\ref{Eq5.16}) and (\ref{Eq5.39}), for any $y\in [R_0(\bm{\phi}),R_0'(\bm{\phi})]$,
\begin{equation}\label{Eq5.17.1}
 \Big|\int_{\rho_{n_0}}^{\rho_{n_0+1}} \log(|\rho_{n_0}+t_{n_0}-y|)d\mu_0(x)\Big|\leq C\log(k)\mu_0([\rho_{n_0},\rho_{n_0+1}])\leq \frac{C\log(k)}{k}.
\end{equation}

Below we consider any $y\in [R_0(\bm{\phi}),R_0'(\bm{\phi})]$. For any $x\in [\rho_{n_0},\rho_{n_0+1})$, we have $0<y-x\leq R_0'(\bm{\phi})$. By (\ref{Eq2.2.1}), $\log(y-x)\leq C\log(R_0(\bm{\phi}))$.
Hence
\begin{eqnarray}
 && |\log(|x-y|)|=|\log(y-x)|=(\log(y-x))_{+}+(\log(y-x))_{-} \nonumber\\
 &=& 2(\log(y-x))_{+}-\log(y-x)\leq C\log(R_0(\bm{\phi}))-\log(y-x),
\end{eqnarray}
where $(a)_{+}:=\max\{a,0\}$ and $(a)_{-}:=-\min\{a,0\}$ for any $a\in\mathbb{R}$. Hence
\begin{eqnarray}
&& \Big|\int_{\rho_{n_0}}^{\rho_{n_0+1}} \log(|x-y|)d\mu_0(x)\Big|\leq \sqrt{R_0(\bm{\phi})}\int_{\rho_{n_0}}^{\rho_{n_0+1}} |\log(|x-y|)| dx\nonumber\\
&\leq&  \sqrt{R_0(\bm{\phi})}\Big(C\log(R_0(\bm{\phi}))(\rho_{n_0+1}-\rho_{n_0})-\int_{\rho_{n_0}}^{\rho_{n_0+1}}\log(y-x)dx \Big).
\end{eqnarray}
By (\ref{Eq5.11}) and (\ref{Eq5.20}), $\log(\rho_{n_0+1}-\rho_{n_0})\geq \log(k^{-1}R_0(\bm{\phi})^{-1\slash 2})\geq -2\log(k)$. Hence by (\ref{Eq5.11}) and (\ref{Eq5.15.1}), 
\begin{equation}
C\log(R_0(\bm{\phi}))(\rho_{n_0+1}-\rho_{n_0})\leq \frac{C\log(k)}{k},
\end{equation}
\begin{eqnarray}
&& -\int_{\rho_{n_0}}^{\rho_{n_0+1}}\log(y-x)dx  \leq -\int_{\rho_{n_0}}^{\rho_{n_0+1}}\log(\rho_{n_0+1}-x)dx\nonumber\\
&=& (\rho_{n_0+1}-\rho_{n_0})(-\log(\rho_{n_0+1}-\rho_{n_0})+1)\leq \frac{C\log(k)}{k}.
\end{eqnarray}
Hence
\begin{equation}\label{Eq5.17.2}
\Big|\int_{\rho_{n_0}}^{\rho_{n_0+1}} \log(|x-y|)d\mu_0(x)\Big|\leq \frac{C\sqrt{R_0(\bm{\phi})}\log(k)}{k}.
\end{equation}

By (\ref{Eq5.17.1}) and (\ref{Eq5.17.2}), for any $y\in [R_0(\bm{\phi}),R_0'(\bm{\phi})]$, 
\begin{equation}
\Big|\int_{\rho_{n_0}}^{\rho_{n_0+1}}(\log(|\rho_{n_0}+t_{n_0}-y|)-\log(|x-y|))d\mu_0(x)\Big|\leq \frac{C\sqrt{R_0(\bm{\phi})}\log(k)}{k}.
\end{equation}
Hence by (\ref{Eq5.17.3}) and (\ref{Eq5.17.4}), we have
\begin{eqnarray}\label{Eq5.18.3}
 && \Big|\int \Big( \int_{\rho_{n_0}}^{\rho_{n_0+1}}(\log(|\rho_{n_0}+t_{n_0}-y|)-\log(|x-y|))d\mu_0(x) \Big) d\tilde{\kappa}_{4;n_0,\mathbf{t}}(y)\Big|\nonumber\\
 &\leq& \frac{C\sqrt{R_0(\bm{\phi})}\log(k)}{k}\cdot (k^{-1}|\{j\in [n]: b_j\in (R_0(\bm{\phi}),R_0'(\bm{\phi})]\}|+\mu_0([R_0(\bm{\phi}),R_0'(\bm{\phi})]))\nonumber\\
 &\leq& \frac{C R_0(\bm{\phi})^{32}\log(k)}{k}.
\end{eqnarray}

Note that by (\ref{Eq5.11}) and (\ref{Eq5.18.5}), 
\begin{equation}
 \sum_{i=m_0'+1}^{n_0-1}\frac{1}{n_0-i}\leq \sum_{i=1}^{n_0}\frac{1}{i}\leq 1+\int_{1}^{n_0}\frac{1}{x}dx=\log(n_0)+1\leq C\log(k).
\end{equation}
Hence by (\ref{Eq5.18.1}), (\ref{Eq5.18.2}), and (\ref{Eq5.18.3}), 
\begin{eqnarray}\label{Eq5.18.6}
 && \Big|\frac{1}{k}\sum_{i=m_0'+1}^{n_0}h(\rho_i+t_i)-\int_{r_0}^{R_0(\bm{\phi})}h(x)d\mu_0(x)\Big| \nonumber\\
 &\leq& \sum_{i=m_0'+1}^{n_0-1} \frac{CR_0(\bm{\phi})^{32}}{(n_0-i)k}+\frac{C R_0(\bm{\phi})^{32}\log(k)}{k}\leq \frac{C R_0(\bm{\phi})^{32}\log(k)}{k}.
\end{eqnarray}

By (\ref{Eq5.18.8}), (\ref{Eq5.18.7}), and (\ref{Eq5.18.6}), we conclude that
\begin{equation}\label{Eq5.19.15}
|\tilde{\Xi}_{3,4;n_0,\mathbf{t}}|\leq C\sqrt{\frac{\epsilon}{\log\log(R_0(\bm{\phi}))}}+ \frac{C R_0(\bm{\phi})^{32}\log(k)}{k}\leq C\sqrt{\epsilon},
\end{equation}
where we use (\ref{Eq5.11}) and the facts that $T$ only depends on $M,\epsilon$ and $k\geq K_0(M,\epsilon,\delta)$ is sufficiently large (depending on $\beta,M,\epsilon,\delta$) for the second inequality. 

\paragraph{Step 5} In this step, we bound $|(\Xi_{2,5}+\Xi_{3,5})-(\tilde{\Xi}_{2,5;n_0,\mathbf{t}}+\tilde{\Xi}_{3,5;n_0,\mathbf{t}})|$. Note that the event $\mathcal{D}(\bm{\phi},n_0)$ holds (recall Definition \ref{Defn5.1}). Let $\alpha_1\leq \alpha_2\leq \cdots\leq \alpha_{n_0}$ be such that $\{\alpha_1,\alpha_2,\cdots,\alpha_{n_0}\}=\{b_i: i\in [n], b_i\in [-R_0(\bm{\phi}),R_0(\bm{\phi})]\}$.  

For any $x\in [-R_0(\bm{\phi}),R_0(\bm{\phi})]$, we let 
\begin{equation}
    g(x):=\int \log(|x-y|)d\kappa_5(y).
\end{equation}
Note that $\tilde{\kappa}_{5;n_0,\mathbf{t}}=\kappa_5$. Hence by (\ref{Eq5.25}),
\begin{equation}\label{Eq5.19.4}
   (\Xi_{2,5}+\Xi_{3,5})-(\tilde{\Xi}_{2,5;n_0,\mathbf{t}}+\tilde{\Xi}_{3,5;n_0,\mathbf{t}})=-\frac{1}{k}\Big(\sum_{i=1}^{n_0}g(\alpha_i)-\sum_{i=1}^{n_0}g(\rho_i+t_i)\Big).
\end{equation}

Note that for any $x\in [-R_0(\bm{\phi}),R_0(\bm{\phi})]$, 
\begin{equation}
    g'(x)=\int \frac{1}{x-y} d\kappa_5(y).
\end{equation}
Hence as the event $\mathcal{C}(\bm{\phi};\Lambda_0')$ holds, by (\ref{Eq5.19.2}), we have 
\begin{equation}\label{Eq5.19.3}
  \sup_{x\in [-R_0(\bm{\phi}),R_0(\bm{\phi})]}|g'(x)|\leq CMR_0'(\bm{\phi})^{-1\slash 2}. 
\end{equation}

For any $i\in [n_0]$, there exists $\tilde{\alpha}_i\in [-R_0(\bm{\phi}),R_0(\bm{\phi})]$, such that 
\begin{equation*}
 g(\alpha_i)-g(\rho_i+t_i)=g'(\tilde{\alpha}_i)(\alpha_i-(\rho_i+t_i)).
\end{equation*}
Hence by (\ref{Eq5.19.3}),
\begin{eqnarray}\label{Eq5.19.7}
&&  |g(\alpha_i)-g(\rho_i+t_i)|\leq CM R_0'(\bm{\phi})^{-1\slash 2}|\alpha_i-(\rho_i+t_i)|\nonumber\\
&\leq& CM R_0'(\bm{\phi})^{-1\slash 2}R_0(\bm{\phi})\leq CM R_0(\bm{\phi})^{-4}.
\end{eqnarray}

By (\ref{Eq5.18.5}), (\ref{Eq5.19.4}), and (\ref{Eq5.19.7}), we have 
\begin{equation}\label{Eq5.19.16}
 |(\Xi_{2,5}+\Xi_{3,5})-(\tilde{\Xi}_{2,5;n_0,\mathbf{t}}+\tilde{\Xi}_{3,5;n_0,\mathbf{t}})|\leq C k^{-1}n_0\cdot M R_0(\bm{\phi})^{-4} \leq CMR_0(\bm{\phi})^{-5\slash 2}. 
\end{equation}

\bigskip

By (\ref{Eq5.19.8}), (\ref{Eq5.19.9}), (\ref{Eq5.19.10}), (\ref{Eq5.19.11}), (\ref{Eq5.19.12}), (\ref{Eq5.19.13}), (\ref{Eq5.19.14}), (\ref{Eq5.19.15}), and (\ref{Eq5.19.16}), 
\begin{eqnarray}
&&  -\int \log(|x-y|) d\Upsilon_1(x)d\Upsilon_2(y)+\int \log(|x-y|) d\tilde{\Upsilon}_{1;n_0,\mathbf{t}}(x)d\tilde{\Upsilon}_{2;n_0,\mathbf{t}}(y) \nonumber\\
& \geq &  -CM^2 R_0(\bm{\phi})^{-1\slash 3}-CM\sqrt{\epsilon}-\frac{CM\log(R_0(\bm{\phi}))}{R_0(\bm{\phi})^{3\slash 2}}.
\end{eqnarray}
By (\ref{Eq2.1.10}), as $M\geq M_0$ is sufficiently large and $\epsilon\in (0,\epsilon_0)$ is sufficiently small, 
\begin{equation*}
 \frac{M\log(R_0(\bm{\phi}))}{R_0(\bm{\phi})^{3\slash 2}}\leq M^2 R_0(\bm{\phi})^{-1\slash 3}\leq\frac{M^2}{10^{M^2\slash (3\epsilon)}}\leq \frac{M^2}{(M^2\slash (3\epsilon)}\leq C\epsilon.
\end{equation*}
Hence we conclude that
\begin{equation}
-\int \log(|x-y|) d\Upsilon_1(x)d\Upsilon_2(y)+\int \log(|x-y|) d\tilde{\Upsilon}_{1;n_0,\mathbf{t}}(x)d\tilde{\Upsilon}_{2;n_0,\mathbf{t}}(y)\geq -CM\sqrt{\epsilon}.
\end{equation}

\end{proof}

\subsubsection{Proof of Proposition \ref{P5.4}}

\begin{proof}[Proof of Proposition \ref{P5.4}]

Throughout the proof, we fix any $R\in [10,R_0(\bm{\phi})-10]$, and assume that the event $\mathcal{B}(\bm{\phi};\Lambda_0)\cap \mathcal{D}(\bm{\phi},n_0)\cap\mathcal{C}(\bm{\phi};\Lambda_0')\cap  (\mathscr{G}_R)^c$ holds. 

By the definition of $\Upsilon_1$ in (\ref{Eq5.21.2}), we have 
\begin{eqnarray}\label{Eq5.22.1}
&&-\int_{\mathbb{R}^2\backslash \Delta} \log(|x-y|) d\Upsilon_1(x)d\Upsilon_1(y)  =  -\frac{2}{k^2}\sum_{\substack{1\leq i<j\leq n:\\ b_i,b_j\in [-R_0(\bm{\phi}),R_0(\bm{\phi})]}}\log(|b_i-b_j|)\nonumber\\
&&\quad\quad\quad\quad\quad\quad\quad\quad\quad\quad+\frac{2}{k}\sum_{\substack{i\in [n]:\\ b_i\in [-R_0(\bm{\phi}),R_0(\bm{\phi})]}}\int_{-R_0(\bm{\phi})}^{R_0(\bm{\phi})}\log(|b_i-x|)d\mu_0(x)\nonumber\\
&&\quad\quad\quad\quad\quad\quad\quad\quad\quad\quad-\int_{-R_0(\bm{\phi})}^{R_0(\bm{\phi})} \int_{-R_0(\bm{\phi})}^{R_0(\bm{\phi})} \log(|x-y|)d\mu_0(x)d\mu_0(y).
\end{eqnarray}

Let $h(x):=x\log(x)-x$ for any $x>0$ and $h(0):=0$. As $h(x) \geq -1$ for any $x\in [0,1]$, for any $z_1,z_2\in [0,1]$, we have
\begin{equation*}
 \int_{x-z_1}^{x+z_2}\log(|x-y|)dy=h(z_1)+h(z_2)\geq -2.
\end{equation*}
For any $x\in [-R_0(\bm{\phi}),R_0(\bm{\phi})]$, we have
\begin{eqnarray}
&&  \int_{-R_0(\bm{\phi})}^{R_0(\bm{\phi})}\log(|x-y|)d\mu_0(y)\geq \int_{[-R_0(\bm{\phi}),R_0(\bm{\phi})]\cap [x-1,x+1]}\log(|x-y|)d\mu_0(y) \nonumber\\
&\geq& \frac{\sqrt{R_0(\bm{\phi})}}{\pi}\int_{[-R_0(\bm{\phi}),R_0(\bm{\phi})]\cap [x-1,x+1]}\log(|x-y|)dy\nonumber\\
&\geq& -\frac{2\sqrt{R_0(\bm{\phi})}}{\pi}\geq -\sqrt{R_0(\bm{\phi})}.
\end{eqnarray}
Hence by Definition \ref{Defn5.1} and (\ref{Eq5.18.5}), 
\begin{equation}\label{Eq5.22.2}
\frac{2}{k}\sum_{\substack{i\in [n]:\\ b_i\in [-R_0(\bm{\phi}),R_0(\bm{\phi})]}}\int_{-R_0(\bm{\phi})}^{R_0(\bm{\phi})}\log(|b_i-x|)d\mu_0(x)\geq -\frac{2}{k}\cdot \sqrt{R_0(\bm{\phi})} n_0\geq -C R_0(\bm{\phi})^2.
\end{equation}

As $\mu_0([-R_0(\bm{\phi}),R_0(\bm{\phi})])\leq \pi^{-1}\int_0^{R_0(\bm{\phi})}\sqrt{x}dx\leq R_0(\bm{\phi})^{3\slash 2}$, by (\ref{Eq5.11}),
\begin{eqnarray}\label{Eq5.22.3}
&& \int_{-R_0(\bm{\phi})}^{R_0(\bm{\phi})} \int_{-R_0(\bm{\phi})}^{R_0(\bm{\phi})} \log(|x-y|)d\mu_0(x)d\mu_0(y)\nonumber\\
&\leq& \log(2R_0(\bm{\phi}))\mu_0([-R_0(\bm{\phi}),R_0(\bm{\phi})])^2 
\leq CR_0(\bm{\phi})^3\log(R_0(\bm{\phi})).
\end{eqnarray}

Moreover, by (\ref{Eq5.18.5}),
\begin{equation}\label{Eq5.22.4}
\sum_{\substack{1\leq i<j\leq n:\\ b_i,b_j\in [-R_0(\bm{\phi}),R_0(\bm{\phi})],\\|b_i-b_j|>1}} \log(|b_i-b_j|)\leq \log(2R_0(\bm{\phi})) n_0^2\leq CR_0(\bm{\phi})^3\log(R_0(\bm{\phi})) k^2.  
\end{equation}
Let 
\begin{eqnarray}
 &&  n_1:=|\{i\in [n]: b_i\in [R-n^{-1\slash 30},R+n^{-1\slash 30}]\}|,\nonumber\\
 &&  n_2:=|\{i\in [n]: b_i\in [-R-n^{-1\slash 30},-R+n^{-1\slash 30}]\}|.
\end{eqnarray}
As the event $(\mathscr{G}_R)^c$ holds and $k$ is sufficiently large, by Definition \ref{Defn5.3.n}, 
\begin{equation}
  n_1+n_2\geq \frac{k}{\log(n)^{1\slash 4}}\geq \frac{ck}{\log(k)^{1\slash 4}}\geq 10.
\end{equation}
Hence
\begin{eqnarray}
&&\frac{n_1(n_1-1)}{2}+\frac{n_2(n_2-1)}{2}=\frac{n_1^2+n_2^2}{2}-\frac{n_1+n_2}{2}\nonumber\\
&\geq & \Big(\frac{n_1+n_2}{2}\Big)^2-\frac{n_1+n_2}{2}\geq \frac{(n_1+n_2)^2}{8}\geq \frac{k^2}{8\sqrt{\log(n)}}.
\end{eqnarray}
Note that $\log(2n^{-1\slash 30})=\log(2)-\log(n)\slash 30\leq -\log(n)\slash 50$. Hence we have
\begin{eqnarray}\label{Eq5.22.5}
&& \sum_{\substack{1\leq i<j\leq n:\\ b_i,b_j\in [-R_0(\bm{\phi}),R_0(\bm{\phi})],\\|b_i-b_j|\leq 1}} \log(|b_i-b_j|)\nonumber\\
&\leq& \sum_{\substack{1\leq i<j\leq n:\\ b_i,b_j\in [R-n^{-1\slash 30},R+n^{-1\slash 30}],\\\text{ or }b_i,b_j\in [-R-n^{-1\slash 30},-R+n^{-1\slash 30}]}}\log(|b_i-b_j|)\nonumber\\
&\leq& \log(2n^{-1\slash 30})\Big(\frac{n_1(n_1-1)}{2}+\frac{n_2(n_2-1)}{2}\Big)\nonumber\\
&\leq &  -\frac{1}{50}\log(n)\cdot \frac{k^2}{8\sqrt{\log(n)}}\leq -ck^2\sqrt{\log(n)}.
\end{eqnarray}

By (\ref{Eq5.22.1}), (\ref{Eq5.22.2})-(\ref{Eq5.22.4}), and (\ref{Eq5.22.5}), we have
\begin{equation}
 -\int_{\mathbb{R}^2\backslash \Delta} \log(|x-y|) d\Upsilon_1(x)d\Upsilon_1(y)\geq c\sqrt{\log(n)}-CR_0(\bm{\phi})^3\log(R_0(\bm{\phi})).
\end{equation}
By (\ref{Eq5.11}) and (\ref{Eq5.13}), as $M\geq M_0$ is sufficiently large, we have 
\begin{equation*}
    R_0(\bm{\phi})^3\log(R_0(\bm{\phi}))\leq R_0(\bm{\phi})^{10}\leq 2^T. 
\end{equation*}
Hence 
\begin{equation}
     -\int_{\mathbb{R}^2\backslash \Delta} \log(|x-y|) d\Upsilon_1(x)d\Upsilon_1(y)\geq c\sqrt{\log(n)}-C\cdot 2^T.
\end{equation}
For any $x\in [0,R_0(\bm{\phi})]$, by (\ref{Eq5.11}), $2-(k\slash n)^{2\slash 3}x\in [-2,2]$. Hence by (\ref{Eq7.6.1}) and \cite[Lemma 6.1]{Zho}, $\tilde{\xi}(x)=0,\forall x\in [0,R_0(\bm{\phi})]$ and $\tilde{\xi}(x)\geq 0,  \forall x\in\mathbb{R}$. Hence 
\begin{equation}
    \int \tilde{\xi}(x) d\Upsilon_1(x)=\frac{1}{k}\sum_{\substack{i\in [n]:\\ b_i\in [-R_0(\bm{\phi}),R_0(\bm{\phi})]}}\tilde{\xi}(b_i)\geq 0. 
\end{equation}
Hence we conclude that
\begin{equation}
    -\int_{\mathbb{R}^2\backslash \Delta} \log(|x-y|) d\Upsilon_1(x)d\Upsilon_1(y)+2 \int \tilde{\xi}(x) d\Upsilon_1(x) \geq c\sqrt{\log(n)}-C\cdot 2^T.
\end{equation}

\end{proof}

\subsubsection{Proof of Proposition \ref{P5.5}}

\begin{proof}[Proof of Proposition \ref{P5.5}]

In the following, we fix any $R\in [10,R_0(\bm{\phi})-10]$, and assume that $\mathcal{B}(\bm{\phi};\Lambda_0)\cap \mathcal{D}(\bm{\phi},n_0)\cap\mathcal{C}(\bm{\phi};\Lambda_0')\cap \mathscr{G}_R \cap \mathscr{E}\cap \{d_R(\nu_{k;R},\mu)\leq \delta\}$ holds.

In the following, we consider any function $f:[-R,R]\rightarrow \mathbb{R}$ such that $\max\{\|f\|_{\infty},\|f\|_{Lip}\}\leq 1$. By (\ref{Eq5.11}), $R\leq R_0(\bm{\phi})\leq 2^{T\slash 10}$. As $T$ only depends on $M,\epsilon$ and $k\geq K_0(M,\epsilon,\delta)$ is sufficiently large (depending on $\beta,M,\epsilon,\delta$), we have $R\leq 4(n\slash k)^{2\slash 3}$. Hence
\begin{eqnarray}\label{Eq5.27.2}
 &&\Big|\int_{[-R,R]}fd\mu_0-\int_{[-R,R]}fd\nu_0\Big|\leq\frac{1}{\pi}\int_{0}^R |f(x)|\sqrt{x}\Big(1-\sqrt{1-\frac{1}{4}\Big(\frac{k}{n}\Big)^{2\slash 3}x}\Big)dx\nonumber\\
 &&\leq \frac{1}{4\pi}\Big(\frac{k}{n}\Big)^{2\slash 3}\int_0^R x^{3\slash 2}dx\leq \Big(\frac{k}{n}\Big)^{2\slash 3} R^{5\slash 2}\leq 2^T k^{2\slash 3} n^{-2\slash 3}.
\end{eqnarray}

Let 
\begin{equation}
 \mathscr{I}_0:=\{i\in [n]: b_i\in [-R,R], k^{-2\slash 3}\lambda_i \in [-R,R]\},
\end{equation}
\begin{equation}
  \mathscr{I}_1:=\{i\in [n]:b_i\in [-R,R], k^{-2\slash 3}\lambda_i\notin[-R,R]\},
\end{equation}
\begin{eqnarray}
 \mathscr{I}_2&:=&\{i\in\mathbb{N}^{*}:\text{ either }i\geq n+1\text{ and }k^{-2\slash 3}\lambda_i\in [-R,R],\nonumber\\
 &&\quad \text{ or }i\in [n], b_i\notin [-R,R], \text{ and }k^{-2\slash 3}\lambda_i\in [-R,R]\}. 
\end{eqnarray}
Note that
\begin{eqnarray}\label{Eq5.26.1}
&& \Big|\int_{[-R,R]}fd\Big(\frac{1}{k}\sum_{i=1}^{\infty}\delta_{k^{-2\slash 3}\lambda_i}\Big)-\int_{[-R,R]}fd\Big(\frac{1}{k}\sum_{i=1}^{n}\delta_{b_i}\Big)\Big| \nonumber\\
&=& \frac{1}{k}\Big|\sum_{\substack{i\in\mathbb{N}^{*}:\\k^{-2\slash 3}\lambda_i\in [-R,R]}}f(k^{-2\slash 3}\lambda_i)-\sum_{\substack{i\in [n]:\\ b_i\in [-R,R]}}f(b_i)\Big| \nonumber\\
&=& \frac{1}{k}\Big|\sum_{i\in \mathscr{I}_0\cup\mathscr{I}_2}f(k^{-2\slash 3}\lambda_i)-\sum_{i\in\mathscr{I}_0\cup\mathscr{I}_1}f(b_i)\Big|\nonumber\\
&\leq& \frac{1}{k}\Big(\sum_{i\in\mathscr{I}_0}|f(k^{-2\slash 3}\lambda_i)-f(b_i)|+\sum_{i\in\mathscr{I}_1}|f(b_i)|+\sum_{i\in\mathscr{I}_2}|f(k^{-2\slash 3}\lambda_i)|\Big)\nonumber\\
&\leq& \frac{1}{k}\Big(\sum_{i\in\mathscr{I}_0}|k^{-2\slash 3}\lambda_i-b_i|+|\mathscr{I}_1|+|\mathscr{I}_2|\Big).
\end{eqnarray}

By (\ref{Eq5.11}) and property (c) of Proposition \ref{P5.2},
\begin{equation}
    \tilde{N}(-R_0(\bm{\phi}))\leq \frac{CMk}{R_0(\bm{\phi})^{3\slash 2}}.
\end{equation}
Note that by (\ref{Eq5.13}), $R_0(\bm{\phi})\geq M$. Hence by (\ref{Eq5.18.5}), 
\begin{equation}\label{Eq5.24.4}
    \tilde{N}(R_0(\bm{\phi}))\leq \tilde{N}(-R_0(\bm{\phi}))+n_0\leq \frac{CMk}{R_0(\bm{\phi})^{3\slash 2}}+C R_0(\bm{\phi})^{3\slash 2} k\leq C R_0(\bm{\phi})^{3\slash 2} k. 
\end{equation}
As $R_0(\bm{\phi})\geq M\geq M_0$ is sufficiently large, by (\ref{Eq5.11}), we have 
\begin{equation}\label{Eq5.23.2}
    \tilde{N}(R_0(\bm{\phi}))\leq C R_0(\bm{\phi})^{3\slash 2} k\leq R_0(\bm{\phi})^{10}k\leq 2^T k.
\end{equation}

For any $i\in [n]$ such that $b_i\in [-R,R]$, we have $b_i\leq R\leq R_0(\bm{\phi})$, hence by (\ref{Eq5.23.2}), $i\leq \tilde{N}(R_0(\bm{\phi}))\leq 2^Tk$. As the event $\mathscr{E}$ holds, by (\ref{Eq5.23.1}), we have 
\begin{equation}\label{Eq5.24.1}
    |b_i-k^{-2\slash 3}\lambda_i|\leq C_0 k^{-2\slash 3} n^{-1\slash 24}.
\end{equation}
Moreover, by (\ref{Eq5.11}) and (\ref{Eq5.18.5}),
\begin{equation}
    |\mathscr{I}_0|\leq |\{i\in [n]: b_i\in [-R_0(\bm{\phi}),R_0(\bm{\phi})]\}|=n_0\leq CR_0(\bm{\phi})^{3\slash 2}k\leq C\cdot 2^T k.
\end{equation}
Hence 
\begin{equation}\label{Eq5.26.2}
    \sum_{i\in\mathscr{I}_0}|k^{-2\slash 3}\lambda_i-b_i|\leq C_0 k^{-2\slash 3} n^{-1\slash 24}|\mathscr{I}_0|\leq C\cdot 2^T k^{1\slash 3}n^{-1\slash 24}.
\end{equation}

We bound $|\mathscr{I}_1|$ as follows. Consider any $i\in\mathscr{I}_1$. If $k^{-2\slash 3}\lambda_i>R$, by (\ref{Eq5.24.1}), 
\begin{equation}
    b_i\geq k^{-2\slash 3}\lambda_i-C_0 k^{-2\slash 3} n^{-1\slash 24}>R-C_0 k^{-2\slash 3} n^{-1\slash 24}\geq R-n^{-1\slash 30}.
\end{equation}
If $k^{-2\slash 3}\lambda_i<-R$, by (\ref{Eq5.24.1}),
\begin{equation}
    b_i\leq k^{-2\slash 3}\lambda_i+C_0 k^{-2\slash 3} n^{-1\slash 24}< -R+C_0 k^{-2\slash 3} n^{-1\slash 24}\leq -R+n^{-1\slash 30}.
\end{equation}
Noting that $b_i\in [-R,R]$, we have $b_i\in [-R,-R+n^{-1\slash 30}]\cup [R-n^{-1\slash 30},R]$. As the event $\mathscr{G}_R$ holds, by (\ref{Eq5.24.2}), we have
\begin{equation}\label{Eq5.26.3}
    |\mathscr{I}_1|\leq \frac{k}{\log(n)^{1\slash 4}}.
\end{equation}

We bound $|\mathscr{I}_2|$ as follows. Consider any $i\in\mathscr{I}_2$. Let $C$ be the constant on the right-hand side of (\ref{Eq5.24.4}). By (\ref{Eq5.11}), $CR_0(\bm{\phi})^{3\slash 2}k\leq C\cdot 2^T k\leq n-1$. Hence $b_{\lfloor CR_0(\bm{\phi})^{3\slash 2} k\rfloor+1}>R_0(\bm{\phi})$. As $R_0(\bm{\phi})\geq M\geq M_0$ is sufficiently large, by (\ref{Eq5.11}), we have $\lfloor CR_0(\bm{\phi})^{3\slash 2} k\rfloor+1\leq R_0(\bm{\phi})^{10} k\leq 2^T k$. As the event $\mathscr{E}$ holds, by (\ref{Eq5.23.1}),
\begin{equation}
    k^{-2\slash 3}\lambda_{\lfloor CR_0(\bm{\phi})^{3\slash 2} k\rfloor+1} \geq b_{\lfloor CR_0(\bm{\phi})^{3\slash 2} k\rfloor+1}-C_0 k^{-2\slash 3} n^{-1\slash 24}> R_0(\bm{\phi})-1>R.
\end{equation}
Hence $i\leq \lfloor CR_0(\bm{\phi})^{3\slash 2} k\rfloor\leq 2^T k\leq n$. By (\ref{Eq5.23.1}), 
\begin{equation}
    b_i\geq k^{-2\slash 3}\lambda_i-C_0 k^{-2\slash 3} n^{-1\slash 24}\geq -R-n^{-1\slash 30},
\end{equation}
\begin{equation}
    b_i\leq k^{-2\slash 3}\lambda_i+C_0 k^{-2\slash 3} n^{-1\slash 24}\leq R+n^{-1\slash 30}.
\end{equation}
Hence $b_i\in [-R-n^{-1\slash 30},-R]\cup [R,R+n^{-1\slash 30}]$. As the event $\mathscr{G}_R$ holds, by (\ref{Eq5.24.2}), we have
\begin{equation}\label{Eq5.26.4}
    |\mathscr{I}_2|\leq \frac{k}{\log(n)^{1\slash 4}}.
\end{equation}

By (\ref{Eq5.26.1}), (\ref{Eq5.26.2}), (\ref{Eq5.26.3}), and (\ref{Eq5.26.4}), we have
\begin{eqnarray}\label{Eq5.27.3}
   && \Big|\int_{[-R,R]}fd\Big(\frac{1}{k}\sum_{i=1}^{\infty}\delta_{k^{-2\slash 3}\lambda_i}\Big)-\int_{[-R,R]}fd\Big(\frac{1}{k}\sum_{i=1}^{n}\delta_{b_i}\Big)\Big|\nonumber\\
   &\leq& C\cdot 2^T k^{-2\slash 3} n^{-1\slash 24}+\frac{2}{\log(n)^{1\slash 4}}\leq \frac{C}{\log(n)^{1\slash 4}}.  
\end{eqnarray}
By (\ref{Eq5.27.2}) and (\ref{Eq5.27.3}), we have
\begin{equation}
    \Big|\int_{[-R,R]}fd\nu_{k;R}-\int_{[-R,R]}fd\mu_{n,k;R}\Big|\leq \frac{C}{\log(n)^{1\slash 4}},
\end{equation}
hence $d_R(\nu_{k;R},\mu_{n,k;R})\leq C\log(n)^{-1\slash 4}\leq \delta$. As $d_R(\nu_{k;R},\mu)\leq \delta$, we have 
\begin{equation}
    d_R(\mu_{n,k;R},\mu)\leq d_R(\nu_{k;R},\mu_{n,k;R})+d_R(\nu_{k;R},\mu)\leq 2\delta.
\end{equation}
Hence the event $\mathcal{B}(\bm{\phi};\Lambda_0)\cap \mathcal{D}(\bm{\phi},n_0)\cap\mathcal{C}(\bm{\phi};\Lambda_0')\cap \mathscr{G}_R\cap \{d_R(\mu_{n,k;R},\mu)\leq 2\delta\}$ holds, and we conclude that (\ref{Eq5.28.1}) holds. 

\end{proof}

\subsubsection{Proof of Proposition \ref{P5.6}}

Before the proof of Proposition \ref{P5.6}, we present the following lemma.

\begin{lemma}\label{L5.1}
There exists a positive absolute constant $C$, such that for any $t_1,t_2\in\mathbb{R}$ such that $t_1\leq t_2$, we have
\begin{eqnarray}\label{Eq6.1.5}
&& \int_{t_1-n^{-1}}^{t_1+n^{-1}}\int_{t_2-n^{-1}}^{t_2+n^{-1}} \log(|x-y|)dxdy\nonumber\\
&\geq& \frac{4}{n^2}\log(t_2-t_1)-\frac{C}{n^2\max\{n(t_2-t_1),1\}}.
\end{eqnarray}
Moreover, for any $t_1,t_2\in\mathbb{R}$ such that $0\leq t_2-t_1\leq 10n^{-1}$, we have
\begin{equation}\label{Eq6.1.4}
    \int_{t_1-n^{-1}}^{t_1+n^{-1}}\int_{t_2-n^{-1}}^{t_2+n^{-1}} \log(|x-y|)dxdy\geq -\frac{4\log(n)}{n^2}-\frac{C}{n^2}.
\end{equation}
\end{lemma}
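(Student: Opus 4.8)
\textbf{Proof proposal for Lemma \ref{L5.1}.}

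The plan is to evaluate the double integral exactly using an antiderivative of $\log|x-y|$ and then estimate the resulting elementary expression. First I would substitute $u = x - t_1$, $v = y - t_2$, so that the integral becomes $\int_{-n^{-1}}^{n^{-1}}\int_{-n^{-1}}^{n^{-1}} \log|u - v + (t_1 - t_2)|\,du\,dv$; writing $a := t_2 - t_1 \geq 0$, this is $\int_{-n^{-1}}^{n^{-1}}\int_{-n^{-1}}^{n^{-1}} \log|u - v - a|\,du\,dv$. The inner structure only depends on $a$ and $n$. Recall the standard primitive: for $w \neq 0$, $\int \log|w|\,dw = w\log|w| - w$, and iterating, $\iint \log|x-y|\,dx\,dy$ over a rectangle has a closed form in terms of the function $g(s) := \tfrac12 s^2 \log|s| - \tfrac34 s^2$ (so that $g''(s) = \log|s|$). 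Concretely, $\int_{p_1}^{p_2}\int_{q_1}^{q_2}\log|x-y|\,dx\,dy = G(p_2-q_1) - G(p_2-q_2) - G(p_1-q_1) + G(p_1-q_2)$ where $G(s) = \tfrac12 s^2\log|s| - \tfrac34 s^2$. Applying this with $p_1 = t_1 - n^{-1}$, $p_2 = t_1 + n^{-1}$, $q_1 = t_2 - n^{-1}$, $q_2 = t_2 + n^{-1}$ gives the integral as $G(a+2n^{-1}) + G(a-2n^{-1}) - 2G(a)$, i.e.\ a second difference of $G$ at $a$ with step $2n^{-1}$.

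For \eqref{Eq6.1.5}, the key observation is that this second difference is approximately $(2n^{-1})^2 G''(a) = 4n^{-2}\log|a| = 4n^{-2}\log(t_2 - t_1)$, which is exactly the main term claimed, and I need to control the error. I would split into two regimes. When $na = n(t_2-t_1) \geq 1$ (say $\geq 4$, so all three arguments $a, a \pm 2n^{-1}$ are comparable to $a$ and bounded away from $0$), a Taylor expansion with integral remainder gives $G(a+2n^{-1}) + G(a-2n^{-1}) - 2G(a) = \int_{-2n^{-1}}^{2n^{-1}} (2n^{-1} - |s|) G''(a+s)\,ds$, and since $G''(a+s) = \log|a+s| \geq \log(a) + \log(1/2) \geq \log(a) - 1$ (or more precisely, bounding $|\log(a+s) - \log a| \leq |s|/(a/2) \leq C n^{-1}/a$ for $|s| \leq 2n^{-1} \leq a/2$), the error is $O(n^{-2} \cdot n^{-1}/a) = O(n^{-3}/a) = O\big(n^{-2}/(n a)\big)$, matching the claimed bound. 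When $na < 4$, i.e.\ $a$ is comparable to or smaller than $n^{-1}$, I would instead bound the integral directly from below: the integrand $\log|u-v-a|$ is at least $\log$ of its value, and since $|u - v - a| \leq |u| + |v| + a \leq 2n^{-1} + 2n^{-1} + 4n^{-1} = 8n^{-1}$ over the domain (a set of measure $4n^{-2}$), one gets $\iint \log|u-v-a| \geq \iint \log|u-v-a|\,\mathbbm{1}_{|u-v-a|<1} \geq$ (lower bound obtained by enlarging the region of integration to where $|u-v-a| \leq 8n^{-1}$ and using that $\int_{-\delta}^{\delta}\log|w|\,dw = 2\delta(\log\delta - 1)$ for the worst case); this produces something of size $C n^{-2}\log(1/(na)) \cdot$ wait — actually in this regime $\log(t_2-t_1) = \log a$ can be very negative, and $4 n^{-2}\log a$ is then a large negative main term, while the error allowed is only $O(n^{-2})$ since $\max\{na,1\}=1$. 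So here I must show $\iint \log|u-v-a| \geq 4n^{-2}\log a - Cn^{-2}$, which amounts to showing the second difference of $G$ does not fall much below $4n^{-2}\log a$; since $G(a \pm 2n^{-1}) \geq 4 n^{-2}\log(\text{something} \geq a) + \ldots$, this needs a slightly careful but elementary comparison — I expect to use that $G(s) - \tfrac12 s^2 \log a$ is controlled on $[0, 8n^{-1}]$ by explicit estimates on $s^2\log|s/a|$ and $s^2$.

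For \eqref{Eq6.1.4}, which is the case $0 \leq a \leq 10 n^{-1}$, I would not track the $\log a$ term at all: simply bound $\iint \log|u - v - a|\,du\,dv$ from below by restricting attention to where $|u-v-a|$ is small. Over the full domain of measure $4n^{-2}$ one has $|u-v-a| \leq 2n^{-1} + 2n^{-1} + 10 n^{-1} = 14 n^{-1} < 1$ once $n$ is large (or just keep the bound for all $n$ using $\log$ being negative only where the argument is $<1$); then $\iint \log|u-v-a|\,du\,dv \geq \int\!\!\int_{\{|u-v-a|\leq 14n^{-1}\}} \log|u-v-a| \geq -C n^{-2}$ is false as stated — I actually need the $-4\log(n)/n^2$ term, which comes from the fact that the integrand can be as negative as $\log(n^{-1}\cdot(\text{small})) \approx -\log n$ near the diagonal. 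The clean route: use the exact formula $4n^{-2}\log(a) - Cn^{-2}/\max\{na,1\}$ from \eqref{Eq6.1.5} is not available when $a$ could be $0$, so instead bound $\iint \log|u-v-a|\,du\,dv \geq \iint \big(\log|u-v-a|\big)\mathbbm{1}_{|u-v-a|<1}$ and crudely estimate: the set where $|u-v-a| < 1$ has measure $\leq 4n^{-2}$, and on it $\log|u-v-a| \geq \log\min_{}\{\cdots\}$; a cleaner computation uses $\int_{-n^{-1}}^{n^{-1}}\log|u-v-a|\,du \geq \int_{-n^{-1}}^{n^{-1}}\log|u - c|\,du$ minimized over $c$, which equals $2n^{-1}(\log n^{-1} - 1) = -2n^{-1}\log n - 2n^{-1}$, then integrating in $v$ over an interval of length $2n^{-1}$ gives $\geq -4n^{-2}\log n - 4n^{-2}$. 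The main obstacle throughout is the bookkeeping of the second-difference error in the intermediate regime $na \asymp 1$ and making sure the two regimes' bounds glue into the single clean statement $-Cn^{-2}/\max\{n(t_2-t_1),1\}$; everything else is a routine evaluation of an explicit antiderivative.
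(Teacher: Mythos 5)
Your overall architecture is the same as the paper's: the closed form $G(a+2n^{-1})+G(a-2n^{-1})-2G(a)$ with $G(s)=\tfrac12 s^2\log|s|-\tfrac34 s^2$ and $a=t_2-t_1$ is exactly the paper's identity (\ref{Eq6.1.3}) (the $-\tfrac34 s^2$ part of the second difference is the $-6n^{-2}$ there), and the split at $a\asymp n^{-1}$ mirrors the paper's two cases. Your large-gap regime $na\ge 4$ is fine: the Taylor-remainder representation of the second difference together with $|\log(a+s)-\log a|\le 2|s|/a$ gives an error $O(n^{-3}/a)$, which is the claimed $O(n^{-2}/\max\{n(t_2-t_1),1\})$. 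Your proof of (\ref{Eq6.1.4}) is also complete and correct, and in fact cleaner than the paper's bookkeeping: $\int_{-n^{-1}}^{n^{-1}}\log|u-c|\,du\ge -2n^{-1}\log n-2n^{-1}$ for every $c$, then integrate in $v$, giving the bound with $C=4$ (and, usefully, without any restriction on $t_2-t_1$).

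The genuine gap is the small-gap case of (\ref{Eq6.1.5}), i.e.\ $n(t_2-t_1)<4$, which you leave as a plan rather than a proof, and the plan as phrased would not work: "controlling $G(s)-\tfrac12 s^2\log a$ on $[0,8n^{-1}]$ by explicit estimates on $s^2\log|s/a|$ and $s^2$" cannot mean a two-sided pointwise bound, because for $a$ far below $n^{-1}$ (say $a=e^{-n}$) the term $\tfrac12 s^2\log(s/a)$ at $s\asymp n^{-1}$ is of order $n^{-1}$, not $n^{-2}$; only one-sided control of the specific second difference survives (the large positive term $G(a+2n^{-1})-\tfrac12(a+2n^{-1})^2\log a$ must be kept with its sign, and the term at $a-2n^{-1}$ bounded below via $x^2\log x\ge -e^{-1}$, which is essentially the cancellation the paper exploits when its $\log n$-coefficients sum to exactly $-4n^{-2}$). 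Your aside that "the error allowed is only $O(n^{-2})$" also misreads the regime: since $\log(n(t_2-t_1))\le\log 4$ there, the target $4n^{-2}\log(t_2-t_1)$ sits within an additive $O(n^{-2})$ of $-4n^{-2}\log n$, so there is no tension at all. Indeed the quickest repair is to use what you already proved: your bound (\ref{Eq6.1.4}) holds for all $a\ge 0$, and for $na<4$ one has $4n^{-2}\log(t_2-t_1)=4n^{-2}\log(n(t_2-t_1))-4n^{-2}\log n\le 4n^{-2}\log 4-4n^{-2}\log n$, so $-4n^{-2}\log n-4n^{-2}\ge 4n^{-2}\log(t_2-t_1)-Cn^{-2}$ with $C=4+4\log 4$, and since $\max\{n(t_2-t_1),1\}\le 4$ this is of the required form after enlarging $C$. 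With that one observation inserted, your argument is complete and essentially equivalent to the paper's.
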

\begin{proof}

In the following, we assume that $0\log(0):=0$.

By a change of variables, we obtain that
\begin{equation}\label{Eq6.1.1}
    \int_{t_1-n^{-1}}^{t_1+n^{-1}}\int_{t_2-n^{-1}}^{t_2+n^{-1}} \log(|x-y|)dxdy=\int_{-n^{-1}}^{n^{-1}}dy \int_{-n^{-1}}^{n^{-1}} \log(|t_1-t_2+x-y|)dx.
\end{equation}
By integration by parts, for any $y\in [-n^{-1},n^{-1}]$, 
\begin{eqnarray}
    &&\int_{-n^{-1}}^{n^{-1}} \log(|t_1-t_2+x-y|)dx= (t_1-t_2-y+n^{-1})\log(|t_1-t_2-y+n^{-1}|)\nonumber\\
    &&\quad\quad\quad\quad\quad\quad\quad\quad\quad\quad
    -(t_1-t_2-y-n^{-1})\log(|t_1-t_2-y-n^{-1}|)-2n^{-1}.\nonumber\\
    &&
\end{eqnarray}
By integration by parts, 
\begin{eqnarray}
  &&  \int_{-n^{-1}}^{n^{-1}} (t_1-t_2-y+n^{-1})\log(|t_1-t_2-y+n^{-1}|)dy  \nonumber\\
  &=& \int_{t_1-t_2}^{t_1-t_2+2n^{-1}} s\log(|s|)ds\nonumber\\
  &=&\frac{1}{2}(t_1-t_2+2n^{-1})^2\log(|t_1-t_2+2n^{-1}|)
  -\frac{1}{2}(t_1-t_2)^2\log(|t_1-t_2|)\nonumber\\
  &&  -\frac{1}{4}(t_1-t_2+2n^{-1})^2+\frac{1}{4}(t_1-t_2)^2,
\end{eqnarray}
\begin{eqnarray}\label{Eq6.1.2}
   &&  \int_{-n^{-1}}^{n^{-1}} (t_1-t_2-y-n^{-1})\log(|t_1-t_2-y-n^{-1}|)dy  \nonumber\\
  &=& \int_{t_1-t_2-2n^{-1}}^{t_1-t_2} s\log(|s|)ds\nonumber\\
  &=&\frac{1}{2}(t_1-t_2)^2\log(|t_1-t_2|)
  -\frac{1}{2}(t_1-t_2-2n^{-1})^2\log(|t_1-t_2-2n^{-1}|)\nonumber\\
  &&  -\frac{1}{4}(t_1-t_2)^2+\frac{1}{4}(t_1-t_2-2n^{-1})^2.
\end{eqnarray}
By (\ref{Eq6.1.1})-(\ref{Eq6.1.2}), we have
\begin{eqnarray}\label{Eq6.1.3}
 &&  \int_{t_1-n^{-1}}^{t_1+n^{-1}}\int_{t_2-n^{-1}}^{t_2+n^{-1}} \log(|x-y|)dxdy\nonumber\\
 &=& \frac{1}{2}(t_1-t_2+2n^{-1})^2\log(|t_1-t_2+2n^{-1}|)
  -(t_1-t_2)^2\log(|t_1-t_2|) \nonumber \\
  &&+\frac{1}{2}(t_1-t_2-2n^{-1})^2\log(|t_1-t_2-2n^{-1}|)-6n^{-2}.
\end{eqnarray}

In the following, we consider two cases.

\paragraph{Case 1: $0\leq t_2-t_1\leq 10 n^{-1}$}

Let $h(x)=x^2\log(x)$ for any $x\geq 0$. Note that $h'(x)=x(2\log(x)+1)$ for any $x\geq 0$. Hence $h'(x)\leq 0$ for $x\in [0,e^{-1\slash 2}]$ and $h'(x)\geq 0$ for $x\geq e^{-1\slash 2}$. Thus for any $x\geq 0$, $h(x)\geq h(e^{-1\slash 2})=-e^{-1}\slash 2\geq -1$. Hence we have
\begin{eqnarray}
  && \frac{1}{2}(t_1-t_2+2n^{-1})^2\log(|t_1-t_2+2n^{-1}|) \nonumber\\
  &=&  \frac{1}{2n^2} h(n|t_1-t_2+2n^{-1}|)-\frac{1}{2}(t_1-t_2+2n^{-1})^2\log(n)\nonumber\\
  &\geq& -\frac{1}{2n^2}-\frac{1}{2}(t_1-t_2+2n^{-1})^2\log(n).
\end{eqnarray}
Note that $\log(|t_1-t_2-2n^{-1}|)=\log(t_2-t_1+2n^{-1})\geq \log(n^{-1})$ and\\ $\log(|t_1-t_2|)=\log(t_2-t_1)\leq \log(10 n^{-1})$. Hence by (\ref{Eq6.1.3}), we have
\begin{eqnarray}\label{Eq6.1.7}
   &&  \int_{t_1-n^{-1}}^{t_1+n^{-1}}\int_{t_2-n^{-1}}^{t_2+n^{-1}} \log(|x-y|)dxdy\nonumber\\
   &\geq& -\frac{1}{2n^2}-\frac{1}{2}(t_2-t_1-2n^{-1})^2\log(n)-\log(10)(t_2-t_1)^2+(t_2-t_1)^2\log(n)\nonumber\\
   && -\frac{1}{2}(t_2-t_1+2n^{-1})^2\log(n)-6n^{-2}\nonumber\\
   &\geq& -4n^{-2}\log(n)-C n^{-2}\geq 4n^{-2}\log(t_2-t_1)-C n^{-2},
\end{eqnarray}
which implies (\ref{Eq6.1.4}).

\paragraph{Case 2: $t_2-t_1 > 10 n^{-1}$}

Consider any $x\in [t_1-n^{-1},t_1+n^{-1}]$ and any $y\in [t_2-n^{-1},t_2+n^{-1}]$. We have 
\begin{equation}
    y-x \geq t_2-t_1-\frac{2}{n} =  (t_2-t_1)\Big(1-\frac{2}{n(t_2-t_1)}\Big)>0.
\end{equation}
As $\log(1-x)\geq -x\slash (1-x)\geq -2x$ for any $x\in [0,1\slash 2]$, we have
\begin{equation}
    \log(y-x)\geq \log(t_2-t_1)+\log\Big(1-\frac{2}{n(t_2-t_1)}\Big)\geq \log(t_2-t_1)-\frac{4}{n(t_2-t_1)}.
\end{equation}

Hence 
\begin{eqnarray}\label{Eq6.1.6}
   \int_{t_1-n^{-1}}^{t_1+n^{-1}}\int_{t_2-n^{-1}}^{t_2+n^{-1}} \log(|x-y|)dxdy\geq \frac{4}{n^2}\log(t_2-t_1)-\frac{16}{n^3(t_2-t_1)}. 
\end{eqnarray}
Now (\ref{Eq6.1.5}) follows from (\ref{Eq6.1.7}) and (\ref{Eq6.1.6}).

\end{proof}

We present the proof of Proposition \ref{P5.6} as follows.

\begin{proof}[Proof of Proposition \ref{P5.6}]

Throughout the proof, we fix any $R\in [10,R_0(\bm{\phi})-10]$ and any $\mu\in \mathcal{X}_R$, and assume that the event
\begin{equation*}
    \mathcal{B}(\bm{\phi};\Lambda_0)\cap \mathcal{D}(\bm{\phi},n_0)\cap\mathcal{C}(\bm{\phi};\Lambda_0')\cap \mathscr{G}_R \cap \{d_R(\mu_{n,k;R},\mu)\leq 2\delta\}
\end{equation*}
holds.

% We define $\mu'$ to be the finite signed Borel measure on $\mathbb{R}$ such that for any $A\in\mathcal{B}_{\mathbb{R}}$,
% \begin{eqnarray}
%   \mu'(A)&=& -\mu_{n,k}([-R_0(\bm{\phi}),R_0(\bm{\phi})])Leb(A\cap [R_0(\bm{\phi})+1,R_0(\bm{\phi})+2])\nonumber\\
%   && +\mu_{n,k}(A\cap [-R_0(\bm{\phi}),R_0(\bm{\phi})]).
% \end{eqnarray}
% Note that $\mu'(\mathbb{R})=0$, $\supp(\mu')\subseteq [-R_0(\bm{\phi}),R_0(\bm{\phi})+2]$, and
% \begin{equation}
%   d_R(\mu',\mu)=d_R(\mu_{n,k;R},\mu)\leq 2\delta. 
% \end{equation}

Noting Definition \ref{Defn5.1}, we let $\alpha_1\leq \alpha_2\leq \cdots\leq \alpha_{n_0}$ be such that
\begin{equation}
\{\alpha_1,\alpha_2,\cdots,\alpha_{n_0}\}=\{b_i: i\in [n], b_i\in [-R_0(\bm{\phi}),R_0(\bm{\phi})]\}.
\end{equation}
For every $i\in [n_0]$, we let $\pi_i$ be the Borel probability measure on $\mathbb{R}$ such that 
\begin{equation}\label{Eq6.1.9}
  \pi_i(A)=\frac{n}{2}\cdot Leb(A\cap [\alpha_i-n^{-1},\alpha_i+n^{-1}]) \text{ for any } A\in\mathcal{B}_{\mathbb{R}}.
\end{equation}
We also let $\mu',\mu_1',\mu_2'$ be finite signed Borel measures on $\mathbb{R}$ such that for any $A\in\mathcal{B}_{\mathbb{R}}$,
\begin{eqnarray}\label{Eq7.3.1}
 \mu'(A)&=& -\mu_{n,k}([-R_0(\bm{\phi}),R_0(\bm{\phi})])Leb(A\cap [R_0(\bm{\phi})+2,R_0(\bm{\phi})+3])\nonumber\\
 && +\frac{1}{k}\sum_{i=1}^{n_0}\pi_i(A)-\mu_0(A\cap [0,R_0(\bm{\phi})]).
\end{eqnarray}
\begin{equation}\label{Eq6.6.1}
 \mu'_1(A)= \frac{1}{k}\sum_{i=1}^{n_0}\pi_i(A)-\mu_0(A\cap [0,R_0(\bm{\phi})]).
\end{equation}
\begin{equation}\label{Eq6.4.1}
 \mu'_2(A)= -\mu_{n,k}([-R_0(\bm{\phi}),R_0(\bm{\phi})])Leb(A\cap [R_0(\bm{\phi})+2,R_0(\bm{\phi})+3]).
\end{equation}
Note that
\begin{equation}\label{Eq7.2.1}
    \mu'(\mathbb{R})=0, \quad \supp \mu'\subseteq [-R_0(\bm{\phi})-1,R_0(\bm{\phi})+3], \quad \mu'=\mu'_1+\mu'_2.
\end{equation}
Moreover, for any $A\in\mathcal{B}_{\mathbb{R}}$, we have
\begin{eqnarray}\label{Eq6.1}
&& (\mu'+\nu_0)(A) \geq -\mu_{n,k}([-R_0(\bm{\phi}),R_0(\bm{\phi})])Leb(A\cap [R_0(\bm{\phi})+2,R_0(\bm{\phi})+3])\nonumber\\
&&\quad\quad\quad\quad\quad\quad\quad
+\frac{1}{k}\sum_{i=1}^{n_0}\pi_i(A)+\nu_0(A\cap [R_0(\bm{\phi})+2,R_0(\bm{\phi})+3])\nonumber\\
&\geq& \int_{A\cap [R_0(\bm{\phi})+2,R_0(\bm{\phi})+3]}\Big(\frac{1}{\pi}\sqrt{x}-\mu_{n,k}([-R_0(\bm{\phi}),R_0(\bm{\phi})])\Big)dx.
\end{eqnarray}
Note that 
\begin{eqnarray*}
&&\mu_{n,k}([-R_0(\bm{\phi}),R_0(\bm{\phi})])\nonumber\\
&=& \frac{1}{k}|\{i\in [n]: b_i\in [-R_0(\bm{\phi}),R_0(\bm{\phi})]\}|-\mu_0([-R_0(\bm{\phi}),R_0(\bm{\phi})])\nonumber\\
&=& \frac{1}{k}(\tilde{N}(R_0(\bm{\phi}))-\tilde{N}_0(R_0(\bm{\phi}))-|\{i\in [n]: b_i<-R_0(\bm{\phi})\}|).
\end{eqnarray*}
Hence by (\ref{Eq5.11})-(\ref{Eq5.13}) and properties (a) and (c) of Proposition \ref{P5.2}, 
\begin{eqnarray}\label{Eq6.2}
   && |\mu_{n,k}([-R_0(\bm{\phi}),R_0(\bm{\phi})])|\leq \frac{1}{k}(|\Psi(R_0(\bm{\phi}))|+\tilde{N}(-R_0(\bm{\phi}))\nonumber\\
   &\leq&  C\sqrt{\frac{\epsilon}{\log(R_0(\bm{\phi}))\log\log(R_0(\bm{\phi}))}}+\frac{CM}{R_0(\bm{\phi})^{3\slash 2}} \leq C.
\end{eqnarray}
By (\ref{Eq5.13}), $R_0(\bm{\phi})\geq M$ is sufficiently large. Hence for any $x\geq R_0(\bm{\phi})+2$, $\pi^{-1}\sqrt{x}-\mu_{n,k}([-R_0(\bm{\phi}),R_0(\bm{\phi})])\geq \pi^{-1}\sqrt{R_0(\bm{\phi})+2}-C\geq 0$. Hence by (\ref{Eq6.1}), 
\begin{equation}
    (\mu'+\nu_0)(A)\geq 0 \text{ for any }A\in\mathcal{B}_{\mathbb{R}}, 
\end{equation}
and $\mu'+\nu_0$ is a positive measure. Recalling Definition \ref{Defn1.2}, we have
\begin{equation}\label{Eq7.8.3}
    \mu'\in\mathscr{X}.
\end{equation}

We bound $d_R(\mu'_R,\mu)$ as follows (see Definition \ref{Defn1.3} for the definition of $\mu'_R$). In the following, we consider an arbitrary function $f:[-R,R]\rightarrow \mathbb{R}$ such that $\|f\|_{BL}\leq 1$. By (\ref{Eq7.3.1}), we have
\begin{equation}\label{Eq7.3.2}
    \Big|\int_{[-R,R]}fd\mu'-\int_{[-R,R]}fd\mu_{n,k}\Big|\leq\frac{1}{k}\sum_{i=1}^{n_0}\Big|f(\alpha_i)\mathbbm{1}_{[-R,R]}(\alpha_i)-\int_{[-R,R]}f d\pi_i\Big|.
\end{equation}
For any $i\in [n_0]$ such that $\alpha_i\in [-R+n^{-1},R-n^{-1}]$, as $\|f\|_{Lip}\leq 1$,
\begin{eqnarray}
 && \Big|f(\alpha_i)\mathbbm{1}_{[-R,R]}(\alpha_i)-\int_{[-R,R]}f d\pi_i\Big|=\Big|f(\alpha_i)-\frac{n}{2}\int_{\alpha_i-n^{-1}}^{\alpha_i+n^{-1}}f(x)dx\Big| \nonumber\\
 &\leq& \frac{n}{2}\int_{\alpha_i-n^{-1}}^{\alpha_i+n^{-1}}|f(\alpha_i)-f(x)|dx\leq \frac{n}{2}\cdot \frac{1}{n}\cdot \frac{2}{n}=\frac{1}{n}.
\end{eqnarray}

For any $i\in [n_0]$ such that $\alpha_i\in [-R-n^{-1},-R+n^{-1}]\cup [R-n^{-1},R+n^{-1}]$, we have $\alpha_i\in[-R-n^{-1\slash 30},-R+n^{-1\slash 30}]\cup[R-n^{-1\slash 30},R+n^{-1\slash 30}]$. As $\|f\|_{\infty}\leq 1$,
\begin{equation}
    \Big|f(\alpha_i)\mathbbm{1}_{[-R,R]}(\alpha_i)-\int_{[-R,R]}f d\pi_i\Big|\leq 2
\end{equation}
Moreover, as the event $\mathscr{G}_R$ holds, we have
\begin{equation}
    |\{i\in [n_0]:\alpha_i\in [-R-n^{-1},-R+n^{-1}]\cup [R-n^{-1},R+n^{-1}]\}|\leq \frac{k}{\log(n)^{1\slash 4}}.
\end{equation}

For any $i\in [n_0]$ such that $\alpha_i\notin [-R-n^{-1},R+n^{-1}]$, we have
\begin{equation}\label{Eq7.3.3}
    \Big|f(\alpha_i)\mathbbm{1}_{[-R,R]}(\alpha_i)-\int_{[-R,R]}f d\pi_i\Big|=0.
\end{equation}

By (\ref{Eq7.3.2})-(\ref{Eq7.3.3}), noting (\ref{Eq5.11}) and (\ref{Eq5.18.5}), we obtain that
\begin{eqnarray}
   &&  \Big|\int_{[-R,R]}fd\mu'-\int_{[-R,R]}fd\mu_{n,k}\Big|\leq \frac{1}{k}\Big(\frac{n_0}{n}+\frac{2k}{\log(n)^{1\slash 4}}\Big)\nonumber\\
   &\leq& \frac{CR_0(\bm{\phi})^{3\slash 2}}{n}+\frac{2}{\log(n)^{1\slash 4}}\leq \frac{C}{\log(n)^{1\slash 4}}\leq \delta.
\end{eqnarray}
Hence $d_R(\mu'_R,\mu_{n,k;R})\leq \delta$. As $d_R(\mu_{n,k;R},\mu)\leq 2\delta$, we have
\begin{equation}\label{Eq7.8.4}
  d_R(\mu'_R,\mu)\leq 3\delta.    
\end{equation}

\paragraph{Step 1}

Consider any $i,j\in [n_0]$ such that $i<j$. By (\ref{Eq6.1.9}) and Lemma \ref{L5.1}, 
\begin{eqnarray}\label{Eq6.2.20}
 &&\int \int \log(|x-y|)d\pi_i(x)d\pi_j(y)=\frac{n^2}{4} \int_{\alpha_i-n^{-1}}^{\alpha_i+n^{-1}}\int_{\alpha_j-n^{-1}}^{\alpha_j+n^{-1}} \log(|x-y|)dxdy\nonumber\\
 && \geq\log(\alpha_j-\alpha_i)-\frac{C}{\max\{n(\alpha_j-\alpha_i),1\}}.
\end{eqnarray}

For any $x>0$, let $h(x)=-x\log(x)$ and note that $h'(x)=-1-\log(x)$. Hence $h'(x)\geq 0$ for $x\in (0,e^{-1}]$ and $h'(x)\leq 0$ for $x\geq e^{-1}$. If $n^{-1}\leq \alpha_j-\alpha_i\leq 1\slash 2$, we have $h(\alpha_j-\alpha_i)\geq \min\{h(n^{-1}),h(1\slash 2)\}\geq c\log(n)\slash n$, hence
\begin{equation}\label{Eq6.1.15}
    \frac{1}{\max\{n(\alpha_j-\alpha_i),1\}}=\frac{1}{n(\alpha_j-\alpha_i)}=\frac{-\log(\alpha_j-\alpha_i)}{n h(\alpha_j-\alpha_i)}\leq \frac{-C\log(\alpha_j-\alpha_i)}{\log(n)},
\end{equation}
where we note that $\log(\alpha_j-\alpha_i)\leq \log(1\slash 2)<0$. If $0<\alpha_j-\alpha_i<n^{-1}$, we have $-\log(\alpha_j-\alpha_i)\geq -\log(n^{-1})=\log(n)$, hence  
\begin{equation}
    \frac{1}{\max\{n(\alpha_j-\alpha_i),1\}}=1\leq \frac{-\log(\alpha_j-\alpha_i)}{\log(n)}.
\end{equation}
If $\alpha_j-\alpha_i>1\slash 2$, we have
\begin{equation}\label{Eq6.1.14}
    \frac{1}{\max\{n(\alpha_j-\alpha_i),1\}}\leq \frac{2}{n}. 
\end{equation}

By (\ref{Eq6.1.15})-(\ref{Eq6.1.14}), we have
\begin{equation}\label{Eq6.2.17}
\sum_{1\leq i<j\leq n_0}\frac{1}{\max\{n(\alpha_j-\alpha_i),1\}}\leq -\frac{C_0}{\log(n)}\sum_{\substack{1\leq i<j\leq n_0 : \\ 0<\alpha_j-\alpha_i\leq 1\slash 2}} \log(\alpha_j-\alpha_i)+\frac{2n_0^2}{n},
\end{equation}
where $C_0$ is a positive constant that only depends on $\beta$. As $0<\alpha_j-\alpha_i\leq 2R_0(\bm{\phi})$ for any $i,j\in [n_0]$ such that $i<j$, we have
\begin{eqnarray}\label{Eq6.2.19}
\sum_{1\leq i<j\leq n_0}\log(\alpha_j-\alpha_i)&=&\sum_{\substack{1\leq i<j\leq n_0 : \\ 0<\alpha_j-\alpha_i\leq 1\slash 2}} \log(\alpha_j-\alpha_i)+\sum_{\substack{1\leq i<j\leq n_0 : \\ \alpha_j-\alpha_i> 1\slash 2}} \log(\alpha_j-\alpha_i)\nonumber\\
&\leq& \sum_{\substack{1\leq i<j\leq n_0 : \\ 0<\alpha_j-\alpha_i\leq 1\slash 2}} \log(\alpha_j-\alpha_i)+\log(2R_0(\bm{\phi})) n_0^2.
\end{eqnarray}
By (\ref{Eq5.18.5}), (\ref{Eq6.2.17}), and (\ref{Eq6.2.19}), we have
\begin{eqnarray}\label{Eq6.2.21}
    \sum_{1\leq i<j\leq n_0}\frac{1}{\max\{n(\alpha_j-\alpha_i),1\}}&\leq&   
    -\frac{C_0}{\log(n)}\sum_{1\leq i<j\leq n_0}\log(\alpha_j-\alpha_i)\nonumber\\
    && +\frac{C R_0(\bm{\phi})^3 \log(R_0(\bm{\phi}))k^2}{\log(n)}.
\end{eqnarray}
By (\ref{Eq6.2.20}) and (\ref{Eq6.2.21}),
\begin{eqnarray}\label{Eq7.1.1}
&& \sum_{1\leq i<j\leq n_0} \int \int \log(|x-y|)d\pi_i(x)d\pi_j(y)\nonumber\\
&\geq& \Big(1+\frac{C_1}{\log(n)}\Big) \sum_{1\leq i<j\leq n_0}\log(\alpha_j-\alpha_i)-\frac{C R_0(\bm{\phi})^3 \log(R_0(\bm{\phi}))k^2}{\log(n)},\nonumber\\
&& 
\end{eqnarray}
where $C_1$ is a positive constant that only depends on $\beta$.

Moreover, by (\ref{Eq5.18.5}), (\ref{Eq6.1.9}), and Lemma \ref{L5.1}, we have
\begin{eqnarray}\label{Eq7.1.2}
&& \sum_{i=1}^{n_0} \int \int \log(|x-y|)d\pi_i(x)d\pi_i(y)\nonumber\\
&=& \sum_{i=1}^{n_0}\frac{n^2}{4} \int_{\alpha_i-n^{-1}}^{\alpha_i+n^{-1}}\int_{\alpha_i-n^{-1}}^{\alpha_i+n^{-1}} \log(|x-y|)dxdy\nonumber\\
&\geq& -(\log(n)+C)n_0\geq -C R_0(\bm{\phi})^{3\slash 2} k\log(n).
\end{eqnarray}

\paragraph{Step 2}

We consider an arbitrary $i\in [n_0]$. By (\ref{Eq6.1.9}), we have
\begin{equation}
    \int d\pi_i(x) \int_0^{R_0(\bm{\phi})} \log(|x-y|)d\mu_0(y)=\frac{n}{2}\int_{\alpha_i-n^{-1}}^{\alpha_i+n^{-1}}dx \int_0^{R_0(\bm{\phi})} \log(|x-y|)d\mu_0(y).
\end{equation}
Hence
\begin{eqnarray}\label{Eq6.3.1}
&& \int d\pi_i(x) \int_0^{R_0(\bm{\phi})} \log(|x-y|)d\mu_0(y)-\int_0^{R_0(\bm{\phi})} \log(|\alpha_i-y|)d\mu_0(y) \nonumber\\
&=& \frac{n}{2}\int_{\alpha_i-n^{-1}}^{\alpha_i+n^{-1}}dx \int_0^{R_0(\bm{\phi})}(\log(|x-y|)-\log(|\alpha_i-y|))d\mu_0(y).
\end{eqnarray}

Consider any $x\in [\alpha_i-n^{-1},\alpha_i+n^{-1}]$. For any $y\in [\alpha_i-2n^{-1},\alpha_i+2n^{-1}]$, $|x-y|\leq 4\slash n<1$ and $|\alpha_i-y|\leq 2\slash n<1$. Hence 
\begin{eqnarray}
&&\int_{[\alpha_i-2n^{-1},\alpha_i+2n^{-1}]\cap[0,R_0(\bm{\phi})]}(\log(|x-y|)-\log(|\alpha_i-y|))d\mu_0(y)\nonumber\\
&\leq& \int_{[\alpha_i-2n^{-1},\alpha_i+2n^{-1}]\cap[0,R_0(\bm{\phi})]}(-\log(|\alpha_i-y|))d\mu_0(y)\nonumber\\
&\leq& \sqrt{R_0(\bm{\phi})}\int_{[\alpha_i-2n^{-1},\alpha_i+2n^{-1}]}(-\log(|\alpha_i-y|))dy\nonumber\\
&=& \sqrt{R_0(\bm{\phi})}\Big(\frac{4}{n}\log\Big(\frac{n}{2}\Big)+\frac{4}{n}\Big)\leq \frac{8\sqrt{R_0(\bm{\phi})}\log(n)}{n}.
\end{eqnarray}
Using the inequality $\log(1+x)\leq x$ for any $x>-1$, we obtain that for any $y\in [0,R_0(\bm{\phi})]\backslash [\alpha_i-2n^{-1},\alpha_i+2n^{-1}]$,
\begin{equation*}
    \log(|x-y|)-\log(|\alpha_i-y|)\leq \log\Big(\frac{|x-\alpha_i|+|\alpha_i-y|}{|\alpha_i-y|}\Big)\leq \frac{|x-\alpha_i|}{|\alpha_i-y|}\leq \frac{1}{n|\alpha_i-y|}.
\end{equation*}
Hence
\begin{eqnarray}\label{Eq6.3.2}
&&\int_{[0,R_0(\bm{\phi})]\backslash [\alpha_i-2n^{-1},\alpha_i+2n^{-1}]}(\log(|x-y|)-\log(|\alpha_i-y|))d\mu_0(y)\nonumber\\
&\leq& \frac{1}{n}\int_{[0,R_0(\bm{\phi})]\backslash [\alpha_i-2n^{-1},\alpha_i+2n^{-1}]}\frac{1}{|y-\alpha_i|}d\mu_0(y)\nonumber\\
&\leq& \frac{\sqrt{R_0(\bm{\phi})}}{n}\int_{[0,R_0(\bm{\phi})]\backslash [\alpha_i-2n^{-1},\alpha_i+2n^{-1}]}\frac{1}{|y-\alpha_i|}dy\nonumber\\
&\leq& \frac{\sqrt{R_0(\bm{\phi})}}{n}\Big(\int_{-R_0(\bm{\phi})-2n^{-1}}^{\alpha_i-2n^{-1}}\frac{1}{|y-\alpha_i|}dy+\int_{\alpha_i+2n^{-1}}^{R_0(\bm{\phi})+2n^{-1}}\frac{1}{|y-\alpha_i|}dy\Big)\nonumber\\
&=& \frac{\sqrt{R_0(\bm{\phi})}}{n}\Big(\log\Big(R_0(\bm{\phi})+\alpha_i+\frac{2}{n}\Big)+\log\Big(R_0(\bm{\phi})-\alpha_i+\frac{2}{n}\Big)-2\log\Big(\frac{2}{n}\Big)\Big)\nonumber\\
&\leq&\frac{\sqrt{R_0(\bm{\phi})}}{n}(2\log(3R_0(\bm{\phi}))+2\log(n))\leq \frac{C\sqrt{R_0(\bm{\phi})}\log(n)}{n},
\end{eqnarray}
where we use (\ref{Eq5.11}) and the facts that $T$ only depends on $M,\epsilon$ and $k\geq K_0(M,\epsilon,\delta)$ is sufficiently large (depending on $\beta,M,\epsilon,\delta$) in the last line.

By (\ref{Eq6.3.1})-(\ref{Eq6.3.2}), for any $i\in [n_0]$, we have
\begin{eqnarray}
   && \int d\pi_i(x) \int_0^{R_0(\bm{\phi})} \log(|x-y|)d\mu_0(y)-\int_0^{R_0(\bm{\phi})} \log(|\alpha_i-y|)d\mu_0(y)\nonumber\\
   && \leq \frac{C\sqrt{R_0(\bm{\phi})}\log(n)}{n}. 
\end{eqnarray}
Hence by (\ref{Eq5.18.5}),
\begin{eqnarray}\label{Eq7.1.3}
&& \frac{1}{k}\sum_{i=1}^{n_0}\int d\pi_i(x) \int_0^{R_0(\bm{\phi})} \log(|x-y|)d\mu_0(y)-\frac{1}{k}\sum_{i=1}^{n_0} \int_0^{R_0(\bm{\phi})} \log(|\alpha_i-y|)d\mu_0(y) \nonumber\\
&&\leq\frac{n_0}{k}\cdot \frac{C\sqrt{R_0(\bm{\phi})}\log(n)}{n}\leq \frac{CR_0(\bm{\phi})^2\log(n)}{n}.
\end{eqnarray}

\paragraph{Step 3}

Consider any $x\in [-R_0(\bm{\phi}),R_0(\bm{\phi})]$. We have
\begin{equation}
    \Big|\int_0^{R_0(\bm{\phi})}\log(|x-y|)d\mu_0(y)\Big|\leq \sqrt{R_0(\bm{\phi})}\int_0^{R_0(\bm{\phi})}|\log(|x-y|)|dy.
\end{equation}
Note that
\begin{equation*}
    \int_{x-1}^{x+1}|\log(|x-y|)|dy=-2\int_0^1\log(s)ds=2,
\end{equation*}
\begin{equation*}
    \int_{-R_0(\bm{\phi})-1}^{x-1}|\log(|x-y|)|dy\leq \log(x+R_0(\bm{\phi})+1)(x+R_0(\bm{\phi}))\leq C R_0(\bm{\phi})\log(R_0(\bm{\phi})),
\end{equation*}
\begin{equation*}
    \int_{x+1}^{R_0(\bm{\phi})+1}|\log(|x-y|)|dy\leq \log(R_0(\bm{\phi})+1-x)(R_0(\bm{\phi})-x)\leq C R_0(\bm{\phi})\log(R_0(\bm{\phi})).
\end{equation*}
Hence for any $x \in [-R_0(\bm{\phi}),R_0(\bm{\phi})]$,
\begin{equation}\label{Eq7.1.5}
    \Big|\int_0^{R_0(\bm{\phi})}\log(|x-y|)d\mu_0(y)\Big|\leq  CR_0(\bm{\phi})^{3\slash 2}\log(R_0(\bm{\phi})).
\end{equation}
By (\ref{Eq5.18.5}) and (\ref{Eq7.1.5}), we have
\begin{eqnarray}\label{Eq7.1.8}
    \Big|\frac{2}{k}\sum_{i=1}^{n_0}\int_0^{R_0(\bm{\phi})}\log(|\alpha_i-y|)d\mu_0(y)\Big|&\leq& CR_0(\bm{\phi})^{3\slash 2}\log(R_0(\bm{\phi}))\cdot \frac{2n_0}{k}\nonumber\\
    &\leq& C R_0(\bm{\phi})^3\log(R_0(\bm{\phi})),
\end{eqnarray}
\begin{eqnarray}\label{Eq7.1.7}
&& \Big| \int_0^{R_0(\bm{\phi})}\int_0^{R_0(\bm{\phi})}\log(|x-y|)d\mu_0(x) d\mu_0(y)\Big|\nonumber\\
&\leq& CR_0(\bm{\phi})^{3\slash 2}\log(R_0(\bm{\phi}))\cdot\mu_0([0,R_0(\bm{\phi})])\leq C R_0(\bm{\phi})^3\log(R_0(\bm{\phi})).\nonumber\\
&&
\end{eqnarray}
By (\ref{Eq5.21.2}), (\ref{Eq7.1.8}), and (\ref{Eq7.1.7}),
\begin{eqnarray}\label{Eq7.1.6}
 &&   \frac{2}{k^2}\sum_{1\leq i<j\leq n_0}\log(\alpha_j-\alpha_i)=\int_{\mathbb{R}^2\backslash \Delta} \log(|x-y|)d\Upsilon_1(x)d\Upsilon_1(y)\nonumber\\
 && +\frac{2}{k}\sum_{i=1}^{n_0}\int_0^{R_0(\bm{\phi})}\log(|\alpha_i-y|)d\mu_0(y)-\int_0^{R_0(\bm{\phi})}\int_0^{R_0(\bm{\phi})}\log(|x-y|)d\mu_0(x) d\mu_0(y)\nonumber\\
 &\geq& \int_{\mathbb{R}^2\backslash \Delta} \log(|x-y|)d\Upsilon_1(x)d\Upsilon_1(y)-C R_0(\bm{\phi})^3\log(R_0(\bm{\phi})).
\end{eqnarray}

By (\ref{Eq6.6.1}), (\ref{Eq7.1.1}), (\ref{Eq7.1.2}), (\ref{Eq7.1.3}), and (\ref{Eq7.1.6}),
\begin{eqnarray}\label{Eq7.2.2}
 &&\int\int \log(|x-y|)d\mu'_1(x)d\mu'_1(y)= \frac{1}{k^2}\sum_{i=1}^{n_0} \int \int \log(|x-y|)d\pi_i(x)d\pi_i(y)\nonumber\\
 &&+\frac{2}{k^2} \sum_{1\leq i<j\leq n_0} \int \int \log(|x-y|)d\pi_i(x)d\pi_j(y)-\frac{2}{k}\sum_{i=1}^{n_0}\int d\pi_i(x)\int_{0}^{R_0(\bm{\phi})}\log(|x-y|)d\mu_0(y)\nonumber\\
 &&+\int_0^{R_0(\bm{\phi})}\int_0^{R_0(\bm{\phi})}\log(|x-y|)d\mu_0(x)d\mu_0(y)\nonumber\\
 &\geq& \frac{2}{k^2}\Big(1+\frac{C_1}{\log(n)}\Big) \sum_{1\leq i<j\leq n_0}\log(\alpha_j-\alpha_i)-\frac{C R_0(\bm{\phi})^3 \log(R_0(\bm{\phi}))}{\log(n)}\nonumber\\
 &&-\frac{2}{k}\sum_{i=1}^{n_0} \int_0^{R_0(\bm{\phi})} \log(|\alpha_i-y|)d\mu_0(y)+\int_0^{R_0(\bm{\phi})}\int_0^{R_0(\bm{\phi})}\log(|x-y|)d\mu_0(x)d\mu_0(y)\nonumber\\
 &\geq& \Big(1+\frac{C_1}{\log(n)}\Big)\int_{\mathbb{R}^2\backslash \Delta} \log(|x-y|)d\Upsilon_1(x)d\Upsilon_1(y)-\frac{C R_0(\bm{\phi})^3 \log(R_0(\bm{\phi}))}{\log(n)}.
\end{eqnarray}

\paragraph{Step 4}

Consider any $i\in [n_0]$. For any $x\in [\alpha_i-n^{-1},\alpha_i+n^{-1}]$ and any $y\in [R_0(\bm{\phi})+2,R_0(\bm{\phi})+3]$, we have $y-\alpha_i\geq (R_0(\bm{\phi})+2)-R_0(\bm{\phi})=2$ and $y-x\geq y-(\alpha_i+n^{-1})\geq 1$. Using the inequality $\log(1+x)\leq x$ for any $x>-1$, we obtain that
\begin{equation*}
 \log(|x-y|)-\log(|\alpha_i-y|)\leq \log\Big(\frac{|x-\alpha_i|+|\alpha_i-y|}{|\alpha_i-y|}\Big)\leq \frac{|x-\alpha_i|}{|\alpha_i-y|}\leq \frac{1}{n},
\end{equation*}
\begin{equation*}
    \log(|\alpha_i-y|)-\log(|x-y|)\leq \log\Big(\frac{|x-\alpha_i|+|x-y|}{|x-y|}\Big)\leq \frac{|x-\alpha_i|}{|x-y|}\leq \frac{1}{n}.
\end{equation*}
Hence $|\log(|x-y|)-\log(|\alpha_i-y|)|\leq n^{-1}$. Thus for any $y\in [R_0(\bm{\phi})+2,R_0(\bm{\phi})+3]$, 
\begin{eqnarray}\label{Eq6.4.2}
&& \Big|\int\log(|x-y|)d\pi_i(x)-\log(|\alpha_i-y|)\Big|\nonumber\\
&\leq &\frac{n}{2}\int_{\alpha_i-n^{-1}}^{\alpha_i+n^{-1}}|\log(|x-y|)-\log(|\alpha_i-y|)|dx\leq \frac{1}{n}.
\end{eqnarray}

By (\ref{Eq5.18.5}) and (\ref{Eq6.4.2}), for any $y\in [R_0(\bm{\phi})+2,R_0(\bm{\phi})+3]$, we have
\begin{eqnarray}\label{Eq6.4.3}
  &&  \Big|\int \log(|x-y|)d\Big(\frac{1}{k}\sum_{i=1}^{n_0}\pi_i-\frac{1}{k}\sum_{i=1}^{n_0}\delta_{\alpha_i}\Big)(x)\Big|\nonumber\\
  &\leq& \frac{1}{k}\sum_{i=1}^{n_0}\Big|\int\log(|x-y|)d\pi_i(x)-\log(|\alpha_i-y|)\Big|\leq \frac{n_0}{k n}\leq \frac{CR_0(\bm{\phi})^{3\slash 2}}{n}.\nonumber\\
  &&
\end{eqnarray}
Hence by (\ref{Eq6.4.1}) and (\ref{Eq6.2}),
\begin{eqnarray}\label{Eq6.7.1}
&& \Big|\int\int \log(|x-y|)d\Big(\frac{1}{k}\sum_{i=1}^{n_0}\pi_i-\frac{1}{k}\sum_{i=1}^{n_0}\delta_{\alpha_i}\Big)(x)d\mu'_2(y)\Big|\nonumber\\
&\leq & |\mu_{n,k}([-R_0(\bm{\phi}),R_0(\bm{\phi})])|\cdot \frac{CR_0(\bm{\phi})^{3\slash 2}}{n}\leq \frac{CR_0(\bm{\phi})^{3\slash 2}}{n}.
\end{eqnarray}

In the following, we consider any $y\in [R_0(\bm{\phi})+2,R_0(\bm{\phi})+3]$. Recall the definition of $\Upsilon_1$ from (\ref{Eq5.21.2}). We have
\begin{eqnarray}\label{Eq6.5.8}
    \int \log(|x-y|)d\Upsilon_1(x)&=&k^{-1}\log(R_0(\bm{\phi})+y)\mathbbm{1}_{b_i=-R_0(\bm{\phi})\text{ for some } i\in [n]}\nonumber\\
    && + k^{-1}\int_{-R_0(\bm{\phi})}^{R_0(\bm{\phi})}\log(|x-y|)d\Psi(x).
\end{eqnarray}
Note that
\begin{equation}\label{Eq6.5.9}
    \Big|k^{-1}\log(R_0(\bm{\phi})+y)\mathbbm{1}_{b_i=-R_0(\bm{\phi})\text{ for some } i\in [n]}\Big|\leq \frac{\log(2R_0(\bm{\phi})+3)}{k} \leq \frac{C\log(R_0(\bm{\phi}))}{k}. 
\end{equation}
By integration by parts, we have
\begin{eqnarray}\label{Eq6.5.6}
 && k^{-1}\int_{-R_0(\bm{\phi})}^{R_0(\bm{\phi})}\log(|x-y|)d\Psi(x)\nonumber\\
 &=& \frac{1}{k}\Big(\log(|y-R_0(\bm{\phi})|)\Psi(R_0(\bm{\phi}))-\log(|y+R_0(\bm{\phi})|)\Psi(-R_0(\bm{\phi}))\nonumber\\
 &&\quad -\int_{-R_0(\bm{\phi})}^{R_0(\bm{\phi})}\frac{\Psi(x)}{x-y}dx\Big).
\end{eqnarray}
As $|\log(|y-R_0(\bm{\phi})|)|\leq \log(3)\leq 2$, by property (a) of Proposition \ref{P5.2}, 
\begin{equation}
    \Big|\frac{1}{k}\log(|y-R_0(\bm{\phi})|)\Psi(R_0(\bm{\phi}))\Big|\leq C\sqrt{\frac{\epsilon}{\log(R_0(\bm{\phi}))\log\log(R_0(\bm{\phi}))}}.
\end{equation}
By (\ref{Eq5.13}),
\begin{equation}\label{Eq6.5.15}
    M\leq \log(R_0(\bm{\phi})).
\end{equation}
As $|\log(|y+R_0(\bm{\phi})|)|\leq \log(2R_0(\bm{\phi})+3)\leq C\log(R_0(\bm{\phi}))$, by (\ref{Eq5.11}) and property (c) of Proposition \ref{P5.2}, 
\begin{equation}
    \Big|\frac{1}{k}\log(|y+R_0(\bm{\phi})|)\Psi(-R_0(\bm{\phi}))\Big|\leq C\log(R_0(\bm{\phi}))\cdot \frac{CM}{R_0(\bm{\phi})^{3\slash 2}}\leq \frac{C \log(R_0(\bm{\phi}))^2}{R_0(\bm{\phi})^{3\slash 2}}. 
\end{equation}
As $y-x\geq (R_0(\bm{\phi})+2)-R_0(\bm{\phi})\geq 1$ for any $x\leq R_0(\bm{\phi})$, by (\ref{Eq5.50}), (\ref{Eq5.6.1}), and property (a) of Proposition \ref{P5.2},
\begin{eqnarray}\label{Eq6.5.7}
  && \Big| \int_{-R_0(\bm{\phi})}^{R_0(\bm{\phi})}\frac{\Psi(x)}{x-y}dx\Big| \leq  \int_{-R_0(\bm{\phi})}^{R_0(\bm{\phi})}\frac{|\Psi(x)|}{y-x}dx\nonumber\\
  &\leq& \int_{-R_0(\bm{\phi})}^{R_0(\bm{\phi})-R_0(\bm{\phi})^{-4}}\frac{|\Psi(x)|}{R_0(\bm{\phi})-x}dx+\int_{R_0(\bm{\phi})-R_0(\bm{\phi})^{-4}}^{R_0(\bm{\phi})}  |\Psi(x)|dx\nonumber\\
  &\leq& CM\sqrt{\log(R_0(\bm{\phi}))} k+C\sqrt{\frac{\epsilon}{\log(R_0(\bm{\phi}))\log\log(R_0(\bm{\phi}))}}k\cdot R_0(\bm{\phi})^{-4}\nonumber\\
  &\leq& CM\sqrt{\log(R_0(\bm{\phi}))} k.
\end{eqnarray}
By (\ref{Eq6.5.6})-(\ref{Eq6.5.7}),
\begin{equation}\label{Eq6.5.12}
   \Big| k^{-1}\int_{-R_0(\bm{\phi})}^{R_0(\bm{\phi})}\log(|x-y|)d\Psi(x)\Big|\leq CM\sqrt{\log(R_0(\bm{\phi}))}.
\end{equation}
By (\ref{Eq6.5.8}), (\ref{Eq6.5.9}), and (\ref{Eq6.5.12}),
\begin{equation}\label{Eq6.5.14}
    \Big| \int \log(|x-y|)d\Upsilon_1(x)\Big|\leq CM\sqrt{\log(R_0(\bm{\phi}))}.
\end{equation}

By (\ref{Eq6.4.1}), (\ref{Eq6.2}), (\ref{Eq6.5.15}), and (\ref{Eq6.5.14}), 
\begin{eqnarray}\label{Eq6.7.3}
&& \Big|\int\int \log(|x-y|)d\Upsilon_1(x)d\mu_2'(y)\Big|\nonumber\\
&=& \Big|\mu_{n,k}([-R_0(\bm{\phi}),R_0(\bm{\phi})])\Big|\Big|\int_{R_0(\bm{\phi})+2}^{R_0(\bm{\phi})+3}dy\int\log(|x-y|)d\Upsilon_1(x)\Big|\nonumber\\
&\leq& C\Big(\sqrt{\frac{\epsilon}{\log(R_0(\bm{\phi}))\log\log(R_0(\bm{\phi}))}}+\frac{M}{R_0(\bm{\phi})^{3\slash 2}}\Big)\cdot M\sqrt{\log(R_0(\bm{\phi}))}\nonumber\\
&\leq& C\Big(M\sqrt{\epsilon}+\frac{\log(R_0(\bm{\phi}))^3}{R_0(\bm{\phi})^{3\slash 2}}\Big).
\end{eqnarray}

By (\ref{Eq5.21.2}) and (\ref{Eq6.6.1}), we have
\begin{equation}
    \mu'_1=\frac{1}{k}\sum_{i=1}^{n_0}\pi_i-\frac{1}{k}\sum_{i=1}^{n_0}\delta_{\alpha_i}+\Upsilon_1.
\end{equation}
Hence by (\ref{Eq6.7.1}) and (\ref{Eq6.7.3}),
\begin{equation}\label{Eq7.2.3}
    \Big|\int\int \log(|x-y|)d\mu'_1(x)d\mu_2'(y)\Big|\leq C\Big(M\sqrt{\epsilon}+\frac{\log(R_0(\bm{\phi}))^3}{R_0(\bm{\phi})^{3\slash 2}}\Big).
\end{equation}

Using the inequality $|x\log(x)|\leq e^{-1}$ for any $x\in [0,1]$, we have
\begin{eqnarray*}
  &&   \Big|\int_{R_0(\bm{\phi})+2}^{R_0(\bm{\phi})+3}dy\int_{R_0(\bm{\phi})+2}^{R_0(\bm{\phi})+3}\log(|x-y|)dx\Big|=\Big|\int_{0}^1 dy\int_{0}^1\log(|x-y|)dx\Big|\nonumber\\
  &=& \Big|\int_0^1((1-y)\log(1-y)+y\log(y)-1)dy \Big|\leq 3.
\end{eqnarray*}
Hence by (\ref{Eq6.4.1}), (\ref{Eq6.2}), and (\ref{Eq6.5.15}),
\begin{eqnarray}\label{Eq7.2.4}
 && \Big|\int\int\log(|x-y|)d\mu'_2(x)d\mu'_2(y)\Big|\nonumber\\
 &=&(\mu_{n,k}([-R_0(\bm{\phi}),R_0(\bm{\phi})]))^2\Big|\int_{R_0(\bm{\phi})+2}^{R_0(\bm{\phi})+3}dy\int_{R_0(\bm{\phi})+2}^{R_0(\bm{\phi})+3}\log(|x-y|)dx\Big|\nonumber\\
 &\leq& C\Big(\sqrt{\frac{\epsilon}{\log(R_0(\bm{\phi}))\log\log(R_0(\bm{\phi}))}}+\frac{\log(R_0(\bm{\phi}))}{R_0(\bm{\phi})^{3\slash 2}}\Big).
\end{eqnarray}

By (\ref{Eq7.2.1}), (\ref{Eq7.2.2}), (\ref{Eq7.2.3}), (\ref{Eq7.2.4}), noting (\ref{Eq5.11}), we have
\begin{eqnarray}
   \int\int \log(|x-y|)d\mu'(x)d\mu'(y) &\geq& \Big(1+\frac{C_1}{\log(n)}\Big)\int_{\mathbb{R}^2\backslash \Delta} \log(|x-y|)d\Upsilon_1(x)d\Upsilon_1(y) \nonumber\\
    &&-C\Big(M\sqrt{\epsilon}+\frac{\log(R_0(\bm{\phi}))^3}{R_0(\bm{\phi})^{3\slash 2}}\Big),
\end{eqnarray}
which leads to
\begin{eqnarray}\label{Eq7.8.1}
  -\int_{\mathbb{R}^2\backslash \Delta} \log(|x-y|)d\Upsilon_1(x)d\Upsilon_1(y)&\geq& -\Big(1+\frac{C_1}{\log(n)}\Big)^{-1}\int\int \log(|x-y|)d\mu'(x)d\mu'(y)\nonumber\\
  &&  -C\Big(M\sqrt{\epsilon}+\frac{\log(R_0(\bm{\phi}))^3}{R_0(\bm{\phi})^{3\slash 2}}\Big).
\end{eqnarray}

\paragraph{Step 5}

For any $x\in [0,R_0(\bm{\phi})]$, by (\ref{Eq5.11}), $2-(k\slash n)^{2\slash 3}x\in [-2,2]$; hence by (\ref{Eq7.6.1}) and \cite[Lemma 6.1]{Zho}, $\tilde{\xi}(x)=0$. Hence by (\ref{Eq5.21.2}), 
\begin{eqnarray}\label{Eq12.39}
  2\int\tilde{\xi}(x)d\Upsilon_1(x)&=&\frac{2}{k}\sum_{i=1}^{n}\tilde{\xi}(b_i)\mathbbm{1}_{[-R_0(\bm{\phi}),R_0(\bm{\phi})]}(b_i)\nonumber\\
  &&-2\int\mathbbm{1}_{[-R_0(\bm{\phi}),R_0(\bm{\phi})]}(x)\tilde{\xi}(x)d\mu_0(x)\nonumber\\
  &=&\frac{2}{k}\sum_{i=1}^{n_0}\tilde{\xi}(\alpha_i)=\frac{2}{k}\sum_{i\in [n_0]:\alpha_i<0}\tilde{\xi}(\alpha_i).
\end{eqnarray}
For any $i\in [n_0]$ such that $\alpha_i<0$, by (\ref{Eq7.6.1}) and \cite[Lemma 6.1]{Zho}, we have
\begin{equation}
   \frac{2}{3}|\alpha_i|^{3\slash 2}\leq  \tilde{\xi}(\alpha_i)\leq \frac{2}{3}|\alpha_i|^{3\slash 2}\Big(1+\frac{1}{4}\Big(\frac{k}{n}\Big)^{2\slash 3}R_0(\bm{\phi})\Big).
\end{equation}
Hence by (\ref{Eq5.18.5}),
\begin{eqnarray}\label{Eq7.15.1}
&& \Big|2\int\tilde{\xi}(x)d\Upsilon_1(x)-\frac{4}{3k}\sum_{i\in [n_0]:\alpha_i<0}|\alpha_i|^{3\slash 2}\Big|\leq \frac{2}{k}\sum_{i\in [n_0]:\alpha_i<0}\Big|\tilde{\xi}(\alpha_i)-\frac{2}{3}|\alpha_i|^{3\slash 2}\Big|\nonumber\\
&\leq& \frac{1}{3k}\sum_{i\in [n_0]:\alpha_i<0} \Big(\frac{k}{n}\Big)^{2\slash 3}|\alpha_i|^{3\slash 2}R_0(\bm{\phi})\leq \frac{R_0(\bm{\phi})^{5\slash 2} n_0}{k^{1\slash 3}n^{2\slash 3}} \leq CR_0(\bm{\phi})^4 \Big(\frac{k}{n}\Big)^{2\slash 3} . \nonumber\\
&&
\end{eqnarray}

For every $i\in [n_0]$, let 
\begin{eqnarray}
 \mathscr{J}_i:=\frac{4}{3}\int |x|^{3\slash 2} \mathbbm{1}_{(-\infty,0]}(x) d\pi_i(x)=\frac{2n}{3}\int_{\alpha_i-n^{-1}}^{\alpha_i+n^{-1}}|x|^{3\slash 2} \mathbbm{1}_{(-\infty,0]}(x) dx.
\end{eqnarray}
For any $i\in [n_0]$ such that $\alpha_i\geq n^{-1}$, we have $\alpha_i-n^{-1}\geq 0$, hence $\mathscr{J}_i=0$. For any $i\in [n_0]$ such that $-n^{-1}\leq \alpha_i\leq n^{-1}$, we have
\begin{equation}
    |\mathscr{J}_i|\leq \frac{2n}{3}\int_{-2n^{-1}}^{2n^{-1}}|x|^{3\slash 2}\mathbbm{1}_{(-\infty,0]}(x) dx\leq \frac{2n}{3}\cdot (2n^{-1})^{3\slash 2}\cdot \frac{2}{n}\leq 6 n^{-3\slash 2}.
\end{equation}

For any $i\in [n_0]$ such that $\alpha_i<-n^{-1}$, we have
\begin{equation*}
    \mathscr{J}_i=\frac{2n}{3}\int_{\alpha_i-n^{-1}}^{\alpha_i+n^{-1}}|x|^{3\slash 2} dx.
\end{equation*}
For any $x\in [\alpha_i-n^{-1},\alpha_i+n^{-1}]$, there exists $y$ between $x$ and $\alpha_i$, such that
\begin{equation*}
    |x|^{3\slash 2}-|\alpha_i|^{3\slash 2}=(-x)^{3\slash 2}-(-\alpha_i)^{3\slash 2}=\frac{3}{2}(-y)^{1\slash 2}(\alpha_i-x).
\end{equation*}
As $y\in [\alpha_i-n^{-1},\alpha_i+n^{-1}]\subseteq [-2R_0(\bm{\phi}),0]$, we have 
\begin{equation*}
 \Big||x|^{3\slash 2}-|\alpha_i|^{3\slash 2}\Big|\leq \frac{3}{2}|y|^{1\slash 2}|x-\alpha_i|\leq 4\sqrt{R_0(\bm{\phi})}n^{-1}.
\end{equation*}
Hence
\begin{eqnarray}
&& \Big|\mathscr{J}_i-\frac{4}{3}|\alpha_i|^{3\slash 2}\Big|=\frac{2n}{3}\Big|\int_{\alpha_i-n^{-1}}^{\alpha_i+n^{-1}}(|x|^{3\slash 2}-|\alpha_i|^{3\slash 2}) dx\Big|\nonumber\\
&\leq& \frac{2n}{3}\int_{\alpha_i-n^{-1}}^{\alpha_i+n^{-1}}\Big||x|^{3\slash 2}-|\alpha_i|^{3\slash 2}\Big|dx \leq 6\sqrt{R_0(\bm{\phi})}n^{-1}.
\end{eqnarray}

Hence by (\ref{Eq5.18.5}) and (\ref{Eq7.3.1}), we have 
\begin{eqnarray}\label{Eq7.15.2}
&& \Big|\frac{4}{3}\int |x|^{3\slash 2} \mathbbm{1}_{(-\infty,0]}(x) d\mu'(x)-\frac{4}{3k}\sum_{i\in [n_0]:\alpha_i<0}|\alpha_i|^{3\slash 2}\Big|\nonumber\\
&=& \Big|\frac{1}{k}\sum_{i=1}^{n_0}\mathscr{J}_i-\frac{4}{3k}\sum_{i\in [n_0]:\alpha_i<0}|\alpha_i|^{3\slash 2}\Big|\nonumber\\
&\leq& \frac{1}{k}\sum_{\substack{i\in [n_0]:\\-n^{-1}\leq \alpha_i\leq n^{-1}}}|\mathscr{J}_i|+\frac{4}{3k}\sum_{\substack{i\in [n_0]:\\\alpha_i\in [-n^{-1},0)}}|\alpha_i|^{3\slash 2}+\frac{1}{k}\sum_{\substack{i\in [n_0]:\\\alpha_i<-n^{-1}}}\Big|\mathscr{J}_i-\frac{4}{3}|\alpha_i|^{3\slash 2}\Big|\nonumber\\
&\leq& \frac{1}{k}\cdot 6n^{-3\slash 2}\cdot n_0+\frac{4}{3k}\cdot n^{-3\slash 2}\cdot n_0+\frac{1}{k}\cdot 6\sqrt{R_0(\bm{\phi})}n^{-1}\cdot n_0\nonumber\\
&\leq& CR_0(\bm{\phi})^2 n^{-1}.
\end{eqnarray}

By (\ref{Eq7.15.1}) and (\ref{Eq7.15.2}), we have
\begin{equation}\label{Eq7.8.2}
    \Big|2\int\tilde{\xi}(x)d\Upsilon_1(x)-\frac{4}{3}\int |x|^{3\slash 2} \mathbbm{1}_{(-\infty,0]}(x) d\mu'(x)\Big|\leq CR_0(\bm{\phi})^4 \Big(\frac{k}{n}\Big)^{2\slash 3}.
\end{equation}

By (\ref{Eq7.8.1}) and (\ref{Eq7.8.2}), noting (\ref{Eq5.11}) and Definition \ref{Defn1.3}, we have
\begin{eqnarray}
&& -\int_{\mathbb{R}^2\backslash \Delta} \log(|x-y|)d\Upsilon_1(x)d\Upsilon_1(y)+2\int\tilde{\xi}(x)d\Upsilon_1(x)\nonumber\\
&\geq& -\Big(1+\frac{C_1}{\log(n)}\Big)^{-1}\int\int \log(|x-y|)d\mu'(x)d\mu'(y)\nonumber\\
&& +\frac{4}{3}\int |x|^{3\slash 2} \mathbbm{1}_{(-\infty,0]}(x) d\mu'(x)-C\Big(M\sqrt{\epsilon}+\frac{\log(R_0(\bm{\phi}))^3}{R_0(\bm{\phi})^{3\slash 2}}\Big)\nonumber\\
&\geq& \Big(1+\frac{C_1}{\log(n)}\Big)^{-1}\mathscr{I}(\mu')-C\Big(M\sqrt{\epsilon}+\frac{\log(R_0(\bm{\phi}))^3}{R_0(\bm{\phi})^{3\slash 2}}\Big),
\end{eqnarray}
where we use the fact that $\frac{4}{3}\int |x|^{3\slash 2} \mathbbm{1}_{(-\infty,0]}(x) d\mu'(x)=k^{-1}\sum_{i=1}^{n_0}\mathscr{J}_i\geq 0$. Hence by (\ref{Eq7.8.3}) and (\ref{Eq7.8.4}), noting Definition \ref{Defn1.3}, we obtain that
\begin{eqnarray}
&& -\int_{\mathbb{R}^2\backslash \Delta} \log(|x-y|)d\Upsilon_1(x)d\Upsilon_1(y)+2\int\tilde{\xi}(x)d\Upsilon_1(x)\nonumber\\
&\geq& \Big(1+\frac{C_1}{\log(n)}\Big)^{-1}I_R(\mu,3\delta)-C\Big(M\sqrt{\epsilon}+\frac{\log(R_0(\bm{\phi}))^3}{R_0(\bm{\phi})^{3\slash 2}}\Big).
\end{eqnarray}
By (\ref{Eq5.13}), as $M\geq M_0$ is sufficiently large, we have
\begin{equation}
    \frac{\log(R_0(\bm{\phi}))^3}{R_0(\bm{\phi})^{3\slash 2}}\leq R_0(\bm{\phi})^{-1}\leq M^{-1}.
\end{equation}
Hence we conclude that
\begin{eqnarray}
&& -\int_{\mathbb{R}^2\backslash \Delta} \log(|x-y|)d\Upsilon_1(x)d\Upsilon_1(y)+2\int\tilde{\xi}(x)d\Upsilon_1(x)\nonumber\\
&&\geq\Big(1+\frac{C_1}{\log(n)}\Big)^{-1}I_R(\mu,3\delta)-C\Big(M\sqrt{\epsilon}+M^{-1}\Big).
\end{eqnarray}

\end{proof}

\subsubsection{Proof of Proposition \ref{P5.7}}

\begin{proof}[Proof of Proposition \ref{P5.7}]

Throughout the proof, we fix an arbitrary
\begin{equation*}
    \mathbf{t}=(t_1,t_2,\cdots,t_{n_0})\in \mathscr{F}_{n_0}.
\end{equation*}
For every $i\in [m_0']$, we define $\mathscr{V}_i$ to be the signed Borel measure on $\mathbb{R}$ such that for any $A\in\mathcal{B}_{\mathbb{R}}$,
\begin{equation}\label{Eq10.20}
    \mathscr{V}_i(A)=k^{-1} \mathbbm{1}_{A}(\rho_i+t_i)-\mu_0(A\cap [\rho_{i-1},\rho_i]).
\end{equation}
For every $i\in [m_0'+1,n_0]\cap\mathbb{Z}$, we define $\mathscr{V}_i$ to be the signed Borel measure on $\mathbb{R}$ such that for any $A\in\mathcal{B}_{\mathbb{R}}$,
\begin{equation}
    \mathscr{V}_i(A)=k^{-1} \mathbbm{1}_{A}(\rho_i+t_i)-\mu_0(A\cap [\rho_{i},\rho_{i+1}]).
\end{equation}
We define $\mathscr{V}_0$ to be the signed Borel measure on $\mathbb{R}$ such that for any $A\in\mathcal{B}_{\mathbb{R}}$,
\begin{equation}
    \mathscr{V}_0(A)=-\mu_0(A\cap [\rho_{m_0'},r_0]).
\end{equation}
We also define 
\begin{equation}
    \mathscr{W}_1:=\sum_{i=0}^{m_0'} \mathscr{V}_i, \quad \mathscr{W}_2:=\sum_{i=m_0'+1}^{n_0} \mathscr{V}_i. 
\end{equation}
Note that by (\ref{Eq8.2}) and (\ref{Eq8.1}), we have 
\begin{equation}\label{Eq12.35}
    \tilde{\Upsilon}_{1;n_0,\mathbf{t}}=\mathscr{W}_1+\mathscr{W}_2.
\end{equation}
For every $i,j\in \{0\}\cup [n_0]$ such that $i\leq j$, we let 
\begin{equation}\label{Eq10.19}
    \mathscr{P}_{i,j}:=\int_{\mathbb{R}^2\backslash \Delta} \log(|x-y|) d\mathscr{V}_i(x)d\mathscr{V}_j(y).
\end{equation}

\paragraph{Step 1}

In this step, we bound $\int_{\mathbb{R}^2\backslash \Delta}\log(|x-y|)d\mathscr{W}_1(x)d\mathscr{W}_1(y)$. Note that
\begin{equation}\label{Eq12.13}
    \int_{\mathbb{R}^2\backslash \Delta}\log(|x-y|)d\mathscr{W}_1(x)d\mathscr{W}_1(y)=2\sum_{0\leq i<j\leq m_0'}\mathscr{P}_{i,j}+\sum_{i=0}^{m_0'}\mathscr{P}_{i,i}.
\end{equation}

\subparagraph{Sub-step 1.1}

In the following, we consider any $i,j\in [m_0']$ such that $i<j$. By (\ref{Eq5.25}), 
\begin{eqnarray}\label{Eq10.16}
\mathscr{P}_{i,j}&=&k^{-2}\log(|(\rho_i+t_i)-(\rho_j+t_j)|)-k^{-1}\int_{\rho_{i-1}}^{\rho_i}\log(|(\rho_j+t_j)-x|)d\mu_0(x)\nonumber\\
&&-k^{-1}\int_{\rho_{j-1}}^{\rho_j}\log(|(\rho_i+t_i)-x|)d\mu_0(x)\nonumber\\
&&+\int_{\rho_{i-1}}^{\rho_i}\int_{\rho_{j-1}}^{\rho_j}\log(|x-y|)d\mu_0(x)d\mu_0(y).
\end{eqnarray}

By (\ref{Eq5.43}) and (\ref{Eq9.6}), $\rho_j-\rho_i\geq \rho_j-\rho_{j-1}\geq ck^{-1}$, hence
\begin{equation}
k^{-2}\log(|(\rho_i+t_i)-(\rho_j+t_j)|)\geq k^{-2}\log(\rho_j-\rho_i),
\end{equation}
\begin{eqnarray}
&& k^{-2}\log(|(\rho_i+t_i)-(\rho_j+t_j)|)\leq k^{-2}\log(\rho_j-\rho_i+n^{-1})\nonumber\\
&=&k^{-2}\Big(\log(\rho_j-\rho_i)+\log\Big(1+\frac{1}{n(\rho_j-\rho_i)}\Big)\Big)\nonumber\\
&\leq& k^{-2}\Big(\log(\rho_j-\rho_i)+\frac{1}{n(\rho_j-\rho_i)}\Big)\leq k^{-2}\log(\rho_j-\rho_i)+\frac{C}{kn}.
\end{eqnarray}
Hence
\begin{equation}\label{Eq9.7}
    |k^{-2}\log(|(\rho_i+t_i)-(\rho_j+t_j)|)-k^{-2}\log(\rho_j-\rho_i)|\leq \frac{C}{kn}.
\end{equation}

Now by (\ref{Eq5.43}), $j\leq m_0'\leq Ck$. Hence by (\ref{Eq9.6}),
\begin{equation}
    \frac{\rho_i-\rho_{i-1}+n^{-1}}{\rho_j-\rho_i}\leq \frac{C k^{-2\slash 3} i^{-1\slash 3}}{k^{-2\slash 3}\sum_{l=i+1}^j l^{-1\slash 3}}\leq \frac{C k^{1\slash 3}}{j-i}.
\end{equation}
For any $x\in [\rho_{i-1},\rho_i]$, we have $\rho_j+t_j-x\in [\rho_j-\rho_i,\rho_j-\rho_{i-1}+n^{-1}]$, hence
\begin{equation}
    \log(|(\rho_j+t_j)-x|)\geq \log(\rho_j-\rho_i),
\end{equation}
\begin{eqnarray}
&& \log(|(\rho_j+t_j)-x|)\leq \log(\rho_j-\rho_{i-1}+n^{-1})\nonumber\\
&=& \log(\rho_j-\rho_{i})+\log\Big(1+\frac{\rho_i-\rho_{i-1}+n^{-1}}{\rho_j-\rho_i}\Big) \nonumber\\
&\leq & \log(\rho_j-\rho_{i})+\frac{\rho_i-\rho_{i-1}+n^{-1}}{\rho_j-\rho_i}\leq \log(\rho_j-\rho_i)+\frac{C k^{1\slash 3}}{j-i}.
\end{eqnarray}
Hence by (\ref{rhh}), we have
\begin{equation}
    k^{-1}\int_{\rho_{i-1}}^{\rho_i}\log(|(\rho_j+t_j)-x|)d\mu_0(x)\geq \frac{\mu_0([0,r_0]) \log(\rho_j-\rho_i)}{k(m_0'+1)},
\end{equation}
\begin{equation}
    k^{-1}\int_{\rho_{i-1}}^{\rho_i}\log(|(\rho_j+t_j)-x|)d\mu_0(x)\leq \frac{\mu_0([0,r_0]) \log(\rho_j-\rho_i)}{k(m_0'+1)}+\frac{C\mu_0([0,r_0])}{k^{2\slash 3}(m_0'+1)(j-i)}.
\end{equation}
Hence by (\ref{Eq5.17}) and (\ref{Eq9.3}), we have
\begin{eqnarray}\label{Eq9.8}
  &&  \Big|k^{-1}\int_{\rho_{i-1}}^{\rho_i}\log(|(\rho_j+t_j)-x|)d\mu_0(x)-\frac{\mu_0([0,r_0]) \log(\rho_j-\rho_i)}{k(m_0'+1)}\Big|\nonumber\\
  &\leq& \frac{C\mu_0([0,r_0])}{k^{2\slash 3}(m_0'+1)(j-i)}\leq \frac{C}{k^{5\slash 3}(j-i)}.
\end{eqnarray}
By (\ref{Eq5.15}) and (\ref{Eq5.16}), we have
\begin{equation*}
    |m_0'-k\mu_0([0,r_0])|=|n_0-k\mu_0([0,R_0(\bm{\phi})])|\leq C_0\sqrt{\frac{\epsilon}{\log(R_0(\bm{\phi}))\log\log(R_0(\bm{\phi}))}}k.
\end{equation*}
By (\ref{Eq5.11}), as $T$ only depends on $M,\epsilon$ and $k\geq K_0(M,\epsilon,\delta)$ is sufficiently large (depending on $\beta,M,\epsilon,\delta$), we have
\begin{equation*}
    |m_0'+1-k\mu_0([0,r_0])| \leq C\sqrt{\frac{\epsilon}{\log(R_0(\bm{\phi}))\log\log(R_0(\bm{\phi}))}}k.
\end{equation*}
Hence by (\ref{Eq9.3}), 
\begin{equation}\label{Eq9.9}
    \Big|\frac{\mu_0([0,r_0])}{m_0'+1}-\frac{1}{k}\Big|\leq Ck^{-1}  \sqrt{\frac{\epsilon}{\log(R_0(\bm{\phi}))\log\log(R_0(\bm{\phi}))}}.
\end{equation}
As $k\leq n^{1\slash 20000}$ and $k\geq K_0(M,\epsilon,\delta)$ is sufficiently large, we have\\ $k^{5\slash 3}(j-i)\leq Ck^{8\slash 3}\leq k^3\leq n$. Hence by (\ref{Eq9.7}), (\ref{Eq9.8}), and (\ref{Eq9.9}),
\begin{eqnarray}\label{Eq9.15}
  &&  \Big|k^{-1}\int_{\rho_{i-1}}^{\rho_i}\log(|(\rho_j+t_j)-x|)d\mu_0(x)-k^{-2}\log(|(\rho_i+t_i)-(\rho_j+t_j)|)\Big|\nonumber\\
  &\leq& \frac{1}{k}\Big|\frac{\mu_0([0,r_0])}{m_0'+1}-\frac{1}{k}\Big||\log(\rho_j-\rho_i)|+\frac{C}{k^{5\slash 3}(j-i)}\nonumber\\
  &\leq& Ck^{-2}|\log(\rho_j-\rho_i)|\sqrt{\frac{\epsilon}{\log(R_0(\bm{\phi}))\log\log(R_0(\bm{\phi}))}}+ \frac{C}{k^{5\slash 3}(j-i)}.
\end{eqnarray}

Now note that by (\ref{Eq5.43}) and (\ref{Eq9.6}), we have
\begin{equation*}
    \rho_j-\rho_i\begin{cases}
     \leq r_0\leq 10\\
     \geq ck^{-2\slash 3}\sum_{l=i+1}^j l^{-1\slash 3}\geq ck^{-1}(j-i)
    \end{cases}.
\end{equation*}
Hence 
\begin{equation*}
    \log(\rho_j-\rho_i)\begin{cases}
     \leq \log(10)\leq 3\\
     \geq \log(c)+\log\big(\frac{j-i}{k}\big)\geq -C+\log\big(\frac{j-i}{k}\big)
    \end{cases},
\end{equation*}
which leads to
\begin{eqnarray}\label{Eq9.12}
  &&|\log(\rho_j-\rho_i)|=(\log(\rho_{j}-\rho_i))_{+}+(\log(\rho_{j}-\rho_i))_{-}\nonumber\\
  &=& 2(\log(\rho_{j}-\rho_i))_{+}-\log(\rho_j-\rho_i)\leq -\log\Big(\frac{j-i}{k}\Big)+C.
\end{eqnarray}

Note that
\begin{equation}\label{Eq9.14}
    \sum_{l=1}^k \log\Big(\frac{l}{k}\Big)=\sum_{l=2}^k \log(l)-k\log(k)\geq \int_1^k \log(x)dx-k\log(k)=-k+1\geq -k.
\end{equation}
By (\ref{Eq5.43}), (\ref{Eq9.12}), and (\ref{Eq9.14}), we have
\begin{eqnarray}\label{Eq9.17}
&& \sum_{1\leq i<j\leq m_0'}|\log(\rho_j-\rho_i)|\leq -\sum_{1\leq i<j\leq m_0'}\log\Big(\frac{j-i}{k}\Big)+C (m_0')^2 \nonumber\\
&\leq& -\sum_{i=1}^{m_0'}\sum_{j=i+1}^{i+k}\log\Big(\frac{j-i}{k}\Big)+C k^2=-\sum_{i=1}^{m_0'}\sum_{l=1}^k \log\Big(\frac{l}{k}\Big)+C k^2\nonumber\\
&\leq& k m_0'+Ck^2\leq Ck^2.
\end{eqnarray}
We also note that by (\ref{Eq5.43}),
\begin{eqnarray}\label{Eq9.18}
  &&  \sum_{1\leq i<j\leq m_0'}\frac{1}{j-i}= \sum_{i=1}^{m_0'-1}\sum_{j=i+1}^{m_0'}\frac{1}{j-i}\leq m_0'\Big(\sum_{l=1}^{m_0'}\frac{1}{l}\Big)\nonumber\\
  &\leq& m_0'\Big(1+\int_1^{m_0'}\frac{1}{x}dx\Big)=m_0'(1+\log(m_0'))\leq Ck\log(k).
\end{eqnarray}
By (\ref{Eq9.15}), (\ref{Eq9.17}), and (\ref{Eq9.18}), we have
\begin{eqnarray}\label{Eq10.17}
&& \sum_{1\leq i<j\leq m_0'} \Big|k^{-1}\int_{\rho_{i-1}}^{\rho_i}\log(|(\rho_j+t_j)-x|)d\mu_0(x)-k^{-2}\log(|(\rho_i+t_i)-(\rho_j+t_j)|)\Big| \nonumber\\
&&  \leq C\sqrt{\frac{\epsilon}{\log(R_0(\bm{\phi}))\log\log(R_0(\bm{\phi}))}}+Ck^{-2\slash 3}\log(k)\nonumber\\
&&\leq C\sqrt{\frac{\epsilon}{\log(R_0(\bm{\phi}))\log\log(R_0(\bm{\phi}))}}.
\end{eqnarray}

\subparagraph{Sub-step 1.2}

In the following, we consider any $i\in [m_0'-1]$. Note that by (\ref{Eq9.6}), we have $\rho_{i+1}-\rho_i-t_i\geq ck^{-1}-n^{-1}>0$. Hence by (\ref{Eq9.6}), for any $x\in [\rho_i,\rho_{i+1}]$, as $k\geq K_0(M,\epsilon,\delta)$ is sufficiently large,
\begin{eqnarray}\label{Eq10.2}
    |\rho_i+t_i-x|&\leq& \max\{|t_i|,|\rho_i+t_i-\rho_{i+1}|\}\leq \max\{n^{-1},\rho_{i+1}-\rho_{i}\}\nonumber\\
    &\leq& \max\{n^{-1},Ck^{-2\slash 3}\}\leq C k^{-2\slash 3}<1.
\end{eqnarray}
As $\rho_{i+1}\leq r_0\leq 10$, by (\ref{Eq9.6}) and (\ref{Eq10.2}), we have
\begin{eqnarray}\label{Eq10.14}
&& \Big|\frac{1}{k}\int_{\rho_i}^{\rho_{i+1}}\log(|(\rho_i+t_i)-x|)d\mu_0(x)\Big|=\frac{1}{k}\int_{\rho_i}^{\rho_{i+1}}(-\log(|\rho_i+t_i-x|))d\mu_0(x) \nonumber\\
&&\leq \frac{\sqrt{10}}{\pi k}\int_{\rho_i}^{\rho_{i+1}}(-\log(|\rho_i+t_i-x|))dx\leq \frac{2}{k}\int_{\rho_i-\rho_{i+1}+t_i}^{t_i}(-\log(|s|))ds\nonumber\\
&&\leq \frac{2}{k}\int_{-Ck^{-2\slash 3}}^{C k^{-2\slash 3}}(-\log(|s|))ds\leq Ck^{-5\slash 3}\log(k).
\end{eqnarray}

For any $x\in [\rho_{i-1},\rho_{i}]$ and $y\in [\rho_{i},\rho_{i+1}]$, by (\ref{Eq9.6}), we have
\begin{equation}\label{Eq10.1}
    |x-y|=y-x\leq \rho_{i+1}-\rho_{i-1}\leq Ck^{-2\slash 3}<1.
\end{equation}
For any $y\in [\rho_i,\rho_{i+1}]$, as $y-\rho_i\leq y-\rho_{i-1}\leq \rho_{i+1}-\rho_{i-1}<1$, we have
\begin{eqnarray}\label{Eq10.3}
&&\int_{\rho_{i-1}}^{\rho_i}(-\log(|x-y|))dx=\int_{y-\rho_i}^{y-\rho_{i-1}} (-\log(s))ds\nonumber\\
&=& -\log(y-\rho_{i-1})(y-\rho_{i-1})+\log(y-\rho_i)(y-\rho_i)+\rho_i-\rho_{i-1}\nonumber\\
&\leq& -(\rho_{i+1}-\rho_{i-1})\log(y-\rho_{i-1})+\rho_i-\rho_{i-1}\nonumber\\
&\leq& Ck^{-2\slash 3}(1-\log(y-\rho_{i-1})).
\end{eqnarray}
By (\ref{Eq9.6}), (\ref{Eq10.1}), and (\ref{Eq10.3}), as $\rho_{i+1}\leq r_0\leq 10$,
\begin{eqnarray}\label{Eq10.11}
&& \Big|\int_{\rho_{i-1}}^{\rho_i}\int_{\rho_{i}}^{\rho_{i+1}}\log(|x-y|)d\mu_0(x)d\mu_0(y)\Big|\nonumber\\
&=&\int_{\rho_{i-1}}^{\rho_i}\int_{\rho_{i}}^{\rho_{i+1}}(-\log(|x-y|))d\mu_0(x)d\mu_0(y) \nonumber\\
&\leq& \frac{10}{\pi^2}\int_{\rho_i}^{\rho_{i+1}}dy\int_{\rho_{i-1}}^{\rho_i}(-\log(|x-y|))dx\leq Ck^{-2\slash 3}\int_{\rho_i}^{\rho_{i+1}}(1-\log(y-\rho_{i-1}))dy\nonumber\\
&=&Ck^{-2\slash 3} (2(\rho_{i+1}-\rho_i)-(\rho_{i+1}-\rho_{i-1})\log(\rho_{i+1}-\rho_{i-1})\nonumber\\
&&+(\rho_i-\rho_{i-1})\log(\rho_i-\rho_{i-1}))\leq C k^{-4\slash 3}\log(k).
\end{eqnarray}

In the following, we consider any $1\leq i<j\leq m_0'$ such that $j\geq i+2$. For any $x\in [\rho_{j-1},\rho_j]$, by (\ref{Eq9.6}), we have $x-(\rho_i+t_i)\leq \rho_j-\rho_i$ and 
\begin{equation*}
        x-(\rho_i+t_i)
        \geq \rho_{j-1}-\rho_i-n^{-1}\geq \rho_{i+1}-\rho_{i}-n^{-1} \geq ck^{-1}-n^{-1}>0.
\end{equation*}
Hence by (\ref{Eq9.6}) and the inequality $\log(1+t)\geq t\slash (1+t)$ for any $t>-1$, 
\begin{equation}\label{Eq10.4}
    \log(|(\rho_i+t_i)-x|)= \log(x-(\rho_i+t_i))\leq \log(\rho_j-\rho_i),
\end{equation}
\begin{eqnarray}\label{Eq10.7}
 && \log(|(\rho_i+t_i)-x|)\geq \log(\rho_{j-1}-\rho_i-n^{-1})\nonumber\\
 &=& \log(\rho_j-\rho_i)+\log\Big(1-\frac{\rho_{j}-\rho_{j-1}+n^{-1}}{\rho_j-\rho_i}\Big)\nonumber\\
 &\geq& \log(\rho_j-\rho_i)-\frac{\rho_j-\rho_{j-1}+n^{-1}}{\rho_{j-1}-\rho_i-n^{-1}}\geq \log(\rho_j-\rho_i)-\frac{4(\rho_j-\rho_{j-1})}{\rho_{j-1}-\rho_i}\nonumber\\
 &\geq& \log(\rho_j-\rho_i)-\frac{C k^{-2\slash 3} j^{-1\slash 3}}{k^{-2\slash 3}\sum_{l=i+1}^{j-1}l^{-1\slash 3}}\geq \log(\rho_j-\rho_i)-\frac{C}{j-i-1},\nonumber\\
 &&
\end{eqnarray}
where we use the fact that $\min\{\rho_j-\rho_{j-1},\rho_{j-1}-\rho_i\}\geq ck^{-1}\geq 2n^{-1}$ in the third line of (\ref{Eq10.7}). Hence by (\ref{rhh}), we have
\begin{equation}
    k^{-1}\int_{\rho_{j-1}}^{\rho_j}\log(|(\rho_i+t_i)-x|)d\mu_0(x)\leq \log(\rho_j-\rho_i)\cdot\frac{\mu_0([0,r_0])}{(m_0'+1)k},
\end{equation}
\begin{equation}
 k^{-1}\int_{\rho_{j-1}}^{\rho_j}\log(|(\rho_i+t_i)-x|)d\mu_0(x)\geq \Big(\log(\rho_j-\rho_i)-\frac{C}{j-i-1}\Big)\cdot\frac{\mu_0([0,r_0])}{(m_0'+1)k}.
\end{equation}
Hence by (\ref{Eq5.17}) and (\ref{Eq9.3}), we have
\begin{eqnarray}\label{Eq10.9}
  &&  \Big|-k^{-1}\int_{\rho_{j-1}}^{\rho_j}\log(|(\rho_i+t_i)-x|)d\mu_0(x)+\log(\rho_j-\rho_i)\cdot\frac{\mu_0([0,r_0])}{(m_0'+1)k}\Big|  \nonumber\\
  &\leq&\frac{C}{j-i-1}\cdot \frac{\mu_0([0,r_0])}{(m_0'+1)k}\leq \frac{C}{k^2(j-i-1)}.
\end{eqnarray}

Now for any $x\in [\rho_{i-1},\rho_i]$ and $y\in [\rho_{j-1},\rho_j]$, we have
\begin{equation*}
    0<\rho_{j-1}-\rho_i\leq y-x\leq \rho_j-\rho_{i-1},
\end{equation*}
\begin{equation}\label{Eq10.6}
    \log(|x-y|)=\log(y-x)\begin{cases}
     \geq \log(\rho_j-\rho_i)+\log\big(\frac{\rho_{j-1}-\rho_i}{\rho_j-\rho_i}\big)\\
     \leq \log(\rho_j-\rho_i)+\log\big(\frac{\rho_j-\rho_{i-1}}{\rho_j-\rho_i}\big)
    \end{cases}.
\end{equation}
By (\ref{Eq5.43}), (\ref{Eq9.6}), and the inequality $t\slash (1+t)\leq \log(1+t)\leq t$ for any $t>-1$, 
\begin{eqnarray}\label{Eq10.7n}
 \log\Big(\frac{\rho_{j-1}-\rho_i}{\rho_j-\rho_i}\Big)&=&\log\Big(1-\frac{\rho_j-\rho_{j-1}}{\rho_j-\rho_i}\Big)\geq -\frac{\rho_j-\rho_{j-1}}{\rho_{j-1}-\rho_i}\nonumber\\
 &\geq& -\frac{Ck^{-2\slash 3}j^{-1\slash 3}}{k^{-2\slash 3}\sum_{l=i+1}^{j-1}l^{-1\slash 3}}\geq -\frac{C}{j-i-1}, 
\end{eqnarray}
\begin{eqnarray}\label{Eq10.8}
  &&\log\Big(\frac{\rho_j-\rho_{i-1}}{\rho_j-\rho_i}\Big)=\log\Big(1+\frac{\rho_i-\rho_{i-1}}{\rho_j-\rho_i}\Big)\leq \frac{\rho_i-\rho_{i-1}}{\rho_j-\rho_i}\nonumber\\
 &\leq& \frac{Ck^{-2\slash 3}i^{-1\slash 3}}{k^{-2\slash 3}\sum_{l=i+1}^j l^{-1\slash 3}}\leq \frac{C j^{1\slash 3}}{j-i-1}\leq \frac{C k^{1\slash 3}}{j-i-1}.
\end{eqnarray}
By (\ref{Eq5.17}), (\ref{Eq9.3}), (\ref{rhh}), (\ref{Eq9.6}), and (\ref{Eq10.6})-(\ref{Eq10.8}), 
\begin{eqnarray}\label{Eq10.10}
 && \Big|\int_{\rho_{i-1}}^{\rho_i}\int_{\rho_{j-1}}^{\rho_j}\log(|x-y|)d\mu_0(x)d\mu_0(y)-\log(\rho_j-\rho_i)\cdot\frac{\mu_0([0,r_0])^2}{(m_0'+1)^2}\Big|\nonumber\\
 &\leq& \frac{Ck^{1\slash 3}}{j-i-1}\cdot \frac{\mu_0([0,r_0])^2}{(m_0'+1)^2}\leq \frac{C}{k^{5\slash 3}(j-i-1)}.
\end{eqnarray}

By (\ref{Eq5.17}), (\ref{Eq9.3}), (\ref{Eq9.9}), (\ref{Eq10.9}), and (\ref{Eq10.10}), for any $1\leq i<j\leq m_0'$ such that $j\geq i+2$, we have
\begin{eqnarray}\label{Eq10.12}
&&\Big|-k^{-1}\int_{\rho_{j-1}}^{\rho_j}\log(|(\rho_i+t_i)-x|)d\mu_0(x)+\int_{\rho_{i-1}}^{\rho_i}\int_{\rho_{j-1}}^{\rho_j}\log(|x-y|)d\mu_0(x)d\mu_0(y)\Big|\nonumber\\
&&\leq |\log(\rho_j-\rho_i)|\cdot \frac{\mu_0([0,r_0])}{m_0'+1}\cdot \Big|\frac{\mu_0([0,r_0])}{m_0'+1}-\frac{1}{k}\Big|+\frac{C}{k^{5\slash 3}(j-i-1)}\nonumber\\
&&\leq Ck^{-2}|\log(\rho_j-\rho_i)|  \sqrt{\frac{\epsilon}{\log(R_0(\bm{\phi}))\log\log(R_0(\bm{\phi}))}}+\frac{C}{k^{5\slash 3}(j-i)}.
\end{eqnarray}

By (\ref{Eq5.11}), (\ref{Eq5.43}), (\ref{Eq9.17}), (\ref{Eq9.18}), (\ref{Eq10.14}), (\ref{Eq10.11}), and (\ref{Eq10.12}), we have
\begin{eqnarray}\label{Eq10.18}
&& \sum_{1\leq i<j\leq m_0'}\Big|-k^{-1}\int_{\rho_{j-1}}^{\rho_j}\log(|(\rho_i+t_i)-x|)d\mu_0(x)\nonumber\\
&& \quad\quad\quad\quad\quad +\int_{\rho_{i-1}}^{\rho_i}\int_{\rho_{j-1}}^{\rho_j}\log(|x-y|)d\mu_0(x)d\mu_0(y)\Big|\nonumber\\
&\leq& Ck^{-4\slash 3}\log(k)m_0'+Ck^{-5\slash 3}\sum_{1\leq i<j\leq m_0'}\frac{1}{j-i}\nonumber\\
&& +Ck^{-2} \sqrt{\frac{\epsilon}{\log(R_0(\bm{\phi}))\log\log(R_0(\bm{\phi}))}}\sum_{1\leq i<j\leq m_0'} |\log(\rho_j-\rho_i)|\nonumber\\
&\leq& Ck^{-1\slash 3}\log(k)+C\sqrt{\frac{\epsilon}{\log(R_0(\bm{\phi}))\log\log(R_0(\bm{\phi}))}}\nonumber\\
&\leq& C\sqrt{\frac{\epsilon}{\log(R_0(\bm{\phi}))\log\log(R_0(\bm{\phi}))}}.
\end{eqnarray}

By (\ref{Eq10.16}), (\ref{Eq10.17}), and (\ref{Eq10.18}), we conclude that
\begin{equation}\label{Eq12.9}
    \sum_{1\leq i<j\leq m_0'}|\mathscr{P}_{i,j}|\leq C\sqrt{\frac{\epsilon}{\log(R_0(\bm{\phi}))\log\log(R_0(\bm{\phi}))}}.
\end{equation}

\subparagraph{Sub-step 1.3}

In the following, we consider any $i\in [m_0']$. By (\ref{Eq10.20}) and (\ref{Eq10.19}), we have
\begin{eqnarray}\label{Eq10.25}
\mathscr{P}_{i,i}&=&-\frac{2}{k}\int_{\rho_{i-1}}^{\rho_{i}}\log(|(\rho_i+t_i)-x|)d\mu_0(x)\nonumber\\
&&+\int_{\rho_{i-1}}^{\rho_{i}}\int_{\rho_{i-1}}^{\rho_{i}}\log(|x-y|)d\mu_0(x)d\mu_0(y).
\end{eqnarray}
For any $x\in [\rho_{i-1},\rho_i)$, by (\ref{Eq9.6}), we have
\begin{eqnarray}
 \rho_i+t_i-x\begin{cases}
  >t_i\geq 0\\
  \leq \rho_i-\rho_{i-1}+n^{-1}\leq Ck^{-2\slash 3}<1
 \end{cases}.
\end{eqnarray}
As $\rho_i\leq r_0\leq 10$ and $\rho_i-\rho_{i-1}+t_i\leq \rho_i-\rho_{i-1}+n^{-1}\leq C k^{-2\slash 3}<1$, we have
\begin{eqnarray}\label{Eq10.27}
&& \Big|-\frac{2}{k}\int_{\rho_{i-1}}^{\rho_{i}}\log(|(\rho_i+t_i)-x|)d\mu_0(x)\Big|=\frac{2}{k}\int_{\rho_{i-1}}^{\rho_i}(-\log(\rho_i+t_i-x))d\mu_0(x) \nonumber\\
 && \leq \frac{\sqrt{10}}{\pi}\cdot \frac{2}{k} \int_{\rho_{i-1}}^{\rho_i}(-\log(\rho_i+t_i-x))dx \leq \frac{C}{k}\int_{t_i}^{\rho_i-\rho_{i-1}+t_i}(-\log(s))ds\nonumber\\
 &&\leq \frac{C}{k}\int_{0}^{Ck^{-2\slash 3}}(-\log(s))ds= \frac{C}{k}\Big(-Ck^{-2\slash 3}\log(Ck^{-2\slash 3})+Ck^{-2\slash 3}\Big)\nonumber\\
 &&\leq C k^{-5\slash 3}\log(k).
\end{eqnarray}

For any $x\in [\rho_{i-1},\rho_i]$ and $y\in [\rho_{i-1},\rho_i]$ such that $x\neq y$, we have that $|x-y|\leq \rho_i-\rho_{i-1}\leq C k^{-2\slash 3}<1$. Hence
\begin{eqnarray}\label{Eq10.21}
   && \Big|\int_{\rho_{i-1}}^{\rho_{i}}\int_{\rho_{i-1}}^{\rho_{i}}\log(|x-y|)d\mu_0(x)d\mu_0(y)\Big|=\int_{\rho_{i-1}}^{\rho_{i}}\int_{\rho_{i-1}}^{\rho_{i}}(-\log(|x-y|))d\mu_0(x)d\mu_0(y) \nonumber\\
    && \leq \frac{10}{\pi^2}\int_{\rho_{i-1}}^{\rho_{i}}\int_{\rho_{i-1}}^{\rho_{i}}(-\log(|x-y|))dxdy.
\end{eqnarray}
For any $x\in (\rho_{i-1},\rho_i)$, 
\begin{eqnarray}
&& \int_{\rho_{i-1}}^{\rho_{i}}(-\log(|x-y|))dy=-\int_{\rho_{i-1}-x}^{\rho_i-x}\log(|s|)ds \nonumber\\
&=& -(\rho_i-x)\log(\rho_i-x)+(\rho_{i-1}-x)\log(x-\rho_{i-1})+(\rho_i-\rho_{i-1})\nonumber\\
&\leq& (\rho_i-\rho_{i-1})(-\log(\rho_i-x)-\log(x-\rho_{i-1})+1).
\end{eqnarray}
Hence by (\ref{Eq5.43}) and (\ref{Eq9.6}),
\begin{eqnarray}\label{Eq10.23}
&&\int_{\rho_{i-1}}^{\rho_{i}}\int_{\rho_{i-1}}^{\rho_{i}}(-\log(|x-y|))dxdy\nonumber\\
&\leq& (\rho_i-\rho_{i-1})\Big(-\int_{\rho_{i-1}}^{\rho_i}\log(\rho_i-x)dx-\int_{\rho_{i-1}}^{\rho_i}\log(x-\rho_{i-1})dx+\rho_i-\rho_{i-1}\Big) \nonumber\\
&=& (\rho_i-\rho_{i-1})(-2(\rho_i-\rho_{i-1})\log(\rho_i-\rho_{i-1})+3(\rho_i-\rho_{i-1}))\nonumber\\
&\leq& C k^{-4\slash 3} \log(k).
\end{eqnarray}
By (\ref{Eq10.21}) and (\ref{Eq10.23}),
\begin{equation}\label{Eq10.28}
    \Big|\int_{\rho_{i-1}}^{\rho_{i}}\int_{\rho_{i-1}}^{\rho_{i}}\log(|x-y|)d\mu_0(x)d\mu_0(y)\Big|\leq C k^{-4\slash 3} \log(k).
\end{equation}

By (\ref{Eq5.43}), (\ref{Eq10.25}), (\ref{Eq10.27}), and (\ref{Eq10.28}), we conclude that 
\begin{equation}\label{Eq12.10}
    \sum_{i=1}^{m_0'}|\mathscr{P}_{i,i}|\leq Cm_0' k^{-4\slash 3}\log(k)\leq C k^{-1\slash 3} \log(k).
\end{equation}

\subparagraph{Sub-step 1.4}

In this sub-step, we bound $\mathscr{P}_{0,0}$. For any $x,y\in [\rho_{m_0'},r_0]$ such that $x\neq y$, by (\ref{Eq9.3}) and (\ref{Eq9.6}), $|x-y|\leq r_0-\rho_{m_0'}\leq C k^{-1}<1$. Hence 
\begin{eqnarray}
|\mathscr{P}_{0,0}|&=&\int_{\rho_{m_0'}}^{r_0}\int_{\rho_{m_0'}}^{r_0} (-\log(|x-y|))d\mu_0(x)d\mu_0(y) \nonumber\\
&\leq& \frac{10}{\pi^2} \int_{\rho_{m_0'}}^{r_0}\int_{\rho_{m_0'}}^{r_0} (-\log(|x-y|)) dx dy.
\end{eqnarray}
For any $x\in (\rho_{m_0'},r_0)$, we have
\begin{eqnarray*}
 &&   \int_{\rho_{m_0'}}^{r_0} (-\log(|x-y|))  dy=-\int_{\rho_{m_0'}-x}^{r_0-x} \log(|s|)ds\nonumber\\
 &=&-(r_0-x)\log(r_0-x)+(\rho_{m_0'}-x)\log(x-\rho_{m_0'})+(r_0-\rho_{m_0'})\nonumber\\
 &\leq& (r_0-\rho_{m_0'})(-\log(r_0-x)-\log(x-\rho_{m_0'})+1).
\end{eqnarray*}
Hence by (\ref{Eq5.43}) and (\ref{Eq9.6}),
\begin{eqnarray}\label{Eq12.11}
&&|\mathscr{P}_{0,0}|\leq C(r_0-\rho_{m_0'})
\Big(-\int_{\rho_{m_0'}}^{r_0}\log(r_0-x)dx-\int_{\rho_{m_0'}}^{r_0}\log(x-\rho_{m_0'})dx+(r_0-\rho_{m_0'})\Big)\nonumber\\
&&= C(r_0-\rho_{m_0'})(-2(r_0-\rho_{m_0'})\log(r_0-\rho_{m_0'})+3(r_0-\rho_{m_0'}))\leq Ck^{-2}\log(k).
\end{eqnarray}

\subparagraph{Sub-step 1.5} 

In this sub-step, we bound $\mathscr{P}_{0,i}$ for every $i\in [m_0']$. In the following, we consider any $i\in [m_0']$. Note that
\begin{eqnarray}\label{Eq12.6}
\mathscr{P}_{0,i}&=&-\frac{1}{k}\int_{\rho_{m_0'}}^{r_0}\log(|\rho_i+t_i-x|)d\mu_0(x)\nonumber\\
&&+\int_{\rho_{i-1}}^{\rho_i}\int_{\rho_{m_0'}}^{r_0}\log(|x-y|)d\mu_0(x)d\mu_0(y).
\end{eqnarray}

We have
\begin{eqnarray}\label{Eq12.1}
&& \int_{\rho_{m_0'}}^{r_0}|\log(|\rho_{m_0'}+t_{m_0'}-x|)|dx=\int_{\rho_{m_0'}+t_{m_0'}-r_0}^{t_{m_0'}} |\log(|s|)|ds\leq \int_{-(r_0-\rho_{m_0'})}^{n^{-1}} |\log(|s|)|ds \nonumber\\
&&\leq \int_{-C k^{-1}}^{C k^{-1}}|\log(|s|)|ds=-\int_{-C k^{-1}}^{C k^{-1}}\log(|s|)ds=-2Ck^{-1}\log(C k^{-1})+2Ck^{-1}\nonumber\\
&&\leq Ck^{-1}\log(k).
\end{eqnarray}
If $i\in [m_0'-1]$, for any $x\in [\rho_{m_0'},r_0]$, by (\ref{Eq9.6}), we have
\begin{equation*}
    \rho_i+t_i-x\begin{cases}
     \geq -r_0\geq -10\\
     \leq n^{-1}-(\rho_{m_0'}-\rho_i)\leq n^{-1}-ck^{-1}\leq -ck^{-1}
    \end{cases},
\end{equation*}
hence $ck^{-1}\leq|\rho_i+t_i-x|\leq 10$ and $|\log(|\rho_i+t_i-x|)|\leq C\log(k)$. Hence if $i\in [m_0'-1]$, 
\begin{equation}\label{Eq12.2}
    \int_{\rho_{m_0'}}^{r_0}|\log(|\rho_i+t_i-x|)| dx\leq C\log(k)(r_0-\rho_{m_0'})\leq Ck^{-1}\log(k).
\end{equation}
By (\ref{Eq12.1}) and (\ref{Eq12.2}), for any $i\in [m_0']$, we have
\begin{equation}\label{Eq12.7}
    \Big|\int_{\rho_{m_0'}}^{r_0}\log(|\rho_i+t_i-x|)d\mu_0(x)\Big|\leq C\int_{\rho_{m_0'}}^{r_0}|\log(|\rho_i+t_i-x|)|dx\leq Ck^{-1}\log(k).
\end{equation}

For any $x\in [\rho_{m_0'-1},\rho_{m_0'}]$ and $y\in [\rho_{m_0'},r_0]$, by (\ref{Eq9.3}) and (\ref{Eq9.6}), we have
\begin{equation}\label{Eq12.3}
    0\leq y-x\leq r_0-\rho_{m_0'-1}=(r_0-\rho_{m_0'})+(\rho_{m_0'}-\rho_{m_0'-1})\leq C k^{-1}<1. 
\end{equation}
For any $x\in [\rho_{m_0'-1},\rho_{m_0'}]$,
\begin{eqnarray}
  &&  \int_{\rho_{m_0'}}^{r_0} \log(|x-y|) dy =\int_{\rho_{m_0'}-x}^{r_0-x}\log(|s|) ds \nonumber\\
  &=&(r_0-x)\log(r_0-x)-(\rho_{m_0'}-x)\log(\rho_{m_0'}-x)-(r_0-\rho_{m_0'})\nonumber\\
  &\geq& (r_0-\rho_{m_0'-1})\log(r_0-x)-(r_0-\rho_{m_0'}).
\end{eqnarray}
By (\ref{Eq5.43}) and (\ref{Eq9.6}), $r_0-\rho_{m_0'}\geq ck^{-1}$, hence
\begin{eqnarray}\label{Eq12.4}
 && \int_{\rho_{m_0'-1}}^{\rho_{m_0'}}\int_{\rho_{m_0'}}^{r_0}|\log(|x-y|)|dxdy=-\int_{\rho_{m_0'-1}}^{\rho_{m_0'}}\int_{\rho_{m_0'}}^{r_0} \log(|x-y|) dxdy \nonumber\\
 &\leq& -(r_0-\rho_{m_0'-1})\int_{\rho_{m_0'-1}}^{\rho_{m_0'}}\log(r_0-x)dx+(r_0-\rho_{m_0'})(\rho_{m_0'}-\rho_{m_0'-1})\nonumber\\
 &\leq& Ck^{-2}\log(k).
\end{eqnarray}
If $i\in [m_0'-1]$, for any $x\in [\rho_{i-1},\rho_i]$ and $y\in [\rho_{m_0'},r_0]$, by (\ref{Eq5.43}) and (\ref{Eq9.6}),
\begin{equation*}
   10\geq r_0\geq y-x\geq \rho_{m_0'}-\rho_{m_0'-1}\geq ck^{-1}, \text{ hence } |\log(|x-y|)|\leq C\log(k).
\end{equation*}
Hence by (\ref{Eq9.6}), if $i\in [m_0'-1]$,
\begin{equation}\label{Eq12.5}
    \int_{\rho_{i-1}}^{\rho_i}\int_{\rho_{m_0'}}^{r_0}|\log(|x-y|)|dxdy\leq C\log(k)(\rho_{i}-\rho_{i-1})(r_0-\rho_{m_0'})\leq Ck^{-5\slash 3}\log(k).
\end{equation}
By (\ref{Eq12.4}) and (\ref{Eq12.5}), for any $i\in [m_0']$,
\begin{eqnarray}\label{Eq12.8}
\Big|\int_{\rho_{i-1}}^{\rho_i}\int_{\rho_{m_0'}}^{r_0}\log(|x-y|)d\mu_0(x)d\mu_0(y)\Big|&\leq& C \int_{\rho_{i-1}}^{\rho_i}\int_{\rho_{m_0'}}^{r_0}|\log(|x-y|)|dx dy\nonumber\\
&\leq& C k^{-5\slash 3} \log(k).
\end{eqnarray}

By (\ref{Eq12.6}), (\ref{Eq12.7}), and (\ref{Eq12.8}), we have $|\mathscr{P}_{0,i}|\leq Ck^{-5\slash 3}\log(k)$ for any $i\in [m_0']$. Hence by (\ref{Eq5.43}),
\begin{equation}\label{Eq12.12}
    \sum_{i=1}^{m_0'} |\mathscr{P}_{0,i}|\leq Cm_0' k^{-5\slash 3}\log(k)\leq C k^{-2\slash 3}\log(k).
\end{equation}

By (\ref{Eq12.13}), (\ref{Eq12.9}), (\ref{Eq12.10}), (\ref{Eq12.11}), and (\ref{Eq12.12}), we conclude that
\begin{equation}\label{Eq12.36}
    \Big| \int_{\mathbb{R}^2\backslash \Delta}\log(|x-y|)d\mathscr{W}_1(x)d\mathscr{W}_1(y)\Big|\leq C\sqrt{\frac{\epsilon}{\log(R_0(\bm{\phi}))\log\log(R_0(\bm{\phi}))}}.
\end{equation}

\paragraph{Step 2}

In this step, we bound $\int_{\mathbb{R}^2\backslash \Delta}\log(|x-y|)d\mathscr{W}_2(x)d\mathscr{W}_2(y)$. Note that
\begin{equation}\label{Eq12.26}
    \int_{\mathbb{R}^2\backslash \Delta}\log(|x-y|)d\mathscr{W}_2(x)d\mathscr{W}_2(y)=2\sum_{m_0'+1\leq i<j\leq n_0}\mathscr{P}_{i,j}+\sum_{i=m_0'+1}^{n_0}\mathscr{P}_{i,i}.
\end{equation}

\subparagraph{Sub-step 2.1}

In the following, we consider any $i,j\in [m_0'+1,n_0]\cap\mathbb{Z}$ such that $i<j$. Note that
\begin{eqnarray}\label{Eq12.18}
 \mathscr{P}_{i,j}&=&k^{-2}\log(|(\rho_i+t_i)-(\rho_j+t_j)|)-k^{-1}\int_{\rho_i}^{\rho_{i+1}}\log(|(\rho_j+t_j)-x|)d\mu_0(x)\nonumber\\
 && -k^{-1}\int_{\rho_j}^{\rho_{j+1}}\log(|(\rho_i+t_i)-x|)d\mu_0(x)\nonumber\\
 && +\int_{\rho_i}^{\rho_{i+1}}\int_{\rho_j}^{\rho_{j+1}}\log(|x-y|)d\mu_0(x)d\mu_0(y).
\end{eqnarray}

For any $x\in [\rho_j,\rho_{j+1}]$, by (\ref{Eq5.11}) and (\ref{Eq5.20}), we have
\begin{equation}
    x-(\rho_i+t_i)\geq \rho_j-\rho_i-n^{-1}\geq c R_0(\bm{\phi})^{-1\slash 2}k^{-1}-n^{-1}\geq c R_0(\bm{\phi})^{-1\slash 2}k^{-1}>0.
\end{equation}
For any $x\in [\rho_j,\rho_{j+1}]$, we let
\begin{equation}
    g_{i,j}(x):=\log(|(\rho_i+t_i)-(\rho_j+t_j)|)-\log(|(\rho_i+t_i)-x|).
\end{equation}
For any $x\in [\rho_j,\rho_{j+1}]$, by (\ref{Eq5.20}), (\ref{Eq5.15.1}), as $\log(1+t)\leq t$ for any $t>-1$, 
\begin{eqnarray}
    && g_{i,j}(x) \geq  \log((\rho_j+t_j)-(\rho_i+t_i))-\log(\rho_{j+1}-(\rho_i+t_i)) \nonumber\\
    &=& -\log\Big(1+\frac{\rho_{j+1}-(\rho_j+t_j)}{(\rho_j+t_j)-(\rho_i+t_i)}\Big)\geq -\frac{\rho_{j+1}-(\rho_j+t_j)}{(\rho_j+t_j)-(\rho_i+t_i)}\nonumber\\
    &\geq& -\frac{C k^{-1}}{k^{-1}R_0(\bm{\phi})^{-1\slash 2}(j-i)}\geq -\frac{C R_0(\bm{\phi})^{1\slash 2}}{j-i},
\end{eqnarray}
\begin{eqnarray}
 && g_{i,j}(x)\leq \log((\rho_j+t_j)-(\rho_i+t_i))-\log(\rho_{j}-(\rho_i+t_i)) \nonumber\\
 &=& \log\Big(1+\frac{t_j}{\rho_j-(\rho_i+t_i)}\Big)\leq \frac{t_j}{\rho_j-(\rho_i+t_i)}\nonumber\\
 &\leq& \frac{n^{-1}}{ck^{-1}R_0(\bm{\phi})^{-1\slash 2}(j-i)}\leq \frac{C R_0(\bm{\phi})^{1\slash 2}}{j-i},
\end{eqnarray}
hence $|g_{i,j}(x)|\leq C R_0(\bm{\phi})^{1\slash 2}(j-i)^{-1}$. By (\ref{Eq5.39}), we have
\begin{eqnarray}\label{Eq12.14}
&& \Big|k^{-2}\log(|(\rho_i+t_i)-(\rho_j+t_j)|)-k^{-1}\int_{\rho_j}^{\rho_{j+1}}\log(|(\rho_i+t_i)-x|)d\mu_0(x)\Big| \nonumber\\
&=& k^{-1}\Big|\int_{\rho_j}^{\rho_{j+1}}g_{i,j}(x)d\mu_0(x)\Big|\leq \frac{CR_0(\bm{\phi})^{1\slash 2}}{k^2(j-i)} .
\end{eqnarray}

For any $x\in [\rho_i,\rho_{i+1})$, by (\ref{Eq5.15.1}), 
\begin{equation}\label{Eq12.16}
    0<\rho_{i+1}+t_{i+1}-x\leq \rho_{i+1}-\rho_i+n^{-1}\leq Ck^{-1}<1.
\end{equation}
As $\rho_{i+1}\leq R_0(\bm{\phi})$, by (\ref{Eq12.16}), we have
\begin{eqnarray}\label{Eq12.19}
&& \Big|\int_{\rho_{i}}^{\rho_{i+1}}\log(|(\rho_{i+1}+t_{i+1})-x|)d\mu_0(x)\Big|=\int_{\rho_{i}}^{\rho_{i+1}}(-\log(\rho_{i+1}+t_{i+1}-x))d\mu_0(x) \nonumber\\
&\leq& \sqrt{R_0(\bm{\phi})}\int_{\rho_{i}}^{\rho_{i+1}}(-\log(\rho_{i+1}+t_{i+1}-x))dx=\sqrt{R_0(\bm{\phi})}\int_{t_{i+1}}^{\rho_{i+1}-\rho_i+t_{i+1}} (-\log(s))ds\nonumber\\
&\leq & \sqrt{R_0(\bm{\phi})}\int_0^{Ck^{-1}}(-\log(s))ds=\sqrt{R_0(\bm{\phi})}(-Ck^{-1}\log(Ck^{-1})+Ck^{-1})\nonumber\\
&\leq& C\sqrt{R_0(\bm{\phi})}k^{-1}\log(k). 
\end{eqnarray}

For any $x\in [\rho_i,\rho_{i+1})$ and $y\in [\rho_{i+1},\rho_{i+2}]$, by (\ref{Eq5.15.1}), 
\begin{equation}\label{Eq12.17}
    0<y-x\leq \rho_{i+2}-\rho_i\leq C k^{-1}<1.
\end{equation}
As $\rho_{i+2}\leq R_0(\bm{\phi})$, by (\ref{Eq12.17}), for any $x\in [\rho_i,\rho_{i+1})$, we have
\begin{eqnarray}
  &&  \int_{\rho_{i+1}}^{\rho_{i+2}}|\log(|x-y|)|d\mu_0(y)= \int_{\rho_{i+1}}^{\rho_{i+2}}(-\log(y-x))d\mu_0(y) \nonumber\\
  &\leq& \sqrt{R_0(\bm{\phi})}\int_{\rho_{i+1}}^{\rho_{i+2}}(-\log(y-\rho_{i+1}))dy=\sqrt{R_0(\bm{\phi})}\int_{0}^{\rho_{i+2}-\rho_{i+1}}(-\log(s))ds\nonumber\\
  &\leq & \sqrt{R_0(\bm{\phi})}\int_{0}^{C k^{-1}}(-\log(s))ds\leq C\sqrt{R_0(\bm{\phi})}k^{-1}\log(k).
\end{eqnarray}
Hence by (\ref{Eq5.39}), we have
\begin{eqnarray}\label{Eq12.20}
&&\Big|\int_{\rho_i}^{\rho_{i+1}}\int_{\rho_{i+1}}^{\rho_{i+2}}\log(|x-y|)d\mu_0(x)d\mu_0(y)\Big|\leq \int_{\rho_i}^{\rho_{i+1}}d\mu_0(x)\int_{\rho_{i+1}}^{\rho_{i+2}}|\log(|x-y|)|d\mu_0(y)\nonumber\\
&&\leq C\sqrt{R_0(\bm{\phi})} k^{-1}\log(k) \mu_0([\rho_i,\rho_{i+1}])\leq C\sqrt{R_0(\bm{\phi})}k^{-2}\log(k).
\end{eqnarray}

If $j\geq i+2$, for any $x\in [\rho_i,\rho_{i+1})$ and $y\in [\rho_j,\rho_{j+1}]$, by (\ref{Eq5.20}), we have
\begin{equation}
 \min\{y-x,\rho_j+t_j-x\}\geq \rho_j-\rho_{i+1}\geq c R_0(\bm{\phi})^{-1\slash 2}k^{-1}(j-i-1)>0.
\end{equation}
Hence by (\ref{Eq5.15.1}) and the inequality $\log(1+t)\leq t$ for any $t>-1$,
\begin{eqnarray}
 && \log(|x-y|)-\log(|\rho_j+t_j-x|)=\log\Big(1+\frac{y-(\rho_j+t_j)}{\rho_j+t_j-x}\Big) \nonumber\\
 &\leq& \frac{y-(\rho_j+t_j)}{\rho_j+t_j-x}\leq \frac{\rho_{j+1}-\rho_j}{\rho_j+t_j-x}\leq \frac{Ck^{-1}}{ R_0(\bm{\phi})^{-1\slash 2} k^{-1} (j-i-1)}\nonumber\\
 &\leq& \frac{C\sqrt{R_0(\bm{\phi})}}{j-i-1},
\end{eqnarray}
\begin{eqnarray}
 && \log(|\rho_j+t_j-x|)-\log(|x-y|)=\log\Big(1+\frac{\rho_j+t_j-y}{y-x}\Big) \nonumber\\
 &\leq& \frac{\rho_j+t_j-y}{y-x}\leq \frac{n^{-1}}{R_0(\bm{\phi})^{-1\slash 2} k^{-1} (j-i-1)}\leq  \frac{C\sqrt{R_0(\bm{\phi})}}{j-i-1}.
\end{eqnarray}
Hence by (\ref{Eq5.39}), if $j\geq i+2$,
\begin{eqnarray}\label{Eq12.21}
&& \Big|-k^{-1}\int_{\rho_i}^{\rho_{i+1}}\log(|(\rho_j+t_j)-x|)d\mu_0(x)+\int_{\rho_i}^{\rho_{i+1}}\int_{\rho_j}^{\rho_{j+1}}\log(|x-y|)d\mu_0(x)d\mu_0(y)\Big| \nonumber\\
&&\leq \int_{\rho_i}^{\rho_{i+1}}\int_{\rho_j}^{\rho_{j+1}}|\log(|\rho_j+t_j-x|)-\log(|x-y|)|d\mu_0(x)d\mu_0(y)\nonumber\\
&&\leq \frac{C\sqrt{R_0(\bm{\phi})}}{j-i-1}\cdot \mu_0([\rho_i,\rho_{i+1}])\mu_0([\rho_j,\rho_{j+1}])\leq \frac{C\sqrt{R_0(\bm{\phi})}}{k^2(j-i)}.
\end{eqnarray}

By (\ref{Eq5.18.5}),
\begin{eqnarray}\label{Eq12.15}
&& \sum_{m_0'+1\leq i<j\leq n_0}\frac{1}{j-i}=\sum_{i=m_0'+1}^{n_0-1}\sum_{j=i+1}^{n_0}\frac{1}{j-i}\leq n_0\Big(\sum_{l=1}^{n_0}\frac{1}{l}\Big) \nonumber\\
&\leq& n_0\Big(1+\int_1^{n_0}\frac{1}{x}dx\Big)=n_0(1+\log(n_0))\leq C R_0(\bm{\phi})^{3\slash 2} k\log(k).
\end{eqnarray}
Hence by (\ref{Eq12.18}), (\ref{Eq12.14}), (\ref{Eq12.19}), (\ref{Eq12.20}), and (\ref{Eq12.21}), noting (\ref{Eq5.18.5}),
\begin{eqnarray}\label{Eq12.25}
&& \sum_{m_0'+1\leq i<j\leq n_0}|\mathscr{P}_{i,j}|\nonumber\\
&\leq& C\sqrt{R_0(\bm{\phi})}k^{-2}\log(k)\cdot n_0+CR_0(\bm{\phi})^{1\slash 2} k^{-2} \sum_{m_0'+1\leq i<j\leq n_0}\frac{1}{j-i}\nonumber\\
&\leq& C R_0(\bm{\phi})^2 k^{-1}\log(k).
\end{eqnarray}

\subparagraph{Sub-step 2.2}

In the following, we consider any $i\in [m_0'+1,n_0]\cap\mathbb{Z}$. Note that 
\begin{eqnarray}\label{Eq12.22}
 \mathscr{P}_{i,i}&=& -\frac{2}{k}\int_{\rho_i}^{\rho_{i+1}}\log(|x-(\rho_i+t_i)|)d\mu_0(x)\nonumber\\
 &&+\int_{\rho_i}^{\rho_{i+1}}\int_{\rho_i}^{\rho_{i+1}}\log(|x-y|)d\mu_0(x)d\mu_0(y).
\end{eqnarray}

For any $x\in [\rho_i,\rho_{i+1}]$, by (\ref{Eq5.15.1}), 
\begin{equation*}
    x-(\rho_i+t_i) \begin{cases}
     \geq -t_i \geq -n^{-1}>-1\\
     \leq \rho_{i+1}-\rho_i\leq Ck^{-1}<1
    \end{cases}.
\end{equation*}
As $\rho_{i+1}\leq R_0(\bm{\phi})$, we have
\begin{eqnarray}\label{Eq12.23}
&&\Big|\int_{\rho_i}^{\rho_{i+1}}\log(|x-(\rho_i+t_i)|)d\mu_0(x)\Big|=\int_{\rho_i}^{\rho_{i+1}}(-\log(|x-(\rho_i+t_i)|))d\mu_0(x)\nonumber\\
&\leq& \sqrt{R_0(\bm{\phi})} \int_{\rho_i}^{\rho_{i+1}}(-\log(|x-(\rho_i+t_i)|))dx=\sqrt{R_0(\bm{\phi})}\int_{-t_i}^{\rho_{i+1}-\rho_i-t_i} (-\log(|s|))ds\nonumber\\
&\leq& \sqrt{R_0(\bm{\phi})}\int_{-Ck^{-1}}^{Ck^{-1}}(-\log(|s|))ds=2\sqrt{R_0(\bm{\phi})}(-Ck^{-1}\log(Ck^{-1})+Ck^{-1})\nonumber\\
&\leq& C\sqrt{R_0(\bm{\phi})}k^{-1}\log(k).
\end{eqnarray}

For any $x,y\in [\rho_i,\rho_{i+1}]$ such that $x\neq y$, by (\ref{Eq5.15.1}), 
\begin{equation*}
    |x-y|\leq \rho_{i+1}-\rho_i\leq Ck^{-1}<1.
\end{equation*}
Hence for any $x\in [\rho_i,\rho_{i+1}]$, as $\rho_{i+1}\leq R_0(\bm{\phi})$, 
\begin{eqnarray}
&& \int_{\rho_i}^{\rho_{i+1}}|\log(|x-y|)| d\mu_0(y)=\int_{\rho_i}^{\rho_{i+1}}(-\log(|x-y|))d\mu_0(y) \nonumber\\
&\leq& \sqrt{R_0(\bm{\phi})}\int_{\rho_i}^{\rho_{i+1}}(-\log(|x-y|))dy=\sqrt{R_0(\bm{\phi})}\int_{\rho_i-x}^{\rho_{i+1}-x}(-\log(|s|))ds\nonumber\\
&\leq & \sqrt{R_0(\bm{\phi})}\int_{-Ck^{-1}}^{Ck^{-1}}(-\log(|s|))ds=2\sqrt{R_0(\bm{\phi})}(-Ck^{-1}\log(Ck^{-1})+Ck^{-1})\nonumber\\
&\leq& C\sqrt{R_0(\bm{\phi})} k^{-1}\log(k).
\end{eqnarray}
Hence by (\ref{Eq5.39}),
\begin{eqnarray}\label{Eq12.24}
&& \Big|\int_{\rho_i}^{\rho_{i+1}}\int_{\rho_i}^{\rho_{i+1}}\log(|x-y|)d\mu_0(x)d\mu_0(y)\Big|\leq\int_{\rho_i}^{\rho_{i+1}}d\mu_0(x)\int_{\rho_i}^{\rho_{i+1}}|\log(|x-y|)|d\mu_0(y) \nonumber\\
&& \leq C\sqrt{R_0(\bm{\phi})} k^{-1}\log(k)\mu_0([\rho_i,\rho_{i+1}])\leq C\sqrt{R_0(\bm{\phi})}k^{-2}\log(k).
\end{eqnarray}

By (\ref{Eq12.22}), (\ref{Eq12.23}), and (\ref{Eq12.24}), noting (\ref{Eq5.18.5}), we have
\begin{equation}\label{Eq12.27}
    \sum_{i=m_0'+1}^{n_0} |\mathscr{P}_{i,i}|\leq C\sqrt{R_0(\bm{\phi})}k^{-2}\log(k)\cdot n_0\leq CR_0(\bm{\phi})^2 k^{-1}\log(k).
\end{equation}

By (\ref{Eq12.26}), (\ref{Eq12.25}), and (\ref{Eq12.27}), we conclude that
\begin{equation}\label{Eq12.37}
    \Big|\int_{\mathbb{R}^2\backslash \Delta}\log(|x-y|)d\mathscr{W}_2(x)d\mathscr{W}_2(y)\Big|\leq CR_0(\bm{\phi})^2 k^{-1}\log(k).
\end{equation}

\paragraph{Step 3}

In this step, we bound $\int_{\mathbb{R}^2\backslash \Delta}\log(|x-y|)d\mathscr{W}_1(x)d\mathscr{W}_2(y)$. Note that
\begin{equation}\label{Eq12.31}
  \int_{\mathbb{R}^2\backslash \Delta}\log(|x-y|)d\mathscr{W}_1(x)d\mathscr{W}_2(y)=\sum_{i=0}^{m_0'}\sum_{j=m_0'+1}^{n_0}\mathscr{P}_{i,j}.
\end{equation}

\subparagraph{Sub-step 3.1}

In this sub-step, we bound $\mathscr{P}_{i,j}$ for any $i\in [m_0']$ and any $j\in [m_0'+1,n_0]\cap\mathbb{Z}$. In the following, we fix any $i\in [m_0']$ and $j\in [m_0'+1,n_0]\cap\mathbb{Z}$.

By (\ref{Eq5.43}) and (\ref{Eq9.6}), $\rho_{m_0'+1}-(\rho_{m_0'}+n^{-1})\geq ck^{-1}-n^{-1}\geq ck^{-1}$. Below we consider any $x\in [0,\rho_{m_0'}+n^{-1}]$. We define
\begin{eqnarray}
    h_j(x)&:=&k^{-1}\log(|\rho_j+t_j-x|)-\int_{\rho_j}^{\rho_{j+1}}\log(|y-x|)d\mu_0(y)\nonumber\\
    &=&\int_{\rho_j}^{\rho_{j+1}}(\log(|\rho_j+t_j-x|)-\log(|y-x|))d\mu_0(y),
\end{eqnarray}
where we use (\ref{Eq5.39}) for the second equality. For any $y\in [\rho_j,\rho_{j+1}]$, by (\ref{Eq5.20}), 
\begin{eqnarray*}
  &&  \min\{\rho_j+t_j-x,y-x\}\geq \rho_j-x\geq \rho_{j}-(\rho_{m_0'}+n^{-1})  \nonumber\\
  &\geq&  \rho_j-\rho_{m_0'+1}+ck^{-1}
    =\sum_{l=m_0'+1}^{j-1}(\rho_{l+1}-\rho_l)+ck^{-1}\nonumber\\
    &\geq& \pi k^{-1} R_0(\bm{\phi})^{-1\slash 2}(j-m_0'-1)+ck^{-1}\geq ck^{-1}R_0(\bm{\phi})^{-1\slash 2}(j-m_0'),
\end{eqnarray*}
where we use (\ref{Eq5.13}) for the last inequality. Hence by (\ref{Eq5.15.1}) and the inequality $\log(1+t)\leq t$ for any $t>-1$, we have
\begin{eqnarray}
&& \log(|\rho_j+t_j-x|)-\log(|y-x|)=\log\Big(1+\frac{\rho_j+t_j-y}{y-x}\Big)\leq \frac{\rho_j+t_j-y}{y-x} \nonumber\\
&\leq& \frac{n^{-1}}{ck^{-1}R_0(\bm{\phi})^{-1\slash 2}(j-m_0')}\leq \frac{C\sqrt{R_0(\bm{\phi})}}{j-m_0'},
\end{eqnarray}
\begin{eqnarray}
&& \log(|y-x|)-\log(|\rho_j+t_j-x|)=\log\Big(1+\frac{y-(\rho_j+t_j)}{\rho_j+t_j-x}\Big)\leq \frac{y-(\rho_j+t_j)}{\rho_j+t_j-x}\nonumber\\
&\leq& \frac{\rho_{j+1}-\rho_j}{\rho_j+t_j-x}\leq \frac{C k^{-1}}{k^{-1}R_0(\bm{\phi})^{-1\slash 2}(j-m_0')}\leq \frac{C\sqrt{R_0(\bm{\phi})}}{j-m_0'}.
\end{eqnarray}
Hence by (\ref{Eq5.39}), for any $x\in [0,\rho_{m_0'}+n^{-1}]$, we have
\begin{equation}\label{Eq12.28}
|h_j(x)|\leq \frac{C\sqrt{R_0(\bm{\phi})}}{j-m_0'}\cdot \mu_0([\rho_j,\rho_{j+1}])= \frac{C\sqrt{R_0(\bm{\phi})}}{k(j-m_0')}. 
\end{equation}
By (\ref{Eq5.17}), (\ref{Eq9.3}), and (\ref{rhh}), $\mu_0([\rho_{i-1},\rho_i])=\mu_0([0,r_0])\slash (m_0'+1)\leq Ck^{-1}$. Hence by (\ref{Eq12.28}), we have
\begin{eqnarray}\label{Eq12.32}
&& |\mathscr{P}_{i,j}|= \Big|k^{-1}h_j(\rho_i+t_i)-\int_{\rho_{i-1}}^{\rho_i}h_j(x)d\mu_0(x)\Big| \nonumber\\
&\leq& k^{-1}|h_j(\rho_i+t_i)|+\int_{\rho_{i-1}}^{\rho_i}|h_j(x)|d\mu_0(x)\nonumber\\
&\leq& \frac{C\sqrt{R_0(\bm{\phi})}}{k(j-m_0')}\cdot (k^{-1}+\mu_0([\rho_{i-1},\rho_i]))\leq \frac{C\sqrt{R_0(\bm{\phi})}}{k^2(j-m_0')}.
\end{eqnarray}

\subparagraph{Sub-step 3.2}

In this sub-step, we bound $\mathscr{P}_{0,,m_0'+1}$. Note that $\rho_{m_0'+1}=r_0$. For any $x\in [\rho_{m_0'},r_0)$, by (\ref{Eq9.3}) and (\ref{Eq9.6}),
\begin{equation*}
   0\leq t_{m_0'+1}< \rho_{m_0'+1}+t_{m_0'+1}-x\leq r_0-\rho_{m_0'}+n^{-1}\leq Ck^{-1}<1.
\end{equation*}
Hence
\begin{eqnarray}\label{Eq12.29}
&& \Big|\int_{\rho_{m_0'}}^{r_0}\log(|\rho_{m_0'+1}+t_{m_0'+1}-x|)d\mu_0(x)\Big|=\int_{\rho_{m_0'}}^{r_0}(-\log(|r_0+t_{m_0'+1}-x|))d\mu_0(x)\nonumber\\
&\leq& C\int_{\rho_{m_0'}}^{r_0}(-\log(|r_0+t_{m_0'+1}-x|))dx= C\int_{t_{m_0'+1}}^{r_0-\rho_{m_0'}+t_{m_0'+1}} (-\log(|s|))ds\nonumber\\
&\leq& C\int_0^{Ck^{-1}}(-\log(|s|))ds=C(-Ck^{-1}\log(Ck^{-1})+Ck^{-1})\leq Ck^{-1}\log(k). 
\end{eqnarray}

For any $x\in [\rho_{m_0'},r_0]$ and $y\in (\rho_{m_0'+1},\rho_{m_0'+2}]$, by (\ref{Eq9.3}), (\ref{Eq5.15.1}), and (\ref{Eq9.6}),
\begin{equation*}
    0<y-x\leq \rho_{m_0'+2}-\rho_{m_0'}= (\rho_{m_0'+2}-\rho_{m_0'+1})+(\rho_{m_0'+1}-\rho_{m_0'})\leq Ck^{-1}<1.
\end{equation*}
By (\ref{Eq5.15.1}), $\rho_{m_0'+2}= r_0+(\rho_{m_0'+2}-\rho_{m_0'+1})\leq 10+2\pi k^{-1}\leq 12$. Hence
\begin{eqnarray}
&&\Big|\int_{\rho_{m_0'}}^{r_0}\int_{\rho_{m_0'+1}}^{\rho_{m_0'+2}}\log(|x-y|)d\mu_0(x)d\mu_0(y)\Big|\nonumber\\
&=&\int_{\rho_{m_0'}}^{r_0}\int_{\rho_{m_0'+1}}^{\rho_{m_0'+2}}(-\log(y-x))d\mu_0(x)d\mu_0(y)\nonumber\\
&\leq& C\int_{\rho_{m_0'}}^{r_0}\int_{\rho_{m_0'+1}}^{\rho_{m_0'+2}}(-\log(y-x))dxdy.
\end{eqnarray}
For any $y\in [\rho_{m_0'+1},\rho_{m_0'+2}]=[r_0,\rho_{m_0'+2}]$, we have
\begin{eqnarray}
&& \int_{\rho_{m_0'}}^{r_0}(-\log(y-x))dx=\int_{y-r_0}^{y-\rho_{m_0'}}(-\log(s))ds\leq \int_0^{Ck^{-1}}(-\log(s))ds\nonumber\\
&& = -Ck^{-1}\log(C k^{-1})+Ck^{-1}\leq Ck^{-1}\log(k).
\end{eqnarray}
Hence
\begin{eqnarray}\label{Eq12.30}
&&  \Big|\int_{\rho_{m_0'}}^{r_0}\int_{\rho_{m_0'+1}}^{\rho_{m_0'+2}}\log(|x-y|)d\mu_0(x)d\mu_0(y)\Big|\nonumber \\
&\leq& Ck^{-1}\log(k) (\rho_{m_0'+2}-\rho_{m_0'+1})\leq Ck^{-2}\log(k).
\end{eqnarray}

By (\ref{Eq12.29}) and (\ref{Eq12.30}), we have
\begin{eqnarray}\label{Eq12.33}
    |\mathscr{P}_{0,m_0'+1}|&=&\Big|-k^{-1}\int_{\rho_{m_0'}}^{r_0}\log(|\rho_{m_0'+1}+t_{m_0'+1}-x|)d\mu_0(x)\nonumber\\
    &&\quad +\int_{\rho_{m_0'}}^{r_0}\int_{\rho_{m_0'+1}}^{\rho_{m_0'+2}}\log(|x-y|)d\mu_0(x)d\mu_0(y)\Big|\nonumber\\
    &\leq& Ck^{-2}\log(k).
\end{eqnarray}

\subparagraph{Sub-step 3.3}

In this sub-step, we bound $\mathscr{P}_{0,i}$ for any $i\in [m_0'+2,n_0]\cap\mathbb{Z}$. In the following, we fix any $i\in [m_0'+2,n_0]\cap\mathbb{Z}$.

For any $x\in [\rho_{m_0'},r_0]$ and $y\in [\rho_i,\rho_{i+1}]$, by (\ref{Eq5.20}), we have
\begin{equation*}
    \min\{\rho_i+t_i-x,y-x\}\geq \rho_i-r_0=\sum_{l=m_0'+1}^{i-1}(\rho_{l+1}-\rho_l)\geq \frac{c(i-m_0'-1)}{k\sqrt{R_0(\bm{\phi})}}>0.
\end{equation*}
Hence by (\ref{Eq5.15.1}) and the inequality $\log(1+t)\leq t$ for any $t>-1$, we have
\begin{eqnarray}
&& \log(|y-x|)-\log(|\rho_i+t_i-x|)=\log\Big(1+\frac{y-(\rho_i+t_i)}{\rho_i+t_i-x}\Big)\leq \frac{y-(\rho_i+t_i)}{\rho_i+t_i-x} \nonumber\\
&\leq& \frac{\rho_{i+1}-\rho_i}{\rho_i+t_i-x}\leq \frac{C k^{-1}}{k^{-1} R_0(\bm{\phi})^{-1\slash 2}(i-m_0'-1)}=\frac{C\sqrt{R_0(\bm{\phi})}}{i-m_0'-1},
\end{eqnarray}
\begin{eqnarray}
&& \log(|\rho_i+t_i-x|)-\log(|y-x|)=\log\Big(1+\frac{\rho_i+t_i-y}{y-x}\Big)\leq \frac{\rho_i+t_i-y}{y-x}\nonumber\\
&\leq& \frac{n^{-1}}{ck^{-1} R_0(\bm{\phi})^{-1\slash 2}(i-m_0'-1)}\leq \frac{C\sqrt{R_0(\bm{\phi})}}{i-m_0'-1}.
\end{eqnarray}
By (\ref{Eq5.17}), (\ref{Eq9.3}), and (\ref{rhh}), $\mu_0([\rho_{m_0'},\rho_{m_0'+1}])=\mu_0([0,r_0])\slash (m_0'+1)\leq Ck^{-1}$. Hence by (\ref{Eq5.39}), we have
\begin{eqnarray}\label{Eq12.34}
&& |\mathscr{P}_{0,i}|=\Big|\int_{\rho_{m_0'}}^{r_0}\int_{\rho_i}^{\rho_{i+1}}(\log(|x-y|)-\log(|\rho_i+t_i-x|))d\mu_0(x)d\mu_0(y)\Big| \nonumber\\
&\leq& \int_{\rho_{m_0'}}^{r_0}\int_{\rho_i}^{\rho_{i+1}}|\log(|x-y|)-\log(|\rho_i+t_i-x|)|d\mu_0(x)d\mu_0(y)\nonumber\\
&\leq& \frac{C\sqrt{R_0(\bm{\phi})}}{i-m_0'-1}\cdot \mu_0([\rho_{m_0'},\rho_{m_0'+1}])\mu_0([\rho_i,\rho_{i+1}])\leq \frac{C\sqrt{R_0(\bm{\phi})}}{k^2(i-m_0'-1)}.
\end{eqnarray}

By (\ref{Eq5.11}) and (\ref{Eq5.18.5}), we have
\begin{equation}
    \sum_{i=m_0'+1}^{n_0}\frac{1}{i-m_0'}\leq \sum_{l=1}^{n_0}\frac{1}{l}\leq 1+\int_{1}^{n_0}\frac{1}{x}dx=1+\log(n_0)\leq C\log(k).
\end{equation}
Hence by (\ref{Eq12.31}), (\ref{Eq12.32}), (\ref{Eq12.33}), and (\ref{Eq12.34}), noting (\ref{Eq5.11}) and (\ref{Eq5.18.5}), we have 
\begin{eqnarray}\label{Eq12.38}
&&\Big|\int_{\mathbb{R}^2\backslash \Delta}\log(|x-y|)d\mathscr{W}_1(x)d\mathscr{W}_2(y)\Big|\nonumber\\
&\leq& n_0\sum_{j=m_0'+1}^{n_0} \frac{C\sqrt{R_0(\bm{\phi})}}{k^2(j-m_0')}+Ck^{-2}\log(k)+\sum_{i=m_0'+2}^{n_0}\frac{C\sqrt{R_0(\bm{\phi})}}{k^2(i-m_0'-1)}\nonumber\\
&\leq& C n_0 k^{-2}\sqrt{R_0(\bm{\phi})} \sum_{i=m_0'+1}^{n_0}\frac{1}{i-m_0'}+Ck^{-2}\log(k)\nonumber\\
&\leq& C R_0(\bm{\phi})^2 k^{-1}\log(k).
\end{eqnarray}

\bigskip

By (\ref{Eq12.35}), (\ref{Eq12.36}), (\ref{Eq12.37}), and (\ref{Eq12.38}), noting (\ref{Eq5.11}), we have
\begin{equation}
    \Big|-\int_{\mathbb{R}^2\backslash\Delta}\log(|x-y|)d\tilde{\Upsilon}_{1;n_0,\mathbf{t}}(x)d\tilde{\Upsilon}_{1;n_0,\mathbf{t}}(y)\Big|\leq C\sqrt{\frac{\epsilon}{\log(R_0(\bm{\phi}))\log\log(R_0(\bm{\phi}))}}.
\end{equation}
We recall from the argument above (\ref{Eq12.39}) that $\tilde{\xi}(x)=0$ for any $x\in [0,R_0(\bm{\phi})]$. Hence by (\ref{Eq5.25}) and (\ref{Eq8.1}), we have 
\begin{equation}
    \int \tilde{\xi}(x)d\tilde{\Upsilon}_{1;n_0,\mathbf{t}}(x)=\frac{1}{k}\sum_{i=1}^{n_0}\tilde{\xi}(\rho_i+t_i)-\int_0^{R_0(\bm{\phi})}\tilde{\xi}(x)d\mu_0(x)=0.
\end{equation}
Therefore, by (\ref{Eq5.11}), we conclude that
\begin{eqnarray}
  &&  \Big|-\int_{\mathbb{R}^2\backslash\Delta}\log(|x-y|)d\tilde{\Upsilon}_{1;n_0,\mathbf{t}}(x)d\tilde{\Upsilon}_{1;n_0,\mathbf{t}}(y)+2\int \tilde{\xi}(x)d\tilde{\Upsilon}_{1;n_0,\mathbf{t}}(x)\Big|\nonumber\\
  && \leq C\sqrt{\frac{\epsilon}{\log(R_0(\bm{\phi}))\log\log(R_0(\bm{\phi}))}}\leq C\sqrt{\epsilon}.
\end{eqnarray}

\end{proof}

\subsection{Local large deviation upper bound}\label{local_upp}

In this subsection, we establish the local large deviation upper bound in Proposition \ref{local_up_p} below. We recall relevant notations and definitions from Definitions \ref{Defn1.1}-\ref{Defn1.3}.

\begin{proposition}\label{local_up_p}
For any $\mu\in\mathcal{X}_R$, we have
\begin{equation}
    \limsup_{\delta\rightarrow 0^{+}}\limsup_{k\rightarrow\infty}\frac{1}{k^2}\log(\mathbb{P}(d_R(\nu_{k;R},\mu)\leq \delta))\leq -\frac{\beta}{2} I_R(\mu).
\end{equation}
\end{proposition}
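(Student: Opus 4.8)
The plan is to reduce the probability $\mathbb{P}(d_R(\nu_{k;R},\mu)\leq\delta)$ to an expression governed by a two-dimensional log-gas energy functional for the Gaussian $\beta$ ensemble, and then identify the exponential rate of that functional with $\frac{\beta}{2}I_R(\mu)$. First I would fix the auxiliary parameters $M,\epsilon,\delta$ as in Section \ref{Sect.2}, pick $n$ so that $n^{10^{-7}}\leq k\leq n^{1/20000}$, and intersect with the high-probability events $\mathscr{E}$, $\mathscr{G}_R$, $\mathcal{B}_0(\Lambda_0)$, and $\bigcup_{\bm\phi\in\Phi}\mathcal{B}(\bm\phi;\Lambda_0)\cap\mathcal{C}(\bm\phi;\Lambda_0')$. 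Propositions \ref{P4.4}, \ref{P5.1}, \ref{Defn5.3.n}-type tail bounds, and \eqref{Eq5.19.1}, \eqref{Eq12.40} all guarantee that the complements of these events contribute at most $e^{-cMk^2}$ (or $e^{-ck^3}$), which is negligible at speed $k^2$ once $M$ is taken large at the end. On these events, Proposition \ref{P5.5} lets me replace $\{d_R(\nu_{k;R},\mu)\leq\delta\}$ by $\{d_R(\mu_{n,k;R},\mu)\leq 2\delta\}$, transferring the problem to the tridiagonal $\beta$-ensemble $H_{\beta,n}$ whose eigenvalue density \eqref{Eqn10} is an explicit log-gas.

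Next I would write the $\beta$-ensemble density in the $b_i$ coordinates and, using the decomposition $\mu_{n,k}=\Upsilon_1+\Upsilon_2$ into the bulk part on $[-R_0(\bm\phi),R_0(\bm\phi)]$ and the tail part, factor the Vandermonde and confinement terms so that the exponent becomes $-\frac{\beta}{2}k^2\bigl[-\int_{\mathbb{R}^2\setminus\Delta}\log|x-y|\,d\Upsilon_1 d\Upsilon_1 + 2\int\tilde\xi\,d\Upsilon_1\bigr]$ plus cross terms and tail terms. The cross term $-\int\log|x-y|\,d\Upsilon_1\,d\Upsilon_2$ is controlled by Proposition \ref{P5.3}: replacing the actual bulk configuration by the ``typical'' spread-out configuration $\tilde\Upsilon_{1;n_0,\mathbf t},\tilde\Upsilon_{2;n_0,\mathbf t}$ changes it by at most $CM\sqrt\epsilon$, and Proposition \ref{P5.7} shows the typical self-energy of the bulk is $O(\sqrt\epsilon)$; meanwhile Proposition \ref{P5.4} guarantees that on $(\mathscr{G}_R)^c$ the bulk self-energy blows up like $\sqrt{\log n}\gg 2^T$, forcing $\mathscr{G}_R$ to hold inside the relevant region. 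Then Proposition \ref{P5.6} provides the crucial lower bound $-\int\log|x-y|\,d\Upsilon_1 d\Upsilon_1 + 2\int\tilde\xi\,d\Upsilon_1 \geq (1+C_1/\log n)^{-1} I_R(\mu,3\delta) - C(M\sqrt\epsilon+M^{-1})$ on the event $\mathcal{B}(\bm\phi;\Lambda_0)\cap\mathcal{D}(\bm\phi,n_0)\cap\mathcal{C}(\bm\phi;\Lambda_0')\cap\mathscr{G}_R\cap\{d_R(\mu_{n,k;R},\mu)\leq 2\delta\}$. Combining, the integrand of the partition-function ratio is bounded above by $\exp\bigl(-\frac{\beta}{2}k^2[(1+C_1/\log n)^{-1}I_R(\mu,3\delta)-C(M\sqrt\epsilon+M^{-1})]\bigr)$ times subexponential factors.

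The remaining combinatorial bookkeeping is to sum over the finitely many choices of $\bm\phi\in\Phi$ and $n_0$ satisfying \eqref{Eq5.15} — both counts are polynomial in $k$ hence absorbed at speed $k^2$ — and to handle the normalization constant $Z_{\beta,n}$ together with the change of variables from $\lambda_i^{(n)}$ to $b_i$; since $k\leq n^{1/20000}$ and only $O(2^T k)$ eigenvalues near the edge are involved, the semicircle-law asymptotics of $Z_{\beta,n}$ and the Jacobian factors contribute only $e^{o(k^2)}$ after dividing by the ``reference'' configuration energy. Taking $\limsup_{k\to\infty}\frac1{k^2}\log$ gives $-\frac{\beta}{2}(1+0)^{-1}I_R(\mu,3\delta)+C\frac{\beta}{2}(M\sqrt\epsilon+M^{-1})$ since $\log n\to\infty$ along the chosen subsequence; then $\limsup_{\delta\to0^+}$ replaces $I_R(\mu,3\delta)$ by $I_R(\mu)$ via \eqref{Eq1.2.1}; finally letting $\epsilon\to0$ and $M\to\infty$ (in that order, respecting the constraints $\epsilon<\epsilon_0$, $M\geq M_0$) kills the error term and yields $-\frac{\beta}{2}I_R(\mu)$.

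The main obstacle I anticipate is not any single estimate — those are packaged in Propositions \ref{P5.3}--\ref{P5.7} — but the careful orchestration of the quantifier order $k\to\infty$, then $\delta\to0$, then $\epsilon\to0$, then $M\to\infty$, while making sure every ``subexponential'' or ``$O(e^{-cMk^2})$'' error really is uniform in the way needed: in particular the $n=n(k)$ coupling must be chosen so that $n\to\infty$ as $k\to\infty$ (so that the $1/\log n$ correction vanishes and $\mathscr{E},\mathscr{G}_R$ hold with the stated probabilities), yet $k\geq n^{10^{-7}}$ must still be compatible with $k\geq K_0(M,\epsilon,\delta)$. Threading this — essentially choosing $n=\lfloor k^{10000}\rfloor$ or similar and checking all the ``$k$ sufficiently large'' clauses simultaneously — together with verifying that the polynomial-in-$k$ cardinalities of $\Phi$ and of the admissible $n_0$'s genuinely do not spoil the $k^2$ speed, is where the real work lies.
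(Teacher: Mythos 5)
Your proposal follows essentially the same route as the paper's proof: reduce via the coupling event $\mathscr{E}$ and Proposition \ref{P5.5} to the Gaussian $\beta$ ensemble, decompose over $\bm\phi\in\Phi$ and admissible $n_0$, bound the log-gas ratio by comparing the constrained configuration with the spread-out reference configurations $\tilde\mu_{n_0,\mathbf t}$ via Propositions \ref{P5.3}, \ref{P5.4}, \ref{P5.6}, \ref{P5.7}, and then send $k\to\infty$, $\delta\to0^+$, and the auxiliary parameters to their limits. Two small points: the paper takes $n=k^{40000}$ (your tentative $n=\lfloor k^{10000}\rfloor$ would violate $k\leq n^{1/20000}$, though any admissible power works), and no asymptotics of $Z_{\beta,n}$ are needed, since the normalization cancels in the ratio of the restricted integral to the lower bound obtained by integrating over the reference tube $\mathscr{T}_{n_0}$ — exactly the mechanism you describe.
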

\begin{proof}

Throughout the proof, we denote by $C_{M,\epsilon}$ a positive constant that only depends on $\beta,M,\epsilon$. The value of $C_{M,\epsilon}$ may change from line to line.

We fix any $R\geq 10$ and $\mu\in\mathcal{X}_R$. We recall the setup stated at the beginning of this section, and let $\Lambda_0,\Lambda_0'$ be the constants that appear in Proposition \ref{P4.4} and (\ref{Eq5.19.1}). By Propositions \ref{P4.4}-\ref{P5.1}, (\ref{Eq12.40}), and the union bound, as $k\geq K_0(M,\epsilon,\delta)$ is sufficiently large (depending on $\beta,M,\epsilon,\delta$), we have
\begin{eqnarray}\label{Eq4.14.16}
&& \mathbb{P}(d_R(\nu_{k;R},\mu)\leq \delta)\nonumber\\
&\leq& \mathbb{P}(\mathcal{B}_0(\Lambda_0)^c)+\mathbb{P}(\mathscr{E}^c)+\mathbb{P}(\mathcal{B}_0(\Lambda_0)\cap \mathscr{E}\cap \{d_R(\nu_{k;R},\mu)\leq \delta\}) \nonumber\\
&\leq & C\exp(-cMk^2)+\sum_{\bm{\phi}\in\Phi} \mathbb{P}(\mathcal{B}(\bm{\phi};\Lambda_0)\cap \mathscr{E} \cap \{d_R(\nu_{k;R},\mu)\leq \delta\}).
\end{eqnarray}
Throughout the rest of the proof, we assume that $M\geq R+10$. By (\ref{Eq5.13}), for any $\bm{\phi}\in\Phi$, we have $R_0(\bm{\phi})\geq M\geq R+10$. 

In the following, we consider an arbitrary $\bm{\phi}\in\Phi$. Let $\mathcal{N}_0(\bm{\phi})$ be the set of $n_0\in \{0\}\cup[n]$ such that (\ref{Eq5.15}) holds. By Definition \ref{Defn5.1} and (\ref{Eq5.14}), we have
\begin{equation}
    \mathcal{B}(\bm{\phi};\Lambda_0) \subseteq \bigcup_{n_0\in\mathcal{N}_0(\bm{\phi})} \mathcal{D}(\bm{\phi},n_0).
\end{equation}
Hence by (\ref{Eq5.19.1}) and the union bound, we have
\begin{eqnarray}\label{Eq4.14.15}
&& \mathbb{P}(\mathcal{B}(\bm{\phi};\Lambda_0)\cap \mathscr{E} \cap \{d_R(\nu_{k;R},\mu)\leq \delta\}) \nonumber\\
&\leq& \mathbb{P}(\mathcal{C}(\bm{\phi};\Lambda_0')^c)+\mathbb{P}(\mathcal{B}(\bm{\phi};\Lambda_0)\cap \mathcal{C}(\bm{\phi};\Lambda_0')\cap \mathscr{E}\cap \{d_R(\nu_{k;R},\mu)\leq \delta\})\nonumber\\
&\leq& \sum_{n_0\in\mathcal{N}_0(\bm{\phi})}\mathbb{P}(\mathcal{B}(\bm{\phi};\Lambda_0)\cap\mathcal{D}(\bm{\phi},n_0)\cap \mathcal{C}(\bm{\phi};\Lambda_0')\cap \mathscr{E}\cap \{d_R(\nu_{k;R},\mu)\leq \delta\})\nonumber\\
&& +C\exp(-Mk^2).
\end{eqnarray}

In the following, we consider an arbitrary $n_0\in\mathcal{N}_0(\bm{\phi})$. By Proposition \ref{P5.5} and the union bound, we have
\begin{eqnarray}\label{Eq4.14.10}
&& \mathbb{P}(\mathcal{B}(\bm{\phi};\Lambda_0)\cap\mathcal{D}(\bm{\phi},n_0)\cap \mathcal{C}(\bm{\phi};\Lambda_0')\cap \mathscr{E}\cap \{d_R(\nu_{k;R},\mu)\leq \delta\})\nonumber\\
&\leq& \mathbb{P}(\mathcal{B}(\bm{\phi};\Lambda_0)\cap\mathcal{D}(\bm{\phi},n_0)\cap \mathcal{C}(\bm{\phi};\Lambda_0')\cap \mathscr{G}_R\cap \mathscr{E}\cap \{d_R(\nu_{k;R},\mu)\leq \delta\})\nonumber\\
&&+ \mathbb{P}(\mathcal{B}(\bm{\phi};\Lambda_0)\cap\mathcal{D}(\bm{\phi},n_0)\cap \mathcal{C}(\bm{\phi};\Lambda_0')\cap(\mathscr{G}_R)^c)\nonumber\\
&\leq& \mathbb{P}(\mathcal{B}(\bm{\phi};\Lambda_0)\cap \mathcal{D}(\bm{\phi},n_0)\cap\mathcal{C}(\bm{\phi};\Lambda_0')\cap \mathscr{G}_R \cap \{d_R(\mu_{n,k;R},\mu)\leq 2\delta\})\nonumber\\
&& + \mathbb{P}(\mathcal{B}(\bm{\phi};\Lambda_0)\cap\mathcal{D}(\bm{\phi},n_0)\cap \mathcal{C}(\bm{\phi};\Lambda_0')\cap(\mathscr{G}_R)^c).
\end{eqnarray}

% By \cite[Theorem 2.12]{DE} (see also \cite[Section 4.5]{AGZ}), the joint probability density function of $(\lambda_1^{(n)},\lambda_2^{(n)},\cdots,\lambda_n^{(n)})$ is given by
% \begin{eqnarray}
%     &&\rho_n^{(\beta)}(x_1,x_2,\cdots,x_n)=C_n^{(\beta)} \mathbbm{1}_{x_1<x_2<\cdots<x_n} \prod_{1\leq i<j\leq n}|x_j-x_i|^{\beta} \prod_{i=1}^n e^{-\beta x_i^2\slash 4}\nonumber\\
%     &&=C_n^{(\beta)} \mathbbm{1}_{x_1<x_2<\cdots<x_n}\exp\Big(\beta\sum_{1\leq i<j\leq n}\log(|x_j-x_i|)-\frac{\beta}{4}\sum_{i=1}^n x_i^2\Big),
% \end{eqnarray}
% where $C_n^{(\beta)}$ is a normalization constant. As $b_i=-k^{-2\slash 3}n^{1\slash 6}(\lambda_i^{(n)}-2\sqrt{n})$ for every $i\in [n]$, the joint probability density function of $(b_1,b_2,\cdots,b_n)$ is given by

Recall Definition \ref{Defn5.3}, and let $\omega_0$ be the Borel measure on $\mathbb{R}$ such that for any $A\in\mathcal{B}_{\mathbb{R}}$, $\omega_0(A)=\int_A \rho(x)dx$. By \cite[Theorem 2.12]{DE} (see also \cite[Section 4.5]{AGZ}), the joint density of $(n^{-1\slash 2}\lambda_1^{(n)},n^{-1\slash 2}\lambda_2^{(n)},\cdots,n^{-1\slash 2}\lambda_n^{(n)})$ is given by
\begin{eqnarray}
    &&\rho_n^{(\beta)}(x_1,x_2,\cdots,x_n)\propto \mathbbm{1}_{x_1>x_2>\cdots>x_n} \prod_{1\leq i<j\leq n}|x_j-x_i|^{\beta} \prod_{i=1}^n e^{-\beta n x_i^2\slash 4}\nonumber\\
    &\propto& \mathbbm{1}_{x_1>x_2>\cdots>x_n}\exp\Big(\beta\sum_{1\leq i<j\leq n}\log(|x_j-x_i|)-\frac{\beta n}{4}\sum_{i=1}^n x_i^2\Big)\nonumber\\
    &\propto& \exp\Big(\frac{\beta n^2}{2}\Big(\int_{\mathbb{R}^2\backslash\Delta}\log(|x-y|)d\big(n^{-1}\sum_{i=1}^n \delta_{x_i}-\omega_0\big)(x)d\big(n^{-1}\sum_{i=1}^n \delta_{x_i}-\omega_0\big)(y)\nonumber\\
    && \quad\quad\quad -2 \int \xi(x)d\big(n^{-1}\sum_{i=1}^n \delta_{x_i}-\omega_0\big)(x)\Big)\Big) \mathbbm{1}_{x_1>x_2>\cdots>x_n}.
\end{eqnarray}
As $b_i=-k^{-2\slash 3}n^{1\slash 6}(\lambda_i^{(n)}-2\sqrt{n})$ for every $i\in [n]$, the joint density of $(b_1,b_2,\cdots,b_n)$ is given by
\begin{eqnarray}\label{Eq14.1}
    && \tilde{\rho}_{n,k}^{(\beta)}(b_1,b_2,\cdots,b_n) \nonumber\\
    &\propto& \exp\Big(\frac{\beta k^2}{2}\Big(\int_{\mathbb{R}^2\backslash\Delta} \log(|x-y|)d\mu_{n,k}(x)d\mu_{n,k}(y)-2\int \tilde{\xi}(x)d\mu_{n,k}(x) \Big)\Big)\nonumber\\
    &&\times
    \mathbbm{1}_{b_1<b_2<\cdots<b_n}.
\end{eqnarray}
We define $\mathscr{B}_0:=\{(b_1,b_2,\cdots,b_n)\in\mathbb{R}^n: b_1<b_2<\cdots<b_n\}$.

Let
\begin{equation}\label{Eq4.14.13}
   \mathcal{G}_1:= \mathcal{B}(\bm{\phi};\Lambda_0)\cap \mathcal{D}(\bm{\phi},n_0)\cap\mathcal{C}(\bm{\phi};\Lambda_0')\cap \mathscr{G}_R \cap \{d_R(\mu_{n,k;R},\mu)\leq 2\delta\},
\end{equation}
\begin{equation}\label{Eq4.14.14}
    \mathcal{G}_2:=  \mathcal{B}(\bm{\phi};\Lambda_0)\cap\mathcal{D}(\bm{\phi},n_0)\cap \mathcal{C}(\bm{\phi};\Lambda_0')\cap(\mathscr{G}_R)^c.
\end{equation}
Let $\mathcal{G}_1'$ be the set of $(b_1,\cdots,b_n)\in \mathscr{B}_0$ such that the event $\mathcal{G}_1$ holds and $\mathcal{G}_2'$ the set of $(b_1,\cdots,b_n)\in\mathscr{B}_0$ such that the event $\mathcal{G}_2$ holds. We also let $\mathcal{G}_1''$ be the set of $(b_1,\cdots,b_n)\in\mathscr{B}_0$ such that the event
\begin{equation*}
    \mathcal{B}(\bm{\phi};\Lambda_0)  \cap\mathcal{C}(\bm{\phi};\Lambda_0')\cap \mathscr{G}_R \cap \{d_R(\mu_{n,k;R},\mu)\leq 2\delta\}
\end{equation*}
holds, and let $\mathcal{G}_2''$ be the set of $(b_1,\cdots,b_n)\in\mathscr{B}_0$ such that the event 
\begin{equation*}
    \mathcal{B}(\bm{\phi};\Lambda_0) \cap \mathcal{C}(\bm{\phi};\Lambda_0')\cap(\mathscr{G}_R)^c
\end{equation*}
holds. 

Let $\tilde{\mathscr{X}}$ be the set of compactly supported finite signed Borel measures $\mu$ on $\mathbb{R}$ such that $\mu+\mu_0$ is a positive measure and $\mu(\mathbb{R})=0$. For any $\nu\in \tilde{\mathscr{X}}$, we define
\begin{equation}
    J_0(\nu):=-\int_{\mathbb{R}^2\backslash\Delta} \log(|x-y|)d\nu(x)d\nu(y)+2\int \tilde{\xi}(x)d\nu(x).
\end{equation}
For every $i\in \{1,2\}$, by (\ref{Eq14.1}),
\begin{eqnarray}\label{Eq14.7}
\mathbb{P}(\mathcal{G}_i)=\frac{\int_{\mathcal{G}_i'}\exp(-\beta k^2 J_0(\mu_{n,k})\slash 2)db_1\cdots db_n}{\int_{\mathscr{B}_0}\exp(-\beta k^2 J_0(\mu_{n,k})\slash 2)db_1\cdots db_n}.
\end{eqnarray}
Recall the definition of $\tilde{\mu}_{n_0,\mathbf{t}}$ from (\ref{Eq8.2}). We have
\begin{eqnarray}\label{Eq14.4}
&& \int_{\mathcal{G}_i'}\exp(-\beta k^2 J_0(\mu_{n,k})\slash 2)db_1\cdots db_n\nonumber\\
&=& \sum_{m=0}^{n-n_0} \int_{b_1<\cdots<b_m<-R_0(\bm{\phi})} db_1\cdots db_m \int_{R_0(\bm{\phi})<b_{n_0+m+1}<\cdots<b_n}db_{n_0+m+1}\cdots db_n\nonumber\\
&&\quad\quad \int_{-R_0(\bm{\phi})\leq b_{m+1}<\cdots<b_{n_0+m}\leq R_0(\bm{\phi})} \exp(-\beta k^2 J_0(\mu_{n,k})\slash 2) \mathbbm{1}_{\mathcal{G}_i''}(b_1,\cdots,b_n)\nonumber\\
&&\quad\quad\quad\quad\quad\quad\quad\quad\quad\quad\quad\quad\quad\quad\quad\quad\quad db_{m+1}\cdots db_{n_0+m},
\end{eqnarray}
\begin{eqnarray}\label{Eq14.5}
&& \int_{\mathscr{B}_0}\exp(-\beta k^2 J_0(\mu_{n,k})\slash 2)db_1\cdots db_n\nonumber\\
&\geq & \sum_{m=0}^{n-n_0} \int_{b_1<\cdots<b_m<-R_0(\bm{\phi})} db_1\cdots db_m \int_{R_0(\bm{\phi})<b_{n_0+m+1}<\cdots<b_n}db_{n_0+m+1}\cdots db_n\nonumber\\
&&\quad\quad \int_{\mathbf{t}=(t_1,\cdots,t_{n_0}) \in \mathscr{T}_{n_0}} \exp(-\beta k^2 J_0(\tilde{\mu}_{n_0,\mathbf{t}})\slash 2) dt_1\cdots dt_{n_0}. 
\end{eqnarray}

In the following, we fix any $m\in \{0\}\cup [n-n_0]$, $b_1<\cdots<b_m<-R_0(\bm{\phi})$, and $R_0(\bm{\phi})<b_{n_0+m+1}<\cdots<b_n$. Recall (\ref{Eq5.21.2}), (\ref{Eq8.1}), and (\ref{Eq14.2}). We have
\begin{eqnarray}
    &&J_0(\mu_{n,k})=-\int_{\mathbb{R}^2\backslash\Delta}\log(|x-y|)d\Upsilon_1(x)d\Upsilon_1(y)-2\int_{\mathbb{R}^2\backslash\Delta}\log(|x-y|)d\Upsilon_1(x)d\Upsilon_2(y)\nonumber\\
    &&\quad -\int_{\mathbb{R}^2\backslash\Delta}\log(|x-y|)d\Upsilon_2(x)d\Upsilon_2(y)+2\int \tilde{\xi}(x)d\Upsilon_1(x)+2\int \tilde{\xi}(x)d\Upsilon_2(x).
\end{eqnarray}
For any $\mathbf{t}=(t_1,t_2,\cdots,t_{n_0})\in \mathscr{T}_{n_0}$, we have
\begin{eqnarray}
     &&J_0(\tilde{\mu}_{n_0,\mathbf{t}})=-\int_{\mathbb{R}^2\backslash\Delta}\log(|x-y|)d\tilde{\Upsilon}_{1;n_0,\mathbf{t}}(x)d\tilde{\Upsilon}_{1;n_0,\mathbf{t}}(y)\nonumber\\
     &&-2\int_{\mathbb{R}^2\backslash\Delta}\log(|x-y|)d\tilde{\Upsilon}_{1;n_0,\mathbf{t}}(x)d\tilde{\Upsilon}_{2;n_0,\mathbf{t}}(y)-\int_{\mathbb{R}^2\backslash\Delta}\log(|x-y|)d\tilde{\Upsilon}_{2;n_0,\mathbf{t}}(x)d\tilde{\Upsilon}_{2;n_0,\mathbf{t}}(y)\nonumber\\
     &&+2\int \tilde{\xi}(x)d\tilde{\Upsilon}_{1;n_0,\mathbf{t}}(x)+2\int \tilde{\xi}(x)d\tilde{\Upsilon}_{2;n_0,\mathbf{t}}(x).
\end{eqnarray}
As $\tilde{\Upsilon}_{2;n_0,\mathbf{t}}=\Upsilon_2$, we have
\begin{eqnarray}\label{Eq14.3}
&& J_0(\mu_{n,k})-J_0(\tilde{\mu}_{n_0,\mathbf{t}})=-\int_{\mathbb{R}^2\backslash\Delta}\log(|x-y|)d\Upsilon_1(x)d\Upsilon_1(y)+2\int \tilde{\xi}(x)d\Upsilon_1(x) \nonumber\\
&&+\int_{\mathbb{R}^2\backslash\Delta}\log(|x-y|)d\tilde{\Upsilon}_{1;n_0,\mathbf{t}}(x)d\tilde{\Upsilon}_{1;n_0,\mathbf{t}}(y)-2\int \tilde{\xi}(x)d\tilde{\Upsilon}_{1;n_0,\mathbf{t}}(x)\nonumber\\
&&-2\int_{\mathbb{R}^2\backslash\Delta}\log(|x-y|)d\Upsilon_1(x)d\Upsilon_2(y)+2\int_{\mathbb{R}^2\backslash\Delta}\log(|x-y|)d\tilde{\Upsilon}_{1;n_0,\mathbf{t}}(x)d\tilde{\Upsilon}_{2;n_0,\mathbf{t}}(y).\nonumber\\
&& 
\end{eqnarray}

Consider any choice of $-R_0(\bm{\phi})\leq b_{m+1}<\cdots<b_{n_0+m}\leq R_0(\bm{\phi})$. If $(b_1,\cdots,b_n)\notin \mathcal{G}_1''$, we have $\exp(-\beta k^2 J_0(\mu_{n,k})\slash 2) \mathbbm{1}_{\mathcal{G}_1''}(b_1,\cdots,b_n)=0$. Otherwise, for any $\mathbf{t}=(t_1,t_2,\cdots,t_{n_0})\in \mathscr{T}_{n_0}$, by (\ref{Eq14.3}) and Propositions \ref{P5.3}, \ref{P5.6}, and \ref{P5.7}, we have
\begin{equation}
    J_0(\mu_{n,k})-J_0(\tilde{\mu}_{n_0,\mathbf{t}})\geq \Big(1+\frac{C_1}{\log(n)}\Big)^{-1}I_R(\mu,3\delta)-C(M\sqrt{\epsilon}+M^{-1}),
\end{equation}
where $C,C_1$ are positive constants that only depend on $\beta$. Hence
\begin{eqnarray}
&& \int_{-R_0(\bm{\phi})\leq b_{m+1}<\cdots<b_{n_0+m}\leq R_0(\bm{\phi})} \exp(-\beta k^2 J_0(\mu_{n,k})\slash 2) \mathbbm{1}_{\mathcal{G}_1''}(b_1,\cdots,b_n)\nonumber\\
&& \quad\quad\quad\quad\quad\quad\quad\quad\quad\quad\quad\quad\quad\quad\quad db_{m+1}\cdots db_{n_0+m} \nonumber\\
&\leq& \exp\Big(-\frac{\beta k^2}{2}\Big(\Big(1+\frac{C_1}{\log(n)}\Big)^{-1}I_R(\mu,3\delta)-C(M\sqrt{\epsilon}+M^{-1})\Big)\Big)\nonumber\\
&& \times \inf_{\mathbf{t}=(t_1,t_2,\cdots,t_{n_0})\in \mathscr{T}_{n_0}}\Big\{\exp(-\beta k^2 J_0(\tilde{\mu}_{n_0,\mathbf{t}})\slash 2)\Big\} (2R_0(\bm{\phi}))^{n_0}\nonumber\\
&\leq& \exp\Big(-\frac{\beta k^2}{2}\Big(\Big(1+\frac{C_1}{\log(n)}\Big)^{-1}I_R(\mu,3\delta)-C(M\sqrt{\epsilon}+M^{-1})\Big)\Big)(2R_0(\bm{\phi}))^{n_0} n^{n_0} n_0!\nonumber\\
&& \times \int_{\mathbf{t}=(t_1,\cdots,t_{n_0}) \in \mathscr{T}_{n_0}} \exp(-\beta k^2 J_0(\tilde{\mu}_{n_0,\mathbf{t}})\slash 2) dt_1\cdots dt_{n_0}.
\end{eqnarray}
Hence by (\ref{Eq14.4}) and (\ref{Eq14.5}),
\begin{eqnarray}\label{Eq14.8}
  &&  \int_{\mathcal{G}_1'}\exp(-\beta k^2 J_0(\mu_{n,k})\slash 2)db_1\cdots db_n\nonumber\\
  &\leq& \exp\Big(-\frac{\beta k^2}{2}\Big(\Big(1+\frac{C_1}{\log(n)}\Big)^{-1}I_R(\mu,3\delta)-C(M\sqrt{\epsilon}+M^{-1})\Big)\Big)\nonumber\\
  &&\times (2R_0(\bm{\phi})n)^{n_0}  n_0! \int_{\mathscr{B}_0}\exp(-\beta k^2 J_0(\mu_{n,k})\slash 2)db_1\cdots db_n.
\end{eqnarray}
By (\ref{Eq5.11}) and (\ref{Eq5.18.5}), we have $(2R_0(\bm{\phi})n)^{n_0}  n_0!\leq (2^{T+1} n^2)^{n_0}\leq (2^{T+1} n^2)^{C 2^T k}$. Hence by (\ref{Eq14.7}) and (\ref{Eq14.8}), as $T$ only depends on $M,\epsilon$ and $k\geq K(M,\epsilon,\delta)$ is sufficiently large (depending on $\beta,M,\epsilon,\delta$), we have
\begin{eqnarray}\label{Eq4.14.11}
   &&\mathbb{P}(\mathcal{G}_1)\leq \exp\Big(-\frac{\beta k^2}{2}\Big(\Big(1+\frac{C_1}{\log(n)}\Big)^{-1}I_R(\mu,3\delta)-C(M\sqrt{\epsilon}+M^{-1})\Big)\nonumber\\
   &&\quad\quad\quad\quad\quad\quad +C 2^Tk \log(2^{T+1} n^2)\Big)\nonumber\\
   &\leq& \exp\Big(-\frac{\beta k^2}{2}\Big(\Big(1+\frac{C_1}{\log(n)}\Big)^{-1}I_R(\mu,3\delta)-C(M\sqrt{\epsilon}+M^{-1})\Big)\Big).
\end{eqnarray}

Similarly, by Propositions \ref{P5.3}, \ref{P5.4}, and \ref{P5.7}, we have
\begin{equation}\label{Eq4.14.12}
    \mathbb{P}(\mathcal{G}_2)\leq \exp(-ck^2\sqrt{\log(n)}+C_{M,\epsilon} k^2)\leq \exp(-ck^2\sqrt{\log(n)}).
\end{equation}

By (\ref{Eq4.14.10}), (\ref{Eq4.14.11}), and (\ref{Eq4.14.12}), noting (\ref{Eq4.14.13}) and (\ref{Eq4.14.14}), we obtain that
\begin{eqnarray}
&& \mathbb{P}(\mathcal{B}(\bm{\phi};\Lambda_0)\cap\mathcal{D}(\bm{\phi},n_0)\cap \mathcal{C}(\bm{\phi};\Lambda_0')\cap \mathscr{E}\cap \{d_R(\nu_{k;R},\mu)\leq \delta\})\nonumber\\
&\leq& \exp\Big(-\frac{\beta k^2}{2}\Big(\Big(1+\frac{C_1}{\log(n)}\Big)^{-1}I_R(\mu,3\delta)-C(M\sqrt{\epsilon}+M^{-1})\Big)\Big)\nonumber\\
&& +\exp(-ck^2\sqrt{\log(n)}).
\end{eqnarray}
Hence by (\ref{Eq4.14.15}), as $|\mathcal{N}_0(\bm{\phi})|\leq n+1\leq 2n$, we have
\begin{eqnarray}\label{Eq4.14.17}
&& \mathbb{P}(\mathcal{B}(\bm{\phi};\Lambda_0)\cap \mathscr{E} \cap \{d_R(\nu_{k;R},\mu)\leq \delta\})\nonumber\\
&\leq& \exp\Big(-\frac{\beta k^2}{2}\Big(\Big(1+\frac{C_1}{\log(n)}\Big)^{-1}I_R(\mu,3\delta)-C(M\sqrt{\epsilon}+M^{-1})\Big)\Big) \nonumber\\
&& +\exp(-ck^2\sqrt{\log(n)})+C \exp(-Mk^2).
\end{eqnarray}

Recall the definition of $\Phi$ below (\ref{Eq2.14}). As $T$ only depends on $M,\epsilon$, we have
\begin{equation}
    |\Phi|\leq \mathcal{I}_0 \cdot  2^{2 W_{\mathcal{I}_0+1}+1} \cdot 2^{2 W_{\mathcal{I}_0+1}+1}\leq 8 \lceil M^2\slash \epsilon\rceil T^4 \leq C_{M,\epsilon}.
\end{equation}
Hence by (\ref{Eq4.14.16}) and (\ref{Eq4.14.17}), we have
\begin{eqnarray}\label{Eq4.14.18}
&& \mathbb{P}(d_R(\nu_{k;R},\mu)\leq \delta) \nonumber\\
&\leq& C_{M,\epsilon} \exp\Big(-\frac{\beta k^2}{2}\Big(\Big(1+\frac{C_1}{\log(n)}\Big)^{-1}I_R(\mu,3\delta)-C(M\sqrt{\epsilon}+M^{-1})\Big)\Big)\nonumber\\
&&+C_{M,\epsilon} \exp(-ck^2\sqrt{\log(n)})+C_{M,\epsilon}\exp(-c Mk^2).
\end{eqnarray}

We take $n=k^{40000}$ and $\epsilon=M^{-4}$. Recall (\ref{Eq1.2.1}). In (\ref{Eq4.14.18}), first taking $k\rightarrow\infty$, and then taking $\delta\rightarrow 0^{+}$, we obtain that
\begin{eqnarray}
&& \limsup_{\delta\rightarrow 0^{+}}\limsup_{k\rightarrow\infty}\frac{1}{k^2}\log(\mathbb{P}(d_R(\nu_{k;R},\mu)\leq \delta)) \nonumber\\
&\leq& \max\Big\{-cM,-\frac{\beta}{2}I_R(\mu)+C(M\sqrt{\epsilon}+M^{-1})\Big\}\nonumber\\
&\leq& \max\Big\{-cM,-\frac{\beta}{2}I_R(\mu)+CM^{-1}\Big\}.
\end{eqnarray}
Letting $M\rightarrow\infty$, we obtain that
\begin{equation}
    \limsup_{\delta\rightarrow 0^{+}}\limsup_{k\rightarrow\infty}\frac{1}{k^2}\log(\mathbb{P}(d_R(\nu_{k;R},\mu)\leq \delta))\leq -\frac{\beta}{2} I_R(\mu).
\end{equation}

\end{proof}

\subsection{Local large deviation lower bound}\label{local_low}

In this subsection, we establish the local large deviation lower bound in Proposition \ref{local_low_p} below. Again, we recall relevant notations and definitions from Definitions \ref{Defn1.1}-\ref{Defn1.3}.

\begin{proposition}\label{local_low_p}
For any $\mu\in\mathcal{X}_R$, we have
\begin{equation}
    \liminf_{\delta\rightarrow 0^{+}}\liminf_{k\rightarrow\infty}\frac{1}{k^2}\log(\mathbb{P}(d_R(\nu_{k;R},\mu)\leq \delta))\geq -\frac{\beta}{2} I_R(\mu).
\end{equation}
\end{proposition}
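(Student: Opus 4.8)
The plan is to establish the local large deviation lower bound by a construction that mirrors the upper bound proof, but now producing a \emph{lower} bound on the relevant partition-function ratio rather than an upper bound. Given $\mu \in \mathcal{X}_R$ and a small $\delta > 0$, the starting point is to choose a near-optimal competitor $\mu' \in \mathscr{X}$ with $d_R(\mu'_R, \mu) \le \delta/2$ and $\mathscr{I}(\mu') \le I_R(\mu, \delta) + \delta$ (such a $\mu'$ exists by the definition of $I_R(\mu,\delta)$ in Definition~\ref{Defn1.3}). Since $I_R(\mu) = \lim_{\delta\to 0^+} I_R(\mu,\delta)$, it suffices to lower bound $\mathbb{P}(d_R(\nu_{k;R},\mu) \le \delta)$ by $\exp(-\frac{\beta}{2} k^2 (I_R(\mu,\delta) + o_\delta(1) + o_k(1)))$. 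I would work again with the Gaussian $\beta$ ensemble representation: by the coupling of \cite[Theorem 1.5]{Zho} (the event $\mathscr{E}$ with $\mathbb{P}(\mathscr{E}^c) \le C\exp(-ck^3)$, see~(\ref{Eq12.40})) and Proposition~\ref{P5.5}, it is enough to lower bound $\mathbb{P}(d_R(\mu_{n,k;R},\mu) \le 2\delta)$ and intersect with the favorable events $\mathcal{B}(\bm{\phi};\Lambda_0)$, $\mathcal{C}(\bm{\phi};\Lambda_0')$, $\mathscr{G}_R$, $\mathcal{D}(\bm{\phi},n_0)$, which by Propositions~\ref{P4.4}, \ref{P5.1}, \ref{P5.2} and~(\ref{Eq5.19.1}) carry all but exponentially small ($\exp(-cMk^2)$) probability; choosing $M$ large at the end makes these losses negligible.

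The main work is to exhibit an explicit event of configurations $(b_1,\dots,b_n)$ on which $\mu_{n,k;R}$ is within $2\delta$ of $\mu$ and whose probability under the density~(\ref{Eq14.1}) is at least $\exp(-\frac{\beta}{2}k^2(\mathscr{I}(\mu') + o(1)))$. Concretely, I would discretize $\mu' + \nu_0$ (restricted to a bounded window) into $\approx n_0$ atoms with spacing $\gtrsim 1/n$, place a configuration $(\beta_1 < \dots < \beta_{n_0})$ at these prescribed locations inside $[-R_0(\bm{\phi}),R_0(\bm{\phi})]$ matching $\mu'$ on $[-R,R]$, and let each $b_i$ range over a box of width $\sim n^{-1}$ around $\beta_i$ (this is exactly the $\mathbf{t} = (t_1,\dots,t_{n_0}) \in \mathscr{T}_{n_0}$ parametrization already used in the upper-bound argument, cf.~(\ref{Eq8.2}), (\ref{Eq14.5})). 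On the complementary indices (outside $[-R_0(\bm{\phi}),R_0(\bm{\phi})]$) I would let the points range over the typical region dictated by $\mu_0$, i.e. restrict to the event where the tail configuration looks like $\nu_{0}$, so that the energy $J_0$ splits as in~(\ref{Eq14.3}) into the interior piece plus cross and exterior pieces that are controlled by Propositions~\ref{P5.3} and~\ref{P5.7}. The interior energy of this constructed configuration is $\big(1+O(1/\log n)\big)^{\pm 1}\,\mathscr{I}(\mu') + O(M\sqrt{\epsilon} + M^{-1})$ — this is the lower-bound analogue of Proposition~\ref{P5.6}, and I would prove it by the same Riemann-sum comparisons between $\sum \log|\beta_i - \beta_j|$ and $\int\int \log|x-y|\,d\mu'\,d\mu'$, using Lemma~\ref{L5.1} (which already gives two-sided control). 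Integrating $\exp(-\frac{\beta}{2}k^2 J_0)$ over this box of volume $\gtrsim (c/n)^{n_0}$ and dividing by the full partition function (which is bounded above via the same box construction, up to $n^{n_0} n_0!$ combinatorial factors that are $\exp(o(k^2))$ since $n_0 \le C 2^T k$ and $n$ is polynomial in $k$) yields the claimed lower bound.

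The order of the steps: (1) fix $M$ large, $\epsilon = M^{-4}$, $n = k^{40000}$, and the near-optimal $\mu'$; (2) reduce from $\nu_{k;R}$ to $\mu_{n,k;R}$ via $\mathscr{E}$ and Proposition~\ref{P5.5}, and intersect with $\mathcal{B}(\bm{\phi};\Lambda_0) \cap \mathcal{D}(\bm{\phi},n_0) \cap \mathcal{C}(\bm{\phi};\Lambda_0') \cap \mathscr{G}_R$ for suitable $\bm{\phi}, n_0$ (noting these events cost only $\exp(-cMk^2)$, which is absorbed after $M\to\infty$ exactly as in the upper bound, cf.~(\ref{Eq4.14.18})); (3) construct the discretized configuration and the box of admissible $\mathbf{t}$'s, verifying $d_R(\mu_{n,k;R},\mu) \le 2\delta$ on this box; (4) decompose $J_0$ as in~(\ref{Eq14.3}) and bound the interior term using the lower-bound counterpart of Proposition~\ref{P5.6} together with the cross-term and self-energy estimates from Propositions~\ref{P5.3}, \ref{P5.7}; (5) lower bound the numerator integral over the box and upper bound the denominator (full) integral, take $k\to\infty$ then $\delta\to 0^+$ then $M\to\infty$. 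The main obstacle is step~(4): proving the \emph{lower} bound on the interior log-energy requires showing the discrete energy does not exceed the continuous one $\mathscr{I}(\mu')$ by more than $o(1)$, which is the delicate direction of the Riemann-sum comparison — the discretization must be chosen so that $\sum_{i \ne j}\log|\beta_i-\beta_j| \ge (1-o(1)) k^2 \int\int \log|x-y|\,d(\mu'+\nu_0)\,d(\mu'+\nu_0) - (\text{self-energy correction})$, and one must handle the logarithmic singularity of the kernel against the near-equispaced atoms. The saving grace is that Lemma~\ref{L5.1} already supplies a two-sided comparison accurate to $O(1/(n^2 \max\{n|t_2-t_1|,1\}))$, so the argument is essentially a re-run of the computations in the proofs of Propositions~\ref{P5.6} and~\ref{P5.7} with the inequalities reversed; no genuinely new estimate should be needed beyond bookkeeping.
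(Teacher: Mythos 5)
Your outer reduction (pick a near-optimal $\mu'\in\mathscr{X}$ with $d_R(\mu'_R,\mu)\le\delta/2$, get the bound at $\mu'$, use inclusion of events, take the infimum to recover $I_R(\mu,\delta/2)$ by Definition~\ref{Defn1.3}, then let $\delta\to0^+$) is exactly what the paper does. But the paper stops there: the lower bound at a fixed $\mu'\in\mathscr{X}$ is not re-proved, it is imported by following the proof of Theorem~1.1(a) of \cite{Zho} (adapted to general $\beta\in\mathbb{N}^{*}$), so the whole proof of Proposition~\ref{local_low_p} is a few lines. You instead propose to re-derive that inner bound from scratch by a tilting/box construction against the Gaussian $\beta$ ensemble density~(\ref{Eq14.1}). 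That is the standard route and could in principle be carried out, but as sketched it has two genuine gaps, and a few of the tools you cite do not apply in the form you use them.

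First, to lower bound the probability of your tilted box you must \emph{upper} bound the denominator $\int_{\mathscr{B}_0}\exp(-\beta k^2 J_0(\mu_{n,k})/2)\,db_1\cdots db_n$; your parenthetical claim that the full partition function ``is bounded above via the same box construction'' is backwards --- the box construction (as in~(\ref{Eq14.5})) only gives a \emph{lower} bound on it. An upper bound requires running the energy-comparison machinery over \emph{all} configurations (not just those with $d_R(\mu_{n,k;R},\mu)\le2\delta$), i.e.\ an analogue of Proposition~\ref{P5.6} with $I_R(\mu,3\delta)$ replaced by the nonnegativity of $\mathscr{I}$, together with the bad-event estimates; none of the propositions you invoke states this. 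Second, the ``lower-bound analogue of Proposition~\ref{P5.6}'' --- showing the discrete energy of a quantile discretization of $\mu'+\nu_0$ does not exceed $\mathscr{I}(\mu')+o(1)$ --- is not just bookkeeping: Lemma~\ref{L5.1} is one-sided (it only bounds the smeared log-interaction from \emph{below}), and the computation in Proposition~\ref{P5.7} leans on the explicit $\sqrt{x}$ density of $\mu_0$, which makes the quantile gaps of order $k^{-1}$; for a general finite-energy $\mu'$ the gaps can be arbitrarily small near points of high density, and controlling the near-diagonal terms needs the classical regularization/finite-energy argument (e.g.\ smoothing $\mu'$ and proving continuity of $\mathscr{I}$ under the smoothing). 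Smaller issues of the same kind: Proposition~\ref{P5.5} is stated in the direction opposite to the one you need (its proof does give $d_R(\nu_{k;R},\mu_{n,k;R})\le\delta$ on the good events, so this is repairable), and Proposition~\ref{P5.3} is proved for the $\mu_0$-quantile reference $\tilde\mu_{n_0,\mathbf t}$, not for your tilted interior configuration, so the cross-term control would have to be re-proved. In short, your plan is a legitimate alternative architecture, but filling these holes amounts to redoing the lower-bound analysis of \cite{Zho}; the economical proof is the paper's citation-based reduction.
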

\begin{proof}

Consider any $\delta>0$ and any $\mu'\in\mathscr{X}$ such that $d_R(\mu'_R,\mu)\leq \delta\slash 2$. Following the proof of \cite[Theorem 1.1(a)]{Zho} (which can be adapted to general $\beta\in\mathbb{N}^{*}$), we obtain that 
\begin{eqnarray*}
   && \liminf_{k\rightarrow\infty}\frac{1}{k^2}\log(\mathbb{P}(d_R(\nu_{k;R},\mu_R')\leq \delta\slash 2))\nonumber\\
   &\geq& \liminf_{\delta'\rightarrow 0^{+}} \liminf_{k\rightarrow\infty}\frac{1}{k^2}\log(\mathbb{P}(d_R(\nu_{k;R},\mu_R')\leq \delta'))\geq -\frac{\beta}{2} \mathscr{I}(\mu').
\end{eqnarray*}
As $\{d_R(\nu_{k;R},\mu)\leq \delta\}\supseteq \{d_R(\nu_{k;R},\mu_R')\leq \delta\slash 2\}$, we have
\begin{equation*}
    \liminf_{k\rightarrow\infty}\frac{1}{k^2}\log(\mathbb{P}(d_R(\nu_{k;R},\mu)\leq \delta))\geq -\frac{\beta}{2} \mathscr{I}(\mu').
\end{equation*}
Hence for any $\delta>0$, 
\begin{equation*}
    \liminf_{k\rightarrow\infty}\frac{1}{k^2}\log(\mathbb{P}(d_R(\nu_{k;R},\mu)\leq \delta))\geq -\frac{\beta}{2} \inf_{\substack{\mu'\in\mathscr{X}:\\ d_R(\mu'_R,\mu)\leq \delta\slash 2}}\mathscr{I}(\mu')=-\frac{\beta}{2} I_R(\mu,\delta\slash 2).
\end{equation*}
Taking $\delta\rightarrow 0^{+}$, we conclude that
\begin{equation}
    \liminf_{\delta\rightarrow 0^{+}}\liminf_{k\rightarrow\infty}\frac{1}{k^2}\log(\mathbb{P}(d_R(\nu_{k;R},\mu)\leq \delta))\geq -\frac{\beta}{2} I_R(\mu).
\end{equation}
\end{proof}

\bibliographystyle{acm}
\bibliography{Airy.bib}

\end{document}